\documentclass[11pt,oneside,english]{amsart}
\usepackage[latin9]{inputenc}
\usepackage[a4paper]{geometry}
\geometry{verbose,tmargin=2.5cm,bmargin=2.5cm,lmargin=2.5cm,rmargin=2.5cm}
\setcounter{tocdepth}{2}
\setlength{\parskip}{\smallskipamount}
\setlength{\parindent}{0pt}
\usepackage[active]{srcltx}
\usepackage{color}
\usepackage{babel}
\usepackage{amstext}
\usepackage{amsthm}
\usepackage{amssymb}
\usepackage{stackrel}
\usepackage{setspace}
\onehalfspacing
\usepackage[unicode=true,pdfusetitle,
 bookmarks=true,bookmarksnumbered=false,bookmarksopen=false,
 breaklinks=false,pdfborder={0 0 1},backref=false,colorlinks=false]
 {hyperref}

\makeatletter
\numberwithin{equation}{section}
\numberwithin{figure}{section}
\theoremstyle{plain}
\newtheorem{thm}{\protect\theoremname}[section]
\theoremstyle{plain}
\newtheorem{question}[thm]{\protect\questionname}
\theoremstyle{definition}
\newtheorem{defn}[thm]{\protect\definitionname}
\theoremstyle{remark}
\newtheorem{rem}[thm]{\protect\remarkname}
\theoremstyle{plain}
\newtheorem{cor}[thm]{\protect\corollaryname}
\theoremstyle{definition}
\newtheorem{example}[thm]{\protect\examplename}
\theoremstyle{remark}
\newtheorem*{acknowledgement*}{\protect\acknowledgementname}
\theoremstyle{plain}
\newtheorem{lem}[thm]{\protect\lemmaname}
\theoremstyle{plain}
\newtheorem{fact}[thm]{\protect\factname}
\theoremstyle{plain}
\newtheorem{prop}[thm]{\protect\propositionname}
\theoremstyle{remark}
\newtheorem{notation}[thm]{\protect\notationname}
\theoremstyle{plain}
\newtheorem{conjecture}[thm]{\protect\conjecturename}

\newtheorem{theorem}{Theorem}

\AtEndDocument{\bigskip{\footnotesize%
  \textsc{Itay Glazer, Faculty of Mathematics and Computer Science, The  Weizmann Institute of
Science,  Rehovot 76100, ISRAEL.
} \\
  \textit{E-mail address}: \texttt{itayglazer@gmail.com} \\
    \textit{URL}: \texttt{https://sites.google.com/view/itay-glazer} \par
  \addvspace{\medskipamount}
}}
\AtEndDocument{\bigskip{\footnotesize
  \textsc{Yotam I. Hendel, Department of Mathematics, Northwestern University, 2033 Sheridan Road, Evanston, IL 60208, USA.
} \\
  \textit{E-mail address}: \texttt{yotam.hendel@gmail.com} \\
    \textit{URL}: \texttt{https://sites.google.com/view/yotam-hendel} 
}}

\makeatother

\providecommand{\acknowledgementname}{Acknowledgement}
\providecommand{\conjecturename}{Conjecture}
\providecommand{\corollaryname}{Corollary}
\providecommand{\definitionname}{Definition}
\providecommand{\examplename}{Example}
\providecommand{\factname}{Fact}
\providecommand{\lemmaname}{Lemma}
\providecommand{\notationname}{Notation}
\providecommand{\propositionname}{Proposition}
\providecommand{\questionname}{Question}
\providecommand{\remarkname}{Remark}
\providecommand{\theoremname}{Theorem}

\begin{document}
\title[On singularity properties of word maps and applications]{On singularity properties of word maps and applications to probabilistic
Waring type problems}
\author{Itay Glazer and Yotam I. Hendel}
\begin{abstract}
We study singularity properties of word maps on semisimple algebraic
groups and Lie algebras, generalizing the work of Aizenbud-Avni in
the case of the commutator map.

Given a word $w$ in a free Lie algebra $\mathcal{L}_{r}$, it induces
a word map $\varphi_{w}:\mathfrak{g}^{r}\rightarrow\mathfrak{g}$
for every semisimple Lie algebra $\mathfrak{g}$. Given two words
$w_{1}\in\mathcal{L}_{r_{1}}$ and $w_{2}\in\mathcal{L}_{r_{2}}$,
we define and study the convolution of the corresponding word maps
$\varphi_{w_{1}}*\varphi_{w_{2}}:=\varphi_{w_{1}}+\varphi_{w_{2}}:\mathfrak{g}^{r_{1}+r_{2}}\rightarrow\mathfrak{g}$.

We show that for any word $w\in\mathcal{L}_{r}$ of degree $d$, and
any simple Lie algebra $\mathfrak{g}$ with $\varphi_{w}(\mathfrak{g}^{r})\neq0$,
one obtains a flat morphism with reduced fibers of rational singularities
(abbreviated an (FRS) morphism) after taking $O(d^{4})$ self-convolutions
of $\varphi_{w}$. We deduce that a group word map of length $\ell$
becomes (FRS) at $(e,\ldots,e)\in\underline{G}^{r}$ after $O(\ell^{4})$
self-convolutions, for every semisimple algebraic group $\underline{G}$.

We furthermore bound the dimensions of the jet schemes of the fibers
of Lie algebra word maps, and the fibers of group word maps in the
case where $\underline{G}=\mathrm{SL}_{n}$. For the commutator word
$w_{0}=[X,Y]$, we show that $\varphi_{w_{0}}^{*4}$ is (FRS) for
any semisimple Lie algebra, obtaining applications in representation
growth of compact $p$-adic and arithmetic groups.

The singularity properties we consider, such as the (FRS) property,
are intimately connected to the point count of fibers over finite
rings of the form $\mathbb{Z}/p^{k}\mathbb{Z}$. This allows us to
relate them to properties of some natural families of random walks
on finite and compact $p$-adic groups. We explore these connections,
characterizing some of the singularity properties discussed in probabilistic
terms, and provide applications to $p$-adic probabilistic Waring
type problems.
\end{abstract}

\maketitle
\begin{scriptsize}

\tableofcontents{}

\end{scriptsize}

\global\long\def\nats{\mathbb{\mathbb{N}}}%
\global\long\def\reals{\mathbb{\mathbb{R}}}%
\global\long\def\ints{\mathbb{\mathbb{Z}}}%
\global\long\def\val{\mathbb{\mathrm{val}}}%
\global\long\def\Qp{\mathbb{Q}_{p}}%
\global\long\def\Zp{\mathbb{Z}_{p}}%
\global\long\def\ac{\mathrm{ac}}%
\global\long\def\complex{\mathbb{C}}%
\global\long\def\rats{\mathbb{Q}}%
\global\long\def\supp{\mathrm{supp}}%
\global\long\def\VF{\mathrm{VF}}%
\global\long\def\RF{\mathrm{RF}}%
\global\long\def\VG{\mathrm{VG}}%
\global\long\def\spec{\mathrm{Spec}}%
\global\long\def\Ldp{\mathcal{L}_{\mathrm{DP}}}%

\raggedbottom

\section{Introduction}

\subsection{Motivation}

\subsubsection{\label{subsec:Waring-type-problems}Waring type problems}

Our motivation goes back to the 18th century, with Lagrange's famous
four square theorem - every natural number can be represented as the
sum of at most four square integers (Lagrange, 1770). Waring then
considered the following generalization which was confirmed by Hilbert
in 1909 \cite{Hil909}: 
\begin{question}[Waring's problem, 1770]
Can every natural number be represented as the sum of $t(k)$ many
$k$-th powers, for some function $t(k)$ of $k$?
\end{question}

We now formulate Waring's problem in a language suitable for our purposes. 
\begin{defn}
\label{def:convolutions for poor}Let $\varphi:X\rightarrow G$ and
$\psi:Y\rightarrow G$ be maps from sets $X$ and $Y$ to a (semi-)group
$G$. Define the \textit{convolution} $\varphi*\psi:X\times Y\rightarrow G$
of $\varphi$ and $\psi$ by 
\[
\varphi*\psi(x,y)=\varphi(x)\cdot\psi(y).
\]
Furthermore, we denote by $\varphi^{*t}:X^{t}\rightarrow G$ the $t$-th
convolution power (or self-convolution) of $\varphi$. 
\end{defn}

In this language, the Waring problem asks whether the power map $\varphi_{k}:\nats\rightarrow\nats$
defined by $\varphi_{k}(x)=x^{k}$ becomes surjective after $t(k)$
self-convolutions. Here is an interesting variant.

Given a word $w=x_{i_{1}}^{n_{1}}\cdot\dots\cdot x_{i_{l}}^{n_{l}}\in F_{r}$,
with $x_{i_{j}}\in\{x_{1},\dots,x_{r}\}$ and $n_{j}\in\mathbb{Z}$,
and a group $G$, we associate the corresponding \textit{word map}
$\varphi_{w}:G^{r}\rightarrow G$ by $(g_{1},\dots,g_{r})\mapsto g_{i_{1}}^{n_{1}}\cdot\dots\cdot g_{i_{l}}^{n_{l}}$.
Note that $\varphi_{w_{1}}*\varphi_{w_{2}}=\varphi_{w_{1}*w_{2}}$,
where $w_{1}*w_{2}\in F_{r_{1}+r_{2}}$ is the concatenation of $w_{1}\in F_{r_{1}}$
and $w_{2}\in F_{r_{2}}$. We can then ask if for any $w\in F_{r}$,
there exists $t(w)$ such that $\varphi_{w}^{*t(w)}:G^{rt(w)}\rightarrow G$
is surjective for every $G$ in a nice family $\mathcal{G}$ of groups.
The minimal such $t(w)$ is called the \textit{$\mathcal{G}$-width}
of $w$. Similarly, one can take a word $w$ in a free Lie algebra
$\mathcal{L}_{r}$ on a finite set $\{X_{1},\dots,X_{r}\}$ and consider,
for any semisimple Lie algebra $\mathfrak{g}$, the corresponding
Lie algebra word maps $\varphi_{w}:\mathfrak{g}^{r}\rightarrow\mathfrak{g}$.

This kind of questions are referred to as Waring type problems, and
they have been extensively studied over the past few decades, in various
settings. We mention some of them: 
\begin{enumerate}
\item \textbf{$\mathcal{G}_{\text{Alg}}$, the family of simple algebraic
groups}. It is a classical theorem of Borel \cite{Bor83} (see also
\cite{Lar04}) that for any non-trivial word $w\in F_{r}$, and any
semisimple algebraic group $\underline{G}$, the map $\varphi_{w}:\underline{G}^{r}\rightarrow\underline{G}$
is dominant. In particular $\varphi_{w}^{*2}$ is surjective. 
\item \textbf{$\mathcal{G}_{\text{LieA}}$, the family of simple Lie algebras.}
In \cite{BGKP12}, an analogue of Borel's theorem was shown for semisimple
Lie algebras $\mathfrak{g}$ under the additional assumption that
$\varphi_{w}:\mathfrak{g}^{r}\rightarrow\mathfrak{g}$ is not identically
zero on $\mathfrak{sl}_{2}$. 
\item \textbf{$\mathcal{G}_{\text{fin}}$, the family of finite simple groups.}
The problem of finding the width of various words over (large enough)
finite simple groups has been studied extensively (e.g.~\cite{MZ96,SW97,LiS01,LaS08,Sha09,LaS09,LOST10}).
The state of the art is \cite{LST11}, where Larsen, Shalev and Tiep
showed that for any $w\in F_{r}$, the word map $\varphi_{w}^{*2}:G^{2r}\rightarrow G$
is surjective for every large enough $G\in\mathcal{G}_{\text{fin}}$. 
\item \textbf{$\mathcal{G}_{\text{cp}}$, the family of compact $p$-adic
groups.} In \cite{Jai08} it was shown that any word $w\in F_{r}$
has finite width with respect to any compact $p$-adic group. In \cite{AGKS13},
the authors showed that for any $w\in F_{r}$ and any simply connected
semisimple $\rats$-group $\underline{G}$, $\varphi_{w}^{*3}:\underline{G}(\Zp)^{3r}\rightarrow\underline{G}(\Zp)$
is surjective for every large enough prime $p$. 
\item \textbf{Other families. } Given a ring $R$, one can consider the
corresponding Waring problem for $\underline{G}(R)$ where $\underline{G}\in\mathcal{G}_{\text{Alg}}$.
For example, the cases of $R=\reals,\Qp$ were treated in \cite{HLS15},
and the case $R=\ints$ and $\underline{G}=\mathrm{SL}_{n}$ was studied
in \cite{AM19}, and it was shown that $\varphi_{w}^{*87}$ is surjective
for any $w\in F_{r}$, if $n$ is large enough. 
\end{enumerate}

\subsubsection{\label{subsec:Probabilistic-Waring-type}Probabilistic Waring type
problems}

Given that a word map $\varphi_{w}$ becomes surjective after enough
self-convolutions, the next step is to study its fibers $\varphi_{w}^{-1}(g)$,
and to ask how many self-convolutions does it take so that the fibers
of $\varphi_{w}^{*t}$ will be uniform in some sense. Such questions
are referred to as probabilistic Waring type problems, and have recently
received a significant amount of attention (e.g.~\cite{GS09,SS11,LaS12,AA16,Bors17,LST19}).
In particular, one can take each of the Waring type problems described
above and produce its probabilistic counterpart.

For $\mathcal{G}_{\text{fin}}$ and $\mathcal{G}_{\text{cp}}$, problems
as above can be phrased in terms of questions on certain random walks.
For simplicity of presentation we work here only with $\mathrm{SL}_{n}$.
Any word $w\in F_{r}$ induces a family $\{\varphi_{w,n,p,k}\}_{n,p,k}$
of word maps
\[
\varphi_{w,n,p,k}:\mathrm{SL}_{n}^{r}(\ints/p^{k}\ints)\rightarrow\mathrm{SL}_{n}(\ints/p^{k}\ints).
\]
One can then study the family of random walks induced by $\varphi_{w,n,p,k}$.
Write $\pi_{n,p,k}$ for the uniform probability measure on $\mathrm{SL}_{n}(\ints/p^{k}\ints)$,
and set $\tau_{w,n,p,k}:=(\varphi_{w,n,p,k})_{*}(\pi_{n,p,k})$. Then
the collection $\{\tau_{w,n,p,k}\}_{n,p,k}$ induces a family of random
walks on $\{\mathrm{SL}_{n}(\ints/p^{k}\ints)\}_{n,p,k}$, where the
probability to reach $g\in\mathrm{SL}_{n}(\ints/p^{k}\ints)$ at the
$t$-th step of the random walk is
\[
\tau_{w,n,p,k}^{*t}(g)=\frac{\left|(\varphi_{w,n,p,k}^{*t})^{-1}(g)\right|}{\left|\mathrm{SL}_{n}(\ints/p^{k}\ints)\right|^{rt}}.
\]
Since $\varphi_{w,n,p,k}$ is generating for $p$ large enough (e.g.~\cite[Theorem 2.3]{AGKS13}),
$\tau_{w,n,p,k}^{*t}$ converges to the uniform probability $\pi_{n,p,k}$
in any of the $L^{a}$-norms for $a\in[1,\infty]$, as $t$ goes to
$\infty$. Roughly speaking, the minimal number of steps $t_{a}$
it takes for $\tau_{w,n,p,k}^{*t_{a}}$ to be sufficiently close to
$\pi_{n,p,k}$ in the $L^{a}$-norm is called the $L^{a}$-mixing
time (see Definition \ref{def:mixing time}). A priori, $t_{a}$ might
depend on $p,k$ and $n$. It is natural to ask the following:
\begin{question}
\label{que:Mein probabilistic questions}Let $w\in F_{r}$ be a word.
\begin{enumerate}
\item Is there $\varepsilon(w)>0$, such that $\tau_{w,n,p,k}(g)<\left|\mathrm{SL}_{n}(\ints/p^{k}\ints)\right|^{-\varepsilon(w)}$
uniformly in $n,p$ and $k$, when $p$ is large enough?
\item Is there an upper bound on the $L^{a}$-mixing times of $\{\tau_{w,n,p,k}\}$
which is uniform in $n,p$ and $k$, when $p$ is large enough? 
\end{enumerate}
\end{question}

\subsubsection{\label{subsec:Algebraic-Waring-type}Algebraic Waring type problems}

Let $\varphi:X\rightarrow\underline{G}$ be a morphism from a smooth
variety $X$ to an algebraic group $\underline{G}$. Similarly to
the above problems, we can study the convolution powers $\varphi^{*t}:X^{t}\rightarrow\underline{G}$
of $\varphi$ using Definition \ref{def:convolutions for poor}. Similarly
to the usual convolution operation in analysis, the algebraic convolution
operation has a smoothing effect on morphisms. In \cite{GH19,GH},
it was shown that if $\varphi:X\rightarrow\underline{G}$ is (strongly)
dominant, then $\varphi^{*t}:X^{t}\rightarrow\underline{G}$ has increasingly
better singularity properties as $t$ tends to $\infty$ (see Section
\ref{sec:Singulrities-of-word} for further discussion).

Now, given $w\in F_{r}$ (resp.~$w\in\mathcal{L}_{r}$), we can study
self-convolutions $\varphi_{w}^{*t}$ of the corresponding word maps
$\varphi_{w}:\underline{G}^{r}\rightarrow\underline{G}$ (resp.~$\varphi_{w}:\mathfrak{g}^{r}\rightarrow\mathfrak{g}$)
for $\underline{G}\in\mathcal{G}_{\mathrm{Alg}}$ and $\mathfrak{g}\in\mathcal{G}_{\mathrm{LieA}}$.
The question of surjectivity of self-convolutions was fully answered
by Borel's Theorem for $\underline{G}\in\mathcal{G}_{\mathrm{Alg}}$
(see \cite{Bor83}), and partially answered in \cite{BGKP12} for
$\mathfrak{g}\in\mathcal{G}_{\mathrm{LieA}}$. Having the results
of \cite{GH19,GH} in mind, and since there is a strong connection
between the singularities of a variety and its point count over finite
rings (see Theorem \ref{thm: rational singularities and point count}),
we are led to the following questions, which are analogous to Question
\ref{que:Mein probabilistic questions} (brought here in non-precise
form):
\begin{question}
\label{que:questions for algebraic waring problem}Let $w\in F_{r}$
(resp.~$w\in\mathcal{L}_{r}$) be a word.
\begin{enumerate}
\item How large are the fibers of $\varphi_{w}$, and what can one say on
their singularities? 
\item Is there $t\in\nats$ such that $\varphi_{w}^{*t}$ has ''nice''
singularity properties, uniformly over $\underline{G}\in\mathcal{G}_{\mathrm{Alg}}$
or over $\mathfrak{g}\in\mathcal{G}_{\mathrm{LieA}}$? 
\end{enumerate}
\end{question}

Question \ref{que:questions for algebraic waring problem} is in some
sense a generalization of Question \ref{que:Mein probabilistic questions}.
If we take 
\[
\text{"nice"="flat, with geometrically irreducible fibers"},
\]
then the Lang-Weil bounds give a strong connection between Question
\ref{que:questions for algebraic waring problem} and the probabilistic
Waring problem for finite simple groups of Lie type. Here, to measure
how large are the fibers we use their dimension. This can already
be seen in \cite{LST19}, where this connection is being used frequently.
If we take 
\[
\text{"nice"="flat, with geometrically irreducible, reduced fibers of rational singularities"},
\]
and consider the log canonical threshold $\mathrm{lct}(\underline{G}^{r},\varphi_{w}^{-1}(g))$
(see e.g.~\cite{Mus12}), as a measurement of how bad are the singularities
of the fibers, then Question \ref{que:questions for algebraic waring problem}
becomes strongly connected to the probabilistic Waring problem for
compact $p$-adic groups.

In this paper we deal with Questions \ref{que:Mein probabilistic questions}
and \ref{que:questions for algebraic waring problem} in the cases
of $\mathcal{G}_{\mathrm{Alg}}$, $\mathcal{G}_{\mathrm{LieA}}$ and
$\mathcal{G}_{\mathrm{cp}}$. We answer Question \ref{que:questions for algebraic waring problem}
for semisimple Lie algebras, and provide partial answers for semisimple
algebraic groups (Sections \ref{sec:Singulrities-of-word}, \ref{sec:Lie-algebra-word},
\ref{sec:Proof-of-Theorems- Lie algebra} and \ref{sec:flatness-and-(FRS) of word maps}).
In Sections \ref{sec:The-(FRS)-property and counting points} and
\ref{sec:Applications-to-probabilistic} we study the connection between
Questions \ref{que:Mein probabilistic questions} and \ref{que:questions for algebraic waring problem},
and utilize it in order to deal with Question \ref{que:Mein probabilistic questions}
in Subsection \ref{subsec:Algebraic-random-walks word maps}. In addition,
in Section \ref{sec:The-commutator-map-revisited} we use the methods
developed in Sections \ref{sec:Lie-algebra-word} and \ref{sec:Proof-of-Theorems- Lie algebra}
to study the fibers of convolutions of the commutator map. As an application,
we provide improved bounds on the representation growth of compact
$p$-adic and arithmetic groups.

\subsection{\label{subsec:Main-results}Main results}

We divide the main results into several parts. We first discuss the
relative Waring problem in the setting of semisimple Lie algebras
and algebraic groups, through the study of singularity properties
of word maps, followed by a discussion on the special case of the
commutator map. We then provide algebraic interpretation to the various
probabilistic properties of word measures in terms of these singularity
properties. Combining these two parts, we provide applications to
the $p$-adic probabilistic Waring problem.

\subsubsection{Singularity properties of word maps}

Let $K$ be a field of characteristic $0$. 
\begin{defn}[{The (FRS) property \cite[Definition II]{AA16}}]
\label{def:(FRS)}~
\begin{enumerate}
\item A $K$-variety $X$ has \textit{rational singularities} if it is normal
and for any resolution of singularities $\pi:\widetilde{X}\to X$
we have $R^{i}\pi_{*}(\mathcal{O}_{\widetilde{X}})=0$ for $i\geq1$. 
\item Let $X$ and $Y$ be smooth $K$-varieties. We say that a morphism
$\varphi:X\rightarrow Y$ is \textit{(FRS)} if it is flat and if every
fiber of $\varphi$ is reduced and has rational singularities. 
\end{enumerate}
\end{defn}

\begin{defn}
\label{def:Lie algebra words-introduction}Let $\mathcal{A}_{r}$
(resp.~$\mathcal{L}_{r}$) be the free associative $K$-algebra (resp.~the
free $K$-Lie algebra) on a finite set $\{X_{1},\ldots,X_{r}\}$.
We call $w\in\mathcal{A}_{r}$ (resp.~$w\in\mathcal{L}_{r}$) an
\textit{algebra word} (resp.~a \textit{Lie algebra word}). Note that
both $\mathcal{A}_{r}$ and $\mathcal{L}_{r}$ have a natural gradation.
We define the \textit{degree} of $w$, denoted $\mathrm{deg}(w)$,
as the maximal grade $d\in\nats$ in which the image of $w$ is non-trivial. 
\end{defn}

In the following theorems we show that (Lie) algebra word maps obtain
good singularity properties after sufficiently many convolutions,
in a uniform way:

\begin{theorem}[Theorem \ref{thm: main thm Lie algebra word maps}]\label{thm A}Let
$\{w_{i}\in\mathcal{L}_{r_{i}}\}_{i\in\nats}$ be a collection of
Lie algebra words of degree at most $d\in\nats$. Then there exists
$0<C<10^{6}$, such that for any simple $K$-Lie algebra $\mathfrak{g}$
for which $\{\varphi_{w_{i}}:\mathfrak{g}^{r_{i}}\rightarrow\mathfrak{g}\}$
are non-trivial, we have the following: 
\begin{enumerate}
\item If $m\geq Cd^{3}$ then $\varphi_{w_{1}}*\ldots*\varphi_{w_{m}}$
is flat. 
\item If $m\geq Cd^{4}$ then $\varphi_{w_{1}}*\ldots*\varphi_{w_{m}}$
is (FRS). 
\end{enumerate}
\end{theorem}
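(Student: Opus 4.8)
The plan is to reduce the statement, via standard semicontinuity and deformation-to-the-normal-cone type arguments on jet schemes, to a dimension estimate on the singular locus of a single word-map fiber, and then to "amplify" that estimate by convolution until the bounds required by the jet-scheme criterion for flatness and for the (FRS) property are met. Concretely, recall (from the results of \cite{GH19,GH} invoked earlier) that if $\varphi:X\to\underline{G}$ (or $\mathfrak g$) is a morphism from a smooth variety and the dimension of the $k$-th jet scheme of each fiber grows like $(k+1)\dim(\text{fiber})-O(1)$ with a controlled constant, then after taking enough convolution powers one obtains first flatness and then the (FRS) property; the number of convolutions needed is governed by how far the jet-scheme dimensions of the original fibers deviate from the "expected" value, i.e.\ essentially by the codimension of the singular locus of the generic fiber and by a measure of the non-flat locus. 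So the first step is to make this black box precise: state the quantitative criterion (presumably Theorem/Proposition references from Section~\ref{sec:Singulrities-of-word}) in the form ``if for all $k$ one has $\dim J_k(\varphi^{-1}(y)) \le (k+1)(\dim X - \dim \underline G) + a$ for a constant $a$ depending on $\varphi$, then $\varphi^{*t}$ is flat once $t\gtrsim a$ and (FRS) once $t\gtrsim a\cdot(\text{something linear})$.''

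The second step is the geometric heart: for a single Lie algebra word $w\in\mathcal L_r$ of degree $d$ with $\varphi_w$ non-trivial on a simple Lie algebra $\mathfrak g$, bound the dimension of the jet schemes of the fibers of $\varphi_w:\mathfrak g^r\to\mathfrak g$ by something of the form $(k+1)\dim(\text{fiber})+O(d^{?})$, uniformly in $\mathfrak g$. The uniformity in $\mathfrak g$ is the subtle point: one wants the implied constant to depend only on $d$, not on the rank or type of $\mathfrak g$. I expect this is achieved by a $\mathfrak{sl}_2$-reduction — principal (or suitably chosen) $\mathfrak{sl}_2$-triples let one pull the question back to $\mathfrak{sl}_2$-computations where $d$ enters through the degree of the word and the size of the relevant weight spaces — together with the observation that $\varphi_w$ non-trivial forces non-triviality on some $\mathfrak{sl}_2$. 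This is essentially where the power of $d$ in the final bound is born; a naive estimate gives some polynomial in $d$, and tracking the exponent carefully yields the $d^3$ (flatness) and $d^4$ ((FRS)) of the statement. This dimension-of-jet-schemes estimate for Lie algebra word maps is, I believe, proved separately in the paper (it is one of the advertised results), so here one would just cite it; the present proof's job is to feed it into the convolution machinery.

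The third step is bookkeeping the convolution amplification: if $\varphi_w$ has the jet-scheme defect bounded by $c\cdot d^{a}$, then $\varphi_{w_1}*\cdots*\varphi_{w_m}$ (with all $w_i$ of degree $\le d$) has defect that shrinks linearly in $m$ — each convolution factor contributes roughly one unit of "smoothing" in the relevant codimension count — so the defect is negative (forcing flatness) once $m\gtrsim c\, d^{a}$, and the stronger rational-singularities condition, which requires the jet dimensions to beat the expected value by a margin growing with the number of variables, kicks in after an extra factor, giving $m\gtrsim C d^{4}$. One then checks that the constants can be chosen absolute (the claimed $C<10^6$) by keeping all the implied constants in the jet estimate and in the convolution lemma numerical rather than asymptotic. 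The main obstacle, to my mind, is exactly the uniformity of the jet-scheme dimension bound across all simple $\mathfrak g$ with a constant depending polynomially (with the right exponent) only on $d$: handling classical algebras of unbounded rank and the exceptional ones on equal footing, and making sure the $\mathfrak{sl}_2$-reduction does not lose more than a bounded power of $d$, is where the real work sits — but since that estimate is proved elsewhere in the paper, the proof of Theorem~\ref{thm A} itself is the (still nontrivial) task of combining it correctly with the flatness/(FRS)-via-convolution criterion and tracking the exponents and absolute constants.
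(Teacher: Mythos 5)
Your proposal takes a genuinely different route from the paper's, but it leans on two black boxes that are not actually available, and as written it is circular with respect to the paper's logic. The ``quantitative criterion'' you want in step~(1) — that a jet-scheme dimension defect bounded by $a$ (uniformly in $k$) forces flatness of $\varphi^{*t}$ once $t\gtrsim a$ and (FRS) once $t\gtrsim a\cdot(\text{linear})$ — is not what the results of \cite{GH19,GH} provide. Proposition~\ref{Prop: singularity properties obtained after convolution} and Theorem~\ref{Main result} give bounds that depend on $\dim\mathfrak{g}$, which is precisely the non-uniformity Theorem~\ref{thm A} is supposed to overcome. The direction the paper does prove is the reverse one: Lemma~\ref{lem:convolution gives bounds on epsilon flatness} shows that jet-flatness of $\varphi^{*t}$ implies $\tfrac1t$-jet-flatness of $\varphi$, via the easy inclusion $(X_{g,\varphi})^{t}\subseteq X_{g^{t},\varphi^{*t}}$; this gives a \emph{lower} bound on the fiber dimensions of the convolution, the opposite of what you need. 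The implication ``$\varepsilon$-jet-flat $\Rightarrow$ $\varphi^{*O(1/\varepsilon)}$ is jet-flat/(FRS)'' would require a Thom--Sebastiani-type additivity of log canonical thresholds for a \emph{vector-valued} target, since the fiber of $\varphi^{*t}$ over $y$ is $\{(x_1,\ldots,x_t):\sum_i\varphi(x_i)=y\}$ and does not factor into a product; the scalar case cited from \cite{MSS18} does not apply, and no such result is established here. Moreover, you declare the uniform jet-scheme estimate for a single $\varphi_w$ to be ``proved separately in the paper'', but the logical order is reversed: Theorem~\ref{thm:lower bounds on epsilon jet flatness} is \emph{deduced from} the proof of Theorem~\ref{thm: main thm Lie algebra word maps}, not used as an input, so you cannot cite it without circularity.

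The $\mathfrak{sl}_2$-reduction in your step~(2) is also not a route the paper follows, and it has concrete obstructions: the non-triviality hypothesis cannot be tested on $\mathfrak{sl}_2$ (Example~\ref{exa:trivial word maps} gives a nonzero Lie word vanishing identically on $\mathfrak{sl}_2$), and jet schemes of fibers of $\varphi_w$ inside $\mathfrak{g}^r$ do not descend to $\mathfrak{sl}_2$-slices in a way that controls dimensions of the full fibers. What the paper actually does is reduce to self-convolutions of $d$-homogeneous words of pure type (Propositions~\ref{prop:reduction to balanced homogeneous maps} and~\ref{prop:reduction to self convolutions}), encode $\varphi_w^{*t}:\mathfrak{g}^{rt}\to\mathfrak{g}$ combinatorially via a polyhypergraph $\Gamma_{\mathfrak{g},w}$ built from a Chevalley basis, degenerate that gadget by elimination (Corollary~\ref{cor:elimination for dPH}) and coloring (Corollary~\ref{cor:Coloring for for dPH}) — tracking the number of colors as the source of the $d^3$/$d^4$ exponents — and handle low-rank Lie algebras and small polyhypergraphs by an analytic criterion (Proposition~\ref{prop:(FRS) for generating morphisms}). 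This combinatorial machinery, not an $\mathfrak{sl}_2$-reduction plus convolution amplification, is where the uniformity in $\mathfrak{g}$ and the polynomial-in-$d$ bounds come from.
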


\begin{theorem}[Theorem \ref{thm: main theorem for matrix word map}]\label{Thm B}Let
$\{w_{i}\in\mathcal{A}_{r_{i}}\}_{i\in\nats}$ be a collection of
algebra words of degree at most $d\in\nats$. Then for any $n\in\nats$
such that $\{\varphi_{w_{i}}:M_{n}^{r}\rightarrow M_{n}\}_{i\in\nats}$
are generating, with $M_{n}$ is the ring of $n\times n$-matrices
over $K$, the assertions of Theorem \ref{thm A} hold. \end{theorem}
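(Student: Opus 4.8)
The plan is to deduce Theorem~\ref{Thm B} from Theorem~\ref{thm A} by a reduction to the semisimple Lie algebra case. The essential point is that an associative algebra word $w\in\mathcal{A}_r$ naturally restricts to a map on $\mathfrak{sl}_n\subseteq M_n$ only after we account for the trace, so we first separate the ``trace part'' from the ``traceless part''. Concretely, write $M_n = \mathfrak{sl}_n \oplus K\cdot\mathrm{Id}$ as a direct sum of Lie algebras (with $K\cdot\mathrm{Id}$ central), and correspondingly decompose the matrix word map $\varphi_{w_i}:M_n^{r_i}\to M_n$ into its projection to $\mathfrak{sl}_n$ and its projection to the scalar line. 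Since $\varphi_{w_i}$ is assumed generating, its image spans $M_n$, and in particular the $\mathfrak{sl}_n$-component of $\varphi_{w_i}$ is a non-trivial $\mathfrak{sl}_n$-valued map. The scalar component is a polynomial map $M_n^{r_i}\to K$ whose homogeneous structure is controlled by $\deg(w_i)$.

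The key steps, in order, are as follows. First I would show that being (FRS) (resp.\ flat) for a morphism into a product $Y_1\times Y_2$ can be checked ``coordinatewise in a fibered sense'': if $\varphi = (\varphi_1,\varphi_2): X\to Y_1\times Y_2$ and $\varphi_1$ is (FRS), and moreover the restriction of $\varphi_2$ to each fiber of $\varphi_1$ is (FRS), then $\varphi$ is (FRS); flatness behaves similarly by the local criterion. Second, I would observe that convolution respects this decomposition: $(\varphi_{w_1}*\cdots*\varphi_{w_m})$ has $\mathfrak{sl}_n$-component equal to the convolution of the $\mathfrak{sl}_n$-components, which is exactly a Lie algebra word-map convolution on the simple Lie algebra $\mathfrak{sl}_n$ of the type handled by Theorem~\ref{thm A}; hence for $m\geq Cd^4$ this component is already (FRS). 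Third, I would handle the scalar component: conditioned on the $\mathfrak{sl}_n$-fibers, the trace part is a sum of $m$ polynomial functions of bounded degree, and one shows that after enough convolutions such a sum of non-constant (or generically non-constant) polynomials defines an (FRS) morphism to $\mathbb{A}^1$ — this is where a Waring-for-polynomials type input, or the elementary fact that a dominant map to $\mathbb{A}^1$ from a smooth variety with rational-singularity fibers becomes (FRS) after a controlled number of self-convolutions, gets used. One must check the degree bookkeeping so the number of convolutions needed here does not exceed $Cd^4$ (enlarging $C$ within the bound $10^6$ if necessary).

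The main obstacle I expect is the interaction between the two components rather than either one in isolation: it is not enough that the $\mathfrak{sl}_n$-part and the scalar part are each individually (FRS), one needs the genuinely relative statement that $\varphi_2$ restricted to the fibers of $\varphi_1$ is (FRS), and controlling the singularities of these fibers uniformly in $n$ requires knowing that the $\mathfrak{sl}_n$-word-map fibers are not merely reduced with rational singularities but behave well in families (this is presumably supplied by the jet-scheme dimension bounds for $\mathrm{SL}_n$ fibers advertised in the abstract). A secondary subtlety is the case $n=1$, or more generally small $n$, where $\mathfrak{sl}_n$ may be trivial or not simple, so the statement degenerates to the purely scalar case and must be treated separately; but there the word map is just a polynomial map $\mathbb{A}^{r}\to\mathbb{A}^1$ and the claim reduces to a classical statement. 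Once these relative (FRS) and flatness criteria are in place, the degree estimates transfer verbatim from Theorem~\ref{thm A}, and Theorem~\ref{Thm B} follows.
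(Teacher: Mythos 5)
The central claim of your reduction is false, and it is the load-bearing one. You assert that the $\mathfrak{sl}_n$-component of a matrix word map $\varphi_{w}$ (for $w\in\mathcal{A}_r$) is ``exactly a Lie algebra word-map convolution on $\mathfrak{sl}_n$ of the type handled by Theorem~\ref{thm A},'' and hence that for $m\geq Cd^4$ that component is already (FRS) by Theorem~\ref{thm A}. But the free associative algebra $\mathcal{A}_r$ is strictly larger than the free Lie algebra $\mathcal{L}_r$: a word like $X_1X_2$ or $X_1^2$ is not a Lie algebra word, and its traceless projection $X_1X_2-\tfrac{1}{n}\mathrm{Tr}(X_1X_2)\,\mathrm{Id}$ is not expressible as a linear combination of iterated Lie brackets of the inputs. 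The inclusion, as Remark~\ref{rem:different description for Lie algebra word maps} points out, goes only in the opposite direction: every Lie algebra word map is the restriction of a matrix word map, not vice versa. There is also a source mismatch: Theorem~\ref{thm A} concerns maps $\mathfrak{g}^r\to\mathfrak{g}$, whereas your traceless component has source $M_n^{r}$, not $\mathfrak{sl}_n^{r}$. So Theorem~\ref{thm B} cannot be deduced as a corollary of Theorem~\ref{thm A} in the way you propose.

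Your decomposition of the target into a traceless part plus the trace, your recognition that the scalar part needs a separate fibered argument, and your flag that one needs a relative criterion (of the form in Lemma~\ref{lem:Auxilery lemma for matrix word maps}) are all aligned with what the paper actually does. But the paper does not then appeal to Theorem~\ref{thm A}; it re-runs the entire $d$-polyhypergraph machinery of Sections~\ref{sec:Lie-algebra-word}--\ref{sec:Proof-of-Theorems- Lie algebra} with the multilinear forms now encoding associative monomials rather than Lie-bracket monomials. Concretely: after reducing via Lemma~\ref{lem: reduction to largest degree} and Lemma~\ref{lem:reduction to pure Matrix words} to a pure-type homogeneous matrix word, one colors $\{e_{i,j}\}$ by $\nu_0$ exactly as in Subsection~\ref{subsec:Proof-for-SLn}, producing $\Psi_+,\Psi_-,\Psi_0$; the arguments for the corresponding polyhypergraphs $\widetilde{\Gamma}_\pm$ are then formally identical to those for $\Gamma_\pm$ but with the form $\tau_\gamma$ coming from $\hat{e}_{k,l}(e_{i_1,j_1}\cdots e_{i_d,j_d})$ instead of $\hat{e}_{k,l}([e_{i_1,j_1},\ldots,e_{i_d,j_d}])$; and the scalar coordinate $\mathrm{Tr}(\varphi_{w_1})$ is absorbed into the diagonal piece $\Psi_0$ and handled with Lemma~\ref{lem:Auxilery lemma for matrix word maps} and its jet-scheme variant, as you anticipated. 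So the right picture is that Theorems~\ref{thm A} and~\ref{Thm B} are proved in parallel by the same combinatorial engine, not that one implies the other. If you want a shortcut, the reduction that actually works is the one in Remark~\ref{rem:different description for Lie algebra word maps}: Lie algebra words are the special case of matrix words, which is part of why Subsection~\ref{subsec:Matrix-word-maps} is short and the real content lives in Subsections~\ref{subsec:Proof-for-SLn}--\ref{subsec:Proof-for-the general case}.
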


Any word map $\varphi_{w}:\underline{G}^{r}\rightarrow\underline{G}$
can be degenerated to a map $\widetilde{\varphi}_{w}:\mathfrak{g}^{r}\rightarrow\mathfrak{g}$,
called the \textit{symbol} of $\varphi_{w}$, by taking the analogue
of the lowest order non-vanishing term in the Taylor expansion of
$\varphi_{w}$ near $(e,\ldots,e)\in\underline{G}^{r}$. The Baker-Campbell-Hausdorff
formula shows that $\widetilde{\varphi}_{w}$ is in fact a Lie algebra
word map (Proposition \ref{prop:symbol of a word map is a Lie algebra word map}).
Theorem \ref{thm A}, together with the fact that the (FRS) property
behaves well under deformations (Corollary \ref{cor:reduction to degeneration})
give the following:

\begin{theorem}[Theorem \ref{thm: (FRS) at (e,...,e)}]\label{thm C}
Let $d\in\nats$, let $\underline{G}$ be a connected semisimple $K$-group
and let $\{\varphi_{w_{i}}:\underline{G}^{r_{i}}\rightarrow\underline{G}\}_{i=1}^{m}$
be a collection of word maps with symbols $\widetilde{\varphi}_{w_{i}}$
of degree at most $d$. Then there exists $C>0$ such that for any
$m\geq C\cdot d^{4}$ the map $\varphi_{w_{1}}*\ldots*\varphi_{w_{m}}$
is (FRS) at $(e,\ldots,e)$. \end{theorem}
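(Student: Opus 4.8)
The plan is to reduce the statement about group word maps to Theorem A about Lie algebra word maps, via the symbol construction, and then invoke the deformation-stability of the (FRS) property. First I would recall (Proposition \ref{prop:symbol of a word map is a Lie algebra word map}) that the symbol $\widetilde{\varphi}_{w_i}:\mathfrak{g}^{r_i}\to\mathfrak{g}$ of a group word map $\varphi_{w_i}:\underline{G}^{r_i}\to\underline{G}$ is itself a Lie algebra word map, arising from a Lie algebra word $\widetilde{w}_i\in\mathcal{L}_{r_i}$ of degree at most $d$; the Baker--Campbell--Hausdorff formula is what identifies the lowest-order nonvanishing Taylor term near $(e,\ldots,e)$ with a bracket polynomial. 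I would also note that convolution commutes with passing to the symbol: the symbol of $\varphi_{w_1}*\cdots*\varphi_{w_m}$ is $\widetilde{\varphi}_{w_1}*\cdots*\widetilde{\varphi}_{w_m}$ (since the symbol only records the leading term near the identity, and convolution is addition on the Lie algebra side), and the degrees stay bounded by $d$.

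Next, since $\underline{G}$ is connected semisimple, its Lie algebra $\mathfrak{g}$ decomposes as a finite direct sum $\mathfrak{g}=\bigoplus_j \mathfrak{g}_j$ of simple ideals, and a Lie algebra word map on $\mathfrak{g}$ is the product of the corresponding word maps on each $\mathfrak{g}_j$. The (FRS) property is preserved under finite products of morphisms, so it suffices to treat each simple factor. If $\widetilde{\varphi}_{w_i}$ is nontrivial on $\mathfrak{g}_j$ for all $i$, then by Theorem \ref{thm A} there is a constant $0<C<10^6$ so that for $m\ge C d^4$ the convolution $\widetilde{\varphi}_{w_1}*\cdots*\widetilde{\varphi}_{w_m}$ is (FRS) on $\mathfrak{g}_j$. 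If instead some $\widetilde{\varphi}_{w_i}$ vanishes identically on a simple factor $\mathfrak{g}_j$, one checks that the corresponding word map $\varphi_{w_i}$ is constant equal to $e$ on the factor of $\underline{G}$ integrating $\mathfrak{g}_j$ — so this case does not actually occur unless the word is trivial there, in which case the claim is about a constant map and is handled separately (or, more cleanly, one observes that a nonconstant symbol on $\mathfrak{g}$ is nonconstant on at least those factors that matter and the trivial factors contribute a smooth constant map that does not affect flatness or fiber singularities near the identity).

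Finally I would apply Corollary \ref{cor:reduction to degeneration}, which says that a morphism whose symbol (degeneration) at a point is (FRS) is itself (FRS) at that point: concretely, one realizes $\varphi_{w_1}*\cdots*\varphi_{w_m}$ near $(e,\ldots,e)$ as a member of a flat family whose special fiber is the Lie algebra convolution $\widetilde{\varphi}_{w_1}*\cdots*\widetilde{\varphi}_{w_m}$, and uses that (FRS)-ness of the special fiber of a family of morphisms is an open condition on the base, together with the fact that the (FRS) locus is open in the source. Since the special fiber is (FRS) everywhere (in particular at $0=(0,\ldots,0)$, the image of $(e,\ldots,e)$ under the degeneration), the original map is (FRS) at $(e,\ldots,e)$. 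Taking the same $C$ as in Theorem A (enlarging it by a bounded factor if the product-over-simple-factors bookkeeping requires it) gives the result.

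The main obstacle I expect is making the degeneration argument precise: one must set up the one-parameter family interpolating between $\varphi_{w_1}*\cdots*\varphi_{w_m}$ (in exponential coordinates near the identity) and its symbol in such a way that it is flat over the parameter line and that the semicontinuity statement for the (FRS) property — which is exactly the content being quoted from Corollary \ref{cor:reduction to degeneration} and ultimately rests on the Aizenbud--Avni characterization of (FRS) via jet-scheme dimensions or via $\varepsilon$-adic integrability — applies verbatim. The bracket-polynomial identification of the symbol via BCH, and the compatibility of symbols with convolution, are the technical lemmas that must be in place before this reduction can be invoked; granting Proposition \ref{prop:symbol of a word map is a Lie algebra word map} and Corollary \ref{cor:reduction to degeneration} as stated, the argument is then a short assembly.
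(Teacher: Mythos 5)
Your proposal follows the same route as the paper, whose proof of this statement is in fact only a two-line citation of Corollary \ref{cor:(cf.---Linearization)} and Proposition \ref{prop:symbol of a word map is a Lie algebra word map} followed by an appeal to Theorem \ref{thm: main thm Lie algebra word maps}; your reconstruction is the intended argument, spelled out. Two of the points you raise as potential obstacles do need sharpening, however. First, the assertion that the symbol of $\varphi_{w_{1}}*\cdots*\varphi_{w_{m}}$ at $(e,\ldots,e)$ equals $\widetilde{\varphi}_{w_{1}}*\cdots*\widetilde{\varphi}_{w_{m}}$ is true as stated only when all the vanishing orders $l_{i}+1$ of the $\varphi_{w_{i}}$ coincide. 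If they differ, the unweighted degeneration $D^{l-1}_{(e,\ldots,e)}$ with $l=\min_{i}(l_{i}+1)$ annihilates every summand except the minimal-degree ones; to recover the sum of all symbols one must use the general weighted $\mathbb{G}_{m}$-degeneration of Corollary \ref{cor:reduction to degeneration}, scaling the $i$-th block of variables with weight proportional to $1/(l_{i}+1)$ so that every $\widetilde{\varphi}_{w_{i}}$ lands in the same weighted degree and the BCH cross-terms are of strictly higher weighted degree. You correctly anticipate that making the family flat and equivariant is the crux, but the unweighted $D^{l}$ you implicitly invoke is insufficient when the degrees are unequal.

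Second, your parenthetical handling of the case where $\widetilde{\varphi}_{w_{i}}$ vanishes identically on a simple factor $\mathfrak{g}_{j}$ is based on a false premise: by Borel's theorem a nontrivial group word map is dominant on every simple factor, so $\varphi_{w_{i}}$ is never ``constant equal to $e$'' there. What actually happens is that $\varphi_{w_{i}}$ may have a higher vanishing order at $e$ on the $j$-th factor than on others, so the symbol computed with the $l$ that works uniformly on $\underline{G}$ can restrict to zero on $\mathfrak{g}_{j}$, and the degenerated map on that factor is the zero map, which is not flat. The fix is to work one simple factor at a time from the outset: for each $j$, take the symbol of $\varphi_{w_{i}}$ relative to the simple factor $\underline{G}_{j}$ alone, which is then nontrivial by construction (Definition \ref{def:symbol of a word map} and Proposition \ref{prop:symbol of a word map is a Lie algebra word map}), and read the hypothesis ``symbols of degree at most $d$'' as a bound on each such factor-wise symbol before invoking Theorem \ref{thm: main thm Lie algebra word maps}. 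With these two corrections the assembly is exactly the one the paper performs.
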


We now turn to Item (1) of Question \ref{que:questions for algebraic waring problem}.
To quantitatively measure the singularities of the fibers of word
maps, we introduce the following notions: 
\begin{defn}
Let $\varphi:X\rightarrow Y$ be a morphism between geometrically
irreducible smooth $K$-varieties. Let $J_{m}(\varphi):J_{m}(X)\rightarrow J_{m}(Y)$
be the corresponding $m$-th jet morphism (see Subsections \ref{subsec:The-commutator-word map}
and \ref{def:basic definition jet schemes}) and let $X_{\varphi(x),\varphi}$
be the fiber of $\varphi$ over $\varphi(x)$. 
\begin{enumerate}
\item $\varphi$ is called \textit{$\varepsilon$-flat} if for every $x\in X$
we have $\mathrm{dim}X_{\varphi(x),\varphi}\leq\mathrm{dim}X-\varepsilon\mathrm{dim}Y$. 
\item $\varphi$ is called \textit{$\varepsilon$-jet flat} (resp.~jet-flat)
if $J_{m}(\varphi)$ is $\varepsilon$-flat (resp.~flat) for every
$m\in\nats$. 
\end{enumerate}
Note that a $1$-flat morphism is just flat. It follows from \cite{Mus02}
that the notion of $\varepsilon$-jet flatness is closely related
to the log canonical threshold of the pair $(X,X_{\varphi(x),\varphi})$,
denoted $\mathrm{lct}(X,X_{\varphi(x),\varphi})$; the morphism $\varphi$
is $\varepsilon$-jet flat if and only if $\mathrm{lct}(X,X_{\varphi(x),\varphi})\geq\varepsilon\mathrm{dim}Y$
for all $x\in X$. 
\end{defn}

Let $w$ be a word in $\mathcal{A}_{r}$ or $\mathcal{L}_{r}$. An
easy calculation of dimensions shows that if $\varphi_{w}^{*t}$ is
jet-flat for some $t\in\nats$, then $\varphi_{w}$ is $\frac{1}{t}$-jet
flat. Since (FRS) morphisms are in particular jet-flat (see Corollary
\ref{cor: singularity properties through dim of jets}), one can deduce
the following from the proof of Theorems \ref{thm A} and \ref{Thm B}:

\begin{theorem}[Theorem \ref{thm:lower bounds on epsilon jet flatness}]\label{thm D}
Let $w_{1}\in\mathcal{A}_{r}$ and $w_{2}\in\mathcal{L}_{r}$ be words
of degree at most $d$. 
\begin{enumerate}
\item The map $\varphi_{w_{1}}:M_{n}^{r}\rightarrow M_{n}$ is $\frac{1}{25d^{3}}$-jet
flat, for every $n\in\nats$.
\item The map $\varphi_{w_{2}}:\mathfrak{g}^{r}\rightarrow\mathfrak{g}$
is $\frac{1}{2\cdot10^{5}d^{3}}$-jet flat, for every simple $K$-Lie
algebra $\mathfrak{g}$.
\end{enumerate}
\end{theorem}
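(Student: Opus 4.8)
The plan is to extract the jet-flatness bounds directly from the inductive machinery that goes into the proofs of Theorems \ref{thm A} and \ref{Thm B}, rather than invoking those theorems as black boxes. The key observation is the elementary dimension count already flagged in the text: if $\varphi_{w}^{*t}$ is jet-flat for some $t$, then $\varphi_{w}$ is $\frac{1}{t}$-jet flat. Indeed, $J_m(\varphi_w^{*t}) = J_m(\varphi_w)^{*t}$ as a map on jet schemes (jet functors commute with products and with the group operation on $\mathfrak{g}$ or $M_n$), so for a point $y$ in the target one has $\dim J_m(\varphi_w^{*t})^{-1}(y) \geq t\cdot\big(\text{generic fiber dim of }J_m(\varphi_w)\big) - (t-1)\dim J_m(Y)$ by concatenating preimages; jet-flatness of the convolution forces the left side to equal $t\dim J_m(X) - \dim J_m(Y)$, and since $\dim J_m(X) = (m+1)\dim X$ and $\dim J_m(Y) = (m+1)\dim Y$, rearranging gives $\dim J_m(\varphi_w)^{-1}(y)\le (m+1)(\dim X - \tfrac1t\dim Y)$, i.e. $\frac1t$-jet flatness. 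So it suffices to produce an explicit $t = t(d)$ for which $\varphi_w^{*t}$ is jet-flat, and then read off the constant.

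The second step is to locate, inside the proof of Theorem \ref{thm A}(1) and Theorem \ref{Thm B}, the precise number of convolutions after which the map is shown to be flat, and to check that the same argument in fact yields \emph{jet}-flatness. This is plausible because (FRS) morphisms are jet-flat (Corollary \ref{cor: singularity properties through dim of jets}), and the flatness half of Theorems \ref{thm A}--\ref{Thm B} is itself typically proved via a dimension estimate on fibers that is uniform enough to survive base change to jet schemes — the jet scheme of a fiber of $\varphi_w$ is a fiber of $J_m(\varphi_w)$, and the bound on $\dim J_m$ of fibers of word maps (promised in the abstract and presumably established en route) is exactly what jet-flatness of a convolution power needs. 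Concretely, I expect the proof of Theorem \ref{thm A} to establish flatness of $\varphi_{w_1}*\cdots*\varphi_{w_m}$ once $m \geq C d^3$, and a closer bookkeeping of that same step — using the $\dim J_m$-of-fibers bound rather than just $\dim$ of fibers — to give jet-flatness of $\varphi_w^{*m}$ for $m$ of the same order. Taking $t = 25 d^3$ in the associative case and $t = 2\cdot 10^5 d^3$ in the Lie case, the dimension count of the first paragraph then yields $\frac{1}{25 d^3}$- and $\frac{1}{2\cdot 10^5 d^3}$-jet flatness respectively. One must also handle the degenerate cases where $\varphi_{w_i}$ is trivial (the statement is vacuous or the relevant Lie algebra is excluded) and confirm the reduction to \emph{simple} $\mathfrak{g}$ and to $M_n$ with $\varphi_{w_1}$ generating, exactly as in Theorems \ref{thm A} and \ref{Thm B}.

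The main obstacle is verifying that the flatness argument of Theorems \ref{thm A} and \ref{Thm B} genuinely upgrades to jet-flatness with no loss in the constant beyond what is claimed — in other words, that the fiber-dimension estimates used there are really estimates on $\dim J_m$ of the fibers, uniformly in $m$, and not merely on the dimensions of the fibers themselves. If the proof of Theorem \ref{thm A} already passes through jet schemes (as the abstract's mention of bounding $\dim J_m$ of fibers of word maps suggests), this is immediate; otherwise one needs the principle that a bound of the form $\dim X_{y,\varphi} \le \dim X - \delta \dim Y$ obtained from a suitably robust geometric input (e.g. a stratification with controlled fiber dimensions, or a resolution-type estimate) propagates to $\dim J_m(X_{y,\varphi}) \le (m+1)(\dim X - \delta\dim Y)$. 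Given Corollary \ref{cor: singularity properties through dim of jets} this should be in hand. The remaining work is purely arithmetic: tracking the constant $C$ through the chain of reductions (symbol, simple factors, the induction on $d$) to confirm it is at most $25$ in the associative case and at most $2\cdot 10^5$ in the Lie case.
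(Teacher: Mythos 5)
There is a genuine gap, and it is exactly the one you flag at the end of your own write-up: the flatness argument of Theorems~\ref{thm A}--\ref{Thm B} does \emph{not} upgrade to jet-flatness at the same number of convolutions. The paper's polyhypergraph machinery establishes flatness of $\varphi_w^{*m}$ for $m\sim d^3$, but jet-flatness of $\varphi_w^{*m}$ only for $m\sim d^4$ (via the (FRS) property). Indeed, if $\varphi_w^{*Cd^3}$ were jet-flat, Corollary~\ref{cor:jet-flat implies FTS after two convolusions} would immediately give (FRS) after $Cd^3+1$ convolutions; but the bottleneck pieces $\Gamma_0$ and $\mathcal{G}_{\mathrm{small}}$ in Sections~\ref{subsec:The-case-of gamma zero} and~\ref{subsec:Proof-for-the hypergraph G+- and Gsmall} genuinely require $\sim d^4$ convolutions to become (FRS). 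So your plan --- apply Lemma~\ref{lem:convolution gives bounds on epsilon flatness} to the full map after $t(d)$ self-convolutions --- would at best give a $\frac{1}{Cd^4}$-jet-flatness bound, not the stated $\frac{1}{25d^3}$ and $\frac{1}{2\cdot 10^5 d^3}$.

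The paper's proof gets around this by a different idea you do not mention: it never establishes jet-flatness of a self-convolution of the \emph{full} word map. Instead, for $\mathrm{rk}(\mathfrak{g})\geq 50d$, it composes $\psi_w$ with the projection $\pi_+$ onto the root subspaces $\mathfrak{g}_\beta$ with $\beta$ positive of length $>4d$. That projected target sees only the $\Gamma_+$-part of the polyhypergraph, for which the machinery of Subsection~\ref{subsec:Proof-for-the hypergraph G+- and Gsmall} yields (FRS) --- hence jet-flatness --- already after $3\cdot 10^4 d^3$ convolutions. Lemma~\ref{lem:convolution gives bounds on epsilon flatness} then gives $\frac{1}{3\cdot 10^4 d^3}$-jet-flatness of $\pi_+\circ\psi_w$. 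Since the fiber $\psi_w^{-1}(Y)$ is cut out by a larger ideal than $(\pi_+\circ\psi_w)^{-1}(\pi_+(Y))$, Fact~\ref{fact:properties of log cannonical threshold}(1) (monotonicity of lct) transfers the lower bound on the log canonical threshold back to the fibers of the full map; and because $N>\frac14\dim\mathfrak{g}$, the loss is only a bounded constant, giving $\frac{1}{2\cdot 10^5 d^3}$. The low-rank case $\mathrm{rk}(\mathfrak{g})<50d$ is handled separately and directly, using Fact~\ref{fact:properties of log cannonical threshold}(2) ($\mathrm{lct}\geq 1/\mathrm{ord}$) together with $\dim\mathfrak{g}<10^4 d^2$. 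Your write-up omits both the projection/lct-monotonicity step (which is where the $d^3$ actually comes from) and the low-rank reduction, so the proposal as stated does not reach the claimed constants.
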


We denote by $\ell(w)$ the \textsl{length} of a word $w\in F_{r}$
(e.g.~$xyx^{-1}y^{-1}$ has length $4$). Theorem \ref{thm D} can
be used to give lower bounds on the log canonical threshold of the
fibers of group word maps.

\begin{theorem}[Theorem \ref{thm:jet flatness of word maps}]\label{thm E}
For any $w\in F_{r}$, the map $\varphi_{w}:\mathrm{SL}_{n}^{r}\rightarrow\mathrm{SL}_{n}$
is $\frac{1}{10^{17}\ell(w)^{9}}$-jet flat. In particular, $\mathrm{lct}(\mathrm{SL}_{n}^{r},\varphi_{w}^{-1}(g))\geq\frac{\mathrm{dim}\mathrm{SL}_{n}}{10^{17}\ell(w)^{9}}$
for every $g\in\mathrm{SL}_{n}$. \end{theorem}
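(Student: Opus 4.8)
The plan is to reduce the group word map $\varphi_w:\mathrm{SL}_n^r\to\mathrm{SL}_n$ to its symbol, which by Proposition \ref{prop:symbol of a word map is a Lie algebra word map} is a Lie algebra word map $\widetilde{\varphi}_w:\mathfrak{sl}_n^r\to\mathfrak{sl}_n$, and then invoke Theorem \ref{thm D}(2) for $\mathfrak{g}=\mathfrak{sl}_n$. First I would control $\deg(\widetilde{\varphi}_w)$ in terms of $\ell(w)$: writing $w=x_{i_1}^{n_1}\cdots x_{i_l}^{n_l}\in F_r$, one has $\sum|n_j|\le \ell(w)$, and the Baker--Campbell--Hausdorff expansion of $\log\big(\varphi_w(\exp tX_1,\ldots,\exp tX_r)\big)$ has lowest-order nonvanishing term given by an iterated-bracket polynomial whose degree is bounded by $\ell(w)$ (the total number of letters counted with the exponents). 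So $d:=\deg(\widetilde{\varphi}_w)\le \ell(w)$.

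Next, $\varepsilon$-jet flatness descends from the symbol: the key input is that jet-flatness is governed by the log canonical threshold of the fibers (via \cite{Mus02}, as recalled after the definition of $\varepsilon$-jet flat), and the symbol is a flat degeneration of $\varphi_w$ near $(e,\ldots,e)$, so by semicontinuity of the log canonical threshold under deformation (the group-theoretic analogue of Corollary \ref{cor:reduction to degeneration}, or a direct jet-scheme comparison) one gets $\mathrm{lct}(\mathrm{SL}_n^r,\varphi_w^{-1}(g))\ge \mathrm{lct}(\mathfrak{sl}_n^r,\widetilde{\varphi}_w^{-1}(0))$ for all $g$ in the image near $e$; one then spreads this out over all of $\mathrm{SL}_n$ using that $\mathrm{SL}_n$ acts transitively by translations and the word map is equivariant enough (or one covers $\mathrm{SL}_n$ by translates, using that $\varphi_w$ is dominant by Borel's theorem \cite{Bor83}, so a generic fiber has the same lct as the fiber over $e$ up to the boundary). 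Applying Theorem \ref{thm D}(2) to $\widetilde{\varphi}_w$ gives that $\widetilde{\varphi}_w$ is $\frac{1}{2\cdot 10^5 d^3}$-jet flat, hence $\varphi_w$ is $\frac{1}{2\cdot 10^5 \ell(w)^3}$-jet flat, which is already better than the claimed bound — so either the extra powers of $\ell(w)$ come from a cruder but more robust degree estimate for the symbol, or from the passage through $\mathrm{SL}_n$ rather than a fixed simple factor; in any case I would absorb the slack into the constant $10^{17}$ and the exponent $9$. The final clause, $\mathrm{lct}(\mathrm{SL}_n^r,\varphi_w^{-1}(g))\ge \frac{\dim\mathrm{SL}_n}{10^{17}\ell(w)^9}$, is then immediate from the equivalence between $\varepsilon$-jet flatness of $\varphi$ and the bound $\mathrm{lct}(X,X_{\varphi(x),\varphi})\ge\varepsilon\dim Y$ recorded right after the definition.

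The main obstacle I anticipate is the reduction from the group word map to its symbol at the level of jet schemes, uniformly over $n$ and over all $g\in\mathrm{SL}_n$ rather than just at $g=e$. Near $(e,\ldots,e)$ the degeneration to the symbol is clean, but to get a statement for every fiber $\varphi_w^{-1}(g)$ I need that the worst singularities of fibers of word maps on $\mathrm{SL}_n$ occur (up to the constant) at the identity fiber — this should follow because any $g$ in the image is, after conjugation and the dominance of shorter pieces of $w$, reachable in a way that only improves the singularities, but making this precise uniformly in $n$ requires care (e.g. using a Bertini-type argument or the fact that the non-(FRS) locus is a proper closed subvariety whose structure is controlled by the symbol). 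A secondary technical point is the degree bound $\deg(\widetilde{\varphi}_w)\le \ell(w)$: one must check the lowest-order BCH term is genuinely nonzero and its degree is at most the word length, for which the relevant statement is again Proposition \ref{prop:symbol of a word map is a Lie algebra word map}.
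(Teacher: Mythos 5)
Your approach has a genuine gap at exactly the point you flag as "the main obstacle," and unfortunately that obstacle is not surmountable by the route you sketch. The symbol $\widetilde{\varphi}_w = D^l_{(e,\ldots,e)}\varphi_w$ is obtained by degenerating $\varphi_w$ at the single point $(e,\ldots,e)$, so Corollary \ref{cor:(cf.---Linearization)}/\ref{cor:reduction to degeneration} only transfers a singularity property \emph{at that point}: it lets you control the germ of the fiber $\varphi_w^{-1}(e)$ near the identity tuple, and says nothing about the same fiber away from $(e,\ldots,e)$, let alone about $\varphi_w^{-1}(g)$ for $g\neq e$. Your hope that "the worst singularities occur (up to a constant) at the identity fiber" is false in general and is explicitly refuted by the paper's own Example \ref{exa:commutator and power word}: for $w=x^m$ the symbol is linear (so $\varphi_w$ is smooth at $(e,\ldots,e)$), yet $\varphi_w$ is not even flat, and $\varphi_w^{*(m-1)}$ is still not flat — the bad locus of the fiber $\varphi_w^{-1}(e)$ lies far from the identity tuple, invisible to the tangent cone at $(e,\ldots,e)$. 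The suggestion to "spread this out over all of $\mathrm{SL}_n$ using translations" also does not work: word maps are conjugation-equivariant, not translation-equivariant, and distinct fibers $\varphi_w^{-1}(g)$ are genuinely non-isomorphic; nor does comparing to the generic fiber help, since the generic fiber is smooth and the whole difficulty is the special fibers.

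The paper's actual argument goes through matrix word maps (Theorem \ref{thm D}(1)), not Lie algebra word maps, precisely to obtain a global bound. The key input is Lemma \ref{lem:large generating unipotent subgroups}: $\mathrm{SL}_{2n}$ is boundedly generated (with $13$ factors) by the two abelian unipotent blocks $\mathrm{Mat}_n^{\pm}$, and inversion on these blocks is linear. Pre-composing $\varphi_w$ with the multiplication map from these blocks yields $\Phi_w:\mathbb{A}^N\to\mathrm{SL}_{2n}$ whose $2\times 2$ block entries $P_{ij}$ are matrix word maps of degree $O(\ell(w))$ (Proposition \ref{prop:Proposition 7.13}). Since $\Phi_w^{-1}(g)\subseteq P_{12}^{-1}(g_{12})$ for every $g$, Theorem \ref{thm D}(1) applied to $P_{12}$ gives a uniform lower bound on $\mathrm{lct}$ for \emph{every} fiber, and Lemma \ref{lem:reduction to matrix word maps} transfers $\varepsilon$-jet flatness from $\Phi_w$ back to $\varphi_w$. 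A separate elementary estimate handles small $n$, and a comparison of $\mathrm{SL}_{2n-1}$ inside $\mathrm{SL}_{2n}$ handles odd sizes. So the exponent $9$ and the constant $10^{17}$ arise not from slack in a degree bound for the BCH symbol, but from composing the degree-$26\ell(w)$ matrix words with Theorem \ref{thm D}(1) and passing between $\mathrm{SL}_{2n-1}$ and $\mathrm{SL}_{2n}$. If you want to use Theorem \ref{thm D}(2) and the symbol, that route gives Theorem \ref{thm C} (the result at $(e,\ldots,e)$ only), which is a genuinely weaker, local statement.
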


Theorem \ref{thm A} and Theorem \ref{thm D} provide an answer to
Question \ref{que:questions for algebraic waring problem} for algebra
and Lie algebra words. Theorem \ref{thm E} provides an answer to
Part (1) of Question \ref{que:questions for algebraic waring problem}
for $\underline{G}=\mathrm{SL}_{n}$. Probabilistic applications of
the above results are discussed in Subsection \ref{subsec:Algebraic-interpretation-of}.

\subsubsection{\label{subsec:The-commutator-word}The commutator word map}

Let $w_{0}=xyx^{-1}y^{-1}\in F_{r}$ and $w=[X,Y]\in\mathcal{L}_{r}$
be the commutator word and Lie algebra word. In \cite{AA16}, it was
shown that $\varphi_{w_{0}}^{*21}$ is (FRS) for all semisimple algebraic
groups $\underline{G}$, and that $\varphi_{w}^{*21}$ is (FRS) for
all classical Lie algebras (see \cite[Theorem VIII]{AA16}). These
bounds were improved to $2$ for $\mathrm{SL}_{n}$ and $\frak{sl}_{n}$
in \cite{Bud}, and to $11$ in the general case in \cite{Kap}. Using
a criterion of Musta\c{t}\u{a} for rational singularities \cite[Theorem 0.1]{Mus01},
and the methods developed in the course of the proof of Theorem \ref{thm A},
we improve these bounds and significantly simplify the proof of \cite{AA16}:

\begin{theorem}[Theorems \ref{thm:Commutator is (FRS) after 4 convolutions} and \ref{Thm:commutator is flat}]\label{thm F:commutator map is (FRS)}Let
$\underline{G}$ be a semisimple algebraic $K$-group, with Lie algebra
$\mathfrak{g}$. 
\begin{enumerate}
\item $\varphi_{w_{0}}^{*2}:\underline{G}^{4}\rightarrow\underline{G}$
and $\varphi_{w}^{*2}:\mathfrak{g}^{4}\rightarrow\mathfrak{g}$ are
flat. 
\item $\varphi_{w_{0}}^{*4}:\underline{G}^{8}\rightarrow\underline{G}$
and $\varphi_{w}^{*4}:\mathfrak{g}^{8}\rightarrow\mathfrak{g}$ are
(FRS).
\end{enumerate}
\end{theorem}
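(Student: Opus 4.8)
The plan is to reduce everything to a statement about the Lie algebra commutator map $\varphi_{w}:\mathfrak{g}^{2}\to\mathfrak{g}$ and its convolution powers, and then to run a jet-scheme computation through Musta\c{t}\u{a}'s criterion. First I would observe that, by Corollary \ref{cor:reduction to degeneration} and Proposition \ref{prop:symbol of a word map is a Lie algebra word map}, the symbol of the commutator group word $w_{0}=xyx^{-1}y^{-1}$ is (up to higher order terms) exactly the Lie bracket $[X,Y]$, so that if $\varphi_{w}^{*t}:\mathfrak{g}^{2t}\to\mathfrak{g}$ is flat (resp.\ (FRS)) then $\varphi_{w_{0}}^{*t}$ is flat (resp.\ (FRS)) at least at $(e,\dots,e)$; combined with the homogeneity of $\varphi_{w}$ under the $\mathbb{G}_{m}$-action scaling all inputs, (FRS)-ness at the central fiber $0\in\mathfrak{g}$ propagates along the whole cone, and then translation/conjugation arguments (as in \cite{AA16}) upgrade this to (FRS) everywhere on $\underline{G}$. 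So it suffices to prove the two numbered claims for $\varphi_{w}$ on every simple $\mathfrak{g}$, and then assemble the semisimple case from the simple factors since convolution and the (FRS) property are compatible with products.

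For part (1), flatness of $\varphi_{w}^{*2}:\mathfrak{g}^{4}\to\mathfrak{g}$, I would use the dimension criterion for flatness onto a smooth variety: a morphism from a Cohen--Macaulay (here smooth) variety to a smooth variety is flat iff all fibers have the expected dimension $\dim\mathfrak{g}^{4}-\dim\mathfrak{g}=3\dim\mathfrak{g}$. Equivalently, writing $N(\varphi_{w})$ for the maximal fiber dimension excess of the single commutator map, one needs $N(\varphi_{w}^{*2})=0$, and convolution adds fiber dimensions in the controlled way recorded in the general machinery of Sections \ref{sec:Singulrities-of-word}--\ref{sec:Proof-of-Theorems- Lie algebra}; the key input is the bound on the dimensions of the fibers of $\varphi_{w}$ itself, coming from the structure of the commutator map (the generic fiber is the regular centralizer bundle, and the bad locus is where centralizers jump), which after one convolution already brings the excess below $\dim\mathfrak{g}$ and after two convolutions to $0$. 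This is essentially the $\varepsilon$-flatness package of Theorem \ref{thm D} specialized to $d=2$, so I expect it to be short.

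For part (2), the (FRS) property of $\varphi_{w}^{*4}$, the main tool is Musta\c{t}\u{a}'s criterion \cite[Theorem 0.1]{Mus01}: for a local complete intersection $Y\subset X$ with $X$ smooth, $Y$ has rational singularities iff the jet schemes $J_{m}(Y)$ are irreducible (equivalently, of the expected dimension $(m+1)\dim Y$) for all $m$. Applied fiberwise, $\varphi_{w}^{*4}$ is (FRS) iff it is flat (already handled, since $4\geq 2$) and each fiber, a complete intersection in $\mathfrak{g}^{8}$, has irreducible jet schemes. By the jet-scheme/convolution formalism of Subsection \ref{def:basic definition jet schemes} and Corollary \ref{cor: singularity properties through dim of jets}, $J_{m}$ commutes with convolution up to the appropriate products, so the dimension of $J_{m}$ of a fiber of $\varphi_{w}^{*4}$ is controlled by the dimensions of $J_{m}$ of the fibers of $\varphi_{w}$; the point is to show that after four self-convolutions the ``jet excess'' $\sup_{m}\bigl(\dim J_{m}(\text{fiber})-(m+1)\dim(\text{fiber})\bigr)$ drops to $0$. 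This reduces to a single estimate: for the commutator map $\varphi_{w}:\mathfrak{g}^{2}\to\mathfrak{g}$, bound $\dim J_{m}\bigl(\varphi_{w}^{-1}(x)\bigr)$ uniformly in $m$ and in the simple Lie algebra $\mathfrak{g}$, using that the jets of the commutator equation $[A,B]=x$ are governed by centralizer dimensions in $\mathfrak{g}[t]/t^{m+1}$, which are in turn controlled by the rank and the minimal centralizer dimension of a nonzero element (bounded below by something linear in $\dim\mathfrak{g}$).

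The hard part will be precisely the uniform jet-dimension estimate for the commutator map over all simple $\mathfrak{g}$ and all jet levels $m$: one must show that the locus of pairs $(A,B)$ with large centralizer (over the truncated polynomial ring) is small enough that, after summing the contributions through four convolutions, no component of any jet scheme of a fiber exceeds the expected dimension. This is where the arguments of Sections \ref{sec:Lie-algebra-word} and \ref{sec:Proof-of-Theorems- Lie algebra} (stratifying $\mathfrak{g}\times\mathfrak{g}$ by centralizer type and estimating the dimension of each stratum, then propagating through $J_m$) do the real work; the number $4$ is what makes the four-fold sum of excesses collapse to zero, whereas $3$ would leave a positive excess for small-rank $\mathfrak{g}$. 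Once that estimate is in hand, Musta\c{t}\u{a}'s criterion, the product compatibility, and the symbol/degeneration reduction assemble the theorem, and the passage back to $\underline{G}$ via Theorem \ref{thm C}-style arguments (plus the explicit verification that the commutator map on $\underline{G}$ is, away from a small set, a fiber bundle with smooth fibers so that (FRS) at the identity spreads out) finishes the group statement.
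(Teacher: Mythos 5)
Your high-level skeleton — reduce to the Lie algebra commutator via the symbol, invoke the $[\text{AA16}]$-style observation that the commutator's worst behavior is concentrated at $(e,\dots,e)$, prove the Lie algebra claim, and run a jet-scheme/Musta\c{t}\u{a}-style argument — matches the structure of the paper's proof. But two of the central steps are underpowered or missing.

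For part (1), you propose to bound fiber dimensions of $\psi_w=[\cdot,\cdot]$ and then convolve, and you say this ``is essentially the $\varepsilon$-flatness package of Theorem~\ref{thm D} specialized to $d=2$.'' That reference cannot carry the load: Theorem~\ref{thm D}(2) only gives $\varepsilon$-jet flatness with $\varepsilon\sim\frac{1}{2\cdot10^5\cdot 8}$, so two self-convolutions of a $\frac{1}{t}$-flat map for $t$ that large gets you nowhere near flatness. You need a sharp, commutator-specific estimate. The centralizer-stratification intuition you describe (regular centralizer bundle, jumping locus) is the right picture, and the paper does encode it in Lemmas~\ref{lem:6.3} and~\ref{lem:6.4}, but the paper assembles those estimates through a Fourier-analytic / Lang--Weil argument: it computes $\mathcal{F}(\tau_w)(Z)=\frac{|\mathrm{Cent}_{\mathfrak{g}(\mathbb{F}_p)}(Z)|}{|\mathfrak{g}(\mathbb{F}_p)|}$, expresses $\tau_w^{*2}(0)$ as $\frac{|\Upsilon(\mathbb{F}_p)|}{|\mathfrak{g}(\mathbb{F}_p)|^3}$ where $\Upsilon=\{(X,Y,Z):[X,Z]=[Y,Z]=0\}$, bounds $\dim\Upsilon=2\dim\mathfrak{g}$ with a unique top component, and invokes Theorem~\ref{thm: flatness and counting points}. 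This simultaneously delivers flatness \emph{and} geometric irreducibility of fibers, and the irreducibility is needed later. A purely dimension-theoretic argument might be salvageable, but as stated your reduction to Theorem~\ref{thm D} would fail.

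For part (2), the genuinely missing idea is the degeneration of the jet morphism. The paper does \emph{not} try to bound $\dim J_m(\psi_w^{-1}(x))$ directly by analyzing centralizers over $\mathfrak{g}[t]/t^{m+1}$; that would be hard to make uniform in $m$ and in $\mathfrak{g}$. Instead it applies the averaging, monomialization, and level-separation weights of Subsection~\ref{subsec:Degenerations-of-jets of word maps} to degenerate $J_m(\psi_w)$ (after one self-convolution) to the map
\[
\phi_m(Y_1,Y_2,\dots,Y_{2m-1},Y_{2m})=\bigl([Y_1,Y_2],[Y_3,Y_4],\dots,[Y_{2m-1},Y_{2m}]\bigr),
\]
a tower of commutator maps in disjoint variables. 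Since $\phi_m^{-1}(0)$ is a product of fibers of $\psi_w$, the flatness/reducedness/irreducibility of $\phi_m^{*2}$ follows at once from Theorem~\ref{Thm:commutator is flat} applied coordinatewise, with no dependence on $m$. Then Proposition~\ref{Prop:properties preserved under deformations} pushes these properties back to $J_m(\psi_w^{*4})$ at $(0,\dots,0)$, and Corollary~\ref{cor: singularity properties through dim of jets} (the relative Musta\c{t}\u{a} criterion) gives (FRS). Without this degeneration step, your proposal has no mechanism to control the jets uniformly. Also, your explanation of why $4$ (not $3$) suffices — ``the four-fold sum of excesses collapses to zero'' — misidentifies the source of the factor: in the paper, $4=2\times 2$ with one factor of $2$ coming from the level-separation coloring (which doubles the variables to split $J_m(\psi_w)$ into disjoint layers) and the other factor of $2$ coming from the base flatness/GI/reducedness of $\psi_w^{*2}$; it is not a dimension-excess count over strata.
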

\begin{rem}
It is conjectured that $\varphi_{w_{0}}^{*2}$ and $\varphi_{w}^{*2}$
are (FRS) for any semisimple algebraic group $\underline{G}$. For
$\underline{G}$ not of type $A$, Theorem \ref{thm F:commutator map is (FRS)}
gives the best known bounds. 
\end{rem}

The fibers of the commutator map $\varphi_{w_{0}}$ are closely related,
via a classical theorem of Frobenius, to the representation growth
of compact $p$-adic groups and arithmetic groups (see \cite{AA16,AA18}
and \cite[Theorems 1.3-1.5]{Bud}). Thus, Theorem \ref{thm F:commutator map is (FRS)}
gives the following applications. 
\begin{cor}[{cf.~\cite[Theorem A]{AA16}, \cite[Theorem B]{AA18}}]
\label{cor: representation growth}Let $\underline{G}$ be a semisimple,
simply connected algebraic $\rats$-group, and let $G$ be either
\begin{enumerate}
\item a compact open subgroup of $\underline{G}(\mathbb{Q}_{p})$ for some
prime $p$; or 
\item $\underline{G}(\mathbb{Z})$, provided that $\underline{G}$ has $\rats$-rank$\geq2$. 
\end{enumerate}
Then there exists a constant $C$ such that, for any $N\in\nats$,
\[
\#\{N\text{-dimensional irreducible \ensuremath{\mathbb{C}}-representations of }G\}<C\cdot N^{7}.
\]
\end{cor}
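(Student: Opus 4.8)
The plan is to deduce Corollary~\ref{cor: representation growth} from Theorem~\ref{thm F:commutator map is (FRS)}(2) by the Frobenius formula connecting representation counts to fibers of the commutator word map, following the strategy of \cite{AA16,AA18} but feeding in the improved bound $\varphi_{w_0}^{*4}$ is (FRS) rather than $\varphi_{w_0}^{*21}$. First I would recall the representation zeta function $\zeta_G(s)=\sum_{\rho}(\dim\rho)^{-s}$ of a group $G$, whose \emph{abscissa of convergence} $\alpha(G)$ governs the polynomial representation growth: the stated bound $\#\{N\text{-dim.\ irreps}\}<C\cdot N^{7}$ is equivalent to the assertion $\alpha(G)\le 6$ together with convergence on the line $\mathrm{Re}(s)=6$ (partial summation upgrades $\alpha(G)<7$ to the clean power bound; one must be slightly careful with the endpoint, but a bound of the form $\alpha(G)\le 6$ suffices to get exponent $7$ with a constant). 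So the real task is to show $\alpha(G)\le 6$.

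The key input is the classical observation (Frobenius, reformulated in \cite[\S2]{AA16}) that for a finite group $H$, $\sum_{\rho}(\dim\rho)^{-2t+2}=\tfrac{|H|^{?}}{\cdots}\cdot\#\{\text{solutions of }[x_1,y_1]\cdots[x_t,y_t]=e\text{ in }H^{2t}\}$ — i.e.\ the number of ways of writing $e$ as a product of $t$ commutators, which is exactly $|(\varphi_{w_0}^{*t})^{-1}(e)|$ up to normalization. For the $p$-adic case, one applies this with $H=\underline{G}(\ints/p^{k}\ints)$ (a compact open subgroup is, up to finite index and a standard reduction to the Chevalley group case using that $\alpha$ is a commensurability invariant, controlled by these congruence quotients). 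Now the bridge to singularities: by Theorem~\ref{thm: rational singularities and point count}, since $\varphi_{w_0}^{*4}$ is (FRS) over $\underline{G}$ (hence over $\ints$ for almost all $p$, with a uniform bound accounting for the finitely many bad primes), the fiber counts $|(\varphi_{w_0}^{*4})^{-1}(g)(\ints/p^k\ints)|$ are $\le C\cdot |\underline{G}(\ints/p^k\ints)|^{8\dim\underline{G}}\cdot|\underline{G}(\ints/p^k\ints)|^{-\dim\underline{G}}$ with $C$ independent of $p,k$; equivalently $\tau_{w_0,p,k}^{*4}(g)\le C|\underline{G}(\ints/p^k\ints)|^{-1}$ uniformly. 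Plugging $t=4$ (so $2t-2=6$) into the Frobenius identity converts this uniform bound on the $8$-fold commutator fiber into the statement that $\sum_{\rho}(\dim\rho)^{-6}$ over $\underline{G}(\ints/p^k\ints)$ is bounded uniformly in $k$; letting $k\to\infty$ and assembling over all congruence levels (using that every continuous irrep of the compact group factors through some finite level) gives $\zeta_G(6)<\infty$, i.e.\ $\alpha(G)\le 6$, whence the $N^{7}$ bound. The arithmetic case (2) reduces to the $p$-adic case by the work of \cite{AA18}: under $\rats$-rank $\ge 2$, the Congruence Subgroup Property holds, $\zeta_{\underline{G}(\ints)}(s)=\zeta_{\underline{G}(\ints)}^{\mathrm{arith}}(s)$ factors (Euler product) into the local $p$-adic zeta functions times an archimedean factor, and the uniform-in-$p$ bound above is exactly what is needed to control the Euler product near $s=6$; one also invokes the archimedean estimate (abscissa of the Archimedean factor is strictly less than the relevant value, from Larsen--Lubotzky type bounds), which is $p$-independent.

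The main obstacle is the uniformity of the constant $C$ across all primes $p$ and all levels $k$ simultaneously, and the passage from $\underline{G}$ to an arbitrary compact open subgroup / to $\underline{G}(\ints)$. The (FRS) property of $\varphi_{w_0}^{*4}$ is a priori a statement over $K=\complex$ (or a characteristic-$0$ field); spreading it out to a scheme over $\ints[1/M]$ and invoking Theorem~\ref{thm: rational singularities and point count} there is standard, but it only handles $p\nmid M$, so one needs to separately note that the finitely many excluded primes contribute finitely many Euler factors, each individually finite at $s=6$ because $\varphi_{w_0}^{*t}$ is eventually (FRS) anyway (or because each local zeta function is a rational function in $p^{-s}$ with controlled poles, as in \cite{AA16}). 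The reduction to the simply connected Chevalley group over congruence quotients — handling the compact open subgroup case — is again routine given that $\alpha$ is invariant under commensurability, so I would cite \cite{AA16,AA18} for it rather than reprove it. In short, the only genuinely new ingredient over \cite{AA16,AA18} is substituting "$4$" for "$21$" as the number of commutators after which the (FRS) property kicks in, which is precisely Theorem~\ref{thm F:commutator map is (FRS)}, and the improvement of the exponent from $22$-ish to $7$ is an immediate consequence since the exponent is $2t-2+1=2t-1$ with $t=4$, up to the precise bookkeeping of the endpoint in the Tauberian step.
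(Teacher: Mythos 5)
Your proposal is correct and follows essentially the same route as the paper, which derives the corollary from Theorem~\ref{thm F:commutator map is (FRS)} by invoking the Frobenius-formula machinery of \cite{AA16,AA18,Bud} with $t=4$ in place of $t=21$. One imprecision worth noting: the ``extra $1$'' taking the exponent from $2t-2=6$ to $7$ arises in the arithmetic case from the Euler product rather than from the Tauberian step --- Theorem~\ref{thm: rational singularities and point count} gives only $\zeta_{\underline{G}(\ints_p)}(6)=1+O(p^{-1/2})$, which is not summable over $p$, so one passes to $s=7$, where a lower bound $\gtrsim p$ on the minimal nontrivial dimension of irreducible representations of $\underline{G}(\mathbb{F}_p)$ improves the tail to $O(p^{-3/2})$.
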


We further have, 
\begin{cor}[{cf.~\cite[Theorem B]{AA16}}]
\label{cor for surface groups}Let $\underline{G}$ be a complex
semisimple algebraic group, and $\Sigma_{g}$ be a closed orientable
surface of genus $g\geq4$. Then the deformation variety 
\[
\mathrm{Hom}(\pi_{1}(\Sigma_{g}),\underline{G}(\complex))\simeq(\varphi_{w_{0}}^{*g})^{-1}(e)
\]
has rational singularities. 
\end{cor}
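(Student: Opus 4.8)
The plan is to deduce Corollary \ref{cor for surface groups} directly from part (2) of Theorem \ref{thm F:commutator map is (FRS)} together with the classical identification of the representation variety with a fiber of a self-convolution of the commutator map. First I would recall why $\mathrm{Hom}(\pi_1(\Sigma_g),\underline{G}(\complex))\simeq(\varphi_{w_0}^{*g})^{-1}(e)$: the fundamental group of a closed orientable surface of genus $g$ has the presentation $\langle a_1,b_1,\dots,a_g,b_g \mid \prod_{i=1}^g [a_i,b_i]=1\rangle$, so a homomorphism to $\underline{G}(\complex)$ is precisely a choice of $2g$ elements $(A_1,B_1,\dots,A_g,B_g)\in\underline{G}(\complex)^{2g}$ satisfying $\prod_i [A_i,B_i]=e$; since $\prod_i[A_i,B_i]$ is exactly the value of $\varphi_{w_0}^{*g}$ at that point, the representation variety is cut out as the scheme-theoretic fiber $(\varphi_{w_0}^{*g})^{-1}(e)$ inside the smooth variety $\underline{G}(\complex)^{2g}$.

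Next I would invoke Theorem \ref{thm F:commutator map is (FRS)}(2): $\varphi_{w_0}^{*4}:\underline{G}^8\to\underline{G}$ is (FRS). The point is that the (FRS) property is preserved under further convolution — convolving an (FRS) morphism $\varphi$ with an arbitrary morphism $\psi$ from a smooth variety (or at least with more copies of $\varphi_{w_0}$) keeps it (FRS); this should be one of the elementary stability properties of the (FRS) class established in Section \ref{sec:Singulrities-of-word} (it follows, e.g., from base change and the fact that a composition/product with a flat morphism with rational-singularity fibers again has such fibers). Concretely, $\varphi_{w_0}^{*g}=\varphi_{w_0}^{*4}*\varphi_{w_0}^{*(g-4)}$ for $g\ge 4$, and since $\varphi_{w_0}^{*4}$ is already (FRS) and $\underline{G}^{2(g-4)}$ is smooth, $\varphi_{w_0}^{*g}$ is (FRS) as well. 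By Definition \ref{def:(FRS)}(2), every fiber of an (FRS) morphism is reduced and has rational singularities; in particular the fiber over $e\in\underline{G}(\complex)$ has rational singularities. Transporting this through the isomorphism with $\mathrm{Hom}(\pi_1(\Sigma_g),\underline{G}(\complex))$ gives the claim.

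A couple of small points need care. One must make sure the scheme structure on $\mathrm{Hom}(\pi_1(\Sigma_g),\underline{G})$ coming from the functor of points matches the scheme-theoretic fiber $(\varphi_{w_0}^{*g})^{-1}(e)$, so that "the fiber has rational singularities" really is a statement about the representation variety and not merely its reduction; but since the (FRS) property guarantees the fiber is already reduced, any residual ambiguity disappears. One should also note that working over $\complex$ is harmless: the results of the paper are stated for an arbitrary field $K$ of characteristic zero, and rational singularities is a geometric notion stable under the relevant base field, so $K=\complex$ is covered.

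The main obstacle here is not in this deduction — which is essentially bookkeeping once Theorem \ref{thm F:commutator map is (FRS)} is in hand — but in the stability statement that an (FRS) morphism stays (FRS) after additional convolutions with smooth sources; I would cite the precise lemma from Section \ref{sec:Singulrities-of-word} rather than reprove it. Given that, the proof of Corollary \ref{cor for surface groups} is just: identify the representation variety with $(\varphi_{w_0}^{*g})^{-1}(e)$, write $\varphi_{w_0}^{*g}$ as a convolution of $\varphi_{w_0}^{*4}$ with the (smooth) remaining factors, apply Theorem \ref{thm F:commutator map is (FRS)}(2) and the stability of (FRS), and read off rational singularities of the fiber over $e$.
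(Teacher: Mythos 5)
Your proof is correct and follows the same route the paper intends: identify the representation variety with the scheme-theoretic fiber $(\varphi_{w_0}^{*g})^{-1}(e)$ via the standard surface group presentation, write $\varphi_{w_0}^{*g}=\varphi_{w_0}^{*4}*\varphi_{w_0}^{*(g-4)}$, apply Theorem \ref{thm F:commutator map is (FRS)}(2), and use the stability of (FRS) under convolution (which is Proposition \ref{prop: properties preserved under convolution} with $S=$ (FRS)). Your side remark about the scheme structure is also correctly resolved by the reducedness built into the (FRS) property.
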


\subsubsection{\label{subsec:Algebraic-interpretation-of}Algebraic interpretation
of uniform mixing times and the $p$-adic probabilistic Waring type
problem }

Let $\mathcal{G}$ denote the collection of all algebraic group schemes
$\underline{G}$, with $\underline{G}_{\rats}$ semisimple and simply
connected. For a prime power $q=p^{r}$, we denote the unique unramified
extension of $\Qp$ of degree $r$ by $\mathbb{Q}_{q}$, its ring
of integers by $\mathbb{Z}_{q}$, and the maximal ideal of $\mathbb{Z}_{q}$
by $\mathfrak{m}_{q}$.

We study the family $\{\underline{G}(\mathbb{Z}_{q})\}_{\underline{G}\in\mathcal{\mathcal{G}},q}$
through the quotients $\{\underline{G}(\mathbb{Z}_{q}/\mathfrak{m}_{q}^{k})\}_{\underline{G}\in\mathcal{\mathcal{G}},q,k}$.
Let $w\in F_{r}$ and let $\tau_{w,\underline{G},q,k}:=(\varphi_{w})_{*}(\pi_{\underline{G}^{r}(\mathbb{Z}_{q}/\mathfrak{m}_{q}^{k})})$
be the pushforward of the uniform probability measure on $\underline{G}^{r}(\mathbb{Z}_{q}/\mathfrak{m}_{q}^{k})$
with respect to $\varphi_{w}:\underline{G}^{r}\rightarrow\underline{G}$. 
\begin{defn}
For a real function $f$ on a finite group $G$ and $a\in\reals_{\geq1}$,
set 
\[
\left\Vert f\right\Vert _{a}:=(\left|G\right|^{a-1}\sum\limits _{g\in G}\left|f(g)\right|^{a})^{1/a}\text{ ~~and~~ }\left\Vert f\right\Vert _{\infty}=\left|G\right|\cdot\max\limits _{g\in G}\left|f(g)\right|.
\]
\end{defn}

For a probability measure $\mu$ on a finite group $G$, set $S_{\mu}:=\{g\in G:\mu(g)>0\}$.
If $e\in S_{\mu}$ and $\langle S_{\mu}\rangle=G$, then the random
walk induced by $\mu$ converges to the uniform probability measure
on $G$, i.e.~$\underset{t\rightarrow\infty}{\mathrm{lim}}\left\Vert \mu^{*t}-\pi_{G}\right\Vert _{a}=0$
for any $a\in[1,\infty]$ (e.g.~see \cite[Theorem 4.9]{LeP17}).
In particular, by \cite{Jai08}, this applies to each measure $\tau_{w,\underline{G},q,k}$. 
\begin{defn}
\label{def:mixing time}The minimal $t_{a}\in\nats$ such that $\left\Vert \mu^{*t_{a}}-\pi_{G}\right\Vert _{a}<\frac{1}{2}$
is called the \textsl{$L^{a}$-mixing time} of the random walk. At
this point, rapid convergence occurs; for any $m\in\nats$ we have
$\left\Vert \mu^{*t_{a}m}-\pi_{G}\right\Vert _{a}<2^{-m}$ (e.g.~\cite[Lemma 4.18]{LeP17}). 
\end{defn}

We would now like to define a uniform version of mixing time for families
of word maps. Given a set of primes $S$, denote by $\mathcal{P}_{S}$
the set of all prime powers $p^{k}\in\nats$ where $p\notin S$. 
\begin{defn}
~\label{def:uniform mixing tme-introduction}We say that a word $w\in F_{r}$ 
\begin{enumerate}
\item is\textit{ almost $\{\mathbb{F}_{q}\}$-unifor}\textsl{m in $L^{a}$}
if $\forall\underline{G}\in\mathcal{\mathcal{G}}$ there exists a
finite set $S$ of primes such that 
\[
\underset{\mathcal{P}_{S}\ni q\rightarrow\infty}{\lim}\left\Vert \tau_{w,\underline{G},q,1}-\pi_{\underline{G}(\mathbb{F}_{q})}\right\Vert _{a}=0.
\]
\item is\textit{ almost $\{\ints_{q}\}$-unifor}\textsl{m in $L^{a}$} if
$\forall\underline{G}\in\mathcal{\mathcal{G}}$ there exists a finite
set $S$ of primes such that 
\[
\underset{\mathcal{P}_{S}\ni q\rightarrow\infty}{\lim}\left(\underset{k\in\nats}{\sup}\left\Vert \tau_{w,\underline{G},q,k}-\pi_{\underline{G}(\mathbb{Z}_{q}/\mathfrak{m}_{q}^{k})}\right\Vert _{a}\right)=0.
\]
\end{enumerate}
Let $\mathcal{F}$ be $\{\ints_{q}\}$ or $\{\mathbb{F}_{q}\}$. We
say that $w$ has an \textit{almost} $\mathcal{F}$\textit{-uniform
$L^{a}$-mixing time }$t_{a,\mathcal{F}}$ if it is minimal such that
$w^{*t_{a,\mathcal{F}}}$ is almost $\mathcal{F}$-uniform in $L^{a}$. 
\end{defn}

In Section \ref{sec:The-(FRS)-property and counting points} we utilize
the Lang-Weil bounds to obtain a number theoretic characterization
of flatness (see Theorem \ref{thm: flatness and counting points}).
Using \cite[Theorem A]{AA18} and \cite{CGH}, the (FRS) property
of a $\rats$-morphism $\varphi:X\rightarrow Y$ between smooth $\rats$-varieties
can be interpreted by certain uniform bounds on the fibers of $\varphi:X(\mathbb{Z}_{q}/\mathfrak{m}_{q}^{k})\rightarrow Y(\mathbb{Z}_{q}/\mathfrak{m}_{q}^{k})$
(see further discussion in Subsection \ref{Number theoretic characterization of the (FRS)}
in the simpler case where $q=p$ is prime). This allows us to give
an algebraic interpretation of the notion of almost uniformity in
$L^{\infty}$.

\begin{theorem}[Corollary \ref{cor:dictionary for word maps}]\label{thm:G- dictionary for Linfty-introduction}Let
$w\in F_{r}$. 
\begin{enumerate}
\item $w$ is almost $\{\mathbb{F}_{q}\}$-uniform in $L^{\infty}$ if and
only if $(\varphi_{w})_{\rats}:\underline{G}_{\rats}^{r}\rightarrow\underline{G}_{\rats}$
is flat and has geometrically irreducible fibers for every $\underline{G}\in\mathcal{G}$.
 
\item $w$ is almost $\{\mathbb{Z}_{q}\}$-uniform in $L^{\infty}$ if and
only if $(\varphi_{w})_{\rats}:\underline{G}_{\rats}^{r}\rightarrow\underline{G}_{\rats}$
is (FRS) and has geometrically irreducible fibers for every $\underline{G}\in\mathcal{G}$. 
\end{enumerate}
\end{theorem}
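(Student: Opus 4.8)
\emph{Proof proposal.} It suffices to fix a single $\underline{G}\in\mathcal{G}$ and prove the two equivalences for that $\underline{G}$, since both ``$w$ is almost $\{\mathbb{F}_{q}\}$- (resp.\ $\{\mathbb{Z}_{q}\}$-) uniform in $L^{\infty}$'' (Definition \ref{def:uniform mixing tme-introduction}) and the right hand sides are conditions quantified over all $\underline{G}$. Write $R=\mathbb{Z}_{q}/\mathfrak{m}_{q}^{k}$ (so $R=\mathbb{F}_{q}$ when $k=1$), $G=\underline{G}(R)$, $d=\dim\underline{G}$, and for $g\in G$ set $V_{q,k}(g):=\lvert\varphi_{w}^{-1}(g)(R^{r})\rvert\,/\,\lvert G\rvert^{r-1}$. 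Unravelling the definition of the $L^{a}$-norm for $a=\infty$ one obtains
\[
\Vert\tau_{w,\underline{G},q,k}-\pi_{G}\Vert_{\infty}=\max_{g\in G}\lvert V_{q,k}(g)-1\rvert,\qquad\Vert\tau_{w,\underline{G},q,k}\Vert_{\infty}=\max_{g\in G}V_{q,k}(g).
\]
Hence $w$ is almost $\{\mathbb{F}_{q}\}$-uniform in $L^{\infty}$ iff $\max_{g}\lvert V_{q,1}(g)-1\rvert\to0$ as $\mathcal{P}_{S}\ni q\to\infty$ for a suitable finite $S=S(\underline{G})$, and $w$ is almost $\{\mathbb{Z}_{q}\}$-uniform in $L^{\infty}$ iff $\sup_{k}\max_{g}\lvert V_{q,k}(g)-1\rvert\to0$.

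For part (1) this is precisely the number-theoretic characterization of flatness obtained from the Lang--Weil bounds (Theorem \ref{thm: flatness and counting points}). For the ``if'' direction, flatness of $(\varphi_{w})_{\rats}$ forces every fiber to have dimension exactly $(r-1)d$, and with geometric irreducibility the (uniform) Lang--Weil estimates give $\lvert\varphi_{w}^{-1}(g)(\mathbb{F}_{q})\rvert=q^{(r-1)d}(1+O(q^{-1/2}))$ uniformly in $g\in\underline{G}(\mathbb{F}_{q})$, while $\lvert\underline{G}(\mathbb{F}_{q})\rvert^{r-1}=q^{(r-1)d}(1+O(q^{-1}))$ since $\underline{G}$ is smooth and geometrically irreducible; thus $V_{q,1}(g)\to1$ uniformly. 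For ``only if'', a fiber of dimension $>(r-1)d$, or one with two or more top-dimensional geometric components, would --- applying Lang--Weil to the (closed, defined over the base field) loci in $\underline{G}$ over which these phenomena occur --- produce $g\in\underline{G}(\mathbb{F}_{q})$ with $V_{q,1}(g)$ bounded away from $1$ for infinitely many $q\in\mathcal{P}_{S}$, contradicting uniformity. (Borel's theorem \cite{Bor83} guarantees $\varphi_{w}$ is dominant when $w$ is non-trivial, so the generic fiber has the expected dimension; the trivial-word case is immediate.)

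For part (2), note first that almost $\{\mathbb{Z}_{q}\}$-uniformity in $L^{\infty}$ specializes at $k=1$ to almost $\{\mathbb{F}_{q}\}$-uniformity, so by part (1) it implies $(\varphi_{w})_{\rats}$ is flat with geometrically irreducible fibers; moreover it gives $V_{q,k}(g)\le1+\max_{g}\lvert V_{q,k}(g)-1\rvert\le2$ for all $k$ and all sufficiently large $q\in\mathcal{P}_{S}$, and after enlarging $S$ to also exclude the finitely many small primes we get $\sup_{q\in\mathcal{P}_{S'},\,k,\,g}V_{q,k}(g)<\infty$. By \cite[Theorem A]{AA18} together with the uniform-in-$p$ estimates of \cite{CGH} (as used in Subsection \ref{Number theoretic characterization of the (FRS)}), this uniform boundedness of the normalized fiber volumes is equivalent to $(\varphi_{w})_{\rats}$ being (FRS), which yields one implication. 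Conversely, if $(\varphi_{w})_{\rats}$ is (FRS) with geometrically irreducible fibers, the same characterization bounds $\sup_{q,k,g}V_{q,k}(g)$; to upgrade this to $\max_{g}\lvert V_{q,k}(g)-1\rvert\to0$ uniformly in $k$ one stratifies each fiber $F_{g}=\varphi_{w}^{-1}(g)$ into its smooth locus --- a dense geometrically irreducible open, contributing $q^{k(r-1)d}(1+O(q^{-1/2}))$ to the point count by Lang--Weil and lifting of smooth points --- and its singular locus, which has codimension $\ge2$ in $F_{g}$ because the fibers of an (FRS) morphism from a smooth variety are normal; applying the \cite{AA18,CGH} volume bound to this lower-dimensional piece shows it contributes $O(q^{-1})$ to $V_{q,k}(g)$, uniformly in $k,g,q$, so $V_{q,k}(g)=1+O(q^{-1/2})$ uniformly.

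The main obstacle is this last step: passing from the \emph{boundedness} of the normalized fiber volumes that the (FRS) property directly provides to their \emph{uniform convergence to $1$}. This requires combining the geometric-irreducibility hypothesis with a quantitative control of the contribution of the singular loci of the fibers --- in effect a version, uniform in the residue field, of the statement that a geometrically irreducible normal variety with rational singularities has normalized point counts tending to $1$ --- which is exactly what the motivic, uniform-in-$p$ machinery of \cite{CGH} delivers; particular care is needed to ensure all estimates are uniform in the auxiliary parameter $q=p^{r}$ (unramified extensions), not merely in the primes $p$.
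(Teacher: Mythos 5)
Your proposal follows essentially the same skeleton as the paper's proof (which is Theorem~\ref{thm:dictionary for random walks Linfty} specialized to $\varphi=\varphi_{w}$). Part~(1), in both directions, matches the paper's argument via Theorem~\ref{thm: flatness and counting points} and a spreading-out/Chebotarev step to manufacture $g$'s with $V_{q,1}(g)$ bounded away from $1$ when the fibers are not geometrically irreducible. The small differences are in Part~(2).

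For the ``only if'' direction of Part~(2), you pass from the uniform boundedness $\sup_{q,k,g}V_{q,k}(g)<\infty$ to the (FRS) property by invoking \cite[Theorem~A]{AA18} together with the uniform-in-$p$ machinery of \cite{CGH}. The paper instead uses the \emph{analytic} criterion for (FRS) (Theorem~\ref{thm:analytic criterion of the (FRS) property}, together with \cite[Theorem~1.6]{Gla19} and \cite[Lemma~3.15]{GH19}): after fixing a single large residue characteristic $p$, the boundedness in $k$ of the normalized fiber counts over $\mathbb{Z}_{p^{r}}$ says that $(\varphi_{w})_{*}(\mu_{X(\mathbb{Z}_{p^{r}})})$ has bounded density, and the analytic criterion converts this local statement directly into the (FRS) property without invoking any family-in-$p$ result. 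Your route is not wrong, but it is heavier: it uses the harder half of the number-theoretic characterization, which the paper reserves for the converse.

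For the ``if'' direction of Part~(2), your stratification argument has a real gap, which you acknowledge. Splitting the $\mathbb{Z}_{q}/\mathfrak{m}_{q}^{k}$-point count of the fiber $F_{g}$ into lifts of smooth $\mathbb{F}_{q}$-points (handled by Hensel lifting and Lang--Weil) and lifts of singular $\mathbb{F}_{q}$-points, the codimension-$\geq 2$ bound on $F_{g}^{\mathrm{sing}}$ does \emph{not} by itself give that the latter contributes $O(q^{-1})$ to $V_{q,k}(g)$: the number of $\mathbb{Z}_{q}/\mathfrak{m}_{q}^{k}$-points lifting a singular residue point is governed by the log canonical threshold / rational singularities of $F_{g}$, not merely by the dimension of the singular locus, and the implied constant a priori depends on $g$. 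Getting a bound uniform in $g$, $q$, and $k$ is exactly the family version of Theorem~\ref{thm:number theoretic criterion for rational singularities}, which is the content of \cite{CGH}. The paper is in the same position: it explicitly defers the full ``if'' direction to \cite{CGH}, and only supplies a self-contained proof of the weaker statement that $\varphi_{w}^{*2}$ is almost $\{\mathbb{Z}_{q}\}$-uniform in $L^{\infty}$ (via a Cauchy--Schwarz/Young's inequality argument and the irreducibility of the central fiber of $\varphi_{w}*\varphi_{w}^{-1}$). So your argument is no worse than the paper's here, but you should not present the stratification as closing the gap; rather cite \cite{CGH}, or prove the weaker $\varphi_{w}^{*2}$ version as the paper does.
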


In Section \ref{sec:The-(FRS)-property and counting points} we furthermore
provide a number theoretic interpretation of $\varepsilon$-jet flatness:

\begin{theorem}[Theorem \ref{thm:epsilon jet flat and counting points}]\label{thm H-epsilon jet flat and counting points}Let
$\varphi:X\to Y$ be a dominant morphism between finite type $\ints$-schemes
$X$ and $Y$, with $X_{\rats},Y_{\rats}$ smooth and geometrically
irreducible, and let $0<\varepsilon\leq1$. Then the following are
equivalent: 
\begin{enumerate}
\item For any $0<\varepsilon'<\varepsilon$ there exists a finite set $S$
of primes, such that for every $q\in\mathcal{P}_{S}$, every $k\in\nats$
and every $y\in Y(\ints_{q}/\frak{m}_{q}^{k})$,
\[
\left|\varphi^{-1}(y)\right|<q^{k(\mathrm{dim}X_{\mathbb{Q}}-\varepsilon'\mathrm{dim}Y_{\mathbb{Q}})}.
\]
\item $\varphi_{\rats}:X_{\rats}\to Y_{\rats}$ is $\varepsilon$-jet flat. 
\end{enumerate}
\end{theorem}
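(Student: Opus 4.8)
The plan is to prove the equivalence by relating both conditions to the dimensions of the jet schemes of the fibers, using Musta\c{t}\u{a}'s description of the log canonical threshold via jet schemes together with a Lang-Weil type point count. First I would recall the key algebro-geometric input: by Musta\c{t}\u{a}'s theorem (as cited before Theorem \ref{thm D}), for a morphism $\varphi_\rats$ between smooth geometrically irreducible varieties, $\varphi_\rats$ is $\varepsilon$-jet flat if and only if for every $m\in\nats$ and every point $x$, $\dim J_m(X_{\varphi(x),\varphi})\le (m+1)(\dim X_\rats - \varepsilon\dim Y_\rats)$; equivalently the $m$-th jet scheme $J_m(\varphi_\rats^{-1}(y))$ of each fiber has dimension at most $(m+1)(\dim X_\rats-\varepsilon\dim Y_\rats)$. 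So condition (2) is a purely dimension-theoretic statement about jet schemes of fibers, uniformly in $m$ and $y$.

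Next I would set up the point-count side. The crucial link is that for a finite type $\ints$-scheme $Z$ with $Z_\rats$ of dimension $\delta$, one has, for $q\in\mathcal{P}_S$ with $S$ a suitable finite set of primes (depending on $Z$), and for all $k$, that $|Z(\ints_q/\frak m_q^k)|$ is controlled by the jet scheme dimensions: the reduction map $Z(\ints_q/\frak m_q^{k}) \to Z(\ints_q/\frak m_q)$ has fibers whose sizes are governed by $J_{k-1}$-type data, and more precisely $|Z(\ints_q/\frak m_q^k)| \le C \cdot q^{k\delta + o(k)}$ with refinements coming from the jet schemes. To make the two directions work I would prove: (a) if all fibers $\varphi^{-1}(y)$ over $\ints_q/\frak m_q^k$ satisfy the bound $q^{k(\dim X_\rats - \varepsilon'\dim Y_\rats)}$ for all $k$, then summing a geometric-type estimate over the fibers of the jet schemes forces $\dim J_m(\varphi_\rats^{-1}(y)) \le (m+1)(\dim X_\rats-\varepsilon'\dim Y_\rats)$ (one recovers the jet scheme dimension as the "growth exponent" of the point counts $|Z(\ints_q/\frak m_q^{m+1})|$ as $q\to\infty$, via Lang-Weil applied to the reductions mod $p$ of $J_m Z$); letting $\varepsilon'\uparrow\varepsilon$ gives $\varepsilon$-jet flatness. (b) Conversely, if $\varphi_\rats$ is $\varepsilon$-jet flat, then for each $\varepsilon'<\varepsilon$ and each $k$, the bound $\dim J_{k-1}(\varphi_\rats^{-1}(y))\le k(\dim X_\rats-\varepsilon'\dim Y_\rats)$ holds, and a Lang-Weil / Greenberg-functor count ($\ints_q/\frak m_q^k$-points of a fiber are, away from a finite set of primes, comparable to $\mathbb F_q$-points of the appropriate jet scheme, up to a bounded multiplicative constant independent of $q,k$ — here I would need a uniformity statement, which is exactly the kind of thing provided by \cite{CGH} / model-theoretic uniformity) yields $|\varphi^{-1}(y)| \le C\, q^{k(\dim X_\rats - \varepsilon'\dim Y_\rats)}$, and absorbing $C$ into a slight further decrease of the exponent (possible since $\dim Y_\rats\ge 1$ and $k$ ranges over all of $\nats$, or by shrinking $\varepsilon'$ once more) gives condition (1).

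The main obstacle I expect is the \emph{uniformity in both $q$ and $k$ simultaneously} in the point-count estimates: a naive application of Lang-Weil to each jet scheme $J_{k-1}(\varphi^{-1}(y))_{\mathbb F_q}$ produces error terms and implied constants that depend on $k$ (the jet schemes live in ambient spaces of dimension growing linearly in $k$, and their number of irreducible components, degrees, etc.\ can grow with $k$), so one cannot directly conclude a clean bound $|\varphi^{-1}(y)|<q^{k(\cdots)}$ uniformly. Overcoming this requires the motivic/model-theoretic uniformity results (the cited \cite{CGH}, and the machinery behind \cite[Theorem A]{AA18}) that express $|Z(\ints_q/\frak m_q^k)|$ through a rational function in $q$ with denominators and degree bounded independently of $k$, or alternatively Denef--Pas style cell decomposition giving Poincaré series with uniformly bounded complexity; this is precisely why the hypothesis allows an arbitrarily small loss $\varepsilon\to\varepsilon'$ and a finite bad set $S$ of primes depending only on $X,Y,\varphi$. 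A secondary technical point is handling the passage between jet schemes of the total space and jet schemes of the fibers (the fibers need not be smooth, so $J_m$ of a fiber is not simply a fiber of $J_m\varphi$, but the relevant inequality $\dim J_m(\varphi^{-1}(y)) \le \dim J_m(\varphi)^{-1}(\text{jet of }y)$ together with flatness/generic smoothness arguments, or directly Musta\c{t}\u{a}'s formulation in terms of $\mathrm{lct}(X,\varphi^{-1}(y))$, bridges this cleanly).
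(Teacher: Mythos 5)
Your overall architecture matches the paper's proof: both directions are proved by relating the point counts over $\ints_q/\frak{m}_q^k$ to dimensions of jet schemes of fibers via Lang-Weil, you correctly reformulate $\varepsilon$-jet flatness as a uniform-in-$m$ bound on $\dim J_m$ of fibers (via Musta\c{t}\u{a} and Lemma \ref{lem:epsilon jet flat and log canonical threshold}), and you correctly identify the central obstacle in the direction $(2)\Rightarrow(1)$ — that the $k$-th jet schemes have unbounded complexity, so naive Lang-Weil gives constants depending on $k$ — together with the right remedy (motivic/definable uniformity giving $|\varphi^{-1}(r_k(y))|/q^{k(\dim X-\dim Y)}$ as a motivic function whose growth in $k$ can be controlled by cell decomposition). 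This is essentially the paper's argument: it packages the fiber count as a motivic function $g_F(y,k)$, uses \cite[Theorem 2.1.3]{CGH18} to uniformize over $y$, and then applies Presburger cell decomposition and rectilinearization to analyze the $k$-dependence, exactly as you anticipate at a higher level.

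There is, however, one genuine gap in your direction $(1)\Rightarrow(2)$. You write that one recovers the jet scheme dimension by applying Lang--Weil "to the reductions mod $p$ of $J_m Z$", identifying $|Z(\ints_q/\frak{m}_q^{m+1})|$ with $|J_m(Z)(\mathbb F_q)|$. This identification is only valid in equal characteristic: one has $Z(\mathbb F_q[t]/t^{m+1})\simeq J_m(Z)(\mathbb F_q)$, but $\ints_q/\frak{m}_q^{m+1}$ is a ring of Witt-vector type, not $\mathbb F_q[t]/t^{m+1}$, and $Z(\ints_q/\frak m_q^{m+1})$ is rather the $\mathbb F_q$-points of the Greenberg transform, which is not the jet scheme (though it has the same dimension). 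The paper resolves this by first invoking the transfer principle for bounds (Theorem \ref{thm:-transfer principle for bounds}, from \cite{CGH16}) to deduce the inequality $|\varphi^{-1}(y)|<q^{k(\dim X-\varepsilon'\dim Y)}$ over the equal-characteristic local fields $\mathbb F_q((t))$ — i.e.\ over $\mathbb F_q[t]/t^k$ — from the hypothesis over $\ints_q/\frak m_q^k$, after which the clean identification $Y(\mathbb F_q[t]/t^k)\simeq J_{k-1}(Y)(\mathbb F_q)$ applies and Lang--Weil gives $\varepsilon$-flatness of $J_{k-1}(\varphi_\rats)$ for each $k$. Without the transfer principle (or an explicit argument that the Greenberg transform and the jet scheme have equal dimension for almost all $p$, which is true but requires justification), your step (a) does not go through as stated. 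This is worth tightening; the rest of the proposal is sound.
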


Theorems \ref{thm E} and \ref{thm H-epsilon jet flat and counting points}
yield the following $p$-adic analogue of \cite{LaS12}.

\begin{theorem}[Corollary \ref{cor:bounds on fibers for SLn}]\label{Theorem I: bounds on fibers}Let
$w\in F_{r}$ be a word. Then for any $n\in\nats$ there exists a
finite set $S(n)$ of primes such that for every $q\in\mathcal{P}_{S(n)}$
and every $g\in\mathrm{SL}_{n}(\ints_{q}/\frak{m}_{q}^{k})$ we have
\[
\tau_{w,\mathrm{SL}_{n},q,k}(g)\leq\left|\mathrm{SL}_{n}(\ints_{q}/\frak{m}_{q}^{k})\right|^{-\frac{1}{2\cdot10^{17}\ell(w)^{9}}}.
\]

\end{theorem}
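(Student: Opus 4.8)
The plan is to deduce Theorem \ref{Theorem I: bounds on fibers} by combining the jet-flatness estimate of Theorem \ref{thm E} with the number-theoretic characterization of $\varepsilon$-jet flatness in Theorem \ref{thm H-epsilon jet flat and counting points}. First I would fix a word $w \in F_r$ and $n \in \nats$, and apply Theorem \ref{thm E} to the group word map $\varphi_w : \mathrm{SL}_n^r \to \mathrm{SL}_n$, which tells us that $\varphi_w$ is $\frac{1}{10^{17}\ell(w)^9}$-jet flat over $K = \rats$ (or any characteristic-zero field). Next I would invoke Theorem \ref{thm H-epsilon jet flat and counting points}, applied to the $\ints$-model of the morphism $\varphi_w : \mathrm{SL}_n^r \to \mathrm{SL}_n$ — note $\mathrm{SL}_n^r$ and $\mathrm{SL}_n$ are smooth and geometrically irreducible over $\rats$, and $\varphi_w$ is dominant by Borel's theorem, so the hypotheses are met. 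Taking $\varepsilon = \frac{1}{10^{17}\ell(w)^9}$ and any $\varepsilon' < \varepsilon$ in the statement, the implication $(2) \Rightarrow (1)$ yields a finite set $S = S(n)$ of primes so that for all $q \in \mathcal{P}_{S(n)}$, all $k$, and all $g \in \mathrm{SL}_n(\ints_q/\mathfrak{m}_q^k)$,
\[
\left|\varphi_w^{-1}(g)\right| < q^{k(\dim \mathrm{SL}_n^r - \varepsilon' \dim \mathrm{SL}_n)} = q^{k r \dim \mathrm{SL}_n} \cdot q^{-k \varepsilon' \dim \mathrm{SL}_n}.
\]

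Then I would translate this fiber bound into the probabilistic statement. By definition $\tau_{w,\mathrm{SL}_n,q,k}(g) = \left|\varphi_w^{-1}(g)\right| / \left|\mathrm{SL}_n(\ints_q/\mathfrak{m}_q^k)\right|^r$, and since $\left|\mathrm{SL}_n(\ints_q/\mathfrak{m}_q^k)\right| = q^{k \dim \mathrm{SL}_n}(1 + o(1))$ — more precisely it equals $q^{(k-1)\dim\mathrm{SL}_n}\left|\mathrm{SL}_n(\mathbb{F}_q)\right|$ and $\left|\mathrm{SL}_n(\mathbb{F}_q)\right| = q^{\dim\mathrm{SL}_n}(1 - q^{-2})\cdots(1-q^{-n})$ — the denominator is $q^{kr\dim\mathrm{SL}_n}(1+o(1))$ as $q \to \infty$. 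Combining, $\tau_{w,\mathrm{SL}_n,q,k}(g) < q^{-k\varepsilon'\dim\mathrm{SL}_n}(1+o(1))$, while $\left|\mathrm{SL}_n(\ints_q/\mathfrak{m}_q^k)\right|^{-\varepsilon'} = q^{-k\varepsilon'\dim\mathrm{SL}_n}(1+o(1))^{-\varepsilon'}$, so the two agree up to a $q$-dependent factor tending to $1$. To get the clean bound with exponent exactly $\frac{1}{2\cdot 10^{17}\ell(w)^9}$ stated in the theorem, I would choose $\varepsilon' = \frac{1}{2\cdot 10^{17}\ell(w)^9}$ (which is $< \varepsilon$), so that the slack between $\varepsilon'\dim\mathrm{SL}_n$ and the stated exponent $\frac{\dim\mathrm{SL}_n}{2\cdot 10^{17}\ell(w)^9}$ absorbs the $(1+o(1))$ error terms after enlarging $S(n)$ to exclude finitely many more primes — or equivalently observe the extra factor of $\tfrac12$ in the denominator of the exponent (compared to the $\tfrac{1}{10^{17}}$ of Theorem \ref{thm E}) is precisely what provides this room.

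The only genuine subtlety — not really an obstacle — is bookkeeping the passage from the strict inequality with an arbitrary $\varepsilon' < \varepsilon$ to a uniform clean inequality: one must make sure the finite exceptional set $S(n)$ can be chosen to handle simultaneously the jet-flatness exceptional primes coming from Theorem \ref{thm H-epsilon jet flat and counting points} and the finitely many primes needed so that the Lang--Weil-type estimate $\left|\mathrm{SL}_n(\mathbb{F}_q)\right| \geq \tfrac12 q^{\dim\mathrm{SL}_n}$ (say) holds. Since both are finite, their union is finite, and the argument goes through uniformly in $k$ because the estimate in Theorem \ref{thm H-epsilon jet flat and counting points}(1) is already uniform in $k$. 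I should also remark that, as in \cite{LaS12}, the dependence of $S(n)$ on $n$ is unavoidable in this approach because Theorem \ref{thm E}'s constant is uniform in $n$ but the exceptional primes in the $\varepsilon$-jet-flatness-to-point-count translation (equivalently, the primes of bad reduction of the relevant resolutions) may grow with $n$; obtaining a bound uniform in $n$ as well would require the analogue of Theorem \ref{thm A} for $\mathrm{SL}_n$, which is not available here.
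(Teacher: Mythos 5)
Your proof is correct and takes essentially the same route as the paper: the paper's proof is a one-liner invoking Theorem \ref{thm:jet flatness of word maps} together with Theorem \ref{thm:dictionary for random walks Linfty}(3), and the latter is itself proved from Theorem \ref{thm:epsilon jet flat and counting points} via Lang--Weil, so you are simply unpacking that intermediate step by hand. One small slip in your bookkeeping: with $\varepsilon' = \frac{1}{2\cdot10^{17}\ell(w)^9}$ there is \emph{no} slack between $\varepsilon'\dim\mathrm{SL}_n$ and the target exponent (they coincide), so you should instead take $\varepsilon'$ strictly between $\frac{1}{2\cdot10^{17}\ell(w)^9}$ and $\frac{1}{10^{17}\ell(w)^9}$ to absorb the bounded $\prod_{i=2}^n(1-q^{-i})$ factors --- your ``or equivalently'' remark about the factor-of-$\frac12$ gap is the correct mechanism, and this flexibility is exactly what the ``for every $\varepsilon'<\varepsilon$'' quantifier in Theorem \ref{thm:epsilon jet flat and counting points}(1) and in Theorem \ref{thm:dictionary for random walks Linfty}(3) is there to provide.
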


For any $\underline{G}\in\mathcal{G}$ and a prime $p$, write $\underline{G}^{1}(\Zp)$
for the first congruence subgroup of $\underline{G}(\Zp)$ (i.e. the
kernel of $\underline{G}(\Zp)\rightarrow\underline{G}(\mathbb{F}_{p})$),
and let $\mu_{\underline{G}^{1}(\Zp)}$ be the normalized Haar measure.
Using Theorem \ref{thm C} and an equivalent analytic criterion for
the (FRS) property (\cite[Theorem 3.4]{AA16}), we can provide $L^{\infty}$-bounds
for the random walk on $\underline{G}^{1}(\Zp)$ induced from $w$: 

\begin{theorem}[Corollary \ref{cor:result for G1(ZP)}]\label{Thm J:random walk on G1(Zp)}Let
$w\in F_{r}$ be a word. Then there exists $C>0$, such that for every
$\underline{G}\in\mathcal{G}$, every prime $p>p_{0}(\underline{G})$,
and every $t\geq C\ell(w)^{4}$, the pushforward measure $(\varphi_{w}^{*t})_{*}(\mu_{\underline{G}^{1}(\Zp)}^{rt})$
on $\underline{G}^{1}(\Zp)$ has continuous density.\end{theorem}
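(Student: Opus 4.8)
The plan is to combine Theorem~\ref{thm C}, which turns a group word map into an (FRS) morphism at $(e,\ldots,e)$ after a controlled number of self-convolutions, with the analytic characterization of the (FRS) property of \cite[Theorem 3.4]{AA16}, which translates (FRS)-ness into continuous density of pushforward measures. First, recall that the symbol $\widetilde{\varphi}_{w}$ of $\varphi_{w}$ is a Lie algebra word map (Proposition~\ref{prop:symbol of a word map is a Lie algebra word map}) of degree at most $\ell(w)$. Hence Theorem~\ref{thm C} provides an absolute constant $C>0$ such that for every $t\geq C\ell(w)^{4}$ the morphism $\varphi_{w}^{*t}\colon\underline{G}^{rt}\to\underline{G}$ is (FRS) at $(e,\ldots,e)$. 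Both $\varphi_{w}^{*t}$ and the point $(e,\ldots,e)$ are defined over $\rats$, in fact over $\ints[1/N]$ for a suitable integer $N=N(w)$, which we enlarge if needed so that $\underline{G}$ has a smooth affine model over $\ints[1/N]$; since the (FRS) property is stable under base change to any field of characteristic zero, $(\varphi_{w}^{*t})_{\Qp}$ is (FRS) at $(e,\ldots,e)$ for every prime $p$. It suffices to establish the conclusion for $t_{0}:=\lceil C\ell(w)^{4}\rceil$ together with a single threshold $p_{0}(\underline{G})$: for $t\geq t_{0}$ we have $(\varphi_{w}^{*t})_{*}(\mu_{\underline{G}^{1}(\Zp)}^{rt})=(\varphi_{w}^{*t_{0}})_{*}(\mu_{\underline{G}^{1}(\Zp)}^{rt_{0}})*(\varphi_{w}^{*(t-t_{0})})_{*}(\mu_{\underline{G}^{1}(\Zp)}^{r(t-t_{0})})$ as a group convolution on $\underline{G}^{1}(\Zp)$, and convolving a continuous density with any probability measure again produces a continuous density.

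Next, let $U\subseteq\underline{G}^{rt_{0}}$ be the (FRS) locus of $\varphi_{w}^{*t_{0}}$, a Zariski-open subscheme over $\ints[1/N]$ containing the section $(e,\ldots,e)$, and write its closed complement as the common zero locus of finitely many functions $f_{1},\ldots,f_{k}$ with coefficients in $\ints[1/N]$. I claim that for all primes $p>p_{0}(\underline{G})$ one has $(\underline{G}^{1}(\Zp))^{rt_{0}}\subseteq U(\Qp)$. Indeed, for $p\nmid N$ large enough the reduction $(\bar{e},\ldots,\bar{e})$ of the section does not lie in the complement modulo $p$, so $f_{i_{0}}(\bar{e},\ldots,\bar{e})\in\mathbb{F}_{p}^{\times}$ for some $i_{0}$; since every point of the first congruence subgroup $(\underline{G}^{1}(\Zp))^{rt_{0}}$ reduces to $(\bar{e},\ldots,\bar{e})$ modulo $p$, the function $f_{i_{0}}$ takes values in $\ints_{p}^{\times}$ on it, so it cannot meet the complement and lies in $U(\Qp)$. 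Moreover $\underline{G}^{1}(\Zp)=\ker\bigl(\underline{G}(\Zp)\to\underline{G}(\mathbb{F}_{p})\bigr)$ is a subgroup of $\underline{G}(\Zp)$, so $\varphi_{w}^{*t_{0}}$ maps $(\underline{G}^{1}(\Zp))^{rt_{0}}$ into $\underline{G}^{1}(\Zp)$, and $(\varphi_{w}^{*t_{0}})_{*}(\mu_{\underline{G}^{1}(\Zp)}^{rt_{0}})$ is a probability measure supported on $\underline{G}^{1}(\Zp)$.

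Finally, we invoke \cite[Theorem 3.4]{AA16}: since $\varphi_{w}^{*t_{0}}$ is (FRS) on the open set $U$ over $\Qp$, the pushforward along $\varphi_{w}^{*t_{0}}$ of any Schwartz (locally constant, compactly supported) measure supported in $U(\Qp)$ has continuous density with respect to a Haar measure on $\underline{G}(\Qp)$ (a Haar measure and the measure associated with an invariant volume form differ by a nonvanishing analytic function, so the choice is immaterial). The normalized Haar measure $\mu_{\underline{G}^{1}(\Zp)}^{rt_{0}}$ is such a Schwartz measure, and by the previous paragraph it is supported in $U(\Qp)$; hence $(\varphi_{w}^{*t_{0}})_{*}(\mu_{\underline{G}^{1}(\Zp)}^{rt_{0}})$ has continuous density, and since it is supported on the open subgroup $\underline{G}^{1}(\Zp)$ it has continuous density there. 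Combined with the reduction to $t_{0}$ from the first paragraph, this proves the theorem.

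I expect the main obstacle to be the passage from the purely scheme-theoretic input ``$\varphi_{w}^{*t_{0}}$ is (FRS) at $(e,\ldots,e)$'' to the $p$-adic analytic conclusion uniformly for $p>p_{0}(\underline{G})$. Concretely, one must verify that the \emph{whole} first congruence subgroup (not merely the point $(e,\ldots,e)$) lands inside the (FRS) locus, which is precisely the reduction-mod-$p$ argument above and is what forces the dependence of $p_{0}$ on $\underline{G}$ (and on $w$, through $N(w)$ and $t_{0}$), and one must apply the correct local form of the Aizenbud--Avni criterion to a morphism that is only (FRS) on an open subset. Both steps are routine given the quoted results, but they are where every ``sufficiently large $p$'' hypothesis and the appeal to good reduction enter.
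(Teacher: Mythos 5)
Your proof is correct and follows the same route as the paper: apply Theorem~\ref{thm C} (via Proposition~\ref{prop:symbol of a word map is a Lie algebra word map} and the bound $\deg(\widetilde{\varphi}_{w})\leq\ell(w)$) to get an open (FRS) locus containing $(e,\ldots,e)$, observe that the whole first congruence subgroup reduces to $(\bar e,\ldots,\bar e)$ and hence lands in this locus for $p$ large, and invoke the Aizenbud--Avni analytic criterion. Your explicit ideal-generator argument for the containment is just a spelled-out version of the paper's appeal to ``Hensel's lemma,'' and your reduction to a single $t_{0}$ via convolution with an arbitrary probability measure is a harmless extra step (Theorem~\ref{thm C} already covers all $m$ above the threshold).
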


The following theorem provides an algebraic interpretation of the
notion of almost uniformity in $L^{1}$. This is a strengthening of
\cite[Theorem 2]{LST19}.

\begin{theorem}[Corollary \ref{cor:dictionary for word maps}]\label{thm:K- dictionary for L1}Let
$w\in F_{r}$. The following are equivalent: 
\begin{enumerate}
\item $w$ is almost $\{\mathbb{F}_{q}\}$-uniform in $L^{1}$. 
\item $w$ is almost $\{\ints_{q}\}$-uniform in $L^{1}$. 
\item $(\varphi_{w})_{\rats}:\underline{G}_{\rats}^{r}\rightarrow\underline{G}_{\rats}$
has a geometrically irreducible generic fiber, for every $\underline{G}\in\mathcal{G}$. 
\end{enumerate}
\end{theorem}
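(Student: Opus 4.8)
The plan is to prove the three-way equivalence by showing $(3)\Leftrightarrow$ "$\varphi_w$ is $1$-flat generically in the Lang--Weil sense" and then bootstrapping to both uniformity statements via the number-theoretic dictionary established earlier. Concretely, I would first recall from Section~\ref{sec:The-(FRS)-property and counting points} the characterization of flatness via point counts (Theorem~\ref{thm: flatness and counting points}), specialized to the word map $\varphi_w:\underline{G}_{\rats}^r\to\underline{G}_{\rats}$; combined with the Lang--Weil estimates, having a geometrically irreducible generic fiber is equivalent to the statement that, for all $\underline{G}\in\mathcal{G}$ and all sufficiently large $q$, the fiber counts $|\varphi_w^{-1}(y)(\mathbb{F}_q)|$ over generic $y\in\underline{G}(\mathbb{F}_q)$ are $q^{\dim\underline{G}^r-\dim\underline{G}}(1+o(1))$ as $q\to\infty$. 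The point is that $L^1$ convergence $\|\tau_{w,\underline{G},q,1}-\pi_{\underline{G}(\mathbb{F}_q)}\|_1\to 0$ is exactly the statement that the mass $\tau$ places outside a set of near-uniform density tends to $0$, and since the image of $\varphi_w$ is a constructible set containing a dense open (as $\varphi_w$ is dominant by Borel's theorem, so $\underline{G}^r\to\underline{G}$ is dominant), the dominant behavior on the open stratum controls the $L^1$ norm: the contribution of the non-generic locus is $O(q^{-1})$ relative to $|\underline{G}(\mathbb{F}_q)|$ by Lang--Weil on lower-dimensional subvarieties.

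Next I would establish $(1)\Rightarrow(3)$ and $(3)\Rightarrow(2)$, with $(2)\Rightarrow(1)$ being essentially immediate because reduction mod $\mathfrak{m}_q$ is a quotient map, so an $L^1$ bound on $\underline{G}(\ints_q/\mathfrak{m}_q^k)$ for all $k$ in particular gives one at $k=1$. For $(3)\Rightarrow(2)$: by the CGH/Aizenbud--Avni machinery (\cite{AA18,CGH}) and the motivic nature of the fiber counts, once the generic fiber of $(\varphi_w)_{\rats}$ is geometrically irreducible, the pushforward measures $\tau_{w,\underline{G},q,k}$ are uniformly (in $k$) close in $L^1$ to uniform: geometric irreducibility of the generic fiber forces the generating function $\sum_k |\varphi_w^{-1}(y)(\ints_q/\mathfrak{m}_q^k)|\, q^{-k\dim\underline{G}^r}\cdot t^k$ to be a rational function whose behavior near $t=q^{-\dim\underline{G}}$ is governed by a single dominant pole, uniformly over $q$ once $q\notin S$; this yields $\sup_k\|\tau_{w,\underline{G},q,k}-\pi\|_1\to 0$. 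Here one uses that the $L^1$ norm only sees the total variation distance, so we only need control of where the measure is \emph{not} comparable to uniform, which after stratifying $\underline{G}$ by the fiber dimension is supported on a proper subvariety whose $\ints_q/\mathfrak{m}_q^k$-points form a vanishing fraction.

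For $(3)\Rightarrow(1)$ one can argue more directly via Lang--Weil as in the first paragraph, or simply invoke $(3)\Rightarrow(2)\Rightarrow(1)$; I would include the direct Lang--Weil argument since it also exhibits where the finite set $S$ of primes comes from (primes of bad reduction of $\underline{G}$ and of the relevant closed subschemes in the stratification, plus the finitely many primes excluded in the Lang--Weil uniformity). The main obstacle I anticipate is the uniformity in $k$ in the direction $(3)\Rightarrow(2)$: one must rule out that the total variation distance, while tending to $0$ for each fixed $k$, fails to do so uniformly as $k\to\infty$. This is precisely the content packaged in \cite[Theorem A]{AA18} together with the uniform rationality results of \cite{CGH} — the fiber-counting zeta function is motivic with denominator controlled independently of $q$ and $k$ — so the real work is to set up the stratification of $\underline{G}_{\rats}$ by fiber dimension of $\varphi_w$ carefully, check that the $L^1$ norm decomposes along this stratification with the bad strata contributing $O(q^{-1})$ uniformly in $k$, and then cite the motivic machinery for the good stratum. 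I should also remark that this gives the claimed strengthening of \cite[Theorem 2]{LST19}: that result is the $(1)\Leftrightarrow(3)$ part over $\{\mathbb{F}_q\}$, and our contribution is the equivalence with the $\{\ints_q\}$-version, which is genuinely stronger since it is a statement about all the finite quotients simultaneously.
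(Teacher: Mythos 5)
Your overall structure matches the paper's: $(2)\Rightarrow(1)$ is trivial, $(1)\Rightarrow(3)$ reduces to \cite[Proposition~2.1]{LST19}, and the content is $(3)\Rightarrow(2)$. But your proposed proof of $(3)\Rightarrow(2)$ has a genuine gap, and the machinery you invoke does not apply as stated. You cite \cite[Theorem~A]{AA18} and the motivic rationality results of \cite{CGH} as ``packaging'' the uniformity in $k$, but those results characterize the (FRS) property and give $L^{\infty}$-type bounds on $h_X(\ints/p^k\ints)$; they require the fibers to be local complete intersections with rational singularities. Geometric irreducibility of the generic fiber is a much weaker hypothesis, and there is no reason to expect the fiber-counting zeta function to have a ``single dominant pole'' without rational singularities of the \emph{special} fibers. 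In particular, the claim that geometric irreducibility of the generic fiber alone forces the generating function's behavior near $t=q^{-\dim\underline{G}}$ is unjustified, and your outline never engages with the fact that $\|\tau_{w,\underline{G},q,k}-\pi_{q,k}\|_1$ is an average over \emph{all} $y$, many of which sit over singular fibers whose point counts are not controlled by the generic fiber.

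The missing idea is much more elementary and makes the heavy machinery unnecessary. Take an open dense $U\subseteq\underline{G}$ over which $\varphi_{w,\rats}$ is smooth and (FGI), and set $V_q=\{g\in\underline{G}(\ints_q):r_1(g)\in U(\mathbb{F}_q)\}$. Because $\varphi_w$ is smooth over $U$, a Hensel-lifting argument gives the \emph{exact} identity $|\varphi_w^{-1}(r_k(g))|=q^{(k-1)(\dim X-\dim\underline{G})}|\varphi_w^{-1}(r_1(g))|$ for all $k\geq1$ and $g\in V_q$, once $q$ lies outside a finite set $S$. This eliminates the $k$-dependence entirely and reduces the estimate on $r_k(V_q)$ to a single Lang--Weil bound at $k=1$, yielding $|\tau_{w,\underline{G},q,k}(r_k(g))-\pi_{q,k}(r_k(g))|<C_1 q^{-1/2}\pi_{q,k}(r_k(g))$ uniformly in $k$. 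The complement $Z=U^c$ satisfies $\dim\varphi_w^{-1}(Z)_{\rats}<\dim X_{\rats}$ (geometric irreducibility of $X_{\rats}$), and again using smoothness of $X$ to push the count down to level $1$, the total $\tau$- and $\pi$-mass on $r_k(V_q^c)$ is $O(q^{-1})$ uniformly in $k$. Summing the two strata gives $\sup_k\|\tau_{w,\underline{G},q,k}-\pi_{q,k}\|_1=O(q^{-1/2})$. Without this Hensel-lifting observation, the uniformity in $k$ that you correctly identify as ``the main obstacle'' remains open in your proposal.
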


In \cite[Theorem 1]{LST19} it was shown that any convolution $w_{1}*w_{2}$
of two words is almost $\{\mathbb{F}_{q}\}$\textit{-}uniform in $L^{1}$.
Theorem \ref{thm:K- dictionary for L1} allows us to push this result
to compact $p$-adic groups:

\begin{theorem}[Corollary \ref{cor:-L1 mixing time of words}]\label{Theorem L-uniform in L1 after convolution}Let
$w_{1}\in F_{r_{1}}$ and $w_{2}\in F_{r_{2}}$ be words. Then $w=w_{1}*w_{2}$
is almost $\{\ints_{q}\}$-uniform in $L^{1}$.

\end{theorem}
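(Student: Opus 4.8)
The plan is to deduce this from Theorem \ref{thm:K- dictionary for L1} (the $L^1$-dictionary), which reduces the claim to a purely geometric statement: it suffices to show that for $w = w_1 * w_2$, the morphism $(\varphi_w)_{\rats} : \underline{G}_{\rats}^{r_1+r_2} \to \underline{G}_{\rats}$ has a geometrically irreducible generic fiber, for every $\underline{G} \in \mathcal{G}$. Indeed, once condition (3) of Theorem \ref{thm:K- dictionary for L1} is verified for $w$, the equivalence gives condition (2), which is precisely the assertion that $w$ is almost $\{\ints_q\}$-uniform in $L^1$. So the entire content is concentrated in this geometric irreducibility claim about a convolution of two arbitrary words.

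For the geometric input I would invoke \cite[Theorem 1]{LST19}, which (as recalled in the paragraph preceding the statement) asserts that $w_1 * w_2$ is almost $\{\mathbb{F}_q\}$-uniform in $L^1$ for any two words $w_1, w_2$. By the equivalence (1)$\Leftrightarrow$(3) in Theorem \ref{thm:K- dictionary for L1}, almost $\{\mathbb{F}_q\}$-uniformity of $w = w_1*w_2$ in $L^1$ is equivalent to $(\varphi_w)_{\rats}$ having a geometrically irreducible generic fiber over every $\underline{G} \in \mathcal{G}$. Thus condition (3) holds for $w$, and we are done by chaining through the dictionary: (3)$\Rightarrow$(2). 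So the structure is: \cite[Theorem 1]{LST19} $\Rightarrow$ condition (1) for $w$ $\Rightarrow$ (via Theorem \ref{thm:K- dictionary for L1}) condition (3) for $w$ $\Rightarrow$ (via Theorem \ref{thm:K- dictionary for L1} again) condition (2) for $w$, which is the desired $\{\ints_q\}$-uniformity in $L^1$.

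The one point requiring a little care is compatibility of hypotheses: \cite[Theorem 1]{LST19} is stated over finite fields $\mathbb{F}_q$ for simple (or quasisimple) algebraic groups, whereas here $\mathcal{G}$ consists of group schemes $\underline{G}$ with $\underline{G}_{\rats}$ semisimple and simply connected, and the relevant notion of almost $\{\mathbb{F}_q\}$-uniformity quantifies over all $\underline{G} \in \mathcal{G}$ with a $\underline{G}$-dependent finite exceptional set $S$ of primes. I would check that the geometric-irreducibility formulation in condition (3) of Theorem \ref{thm:K- dictionary for L1} is exactly what \cite[Theorem 1]{LST19} delivers — reducing the semisimple simply connected case to the simple factors, using that a word map on a product is the product of the word maps and that a product of geometrically irreducible fibers is geometrically irreducible, and absorbing finitely many bad primes per factor into $S$. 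Apart from this bookkeeping, there is no genuine obstacle: the theorem is a formal consequence of the $L^1$-dictionary (Theorem \ref{thm:K- dictionary for L1}) together with the known finite-field result of Larsen–Shalev–Tiep, the novelty being that the dictionary upgrades the $\mathbb{F}_q$-statement to a $\mathbb{Z}_q$-statement at no extra cost.
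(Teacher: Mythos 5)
Your proof is correct and takes essentially the same approach as the paper: reduce to the dictionary (Corollary \ref{cor:dictionary for word maps}/Theorem \ref{thm:K- dictionary for L1}) and feed in the Larsen--Shalev--Tiep input. The one small difference is that the paper cites the geometric statement directly, namely \cite[Lemma 2.4]{LST19} (recorded as Theorem \ref{thm:-convolution of two word maps is generically absolutely irreducible}), which is already phrased for simply connected semisimple $\underline{G}$ and immediately supplies condition (3); this avoids the detour through \cite[Theorem 1]{LST19} over finite simple groups and the $(1)\Rightarrow(3)$ leg, and in particular sidesteps the hypothesis-matching concern you flag (simple versus semisimple simply connected), which would otherwise need the reduction-to-simple-factors argument you sketch.
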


It follows from \cite{GS09} that the commutator word is almost $\{\mathbb{F}_{q}\}$\textit{-}uniform
in $L^{1}$. Theorems \ref{thm F:commutator map is (FRS)}, \ref{thm:G- dictionary for Linfty-introduction}
and \ref{thm:K- dictionary for L1} give the following: 
\begin{cor}
\label{cor:uniform mixing time commutator}Let $w_{0}=g_{1}g_{2}g_{1}^{-1}g_{2}^{-1}\in F_{2}$
be the commutator word. 
\begin{enumerate}
\item $w_{0}*w_{0}*w_{0}*w_{0}$ is almost $\{\ints_{q}\}$-uniform in $L^{\infty}$. 
\item $w_{0}$ is almost $\{\ints_{q}\}$-uniform in $L^{1}$. 
\end{enumerate}
\end{cor}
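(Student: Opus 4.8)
The plan is to combine the $p$-adic singularity results for the commutator map (Theorem \ref{thm F:commutator map is (FRS)}) with the number-theoretic dictionary (Theorems \ref{thm:G- dictionary for Linfty-introduction} and \ref{thm:K- dictionary for L1}), the only missing input being geometric irreducibility of the relevant fibers, which is classical. For part (1), I would argue as follows. By Theorem \ref{thm F:commutator map is (FRS)}(2), the morphism $\varphi_{w_0}^{*4}\colon\underline{G}^{8}\to\underline{G}$ is (FRS) for every semisimple algebraic $K$-group $\underline{G}$; applying this over $K=\rats$ gives that $(\varphi_{w_0}^{*4})_{\rats}$ is (FRS) for every $\underline{G}\in\mathcal G$. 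Note that $\varphi_{w_0}^{*4}=\varphi_{w_0*w_0*w_0*w_0}$ is the word map attached to $w:=w_0*w_0*w_0*w_0\in F_{8}$, so $w$ is almost $\{\ints_q\}$-uniform in $L^{\infty}$ by Theorem \ref{thm:G- dictionary for Linfty-introduction}(2) — provided we also know that $(\varphi_{w_0}^{*4})_{\rats}$ has geometrically irreducible fibers over every $\underline{G}_{\rats}$. This last point follows from Borel's theorem (stated in the excerpt): for any nontrivial $w\in F_r$ the word map $\varphi_w\colon\underline{G}^r\to\underline{G}$ is dominant, hence $\varphi_{w}^{*2}$, and a fortiori $\varphi_{w_0}^{*4}$, is surjective; combined with flatness from Theorem \ref{thm F:commutator map is (FRS)}(1) and the fact that an (FRS) morphism has reduced fibers, one upgrades ``it follows from \cite{GS09} that $w_0$ is almost $\{\mathbb F_q\}$-uniform in $L^1$'' (cited in the excerpt just above) together with Theorem \ref{thm:K- dictionary for L1} to geometric irreducibility of the generic fiber, and then spreads this out and propagates it to \emph{all} fibers of the (FRS) map $\varphi_{w_0}^{*4}$ using flatness plus the openness of the geometrically-irreducible-fibers locus together with surjectivity. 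Feeding geometric irreducibility back into Theorem \ref{thm:G- dictionary for Linfty-introduction}(2) yields part (1).

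For part (2), the argument is shorter: it is noted in the excerpt, quoting \cite{GS09}, that the commutator word $w_0$ is almost $\{\mathbb F_q\}$-uniform in $L^1$. By the equivalence (1)$\iff$(2) in Theorem \ref{thm:K- dictionary for L1}, almost $\{\mathbb F_q\}$-uniformity in $L^1$ is the same as almost $\{\ints_q\}$-uniformity in $L^1$, so $w_0$ is almost $\{\ints_q\}$-uniform in $L^1$, which is exactly the assertion.

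The step I expect to be the main obstacle is the promotion of geometric irreducibility from the \emph{generic} fiber to \emph{every} fiber of $(\varphi_{w_0}^{*4})_{\rats}$, which is what Theorem \ref{thm:G- dictionary for Linfty-introduction}(2) literally demands (``geometrically irreducible fibers'', not merely the generic one). Flatness of $\varphi_{w_0}^{*4}$ guarantees equidimensional fibers, and the locus of points of the target over which the fiber is geometrically irreducible is constructible; the content is to rule out reducible special fibers. Here I would invoke the homogeneity of the situation under the $\underline{G}$-action (conjugation, or translation by the already-established surjectivity of $\varphi_{w_0}^{*2}$ on each factor) to reduce to a single fiber, say over the identity $e$, and then use that $\varphi_{w_0}^{*4}$ being (FRS) forces that fiber to be reduced with rational — in particular normal, hence irreducible on each connected component — singularities; a connectedness argument (e.g.\ via the fact that $(\varphi_{w_0}^{*g})^{-1}(e)$ is the connected representation variety $\mathrm{Hom}(\pi_1(\Sigma_g),\underline{G})$, cf.\ Corollary \ref{cor for surface groups}, which is known to be irreducible for large $g$) then closes the gap. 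If a cleaner route is available, one simply cites the relevant irreducibility statement for character varieties of surface groups and invokes homogeneity; the rest is a formal application of the two dictionaries.
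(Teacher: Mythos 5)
Your part (2) is exactly the paper's proof: \cite{GS09} gives almost $\{\mathbb F_q\}$-uniformity in $L^1$, and the equivalence $(1)\iff(2)$ in Theorem~\ref{thm:K- dictionary for L1} (equivalently Corollary~\ref{cor:dictionary for word maps}(3)) upgrades this to almost $\{\ints_q\}$-uniformity in $L^1$. No issue there.

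For part (1), the overall strategy (combine Theorem~\ref{thm F:commutator map is (FRS)}(2) with Corollary~\ref{cor:dictionary for word maps}(2)) is right, but the route you suggest for establishing the (FGI) hypothesis has a real gap, and the paper closes it with a tool you did not locate. The paper cites Corollary~\ref{cor: word maps are (FAI) }, which says: if $\varphi_{w_1}*\varphi_{w_2}$ is \emph{flat}, then $\varphi_{w_1}*\varphi_{w_2}*\varphi_{w_3}$ is (FGI), for a simply connected semisimple group. Applied with each $w_i=w_0$ and using Theorem~\ref{Thm:commutator is flat} (flatness of $\varphi_{w_0}^{*2}$), this gives that $\varphi_{w_0}^{*3}$, hence by Proposition~\ref{prop: properties preserved under convolution} also $\varphi_{w_0}^{*4}$, is (FGI); together with Theorem~\ref{thm F:commutator map is (FRS)}(2) one then applies the $L^\infty$-dictionary. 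The proof of Corollary~\ref{cor: word maps are (FAI) } is exactly the projection argument you gesture at: project the fiber $(\varphi_{w}^{*3})^{-1}(g)$ onto the last block of coordinates, use flatness of $\varphi_{w}^{*2}$ and miracle flatness to see the projection is open, invoke the Larsen--Shalev--Tiep result (Theorem~\ref{thm:-convolution of two word maps is generically absolutely irreducible}) that the \emph{generic} fiber of $\varphi_{w_1}*\varphi_{w_2}$ is geometrically irreducible, and conclude via the elementary Lemma~\ref{Lemma 3.13-Auxilary}.

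By contrast, the fallbacks you propose do not work. Conjugation-equivariance of $\varphi_{w_0}^{*4}$ only carries the fiber over $g$ to the fiber over a conjugate $hgh^{-1}$, so it does not reduce all fibers to the fiber over $e$. Constructibility (not openness) of the geometrically-irreducible-fibers locus likewise does not by itself propagate irreducibility from the generic point to a given closed point; some extra argument (exactly the flat-projection-plus-Lemma~\ref{Lemma 3.13-Auxilary} step) is what does the work. And Corollary~\ref{cor for surface groups} addresses only $(\varphi_{w_0}^{*g})^{-1}(e)$, not the fibers over arbitrary $g\in\underline{G}$. So while your sketch contains the germ of the correct argument (the translation/projection idea), as written it does not close the gap, and the paper's Corollary~\ref{cor: word maps are (FAI) } is the clean statement that does.
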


\subsection{Further discussion of the main results}

Questions \ref{que:Mein probabilistic questions} and \ref{que:questions for algebraic waring problem}
(excluding the $L^{1}$-Waring type problems) have been tackled only
quite recently, and mainly in the case of finite simple groups. In
\cite{LaS12} a bound of the form $\left|G\right|^{r-\varepsilon(w)}$
was given on the size of the fibers of $\varphi_{w}:G^{r}\rightarrow G$
for large enough finite simple groups. In \cite{LST19}, the probabilistic
Waring problem for finite simple groups received thorough treatment.
We say that $w\in F_{r}$ is \textsl{almost }$\mathcal{G}_{\mathrm{fin}}$-\textsl{uniform
in $L^{a}$} if $\underset{G\in\mathcal{G}_{\mathrm{fin}}:\left|G\right|\rightarrow\infty}{\lim}\left\Vert \tau_{w,G}-\pi_{G}\right\Vert _{a}=0$.
It was shown in \cite{LST19}, that any word $w\in F_{r}$ has an
almost $\mathcal{G}_{\mathrm{fin}}$-uniform $L^{1}$-mixing time
$t_{1,\mathrm{fin}}(w)\leq2$ and an almost $\mathcal{G}_{\mathrm{fin}}$-uniform
$L^{\infty}$-mixing time $t_{\infty,\mathrm{fin}}(w)\leq C\cdot l^{4}$,
for some explicit $C\gg1$ (see \cite[Theorems 1 and 4]{LST19}).
As a corollary, the authors deduced that for any $w\in F_{r}$ and
any semisimple algebraic group $\underline{G}$, the map $\varphi_{w}^{*t}:\underline{G}^{r}\rightarrow\underline{G}$
is flat for any $t\geq C\cdot l^{4}$ (see \cite[Theorem 5(i)]{LST19}).

There are two substantial difficulties that arise in the compact $p$-adic
probabilistic Waring problem. The first one is that, unlike in the
case of finite simple groups, their representation theory is much
more involved and far less understood. In particular, there is no
developed theory of character bounds as in the case of finite simple
groups (e.g.~\cite{BLS18,GLTa,GLTb}), and the use of other representation
theoretic techniques is limited as well. The second main difficulty
is that, unlike in the case of $\mathcal{G}_{\mathrm{fin}}$, the
singularities of the word maps control the size of the fibers. This
requires the use of more advanced algebro-geometric tools.

Theorems \ref{thm D}, \ref{thm E} and \ref{Theorem I: bounds on fibers}
can be seen as an algebro-geometric and compact $p$-adic versions
of \cite{LaS12}. Theorem \ref{thm A} can be seen as a Lie algebra
analogue of \cite[Theorem 5(i)]{LST19}, which is richer in the sense
that it also gives information on the singularities of the fibers.
Several interesting questions arise:
\begin{question}
~\label{que:conjecture on (FRS) and flatness}
\begin{enumerate}
\item Can we find $\alpha,C>0$ such that for every $w\in F_{r}$ and every
simple algebraic group $\underline{G}$, the word map $\varphi_{w}^{*C\ell(w)^{\alpha}}$
is (FRS)? 
\item If such $\alpha$ exists, what is its optimal value? What is the optimal
value of such $\alpha$ if we just demand that our word map becomes
flat? 
\item What is the answer to (2) in the case of simple Lie algebras, where
$\ell(w)$ is replaced by $\mathrm{deg}(w)$? 
\end{enumerate}
\end{question}

Theorems \ref{thm A} and \ref{thm C} provide a great deal of evidence
towards a positive answer to (1). In fact, in \cite[Theorem IV]{AA16}
it was shown that the commutator map is (FRS) after $t$ convolutions
if and only if it is (FRS) at $(e,\dots,e)\in\underline{G}^{2t}$
after $t$ convolutions. This leads to the following question: 
\begin{question}
\label{que:Are words worst at (e,...,e)}Can we characterize or find
nice families of words $w\in F_{r}$, whose worst singularities are
concentrated at $(e,\ldots,e)$? That is, which words $w\in F_{r}$
satisfy the property that for every simple algebraic group $\underline{G}$,
$\varphi_{w}^{*t}$ is (FRS)/flat if and only if $\varphi_{w}^{*t}$
is (FRS)/flat at $(e,\ldots,e)\in\underline{G}^{rt}$. 
\end{question}

\begin{example}
\label{exa:commutator and power word}~
\begin{enumerate}
\item Let $w=x^{m}$. Then $\varphi_{w}:\underline{G}\rightarrow\underline{G}$
is smooth at $(e,\ldots,e)$ for every simple algebraic group $\underline{G}$.
On the other hand, by analyzing the fiber over $e\in\underline{G}$,
one sees that $\varphi_{w}$ is not even flat, and that in fact $\varphi_{w}^{*(m-1)}$
is not flat. 
\item Let $w_{n}:=[[\dots[[X,Y],Y],\dots],Y]\in\mathcal{L}(X,Y)$ be the
Engel word (where $Y$ appears $n$ times). Then $\varphi_{w,n}^{*(n-1)}:\mathfrak{g}^{2n-2}\rightarrow\mathfrak{g}$
is not flat. 
\end{enumerate}
We therefore see that not all words satisfy the condition of Question
\ref{que:Are words worst at (e,...,e)} and moreover, that a potentially
optimal $\alpha$ for Items (2) and (3) of Question \ref{que:conjecture on (FRS) and flatness}
is at least $1$. There are certain low-rank phenomena for word maps
(discussed in Sections \ref{sec:Lie-algebra-word} and \ref{sec:Proof-of-Theorems- Lie algebra}),
that is, when the length (resp.~degree) of the word is large compared
to the rank of $\underline{G}$ (resp.~$\mathfrak{g}$). We therefore
expect different answers for Items (2) and (3) of Question \ref{que:conjecture on (FRS) and flatness}
when considering only groups or Lie algebras of high enough rank. 
\end{example}

\subsection{\label{subsec:Methods-and-main ideas}Methods and main ideas of the
proofs}

\subsubsection{\label{subsec:Singularities-of-word further discussion}Singularities
of word maps: Theorems \ref{thm A}-\ref{thm E}}

The proof of Theorem \ref{thm A} was inspired by \cite{AA16}, where
the special case of the commutator word map was analyzed. We generalize
some of the techniques in \cite{AA16} as well as introduce new ones.
The main principal employed is that many singularity properties, such
as flatness and the (FRS) property, are preserved under smooth deformations.
In other words, given a Lie algebra word map $\varphi_{w}:\mathfrak{g}^{r}\rightarrow\mathfrak{g}$,
we may degenerate it to a map $\psi_{w}$ which is easier to handle,
and prove that $\psi_{w}$ is flat or (FRS) instead. The idea is to
apply in each time one of the following operations to a given morphism: 
\begin{enumerate}
\item Taking a self-convolution of our morphism; 
\item Applying a degeneration to our morphism,
\end{enumerate}
such that:
\begin{enumerate}
\item The use of Operation (1) will be as economical as possible. Explicitly,
we would like the number of convolutions to be bounded by a polynomial
in the degree of $w\in\mathcal{L}_{r}$, regardless of the simple
Lie algebra under consideration. 
\item Every time we apply a degeneration, the resulting morphism should
stay in a certain nice predefined class of morphisms, and in particular
it should be generating. 
\item After finitely many steps as above we want to arrive at a morphism
we can easily show is flat or (FRS). 
\end{enumerate}
A sketch of the key steps of the proof is as follows: 
\begin{enumerate}
\item In Section \ref{subsec:Reduction to self convolutions+homogeneous},
we reduce to the case of self-convolutions of homogeneous word maps. 
\item Generalizing \cite{AA16}, we attach to each $d$-homogeneous word
map $\varphi_{w}:\mathfrak{g}^{r}\rightarrow\mathfrak{g}$ a certain
``combinatorial gadget'' which we call a $d$-polyhypergraph, and
is a graph whose hyperedges consist of multisets of $d$ vertices
and have types associated to them. Each hyperedge corresponds to a
sum of certain monomials in $\varphi_{w}$, and the types correspond
to a choice of coordinates on the target. We make sense of what it
means for such an object to be flat or (FRS). 
\item We encode the two operations above combinatorially. The convolution
operation corresponds to adding another copy of the polyhypergraph
with hyperedges of identical types, and degeneration corresponds to
eliminating certain hyperedges which are of the same type. 
\item In Sections \ref{subsec:Proof-for-SLn} and \ref{subsec:Proof-for-the general case},
we deal with $d$-homogeneous word maps on high rank classical Lie
algebras, and show that by applying an economical series of convolutions
and degenerations to the corresponding $d$-polyhypergraph, one gets
a hypergraph with a certain tame combinatorial structure (for example,
a disjoint union of hypergraphs each consisting of a single hyperedge). 
\item In Sections \ref{subsec:Proof-for low rank and few edges} we deal
with the case of low rank simple Lie algebras (when $\mathrm{rk}(\mathfrak{g})=O(d)$),
and with the case of a tame combinatorial object (as in (4)). The
case of a hypergraph consisting of a single hyperedge corresponds
to a $d$-homogeneous polynomial map $\varphi:\mathbb{A}^{n}\rightarrow\mathbb{A}^{1}$.
For such maps, one can give an effective lower bound on the log canonical
threshold of the fibers. Using a Thom-Sebastiani type result, we can
deduce that $\varphi^{*(d+1)}$ is (FRS). 
\end{enumerate}
There are certain difficulties that we deal with. In general, a generating
morphism might become non-generating after degeneration, so we need
to choose our degenerations carefully; for example, $\psi_{t}(x,y)=(x,x+ty^{2})$
is a degeneration of a dominant map $\psi_{1}(x,y)=(x,x+y^{2})$ to
a non-generating map $\psi_{0}(x,y)=(x,x)$. In addition, there are
certain low rank phenomena in Lie algebra word maps that make the
choice of degenerations especially difficult. Firstly, words of degree
$d$ can be trivial on Lie algebras of comparable rank (see Example
\ref{exa:trivial word maps}). Moreover, certain degenerations can
be proved to be effective only when the rank is large enough ($\sim d$
). Since the degree $d$ of the words could be arbitrary large, Step
(5) is crucial.

The idea of the proof of Theorem \ref{thm E} is to reduce to the
case of matrix word maps, which is Theorem \ref{thm D}(1). The main
observation (Lemma \ref{lem:large generating unipotent subgroups})
is that $\mathrm{SL}_{2n}$ is boundedly generated by matrices of
the form 
\[
\left\{ \left(\begin{array}{cc}
I_{n} & A\\
0 & I_{n}
\end{array}\right):A\in M_{n}\right\} \text{ and }\left\{ \left(\begin{array}{cc}
I_{n} & 0\\
B & I_{n}
\end{array}\right):B\in M_{n}\right\} ,
\]
and that $\left(\begin{array}{cc}
I_{n} & A\\
0 & I_{n}
\end{array}\right)^{-1}=\left(\begin{array}{cc}
I_{n} & -A\\
0 & I_{n}
\end{array}\right)$. This allows us to write $\varphi_{w}$ in block matrix form, where
each block is a polynomial of matrices (in $M_{n}$), which we know
how to deal with. This is done in Section \ref{sec:flatness-and-(FRS) of word maps}.

\subsubsection{\label{subsec:The-commutator-word map}The commutator word map}

Given a $K$-scheme $X=\spec K[x_{1},\dots,x_{n}]/(f_{1},\dots,f_{k})$
and $m\in\nats$, one may define its \textsl{$m$-th jet scheme} $J_{m}(X)$,
which is the scheme having the following coordinate ring:
\[
K[x_{1},\dots,x_{n},x_{1}^{(1)},\dots,x_{n}^{(1)},\dots,x_{1}^{(m)},\dots,x_{n}^{(m)}]/(\{f_{j}^{(u)}\}_{j=1,u=1}^{k,m}),
\]
where $f_{i}^{(u)}$ is the $u$-th formal derivative of $f_{i}$.
Given a morphism $\varphi:X\rightarrow Y$ of affine $K$-schemes,
it induces a morphism $J_{m}(\varphi):J_{m}(X)\rightarrow J_{m}(Y)$
on the corresponding jet schemes called the\textsl{ $m$-th jet morphism},
which is given by $J_{m}(\varphi)=(\varphi,\varphi^{(1)},\dots,\varphi^{(m)})$,
with $\varphi^{(u)}$ again being the $u$-th formal derivative of
$\varphi$ (as a polynomial map). See Section \ref{subsec:Basic-properties-of}
for a more general (and precise) definition. Using this language,
Musta\c{t}\u{a} introduced the following criterion for rational singularities: 
\begin{thm}[\cite{Mus01}]
Let $X$ be a geometrically irreducible, local complete intersection
variety. Then the $m$-th jet scheme $J_{m}(X)$ of $X$ is geometrically
irreducible for all $m\in\nats$ if and only if $X$ has rational
singularities. 
\end{thm}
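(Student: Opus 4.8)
The plan is to reduce the statement, via a standard ``main component'' analysis of jet schemes, to a dimension estimate for the jets lying over the singular locus, and then to evaluate that estimate on a log resolution by means of the change of variables formula for motivic integration.

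First I would replace ``rational singularities'' by ``canonical singularities'': for a local complete intersection variety these two notions coincide (this rests on Elkik's theorem that canonical singularities are rational, together with the Gorenstein structure of an l.c.i.; see the references in \cite{Mus01}). So it suffices to prove that $J_{m}(X)$ is geometrically irreducible for all $m$ if and only if $X$ has canonical singularities, and since jet schemes commute with the base change $K\hookrightarrow\overline{K}$ I may assume $K=\overline{K}$.

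Next I would set up the main component picture. Writing $X$ locally as $V(f_{1},\dots,f_{c})\subseteq\mathbb{A}^{N}$ with $n:=\dim X=N-c$, the scheme $J_{m}(X)$ is cut out by the $(m+1)c$ equations $\{f_{j}^{(u)}\}$ inside $\mathbb{A}^{(m+1)N}$, so every irreducible component of $J_{m}(X)$ has dimension at least $(m+1)n$. Let $X_{\mathrm{sm}}$ be the smooth locus and $Z=X\setminus X_{\mathrm{sm}}$ the (proper, closed) singular locus; then $J_{m}(X_{\mathrm{sm}})$ is irreducible of dimension exactly $(m+1)n$, its closure is a component of $J_{m}(X)$, and every other component lies inside $\pi_{m}^{-1}(Z)$, where $\pi_{m}\colon J_{m}(X)\to X$ is the truncation map. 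A short argument then shows that $J_{m}(X)$ is irreducible precisely when $\dim\pi_{m}^{-1}(Z)<(m+1)n$, so the theorem reduces to the equivalence
\[
\dim\pi_{m}^{-1}(Z)<(m+1)n\ \text{ for all }m\in\mathbb{N}\quad\Longleftrightarrow\quad X\text{ has canonical singularities}.
\]

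The heart of the argument, and the step I expect to be hardest, is this last equivalence. I would fix a log resolution $f\colon Y\to X$ and, using the Gorenstein structure of $X$, write $K_{Y/X}=\sum_{i}a_{i}E_{i}$, the sum over the prime exceptional divisors. One compares $J_{m}(X)$ with $J_{m}(Y)$ through $J_{m}(f)$: stratifying the jets based at $Z$ according to their contact multiplicities $(\nu_{i})$ with the divisors $E_{i}$, the change of variables formula of Kontsevich and Denef--Loeser shows that the stratum with multiplicity vector $(\nu_{i})$ has dimension $(m+1)n-\sum_{i}\nu_{i}(a_{i}+1)$ up to terms of lower order in $m$, and that for $m\gg0$ every bounded multiplicity vector occurs. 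Summing over the (finitely many relevant) strata, one finds $\dim\pi_{m}^{-1}(Z)<(m+1)n$ for all $m$ when $\min_{i}(a_{i}+1)\ge1$, and $\dim\pi_{m}^{-1}(Z)\ge(m+1)n$ for $m\gg0$ as soon as some $a_{i}+1\le0$; since $\min_{i}(a_{i}+1)\ge1$, i.e.\ $a_{i}\ge0$ for all $i$, is exactly the definition of canonical singularities, this yields the desired equivalence. The delicate point throughout is controlling the dimensions of the fibers of $J_{m}(f)$ by the order of contact with $K_{Y/X}$ --- this is where the l.c.i.\ hypothesis is essential, and where one genuinely needs the arc space / motivic integration machinery rather than a bare count of defining equations.
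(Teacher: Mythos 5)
Your outline follows the same strategy as Mustață's original proof, which the paper simply cites, and your ``main component'' analysis is correct: for $X$ a reduced, irreducible l.c.i.\ of dimension $n$, every component of $J_m(X)$ has dimension $\geq(m+1)n$ (the defining ideal of $J_m(X)$ in $\mathbb{A}^{(m+1)N}$ has $(m+1)c$ generators), the closure of $J_m(X^{\mathrm{sm}})$ is the unique such component not contained in $\pi_m^{-1}(Z)$, and hence irreducibility of $J_m(X)$ for all $m$ is equivalent to $\dim\pi_m^{-1}(Z)<(m+1)n$ for all $m$.

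There is, however, a genuine gap in your opening reduction. You assert that rational and canonical singularities coincide for an l.c.i.\ variety and pass to the canonical case. Both notions presuppose normality, and in the direction ``$J_m(X)$ irreducible for all $m$ $\Rightarrow$ $X$ has rational singularities'' normality is part of the conclusion but is established nowhere in your argument; without it, the relative canonical divisor $K_{Y/X}=\sum_i a_iE_i$ you introduce has no invariant meaning and ``canonical'' is simply not defined. The missing step is short but must be said: if $J_1(X)$ is irreducible, then $\dim\pi_1^{-1}(Z)<2n$, while over every non-smooth point $x$ the fiber $\pi_1^{-1}(x)\cong T_xX$ already has dimension $\geq n+1$, forcing $\dim Z\leq n-2$; since an l.c.i.\ is Cohen--Macaulay, hence $S_2$, Serre's criterion gives normality. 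This should precede the appeal to Elkik.

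A secondary remark, not a logical error but a warning about feasibility: you propose to resolve $X$ itself and apply change of variables to $J_\infty(X)$. The clean form of that formula concerns the smooth side of $f$, and the motivic measure on the arc space of a \emph{singular} $X$ requires care (jets need not lift, and the fibers of $J_m(f)$ are governed by $\mathrm{ord}(K_{Y/X})$ only in a stable range). Mustață's actual argument embeds $X$ in a smooth ambient $M$ of dimension $N$, identifies the jets of $X$ with the contact locus $\mathrm{Cont}^{\geq m+1}(I_X)\subseteq J_m(M)$, and applies change of variables to a log resolution of the pair $(M,X)$; the equality of the codimension with the number of defining equations is precisely what converts the contact-locus dimension count into the estimate for $J_m(X)$. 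Your formula $(m+1)n-\sum_i\nu_i(a_i+1)$ is the right one, but making it rigorous is substantially easier through the ambient space than along the route you sketch.
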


The relative formulation of this statement is as follows: 
\begin{cor}[Corollary \ref{cor: singularity properties through dim of jets}]
\label{cor:relative criterion for (FRS)}Let $X$ and $Y$ be smooth,
geometrically irreducible $K$-varieties, and let $\varphi:X\rightarrow Y$
be a flat morphism with geometrically irreducible fibers. Then $\varphi$
is (FRS) if and only if the $m$-th jet map $J_{m}(\varphi):J_{m}(X)\rightarrow J_{m}(Y)$
is flat, with geometrically irreducible fibers for each $m\in\nats$. 
\end{cor}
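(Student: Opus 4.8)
The plan is to deduce this relative statement from Mustață's absolute criterion by a fiberwise argument, using that jets commute with fibers. First I would recall the key compatibility: for a morphism $\varphi:X\to Y$ and a point $y\in Y(\bar K)$, the jet scheme of the fiber is the fiber of the jet morphism, i.e.\ $J_m(X_{y,\varphi}) \simeq J_m(\varphi)^{-1}(J_m(y))$, where $J_m(y)$ is the ``constant jet'' determined by $y$ (this is just the functorial description $J_m(X)(R) = X(R[t]/t^{m+1})$ together with the fact that $J_m(\spec \bar K) = \spec \bar K$). Since $Y$ is smooth, $J_m(Y)\to Y$ is a Zariski-locally trivial fibration with affine space fibers, so $J_m(Y)$ is smooth and geometrically irreducible; moreover every geometric point of $J_m(Y)$ lying over a given $y\in Y$ can be translated (using the group structure on the fibers of $J_m(Y)\to Y$, or simply by an automorphism of $\spec \bar K[t]/t^{m+1}$) so that flatness/geometric irreducibility of $J_m(\varphi)$ over one such point is equivalent to the same over all of them — hence it suffices to control the fibers $J_m(\varphi)^{-1}(J_m(y)) = J_m(X_{y,\varphi})$.

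For the forward direction, suppose $\varphi$ is (FRS). Then $\varphi$ is flat with reduced fibers of rational singularities. Flatness of $\varphi$ with $X,Y$ smooth forces the fibers $X_{y,\varphi}$ to be local complete intersection schemes of pure dimension $\dim X - \dim Y$; being reduced with rational singularities and (by hypothesis) geometrically irreducible, each such fiber is a geometrically irreducible lci variety with rational singularities. Mustață's theorem then gives that $J_m(X_{y,\varphi})$ is geometrically irreducible for all $m$. By the compatibility above, the fibers of $J_m(\varphi)$ over points of the form $J_m(y)$ are geometrically irreducible of the expected dimension $\dim J_m(X) - \dim J_m(Y)$ (both jet schemes have dimension $(m+1)$ times that of the base, so the expected relative dimension scales correctly); since this holds over a dense set of points of $J_m(Y)$ whose translates cover $J_m(Y)$, and the fiber dimension is everywhere the expected one, the miracle flatness criterion (a morphism of pure-dimensional schemes, source Cohen--Macaulay — here $J_m(X)$ is a cone-like construction over smooth $X$, hence smooth or at least reduced lci — target regular, with all fibers of the expected dimension) yields flatness of $J_m(\varphi)$, with geometrically irreducible fibers.

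For the converse, assume $J_m(\varphi)$ is flat with geometrically irreducible fibers for every $m$. Taking $m=0$ recovers that $\varphi=J_0(\varphi)$ is flat with geometrically irreducible fibers, so again each $X_{y,\varphi}$ is a geometrically irreducible lci scheme. Applying the compatibility and the hypothesis at level $m$, $J_m(X_{y,\varphi})$ is (a fiber of a flat map with geometrically irreducible fibers, hence) geometrically irreducible for all $m$; Mustață's theorem (in the direction ``all jet schemes irreducible $\Rightarrow$ rational singularities'') then shows $X_{y,\varphi}$ has rational singularities. In particular it is normal, hence reduced, so $\varphi$ is (FRS). The main obstacle I anticipate is the bookkeeping around the base change over a point: one must check carefully that $J_m$ sends a fiber over $y\in Y$ to the fiber of $J_m(\varphi)$ over the canonical jet $J_m(y)\in J_m(Y)$ (not over some other jet lifting $y$), and that — because $Y$ is smooth and the fibration $J_m(Y)\to Y$ has homogeneous-space fibers — controlling $J_m(\varphi)$ over this one section is equivalent to controlling it everywhere; this is where one invokes generic flatness together with the equidimensionality of jet schemes of lci varieties (a fact due to Mustață) to promote ``generic'' statements on $J_m(Y)$ to statements over every point.
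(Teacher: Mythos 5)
The converse direction (``jet-(FGI) for all $m$ $\Rightarrow$ (FRS)'') is fine and matches the paper: over the constant jet $J_m(y)$ the fiber of $J_m(\varphi)$ is $J_m(X_{y,\varphi})$, and Mustață's absolute criterion then gives rational singularities of each lci fiber. The problem is in the forward direction, at exactly the place you flag as the anticipated obstacle: passing from the constant jets $\{J_m(y)\}_{y\in Y}\subset J_m(Y)$ to arbitrary jets. The two mechanisms you propose for this do not work. There is no natural group structure on the fibers of $J_m(Y)\to Y$ that acts on $J_m(X)$ compatibly with $J_m(\varphi)$, and the reparameterization group $\mathrm{Aut}(\mathrm{Spec}\,\overline{K}[t]/t^{m+1})$ (the only canonical action) is far from transitive on those fibers: already for $Y=\mathbb{A}^1$ and $m\ge 2$ the orbit of $(y_0,0,1,0,\ldots)$ is disjoint from that of $(y_0,1,0,\ldots,0)$ because reparameterizations preserve the order of vanishing. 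Moreover, miracle flatness (once you have equidimensionality of fibers) yields only flatness of $J_m(\varphi)$, not geometric irreducibility of its fibers; and geometric irreducibility of fibers is not one of the ``local'' singularity properties that propagate along the $\mathbb{G}_m$-degeneration from the zero section (the list in Proposition \ref{Prop:properties preserved under deformations} contains flatness, reducedness, local integrality, normality, (FRS), smoothness --- pointedly not (FGI)).

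The paper closes this gap with a different idea. After using Mustață plus the $\mathbb{G}_m$-degeneration (Corollary \ref{cor:degeneration on jet schemes} and \cite[Proposition 1.5]{Mus01}) to get that $J_m(\varphi)$ is flat with \emph{locally integral} fibers everywhere --- so that the geometrically irreducible components of any fiber are pairwise disjoint --- one identifies the smooth locus $J_m(X)^{\mathrm{sm},J_m(\varphi)}$ with $J_m(X^{\mathrm{sm},\varphi})$, and then invokes Lemma \ref{lem:irreducible generic fiber in jets}: a \emph{smooth} (FGI) morphism remains (FGI) after taking jets. This is the real technical input, proved by factoring $\varphi|_{X^{\mathrm{sm},\varphi}}$ locally through an \'etale cover of a projection and using that jet schemes behave well under base change along \'etale maps. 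Consequently every fiber of $J_m(\varphi)$ intersected with the smooth locus is geometrically irreducible; by reducedness of the fibers this intersection is dense, and together with local integrality this forces the whole fiber to be geometrically irreducible. Your proposal has no analogue of this smooth-locus/density step, and the ``translation'' shortcut you substitute for it is unavailable.
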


We study the commutator map through its jets. By \cite[Theorem IV]{AA16},
to prove Theorem \ref{thm F:commutator map is (FRS)} it is enough
to consider the Lie algebra case, and to show that $\varphi_{w}^{*4}:\mathfrak{g}^{8}\rightarrow\mathfrak{g}$
is (FRS) at $(0,\ldots,0)\in\mathfrak{g}^{8}$, where $\varphi_{w}(X,Y)=[X,Y]$.
Using Corollary \ref{cor:relative criterion for (FRS)}, we may consider
the $m$-th jet $J_{m}(\varphi_{w}):J_{m}(\mathfrak{g}^{2})\rightarrow J_{m}(\mathfrak{g})$
of the commutator map, and it is enough to show that each $J_{m}(\varphi_{w})$
is flat at $(0,\ldots,0)\in\mathfrak{g}^{2m}$, and that $J_{m}(\varphi_{w}^{-1}(0))$
is integral at $(0,\ldots,0)$. Since the property of being flat and
having locally integral fibers is preserved under deformations (Proposition
\ref{Prop:properties preserved under deformations}), we may use degeneration
methods as discussed in the previous subsection.

Under the identification $J_{m}(\mathfrak{g})\simeq\mathfrak{g}^{m+1}$,
the map $J_{m}(\varphi_{w})$ is of the form $(\varphi_{w},\varphi_{w}^{(1)},\ldots,\varphi_{w}^{(m)}):\mathfrak{g}^{2(m+1)}\rightarrow\mathfrak{g}^{m+1}$,
where each $\varphi_{w}^{(u)}$ is the $u$-th formal derivative of
$\varphi_{w}$, and in particular a Lie algebra word map of degree
$2$. By applying a self-convolution, and a series of degenerations,
the map $J_{m}(\varphi_{w}^{*2})$ is degenerated to a map comprised
of commutators in disjoint variables, 
\[
\phi_{m}(Y_{1},Y_{2},\ldots,Y_{2m-1},Y_{2m})=([Y_{1},Y_{2}],[Y_{3},Y_{4}],\ldots,[Y_{2m-1},Y_{2m}]),
\]
whose fibers are a product of fibers of commutator maps. Hence, it
is enough to show that $\varphi_{w}^{*2}$ is flat, with reduced,
geometrically irreducible fibers, which is done in Theorem \ref{Thm:commutator is flat}
using Fourier analysis.

It is worth noting that flatness of the commutator map is known in
the group case (\cite{Li93} and \cite[Theorem 2.8]{Sha09}). In the
Lie algebra case, except of $\mathfrak{sl}_{n}$, where a stronger
claim is proved in \cite{Bud}, no other proof was known to the authors.

\subsubsection{\label{Number theoretic characterization of the (FRS)}Number theoretic
interpretation of the (FRS) and $\varepsilon$-jet flatness properties
and applications}

Let $X$ be a finite type $\mathbb{Z}$-scheme. The Lang-Weil estimates
\cite{LW54} give the following for almost any prime $p$: 
\[
\left|X(\mathbb{F}_{p})\right|=p^{\mathrm{dim}X_{\mathbb{Q}}}\left(C_{X,p}+O(p^{-1/2})\right),
\]
where $C_{X,p}$ is the number of top dimensional irreducible components
of $X_{\overline{\mathbb{F}}_{p}}$ which are defined over $\mathbb{F}_{p}$.
In particular we see that $\left\{ \frac{\left|X(\mathbb{F}_{p})\right|}{p^{\mathrm{dim}X_{\rats}}}\right\} _{p\in\text{primes}}$
is uniformly bounded. Given a morphism $\varphi:X\rightarrow Y$ between
finite type smooth schemes, the flatness of $\varphi_{\rats}$ can
be shown to be equivalent to certain uniform bounds on the fibers
of $\varphi:X(\mathbb{F}_{q})\rightarrow Y(\mathbb{F}_{q})$ over
all finite fields (see Theorem \ref{thm: flatness and counting points}).

For a finite ring $A$, set $h_{X}(A):=\frac{\left|X(A)\right|}{\left|A\right|^{\mathrm{dim}X_{\rats}}}$
if $X_{\rats}$ is non-empty, and $h_{X}(A)=0$ otherwise. Unlike
the case of finite fields, when considering points over finite rings,
the singularities of $X$ come into play. For example, for $X=\mathrm{Spec}\mathbb{Z}[x]/(x^{2})$,
we have $h_{X}(\mathbb{Z}/p^{2k}\mathbb{Z})=p^{k}$, which is not
uniformly bounded in neither $p$ nor $k$. On the other end of the
spectrum, if $X$ is smooth, a generalization of Hensel's Lemma implies
that $\left\{ h_{X}(\ints/p^{k}\ints)\right\} _{p,k}$ is uniformly
bounded. The following natural question arises:
\begin{question}
Can we characterize finite type $\ints$-schemes $X$, such that $\left\{ h_{X}(\ints/p^{k}\ints)\right\} _{p,k}$
is uniformly bounded?
\end{question}

In \cite{AA18}, Aizenbud and Avni characterized this class of schemes
in the case where $X_{\rats}$ is a local complete intersection. This
was done relying on results of Musta\c{t}\u{a} \cite{Mus01} and
Denef \cite{Den87}, and was improved in \cite{Gla19}.
\begin{thm}[{cf.~\cite[Theorem A]{AA18}, \cite[Theorem 1.4]{Gla19}}]
\label{thm: rational singularities and point count}Let $X$ be a
finite type $\ints$-scheme, where $X_{\rats}$ is equidimensional
and a local complete intersection. 
\begin{enumerate}
\item $X_{\rats}$ is reduced and has rational singularities if and only
if there exists $C>0$ such that $h_{X}(\ints/p^{k}\ints)<C$, for
every prime $p$, and every $k\in\nats$. 
\item $X_{\rats}$ is geometrically irreducible, reduced and has rational
singularities if and only if there exists $C>0$ such that $\left|h_{X}(\ints/p^{k}\ints)-1\right|<Cp^{-1/2}$,
for every prime $p$, and every $k\in\nats$. 
\end{enumerate}
\end{thm}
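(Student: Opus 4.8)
The plan is to reformulate the two point-count conditions as statements about the behaviour, as $k\to\infty$, of a family of $p$-adic integrals, and then to read off the singularity type of $X_\rats$ from those asymptotics by means of an embedded resolution of singularities, in the spirit of \cite{Mus01,Den87}. After a standard localization and discarding the finitely many primes of bad reduction (handled separately), one may assume $X$ is affine and realized as a closed subscheme of $\mathbb{A}^N_\ints$ cut out by a regular sequence $f_1,\dots,f_c\in\ints[x_1,\dots,x_N]$, where $c=N-\dim X_\rats$ is the codimension. Writing $f=(f_1,\dots,f_c)\colon\Zp^N\to\Zp^c$ and letting $\mu$ be normalized Haar measure, one has
\[
h_X(\ints/p^k\ints)=p^{kc}\,\mu\!\left(\{x\in\Zp^N:\ f(x)\in p^k\Zp^c\}\right)=\frac{(f_*\mu)(p^k\Zp^c)}{\mathrm{vol}(p^k\Zp^c)},
\]
so that $\{h_X(\ints/p^k\ints)\}_k$ records the behaviour of the density of the pushforward $f_*\mu$ near the origin, and condition $(2)$ asks in addition that this density tend to $1$ at rate $O(p^{-1/2})$.

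Next I would fix an embedded resolution $\pi\colon Y\to\mathbb{A}^N$ that principalizes the ideal $\mathcal I_X$, defined over $\ints[1/M]$ and with good reduction for $p\nmid M$, so that $\pi^{-1}(X)=\sum_j N_j E_j$ is simple normal crossings and $K_{Y/\mathbb{A}^N}=\sum_j(\nu_j-1)E_j$. Denef's change of variables formula for $p$-adic integrals \cite{Den87} then expresses $h_X(\ints/p^k\ints)$ as an explicit finite sum over subsets $J$ of the index set, whose summands are truncated multi-geometric series in $p^{-1}$ with exponents built from $(N_j,\nu_j)_{j\in J}$ and with coefficients governed, through the Lang-Weil estimates \cite{LW54}, by the point counts of the locally closed strata $E_J^\circ=\bigl(\bigcap_{j\in J}E_j\bigr)\setminus\bigcup_{i\notin J}E_i$. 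Extracting the $k\to\infty$ asymptotics of this sum, one obtains $h_X(\ints/p^k\ints)\sim D_p\,p^{k(c-\sigma)}\,k^{\rho-1}$ for some $D_p>0$, where $\sigma=\min_j \nu_j/N_j=\mathrm{lct}(\mathbb{A}^N,\mathcal I_X)$ and $\rho$ is the largest size of a set of pairwise-intersecting $E_j$ attaining this minimum; since all coefficients that appear are nonnegative there is no cancellation. Because $\mathrm{lct}(\mathbb{A}^N,\mathcal I_X)\le\mathrm{codim}\, X=c$, we have $\sigma\le c$, and so $\{h_X(\ints/p^k\ints)\}_k$ is bounded in $k$ exactly when $\sigma=c$ and $\rho=1$.

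It then remains to match ``$\sigma=c$ and $\rho=1$'' with the stated singularity condition. Reducedness of $X_\rats$ is forced, for a non-reduced structure contributes a component of multiplicity $\ge 2$ along which $\sigma<c$; and for reduced lci $X_\rats$, writing the minimal log discrepancies of the pair $(\mathbb{A}^N,\mathcal I_X^{\,c})$ along $\mathrm{Sing}(X)$ in terms of the numbers $\nu_j-cN_j$, Musta\c{t}\u{a}'s jet-scheme criterion \cite{Mus01} --- a form of inversion of adjunction for local complete intersections --- shows that ``no $E_j$ centered over $\mathrm{Sing}(X)$ has $\nu_j/N_j\le c$'' is equivalent to $X_\rats$ having canonical, hence rational, singularities. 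This proves $(1)$. For $(2)$, when $\{h_X(\ints/p^k\ints)\}_k$ is bounded its $k\to\infty$ limit equals the leading Lang-Weil count attached to the open stratum --- essentially $|X(\mathbb{F}_p)|/p^{\dim X_\rats}$, which by \cite{LW54} is $C_{X,p}+O(p^{-1/2})$ with $C_{X,p}$ the number of top-dimensional $\mathbb{F}_p$-rational components of $X_{\overline{\mathbb{F}}_p}$ --- so $|h_X(\ints/p^k\ints)-1|<Cp^{-1/2}$ for all $p,k$ forces $C_{X,p}=1$ for all $p$, i.e.\ $X_\rats$ is geometrically irreducible; conversely geometric irreducibility gives $C_{X,p}=1$ and hence the leading coefficient $1$. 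Together with $(1)$ this yields $(2)$.

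The main obstacles, as I see them, are: (a) \emph{uniformity in $p$} --- good reduction of a single resolution controls all but finitely many primes, but the remaining ones need a separate, softer uniformity argument (this is the point sharpened in \cite{Gla19}); (b) matching the combinatorial condition $\sigma=c$, $\rho=1$ with rational singularities, which rests on inversion of adjunction in the local complete intersection setting and is the technical heart of the proof; and (c) the careful accounting of the contributions of mutually intersecting critical divisors, responsible for the factor $k^{\rho-1}$ --- it is exactly this factor that separates non-rational log canonical singularities (such as the cone over an elliptic curve, for which $h_X(\ints/p^k\ints)$ grows linearly in $k$) from rational ones, so getting it right is what makes both stated equivalences sharp rather than one-directional.
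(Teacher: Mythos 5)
The paper does not prove this theorem; it imports it, as the ``cf.'' label and the surrounding prose make explicit, from \cite{AA18} and \cite{Gla19}, where (the paper says) the proof relies on Musta\c{t}\u{a} \cite{Mus01} and Denef \cite{Den87}. There is therefore no in-paper argument to compare against, and your proposal has to be read as a reconstruction of the cited proofs.

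Your skeleton is the right one and matches \cite{AA18} for $p\gg 0$: localize, reduce to a regular sequence $f_1,\dots,f_c$, rewrite $h_X(\ints/p^k\ints)=p^{kc}\mu\{x\in\Zp^N:f(x)\in p^k\Zp^c\}$, principalize $\mathcal{I}_X$ by a log resolution with good reduction, and extract the $k$-asymptotics by Denef's change of variables together with Lang--Weil on the exceptional strata. You also correctly flag the two genuinely hard points. The gap is that the decisive step --- ``$\sigma=c,\ \rho=1$'' is equivalent to $X_\rats$ being reduced with rational singularities --- is asserted with a pointer to Musta\c{t}\u{a} and inversion of adjunction but not argued, and that is exactly where the cited references do their work. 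Concretely: (i) your $\rho$ (``largest set of pairwise-intersecting $E_j$ attaining the minimum'') is a feature of the chosen resolution, not manifestly an invariant of $X$; the right invariant is the order of the pole of the Igusa/motivic zeta function at $-c$, and the bookkeeping must run over nonempty common strata $E_J^\circ$ rather than pairwise intersections, which already differ for three SNC divisors; (ii) Musta\c{t}\u{a}'s criterion \cite{Mus01} is a statement about irreducibility of jet schemes, i.e.\ $\mathbb{F}_p[t]/(t^{m+1})$-points, whereas your asymptotic is about $\ints/p^k\ints$-points, and for singular $X$ these are \emph{not} the same; bridging them needs either a Denef--Pas/Ax--Kochen transfer (as in \cite{AA18}) or a direct identification of ``$\sigma=c,\ \rho=1$'' with canonical singularities of $X$ via the Ein--Musta\c{t}\u{a} inversion of adjunction for local complete intersections, which you invoke but do not carry out.

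Two smaller but real items. For the direction ``rational $\Rightarrow$ bounded'' you still need boundedness in $k$ at each of the finitely many bad primes; this is obtained from a resolution over $\Qp$ but is a separate argument, not a footnote. And in part (2), obtaining $O(p^{-1/2})$ uniformly in both $p$ and $k$ requires uniform control of all secondary terms of the Denef expansion --- precisely the sharpening of \cite{Gla19} over \cite{AA18} --- and should not be waved through.
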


Notice that an (FRS) morphism $\varphi_{\rats}$ is in particular
flat, so its fibers are local complete intersections (assuming the
base and target are smooth). The condition of rational singularities
puts each individual $\ints$-fiber of $\varphi$ in the framework
of Theorem \ref{thm: rational singularities and point count}. Hence,
under some mild assumptions, the (FRS) property of $\varphi_{\rats}$
can be interpreted by certain uniform bounds on the size of the fibers
of $\varphi:X(\ints/p^{k}\ints)\rightarrow Y(\ints/p^{k}\ints)$.
We make this relation more precise in an upcoming paper joint with
Raf Cluckers \cite{CGH}.

Similarly to the (FRS) property, Theorem \ref{thm H-epsilon jet flat and counting points}
characterizes the $\varepsilon$-jet flatness property in terms of
certain uniform bounds on the size of the fibers over finite rings
of the form $\ints/p^{k}\ints$. The idea of the proof is to first
reduce to the study of the $\mathbb{F}_{p}[t]/t^{k}$-fibers of $\varphi$,
or equivalently, the fibers of $J_{k-1}(\varphi):J_{k-1}(X)(\mathbb{F}_{p})\rightarrow J_{k-1}(Y)(\mathbb{F}_{p})$.
The Lang-Weil bounds on the $k$-th jet maps $J_{k}(\varphi)$, connect
between the $\varepsilon$-flatness of $J_{k-1}(\varphi)$ and the
desired upper bound on the $\mathbb{F}_{p}$-fibers of $J_{k-1}(\varphi)$,
for each fixed $k$. Since the family of morphisms $\{J_{k}(\varphi)\}_{k\in\nats}$
is of unbounded complexity, a finer treatment is needed.

It can be shown that the function $g:Y(\Zp)\times\nats\rightarrow\reals$
defined by $g(y,k):=\left|\varphi^{-1}(r_{k}(y))\right|$, where $r_{k}:Y(\Zp)\rightarrow Y(\ints/p^{k}\ints)$
is the reduction map, belongs to a certain class of functions with
a well behaved structure and integration theory, called \textsl{motivic
functions }(in the sense of \cite{CL08,CL10}, see Section \ref{subsec:The-Denef-Pas-language}
for more details). We then use results in motivic integration, to
deal with the unbounded complexity of $\{J_{k}(\varphi)\}_{k\in\nats}$
and finish the proof.

Most of the probabilistic applications follow from the algebro-geometric
results (Theorems \ref{thm A}-\ref{thm F:commutator map is (FRS)})
and the number theoretic interpretations of the flatness, $\varepsilon$-jet
flatness and the (FRS) properties. To prove Theorem \ref{thm:K- dictionary for L1}
we generalize \cite[Theorem 2]{LST19}. The main observation of \cite{LST19}
is that the property that a word $w\in F_{r}$ is almost $\{\mathbb{F}_{q}\}$-uniform
in $L^{1}$ has a geometric characterization, namely, that the generic
fiber of $\varphi_{w}:\underline{G}^{r}\rightarrow\underline{G}$
is geometrically irreducible for every simple algebraic group $\underline{G}$.
We show that if this is the case, then the $m$-th jet morphism $J_{m}(\varphi_{w}):J_{m}(\underline{G}^{r})\rightarrow J_{m}(\underline{G})$
also has a geometrically irreducible generic fiber (Lemma \ref{lem:irreducible generic fiber in jets}),
which in turn implies that $w\in F_{r}$ is almost $\{\ints_{q}\}$-uniform.

\subsection{Conventions}
\begin{itemize}
\item By $\nats$ we mean the set $\{0,1,2,...\}$.
\item We denote the complement of a set $S$ by $S^{c}$. 
\item Unless explicitly stated otherwise, $K$ is a field of characteristic
$0$ and $F$ is a non-Archimedean local field of characteristic $0$
whose ring of integers is $\mathcal{O}_{F}$. 
\item For any scheme $X$ and an element $x\in X$ we denote by $\kappa(x)$
its function field. 
\item For a morphism $\varphi:X\rightarrow Y$ of schemes, we denote by
$X_{y,\varphi}$ the scheme theoretic fiber $\mathrm{Spec}(\kappa(y))\times_{Y}X$
of $\varphi$ over $y\in Y$. We sometimes denote it by $X_{y}$ if
the map $\varphi$ is understood. 
\item For a $K$-morphism $\varphi:X\rightarrow Y$ between $K$-varieties
$X$ and $Y$, we denote by $X^{\mathrm{sm}}$ (resp.~$X^{\mathrm{sing}}$)
the smooth (resp.~non-smooth) locus of $X$, and we denote by $X^{\mathrm{sm,\varphi}}$
(resp.~$X^{\mathrm{sing,\varphi}}$) the smooth (resp.~non-smooth)
locus of $\varphi$ in $X$. 
\item For an $S$-scheme $X$, we denote the base change with respect to
$S'\rightarrow S$ by $X_{S'}$. 
\end{itemize}
\begin{acknowledgement*}
We thank Uri Bader, Itai Benjamini, Joseph Bernstein, Raf Cluckers,
Persi Diaconis, Ronen Eldan, Tsachik Gelander, Dmitry Gourevitch,
Boris Kunyavskii, Gady Kozma, Erez Lapid, Martin Liebeck, Dan Mikulincer,
Doron Puder, Aner Shalev, Shai Shechter, Stephan Snegirov and Pham
Huu Tiep for useful conversations and for commenting on various preliminary
versions of this project. We thank Nir Avni for many useful conversations
and discussions, and for answering various questions. Finally, we
wish to thank Rami Aizenbud for countless useful conversations, and
for his constant guidance and support throughout this project. Part
of this work was done during the ``Group Representation Theory and
Applications'' program at MSRI. Both authors wish to thank MSRI for
its excellent hospitality and learning environment. Both authors were
partially supported by ISF grant 249/17, BSF grant 2018201 and by
a Minerva foundation grant. 
\end{acknowledgement*}

\section{Preliminaries}

\subsection{\label{subsec:Basic-properties-of}Basic properties of jet schemes }

For a thorough discussion of jet schemes and their properties see
\cite[Chapter 3]{CLNS18} and \cite{LM09}. 
\begin{defn}[{cf.~\cite[Section 3.2]{CLNS18}}]
\label{def:basic definition jet schemes}Let $S$ be a scheme and
let $X$ be a scheme over $S$. 
\begin{enumerate}
\item For any $m\in\nats$, we define the $m$-th jet scheme of $X$, denoted
$J_{m}(X/S)$ as the $S$-scheme representing the functor 
\[
\mathcal{J}_{m}(X/S):W\longmapsto\mathrm{Hom}_{S\text{-schemes}}(W\times_{\spec\ints}\spec(\ints[t]/(t^{m+1})),X),
\]
where $W$ is an $S$-scheme. When the scheme $S$ is understood,
we omit it from our notation. 
\item Given an $S$-morphism $\varphi:X\rightarrow Y$ between two $S$-schemes
$X$ and $Y$, and given an $S$-scheme $W$, the composition with
$\varphi$ yields a map 
\[
\mathcal{J}_{m}(X/S)(W)\rightarrow\mathcal{J}_{m}(Y/S)(W),
\]
which corresponds to a morphism 
\[
J_{m}(\varphi):J_{m}(X/S)\rightarrow J_{m}(Y/S),
\]
and is called the \textsl{$m$-th jet} of $\varphi$. 
\end{enumerate}
\end{defn}

From now on we restrict ourselves to finite type $K$-schemes (with
$S=\mathrm{Spec}K$).
\begin{defn}
\label{def:truncation maps}~
\begin{enumerate}
\item Let $X$ be a finite type $K$-scheme. For any $m\geq n\in\nats$
and any finite type $K$-algebra $A$, we have a natural map $A[t]/(t^{m+1})\rightarrow A[t]/(t^{n+1})$
which induces, via $\mathcal{J}_{m}(X)(\mathrm{Spec}A)\rightarrow\mathcal{J}_{n}(X)(\mathrm{Spec}A)$,
a natural morphism $\pi_{n}^{m}:J_{m}(X)\rightarrow J_{n}(X)$. The
maps $\{\pi_{n}^{m}\}_{m\geq n}$ are called \textit{truncation maps}.
Note that any $K$-morphism $\varphi:X\rightarrow Y$ gives rise to
a collection of morphisms $\{J_{m}(\varphi):J_{m}(X)\rightarrow J_{m}(Y)\}_{m\in\nats}$
between the corresponding $m$-th jet schemes, which commute with
$\{\pi_{n}^{m}\}_{m\geq n}$. 
\item The natural map $A\rightarrow A[t]/(t^{m+1})$ induces a \textsl{zero
section} $s_{m}:X\hookrightarrow J_{m}(X)$. 
\end{enumerate}
\end{defn}

Let $X\subseteq\mathbb{A}^{n}$ be an affine $K$-scheme whose coordinate
ring is $K[x_{1},\dots,x_{n}]/(f_{1},\dots,f_{k})$. Then the coordinate
ring of $J_{m}(X)$ is 
\[
K[x_{1},\dots,x_{n},x_{1}^{(1)},\dots,x_{n}^{(1)},\dots,x_{1}^{(m)},\dots,x_{n}^{(m)}]/(\{f_{j}^{(u)}\}_{j=1,u=1}^{k,m}),
\]
where $f_{i}^{(u)}$ is the $u$-th formal derivative of $f_{i}$.
Here, the zero section is $s_{m}(x_{1},\dots,x_{n})=(x_{1},\dots,x_{n},0,\dots,0)\in J_{m}(X)$. 
\begin{example}
~\label{exa:some examples}
\begin{enumerate}
\item $J_{m}(\mathbb{A}^{n})\simeq\mathbb{A}^{n(m+1)}$. 
\item Let $X=\mathrm{Spec}\left(K[x]/(x^{n})\right)$. Then 
\[
J_{2}(X)=\mathrm{Spec}\left(K[x,x^{(1)},x^{(2)}]/(x^{n},nx^{n-1}x^{(1)},n(n-1)x^{n-2}(x^{(1)})^{2}+nx^{n-1}x^{(2)})\right).
\]
\item For any smooth $K$-scheme $X$, the first jet scheme $J_{1}(X)$
can be identified with the tangent bundle $TX$. 
\end{enumerate}
\end{example}

Let $\varphi:\mathbb{A}^{n_{1}}\rightarrow\mathbb{A}^{n_{2}}$ be
a morphism between affine spaces. Then $J_{m}(\varphi):\mathbb{A}^{n_{1}(m+1)}\rightarrow\mathbb{A}^{n_{2}(m+1)}$
is given by formally deriving $\varphi$, $J_{m}(\varphi)=(\varphi,\varphi^{(1)},\dots,\varphi^{(m)})$.
By functoriality, the $m$-th jet $J_{m}(\varphi)$ of a morphism
$\varphi:X\rightarrow Y$ of affine $K$-schemes, is given by the
formal derivative of $\varphi$. 
\begin{example}
\label{exa:example for jet of the commutator map}Let $\phi:\mathfrak{gl}_{n}^{2}\rightarrow\mathfrak{gl}_{n}$
be the commutator map $\phi(X,Y)=[X,Y]$. Then 
\begin{align*}
\phi_{1}(X,Y,X^{(1)},Y^{(1)}) & =\left([X,Y],XY^{(1)}+X^{(1)}Y-YX^{(1)}-Y^{(1)}X\right)\\
 & =\left([X,Y],[X,Y^{(1)}]+[X^{(1)},Y]\right),
\end{align*}
where $X^{(1)}$ is the formal derivative of $X$ (by deriving each
entry). We will use these kind of computations in Sections \ref{sec:Lie-algebra-word},
\ref{sec:Proof-of-Theorems- Lie algebra} and \ref{sec:The-commutator-map-revisited}. 
\end{example}

Here are two useful facts: 
\begin{lem}
\label{lem:useful facts on jet schemes}~
\begin{enumerate}
\item If $X$ is a smooth $K$-variety, then $J_{m}(X)$ is smooth of dimension
$(m+1)\mathrm{dim}X$ for any $m\geq1$, and all the truncation maps
$\text{\ensuremath{\pi_{n}^{m}}}:J_{m}(X)\rightarrow J_{n}(X)$ are
locally trivial fibrations with fiber $\mathbb{A}^{\mathrm{dim}X(m-n)}$. 
\item If $\varphi:X\rightarrow Y$ is an \'etale morphism then the commutative
diagram 
\[
\begin{array}{ccc}
J_{m}(X) & \stackrel{J_{m}(\varphi)}{\longrightarrow} & J_{m}(Y)\\
\downarrow\pi_{0}^{m} & \, & \downarrow\pi_{0}^{m}\\
X & \stackrel{\varphi}{\longrightarrow} & Y
\end{array}
\]
is a fibered diagram (and in particular $J_{m}(\varphi)$ is \'etale).
\end{enumerate}
\end{lem}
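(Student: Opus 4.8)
The plan is to deduce both assertions from the explicit description of the jet schemes of affine space together with the defining universal property, reducing the general smooth case to $\mathbb{A}^{n}$ via étale charts. I would establish (2) first, as it is the key to the localization in (1).

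\textbf{Part (2).} By Definition~\ref{def:basic definition jet schemes}, $J_{m}(-/K)$ represents the functor $W\mapsto\mathrm{Hom}_{K}(W_{m},-)$ with $W_{m}:=W\times_{K}\mathrm{Spec}(K[t]/(t^{m+1}))$, so it suffices to show that for every $K$-scheme $W$ the natural map
\[
\mathrm{Hom}_{K}(W_{m},X)\longrightarrow\mathrm{Hom}_{K}(W,X)\times_{\mathrm{Hom}_{K}(W,Y)}\mathrm{Hom}_{K}(W_{m},Y)
\]
is bijective, where the maps into the fiber product are restriction along the closed immersion $W\hookrightarrow W_{m}$ cut out by the nilpotent ideal $(t)$ (on the left factor, composed with $\pi_{0}^{m}$ under the jet identification) and composition with $\varphi$. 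An element of the right-hand side is a pair $(a\colon W\to X,\ b\colon W_{m}\to Y)$ with $\varphi\circ a=b|_{W}$. Since $\varphi$ is étale, hence formally étale, and $W\hookrightarrow W_{m}$ is a nilpotent (indeed iterated square-zero) thickening, there is a unique morphism $\widetilde{a}\colon W_{m}\to X$ with $\widetilde{a}|_{W}=a$ and $\varphi\circ\widetilde{a}=b$: one performs the lift over an affine open cover of $W$, where the lifting criterion applies verbatim, and uniqueness guarantees that the local lifts glue. Naturality in $W$ then identifies $J_{m}(X)$ with $X\times_{Y}J_{m}(Y)$ as schemes over $X$ (via $\pi_{0}^{m}$) and over $Y$, so the square in the statement is Cartesian; consequently $J_{m}(\varphi)$ is the base change of the étale morphism $\varphi$ along $\pi_{0}^{m}\colon J_{m}(Y)\to Y$, hence étale.

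\textbf{Part (1).} For $X=\mathbb{A}^{n}$, the coordinate-ring description recorded after Definition~\ref{def:truncation maps} gives $J_{m}(\mathbb{A}^{n})\cong\mathbb{A}^{n(m+1)}$, with $\pi_{l}^{m}$ the linear projection onto the first $n(l+1)$ coordinates; this is a trivial fibration with fiber $\mathbb{A}^{n(m-l)}$, and in particular $J_{m}(\mathbb{A}^{n})$ is smooth of dimension $n(m+1)$. For a general smooth $K$-variety $X$ with $\dim X=n$, every point has a Zariski-open neighborhood $U$ admitting an étale morphism $\rho\colon U\to\mathbb{A}^{n}$. By (2) we have $J_{m}(U)\cong U\times_{\mathbb{A}^{n}}J_{m}(\mathbb{A}^{n})$ and $J_{l}(U)\cong U\times_{\mathbb{A}^{n}}J_{l}(\mathbb{A}^{n})$; combining these with transitivity of fiber products and the fact that $J_{m}(\mathbb{A}^{n})\to\mathbb{A}^{n}$ factors through $J_{l}(\mathbb{A}^{n})$ identifies $\pi_{l}^{m}\colon J_{m}(U)\to J_{l}(U)$ with the base change of $\pi_{l}^{m}\colon J_{m}(\mathbb{A}^{n})\to J_{l}(\mathbb{A}^{n})$ along $J_{l}(U)\to J_{l}(\mathbb{A}^{n})$. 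Hence $J_{m}(U)$ is smooth of dimension $n(m+1)$ and $\pi_{l}^{m}$ is, over $U$, a trivial $\mathbb{A}^{n(m-l)}$-bundle. Since the $J_{m}(U)$ glue to $J_{m}(X)$ compatibly with the truncation maps as $U$ ranges over such a cover, this proves (1). (Alternatively, for (1) one can argue directly: Zariski-locally $X$ is a complete intersection whose Jacobian has maximal rank, the derivative relations $f_{j}^{(u)}$ are linear in the top jet variables with that same full-rank coefficient matrix, and a triangulation in the jet levels exhibits $J_{m}(X)$ as smooth of the stated dimension.)

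\textbf{Main obstacle.} There is no deep difficulty here; the only point requiring care is the passage from the infinitesimal lifting property of formally étale morphisms—naturally phrased for affine test schemes—to an arbitrary $W$, i.e.\ the gluing step in (2), together with the careful bookkeeping of which morphism is realized as a base change of which, so as to read off étaleness of $J_{m}(\varphi)$ in (2) and local triviality of the truncation maps in (1).
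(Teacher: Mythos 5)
The paper states this lemma without proof, referring the reader to standard references (\cite{CLNS18}, \cite{LM09}) for the basic theory of jet schemes, so there is no internal argument to compare against. Your proof is correct and is essentially the argument one finds in those sources: the Cartesian-square statement in (2) is verified on functors of points by applying the infinitesimal lifting criterion for formally \'etale morphisms to the nilpotent thickening $W\hookrightarrow W\times_K\mathrm{Spec}(K[t]/(t^{m+1}))$, with uniqueness of local lifts letting you glue over an affine cover of $W$; and (1) is then deduced by writing $X$ locally as \'etale over $\mathbb{A}^n$, using (2) to identify $J_m(U)$ with $U\times_{\mathbb{A}^n}\mathbb{A}^{n(m+1)}$ and the truncation with a base change of a linear coordinate projection. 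The bookkeeping of which morphism is a base change of which (needed to pass from triviality over $\mathbb{A}^n$ to local triviality over $X$) is handled correctly, and the parenthetical alternative argument for (1) via the Jacobian of the $f_j^{(u)}$ is also a valid and standard way to see smoothness of $J_m(X)$ directly.
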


\subsection{\label{subsec:Log-cannonical-threshold}Log canonical threshold}

Let $X$ be a smooth $K$-variety and let $Z\subseteq X$ be a closed
subscheme. A \textsl{log resolution} $\pi:Y\rightarrow X$ of $(X,Z)$
is a proper, birational morphism from a smooth variety $Y$, such
that $D:=\pi^{-1}(Z)$ is a Cartier divisor and $D+K_{Y/X}$ is a
divisor with simple normal crossings, where $K_{Y/X}$ is the relative
canonical divisor defined by the determinant of the Jacobian of $\pi$
(see \cite[Section 1.1]{Mus12}). Such a resolution exists by Hironaka
\cite{Hir64}. In particular $K_{Y/X}$ and $D$ can be written as
$K_{Y/X}=\sum\limits _{i=1}^{N}k_{i}E_{i}$ and $D=\sum\limits _{i=1}^{N}a_{i}E_{i}$,
where the $E_{i}$ are distinct smooth irreducible divisors.

The \textsl{log canonical threshold} of the pair $(X,Z)$ is defined
as $\mathrm{lct}(X,Z):=\min\limits _{x\in X}\mathrm{lct}_{x}(X,Z)$,
where $\mathrm{lct}_{x}(X,Z):=\underset{i:x\in\pi(E_{i})}{\mathrm{min}}\frac{k_{i}+1}{a_{i}}$.
By convention, if $x\notin Z$, we set $\mathrm{lct}_{x}(X,Z)=\infty$.
For $X$ affine and smooth, with coordinate ring $K[X]$, and an ideal
$\mathfrak{a}\subseteq K[X]$, we write $\mathrm{lct}(\mathfrak{a}):=\mathrm{lct}(X,V(\mathfrak{a}))$,
where $V(\mathfrak{a})$ is the vanishing set of $\mathfrak{a}$.
In general, lower values of $\mathrm{lct}(X,Z)$ correspond to "wilder"
singularities of the pair $(X,Z)$. If $\mathrm{lct}(X,Z)\geq1$ then
the pair $(X,Z)$ is \emph{log canonical}. If $Z$ is a smooth subscheme,
then $\mathrm{lct}(X,Z)=\dim X-\dim Z$, which is the largest possible
value. 

Musta\c{t}\u{a} showed that the log canonical threshold can be characterized
in terms of the growth rate of the dimensions of the jet schemes of
$Z$: 
\begin{thm}[{\cite[Corollary 0.2]{Mus02}, \cite[Corollary 7.2.4.2]{CLNS18}}]
\label{thm:log canonical threshold and jet schemes}Let $X$ be a
smooth, geometrically irreducible $K$-variety, and let $Z\subsetneq X$
be a closed subscheme. Then 
\[
\mathrm{lct}(X,Z)=\mathrm{dim}X-\underset{m\geq0}{\mathrm{sup}}\frac{\mathrm{dim}J_{m}(Z)}{m+1}.
\]
\end{thm}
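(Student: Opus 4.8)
The plan is to recast both sides of the claimed identity in terms of the arc space $J_\infty(X)=\varprojlim_m J_m(X)$ together with a fixed log resolution, reducing the whole statement to a short linear-programming computation. Write $n=\dim X$ and fix a log resolution $\pi\colon Y\to X$ of $(X,Z)$ with $K_{Y/X}=\sum_i k_iE_i$ and $\pi^{-1}(Z)=\sum_i a_iE_i$ as in the previous subsection, so that $\mathrm{lct}(X,Z)=\min_{i:\,a_i>0}\frac{k_i+1}{a_i}$.

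First I would identify the jet schemes of $Z$ with contact loci: for $p\geq1$ let $\mathrm{Cont}^{\geq p}(\mathcal{I}_Z)\subseteq J_\infty(X)$ be the set of arcs $\gamma$ with $\mathrm{ord}_\gamma(\mathcal{I}_Z)\geq p$. Since $X$ is smooth, the truncation $J_\infty(X)\to J_m(X)$ is surjective, and reading off the defining equations of $J_m(Z)$ shows that $J_m(Z)=\pi_m\big(\mathrm{Cont}^{\geq m+1}(\mathcal{I}_Z)\big)$ as a set; hence $\dim J_m(Z)=\dim\pi_m\big(\mathrm{Cont}^{\geq m+1}(\mathcal{I}_Z)\big)$.

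Next, transport the computation to $Y$. Since $\pi$ is proper and birational, $J_\infty(\pi)\colon J_\infty(Y)\to J_\infty(X)$ is bijective off a thin set (arcs contained in the exceptional locus), and along a lifted arc $\gamma'$ one has $\mathrm{ord}_\gamma(\mathcal{I}_Z)=\sum_i a_i\,\mathrm{ord}_{\gamma'}(E_i)$ while the Jacobian order is $\mathrm{ord}_{\gamma'}(\mathrm{Jac}\,\pi)=\sum_i k_i\,\mathrm{ord}_{\gamma'}(E_i)$. Stratify $J_\infty(Y)$ by the multi-order $\nu=(\mathrm{ord}_{\gamma'}(E_i))_i\in\mathbb{Z}_{\geq0}^N$: because the $E_i$ form a simple normal crossings divisor, the stratum $C_\nu$ is empty unless the $E_i$ with $\nu_i>0$ share a point, and for $m$ large its image in $J_m(Y)$ has dimension $(m+1)n-\sum_i\nu_i$. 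The change of variables formula of Denef--Loeser then gives $\dim\pi_m\big(J_\infty(\pi)(C_\nu)\big)=(m+1)n-\sum_i\nu_i(k_i+1)$, and summing the strata with $\sum_i a_i\nu_i\geq p$ (exactly those landing in $\mathrm{Cont}^{\geq p}$) with $m=p-1$ yields
\[
\dim J_{p-1}(Z)=\sup\Big\{\,pn-\sum_i\nu_i(k_i+1)\ :\ \nu\in\mathbb{Z}_{\geq0}^N,\ \textstyle\sum_i a_i\nu_i\geq p\,\Big\}.
\]
Finally, dividing by $p$ and optimising: for fixed $\nu\neq0$ the best choice is $p=\sum_i a_i\nu_i$, giving value $n-\frac{\sum_i\nu_i(k_i+1)}{\sum_i a_i\nu_i}$, and since a weighted average of the ratios $\frac{k_i+1}{a_i}$ is at least their minimum (attained when $\nu$ is supported on a minimising index) we get $\sup_m\frac{\dim J_m(Z)}{m+1}=n-\min_{i:\,a_i>0}\frac{k_i+1}{a_i}=n-\mathrm{lct}(X,Z)$, which rearranges to the assertion.

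The hard part is the middle step: justifying that $J_\infty(\pi)$ is essentially bijective and that the change of variables identity survives at finite jet level. This rests on the theory of cylinders and their codimension in $J_\infty(X)$ for smooth $X$, on the piecewise triviality of the truncation maps over constructible sets, and on checking that the loci where the lift of an arc is ramified map to a strictly smaller-dimensional subset of $J_m(X)$ and hence are invisible to the dimension count. A minor additional point is that the finite-level formula above is exact only for $m$ large compared to the chosen $\nu$ (and $J_m(Z)$ may carry fat or embedded components for small $m$); these corrections are dominated once the supremum over all $m$ is formed, so they do not affect the final equality.
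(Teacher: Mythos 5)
The paper does not supply a proof of this statement — it is quoted as \cite[Corollary 0.2]{Mus02} and \cite[Corollary 7.2.4.2]{CLNS18}. Your sketch reproduces the standard arc-space argument of Musta\c{t}\u{a}: identify $J_m(Z)$ with the truncated contact locus, pass to a log resolution, apply the Denef–Loeser transformation rule on each multi-order stratum, and optimise the resulting linear inequalities; this is precisely the approach of the cited source. The one step you flag as needing care — that the finite-level dimension formula $\dim\pi_m(J_\infty(\pi)(C_\nu))=(m+1)n-\sum\nu_i(k_i+1)$ is exact only for $m$ large relative to $\nu$ — is indeed where the real work in \cite{Mus02} lies: the inequality $\dim J_m(Z)\le(m+1)(n-\mathrm{lct})$ must hold for \emph{every} $m$, not just large $m$, and this requires the cylinder/codimension estimates from the theory of spaces of arcs rather than a domination-by-supremum argument; the other direction (realising the supremum) is the part that only needs $m$ large. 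With that caveat supplied, the sketch is a faithful account of the cited proof.
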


Note that $\mathrm{lct}(X,Z)$ does not depend on the ambient space
$X$ (only on $\mathrm{dim}X$) or the embedding of $Z$ in $X$. 
\begin{example}
Let $\varphi_{i}(x)=x_{i}^{n_{i}}$ for $1\leq i\leq k$. Then $\mathrm{lct}(\mathbb{A}_{\complex}^{1},\varphi_{i}^{-1}(0))=\mathrm{lct}_{0}(\mathbb{A}_{\complex}^{1},\varphi_{i}^{-1}(0))=\frac{1}{n_{i}}$.
By a Thom-Sebastiani type result \cite[Corollary 1]{MSS18} we have
\[
\mathrm{lct}(\mathbb{A}_{\complex}^{k},(\varphi_{1}*\dots*\varphi_{k})^{-1}(0))=\mathrm{min}\{1,\sum\limits _{i=1}^{k}\mathrm{lct}(\mathbb{A}_{\complex}^{1},\varphi_{i}^{-1}(0))\}=\mathrm{min}\{1,\sum\limits _{i=1}^{k}\frac{1}{n_{i}}\}.
\]
We therefore see that the convolution operation improves the log canonical
threshold. 
\end{example}

The following properties of the log canonical threshold will be useful
for us. 
\begin{fact}[{see \cite[Section 1]{Mus12}}]
\label{fact:properties of log cannonical threshold}Let $X$ be a
smooth, geometrically irreducible, affine $K$-variety. 
\begin{enumerate}
\item Let $0\neq\mathfrak{a}_{1}\subseteq\mathfrak{a}_{2}$ be two ideals
in $K[X]$. Then $\mathrm{lct}_{x}(\mathfrak{a}_{1})\leq\mathrm{lct}_{x}(\mathfrak{a}_{2})$
for any $x\in X$. 
\item Let $\mathfrak{a}\subseteq K[X]$ be an ideal. Then for any $x\in X$
we have $\mathrm{lct}_{x}(\mathfrak{a})\geq\frac{1}{\mathrm{ord}_{x}(\mathfrak{a})}$,
where 
\[
\mathrm{ord}_{x}(\mathfrak{a})=\underset{f\in\mathfrak{a}}{\mathrm{min}}~\mathrm{ord}_{x}(f),
\]
and $\mathrm{ord}_{x}(f)$ is the minimal $\left|\alpha\right|$ such
that $\frac{\partial^{\alpha}f}{\partial x^{\alpha}}(x)\neq0$. 
\end{enumerate}
\end{fact}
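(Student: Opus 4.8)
The plan is to derive both items from the behaviour of log canonical thresholds on a common log resolution, together with the jet-scheme characterisation in Theorem~\ref{thm:log canonical threshold and jet schemes}; since $\mathrm{lct}_x$ is a local invariant I may freely shrink $X$ to an affine neighbourhood of $x$ (and, if convenient, pass to $\widehat{\mathcal{O}_{X,x}}$) at any stage. For item (1), by Hironaka choose a single proper birational $\pi\colon Y\to X$ with $Y$ smooth that is simultaneously a log resolution of $(X,V(\mathfrak{a}_1))$ and $(X,V(\mathfrak{a}_2))$, so that $\mathfrak{a}_j\cdot\mathcal{O}_Y=\mathcal{O}_Y(-D_j)$ with $D_j=\sum_i a_i^{(j)}E_i$ and $K_{Y/X}=\sum_i k_iE_i$, all supported on a fixed simple normal crossings configuration $\bigcup_i E_i$. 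The inclusion $\mathfrak{a}_1\subseteq\mathfrak{a}_2$ pulls back to $\mathcal{O}_Y(-D_1)\subseteq\mathcal{O}_Y(-D_2)$, hence $D_1\ge D_2$ as effective divisors, i.e.\ $a_i^{(1)}\ge a_i^{(2)}\ge 0$ for every $i$. Substituting into $\mathrm{lct}_x(\mathfrak{a}_j)=\min\{\tfrac{k_i+1}{a_i^{(j)}}\colon x\in\pi(E_i)\}$, with the usual convention that a zero denominator contributes $+\infty$, and comparing termwise via $\tfrac{k_i+1}{a_i^{(1)}}\le\tfrac{k_i+1}{a_i^{(2)}}$, gives $\mathrm{lct}_x(\mathfrak{a}_1)\le\mathrm{lct}_x(\mathfrak{a}_2)$. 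Alternatively one can argue with jets: $\mathfrak{a}_1\subseteq\mathfrak{a}_2$ makes $V(\mathfrak{a}_2)\hookrightarrow V(\mathfrak{a}_1)$ a closed immersion, hence so is $J_m(V(\mathfrak{a}_2))\hookrightarrow J_m(V(\mathfrak{a}_1))$, whence $\dim J_m(V(\mathfrak{a}_2))\le\dim J_m(V(\mathfrak{a}_1))$ for all $m$, and a (local) version of Theorem~\ref{thm:log canonical threshold and jet schemes} yields the inequality of thresholds.

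For item (2), note first that in characteristic $0$ the quantity $\mathrm{ord}_x(f)$ defined through partial derivatives agrees, by the Taylor expansion, with the order of vanishing of $f$ at $x$. Set $d=\mathrm{ord}_x(\mathfrak{a})$ and pick $f\in\mathfrak{a}$ realising this minimum, so that $\mathrm{ord}_x(f)=d$. Applying item (1) to $(f)\subseteq\mathfrak{a}$ reduces the claim to showing $\mathrm{lct}_x(X,\mathrm{div}(f))\ge\tfrac1d$ for a single hypersurface of multiplicity $d$ at $x$. I would now restrict to a general smooth curve $C\ni x$, obtained as the intersection of $\dim X-1$ general hyperplanes through $x$: by the restriction (semicontinuity) property of the log canonical threshold one has $\mathrm{lct}_x(X,\mathrm{div}(f))\ge\mathrm{lct}_x(C,\mathrm{div}(f|_C))$, and for general $C$ one has $\mathrm{ord}_x(f|_C)=\mathrm{ord}_x(f)=d$, so $\mathrm{div}(f|_C)$ is $d$ times the point $x$ near $x$ and therefore $\mathrm{lct}_x(C,\mathrm{div}(f|_C))=\tfrac1d$. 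One can instead stay within the jet-scheme framework: using $\mathrm{mult}_x(f)=d$, bound the dimension of the fibre $(\pi_0^m)^{-1}(x)\cap J_m(V(f))$ by $(m+1)(\dim X-1)+m+1-\lceil (m+1)/d\rceil$, so that $\sup_m\tfrac{\dim J_m(V(f))}{m+1}\le\dim X-\tfrac1d$, and conclude by Theorem~\ref{thm:log canonical threshold and jet schemes}.

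The main obstacle is precisely the input consumed in item (2): either the behaviour of $\mathrm{lct}$ under restriction to a general hyperplane (and its iteration down to a curve), or, on the jet side, the dimension estimate for $J_m(V(f))$ over a point in terms of $\mathrm{mult}_x(f)$. Both are classical, but they genuinely use the geometry of resolutions / jet schemes rather than formal manipulation; granting either, items (1) and (2) are short. A secondary point to treat with care is the localisation step — shrinking to a neighbourhood of $x$ (and possibly completing) so that the ideal inclusions used above hold as honest inclusions and $\mathrm{lct}_x$ is unaffected — and, relatedly, the matching of the "$\mathrm{ord}_x$ via derivatives" definition in the statement with the order of vanishing used throughout the resolution-theoretic arguments.
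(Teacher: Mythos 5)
The paper does not give a proof of this statement: it is labelled a \emph{Fact} and cited directly from Musta\c{t}\u{a}'s survey \cite[Section~1]{Mus12}, so there is no in-paper argument to compare against. Judged on its own terms, your reconstruction is essentially correct and follows the standard route.

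For item (1), the common-log-resolution argument is exactly right: pulling back $\mathfrak{a}_1\subseteq\mathfrak{a}_2$ gives $D_1\ge D_2$, and comparing $\frac{k_i+1}{a_i^{(1)}}\le\frac{k_i+1}{a_i^{(2)}}$ term by term yields the monotonicity. The jet-scheme variant is also sound, as the closed immersion $V(\mathfrak{a}_2)\hookrightarrow V(\mathfrak{a}_1)$ induces closed immersions of jet schemes, though one should then use the local form of Theorem~\ref{thm:log canonical threshold and jet schemes} near $x$ rather than the global formula as stated.

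For item (2), the reduction via item (1) to a single $f$ with $\mathrm{ord}_x(f)=d$ is correct, since $(f)\subseteq\mathfrak{a}$ gives $\mathrm{lct}_x((f))\le\mathrm{lct}_x(\mathfrak{a})$. The restriction-to-a-general-curve argument then rests on the Demailly--Koll\'ar restriction theorem, iterated down to a complete-intersection curve $C$ through $x$: this does give $\mathrm{lct}_x(X,V(f))\ge\mathrm{lct}_x(C,V(f)|_C)$, and for general $C$ one has $\mathrm{ord}_x(f|_C)=d$ so the right side is $\tfrac1d$. You correctly flag this as the genuine non-formal input; calling it ``semicontinuity'' is a slight misnomer (it is an adjunction/restriction statement), but the direction of the inequality and the computation on the curve are right. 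For the alternative jet-theoretic route, your numerical bound $(m+1)(\dim X-1)+(m+1)-\lceil(m+1)/d\rceil=(m+1)\dim X-\lceil(m+1)/d\rceil$ is in fact the right estimate for $\dim J_m(V(f))$ near $x$ (it checks out in low-dimensional examples and yields $\sup_m\frac{\dim J_m(V(f))}{m+1}\le\dim X-\tfrac1d$), but you describe it as a bound on the fibre $(\pi_0^m)^{-1}(x)\cap J_m(V(f))$, which has codimension an additional $\dim X$; the mislabelling does not affect the conclusion since what you actually plug into the Musta\c{t}\u{a} formula is the whole local dimension, but the statement as written is off by that shift. Since you explicitly outsource the restriction theorem (resp.\ the jet dimension estimate) and flag it as the main obstacle, the sketch has no hidden gaps, though neither route is fully self-contained as presented.
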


\subsection{\label{subsec:Degeneations}A method of degeneration}
\begin{defn}
\label{def:S-degeneration}Let $X$ and $S$ be $K$-schemes. An \textit{$S$-deformation}
of $X$ is a flat $K$-morphism $\pi:\widetilde{X}\rightarrow S$
such that $X_{s}\simeq X$ for some $s\in S$. Assume we are given
a deformation $\pi:\widetilde{X}\rightarrow S$ such that $\widetilde{X}_{s}\simeq X$
for any $s$ in a punctured neighborhood $U$ of $s_{0}\in S$. In
this situation we say that $\widetilde{X}_{s_{0}}$ is an\textit{
$S$-degeneration} of $X$. 
\end{defn}

\begin{prop}
\label{Prop:properties preserved under deformations}Let $S$ be a
$K$-variety, let $X$ and $Y$ be smooth $S$-varieties, with structure
maps $\pi_{X}:X\rightarrow S$ and $\pi_{Y}:Y\rightarrow S$, and
let $\varphi:X\rightarrow Y$ be an $S$-morphism. Let $P$ be one
of the following properties of morphisms: 
\begin{enumerate}
\item Flatness. 
\item Flatness with reduced fibers. 
\item Flatness with locally integral (i.e.~locally irreducible and reduced)
fibers. 
\item Flatness with normal fibers. 
\item (FRS). 
\item Smoothness. 
\end{enumerate}
Then the set of points $x\in X$ for which the fiber map $\varphi_{\pi_{X}(x)}:X_{\pi_{X}(x)}\longrightarrow Y_{\pi_{X}(x)}$
is $P$ at $x$ is open. 
\end{prop}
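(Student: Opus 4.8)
The plan is to reduce the relative assertion to the corresponding \emph{absolute} statement about $\varphi$ itself, and then to patch together classical openness results. Fix $x\in X$ and set $s=\pi_X(x)$, $y=\varphi(x)$. Since $\varphi$ is an $S$-morphism we have $\pi_Y(y)=s$, so $y$ lies in the fibre $Y_s$, and from $\pi_X=\pi_Y\circ\varphi$ one gets the base-change identity $X_s=X\times_Y Y_s$; this gives a canonical isomorphism between the scheme-theoretic fibre of $\varphi_s\colon X_s\to Y_s$ through $x$ and the fibre $X_{y,\varphi}$. Hence every condition occurring in $P$ that refers only to the fibre through $x$ (reducedness, normality, local integrality, or rational singularities of $X_{\varphi_s(x),\varphi_s}$ at $x$) may be read off equally from $\varphi_s$ or from $\varphi$. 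For flatness we apply the local criterion of flatness by fibres [EGA IV, 11.3.10]: since $X$ and $Y$ are smooth, hence flat and locally of finite presentation over $S$, the morphism $\varphi$ is flat at $x$ if and only if $\varphi_s$ is flat at $x$; the same then follows for smoothness (flatness together with smoothness of the fibre). Therefore in all six cases it suffices to show that for a morphism $\varphi\colon X\to Y$ between smooth $K$-varieties of finite type the set $U_P:=\{x\in X:\varphi\text{ is }P\text{ at }x\}$ is open in $X$.

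Cases (1) and (6) are classical: the loci where a morphism locally of finite presentation is flat, respectively smooth, are open [EGA IV, 11.1.1 and 17.5.1]. For (2)--(4) we may work inside the (open) flat locus of $\varphi$; there every fibre $X_{\varphi(x)}$ is a complete intersection in the smooth variety $X_{\kappa(\varphi(x))}$, in particular Cohen--Macaulay and without embedded components. The loci $\{x:X_{\varphi(x)}\text{ is reduced at }x\}$, $\{x:X_{\varphi(x)}\text{ is normal at }x\}$ and $\{x:X_{\varphi(x)}\text{ is locally integral at }x\}$ are then open by the standard constructibility and specialisation results for fibres of flat, finitely presented morphisms [EGA IV, \S9 and \S12], using that $\mathrm{char}\,K=0$, so that the residue fields $\kappa(\varphi(x))$ are perfect (whence reduced $=$ geometrically reduced and normal $=$ geometrically normal on the fibres). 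Intersecting with the flat locus yields (2), (3) and (4).

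It remains to treat (5). A scheme with rational singularities is normal, so being (FRS) at $x$ means precisely that $\varphi$ is flat at $x$ and $X_{\varphi(x)}$ has rational singularities at $x$; by case (1) and the previous paragraph it is enough to show that $\{x:X_{\varphi(x)}\text{ has rational singularities at }x\}$ is open. Because the fibres are complete intersections in smooth varieties, this follows from Elkik's theorem on the behaviour of rational singularities under deformation: in the local-complete-intersection setting it implies that this locus is open (cf.\ the argument in \cite{AA16}). Alternatively, and more in the spirit of the present paper, one can deduce the openness of the rational-singularities locus from Musta\c{t}\u{a}'s jet-scheme criterion for rational singularities of local complete intersections together with the lower semicontinuity of log canonical thresholds in families, cf.\ Theorem \ref{thm:log canonical threshold and jet schemes}. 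In either case $U_{(\mathrm{FRS})}$ is a finite intersection of the open sets produced above, hence open, and the proof is complete. The one step going beyond classical constructibility is this last one---controlling rational singularities of the fibres as the fibre moves---and that is exactly where Elkik's deformation theorem (the same input underlying \cite{AA16}) is essential.
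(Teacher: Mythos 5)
Your proof is correct and follows essentially the same two-step strategy as the paper: first reduce the fibrewise assertion to the absolute one via the criterion of flatness by fibres (the paper calls this ``miracle flatness'') together with the identification $(X_{s_x})_{\varphi_{s_x}(x),\varphi_{s_x}}\simeq X_{\varphi(x),\varphi}$, and then invoke the openness results of EGA IV \S 12 and Elkik's theorem for the rational singularities locus. The only difference is that you spell out the constructibility/specialisation details for cases (2)--(4) and offer an alternative route for (5) via Musta\c{t}\u{a}'s jet criterion, whereas the paper cites EGA IV 12.1.1, 12.1.6 and [Elk78] directly.
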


\begin{proof}
Let $x\in X$ and $s_{x}:=\pi_{X}(x)$. Assume that $\varphi_{s_{x}}:X_{s_{x}}$$\rightarrow Y_{s_{x}}$
is $P$ at $x$. Since $X$ and $Y$ are $S$-smooth, by miracle flatness
it follows for any $x'\in X$, that $\varphi_{s_{x'}}$ is flat at
$x'$ if and only if $\varphi$ is flat at $x'$. This implies $(1)$.
By restricting to an open neighborhood, we may assume that $\varphi$
is flat. Note that $(X_{s_{x}})_{\varphi_{s_{x}}(x),\varphi_{s_{x}}}\simeq X_{\varphi(x),\varphi}$.
Now, since $\varphi$ is flat it follows by \cite[Theorems 12.1.1, 12.1.6]{Gro66}
and by \cite{Elk78} that the set 
\[
\{x\in X:\varphi\text{ is }P\text{ at }x\}=\{x\in X:\varphi_{s_{x}}\text{ is }P\text{ at }x\}
\]
is open. 
\end{proof}
\begin{notation}
We denote by $\mathcal{P}$ the collection of singularity properties
$P$ appearing in Proposition \ref{Prop:properties preserved under deformations}.
\end{notation}

Given an $S$-degeneration $\widetilde{\varphi}:\widetilde{X}\rightarrow\widetilde{Y}$
of a morphism $\varphi:X\rightarrow Y$ to a morphism $\varphi_{0}:=\widetilde{\varphi}_{s_{0}}:\widetilde{X}_{s_{0}}\rightarrow\widetilde{Y}_{s_{0}}$,
and a property $P\in\mathcal{P}$, one can obtain information on the
$P$-locus of $\varphi$ using the $P$-locus of $\varphi_{0}$. This
is manifested in the following corollary: 
\begin{cor}[{cf.~\cite[Corollary 2.3]{AA16}}]
\label{cor:reduction to degeneration}Let $\widetilde{X}$ and $\widetilde{Y}$
be smooth $\mathbb{A}^{1}$-varieties with a $\mathbb{G}_{m}$ action
which is compatible with the action of $\mathbb{G}_{m}$ on $\mathbb{A}^{1}$.
Let $\widetilde{\varphi}:\widetilde{X}\rightarrow\widetilde{Y}$ be
a $\mathbb{G}_{m}$-equivariant $\mathbb{A}^{1}$-morphism, let $s:\mathbb{A}^{1}\rightarrow\widetilde{X}$
be a $\mathbb{G}_{m}$-equivariant section of the structure map $\widetilde{X}\rightarrow\mathbb{A}^{1}$
and let $P\in\mathcal{P}$. Assume that $\widetilde{\varphi}_{0}$
is $P$ at $s(0)$. Then $\widetilde{\varphi}_{1}$ is $P$ at $s(1)$. 
\end{cor}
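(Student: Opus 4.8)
The plan is to deduce Corollary \ref{cor:reduction to degeneration} from Proposition \ref{Prop:properties preserved under deformations} by exhibiting the family $\widetilde{\varphi}\colon\widetilde X\to\widetilde Y$ as an $S$-morphism of smooth $S$-varieties over the base $S=\mathbb{A}^1$, together with the section $s$, and then transporting the property $P$ along the $\mathbb{G}_m$-orbit from the point $s(0)$ to the point $s(1)$. The hypotheses of Proposition \ref{Prop:properties preserved under deformations} are met essentially verbatim: $\widetilde X$ and $\widetilde Y$ are assumed smooth $\mathbb{A}^1$-varieties and $\widetilde\varphi$ is an $\mathbb{A}^1$-morphism between them. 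So the proposition tells us that the locus
\[
U:=\{x\in\widetilde X:\widetilde\varphi_{\pi_{\widetilde X}(x)}\text{ is }P\text{ at }x\}
\]
is open in $\widetilde X$. By hypothesis $s(0)\in U$, since $\widetilde\varphi_0$ is $P$ at $s(0)$ and $\pi_{\widetilde X}(s(0))=0$ because $s$ is a section of the structure map.

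The heart of the argument is then the $\mathbb{G}_m$-equivariance: I would show that $U$ is $\mathbb{G}_m$-stable, so that $\mathbb{G}_m\cdot s(0)\subseteq U$. For $t\in\mathbb{G}_m$, the automorphism $t\cdot(-)$ of $\widetilde X$ covers the automorphism $t\cdot(-)$ of $\mathbb{A}^1$ and, via $\mathbb{G}_m$-equivariance of $\widetilde\varphi$, is intertwined with the automorphism $t\cdot(-)$ of $\widetilde Y$; hence the fiber map $\widetilde\varphi_{t\cdot a}$ over $t\cdot a$ is isomorphic (as a morphism) to $\widetilde\varphi_a$ over $a$, compatibly with the $\mathbb{G}_m$-translation sending $x$ to $t\cdot x$. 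Since being $P$ at a point is invariant under isomorphism of morphisms (all the properties in $\mathcal{P}$ are local and isomorphism-invariant), $x\in U\iff t\cdot x\in U$, which is the desired stability. Because $s$ is $\mathbb{G}_m$-equivariant, $t\cdot s(0)=s(t\cdot 0)=s(0)$ for all $t$, which by itself only recovers $s(0)\in U$; the point is rather that the whole orbit closure matters.

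To pass from $s(0)$ to $s(1)$ I would use that $U$ is open and that $s\colon\mathbb{A}^1\to\widetilde X$ is continuous (a morphism): the preimage $s^{-1}(U)\subseteq\mathbb{A}^1$ is open and contains $0$. Now $\mathbb{G}_m$-equivariance of $s$ forces $s(t)=s(t\cdot 1)=t\cdot s(1)$, so $s^{-1}(U)$ is a $\mathbb{G}_m$-stable open subset of $\mathbb{A}^1$ containing $0$; the only such subset is all of $\mathbb{A}^1$. (Indeed, if $s(1)\notin U$ then no $s(t)$ with $t\neq 0$ lies in $U$, since $U$ is $\mathbb{G}_m$-stable, so $s^{-1}(U)\subseteq\{0\}$, contradicting openness.) Hence $1\in s^{-1}(U)$, i.e.\ $s(1)\in U$, which says exactly that $\widetilde\varphi_1$ is $P$ at $s(1)$.

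The only genuinely delicate point is the first one — verifying that $\widetilde\varphi_{t\cdot a}$ and $\widetilde\varphi_a$ are isomorphic as morphisms in a way compatible with the translation by $t$, so that the $P$-locus is $\mathbb{G}_m$-stable. This is where the compatibility of the three $\mathbb{G}_m$-actions (on $\widetilde X$, on $\widetilde Y$, and on $\mathbb{A}^1$) is used, and it amounts to chasing the commuting square relating $\widetilde\varphi$, the two structure maps, and the $\mathbb{G}_m$-actions; I do not expect any real obstacle beyond bookkeeping, since $t\cdot(-)$ is an automorphism and automorphisms preserve all properties in $\mathcal{P}$. Everything else is soft topology: openness of $U$ from Proposition \ref{Prop:properties preserved under deformations}, continuity of the section, and the classification of $\mathbb{G}_m$-stable opens of $\mathbb{A}^1$.
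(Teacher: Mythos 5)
Your argument is correct and is essentially the intended one (the paper gives no proof, deferring to \cite[Corollary 2.3]{AA16}, which runs exactly this way): invoke Proposition \ref{Prop:properties preserved under deformations} for openness of the $P$-locus $U$, check that $\mathbb{G}_{m}$-equivariance of $\widetilde{\varphi}$ makes $U$ a $\mathbb{G}_{m}$-stable set because the action of $t\in\mathbb{G}_{m}$ gives an isomorphism of the fiber morphism $\widetilde{\varphi}_{a}$ onto $\widetilde{\varphi}_{t\cdot a}$ carrying $x$ to $t\cdot x$, and then use that $s^{-1}(U)$ is a $\mathbb{G}_{m}$-stable open subset of $\mathbb{A}^{1}$ containing $0$, hence all of $\mathbb{A}^{1}$. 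The bookkeeping points you flag (the commuting square and isomorphism-invariance of the properties in $\mathcal{P}$) are indeed the only things to verify and present no obstruction.
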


Let $X$ be an algebraic $K$-variety and let $x\in X(\overline{K})$.
The local ring $\mathcal{O}_{X,x}$ has natural descending filtration
$\mathfrak{m}_{X,x}^{i}$, which induces a graded ring $\mathrm{gr}(\mathcal{O}_{X,x})=\bigoplus\limits _{i\geq0}\mathfrak{m}_{X,x}^{i}/\mathfrak{m}_{X,x}^{i+1}$.
We define the \textit{geometric tangent cone} $C_{x}(X)$ of $X$
at $x$ to be $\mathrm{Spec}(\mathrm{gr}(\mathcal{O}_{X,x})_{\mathrm{red}})$,
where $\mathrm{gr}(\mathcal{O}_{X,x})_{\mathrm{red}}$ is the quotient
of $\mathrm{gr}(\mathcal{O}_{X,x})$ by its nilradical. Any morphism
$\varphi:X\rightarrow Y$ induces a map $\varphi^{*}:\mathcal{O}_{Y,\varphi(x)}\rightarrow\mathcal{O}_{X,x}$,
which yields a morphism of graded rings $\mathrm{gr}(\varphi^{*}):\mathrm{gr}(\mathcal{O}_{Y,\varphi(x)})\rightarrow\mathrm{gr}(\mathcal{O}_{X,x})$
and therefore induces a map $\mathrm{gr}(\varphi):C_{x}X\rightarrow C_{\varphi(x)}Y$
between the tangent cones.

Since the map $\mathrm{gr}(\varphi)$ may be trivial, we would like
to consider a slightly more general construction. Let $l\in\nats$
satisfy $\varphi^{*}(\mathfrak{m}_{Y,\varphi(x)})\subseteq\mathfrak{m}_{X,x}^{l}$,
and define the following filtrations on $\mathcal{O}_{X,x}$ and $\mathcal{O}_{Y,\varphi(x)}$.
For $\mathcal{O}_{X,x}$ we set $F^{i}\mathcal{O}_{X,x}=\mathfrak{m}_{X,x}^{i}$
for $i>0$ and $F^{0}\mathcal{O}_{X,x}=\mathcal{O}_{X,x}$. For $\mathcal{O}_{Y,\varphi(x)}$
we define $F^{i}\mathcal{O}_{Y,\varphi(x)}=\mathfrak{m}_{Y,\varphi(x)}^{\left\lfloor i/l\right\rfloor }$
for $i>0$ and $F^{0}\mathcal{O}_{Y,\varphi(x)}=\mathcal{O}_{Y,\varphi(x)}$.
Our choice of $l$ guarantees that the map $\varphi^{*}$ is filtration-preserving,
so $\varphi$ induces a map on the tangent cones, denoted by
\[
D_{x}^{l-1}:=\mathrm{gr}_{F}(\varphi):C_{x}X\rightarrow C_{\varphi(x)}Y.
\]
By an appropriate choice of $l$ we may guaranty that $D_{x}^{l-1}$
is non-zero. In \cite[Section 2.1.1]{AA16} it is shown that any morphism
$\varphi:X\rightarrow Y$ can be degenerated to the morphism $D_{x}^{l}\varphi:C_{x}X\rightarrow C_{\varphi(x)}Y$
for a suitable $l$. Corollary \ref{cor:reduction to degeneration}
gives the following: 
\begin{cor}[{cf.~\cite[Corollary 2.14]{AA16}, the linearization method}]
\label{cor:(cf.---Linearization)}Let $\varphi:X\rightarrow Y$ be
a morphism of affine varieties, let $x\in X(\overline{K})$, and let
$P\in\mathcal{P}$. Suppose that $D_{x}^{l}\varphi$ is $P$ at $x$
for some $l$ for which $D_{x}^{l}\varphi$ is defined. Then $\varphi$
is $P$ at $x$. 
\end{cor}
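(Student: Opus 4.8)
The corollary is a formal consequence of Corollary \ref{cor:reduction to degeneration} applied to the canonical degeneration of a morphism to its linearization. Concretely, the plan is: (i) recall from \cite[Section 2.1.1]{AA16} that $\varphi$ fits into a $\mathbb{G}_m$-equivariant $\mathbb{A}^1$-family $\widetilde\varphi\colon\widetilde X\to\widetilde Y$ whose fiber over $1$ is $\varphi$ and whose fiber over $0$ is $D_x^l\varphi$, together with a $\mathbb{G}_m$-equivariant section $s\colon\mathbb{A}^1\to\widetilde X$ with $s(1)=x$ and $s(0)$ the vertex of $C_x X$; (ii) feed this into Corollary \ref{cor:reduction to degeneration}.

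\textbf{The degeneration.} Since the statement and the property $P$ are local at $x$ and $\varphi(x)$, I may assume $X=\spec R$, $Y=\spec A$ affine and $x,\varphi(x)$ rational (passing to a finite extension of $K$ changes nothing, each $P\in\mathcal P$ descending along the faithfully flat base change), with $x,\varphi(x)$ cut out by maximal ideals $\mathfrak m_X\subseteq R$, $\mathfrak m_Y\subseteq A$. Fix $l$ as in the hypothesis, so that $\varphi^*(\mathfrak m_Y)\subseteq\mathfrak m_X^{\,l}$, filter $R$ by $F^iR=\mathfrak m_X^{\,i}$ and $A$ by the rescaled filtration $F^iA=\mathfrak m_Y^{\,\lfloor i/l\rfloor}$ of the paragraph preceding the statement, for which $\varphi^*$ is filtration preserving. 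Forming the extended Rees algebras $\widetilde R=\bigoplus_{i\in\ints}F^iR\cdot t^{-i}$ and $\widetilde A=\bigoplus_{i\in\ints}F^iA\cdot t^{-i}$ (with $F^i=R$, resp.\ $A$, for $i\le 0$), with $t$ defining the map to $\mathbb{A}^1$ and the grading giving the $\mathbb{G}_m$-action compatible with scaling on $\mathbb{A}^1$, one obtains flat $\mathbb{A}^1$-families $\widetilde X,\widetilde Y$ whose restriction over $\mathbb{G}_m$ is the $\mathbb{G}_m$-orbit of $\varphi$ (the fiber over $1$ being $\varphi$ itself) and whose central fibers, after passing to reductions, are $C_xX\to C_{\varphi(x)}Y$ with the induced graded map — which is $D_x^l\varphi$ by definition; the origins of the fibers assemble into the section $s$. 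This is the content of \cite[Section 2.1.1]{AA16}, to which I would refer for the verifications.

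\textbf{Conclusion and main obstacle.} With the family in hand the proof finishes in one line: by hypothesis $\widetilde\varphi_0=D_x^l\varphi$ is $P$ at $s(0)$, hence Corollary \ref{cor:reduction to degeneration} gives that $\widetilde\varphi_1=\varphi$ is $P$ at $s(1)=x$. The only genuine work, and the step I expect to be the main obstacle, is arranging the degeneration so that it actually satisfies the hypotheses of Corollary \ref{cor:reduction to degeneration}: that $\widetilde X$ and $\widetilde Y$ be \emph{smooth} along the invariant section — which is where one restricts to a $\mathbb{G}_m$-invariant smooth neighbourhood of $s$, legitimate because whenever $P$ refers to smoothness of source or target the hypothesis forces $X$ smooth at $x$ and $Y$ smooth at $\varphi(x)$, so $C_xX,C_{\varphi(x)}Y$ are affine spaces and $\widetilde X,\widetilde Y$ are smooth near $s$ — and that replacing $\spec\mathrm{gr}_F$ by the geometric tangent cone $C_x$ be compatible with the equivariant section and the smooth total space. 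All of this is carried out in \cite[Section 2.1.1]{AA16}, and the present corollary is its formal consequence via Corollary \ref{cor:reduction to degeneration}.
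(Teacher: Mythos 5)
Your proposal is correct and takes essentially the same route the paper intends: reduce to Corollary \ref{cor:reduction to degeneration} via the Rees-algebra degeneration of $\varphi$ to $D_x^l\varphi$ from \cite[Section 2.1.1]{AA16}, checking that the resulting $\mathbb{A}^1$-family is smooth over $\mathbb{A}^1$ near the $\mathbb{G}_m$-equivariant section. The paper itself gives no more detail, simply invoking that reference together with Corollary \ref{cor:reduction to degeneration}, and your spelled-out Rees construction and the observation that smoothness of $C_xX$ at the vertex forces smoothness of $X$ at $x$ are faithful elaborations of what is implicit there.
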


\begin{rem}
Let $\varphi:\mathbb{A}^{n}\rightarrow\mathbb{A}^{m}$ be a morphism
and assume that all derivatives at $(a_{1},\dots,a_{n})\in K^{n}$
of $\varphi$ of order $\leq l$ vanish. Then $D_{x}^{l}\varphi$
is just the Taylor expansion of order $(l+1)$ at $(a_{1},\dots,a_{n})$
of the polynomial map $\varphi$. 
\end{rem}

The linearization method allows us to reduce the problem of showing
that a morphism satisfies a certain singularity property $P\in\mathcal{P}$
to showing that some map between affine spaces satisfies $P$. We
now treat the latter; for any $\omega=(\omega_{1},\dots,\omega_{n})\in\mathbb{Z}^{n}$
(referred to as a \textsl{weight}), we define the \textit{$\omega$-degree}
of a monomial $x_{1}^{a_{1}}\cdot\ldots\cdot x_{n}^{a_{n}}\in K[x_{1},\dots,x_{n}]$
to be $\mathrm{deg}_{\omega}(x_{1}^{a_{1}}\cdot\ldots\cdot x_{n}^{a_{n}}):=\sum\limits _{i=1}^{n}a_{i}\omega_{i}$.
For any polynomial $f\in K[x_{1},\dots,x_{n}]$, we define the \textit{symbol}
of $f$, denoted $\sigma_{\omega}(f)$, to be the sum of the monomials
of $f$ with the lowest $\omega$-degree. Notice that the degree induced
from the weight $\omega=(1,\dots,1)$ corresponds to the standard
degree of monomials $\mathrm{deg}(x_{1}^{a_{1}}\cdot\ldots\cdot x_{n}^{a_{n}})=\sum\limits _{i=1}^{n}a_{i}$.
By choosing appropriate filtrations, and applying Corollary \ref{cor:reduction to degeneration},
one can obtain the following: 
\begin{cor}[{cf.~\cite[Corollary 2.17]{AA16}, the elimination method}]
\label{cor:(cf.---Elimination)}Let $\varphi=(\varphi_{1},\dots,\varphi_{m}):\mathbb{A}^{n}\rightarrow\mathbb{A}^{m}$
be a morphism, $\omega\in\mathbb{Z}^{n}$ be a weight, $P\in\mathcal{P}$
and let $\sigma_{\omega}(\varphi)=(\sigma_{\omega}(\varphi_{1}),\dots,\sigma_{\omega}(\varphi_{m}))$
be the symbol of $\varphi$. If $\sigma_{\omega}(\varphi)$ is $P$
at $(0,\dots,0)$, then so is $\varphi$. 
\end{cor}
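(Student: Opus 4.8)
The plan is to realize the passage from $\varphi$ to its symbol $\sigma_\omega(\varphi)$ as a $\mathbb{G}_m$-equivariant degeneration over $\mathbb{A}^1$, so that Corollary \ref{cor:reduction to degeneration} applies verbatim. First I would set up the two graded spaces: on the source $\widetilde{X} = \mathbb{A}^n \times \mathbb{A}^1$, let $\mathbb{G}_m$ act by $s\cdot(x_1,\dots,x_n,\lambda) = (s^{\omega_1}x_1,\dots,s^{\omega_n}x_n, s\lambda)$ (a technical point: the $\omega_i$ may be negative or zero, but this is still a legitimate $\mathbb{G}_m$-action on the affine space, and the action on the $\mathbb{A}^1$-factor is the standard scaling, so the structure map is $\mathbb{G}_m$-equivariant). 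On the target $\widetilde{Y} = \mathbb{A}^m \times \mathbb{A}^1$, let $\mathbb{G}_m$ act by $s\cdot(y_1,\dots,y_m,\lambda) = (s^{e_1}y_1,\dots,s^{e_m}y_m, s\lambda)$, where $e_j := \deg_\omega(\sigma_\omega(\varphi_j))$ is the lowest $\omega$-degree occurring in $\varphi_j$. The section $s:\mathbb{A}^1 \to \widetilde{X}$ is $\lambda \mapsto (0,\dots,0,\lambda)$, which is manifestly $\mathbb{G}_m$-equivariant and lands in the origin of each fiber.

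Next I would write down the family $\widetilde{\varphi}:\widetilde{X}\to\widetilde{Y}$. For $\lambda \ne 0$, conjugating $\varphi$ by the scaling automorphisms should produce, in the $j$-th coordinate, the polynomial $\lambda^{-e_j}\varphi_j(\lambda^{\omega_1}x_1,\dots,\lambda^{\omega_n}x_n)$; each monomial $c\, x^a$ of $\varphi_j$ contributes $c\,\lambda^{\deg_\omega(x^a) - e_j} x^a$, where the exponent of $\lambda$ is $\geq 0$ and equals $0$ exactly for the monomials of $\varphi_j$ of minimal $\omega$-degree. Hence this expression extends to a \emph{polynomial} in $\lambda$, defining an $\mathbb{A}^1$-morphism $\widetilde{\varphi}$ whose fiber over $\lambda = 1$ is $\varphi$ and whose fiber over $\lambda = 0$ is precisely $\sigma_\omega(\varphi)$. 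By construction $\widetilde{\varphi}$ is $\mathbb{G}_m$-equivariant for the two actions above (one checks this monomial-by-monomial: the weight of $x^a$ on the source is $\deg_\omega(x^a)$, and after multiplying by $\lambda^{\deg_\omega(x^a)-e_j}$ the total weight is $e_j$, matching the weight of $y_j$ on the target). The spaces $\widetilde{X}, \widetilde{Y}$ are smooth $\mathbb{A}^1$-varieties, so all hypotheses of Corollary \ref{cor:reduction to degeneration} are met.

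Finally, applying Corollary \ref{cor:reduction to degeneration} with this data and the given property $P\in\mathcal{P}$: since $\widetilde{\varphi}_0 = \sigma_\omega(\varphi)$ is assumed to be $P$ at $s(0) = (0,\dots,0)$, we conclude that $\widetilde{\varphi}_1 = \varphi$ is $P$ at $s(1) = (0,\dots,0)$, which is the claim. The only genuinely delicate point is bookkeeping with the weights $e_j$ when some $\varphi_j$ is the zero polynomial or when the $\omega_i$ are not all positive — in the former case one simply drops that coordinate (or assigns it any weight, as the zero map component is $P$-neutral), and in the latter case the degeneration still works since we only ever raise $\lambda$ to nonnegative powers \emph{relative to} $e_j$. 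Everything else is a direct specialization of the already-established Corollary \ref{cor:reduction to degeneration}, so there is no substantial obstacle; this is why the statement is phrased as a corollary.
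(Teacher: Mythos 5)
Your overall strategy---realizing the passage from $\varphi$ to $\sigma_{\omega}(\varphi)$ as an explicit $\mathbb{G}_m$-equivariant family over $\mathbb{A}^1$ and then invoking Corollary \ref{cor:reduction to degeneration}---is exactly what the paper has in mind when it writes ``by choosing appropriate filtrations, and applying Corollary \ref{cor:reduction to degeneration}.'' Your family $\widetilde{\varphi}_j(x,\lambda)=\lambda^{-e_j}\varphi_j(\lambda^{\omega_1}x_1,\dots,\lambda^{\omega_n}x_n)$, which expands monomial-by-monomial to $\sum_a c_a\lambda^{\mathrm{deg}_{\omega}(a)-e_j}x^a$, is the right one: it is polynomial in $\lambda$, it specializes to $\varphi$ at $\lambda=1$ and to $\sigma_{\omega}(\varphi)$ at $\lambda=0$, and the edge cases you flag (non-positive weights, identically zero components) are harmless.

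The one genuine slip is in the $\mathbb{G}_m$-action you write on the $\mathbb{A}^1$-factor. With $s\cdot\lambda=s\lambda$ (weight $+1$) and $s\cdot x_i=s^{\omega_i}x_i$, the term $c_a\lambda^{\mathrm{deg}_{\omega}(a)-e_j}x^a$ scales by $s^{(\mathrm{deg}_{\omega}(a)-e_j)+\mathrm{deg}_{\omega}(a)}=s^{2\mathrm{deg}_{\omega}(a)-e_j}$, which equals $s^{e_j}$ only when $\mathrm{deg}_{\omega}(a)=e_j$, i.e.\ only for the symbol terms themselves; so $\widetilde{\varphi}$ as written is \emph{not} $\mathbb{G}_m$-equivariant and Corollary \ref{cor:reduction to degeneration} does not apply. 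Your own verbal bookkeeping (``after multiplying by $\lambda^{\mathrm{deg}_{\omega}(x^a)-e_j}$ the total weight is $e_j$'') silently assumes $\lambda$ carries weight $-1$, contradicting the definition you wrote two sentences earlier. The fix is exactly that: take $s\cdot\lambda=s^{-1}\lambda$ on the $\mathbb{A}^1$-factor of both $\widetilde{X}$ and $\widetilde{Y}$ (equivalently, negate all weights $\omega_i$ and $e_j$ in the group action while keeping $s\cdot\lambda=s\lambda$). Then each monomial has total weight $e_j$, matching the weight of $y_j$; the section $\lambda\mapsto(0,\lambda)$ is still $\mathbb{G}_m$-equivariant, $0\in\mathbb{A}^1$ remains the unique closed orbit, and the rest of your argument goes through as stated.
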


The above degeneration methods can be applied to jet morphisms. Given
a $K$-variety $X$, there is a natural $\mathbb{G}_{m}$-action $\phi:\mathbb{G}_{m}\times J_{m}(X)\rightarrow J_{m}(X)$
defined on the level of $A$-valued points for any commutative ring
$A$; for any $a\in A^{\times}$ and $f:\mathrm{Spec}A[t]/(t^{m+1})\rightarrow X$
we set
\[
\phi(a,f)=f\circ g_{a},
\]
where $g_{a}:\mathrm{Spec}A[t]/(t^{m+1})\rightarrow\mathrm{Spec}A[t]/(t^{m+1})$
is induced from $t\mapsto at$.

Given a morphism $\varphi:X\rightarrow Y$, the map $J_{m}(\varphi):J_{m}(X)\rightarrow J_{m}(Y)$
is $\mathbb{G}_{m}$-equivariant. 
\begin{example}
Let $X=\mathrm{Spec}\left(\complex[x_{1},\dots,x_{n}]/(f_{1},\dots,f_{k})\right)$.
Then any point $b\in J_{m}(X)(\complex)$ is of the form $b=(b_{1},\dots,b_{n},\dots,b_{1}^{(m)},\dots,b_{n}^{(m)})$,
where $f_{j}^{(u)}(b)=0$ for $1\leq j\leq k$ and $0\leq u\leq m$.
The above $\mathbb{G}_{m}$-action is then given by 
\[
s.b=(b_{1},\dots,b_{n},sb_{1}^{(1)},\dots,sb_{n}^{(1)}\dots,s^{m}b_{1}^{(m)},\dots,s^{m}b_{n}^{(m)}).
\]
Note that $f_{j}^{(u)}(s.b)=s^{u}f_{j}^{(u)}(b)=0$ as expected. 
\end{example}

\begin{cor}
\label{cor:degeneration on jet schemes}Let $\varphi:X\rightarrow Y$
be a morphism between smooth $K$-varieties and let $P\in\mathcal{P}$.
Assume that $J_{m}(\varphi):J_{m}(X)\rightarrow J_{m}(Y)$ is $P$
at $(x,0)\in J_{m}(X)$ for every $x\in X$. Then $J_{m}(\varphi)$
is $P$. 
\end{cor}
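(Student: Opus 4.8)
The plan is to deduce this from Corollary \ref{cor:reduction to degeneration}, whose hypotheses we must arrange via a suitable $\mathbb{G}_m$-equivariant family. First I would invoke the $\mathbb{G}_m$-action $\phi$ on $J_m(X)$ and $J_m(Y)$ described just above, with respect to which $J_m(\varphi)$ is $\mathbb{G}_m$-equivariant. To turn the pointwise hypothesis at the zero section into a statement about all of $J_m(X)$, I would build the degeneration variety $\widetilde{X} := \mathbb{A}^1 \times_{\mathbb{G}_m} (\mathbb{G}_m \times J_m(X))$ — more concretely, set $\widetilde{X} = \mathbb{A}^1 \times J_m(X)$, with $\mathbb{G}_m$ acting by $t\cdot(a,b) = (ta, t.b)$ using the jet action on the second factor and scaling on the first, and structure map $\widetilde{X}\to\mathbb{A}^1$ the first projection (suitably twisted so it is $\mathbb{G}_m$-equivariant for the standard action on $\mathbb{A}^1$). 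Similarly define $\widetilde{Y}$ using $J_m(Y)$, and let $\widetilde{\varphi} = \mathrm{id}_{\mathbb{A}^1}\times J_m(\varphi)$. Since $X,Y$ are smooth, $J_m(X),J_m(Y)$ are smooth by Lemma \ref{lem:useful facts on jet schemes}(1), so $\widetilde{X},\widetilde{Y}$ are smooth $\mathbb{A}^1$-varieties with compatible $\mathbb{G}_m$-actions.

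The key point is the choice of $\mathbb{G}_m$-equivariant section $s:\mathbb{A}^1\to\widetilde{X}$: I would take $s(t) = (t, s_m(x))$ where $s_m:X\hookrightarrow J_m(X)$ is the zero section (Definition \ref{def:truncation maps}(2)), for a fixed $x\in X$. Note the zero section lands in the fixed locus of the jet $\mathbb{G}_m$-action (since $f_j^{(u)}(s.b)=s^u f_j^{(u)}(b)$ and the higher coordinates of $s_m(x)$ vanish), so $s$ is indeed $\mathbb{G}_m$-equivariant. Then the fiber of $\widetilde\varphi$ over $t=1$ is just $J_m(\varphi)$ itself, and $s(1) = (1, s_m(x))$ corresponds to the point $(x,0)\in J_m(X)$; whereas the fiber over $t=0$ is again a copy of $J_m(\varphi)$ (the $\mathbb{G}_m$-orbit closure construction identifies the central fiber with the original map when the action is by the jet scaling), with $s(0)$ corresponding to $(x,0)$ as well. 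Thus the hypothesis that $J_m(\varphi)$ is $P$ at $(x,0)$ gives that $\widetilde\varphi_0$ is $P$ at $s(0)$, and Corollary \ref{cor:reduction to degeneration} yields that $\widetilde\varphi_1 = J_m(\varphi)$ is $P$ at $s(1)$, i.e.\ at $(x,0)$ — which is only the hypothesis again.

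So the genuine content requires the orbit of $(x,0)$ under $\mathbb{G}_m$ to sweep out a Zariski-dense, or at least sufficiently large, subset. The right move is instead to run the degeneration in the other direction: for an arbitrary point $b\in J_m(X)$, the limit $\lim_{t\to 0} t.b$ exists and equals $s_m(\pi_0^m(b)) = (\pi_0^m(b), 0)$, since scaling the higher jet coordinates by $t^u$ sends them to $0$. Hence every point of $J_m(X)$ degenerates, within a single $\mathbb{G}_m$-orbit closure, to a point of the zero section. Concretely, for fixed $b$ I would take $\widetilde X = \overline{\mathbb{G}_m\cdot b}\subseteq J_m(X)$ together with the family over $\mathbb{A}^1$ parametrizing $t\mapsto t.b$ (with $t=0$ giving $(\pi_0^m(b),0)$), and the section $s(t)=t.b$ — but since $\widetilde X$ need not be smooth, it is cleaner to keep the ambient smooth $\widetilde X=\mathbb{A}^1\times J_m(X)$ as above with the twisted action and the \emph{constant} section $s(t)=(t,b')$ replaced by the \emph{orbit} section $s(t) = (t, t.b)$, which is $\mathbb{G}_m$-equivariant by construction. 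Then $\widetilde\varphi_1$ is $J_m(\varphi)$ at $s(1)=(1,b)$, i.e.\ at $b$, while $\widetilde\varphi_0$ is $J_m(\varphi)$ at $s(0)=(0,(\pi_0^m(b),0))$, i.e.\ at the zero-section point $(\pi_0^m(b),0)$, which is $P$ by hypothesis. Corollary \ref{cor:reduction to degeneration} then gives that $J_m(\varphi)$ is $P$ at $b$. Since $b\in J_m(X)$ was arbitrary, $J_m(\varphi)$ is $P$ everywhere, i.e.\ $P$, as required.

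The step I expect to be the main obstacle — or at least the one demanding care — is setting up the smooth $\mathbb{A}^1$-family $\widetilde\varphi:\widetilde X\to\widetilde Y$ with a $\mathbb{G}_m$-action compatible with the one on $\mathbb{A}^1$ and a $\mathbb{G}_m$-equivariant section whose specializations at $1$ and $0$ are exactly $(b)$ and $(\pi_0^m(b),0)$, precisely so that Corollary \ref{cor:reduction to degeneration} applies verbatim; the jet-scheme $\mathbb{G}_m$-action computation $f_j^{(u)}(s.b)=s^u f_j^{(u)}(b)$ guarantees $J_m(X)$ and $J_m(Y)$ carry the action and $J_m(\varphi)$ is equivariant, and Lemma \ref{lem:useful facts on jet schemes}(1) guarantees smoothness, so the remaining work is the bookkeeping of the twisted projection $\widetilde X\to\mathbb{A}^1$ and checking equivariance of the orbit section. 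Everything else is a direct application of Corollary \ref{cor:reduction to degeneration} together with the observation that the limit $\lim_{t\to 0} t.b$ lands on the zero section.
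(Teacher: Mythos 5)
Your corrected version — passing to the smooth ambient family $\widetilde{X}=\mathbb{A}^1\times J_m(X)$, $\widetilde{\varphi}=\mathrm{id}_{\mathbb{A}^1}\times J_m(\varphi)$, and taking the $\mathbb{G}_m$-equivariant orbit section $s(t)=(t,t.b)$ that degenerates an arbitrary jet $b$ to the zero-section point $(\pi_0^m(b),0)$ as $t\to 0$ — is exactly the paper's proof, which uses the section $s(t)=(x,tx^{(1)},\dots,t^m x^{(m)})$ and then applies Corollary \ref{cor:reduction to degeneration}. Your initial attempt with the constant section $s(t)=(t,s_m(x))$ was indeed vacuous, as you correctly diagnosed, but the repair you landed on is the right one.
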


\begin{proof}
It is enough to prove the claim for $X$ and $Y$ affine. Set $\widetilde{J_{m}}(X)=J_{m}(X)\times\mathbb{A}^{1}$,
$\widetilde{J_{m}}(Y)=J_{m}(Y)\times\mathbb{A}^{1}$ and $\widetilde{J_{m}}(\varphi):\widetilde{J_{m}}(X)\rightarrow\widetilde{J_{m}}(Y)$
by 
\[
\widetilde{J_{m}}(\varphi)\left((x,x^{(1)},\dots,x^{(m)}),t\right)=\left(J_{m}(\varphi)(x,x^{(1)},\dots,x^{(m)}),t\right).
\]
Note that $\widetilde{J_{m}}(\varphi)$ is $\mathbb{G}_{m}$-equivariant.
Taking the $\mathbb{G}_{m}$-equivariant section $s(t)=(x,tx^{(1)},\dots,t^{m}x^{(m)})$,
and using the assumption that $J_{m}(\varphi)$ is $P$ at $s(0)=(x,0)$,
by Corollary \ref{cor:reduction to degeneration} we get that $J_{m}(\varphi)$
is $P$ at $s(1)=(x,x^{(1)},\dots,x^{(m)})$.
\end{proof}

\section{\label{sec:Singulrities-of-word}Some properties of the algebraic
convolution operation}

In this section we introduce various singularity properties and discuss
their behavior with respect to the algebraic convolution operation.
We first recall some results from \cite{GH} and \cite{GH19} about
morphisms from a smooth $K$-scheme to a $K$-algebraic group. Throughout
this section we assume that $K$ is a field of characteristic zero.

\subsection{The algebraic convolution operation: the case of general morphisms}
\begin{defn}
\label{def:normality and (FAI)}Let $\varphi:X\to Y$ be a morphism
between $K$-schemes. 
\begin{enumerate}
\item $\varphi$ is called \textit{normal at $x\in X$} if $\varphi$ is
flat at $x$ and the fiber $X_{\varphi(x)}$ is geometrically normal
at $x$. $\varphi$ is called \textit{normal }if it is normal at every
$x\in X$. 
\item $\varphi$ is called \textit{(FGI)} if it is flat with geometrically
irreducible fibers. 
\end{enumerate}
\end{defn}

\begin{rem}
Since by convention the empty scheme is reducible, an (FGI) morphism
is in particular surjective. 
\end{rem}

In \cite{GH19} and \cite{GH}, we have shown that the convolution
operation improves various singularity properties of morphisms in
the following sense: 
\begin{prop}[{\cite[Proposition 3.1, Lemma 3.4]{GH}}]
\label{prop: properties preserved under convolution}Let $X$ and
$Y$ be smooth geometrically irreducible $K$-varieties, $\underline{G}$
be an algebraic $K$-group and let $S$ be any of the following properties
of morphisms: 
\begin{enumerate}
\item Dominance. 
\item Flatness (or flatness with reduced fibers). 
\item Normality. 
\item Smoothness. 
\item (FGI). 
\item (FRS). 
\end{enumerate}
Suppose $\varphi:X\to\underline{G}$ has property $S$ and $\psi:Y\to\underline{G}$
is any morphism. Then the maps $\varphi*\psi:X\times Y\to\underline{G}$
and $\psi*\varphi:Y\times X\to\underline{G}$ have the property $S$. 
\end{prop}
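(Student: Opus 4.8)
The plan is to reduce the statement to a single key computation: that convolution with an arbitrary morphism preserves property $S$ when $S$ is one of the six properties listed, and that it suffices to treat $\varphi * \psi$ (the case of $\psi * \varphi$ being symmetric, using that inversion $g \mapsto g^{-1}$ on $\underline{G}$ is an isomorphism and that all properties in the list are stable under composition with isomorphisms of the target). So fix $\varphi : X \to \underline{G}$ with property $S$ and an arbitrary $\psi : Y \to \underline{G}$.

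The central device is to rewrite the convolution $\varphi * \psi$ as a composite of maps we already understand. First I would form the map $\mathrm{id}_Y \times \varphi : Y \times X \to Y \times \underline{G}$, which has property $S$ because $S$ is preserved under base change by the smooth (even flat) morphism $Y \times X \to X$ — flatness, reducedness of fibers, normality of fibers, geometric irreducibility of fibers, rational singularities of fibers, and smoothness are all fppf-local and stable under such base change, and dominance is likewise preserved since the fibers are unchanged. Next, observe that the ``action'' map $a : Y \times \underline{G} \to \underline{G}$, $(y,g) \mapsto \psi(y) \cdot g$, is an \emph{isomorphism} onto $Y \times \underline{G}$ when we remember the first coordinate: more precisely the map $(y,g) \mapsto (y, \psi(y)g)$ is an automorphism of $Y \times \underline{G}$ (with inverse $(y,h) \mapsto (y, \psi(y)^{-1} h)$, using that $\underline{G}$ is a group). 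Composing $\mathrm{id}_Y \times \varphi$ with this automorphism and then with the projection $\mathrm{pr}_{\underline{G}} : Y \times \underline{G} \to \underline{G}$ recovers exactly $\varphi * \psi$. Thus $\varphi * \psi = \mathrm{pr}_{\underline{G}} \circ \big( (\mathrm{id}_Y \times \varphi) \text{ twisted by } \psi \big)$, where the twist is an isomorphism, so the map $Y \times X \to Y \times \underline{G}$ appearing here still has property $S$.

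It then remains to check that property $S$ descends along the projection $\mathrm{pr}_{\underline{G}} : Y \times \underline{G} \to \underline{G}$ in the relevant sense: if $F : Y \times X \to Y \times \underline{G}$ has property $S$ and the target projects to $\underline{G}$, then $\mathrm{pr}_{\underline{G}} \circ F$ has property $S$. This is where one uses that $\underline{G}$ is a \emph{group}, and in particular that $\mathrm{pr}_{\underline{G}}$ is itself smooth and surjective with geometrically irreducible (indeed smooth) fibers $Y$ — here one needs $Y$ geometrically irreducible, which is a hypothesis. For flatness one uses that a composite of flat maps is flat; for the fiber conditions (reduced / normal / geometrically irreducible / rational singularities), the fiber of $\mathrm{pr}_{\underline{G}} \circ F$ over $g \in \underline{G}$ is $F^{-1}(Y \times \{g\})$, which maps via $F$ to $Y \times \{g\} \cong Y$ with property $S$, and then one invokes the stability of these properties under smooth base change \emph{in reverse} together with the fact that ``$S$ holds for $Z \to Y$ with $Y$ smooth geometrically irreducible'' implies ``$S$ holds for $Z$'' only after fibering further — concretely this is where I would cite the local structure results behind Proposition \ref{Prop:properties preserved under deformations} and the openness statements of \cite{Elk78}, \cite{Gro66} for (FRS), normality, etc. Geometric irreducibility of the total fiber follows because it is fibered over the geometrically irreducible $Y$ with geometrically irreducible fibers. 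For dominance one simply notes the composite of dominant maps (the projection being surjective) is dominant.

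\medskip
\noindent\textbf{Main obstacle.} The genuinely delicate point is property (FRS): one must ensure that ``flat with reduced rational-singularity fibers'' is preserved not just under the smooth base change $Y \times X \to X$ but also survives the final composition with the smooth projection, i.e.\ that a scheme which is, fiberwise over a smooth geometrically irreducible base $Y$, of the form (rational singularities), is itself of rational singularities. This requires knowing that rational singularities ascend and descend along smooth morphisms — a standard but nontrivial fact (e.g.\ via the behavior of $R^i\pi_*\mathcal{O}$ under flat base change in a resolution, or Elkik's theorem \cite{Elk78}) — and that reducedness and normality likewise propagate, for which one relies on \cite[Theorems 12.1.1, 12.1.6]{Gro66}. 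Everything else is formal group-scheme bookkeeping; the real content is this descent of (FRS) through smooth maps, exactly as packaged in the proof of Proposition \ref{Prop:properties preserved under deformations}.
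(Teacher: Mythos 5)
Your proof is correct and uses the same factorization that underlies the cited result in \cite{GH}: rewrite the convolution as a twisted base change of $\varphi$ (the automorphism $(y,g)\mapsto(y,\psi(y)g)$ of $Y\times\underline{G}$ post-composed with $\mathrm{id}_Y\times\varphi$) followed by the smooth projection $\mathrm{pr}_{\underline{G}}$ whose fiber is the smooth geometrically irreducible variety $Y$, and then invoke stability of the fiberwise conditions under flat families over such a base via \cite{Elk78} and \cite{Gro66}. (The composite you display is actually $\psi*\varphi$ rather than $\varphi*\psi$, and ``base change by the morphism $Y\times X\to X$'' should read ``base change along $Y\times\underline{G}\to\underline{G}$,'' but these labeling slips do not affect the substance of the argument.)
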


By taking enough self-convolutions of a given dominant morphism, one
can obtain increasingly better singularity properties, and eventually
obtain an (FRS) morphism. 
\begin{prop}[{cf.~\cite[Theorem B]{GH}}]
\label{Prop: singularity properties obtained after convolution}Let
$m\in\nats$, let $X_{1},\ldots,X_{m}$ be geometrically irreducible
smooth $K$-varieties, $\underline{G}$ be a connected algebraic $K$-group
and let $\{\varphi_{i}:X_{i}\rightarrow\underline{G}\}_{i=1}^{m}$
be a collection of dominant morphisms. 
\begin{enumerate}
\item For any $1\leq i,j\leq m$ the morphism $\varphi_{i}*\varphi_{j}$
is surjective. 
\item If $m\geq\mathrm{dim}\underline{G}$ then $\varphi_{1}*\dots*\varphi_{m}$
is flat. 
\item If $m\geq\mathrm{dim}\underline{G}+1$ then $\varphi_{1}*\dots*\varphi_{m}$
is flat with reduced fibers. 
\item If $m\geq\mathrm{dim}\underline{G}+k$ then $\varphi_{1}*\dots*\varphi_{m}$
is flat with normal fibers which are regular in codimension $k-1$. 
\item The bounds in $(2)-(4)$ are tight (e.g.~consider $\varphi(x_{1},\dots,x_{m})=(x_{1}^{2},\left(x_{1}x_{2}\right)^{2},\left(x_{1}x_{3}\right)^{2},\left(x_{1}x_{m}\right)^{2})$,
see \cite[Proposition 3.6]{GH}). 
\end{enumerate}
\end{prop}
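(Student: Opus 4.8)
First I would upgrade Proposition~\ref{prop: properties preserved under convolution} to a pointwise statement. If $\alpha\colon A\to\underline{G}$ and $\beta\colon B\to\underline{G}$ are morphisms with $A,B$ smooth, then $\alpha*\beta$ factors as $\alpha*\beta=\nu\circ(\alpha\times\mathrm{id}_B)$, where $\nu\colon\underline{G}\times B\to\underline{G}$, $\nu(g,b)=g\beta(b)$, is the composite of the scheme automorphism $(g,b)\mapsto(g\beta(b),b)$ of $\underline{G}\times B$ with the projection to $\underline{G}$, and is therefore smooth (the projection is smooth because $B$ is). Symmetrically $\alpha*\beta=\mu\circ(\mathrm{id}_A\times\beta)$ with $\mu$ smooth, and $\mathrm{id}_A\times\beta$ is the base change of $\beta$. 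Since each of the properties $P\in\{\text{flat},\ \text{flat with reduced fibers},\ \text{flat with normal fibers},\ \text{flat with }(R_{k-1})\text{ fibers},\ \text{smooth}\}$ is stable under base change and under post-composition with a smooth morphism, the factorizations show that $\alpha*\beta$ has $P$ at $(a,b)$ as soon as $\alpha$ has $P$ at $a$, and also as soon as $\beta$ has $P$ at $b$. Writing $\varphi_1*\cdots*\varphi_m=(\varphi_1*\cdots*\varphi_{i-1})*\varphi_i*(\varphi_{i+1}*\cdots*\varphi_m)$ and peeling off the left and right blocks, I get: $\varphi_1*\cdots*\varphi_m$ has $P$ at $(x_1,\dots,x_m)$ whenever some $\varphi_i$ has $P$ at $x_i$. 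Hence the locus $Z_P\subseteq X_1\times\cdots\times X_m$ on which $\varphi_1*\cdots*\varphi_m$ fails $P$ satisfies $Z_P\subseteq B_1\times\cdots\times B_m$, where $B_i\subseteq X_i$ is the locus on which $\varphi_i$ fails $P$.

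Next I would feed in dominance. Each $\varphi_i$ is dominant with $X_i$ smooth and geometrically irreducible, so by generic smoothness (characteristic zero) $\varphi_i$ is smooth --- in particular has $P$ --- over a dense open of $\underline{G}$; thus $B_i$ is a proper closed subset of the irreducible $X_i$ and $\dim B_i\le\dim X_i-1$, whence $\dim Z_P\le\sum_i\dim X_i-m$. Also $\varphi_1*\cdots*\varphi_m$ is dominant by Proposition~\ref{prop: properties preserved under convolution}(1), so its generic fiber has dimension exactly $\sum_i\dim X_i-\dim\underline{G}$, and since $X_1\times\cdots\times X_m$ is Cohen--Macaulay and $\underline{G}$ is regular, by miracle flatness the map is flat iff every fiber has this dimension.

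Now (2)--(4) drop out. For (2): if the convolution were not flat, some fiber would have a component $C$ of dimension $\ge\sum_i\dim X_i-\dim\underline{G}+1$; every point of $C$ is a non-flat point, so $C\subseteq Z_{\mathrm{flat}}$ and $\sum_i\dim X_i-\dim\underline{G}+1\le\sum_i\dim X_i-m$, forcing $m\le\dim\underline{G}-1$; hence $m\ge\dim\underline{G}$ gives flatness. For (3): if $m\ge\dim\underline{G}+1$ the map is flat, so every fiber is a local complete intersection; a non-reduced fiber would contain, as the closure of a non-reduced generic point, a component $C$ of full dimension $\sum_i\dim X_i-\dim\underline{G}$ inside the bad locus, giving $m\le\dim\underline{G}$, a contradiction. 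For (4): the fibers being local complete intersections are $(S_2)$, so a fiber failing to be regular in codimension $k-1$ has a singular-locus component $C$ with $\dim C\ge\sum_i\dim X_i-\dim\underline{G}-(k-1)$, and the closure of its generic point lies in the bad locus; this forces $m\le\dim\underline{G}+k-1$, so $m\ge\dim\underline{G}+k$ makes every fiber regular in codimension $k-1$ --- hence normal once $k\ge2$. That the ambient fiber is actually reduced (resp.\ normal, resp.\ $(R_{k-1})$), given flatness with such fibers over the smooth base $\nu^{-1}(g)$, is the standard permanence of flatness used in the proof of Proposition~\ref{Prop:properties preserved under deformations} (Serre's criteria and \cite{Gro66,Elk78}). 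For (1), $\nu$ is smooth --- hence open --- and surjective, so $\mathrm{Im}(\varphi_i*\varphi_j)\supseteq\nu(U_i\times X_j)$ for a dense open $U_i\subseteq\mathrm{Im}(\varphi_i)$; for any $g\in\underline{G}(\overline{K})$ the dense set $g\cdot(\mathrm{Im}(\varphi_j))^{-1}$ meets $U_i$ because $\underline{G}$ is irreducible, so $g\in\mathrm{Im}(\varphi_i)\cdot\mathrm{Im}(\varphi_j)\subseteq\mathrm{Im}(\varphi_i*\varphi_j)$, proving surjectivity; and (5), sharpness, is the explicit computation of \cite[Proposition 3.6]{GH}.

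The point requiring the most care is the bookkeeping in (3) and (4): one must verify that "$\varphi_1*\cdots*\varphi_m$ fails $P$ only on a set of dimension $\le\sum_i\dim X_i-m$" genuinely forces a \emph{full-dimensional} (resp.\ codimension $\le k-1$) bad component of some fiber to sit \emph{inside} that set. This rests on equidimensionality of the fibers of a flat morphism with Cohen--Macaulay source, on the fact that the non-reduced (resp.\ non-$(R_{k-1})$) locus of such a fiber is closed and stable under generization --- so it contains the closure of the generic point of any bad component --- and on the openness, over the flat locus, of the loci where the fibers are geometrically reduced or $(R_k)$ (\cite{Gro66}). One should also fix the convention, matching Proposition~\ref{Prop:properties preserved under deformations}, that "$\varphi$ has $P$ at $x$" means "$\varphi$ has $P$ on some open neighbourhood of $x$", so that base change and composition with smooth morphisms really do preserve it.
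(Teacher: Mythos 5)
Your argument is correct, and it is in spirit the same as the paper's: the proposition itself is cited from \cite[Theorem B]{GH}, but the proof of the closely related Proposition~\ref{prop:extended properties of convolutions} in this paper runs on the identical engine, namely that the failure locus of the convolution is contained in the product of the factors' failure loci, giving a dimension bound of $\sum_i\dim X_i - m$. The one genuine difference is in the bookkeeping. The paper first establishes flatness, observes that $\varphi_1*\dots*\varphi_m$ is non-smooth at $(x_1,\dots,x_m)$ only if each $\varphi_i$ is non-smooth at $x_i$, bounds the \emph{fiberwise} singular locus $(\varphi^{-1}(g))^{\mathrm{sing}}$ directly by the resulting dimension count, and then reads off reducedness and normality of the (Cohen--Macaulay lci) fibers in one stroke from Serre's $(R_k)+(S_{k+1})$ criterion. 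You instead bound the total $P$-failure locus $Z_P$ in $X_1\times\cdots\times X_m$ for each property $P$ separately and then argue that a fiber failing $P$ forces a full-dimensional (resp.\ codimension $\leq k-1$) component of that fiber to sit inside $Z_P$; this requires the closure/specialization step you flag at the end, resting on the nilradical commuting with localization (for reducedness) and on the singular locus being stable under specialization (for $(R_{k-1})$). Both routes are valid; the fiberwise version of the paper is a touch leaner because one codimension bound on $(\varphi^{-1}(g))^{\mathrm{sing}}$ delivers parts (3) and (4) simultaneously, without the separate "bad generic point specializes to bad points" argument.
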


\begin{thm}[{\cite[Theorem A]{GH}}]
\label{Main result}Let $X$ be a geometrically irreducible smooth
$K$-variety, $\underline{G}$ be a connected $K$-algebraic group
and let $\varphi:X\to\underline{G}$ be a dominant morphism. Then
there exists $N\in\mathbb{N}$ such that for any $t>N$, the $t$-th
convolution power $\varphi^{*t}$ is (FRS). 
\end{thm}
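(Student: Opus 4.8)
The plan is to combine the structural smoothing results of Propositions~\ref{prop: properties preserved under convolution} and~\ref{Prop: singularity properties obtained after convolution} with a \emph{uniform} analysis of the jet schemes of the fibers via their log canonical thresholds. The point is that, for any fixed $m$, flatness of the jet morphism $J_m(\varphi^{*t})$ only becomes available after a number of convolutions growing with $m$, so a genuinely $m$-independent input is required; this input is the growth of $\mathrm{lct}$ of the fibers under convolution.

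First I would reduce to the case of flat fibers. Since $\varphi$ is dominant and $\underline{G}$ is connected, Proposition~\ref{Prop: singularity properties obtained after convolution} shows that $\varphi^{*t_0}$ is flat with normal (hence reduced) fibers once $t_0\geq\mathrm{dim}\,\underline{G}+1$. Replacing $\varphi$ by $\varphi^{*t_0}$, which is harmless by Proposition~\ref{prop: properties preserved under convolution}, I may assume $\varphi:X\to\underline{G}$ is flat with normal fibers and $X$ smooth; then the fibers are local complete intersections, so after base change to $\overline{K}$ each fiber is a disjoint union of geometrically irreducible local complete intersection varieties (normality makes each connected component irreducible, and each component is itself a local complete intersection). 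As being (FRS) is geometric and, together with flatness, is preserved under convolution, it remains to show that after finitely many further convolutions every such component has rational singularities.

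By Musta\c{t}\u{a}'s criterion (\cite{Mus01}; relatively, Corollary~\ref{cor:relative criterion for (FRS)}), a geometrically irreducible local complete intersection has rational singularities if and only if all of its jet schemes are irreducible; so it suffices to show, for $t$ large, that $J_m(\varphi^{*t})$ is flat with geometrically irreducible fibers for every $m$, using Theorem~\ref{thm:log canonical threshold and jet schemes} for the dimension count and Corollary~\ref{cor:relative criterion for (FRS)} for the irreducibility. Functoriality gives $J_m(\varphi^{*t})=J_m(\varphi)^{*t}$, now convolved inside the jet group $J_m(\underline{G})$, with $J_m(\varphi)$ again dominant. For the flatness I pass to log canonical thresholds. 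The function $g\mapsto\mathrm{lct}(X,X_g)$ is constructible and lower semicontinuous on $\underline{G}$ (lct in families), hence takes finitely many values, each strictly positive since every $X_g$ is a nonempty proper closed subscheme of the smooth $X$; let $c>0$ be its minimum. The crucial estimate is the superadditivity of $\mathrm{lct}$ under convolution,
\[
\mathrm{lct}\bigl(X^{t},(\varphi^{*t})^{-1}(g)\bigr)\ \geq\ \min\{\mathrm{dim}\,\underline{G},\ tc\}\qquad\text{uniformly in }g.
\]
Granting this, once $t\geq\mathrm{dim}\,\underline{G}/c$ the right-hand side equals $\mathrm{dim}\,\underline{G}=\mathrm{codim}\,(\varphi^{*t})^{-1}(g)$, so by Theorem~\ref{thm:log canonical threshold and jet schemes} the jet schemes of every fiber have the minimal possible dimension; since the generic fiber of $\varphi^{*t}$ is smooth its jets already realize this dimension, and upper semicontinuity of fiber dimension forces all fibers of $J_m(\varphi^{*t})$ to have it, whence miracle flatness (the jet schemes $J_m(X^t)$ and $J_m(\underline{G})$ being smooth) makes $J_m(\varphi^{*t})$ flat for all $m$ at once.

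Proving the superadditivity estimate is where I expect the main difficulty. Working locally at a point $p=(x_1,\dots,x_t)$ of a fiber, the linearization and elimination methods (Corollaries~\ref{cor:(cf.---Linearization)} and~\ref{cor:(cf.---Elimination)}) degenerate $\varphi^{*t}$ near $p$ to a polynomial map between affine spaces; since the tangent cone of $\underline{G}$ at any point is the vector space $\mathfrak{g}$, the group convolution degenerates to ordinary addition of the local symbols of $\varphi$ at $x_1,\dots,x_t$, each with log canonical threshold of its zero fiber bounded below in terms of $c$, and a Thom--Sebastiani inequality (\cite[Corollary 1]{MSS18}; compare the example following Theorem~\ref{thm:log canonical threshold and jet schemes}) adds these contributions up to the required bound. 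The delicate points are absorbing the twist produced by the group multiplication when $p$ is not the identity tuple, and tracking the codimension normalization through the elimination step. Finally, one must still upgrade ``minimal jet dimension'' to ``irreducible jet schemes'' — equivalently, from log canonical to rational singularities of the fibers, which is strictly stronger and not a consequence of the dimension count alone — by taking a few additional convolutions to bring the fibers of $\varphi^{*t}$ into the range of Proposition~\ref{Prop: singularity properties obtained after convolution}(4) and combining normality with a Stein-factorization argument (a finite morphism generically an isomorphism onto a normal variety is an isomorphism) at each jet level, using that the generic jet-fiber is geometrically irreducible. With all of these ingredients, $N$ may be taken of order $\mathrm{dim}\,\underline{G}/c$.
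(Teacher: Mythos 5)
The paper does not prove this statement — it is imported verbatim as \cite[Theorem~A]{GH} — so there is no in-text proof to compare against. Evaluating your argument on its own, the overall architecture (reduce to flat, normal fibers; characterize (FRS) via Musta\c{t}\u{a}'s jet criterion; control $\mathrm{lct}$ of fibers under convolution to force jet-flatness; then upgrade to rational singularities) is sound, and the upgrade step can in fact be done cleanly by invoking Corollary~\ref{cor:jet-flat implies FTS after two convolusions} rather than the somewhat hand-wavy Stein-factorization sketch.

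The genuine gap is in the ``crucial estimate.'' The Thom--Sebastiani result you cite, \cite[Corollary~1]{MSS18}, and the example following Theorem~\ref{thm:log canonical threshold and jet schemes}, concern \emph{scalar-valued} polynomials $f_i:\mathbb{A}^{n_i}\to\mathbb{A}^1$ added in disjoint variables; the conclusion is $\mathrm{lct}\bigl(\sum f_i\bigr)=\min\bigl\{1,\sum\mathrm{lct}(f_i)\bigr\}$, capped at $1=\dim\mathbb{A}^1$. Your claim requires the analogue for morphisms $\psi_i:\mathbb{A}^{n_i}\to\mathbb{A}^m$ with $m=\dim\underline{G}>1$, namely a lower bound on $\mathrm{lct}$ of the ideal $\bigl(\sum_i\psi_{i,1},\ldots,\sum_i\psi_{i,m}\bigr)$ in terms of the individual $\mathrm{lct}$'s, capped at $m$. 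This is not the ideal $\mathfrak{a}_1+\cdots+\mathfrak{a}_t$ to which the known ``disjoint-variables'' additivity of log canonical thresholds applies, and I do not see how the cited scalar statement yields it; a naive jet-dimension count of the fibered product $\{(x'_1,\dots,x'_t):\sum J_k(\psi_i)(x'_i)=0\}$ only gives $\mathrm{lct}\geq\max_i c_i$, not $\sum_i c_i$. Establishing this vector-valued superadditivity (with the $\min$-cap at $\dim\underline{G}$) is precisely the technical heart of the theorem and needs its own proof.

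A secondary issue: after linearizing $\varphi$ at each $x_i$, the symbol $D^{l}_{x_i}\varphi$ is a \emph{degeneration} of $\varphi$ near $x_i$, so by the semicontinuity behind Corollary~\ref{cor:(cf.---Linearization)} its fiber over $0$ has $\mathrm{lct}$ \emph{at most} $\mathrm{lct}_{x_i}(X,X_{\varphi(x_i)})$, not at least. Hence the constant $c$ in your inequality cannot be taken to be $\min_g\mathrm{lct}(X,X_g)$; you need a uniform positive lower bound on the $\mathrm{lct}$'s of the fibers of all symbols $D^{l}_{x}\varphi$ as $x$ ranges over $X$, which requires a separate boundedness/constructibility argument and is not automatic from the constructibility of $g\mapsto\mathrm{lct}(X,X_g)$.
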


For the next sections we need several more auxiliary results. 
\begin{prop}
\label{prop:extended properties of convolutions}Let $\{X_{i}\}_{i\in\nats}$
be a collection of geometrically irreducible smooth $K$-varieties,
let $\underline{G}$ be a connected algebraic $K$-group, and let
$\{\varphi_{i}:X_{i}\rightarrow\underline{G}\}_{i\in\nats}$ be a
collection of dominant morphisms. Assume that $\varphi_{1}$ is flat.
Then 
\begin{enumerate}
\item $\varphi_{1}*\varphi_{2}$ is flat with reduced fibers. 
\item If $m\geq3$ then $\varphi_{1}*\dots*\varphi_{m}$ is flat with normal
fibers which are regular in codimension $m-2$. 
\end{enumerate}
\end{prop}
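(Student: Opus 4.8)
The plan is to isolate a single ``each convolution improves the fibers by one notch'' statement and iterate it. Recall Serre's criteria: a Noetherian scheme is reduced iff it satisfies $(R_0)$ and $(S_1)$, and normal iff it satisfies $(R_1)$ and $(S_2)$; moreover the fibers of a flat morphism between smooth varieties are local complete intersections, hence Cohen--Macaulay, so they satisfy $(S_k)$ for every $k$. Since $\varphi_1$ is flat, Proposition \ref{prop: properties preserved under convolution}(2) shows that $\varphi_1*\varphi_2*\cdots*\varphi_m$ is flat for every $m$; thus its geometric fibers are pure of dimension $\bigl(\sum_{i=1}^m\dim X_i\bigr)-\dim\underline G$ and Cohen--Macaulay, and everything reduces to bounding the dimension of their singular loci.

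The key step is the following. \emph{Let $\varphi\colon X\to\underline G$ be a flat morphism from a smooth, geometrically irreducible $K$-variety whose geometric fibers are regular in codimension $k$ (with $k=-1$ imposing no condition), and let $\psi\colon Y\to\underline G$ be a dominant morphism from a smooth, geometrically irreducible $K$-variety. Then $\varphi*\psi$ is flat and its geometric fibers are regular in codimension $k+1$.} Granting this, Item (1) is the case $\varphi=\varphi_1$, $\psi=\varphi_2$, $k=-1$: the fibers of $\varphi_1*\varphi_2$ satisfy $(R_0)$, and together with $(S_1)$ this gives reducedness. Item (2) follows by applying the key step $m-1$ times: the fibers of $\varphi_1*\varphi_2$ satisfy $(R_0)$, each further convolution raises the index by one, so the fibers of $\varphi_1*\cdots*\varphi_m$ satisfy $(R_{m-2})$; for $m\ge 3$ this includes $(R_1)$, which with $(S_2)$ yields normal fibers.

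To prove the key step I would base change to $\overline K$ (harmless, since dominance, flatness, convolution, and the formation of fibers and singular loci all commute with it) and fix a geometric point $g$ of $\underline G$. As $\varphi*\psi$ is flat with smooth source $X\times Y$ and smooth target, the singular locus of the fiber $(\varphi*\psi)^{-1}(g)$ is $(\varphi*\psi)^{-1}(g)\cap(X\times Y)^{\mathrm{sing},\varphi*\psi}$. A computation of $d(\varphi*\psi)_{(x,y)}$, using the smoothness of multiplication and the invertibility of translations in $\underline G$, shows that its image is a sum of a translate of $\mathrm{Im}(d\varphi_x)$ and a translate of $\mathrm{Im}(d\psi_y)$; hence $\varphi*\psi$ is smooth at $(x,y)$ whenever $\varphi$ is smooth at $x$ \emph{or} $\psi$ is smooth at $y$, so $(X\times Y)^{\mathrm{sing},\varphi*\psi}\subseteq X^{\mathrm{sing},\varphi}\times Y^{\mathrm{sing},\psi}$. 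By generic smoothness in characteristic zero (applied to the dominant morphism $\psi$ from the smooth variety $Y$), the set $Y^{\mathrm{sing},\psi}$ is a proper closed subset of $Y$, so $\dim Y^{\mathrm{sing},\psi}\le\dim Y-1$. Projecting $(\varphi*\psi)^{-1}(g)\cap\bigl(X^{\mathrm{sing},\varphi}\times Y^{\mathrm{sing},\psi}\bigr)$ onto $Y^{\mathrm{sing},\psi}$, the fiber over $y$ is $\{x\in X^{\mathrm{sing},\varphi}:\varphi(x)=g\psi(y)^{-1}\}=\bigl(\varphi^{-1}(g\psi(y)^{-1})\bigr)^{\mathrm{sing}}$ (again using that $\varphi$ is flat between smooth varieties), of dimension at most $(\dim X-\dim\underline G)-(k+1)$ by hypothesis, and empty when $g\psi(y)^{-1}\notin\varphi(X)$. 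Hence
\[
\dim\bigl((\varphi*\psi)^{-1}(g)\bigr)^{\mathrm{sing}}\ \le\ (\dim Y-1)+\bigl(\dim X-\dim\underline G-k-1\bigr)\ =\ \bigl(\dim X+\dim Y-\dim\underline G\bigr)-(k+2),
\]
which is exactly regularity in codimension $k+1$ of the pure-dimensional fiber $(\varphi*\psi)^{-1}(g)$. This establishes the key step, and with it the proposition.

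The one genuinely delicate point is this last dimension count. The crude bound $\dim\bigl(X^{\mathrm{sing},\varphi}\times Y^{\mathrm{sing},\psi}\bigr)\le\dim X+\dim Y-2$ is too weak as soon as $\dim\underline G\ge 2$; what saves the argument is slicing $X^{\mathrm{sing},\varphi}$ by a single fiber of the \emph{flat} map $\varphi$, which costs a further $\dim\underline G-1$ dimensions, together with the extra $k+1$ gained from the codimension-$k$ regularity of those fibers in the inductive step. This is essentially the dimension bookkeeping behind Proposition \ref{Prop: singularity properties obtained after convolution}, refined by starting from a single flat morphism rather than from $\dim\underline G$ dominant ones (cf.\ \cite{GH}).
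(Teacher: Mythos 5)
Your argument is correct and, at bottom, is the same as the paper's: both reduce via Serre's criteria to bounding the codimension of the singular locus of the fiber, both rest on the containment of the non-smooth locus of a convolution in the product of the non-smooth loci of the factors (which is Proposition~\ref{prop: properties preserved under convolution}), both use generic smoothness to bound the non-smooth locus of each dominant $\psi$ by $\dim Y - 1$, and both use the flatness of $\varphi_1$ to anchor the count. The only difference is presentational: the paper carries out one direct dimension count on $(\varphi_1*\cdots*\varphi_m)^{-1}(g)$ by fibering over the projection to the last $m-1$ coordinates, whereas you isolate a one-step ``improves $(R_k)$ to $(R_{k+1})$'' lemma and iterate it, absorbing the accumulated $\sum(\dim X_i - 1)$ losses into the inductive hypothesis on the earlier convolution.
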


\begin{proof}
Let $X=\stackrel[i=1]{k}{\prod}X_{i}$. Notice that by Proposition
\ref{prop: properties preserved under convolution}, $\varphi:=\varphi_{1}*\varphi_{2}*\dots*\varphi_{k}:X\rightarrow\underline{G}$
is a flat morphism between smooth schemes, and therefore its fibers
are local complete intersections and in particular Cohen-Macaulay.
By Serre's criterion for normality and reducedness \cite[Proposition 5.8.5, Theorem 5.8.6]{Gro66}
it is enough to show that for any $k\geq2$ the fibers are regular
in codimension $k-2$.

Since $\varphi$ is flat, $x=(x_{1},\dots,x_{k})$ is a smooth point
of the fiber $X_{\varphi(x),\varphi}$ if and only if $\varphi$ is
smooth at $x$. For each $1\leq i\leq k$ denote by $\underline{G}^{\mathrm{sing},\varphi_{i}}$
the set of $g\in\underline{G}$ with $(X_{i})_{g,\varphi_{i}}$ non-smooth.
By generic smoothness, $\mathrm{dim}\varphi_{i}^{-1}(\underline{G}^{\mathrm{sing},\varphi_{i}})\leq\mathrm{dim}X_{i}-1$.
Note that by Proposition \ref{prop: properties preserved under convolution},
$\varphi$ is non-smooth at $x$ only if $\varphi_{i}$ is non-smooth
at $x_{i}$ for each $i$. Thus, set theoretically, we have the following:
\[
(\varphi^{-1}(g))^{\mathrm{sing}}\subseteq\bigcup_{(g_{2},\dots,g_{k})\in\underline{G}^{\mathrm{sing},\varphi_{2}}\times\ldots\times\underline{G}^{\mathrm{sing},\varphi_{k}}}\varphi_{1}^{-1}(gg_{k}^{-1}\dots g_{2}^{-1})\times\varphi_{2}^{-1}(g_{2})\times\dots\times\varphi_{k}^{-1}(g_{k}).
\]
In particular,
\begin{align*}
\mathrm{dim}(X_{g,\varphi}^{\mathrm{sing}}) & \leq\mathrm{dim}X_{1}-\mathrm{dim}\underline{G}+\sum_{i=2}^{k}\mathrm{dim}\varphi_{i}^{-1}(\underline{G}^{\mathrm{sing},\varphi_{i}})\\
 & \leq\sum_{i=1}^{k}\mathrm{dim}X_{i}-\mathrm{dim}\underline{G}-(k-1)=\mathrm{dim}X_{g,\varphi}-(k-1).
\end{align*}
\end{proof}
\begin{defn}
\label{def:generating morphism}Let $\varphi:X\rightarrow\underline{G}$
be a morphism between a geometrically irreducible $K$-variety $X$
and a connected algebraic $K$-group $\underline{G}$. Then $\varphi$
is called \textsl{generating}, if for any algebraic $\overline{K}$-subgroup
$\underline{H}\lneq\underline{G}_{\bar{K}}$, and any $g\in\underline{G}(\overline{K})$
we have $\varphi(X)\nsubseteq g\underline{H}$. 
\end{defn}

\begin{lem}[{cf.~\cite[Proposition 3.5]{GH19}}]
\label{lem:generating becomes dominant}Let $X$ be a geometrically
irreducible $K$-variety, $\underline{G}$ be a connected algebraic
$K$-group and $\varphi:X\rightarrow\underline{G}$ a generating morphism
whose image contains $e\in\underline{G}$. Then $\varphi^{*\mathrm{dim}\underline{G}}$
is dominant, and thus $\varphi^{*(\mathrm{dim}\underline{G})^{2}}$
is flat. 
\end{lem}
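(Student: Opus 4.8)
The plan is to establish the two assertions in order: first that $\varphi^{*\dim\underline{G}}$ is dominant, and then deduce flatness of $\varphi^{*(\dim\underline{G})^2}$ from Proposition \ref{Prop: singularity properties obtained after convolution}(2). For the dominance statement, the key point is to track the Zariski closure $\underline{Z}_t := \overline{\varphi^{*t}(X^t)}$ of the image of the $t$-th convolution power. Since $e \in \varphi(X)$ (up to passing to the closure, $e \in \underline{Z}_1$), multiplying by $e$ shows $\underline{Z}_t \subseteq \underline{Z}_{t+1}$: indeed $\varphi^{*t}(x_1,\dots,x_t) = \varphi^{*t}(x_1,\dots,x_t)\cdot e$ lies in $\overline{\varphi^{*t}(X^t)\cdot\varphi(X)} = \underline{Z}_{t+1}$. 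So the dimensions $\dim\underline{Z}_t$ form a non-decreasing sequence of integers bounded by $\dim\underline{G}$, and the plan is to show that as long as $\underline{Z}_t \neq \underline{G}$, one has the strict inequality $\dim\underline{Z}_{t+1} > \dim\underline{Z}_t$; iterating this at most $\dim\underline{G}$ times forces $\underline{Z}_{\dim\underline{G}} = \underline{G}$, i.e. $\varphi^{*\dim\underline{G}}$ is dominant.

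To prove the strict growth step, suppose $\dim\underline{Z}_{t+1} = \dim\underline{Z}_t =: d < \dim\underline{G}$. Since $\underline{Z}_t \subseteq \underline{Z}_{t+1}$ and both are irreducible closed subsets of the same dimension, $\underline{Z}_t = \underline{Z}_{t+1}$; call this common set $\underline{Z}$. Then $\underline{Z}\cdot\varphi(X) \subseteq \underline{Z}$, and since $e \in \overline{\varphi(X)}$ we get (taking closures) that $\underline{Z}\cdot\underline{Z}_1 \subseteq \underline{Z}$; iterating, $\underline{Z}\cdot\underline{Z}_s \subseteq \underline{Z}$ for all $s$, and in particular $\underline{Z}$ is stable under right translation by every element of $\underline{Z}_1 = \overline{\varphi(X)}$. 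The standard argument now shows $\underline{Z}$ is (a translate of) a proper algebraic subgroup: fix $z_0 \in \underline{Z}(\overline{K})$ in the image; then $\underline{H} := z_0^{-1}\underline{Z}$ is a closed subvariety containing $e$, closed under right multiplication by $z_0^{-1}\overline{\varphi(X)}$, hence (since the set of $h$ with $\underline{H}h \subseteq \underline{H}$ is a closed submonoid of an algebraic group, therefore a subgroup — or by the lemma that a closed irreducible subvariety stable under right multiplication by a generating set is a coset) $\underline{H}$ is a proper closed subgroup of $\underline{G}_{\overline K}$ with $\varphi(X) \subseteq z_0\underline{H}$. This contradicts the hypothesis that $\varphi$ is generating.

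Hence the strict growth holds and $\varphi^{*\dim\underline{G}}$ is dominant. For the final claim: $\varphi^{*\dim\underline{G}} : X^{\dim\underline{G}} \to \underline{G}$ is a dominant morphism from a geometrically irreducible smooth variety (a product of such) to the connected group $\underline{G}$, so Proposition \ref{Prop: singularity properties obtained after convolution}(2) applied with $m = \dim\underline{G}$ copies of this morphism gives that $(\varphi^{*\dim\underline{G}})^{*\dim\underline{G}} = \varphi^{*(\dim\underline{G})^2}$ is flat. The main obstacle is the coset-identification step — making rigorous that a proper-dimensional closed irreducible subset stable under right translation by the (closure of the) image of a generating morphism must be contained in a coset of a proper subgroup; this is where the precise definition of "generating" (Definition \ref{def:generating morphism}) is used, and one must be slightly careful about working over $\overline{K}$ and about the distinction between $\varphi(X)$ and its closure, but it is a routine application of the theory of algebraic groups (closed submonoids of algebraic groups are subgroups).
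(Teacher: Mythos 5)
Your proof is correct and follows essentially the same approach as the paper, which simply observes that the chain $\overline{\varphi(X)^{n}}$ must stabilize within $\dim\underline{G}$ steps and otherwise defers the ``stabilization implies coset of a subgroup, contradicting generating'' step to the cited Proposition 3.5 of \cite{GH19}. One small slip worth noting: $z_{0}^{-1}\underline{Z}$ is stable under right multiplication by $\overline{\varphi(X)}$, not by $z_{0}^{-1}\overline{\varphi(X)}$; but since $e\in\varphi(X)$, the cleaner route you already gesture at works directly, namely the stable limit $\underline{Z}$ is itself a closed irreducible submonoid of $\underline{G}$ containing $\varphi(X)$, hence a subgroup, which the generating hypothesis forces to equal $\underline{G}$.
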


\begin{proof}
The proof is the same as in \cite[Proposition 3.5]{GH19}. It is only
left to observe that $\overline{\varphi(X)^{n}}$ must stabilize for
some $n\leq\mathrm{dim}\underline{G}$, and then using \cite[Proposition 3.5]{GH19}
we deduce that $\overline{\varphi(X)^{n}}=\underline{G}$, so that
$\varphi^{*n}$ is dominant. 
\end{proof}
The following will be useful in Section \ref{subsec:Matrix-word-maps}. 
\begin{lem}
\label{lem:Auxilery lemma for matrix word maps}Let $X$ and $Y$
be smooth geometrically irreducible $K$-varieties, $\underline{G}$
a  connected $K$-algebraic group, and $k\in\nats$. Let $\psi:X\rightarrow\underline{G}$
and $\varphi:Y\rightarrow\underline{G}$ be dominant morphisms such
that $\mathrm{dim}Y_{g,\varphi}\leq\mathrm{dim}Y-\mathrm{dim}\underline{G}+k$,
for each $g\in\underline{G}$. Then $\varphi*\psi^{*k}$ is flat. 
\end{lem}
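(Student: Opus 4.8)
The plan is to reduce the assertion to a fibre-dimension estimate and finish with miracle flatness. Since $Y\times X^{k}$ is smooth (hence Cohen--Macaulay) and $\underline{G}$ is smooth (hence regular), the morphism $\varphi*\psi^{*k}\colon Y\times X^{k}\to\underline{G}$ is flat as soon as every one of its fibres has dimension $\dim Y+k\dim X-\dim\underline{G}$. As $\varphi$ is dominant, so is $\varphi*\psi^{*k}$ by Proposition \ref{prop: properties preserved under convolution}(1), so every non-empty fibre already has dimension at least that amount; the whole point is the opposite inequality. The hypothesis $\dim Y_{g,\varphi}\le\dim Y-\dim\underline{G}+k$ gives the desired bound with an extra ``defect'' of $k$, and the mechanism of the proof is that each convolution with $\psi$ removes one unit of defect.

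The key step is the following claim: if $\theta\colon Z\to\underline{G}$ is a dominant morphism from a smooth geometrically irreducible $K$-variety with $\dim Z_{g,\theta}\le\dim Z-\dim\underline{G}+j$ for every $g\in\underline{G}$ and some $j\ge1$, then $\theta*\psi\colon Z\times X\to\underline{G}$ is dominant and satisfies $\dim(Z\times X)_{g,\theta*\psi}\le\dim Z+\dim X-\dim\underline{G}+(j-1)$ for every $g$. To see this, fix $g\in\underline{G}$ and project $(\theta*\psi)^{-1}(g)=\{(z,x):\theta(z)\psi(x)=g\}$ to $X$; the fibre of this projection over $x$ is $\theta^{-1}(g\psi(x)^{-1})$. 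By generic flatness there is a dense open $V\subseteq\underline{G}$ over which $\theta$ is flat, so $\dim\theta^{-1}(g')=\dim Z-\dim\underline{G}$ for $g'\in V$, while $\dim\theta^{-1}(g')\le\dim Z-\dim\underline{G}+j$ for all $g'$ by hypothesis. Over the open set $\{x:g\psi(x)^{-1}\in V\}\subseteq X$ the fibre-dimension bound for the projection gives at most $\dim X+(\dim Z-\dim\underline{G})$, whereas over its complement $\psi^{-1}(W^{-1}g)$, where $W:=\underline{G}\setminus V$, it gives at most $(\dim X-1)+(\dim Z-\dim\underline{G}+j)$; here we used that $W^{-1}g$ is a proper closed subset of $\underline{G}$ and that $\psi$ is dominant with $X$ irreducible, so that $\psi^{-1}(W^{-1}g)$ is a proper closed subset of $X$, of dimension at most $\dim X-1$. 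Taking the maximum and using $j\ge1$ proves the claimed bound, and $\theta*\psi$ is dominant again by Proposition \ref{prop: properties preserved under convolution}(1).

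The lemma now follows by induction on $k$. For $k=0$ the hypothesis together with dominance of $\varphi$ forces every fibre of $\varphi$ to have dimension exactly $\dim Y-\dim\underline{G}$, so $\varphi$ is flat by miracle flatness. For $k\ge1$, set $\theta_{0}=\varphi$, which satisfies the hypothesis of the claim with $j=k$, and apply the claim repeatedly: for $1\le i\le k$ the morphism $\theta_{i}:=\varphi*\psi^{*i}\colon Y\times X^{i}\to\underline{G}$ is dominant with $\dim(Y\times X^{i})_{g,\theta_{i}}\le\dim Y+i\dim X-\dim\underline{G}+(k-i)$ for all $g$ (using here that $Y\times X^{i}$ is again smooth and geometrically irreducible). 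For $i=k$ this says that every fibre of $\varphi*\psi^{*k}$ has dimension at most $\dim Y+k\dim X-\dim\underline{G}$; combined with the reverse inequality coming from dominance and the theorem on dimension of fibres, every fibre has dimension exactly $\dim Y+k\dim X-\dim\underline{G}$, so $\varphi*\psi^{*k}$ is flat by miracle flatness.

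I do not anticipate a genuine obstacle: this is a fibre-dimension argument of the same flavour as Propositions \ref{Prop: singularity properties obtained after convolution} and \ref{prop:extended properties of convolutions}. The only point needing care is the bookkeeping in the claim --- verifying that the ``bad'' base locus $\psi^{-1}(W^{-1}g)$ really drops in dimension, which requires $\psi$ dominant and $X$ irreducible so that the proper closed subset $W\subsetneq\underline{G}$ pulls back to a proper closed subset of $X$, and that $V$ is non-empty, which requires $\theta$ dominant. Since both dominance and geometric irreducibility of all the intermediate convolutions $\theta_{i}$ are automatic --- from Proposition \ref{prop: properties preserved under convolution} and the stability of smoothness and geometric irreducibility under products --- these hypotheses are available at every stage of the induction.
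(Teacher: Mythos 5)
Your proof is correct and takes essentially the same route as the paper: an induction on $k$ whose key step bounds the fibre dimension of a single convolution $\theta*\psi$ by combining a generic-flatness stratification of $\underline{G}$ with the irreducibility of $X$ to force the defective locus into codimension $\ge 1$, then concludes by miracle flatness. The only (cosmetic) difference is that the paper stratifies $\underline{G}$ by the fibre dimension of $\psi$ and projects the generic stratum to $Y$, while you stratify by the flat locus of $\theta$ and project everything to $X$; the bookkeeping is identical.
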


\begin{proof}
By induction, it suffices to show that $\varphi*\psi$ satisfies 
\[
\mathrm{dim}\left((\varphi*\psi)^{-1}(g)\right)\leq\mathrm{dim}X+\mathrm{dim}Y-\mathrm{dim}\underline{G}+k-1.
\]
Set $S_{l}:=\{g\in\underline{G}:\mathrm{dim}X_{g,\psi}=\mathrm{dim}X-\mathrm{dim}\underline{G}+l\}$,
and notice that set theoretically 
\begin{equation}
(\varphi*\psi)^{-1}(g)=\bigcup_{l=0}^{\mathrm{\mathrm{dim}\underline{G}}}\bigcup_{g_{0}\in S_{l}}\varphi^{-1}(gg_{0}^{-1})\times\psi^{-1}(g_{0}),\label{eq:(3.1)}
\end{equation}
for any $g\in\varphi*\psi(X)$. Note that $\underline{G}=\stackrel[l=0]{\mathrm{dim}\underline{G}}{\bigcup}S_{l}$,~
$X=\stackrel[l=0]{\mathrm{dim}\underline{G}}{\bigcup}\psi^{-1}(S_{l})$
and that $S_{0}\subseteq\underline{G}$ and $X_{0}:=\psi^{-1}(S_{0})\subseteq X$
are open subsets, by generic flatness of $\psi$. Hence for every
$g\in G$ we have
\begin{equation}
\mathrm{dim}\left(\bigcup_{g_{0}\in S_{0}}\varphi^{-1}(gg_{0}^{-1})\times\psi^{-1}(g_{0})\right)=\mathrm{dim}X+\mathrm{dim}Y-\mathrm{dim}\underline{G}.\label{eq:(3.2)}
\end{equation}
By the geometric irreducibility of $X$ we have $\mathrm{dim}\psi^{-1}(S_{l})\leq\mathrm{dim}X-1$
for every $l>0$, and therefore by (\ref{eq:(3.1)}) and (\ref{eq:(3.2)})
\[
\mathrm{dim}(\varphi*\psi)^{-1}(g)\leq\mathrm{dim}X+\mathrm{dim}Y-\mathrm{dim}\underline{G}+k-1.
\]
\end{proof}

\subsection{Singularity properties of schemes and morphisms via their jets }

In \cite{Mus01} and \cite{EM04}, it was shown that various singularity
properties of a scheme $X$ can be detected in terms of its jet schemes: 
\begin{thm}
\label{thm: singularities and jets}If $X$ is a geometrically irreducible,
local complete intersection $K$-variety then the following hold: 
\begin{enumerate}
\item If $X$ is normal, then $J_{m}(X)$ is equidimensional for every $m\geq1$
(and hence $\mathrm{dim}J_{m}(X)=(m+1)\mathrm{dim}X$) if and only
if $X$ has log canonical singularities. 
\item $J_{m}(X)$ is geometrically irreducible for all $m\geq1$ if and
only if $X$ has rational singularities. 
\item $J_{m}(X)$ is normal for all $m\geq1$ if and only if $X$ has terminal
singularities. 
\end{enumerate}
\end{thm}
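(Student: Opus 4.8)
The final displayed statement is Theorem \ref{thm: singularities and jets}, reproducing the Mustaţă–Ein–Mustaţă characterizations: for $X$ a geometrically irreducible local complete intersection $K$-variety, (1) relates equidimensionality of all $J_m(X)$ to log canonical singularities, (2) relates irreducibility of all $J_m(X)$ to rational singularities, and (3) relates normality of all $J_m(X)$ to terminal singularities.

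Let me think about how I would prove this.
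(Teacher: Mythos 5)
Your proposal contains no argument at all: after restating the theorem you write ``Let me think about how I would prove this'' and stop. Nothing is proved, so there is nothing to compare.

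For context, the paper does not prove this theorem either; it states it as a citation of known results (Mustaţă, \cite{Mus01}, and Ein--Mustaţă, \cite{EM04}). A genuine proof is nontrivial and goes through the theory of jet schemes and motivic integration: one uses a log resolution $\pi:Y\to X$ together with the change-of-variables formula for jet/arc spaces (Denef--Loeser, Kontsevich) to compare $\dim J_m(X)$ against the discrepancies $k_i$ and multiplicities $a_i$ of the exceptional divisors. Equidimensionality of $J_m(X)$ is equivalent to $\mathrm{lct}(X)\geq 1$ (log canonical); irreducibility is equivalent to the stronger vanishing characterizing rational singularities (for lci varieties this is equivalent to being canonical by a theorem of Elkik and Flenner, and Mustaţă shows this matches irreducibility of all jet schemes); normality of all $J_m(X)$ corresponds to terminal singularities via the criterion that the singular locus of $J_m(X)$ has small enough codimension. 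If you intend to actually prove these three equivalences, you should either reproduce the arc-space dimension estimates from \cite{Mus01} and \cite{EM04} or, more reasonably, cite them as the paper does.
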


This motivates us to introduce the following definitions: 
\begin{defn}
\label{def:jet flat}Let $\varphi:X\rightarrow Y$ be a $K$-morphism
of smooth varieties. 
\begin{enumerate}
\item $\varphi$ is called\textit{ jet-flat} (resp.\textit{~jet-normal}\textsl{/jet-(FGI)})
if $J_{m}(\varphi):J_{m}(X)\rightarrow J_{m}(Y)$ is flat (resp.~normal/(FGI))
for every $m\geq0$. 
\item $\varphi$ is called \textit{(FLCS) }(resp\textit{.~(FTS)}) if $\varphi$
is flat, with normal fibers of log canonical singularities (resp.~terminal
singularities). 
\end{enumerate}
\end{defn}

We now prove the following relative version of Theorem \ref{thm: singularities and jets}. 
\begin{cor}
\label{cor: singularity properties through dim of jets}Let $\varphi:X\rightarrow Y$
be an (FGI)-morphism of smooth geometrically irreducible $K$-varieties
$X$ and $Y$. 
\begin{enumerate}
\item If $\varphi$ is normal then $\varphi$ is (FLCS) if and only if $\varphi$
is jet-flat. 
\item $\varphi$ is (FRS) if and only if $\varphi$ is jet-(FGI). 
\item $\varphi$ is (FTS) if and only if $\varphi$ is jet-normal. 
\end{enumerate}
\end{cor}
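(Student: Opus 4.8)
The plan is to derive each of the three equivalences from the absolute statement of Theorem~\ref{thm: singularities and jets} applied to the individual fibers of $\varphi$, bridging the absolute and relative pictures by two structural facts. First, the jet functor is a Weil restriction along $\mathrm{Spec}\,K[t]/(t^{m+1})\rightarrow\mathrm{Spec}\,K$, so it commutes with fiber products; hence for $y\in Y$ the scheme $J_m(X_{y,\varphi})$ is canonically the scheme-theoretic fiber of $J_m(\varphi):J_m(X)\rightarrow J_m(Y)$ over the zero-section point $s_m(y)\in J_m(Y)$. Second, by Lemma~\ref{lem:useful facts on jet schemes} the schemes $J_m(X),J_m(Y)$ are smooth and geometrically irreducible of dimensions $(m+1)\dim X$ and $(m+1)\dim Y$, so by miracle flatness $J_m(\varphi)$ is flat at a point precisely when the fiber through it has dimension $(m+1)(\dim X-\dim Y)$; and by the $\mathbb{G}_m$-action on jet schemes together with Corollary~\ref{cor:degeneration on jet schemes}, any property $P\in\mathcal{P}$ of $J_m(\varphi)$ that holds at every point $(x,0)$ of the zero section holds everywhere. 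Since an (FGI)-morphism between smooth varieties has geometrically irreducible local complete intersection fibers, Theorem~\ref{thm: singularities and jets} applies to each $X_{y,\varphi}$; the reverse implication in all three parts is obtained by restricting the relevant property of $J_m(\varphi)$ to the fiber over $s_m(y)$, identifying it with $J_m(X_{y,\varphi})$, and quoting Theorem~\ref{thm: singularities and jets}.

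For parts (1) and (3) this machinery is essentially all that is needed, since the properties in question already belong to $\mathcal{P}$. If $\varphi$ is (FLCS) then every $X_{y,\varphi}$ is normal, a local complete intersection, and log canonical, so Theorem~\ref{thm: singularities and jets}(1) makes each $J_m(X_{y,\varphi})$ equidimensional of dimension $(m+1)\dim X_{y,\varphi}$; miracle flatness then makes $J_m(\varphi)$ flat along the zero section, and Corollary~\ref{cor:degeneration on jet schemes} for the property of flatness makes it flat, i.e.~$\varphi$ is jet-flat. Conversely, jet-flatness forces every $J_m(X_{y,\varphi})$ to be equidimensional of the expected dimension, and since $\varphi$ is assumed normal, Theorem~\ref{thm: singularities and jets}(1) makes each $X_{y,\varphi}$ log canonical, so $\varphi$ is (FLCS). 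Part (3) is the same with log canonical replaced by terminal: terminal singularities are canonical and hence log canonical, so Theorem~\ref{thm: singularities and jets}(1) still supplies the dimension count (and thus flatness of $J_m(\varphi)$ along the zero section), while Theorem~\ref{thm: singularities and jets}(3) gives normality of $J_m(X_{y,\varphi})$ along the zero section; one then runs Corollary~\ref{cor:degeneration on jet schemes} for the property of flatness with normal fibers, and the converse uses Theorem~\ref{thm: singularities and jets}(3) again together with the $m=0$ case for normality of the fibers of $\varphi$.

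Part (2) is the delicate one, because flatness with geometrically irreducible fibers is neither literally a member of $\mathcal{P}$ nor an open condition in families. Assume $\varphi$ is (FRS), so each $X_{y,\varphi}$ is a normal local complete intersection variety with rational singularities; being Gorenstein these are canonical, hence log canonical. By Theorem~\ref{thm: singularities and jets}, $J_m(X_{y,\varphi})$ is then geometrically irreducible of dimension $(m+1)\dim X_{y,\varphi}$, hence a reduced complete intersection (its generic point lies over the smooth generic point of $X_{y,\varphi}$), hence geometrically integral. Miracle flatness and Corollary~\ref{cor:degeneration on jet schemes} then make $J_m(\varphi)$ flat. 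To propagate geometric integrality of the fibers away from the zero section, I would use that the jet schemes of a local complete intersection with rational singularities are reduced and locally integral --- indeed, by results of Musta\c{t}\u{a} and Ein--Musta\c{t}\u{a} they again have rational singularities --- so $J_m(\varphi)$ has locally integral fibers at every $(x,0)$; Corollary~\ref{cor:degeneration on jet schemes} for the property of flatness with locally integral fibers then yields locally integral fibers everywhere, and the $\mathbb{G}_m$-equivariant degeneration of each fiber onto the irreducible fiber $J_m(X_{y,\varphi})$ over the zero section, together with flatness and reducedness, upgrades this to geometric irreducibility. Hence $\varphi$ is jet-(FGI). Conversely, jet-(FGI) makes every $J_m(X_{y,\varphi})$ geometrically irreducible, so $X_{y,\varphi}$ has rational singularities by Theorem~\ref{thm: singularities and jets}(2), in particular is reduced; combined with flatness and geometric irreducibility of the fibers of $\varphi$ (which hold as $\varphi$ is (FGI)), this gives that $\varphi$ is (FRS).

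The step I expect to be the main obstacle is exactly this last propagation in part (2): deducing geometric irreducibility of \emph{every} fiber of $J_m(\varphi)$ from the statement over the zero section. Unlike flatness and flatness with normal fibers, the (FGI) property cannot simply be read off the list $\mathcal{P}$; handling it requires the nonformal input that jets of local complete intersection rational singularities are reduced and locally integral, together with the $\mathbb{G}_m$-contraction of $J_m(Y)$ onto the zero section to recover connectedness of the fibers. The remaining ingredients --- the fiber-product compatibility of $J_m$, the dimension bookkeeping through miracle flatness, and the appeals to Theorem~\ref{thm: singularities and jets} --- are routine given the results already recorded above.
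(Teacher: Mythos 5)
Parts (1) and (3) of your proposal are sound and follow essentially the same route as the paper: pass to the fibers over the zero section via the identification $J_m(X_{y,\varphi})\simeq J_m(X)_{s_m(y),J_m(\varphi)}$, apply Theorem~\ref{thm: singularities and jets} to each $X_{y,\varphi}$, and spread out using Corollary~\ref{cor:degeneration on jet schemes}, which works because ``flat'', ``flat with reduced fibers'', and ``flat with normal fibers'' all belong to $\mathcal{P}$.

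Part (2) has a genuine gap, and you flag exactly the right spot but then propose an invalid fix. You correctly observe that ``flat with geometrically irreducible fibers'' is not in $\mathcal{P}$, and you correctly invoke the nontrivial input (\cite[Proposition 1.5]{Mus01}) that jets of an lci variety with rational singularities are locally integral, which, via Corollary~\ref{cor:degeneration on jet schemes} for the property ``flat with locally integral fibers'', makes every fiber of $J_m(\varphi)$ a disjoint union of integral schemes. But your final claim --- that the $\mathbb{G}_m$-equivariant degeneration of any fiber onto the geometrically irreducible fiber $J_m(X_{y,\varphi})$ over the zero section, ``together with flatness and reducedness,'' forces geometric irreducibility --- is false. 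Flatness plus reducedness of a $\mathbb{G}_m$-equivariant family with irreducible special fiber does not control the number of components of the other fibers. For instance, take $W:=(\mathbb{A}^1_t\times\mathbb{A}^1)\,\sqcup\,(\mathbb{G}_m\times\mathbb{A}^1)\to\mathbb{A}^1_t$, projecting to $t$ on each piece and with $\mathbb{G}_m$ scaling the $t$-coordinate: this is flat (open immersions and projections are flat), $\mathbb{G}_m$-equivariant, every fiber is reduced, equidimensional, and locally integral, the fiber over $0$ is an irreducible $\mathbb{A}^1$, yet the fiber over any $t\neq 0$ is the reducible $\mathbb{A}^1\sqcup\mathbb{A}^1$. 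So ``connectedness of the special fiber'' does not propagate outward under $\mathbb{G}_m$-contraction, and this is exactly why the (FGI) property cannot be handled by the degeneration machinery alone.

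The paper closes this gap with a different and essential ingredient: Lemma~\ref{lem:irreducible generic fiber in jets} shows that $J_m(\varphi)$ restricted to $J_m(X^{\mathrm{sm},\varphi})$ (which coincides with the $J_m(\varphi)$-smooth locus when $J_m(\varphi)$ is flat) has geometrically irreducible fibers. Since each fiber of $J_m(\varphi)$ is locally integral, its components are disjoint, and since the fibers are reduced, the smooth locus is dense in each fiber, hence meets every component; combined with irreducibility of the smooth part, this forces a single component. You would need to import Lemma~\ref{lem:irreducible generic fiber in jets} (or an equivalent) to make part (2) work; the $\mathbb{G}_m$-contraction alone does not suffice.
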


\begin{rem}
We have $(FTS)\Rightarrow(FRS)\Rightarrow(FLCS)$.

For the proof of Corollary \ref{cor: singularity properties through dim of jets}
we need the following useful lemma. 
\end{rem}

\begin{lem}
\label{lem:irreducible generic fiber in jets}Let $\varphi:X\rightarrow Y$
be a dominant morphism between smooth geometrically irreducible $K$-varieties. 
\begin{enumerate}
\item Assume that $\varphi$ is smooth and (FGI). Then $\varphi$ is also
jet-(FGI). 
\item Assume that $\varphi$ has a geometrically irreducible generic fiber.
Then for any $m\in\nats$ the jet morphism $J_{m}(\varphi):J_{m}(X)\rightarrow J_{m}(Y)$
has a geometrically irreducible generic fiber. 
\end{enumerate}
\end{lem}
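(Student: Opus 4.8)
The plan is to prove both parts by reducing to the model situation of a projection, using the fact that jet schemes commute with \'etale base change (Lemma \ref{lem:useful facts on jet schemes}(2)) and behave well under products. For part (1), suppose $\varphi:X\to Y$ is smooth and (FGI). Smoothness is Zariski-local on $X$ in a strong sense: by the structure theorem for smooth morphisms, every point of $X$ has an open neighborhood $U$ admitting an \'etale map $U\to \mathbb{A}^{N}_{Y}:=Y\times\mathbb{A}^{N}$ over $Y$, where $N=\dim X-\dim Y$ is the relative dimension. First I would use this to compute $J_m(\varphi)$ locally: by Lemma \ref{lem:useful facts on jet schemes}(2), $J_m(U)\cong J_m(Y)\times_{J_m(\mathbb{A}^N_Y)}J_m(\mathbb{A}^N_Y)$ — more precisely $J_m(U)$ is \'etale over $J_m(\mathbb{A}^N_Y)=J_m(Y)\times\mathbb{A}^{N(m+1)}$ — and the diagram relating $J_m$ of these spaces to their $\pi_0^m$-images is Cartesian. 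Thus $J_m(\varphi)$ restricted to $J_m(U)$ factors as an \'etale map onto an open subscheme of $J_m(Y)\times\mathbb{A}^{N(m+1)}$ followed by the projection; in particular $J_m(\varphi)$ is smooth. Flatness of $J_m(\varphi)$ and irreducibility/connectedness of its fibers can then be checked: each fiber of $J_m(\varphi)$ over a geometric point $\eta\in J_m(Y)$ is covered by \'etale images of open subsets of affine spaces, so is smooth, and one needs that it is connected. Connectedness should follow because the fiber of $\varphi$ over $\pi_0^m(\eta)$ is geometrically irreducible (hence geometrically connected), and the truncation map $J_m(X_{\pi_0^m(\eta)})\to X_{\pi_0^m(\eta)}$ is a Zariski-locally trivial fibration with connected (affine-space) fibers by Lemma \ref{lem:useful facts on jet schemes}(1), so $J_m$ of a geometrically irreducible smooth variety is geometrically irreducible; and the fiber of $J_m(\varphi)$ over $\eta$ maps to $J_m$ of the corresponding fiber of $\varphi$, with the geometry controlled by the Cartesian diagram.

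For part (2), I would pass to the generic fiber. Let $F:=X_{\eta_Y}$ be the fiber of $\varphi$ over the generic point $\eta_Y$ of $Y$; by hypothesis $F$ is geometrically irreducible over $\kappa(\eta_Y)$, and it is smooth over $\kappa(\eta_Y)$ since $X$ and $Y$ are smooth and one may shrink $Y$ so that $\varphi$ is smooth on a dense open of $X$ (generic smoothness in characteristic $0$). Now the key point is compatibility of $J_m$ with the truncation-to-base map: the square relating $J_m(X)\to J_m(Y)$ to $X\to Y$ via $\pi_0^m$ need not be Cartesian, but the \emph{generic fiber} of $J_m(\varphi)$ over $J_m(Y)$ can be identified with $J_m$ of $F$ computed over the function field of $J_m(Y)$, up to an affine-space factor coming from the higher jet coordinates of the base. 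Concretely, after restricting over the open dense $J_m(Y)^{\circ}:=\pi_0^m{}^{-1}(Y^{\circ})$ where $Y^{\circ}$ is a smooth open over which $\varphi$ is smooth, part (1) applies: $J_m(\varphi)$ over $J_m(Y)^{\circ}$ is smooth with geometrically irreducible fibers, hence its generic fiber is geometrically irreducible. Since $J_m(Y)$ is irreducible (Lemma \ref{lem:useful facts on jet schemes}(1)), $J_m(Y)^{\circ}$ is dense, so the generic fiber of $J_m(\varphi)$ over $J_m(Y)$ equals the generic fiber of $J_m(\varphi)|_{J_m(Y)^{\circ}}$, which is geometrically irreducible. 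This gives the claim.

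The main obstacle, I expect, is making the local identification of $J_m(\varphi)$ precise enough — i.e. carefully tracking the \'etale-local model $U\to Y\times\mathbb{A}^N$ through the jet functor and verifying that the resulting description of $J_m(\varphi)$ as (\'etale)$\circ$(projection) is compatible across overlaps, so that one genuinely concludes $J_m(\varphi)$ is smooth with geometrically connected (equivalently, by smoothness, geometrically irreducible) fibers rather than merely smooth with fibers that are locally like affine space. The gluing is where one uses that the fibers of $\varphi$ are \emph{irreducible}, not just smooth: irreducibility of the base fiber forces the \'etale-local pieces of the jet fiber to be linked into a single connected component. Once part (1) is established cleanly, part (2) is a short reduction via generic smoothness and the irreducibility of $J_m(Y)$.
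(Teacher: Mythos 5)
Your overall architecture matches the paper's: use the smooth-local factorization $X\to Y\times\mathbb{A}^{d}\to Y$ with the first map étale, push through the jet functor using Lemma \ref{lem:useful facts on jet schemes}(2), observe $J_{m}(\varphi)$ is smooth, and then establish connectedness of the fibers of $J_{m}(\varphi)$ via a fibration argument onto something geometrically irreducible. The reduction of part (2) to part (1) via generic smoothness and shrinking $Y$ to a dense open where $\varphi$ is smooth and (FGI) is also essentially what the paper does (it invokes \cite[Lemmas 37.25.5, 37.25.6]{Sta} and generic flatness/smoothness, then applies part (1)).

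The gap is in the step you yourself flag as ``the main obstacle.'' You assert that ``the fiber of $J_{m}(\varphi)$ over $\eta$ maps to $J_{m}$ of the corresponding fiber of $\varphi$,'' and then lean on geometric irreducibility of $J_{m}(X_{y})$. This map does not exist for general $\eta=(y,y^{(1)},\dots,y^{(m)})\in J_{m}(Y)$ with nonzero higher coordinates: a jet in $J_{m}(X)_{\eta,J_{m}(\varphi)}$ has derivative coordinates $x^{(j)}$ that push forward to $y^{(j)}\neq 0$, so they are \emph{not} tangent to the fiber $X_{y}$, and do not define a point of $J_{m}(X_{y})$. Only when $\eta=(y,0,\dots,0)$ is the fiber of $J_{m}(\varphi)$ identified with $J_{m}(X_{y})$; that special case is not what you need. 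The two fibrations that actually work are (i) the truncation $\pi_{0}^{m}:J_{m}(X)_{\eta,J_{m}(\varphi)}\to X_{y}$, whose fibers over $x\in X_{y}$ are affine-linear spaces of dimension $m(\dim X-\dim Y)$ because the Jacobian of $\varphi$ at $x$ is surjective (this is essentially how the paper argues in the related Corollary \ref{cor:(FRS) implies expected number of componnents}); or (ii) the paper's choice: the ``shear'' $\Phi(y,y',t,t')=(y,y',t')$ onto $J_{m}(Y)\times\mathbb{A}^{dm}$, which makes the outer rectangle Cartesian, so the fiber of $\Phi\circ J_{m}(\psi)$ over any $(y,y',t')$ is a \emph{field-extension base change of} $X_{y}$ (not of $J_{m}(X_{y})$!), hence geometrically irreducible, and Lemma \ref{Lemma 3.13-Auxilary} finishes. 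Either fix replaces the nonexistent map to $J_{m}(X_{y})$ by a genuine fibration onto an irreducible target with geometrically irreducible fibers. As written, the connectedness step has no valid foundation, so the proof is incomplete.
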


\begin{proof}
By \cite[Lemmas 37.25.5, 37.25.6]{Sta} and by generic flatness, the
generic fiber of $J_{m}(\varphi)$ is geometrically irreducible if
and only if $J_{m}(\varphi)$ is (FGI) over an open set in $J_{m}(Y)$.
By generic smoothness, it is enough to prove $(1)$. Write $d=\mathrm{dim}X-\mathrm{dim}Y$.
We may assume that $\varphi$ factors through $X\overset{\psi}{\rightarrow}Y\times\mathbb{A}^{d}\overset{\mathrm{pr}}{\rightarrow}Y$,
where $\psi$ an \'etale map, and $\mathrm{pr}$ the projection.
We can then write $J_{m}(\varphi)=J_{m}(\mathrm{pr})\circ J_{m}(\psi)$,
with $J_{m}(\psi):J_{m}(X)\rightarrow J_{m}(Y\times\mathbb{A}^{d})$
an \'etale map and $J_{m}(\mathrm{pr}):J_{m}(Y\times\mathbb{A}^{d})\rightarrow J_{m}(Y)$
is the projection. 

Write $x\in J_{m}(\mathbb{A}^{d})\simeq\mathbb{A}^{d(m+1)}$ as $x=(t,t')$
where $t\in\mathbb{A}^{d}$ and where $t'\in\mathbb{A}^{dm}$ corresponds
to the various derivatives. Since $\psi$ is \'etale, by Lemma \ref{lem:useful facts on jet schemes},
the following is a base change diagram: 
\[
\begin{array}{ccccc}
J_{m}(X) & \overset{J_{m}(\psi)}{\longrightarrow} & J_{m}(Y)\times J_{m}(\mathbb{A}^{d}) & \overset{\Phi}{\longrightarrow} & J_{m}(Y)\times\mathbb{A}^{dm}\\
\pi_{0}^{m}\downarrow & \, & \pi_{0}^{m}\downarrow & \, & \pi\downarrow\\
X & \overset{\psi}{\longrightarrow} & Y\times\mathbb{A}^{d} & \overset{\mathrm{pr}}{\longrightarrow} & Y
\end{array},
\]
where $\Phi$ is defined by $\Phi(y,y',t,t')=(y,y',t')$ (for $(y,y')\in J_{m}(Y)$),
and $\pi(y,y',t')=\pi_{0}^{m}(y,y')=y$. Notice that for any $(y,y',t')\in J_{m}(Y)\times\mathbb{A}^{dm}$,
the scheme theoretic fiber $J_{m}(X){}_{(y,y',t'),\Phi\circ J_{m}(\psi)}$
is a base change of the fiber $X_{y,\varphi}$ by a field extension,
so it is geometrically irreducible.

For any $(y,y')\in J_{m}(Y)$ consider the map $f:J_{m}(X)_{(y,y'),J_{m}(\varphi)}\rightarrow\{(y,y')\}\times\mathbb{A}^{dm}$
by $(x,x')\mapsto\Phi\circ J_{m}(\psi)(x,x')$. Then it is a flat
morphism with geometrically irreducible target, and such that all
fibers are geometrically irreducible, as they can be identified with
$J_{m}(X){}_{(y,y',t'),\Phi\circ J_{m}(\psi)}$. Applying the following
lemma to $f$, we deduce that $J_{m}(X)_{(y,y'),J_{m}(\varphi)}$
is geometrically irreducible for any $(y,y')\in J_{m}(Y)$, as required. 
\begin{lem}[{see \cite[Lemma 5.8.12]{Sta}}]
\label{Lemma 3.13-Auxilary}Let $f:X_{1}\rightarrow X_{2}$ be a
continuous map between topological spaces, such that $f$ is open,
$X_{2}$ is irreducible, and there exists a dense collection of points
$y\in X_{2}$ such that $f^{-1}(y)$ is irreducible. Then $X_{1}$
is irreducible. 
\end{lem}

\end{proof}
\begin{proof}[Proof of Corollary \ref{cor: singularity properties through dim of jets}]
$(1)$ and $(3)$ follow from Corollary \ref{cor:degeneration on jet schemes}
and Theorem \ref{thm: singularities and jets}. One direction of $(2)$
follows from Theorem \ref{thm: singularities and jets}. Assume that
$\varphi$ is (FRS). Then it follows from Theorem \ref{thm: singularities and jets}
that $J_{m}(\varphi):J_{m}(X)\rightarrow J_{m}(Y)$ is (FGI) over
$Y\subseteq J_{m}(Y)$. It is left to prove that it is (FGI).

By Corollary \ref{cor:degeneration on jet schemes} $J_{m}(\varphi)$
is flat. By \cite[Proposition 1.5]{Mus01} $J_{m}(\varphi)$ is flat
with fibers which are locally integral at each $x\in X\subseteq X_{m}$.
By Corollary \ref{cor:degeneration on jet schemes}, all of the fibers
of $J_{m}(\varphi)$ are locally integral, and in particular all geometrically
irreducible components of the any fiber of $J_{m}(\varphi)$ are disjoint.

Since $X,Y$ are smooth and $J_{m}(\varphi)$ is flat, we have $J_{m}(X^{\mathrm{sm,\varphi}})\simeq J_{m}(X)^{\mathrm{sm},J_{m}(\varphi)}$.
Indeed, by writing the Jacobian of $J_{m}(\varphi)$, one can see
that $J_{m}(\varphi)$ is a submersion at $(x,x')\in J_{m}(X)$ only
if $\varphi$ is a submersion at $x\in X$, and in the other direction,
$\varphi|_{X^{\mathrm{sm,\varphi}}}$ is smooth and thus $J_{m}(\varphi)|_{J_{m}(X^{\mathrm{sm,\varphi}})}$
is smooth.

Let $(x,x')\in J_{m}(X)$ and set $(y,y'):=J_{m}(\varphi)(x,x')$.
Let $Z$ be any geometrically irreducible component of $J_{m}(X)_{(y,y'),J_{m}(\varphi)}$.
By Lemma \ref{lem:irreducible generic fiber in jets}, $J_{m}(\varphi)|_{X^{\mathrm{sm,\varphi}}}$
has geometrically irreducible fibers, and thus $J_{m}(X)_{(y,y'),J_{m}(\varphi)}\cap J_{m}(X)^{\mathrm{sm},J_{m}(\varphi)}$
is geometrically irreducible. But since $J_{m}(\varphi)$ is flat
with reduced fibers, it follows that $J_{m}(X)_{(y,y'),J_{m}(\varphi)}\cap J_{m}(X)^{\mathrm{sm},J_{m}(\varphi)}$
is dense in $J_{m}(X)_{(y,y'),J_{m}(\varphi)}$ which implies that
$J_{m}(X)_{(y,y'),J_{m}(\varphi)}$ is geometrically irreducible. 
\end{proof}
We would like to prove a version of Corollary \ref{cor: singularity properties through dim of jets}
for non-(FGI) morphisms: 
\begin{cor}
\label{cor:(FRS) implies expected number of componnents}Let $\varphi:X\rightarrow Y$
be an (FRS) morphism between smooth geometrically irreducible $K$-varieties
$X$ and $Y$. Then for any $m\in\nats$, $y\in\varphi(X)$ and $\tilde{y}\in J_{m}(Y)$
with $\pi_{0}^{m}(\tilde{y})=y$, the schemes $\ensuremath{J_{m}(X)_{\tilde{y},J_{m}(\varphi)}}$
and $X_{y,\varphi}$ have the same number of geometrically irreducible
components. 
\end{cor}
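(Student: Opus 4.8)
The plan is to reduce to the situation of Corollary~\ref{cor: singularity properties through dim of jets} by exploiting the normality of the fibers of an (FRS) morphism, which forces the irreducible components of such a fiber to be pairwise disjoint. First I would make standard harmless reductions: the number of geometrically irreducible components of a scheme over a field is unchanged under extension of the base field, and formation of jet schemes, of fibers, and the (FRS) property all commute with such extensions; so I may assume $K=\overline{K}$ and $y\in Y(K)$, $\tilde y\in J_m(Y)(K)$. Write $F:=X_{y,\varphi}$ and $G:=J_m(X)_{\tilde y,J_m(\varphi)}$, and note that the identity $\varphi\circ\pi^m_0=\pi^m_0\circ J_m(\varphi)$ makes $\pi^m_0$ restrict to a morphism $\pi\colon G\to F$.

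Since $\varphi$ is (FRS), $F$ is reduced with rational singularities, hence normal, so $F=\bigsqcup_{i=1}^n C_i$ with $C_1,\dots,C_n$ its pairwise disjoint irreducible components, each a normal local complete intersection with rational singularities, and $n$ is exactly the number of geometrically irreducible components of $X_{y,\varphi}$. As the $C_i$ are open and closed in $F$, this gives a decomposition $G=\bigsqcup_{i=1}^n\pi^{-1}(C_i)$ into open and closed subschemes, so it suffices to show that each $\pi^{-1}(C_i)$ is nonempty and irreducible. I will use two facts, both established exactly as in the proof of Corollary~\ref{cor: singularity properties through dim of jets} \emph{using only that $\varphi$ is (FRS)} (that proof is written assuming also (FGI), but these particular steps do not use it): (a) $J_m(\varphi)$ is flat with locally integral fibers, and (b) $J_m(X^{\mathrm{sm,\varphi}})=J_m(X)^{\mathrm{sm},J_m(\varphi)}$. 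For (a): by Corollary~\ref{cor:degeneration on jet schemes} it is enough to check flatness and local integrality of the fibers of $J_m(\varphi)$ at the points $(x,0)$, where the relevant fiber is the one over the constant jet $s_m(\varphi(x))$, which is naturally $J_m(X_{\varphi(x)})=\bigsqcup_j J_m(D_j)$ with $D_j$ the (again disjoint, normal, lci, rational) components of $X_{\varphi(x)}$; by Theorem~\ref{thm: singularities and jets} each $J_m(D_j)$ is irreducible, is reduced (being lci it is Cohen--Macaulay, and it is generically smooth), and has dimension $(m+1)(\dim X-\dim Y)=\dim J_m(X)-\dim J_m(Y)$, which yields flatness at $(x,0)$ by miracle flatness. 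Fact (b) is the Jacobian computation from the same proof, which only uses flatness of $J_m(\varphi)$.

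Granting (a), $G$ is reduced, so the open and closed subscheme $\pi^{-1}(C_i)$ is reduced and its smooth (equivalently regular) locus is dense in it; by (b) this locus equals $\pi^{-1}(C_i)\cap J_m(X^{\mathrm{sm,\varphi}})=\pi^{-1}(C_i^\circ)$, where $C_i^\circ:=C_i\cap X^{\mathrm{sm,\varphi}}$ is the regular locus of the reduced scheme $C_i$, hence a dense open subset. So it remains to show $\pi^{-1}(C_i^\circ)$ is nonempty and irreducible. Nonemptiness is immediate: at any $x\in C_i^\circ$ the morphism $\varphi$ is smooth, so $\tilde y$ lifts through $\varphi$ to a jet based at $x$, lying in $\pi^{-1}(C_i^\circ)$. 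For irreducibility, consider $\Psi:=(J_m(\varphi),\pi^m_0)\colon J_m(X^{\mathrm{sm,\varphi}})\to J_m(Y)\times_Y X^{\mathrm{sm,\varphi}}$. Since $\varphi|_{X^{\mathrm{sm,\varphi}}}$ is smooth, $\Psi$ is smooth with fibers isomorphic to $\mathbb{A}^{(\dim X-\dim Y)m}$: this is checked \'etale-locally on $X^{\mathrm{sm,\varphi}}$, where $\varphi$ factors as an \'etale morphism to $Y\times\mathbb{A}^{\dim X-\dim Y}$ followed by the projection, so that (using Lemma~\ref{lem:useful facts on jet schemes}) $\Psi$ becomes the pullback along an \'etale morphism of a trivial affine bundle. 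Base changing $\Psi$ along $C_i^\circ\hookrightarrow J_m(Y)\times_Y X^{\mathrm{sm,\varphi}}$, $x\mapsto(\tilde y,x)$, exhibits $\pi^{-1}(C_i^\circ)\to C_i^\circ$ as a smooth (hence open) surjective morphism with geometrically irreducible fibers over the irreducible base $C_i^\circ$, so Lemma~\ref{Lemma 3.13-Auxilary} gives that $\pi^{-1}(C_i^\circ)$ is irreducible. Therefore $G=\bigsqcup_{i=1}^n\pi^{-1}(C_i)$ has exactly $n$ irreducible components, which over $K=\overline{K}$ is the assertion.

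The step I expect to be the main obstacle when writing this out carefully is verifying that the proof of Corollary~\ref{cor: singularity properties through dim of jets} — originally phrased for (FGI) morphisms — really does yield facts (a) and (b) from the (FRS) hypothesis alone, i.e.\ that each place where irreducibility of a fiber was used can be replaced by applying the statement to each of its components, which are disjoint precisely because the fibers are normal; the remainder is bookkeeping with the decomposition $F=\bigsqcup_i C_i$ and the elementary Lemma~\ref{Lemma 3.13-Auxilary}.
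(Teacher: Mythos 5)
Your proof is correct and follows essentially the same strategy as the paper's: both decompose $X_{y,\varphi}$ and $J_m(X)_{\tilde y,J_m(\varphi)}$ into disjoint irreducible components (using normality of the (FRS)-fibers, resp.\ local integrality of the jet-fibers from Corollary~\ref{cor: singularity properties through dim of jets}), pass to the smooth locus via $J_m(X^{\mathrm{sm},\varphi})=J_m(X)^{\mathrm{sm},J_m(\varphi)}$, and apply Lemma~\ref{Lemma 3.13-Auxilary} to a fibration with affine-space fibers. The only mild variation is that you fiber over $X_{y,\varphi}\cap X^{\mathrm{sm},\varphi}$ in one step via $\pi_0^m$ (with fibers $\mathbb{A}^{(\dim X-\dim Y)m}$), whereas the paper inducts on $m$ through the one-step truncations $\pi^m_{m-1}$ (with fibers $\mathbb{A}^{\dim X-\dim Y}$); this is an inessential bookkeeping difference, and your observation that the relevant steps of the proof of Corollary~\ref{cor: singularity properties through dim of jets} use only (FRS) and not (FGI) is exactly the point the paper relies on implicitly.
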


\begin{proof}
We may assume that $Y$ is affine. Let us first assume that $\varphi$
is smooth. For any $(y,y^{(1)},...,y^{(m)})\in J_{m}(Y)$, the truncation
map $\pi_{m-1}^{m}:J_{m}(X)\rightarrow J_{m-1}(X)$ induces a morphism
\[
\psi_{m}:J_{m}(X)_{(y,y^{(1)},...,y^{(m)}),J_{m}(\varphi)}\rightarrow J_{m-1}(X)_{(y,y^{(1)},\dots,y^{(m-1)}),J_{m-1}(\varphi)}.
\]
Note that the fibers of $\psi_{m}$ are isomorphic to $\mathbb{A}^{\mathrm{dim}X-\mathrm{dim}Y}$.
Indeed, we may assume that $X$ is affine, and write $J_{m}(\varphi)=(\varphi,\varphi^{(1)},\dots,\varphi^{(m)})$.
For any $(x,x^{(1)},\dots,x^{(m-1)})\in J_{m-1}(X)_{(y,y^{(1)},\dots,y^{(m-1)}),J_{m-1}(\varphi)}$,
we have 
\[
\psi_{m}^{-1}(x,x^{(1)},\dots,x^{(m-1)})\simeq\{z:\varphi^{(m)}(x,x^{(1)},\dots,x^{(m-1)},z)=y^{(m)}\}.
\]
Since $\varphi^{(m)}$ is linear in $x^{(m)}$ where $x,x^{(1)},\dots,x^{(m-1)}$
are parameters, we are done. Restricting to each geometrically irreducible
component of $X_{y,\varphi}$, and applying Lemma \ref{Lemma 3.13-Auxilary}
to $\psi_{m}$, we deduce by induction on $m$ that $J_{m}(X)_{(y,y^{(1)},...,y^{(m)}),J_{m}(\varphi)}$
and $X_{y,\varphi}$ have the same number of geometrically irreducible
components.

Since we proved the corollary for smooth morphisms, we deduce that
$J_{m}(X)_{\tilde{y},J_{m}(\varphi)}\cap J_{m}(X^{\mathrm{sm,\varphi}})$
has the same number of geometrically irreducible components as $\text{\ensuremath{X_{y,\varphi}}\ensuremath{\ensuremath{\cap}}}X^{\mathrm{sm,\varphi}}$,
which is the same as in $X_{y,\varphi}$. But we have seen that $J_{m}(X^{\mathrm{sm,\varphi}})$
intersects any irreducible component of $J_{m}(X)_{(y,y'),J_{m}(\varphi)}$
and they are all disjoint, so we are done. 
\end{proof}
The $m$-th jet scheme $J_{m}(\underline{G})$ of an algebraic $K$-group
$\underline{G}$ has a natural structure of an algebraic $K$-group,
where the multiplication map $\mathrm{m}_{J_{m}(\underline{G})}:J_{m}(\underline{G})\times J_{m}(\underline{G})\rightarrow J_{m}(\underline{G})$
is just the $m$-th jet $J_{m}(\mathrm{m}_{\underline{G}})$ of the
usual multiplication map $\mathrm{m}_{\underline{G}}$. In particular,
we have the following lemma: 
\begin{lem}
\label{lem:convolution behaves well with jets}Let $X_{1},X_{2}$
be $K$-schemes, $\underline{G}$ an algebraic $K$-group, and let
$\{\varphi_{i}:X_{i}\rightarrow\underline{G}\}_{i=1}^{2}$ be morphisms.
Then $J_{m}(\varphi_{1})*J_{m}(\varphi_{1})=J_{m}(\varphi_{1}*\varphi_{2})$. 
\end{lem}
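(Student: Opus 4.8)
The plan is to deduce the identity purely formally from the functoriality of the jet construction together with the fact, recalled immediately before the statement, that the group law on $J_m(\underline{G})$ is $J_m(\mathrm{m}_{\underline{G}})$. (I read the claimed identity as $J_m(\varphi_1)*J_m(\varphi_2)=J_m(\varphi_1*\varphi_2)$.) First I would unwind Definition \ref{def:convolutions for poor}: the convolution $\varphi_1*\varphi_2$ is by definition the composite
\[
X_1\times X_2\xrightarrow{\ \varphi_1\times\varphi_2\ }\underline{G}\times\underline{G}\xrightarrow{\ \mathrm{m}_{\underline{G}}\ }\underline{G}.
\]

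Next I would record two elementary properties of the functor $X\mapsto J_m(X)$ that follow straight from its defining universal property (Definition \ref{def:basic definition jet schemes}): (i) $J_m(-)$ is a functor, so $J_m(g\circ f)=J_m(g)\circ J_m(f)$; and (ii) $J_m(-)$ commutes with finite products, $J_m(X_1\times X_2)\simeq J_m(X_1)\times J_m(X_2)$, compatibly with the two projections, so that under this identification $J_m(\varphi_1\times\varphi_2)$ becomes $J_m(\varphi_1)\times J_m(\varphi_2)$. Both are immediate by Yoneda: writing $T_m=\mathrm{Spec}(\ints[t]/(t^{m+1}))$, for any test scheme $W$ one has $\mathcal{J}_m(X_1\times X_2)(W)=\mathrm{Hom}_S(W\times T_m,X_1\times X_2)=\mathrm{Hom}_S(W\times T_m,X_1)\times\mathrm{Hom}_S(W\times T_m,X_2)=\mathcal{J}_m(X_1)(W)\times\mathcal{J}_m(X_2)(W)$, naturally in $W$, and functoriality is just post-composition with $f$.

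Then the proof is a one-line chain of equalities:
\[
J_m(\varphi_1*\varphi_2)=J_m\!\big(\mathrm{m}_{\underline{G}}\circ(\varphi_1\times\varphi_2)\big)=J_m(\mathrm{m}_{\underline{G}})\circ J_m(\varphi_1\times\varphi_2)=\mathrm{m}_{J_m(\underline{G})}\circ\big(J_m(\varphi_1)\times J_m(\varphi_2)\big)=J_m(\varphi_1)*J_m(\varphi_2),
\]
where the second equality uses functoriality (i), the third uses (ii) together with the identification $\mathrm{m}_{J_m(\underline{G})}=J_m(\mathrm{m}_{\underline{G}})$ recalled above, and the last is Definition \ref{def:convolutions for poor} applied to the jet maps $J_m(\varphi_i):J_m(X_i)\to J_m(\underline{G})$. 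I do not expect any real obstacle here; the only point needing a line of care is to check that the product identification in (ii) is the very one implicitly used when forming the convolution of the jet morphisms and that it is compatible with the algebraic group structure on $J_m(\underline{G})$ — which is precisely the content of the discussion preceding the statement.
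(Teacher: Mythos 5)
Your proof is correct and follows exactly the route the paper intends: the paper states the lemma immediately after observing that $\mathrm{m}_{J_m(\underline{G})}=J_m(\mathrm{m}_{\underline{G}})$ and gives no further argument, since the claim is precisely the composite of functoriality of $J_m$ and its compatibility with products that you spell out. You also correctly read the evident typo in the statement ($J_m(\varphi_1)*J_m(\varphi_1)$ should be $J_m(\varphi_1)*J_m(\varphi_2)$).
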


\begin{cor}
\label{cor:jet-flat implies FTS after two convolusions}Let $X_{1},X_{2},X_{3}$
be smooth geometrically irreducible $K$-varieties, and let $\{\varphi_{i}:X_{i}\rightarrow\underline{G}\}_{i=1}^{3}$
be dominant morphisms such that $\varphi_{1}:X_{1}\rightarrow\underline{G}$
is jet-flat. Then $\varphi_{1}*\varphi_{2}$ is (FRS) and $\varphi_{1}*\varphi_{2}*\varphi_{3}$
is (FTS). 
\end{cor}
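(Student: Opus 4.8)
The plan is to pass to jet schemes, exploit that convolution commutes with taking jets (Lemma \ref{lem:convolution behaves well with jets}), apply the ``head start'' estimate of Proposition \ref{prop:extended properties of convolutions} level by level, and finally read the singularity type of the fibers off Musta\c{t}\u{a}'s jet criterion. First I would record the consequences of the hypothesis. Since $\varphi_1$ is jet-flat, $J_m(\varphi_1)$ is flat for every $m\in\nats$; being flat with nonempty source and geometrically irreducible target, each $J_m(\varphi_1)$ is moreover dominant. As $\varphi_2,\varphi_3$ are dominant morphisms of smooth geometrically irreducible varieties, generic smoothness together with Lemma \ref{lem:useful facts on jet schemes} (jets of smooth morphisms are smooth) shows that $J_m(\varphi_2)$ and $J_m(\varphi_3)$ are dominant for every $m$. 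Finally, $J_m$ of a smooth geometrically irreducible variety is again smooth and geometrically irreducible, and $J_m(\underline G)$ is a connected algebraic $K$-group, so all hypotheses of Proposition \ref{prop:extended properties of convolutions} are met at each jet level.

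By Lemma \ref{lem:convolution behaves well with jets} we have $J_m(\varphi_1*\varphi_2)=J_m(\varphi_1)*J_m(\varphi_2)$ and $J_m(\varphi_1*\varphi_2*\varphi_3)=J_m(\varphi_1)*J_m(\varphi_2)*J_m(\varphi_3)$. Applying Proposition \ref{prop:extended properties of convolutions} at each $m$, with the flat morphism $J_m(\varphi_1)$ in the role of $\varphi_1$, gives that $J_m(\varphi_1*\varphi_2)$ is flat with reduced fibers for every $m$, and that $J_m(\varphi_1*\varphi_2*\varphi_3)$ is flat with normal fibers for every $m$.

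It remains to translate this back to the fibers of the convolutions themselves. For a flat morphism $\psi$ between smooth varieties and a geometric point $y$, the scheme-theoretic fiber of $J_m(\psi)$ over the zero jet $s_m(y)$ is canonically $J_m(\psi^{-1}(y))$. Hence, for $\psi=\varphi_1*\varphi_2$, every fiber $Z$ of $\psi$ is a reduced local complete intersection all of whose jet schemes $J_m(Z)$ are reduced of dimension $(m+1)\dim Z$; for $\psi=\varphi_1*\varphi_2*\varphi_3$ the $J_m(Z)$ are moreover normal. I would then run the argument of the proof of Corollary \ref{cor: singularity properties through dim of jets} one geometrically irreducible component of $Z$ at a time: reducedness (resp.\ normality) of all $J_m(Z)$ forces the components of $Z$ to be pairwise disjoint, so $J_m$ of a component is a union of connected components of $J_m(Z)$ and inherits reducedness (resp.\ normality), and then Theorem \ref{thm: singularities and jets}(2) (resp.\ (3)) yields that each component has rational (resp.\ terminal) singularities. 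Thus $\varphi_1*\varphi_2$ is (FRS) and $\varphi_1*\varphi_2*\varphi_3$ is (FTS) (the latter also recovering (FRS) for $\varphi_1*\varphi_2*\varphi_3$ via Proposition \ref{prop: properties preserved under convolution}).

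The step I expect to be the main obstacle is this last translation: Corollary \ref{cor: singularity properties through dim of jets} and Theorem \ref{thm: singularities and jets} are phrased for geometrically irreducible (or (FGI)) objects, whereas the convolutions here need not have geometrically irreducible fibers --- for instance $x\mapsto x^n$ on $\mathbb{G}_m$ is jet-flat while its self-convolutions have disconnected fibers. One therefore has to either upgrade Corollary \ref{cor: singularity properties through dim of jets} to the non-(FGI) setting by arguing componentwise, which is clean once one knows the components of each fiber are disjoint (itself a consequence of the jet-level reducedness/normality above), or reduce to the (FGI) case by a Stein-type factorization of the convolution through a finite \'etale cover of $\underline G$, where being (FRS)/(FTS) is unchanged since the cover is \'etale.
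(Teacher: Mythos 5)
Your strategy matches the paper's opening moves --- commute convolution with jets (Lemma \ref{lem:convolution behaves well with jets}), apply Proposition \ref{prop:extended properties of convolutions} at every jet level, and translate back via $J_m(\psi)^{-1}(s_m(y))\simeq J_m(\psi^{-1}(y))$ --- and you correctly flag the central obstacle: Theorem \ref{thm: singularities and jets} and Corollary \ref{cor: singularity properties through dim of jets} are stated for geometrically irreducible objects, while the fibers of a convolution of word-type maps need not be. Where you diverge from the paper is in how you handle this. The paper avoids irreducibility entirely by invoking \cite[Proposition 1.4]{Mus01}: if $W$ is a local complete intersection and $J_m(W^{\mathrm{sm}})$ is dense in $J_m(W)$ for every $m$, then $W$ has rational singularities, with no irreducibility hypothesis. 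Density of the smooth locus comes from reducedness of the fibers of $J_m(\varphi_1*\varphi_2)$, and the identification $J_m(W)^{\mathrm{sm}}=J_m(W^{\mathrm{sm}})$ is the compatibility $J_m(X^{\mathrm{sm},\psi})\simeq J_m(X)^{\mathrm{sm},J_m(\psi)}$ for flat $\psi$ between smooth varieties, already established inside the proof of Corollary \ref{cor: singularity properties through dim of jets}. The (FTS) statement then piggybacks on the established (FRS): $\varphi_1*\varphi_2*\varphi_3$ is (FRS) by Proposition \ref{prop: properties preserved under convolution}, so its fibers are normal with disjoint geometrically irreducible components, and $J_m$-normality (from Proposition \ref{prop:extended properties of convolutions}) restricted to each component gives terminal singularities via Theorem \ref{thm: singularities and jets}(3).

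Neither of your proposed workarounds actually closes the gap you identified. The assertion that reducedness (resp.\ normality) of all $J_m(Z)$ forces the components of $Z$ to be pairwise disjoint is unproved; it amounts to normality of $Z$, which is part of what must be established, and you do not point to a result giving it. Even granting disjointness, you then appeal to Theorem \ref{thm: singularities and jets}(2) for rational singularities of each component having only established reducedness of its jet schemes, not geometric irreducibility, and item (2) requires the latter. The Stein-factorization alternative is also too vague to carry weight: Stein factorization is for proper morphisms, and when $\underline{G}$ is simply connected there is no nontrivial finite \'etale $\underline{G}$-cover to factor through, nor is it clear the fiber components are indexed by a single such cover in general. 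The missing ingredient is precisely Musta\c{t}\u{a}'s density criterion, which does the work without any irreducibility assumption.
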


\begin{proof}
By Proposition \ref{prop:extended properties of convolutions} and
Lemma \ref{lem:convolution behaves well with jets} it follows that
$J_{m}(\varphi_{1}*\varphi_{2})$ is flat with reduced fibers. In
particular, $\left(J_{m}(X_{1}\times X_{2})_{(g,0)}\right)^{\mathrm{sm}}\simeq\left(J_{m}((X_{1}\times X_{2})_{g})\right)^{\mathrm{sm}}$
is dense in $J_{m}((X_{1}\times X_{2})_{g})$ for any $g\in\underline{G}$.
We have also seen that $J_{m}(X_{1}\times X_{2})^{\mathrm{sm},J_{m}(\varphi_{1}*\varphi_{2})}\simeq J_{m}((X_{1}\times X_{2})^{\mathrm{sm},\varphi_{1}*\varphi_{2}})$
so
\begin{align*}
\left(J_{m}(X_{1}\times X_{2})_{(g,0)}\right)^{\mathrm{sm}} & =J_{m}(X_{1}\times X_{2})_{(g,0)}\cap J_{m}(X_{1}\times X_{2})^{\mathrm{sm},J_{m}(\varphi_{1}*\varphi_{2})}\\
 & =J_{m}((X_{1}\times X_{2})_{g}\cap(X_{1}\times X_{2})^{\mathrm{sm},\varphi_{1}*\varphi_{2}})=J_{m}((X_{1}\times X_{2})_{g}^{\mathrm{sm}}).
\end{align*}
By \cite[Proposition 1.4]{Mus01}, we deduce that $(X_{1}\times X_{2})_{g}$
has rational singularities. Proposition \ref{prop:extended properties of convolutions}
and Corollary \ref{cor: singularity properties through dim of jets}
imply that $\varphi_{1}*\varphi_{2}*\varphi_{3}$ is (FTS).
\end{proof}

\subsection{Singularity properties of word maps: first observations}

We have seen in Proposition \ref{Prop: singularity properties obtained after convolution}
that a dominant morphism $\varphi:X\rightarrow\underline{G}$ becomes
flat after $\mathrm{dim}\underline{G}$ self-convolutions, and one
cannot hope for better bounds in this generality. Since word maps
furnish an abundant of symmetries, one would expect their singularity
properties improve much faster under self-convolution than those of
a general morphism. For example, since word maps are conjugate invariant,
the locus of any given singularity property is conjugate invariant
as well. Indeed, in \cite{LST19} it was shown that any word map $\varphi_{w}:\underline{G}^{r}\rightarrow\underline{G}$
becomes flat after $O(\ell(w)^{4})$ self-convolutions, for every
semisimple group $\underline{G}$, where $\ell(w)$ is the length
of $w$. Similarly, one would expect improved bounds for Lie algebra
word maps.

In Sections \ref{sec:Lie-algebra-word} and \ref{sec:Proof-of-Theorems- Lie algebra}
we provide bounds on the number of self-convolutions of Lie algebra
word maps required to obtain flat and (FRS) morphisms. While the task
of providing effective bounds for the (FRS) property of group word
maps is not yet finished, we can still say something on the singularities
of the fibers of word maps (see Section \ref{sec:flatness-and-(FRS) of word maps}).
For this, we would like to introduce quantitative ways to measure
how bad are the singularities of a word map, or how far is a word
map from being an (FRS) morphism. We further discuss the (FGI) property
which plays an important role in the probabilistic applications of
this paper. We first recall a result of \cite{LST19}: 
\begin{thm}[{\cite[Lemma 2.4]{LST19}}]
\label{thm:-convolution of two word maps is generically absolutely irreducible}Let
$\underline{G}$ be a simply connected, semisimple $K$-algebraic
group and let $w_{1}\in F_{r_{1}}$ and $w_{2}\in F_{r_{2}}$ be words.
Then $\varphi_{w_{1}}*\varphi_{w_{2}}$ has a geometrically irreducible
generic fiber. 
\end{thm}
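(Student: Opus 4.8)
The plan is to reduce the statement to a question about finite covers of $\underline{G}$ and then exploit that $\underline{G}$ is simply connected. Geometric irreducibility of the generic fibre is insensitive to the ground field, so I may assume $K=\overline{K}$; I also take $w_{1},w_{2}$ to be nontrivial (if, say, $w_{1}$ is trivial then $\varphi_{w_1}*\varphi_{w_2}$ is, up to a product with copies of $\underline{G}$, the single word map $\varphi_{w_2}$, for which the conclusion genuinely fails — e.g.\ $w=x^{2}$ on $\mathrm{SL}_{2}$). By Borel's theorem each $\varphi_{w_{i}}$ is dominant, so $\Phi:=\varphi_{w_1}*\varphi_{w_2}$ is dominant, and indeed surjective (being a convolution of two dominant morphisms). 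As the source $\underline{G}^{r_{1}+r_{2}}$ is irreducible, the generic fibre of $\Phi$ is automatically irreducible as a scheme; the content is that it is \emph{geometrically} irreducible, i.e.\ that $\Phi^{*}K(\underline{G})$ is algebraically closed in $K(\underline{G}^{r_{1}+r_{2}})$. Equivalently, let $\nu\colon Y\to\underline{G}$ be the normalization of $\underline{G}$ in the relative algebraic closure of $\Phi^{*}K(\underline{G})$ inside $K(\underline{G}^{r_{1}+r_{2}})$ (through which $\Phi$ factors, with geometrically irreducible generic fibre over $Y$); the claim is that $\deg\nu=1$, so assume $d:=\deg\nu\geq 2$ for contradiction.

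Since $\Phi$ is equivariant for the simultaneous conjugation action of $\underline{G}$ on all coordinates, $\nu$ is conjugation-equivariant and $Y$ carries a compatible $\underline{G}$-action. By purity of the branch locus (Zariski--Nagata), $\nu$ is finite étale away from a closed set that is empty or of pure codimension one; being the branch locus of a conjugation-equivariant cover it is a conjugation-stable divisor $B\subseteq\underline{G}$, each of whose components is conjugation-stable because $\underline{G}$ is connected. If $B=\varnothing$ then $\nu$ is finite étale over $\underline{G}$, hence trivial since a semisimple simply connected group has trivial étale fundamental group — contradicting $d\geq 2$. So $B\neq\varnothing$, and it remains to rule this out using that $\nu$ is dominated by the \emph{convolution} $\Phi$. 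For this I would pass to a general closed fibre: by generic smoothness $\Phi^{-1}(g)$ is smooth for general $g\in\underline{G}(K)$, hence irreducible iff connected; the projection $(x,y)\mapsto y$ exhibits $\Phi^{-1}(g)$ over a dense open $\Omega\subseteq\underline{G}^{r_{2}}$ as the pullback of the connected ``$\varphi_{w_1}$-family'' $\varphi_{w_1}^{-1}(V_{1})\to V_{1}$ along $\theta_{g}\colon y\mapsto g\,\varphi_{w_2}(y)^{-1}$, and a Galois/constructibility argument shows every component of $\Phi^{-1}(g)$ dominates $\underline{G}^{r_{2}}$, so connectedness of the whole fibre is equivalent to connectedness of this pullback. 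Writing $\theta_{g}=(L_{g}\circ\mathrm{inv})\circ\varphi_{w_2}$ with $L_{g}\circ\mathrm{inv}$ an automorphism of $\underline{G}$, one reduces to: the conjugation-equivariant ``component cover'' $\nu_{1}\colon Y_{1}\to\underline{G}$ of $\varphi_{w_1}$, \emph{twisted by the generic automorphism $L_{g}\circ\mathrm{inv}$} (so that its branch divisor is now a generic translate of an inverse of a conjugation-stable divisor, in particular in general position), pulls back to a connected cover of $\underline{G}^{r_{2}}$ along $\varphi_{w_2}$.

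The heart of the matter — and where simple-connectedness enters a second time — is this last assertion, and it is the step I expect to be the main obstacle. Since $\underline{G}^{r_{2}}$ is itself semisimple simply connected, the étale fundamental group of a Zariski-open of it is normally generated by the inertia subgroups around the removed divisor; so one must check that $\varphi_{w_2}$ is unramified along a suitable component of the $\varphi_{w_2}$-preimage of the (generically twisted) branch divisor of $\nu_{1}$ — which is available exactly because, after the generic twist by $g$, that divisor meets the critical locus of $\varphi_{w_2}$ transversally — so that the corresponding inertia element is carried onto a full inertia generator of the monodromy of the twisted $\nu_{1}$; as these generate, the pullback cover is connected, forcing $d=1$. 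Carrying out this transversality-and-inertia bookkeeping is the crux; it is precisely what breaks down for groups that are not simply connected — e.g.\ on $\mathbb{G}_{m}$, where $\varphi_{x^{2}}*\varphi_{x^{2}}=(st)^{2}$ has a generic fibre with two components — because there the relevant covers are already everywhere étale and the two monodromies can align.
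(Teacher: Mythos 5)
The paper does not prove this statement: it is imported verbatim as \cite[Lemma 2.4]{LST19}, and there is no internal proof to compare your argument against. Your review therefore has to stand on its own.

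Your overall strategy — pass to the Stein factorization $\nu\colon Y\to\underline{G}$, invoke purity of the branch locus, use triviality of $\pi_1^{\mathrm{\acute{e}t}}(\underline{G})$ to handle the unramified case, and then reformulate the remaining case as connectedness of $\Phi^{-1}(g)$ for generic closed $g$ — is a natural one, and the reformulation via closed fibers is valid (the number of geometric components of the generic fiber agrees with that of a generic closed fiber by constructibility). The observation that the statement genuinely fails for a trivial $w_{1}$ is a correct sanity check and worth keeping in mind, since the theorem as quoted does not explicitly say ``nontrivial.''

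However, there is a genuine gap at exactly the point you flag: the ``transversality-and-inertia bookkeeping'' is a plan, not a proof. Two concrete problems. First, the assertion that ``a Galois/constructibility argument shows every component of $\Phi^{-1}(g)$ dominates $\underline{G}^{r_{2}}$'' is not justified, and it is not a formality — a priori a component $C$ could map into a proper closed subset, and ruling this out needs a dimension or equivariance argument that you do not supply. Second, and more seriously, to conclude that $\theta_{g}^{*}Y_{1}$ is connected you need the image of $\pi_{1}^{\mathrm{\acute{e}t}}(\varphi_{w_{2}}^{-1}(U))\to\pi_{1}^{\mathrm{\acute{e}t}}(U)$ to act transitively on the $d_{1}$ sheets. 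Since the monodromy of $\nu_{1}$ is generated by the inertia subgroups around \emph{all} components of the branch divisor $B_{1}^{-1}g$, you must produce, for \emph{each} such component $B'$, a component $D'$ of $\varphi_{w_{2}}^{-1}(B')$ along which $\varphi_{w_{2}}$ is unramified, and you must verify that the resulting inertia elements jointly generate a transitive subgroup. Generic-translate transversality (Kleiman) gives you that $\varphi_{w_{2}}^{-1}(B_{1}^{-1}g)$ is not contained in the critical locus, but this is a statement about the whole preimage, not component-by-component; and even once you have one good $D'$ per $B'$, the transitivity of the subgroup generated by the corresponding inertia images is not automatic — you are implicitly using that the $\nu_{1}$-inertia subgroups generate the \emph{full} monodromy group (again simple connectedness), which should be stated and used explicitly. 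Until these steps are carried out, the argument does not yet rule out $d\ge 2$ when $B\neq\varnothing$, so the proof is incomplete. For a complete argument you should consult the short proof of Lemma~2.4 in \cite{LST19}, which the paper relies on directly.
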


From Theorem \ref{thm:-convolution of two word maps is generically absolutely irreducible}
we deduce the following corollary:
\begin{cor}
\label{cor: word maps are (FAI) }Let $\underline{G}$ be a simply
connected, semisimple $K$-algebraic group. Let $\{w_{i}\in F_{r_{i}}\}_{i=1}^{3}$
and assume that $\varphi_{w_{1}}*\varphi_{w_{2}}$ is flat. Then $\varphi_{w_{1}}*\varphi_{w_{2}}*\varphi_{w_{3}}$
is (FGI). 
\end{cor}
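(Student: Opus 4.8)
The statement to prove is Corollary~\ref{cor: word maps are (FAI) }: if $\underline{G}$ is simply connected and semisimple, $\{w_i\in F_{r_i}\}_{i=1}^3$, and $\varphi_{w_1}*\varphi_{w_2}$ is flat, then $\varphi_{w_1}*\varphi_{w_2}*\varphi_{w_3}$ is (FGI), i.e.\ flat with geometrically irreducible fibers. The plan is to combine two inputs already available in the excerpt: Theorem~\ref{thm:-convolution of two word maps is generically absolutely irreducible}, which says that a convolution of two word maps has a geometrically irreducible generic fiber; and Proposition~\ref{prop: properties preserved under convolution}, which says that flatness, reducedness of fibers, and the (FGI) property are all preserved under convolution with an arbitrary further morphism.

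\textbf{Step 1: upgrade flatness of $\varphi_{w_1}*\varphi_{w_2}$ to flatness with reduced (indeed normal, generically smooth) fibers after absorbing $\varphi_{w_3}$.} First I would note that each $\varphi_{w_i}:\underline{G}^{r_i}\to\underline{G}$ is dominant: this is Borel's theorem for nontrivial words, and a word inducing a trivial map on a semisimple group poses no difficulty separately (or one reduces to the nontrivial case); in any event the hypothesis that $\varphi_{w_1}*\varphi_{w_2}$ is flat already forces the relevant maps to be dominant since a flat morphism to an irreducible target is surjective. Now apply Proposition~\ref{prop:extended properties of convolutions}(1) with $\varphi_1=\varphi_{w_1}*\varphi_{w_2}$ (which is flat by hypothesis and dominant) and $\varphi_2=\varphi_{w_3}$: this gives that $\varphi_{w_1}*\varphi_{w_2}*\varphi_{w_3}$ is flat with reduced fibers. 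Since $\underline{G}$ is smooth and the source $\underline{G}^{r_1+r_2+r_3}$ is smooth, the fibers are local complete intersections, hence Cohen--Macaulay, so by Serre's criterion they are in particular equidimensional of the expected dimension.

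\textbf{Step 2: pin down irreducibility of all fibers, not just the generic one.} By Theorem~\ref{thm:-convolution of two word maps is generically absolutely irreducible} applied to the pair of words $w_1$ and $w_2*w_3$ (the concatenation), the morphism $\varphi_{w_1}*(\varphi_{w_2}*\varphi_{w_3})=\varphi_{w_1}*\varphi_{w_2}*\varphi_{w_3}$ has a geometrically irreducible generic fiber. Combined with Step~1 (flat, with reduced fibers, and generically smooth over the target by generic smoothness), I would argue fiberwise: fix $g\in\underline{G}$ and let $F_g$ be the fiber over $g$. Because the morphism is flat with reduced fibers, the smooth locus $F_g^{\mathrm{sm}}=F_g\cap X^{\mathrm{sm},\varphi}$ is dense in $F_g$. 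By conjugation invariance of word maps the ``number of geometrically irreducible components of the fiber'' and the ``dimension of the fiber'' are constant on conjugacy classes; more importantly, one uses that over a dense open $U\subseteq\underline{G}$ the fibers are geometrically irreducible, and then a standard constructibility/specialization argument (or invoking Lemma~\ref{Lemma 3.13-Auxilary}-type reasoning, or simply \cite[Lemmas 37.25.5, 37.25.6]{Sta}) shows geometric irreducibility is inherited by every fiber: the key point is that a flat family with reduced, equidimensional fibers whose generic member is geometrically irreducible cannot have a special fiber that breaks into several components without violating upper-semicontinuity of fiber dimension or the reducedness. Thus every fiber of $\varphi_{w_1}*\varphi_{w_2}*\varphi_{w_3}$ is geometrically irreducible, which together with flatness is exactly the (FGI) property.

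\textbf{Main obstacle.} The genuine content is Step~2: passing from ``geometrically irreducible generic fiber'' to ``geometrically irreducible every fiber.'' A priori a flat family can degenerate an irreducible fiber into a union of components; what rules this out here is the combination of reducedness of all fibers (from Step~1) with the fact that the fibers are local complete intersections of constant dimension, so one can apply the principle that in such a family the locus where the fiber becomes geometrically reducible is closed of codimension $\geq 1$ in the base \emph{and} stable under the $\underline{G}$-conjugation action on the target (since word maps are class functions). If the claim genuinely requires more, the cleanest route is to invoke Corollary~\ref{cor:degeneration on jet schemes}/Corollary~\ref{cor: singularity properties through dim of jets}-style reasoning: the singularities of a word map convolution are known (by conjugation invariance together with \cite{LST19}-type arguments) to be ``concentrated'' in a controlled way, so that verifying (FGI) reduces to the generic fiber plus the flatness hypothesis. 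I expect the write-up to be short, the only care needed being the book-keeping that $\varphi_{w_2}*\varphi_{w_3}=\varphi_{w_2*w_3}$ so that Theorem~\ref{thm:-convolution of two word maps is generically absolutely irreducible} applies with the correct two-word grouping.
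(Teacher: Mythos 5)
Your Step 1 is fine (it matches the paper's invocation of Proposition~\ref{prop:extended properties of convolutions}), but your Step~2 has a genuine gap. The assertion that ``a flat family with reduced, equidimensional fibers whose generic member is geometrically irreducible cannot have a special fiber that breaks into several components'' is false: the standard example $V(xy-t)\subset\mathbb{A}^2\times\mathbb{A}^1\to\mathbb{A}^1$ is flat with reduced, equidimensional fibers, the generic fiber is an irreducible conic, and the fiber over $t=0$ is a union of two lines. The locus where the fiber is geometrically irreducible is constructible but not closed in general, so there is no specialization principle of the kind you invoke, and neither reducedness nor upper semicontinuity of fiber dimension rescues it. Conjugation invariance is mentioned but never used in a way that would constrain where reducibility could occur, and Lemma~\ref{Lemma 3.13-Auxilary} applied to $\varphi_w:\underline{G}^r\to\underline{G}$ only recovers irreducibility of the total space $\underline{G}^r$, which is vacuous.

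The paper's argument applies Lemma~\ref{Lemma 3.13-Auxilary} to a different fibration: fix $g$, let $F_g=(\underline{G}^r)_{g,\varphi_w}$, and consider the projection $\pi_3:F_g\to\underline{G}^{r_3}$. Its fiber over $g_3$ is $(\underline{G}^{r_1+r_2})_{g\cdot\varphi_{w_3}(g_3)^{-1},\,\varphi_{w_1}*\varphi_{w_2}}$; since $\varphi_{w_1}*\varphi_{w_2}$ is flat by hypothesis, these fibers have constant dimension, so $\pi_3$ is flat (miracle flatness from the Cohen--Macaulay $F_g$ to the smooth $\underline{G}^{r_3}$) hence open. As $g_3$ varies over a dense subset of $\underline{G}^{r_3}$, the point $g\cdot\varphi_{w_3}(g_3)^{-1}$ sweeps over a dense subset of $\underline{G}$ (dominance of $\varphi_{w_3}$), so by Theorem~\ref{thm:-convolution of two word maps is generically absolutely irreducible} a dense set of fibers of $\pi_3$ is geometrically irreducible. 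Lemma~\ref{Lemma 3.13-Auxilary} then gives that $F_g$ itself is geometrically irreducible, for every $g$. The crucial point your proposal misses is that the theorem about geometric irreducibility of the generic fiber of $\varphi_{w_1}*\varphi_{w_2}$ is leveraged at \emph{varying} target points inside a single fiber of $\varphi_w$, rather than trying (and failing) to propagate irreducibility from the generic fiber to all fibers of the full three-fold convolution.
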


\begin{proof}
Set $r=\stackrel[i=1]{3}{\sum}r_{i}$ and $w=w_{1}*w_{2}*w_{3}$.
Given $g\in\underline{G}$, let $\pi_{3}:(\underline{G}^{r}){}_{g,\varphi_{w}}\rightarrow\underline{G}^{r_{3}}$
be the restriction to $(\underline{G}^{r}){}_{g,\varphi_{w}}$ of
the projection $\underline{G}^{r}\rightarrow\underline{G}^{r_{3}}$.
Note that $\varphi_{w}$ is flat (via Proposition \ref{prop: properties preserved under convolution}),
so $(\underline{G}^{r}){}_{g,\varphi_{w}}$ is Cohen-Macaulay. Moreover,
for any $g_{3}\in\underline{G}^{r_{3}}$ we have 
\[
((\underline{G}^{r}){}_{g,\varphi_{w}})_{g_{3},\pi_{3}}\simeq\left(\underline{G}^{r_{1}+r_{2}}\right)_{g\cdot\varphi_{w_{3}}(g_{3})^{-1},\varphi_{w_{1}}*\varphi_{w_{2}}},
\]
so the fibers of $\pi_{3}$ are of dimension $\mathrm{dim}\underline{G}(r_{1}+r_{2}-1)=\mathrm{dim}(\underline{G}^{r}){}_{g,\varphi_{w}}-\mathrm{dim}\underline{G}^{r_{3}}$.
By miracle flatness, $\pi_{3}$ is flat, and thus open. By Theorem
\ref{thm:-convolution of two word maps is generically absolutely irreducible},
there is a dense collection of fibers of $\pi_{3}$ which are geometrically
irreducible, so by Lemma \ref{Lemma 3.13-Auxilary} we deduce that
$(\underline{G}^{r}){}_{g,\varphi_{w}}$ is geometrically irreducible,
as required. 
\end{proof}
The following Corollary is an analogue of Theorem \ref{thm:-convolution of two word maps is generically absolutely irreducible}
for jet schemes, and is a direct application of Lemma \ref{lem:irreducible generic fiber in jets}
combined with Theorem \ref{thm:-convolution of two word maps is generically absolutely irreducible}. 
\begin{cor}
\label{cor:convolution of word maps has jets with geometrically irreducible generic fiber}Let
$\underline{G}$ be a simply connected, semisimple $K$-algebraic
group and let $w_{1}\in F_{r_{1}}$ and $w_{2}\in F_{r_{2}}$ be words.
Then for each $m\in\nats$, the jet map $J_{m}(\varphi_{w_{1}*w_{2}}):J_{m}(\underline{G})^{r_{1}+r_{2}}\rightarrow J_{m}(\underline{G})$
has a geometrically irreducible generic fiber. 
\end{cor}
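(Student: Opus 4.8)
The plan is to feed Theorem~\ref{thm:-convolution of two word maps is generically absolutely irreducible} directly into Lemma~\ref{lem:irreducible generic fiber in jets}(2); essentially all the substantive work has already been done upstream. First I would note that, by the definition of the convolution of morphisms, $\varphi_{w_1*w_2}$ is literally the morphism $\varphi_{w_1}*\varphi_{w_2}\colon\underline{G}^{r_1}\times\underline{G}^{r_2}\to\underline{G}$. Since $\underline{G}$ is connected and semisimple it is a smooth, geometrically irreducible $K$-variety, and hence so is $\underline{G}^{r_1+r_2}$. By Theorem~\ref{thm:-convolution of two word maps is generically absolutely irreducible} (which is where simple connectedness of $\underline{G}$ is used), $\varphi_{w_1*w_2}$ has a geometrically irreducible generic fiber; in particular this generic fiber is nonempty, so $\varphi_{w_1*w_2}$ is dominant. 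Thus all hypotheses of Lemma~\ref{lem:irreducible generic fiber in jets} are in place.

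Second, I would identify the jet morphism appearing in the statement with $J_m(\varphi_{w_1*w_2})$. Because the jet functor preserves finite products --- one sees this at once from the functor of points, $\mathcal{J}_m(X\times Y)(W)=\mathcal{J}_m(X)(W)\times\mathcal{J}_m(Y)(W)$ --- there is a canonical isomorphism $J_m(\underline{G}^{r_1+r_2})\cong J_m(\underline{G})^{r_1+r_2}$, and under it $J_m(\varphi_{w_1*w_2})$ becomes exactly the map $J_m(\underline{G})^{r_1+r_2}\to J_m(\underline{G})$ of the statement (one may alternatively invoke Lemma~\ref{lem:convolution behaves well with jets} to recognise it as $J_m(\varphi_{w_1})*J_m(\varphi_{w_2})$). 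Applying Lemma~\ref{lem:irreducible generic fiber in jets}(2) with $\varphi=\varphi_{w_1*w_2}$ then yields that $J_m(\varphi_{w_1*w_2})$ has a geometrically irreducible generic fiber for every $m\in\nats$, which is the assertion.

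I do not expect a genuine obstacle here: the content lies in Theorem~\ref{thm:-convolution of two word maps is generically absolutely irreducible} and in Lemma~\ref{lem:irreducible generic fiber in jets}, both already established. The only two minor points deserving a word are the (standard) compatibility of the jet construction with finite products, and the remark that a geometrically irreducible generic fiber is in particular nonempty, so that $\varphi_{w_1*w_2}$ is dominant and the lemma genuinely applies.
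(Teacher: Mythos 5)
Your proof is correct and matches the paper's approach exactly: the paper states the corollary is "a direct application of Lemma~\ref{lem:irreducible generic fiber in jets} combined with Theorem~\ref{thm:-convolution of two word maps is generically absolutely irreducible}," which is precisely the argument you give, with the compatibility of jets with products and the dominance of $\varphi_{w_1*w_2}$ spelled out as the only bookkeeping steps.
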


Finally we would like to introduce quantitative versions of flatness
and jet-flatness, which we will use in Sections \ref{sec:Proof-of-Theorems- Lie algebra}
and \ref{sec:flatness-and-(FRS) of word maps}. 
\begin{defn}
\label{def:epsilon jet flat}Let $\varphi:X\rightarrow Y$ be a morphism
between geometrically irreducible smooth $K$-varieties. 
\begin{enumerate}
\item $\varphi$ is called \textit{$\varepsilon$-flat} if for every $x\in X$
we have $\mathrm{dim}X_{\varphi(x),\varphi}\leq\mathrm{dim}X-\varepsilon\mathrm{dim}Y$. 
\item $\varphi$ is called \textit{$\varepsilon$-jet flat} if $J_{m}(\varphi)$
is $\varepsilon$-flat for every $m\in\nats$. 
\end{enumerate}
Note that a $1$-flat (resp.~$1$-jet flat) morphism is just flat
(resp.~jet-flat). 
\end{defn}

\begin{lem}
\label{lem:epsilon jet flat and log canonical threshold}Let $\varphi:X\rightarrow Y$
be a morphism between geometrically irreducible smooth $K$-varieties.
Then $\varphi$ is $\varepsilon$-jet flat if and only if $\mathrm{lct}(X,X_{\varphi(x),\varphi})\geq\varepsilon\mathrm{dim}Y$
for all $x\in X$. 
\end{lem}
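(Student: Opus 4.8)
The plan is to translate the asserted equivalence, via Musta\c t\u a's description of the log canonical threshold in terms of the dimensions of jet schemes (Theorem \ref{thm:log canonical threshold and jet schemes}), into a purely dimension-theoretic statement about the fibers of the jet maps $J_m(\varphi)$. The bridge between the two sides is the identification of $J_m$ of a fiber of $\varphi$ with the fiber of $J_m(\varphi)$ over the corresponding constant (zero) jet. Since all quantities in play --- dimensions of fibers, and the log canonical threshold in characteristic zero --- are unchanged by the base change $K\hookrightarrow\overline K$, I would first pass to $K$ algebraically closed and work with $K$-points throughout; and since both $\varepsilon$-flatness of $J_m(\varphi)$ and the pointwise inequality $\mathrm{lct}(X,X_{\varphi(x),\varphi})\ge\varepsilon\dim Y$ may be checked on closed points, it suffices to run the equivalence over closed $x\in X$.

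The first ingredient I would record is the elementary identification: for a (closed) point $y\in Y$ there is a canonical isomorphism $J_m(X)_{s_m(y),J_m(\varphi)}\simeq J_m(X_{y,\varphi})$. This holds because the jet functor $J_m(-/K)$ is a right adjoint (to $W\mapsto W\times_K\mathrm{Spec}(K[t]/(t^{m+1}))$) and therefore commutes with fibered products, while $J_m(\mathrm{Spec}\,\kappa(y))=\mathrm{Spec}\,\kappa(y)$ maps to $J_m(Y)$ precisely through the zero section $s_m$ (the constant jet at $y$). Thus the fiber of $J_m(\varphi)$ over a constant jet is the $m$-th jet scheme of the corresponding fiber of $\varphi$.

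The key geometric step, and the one I expect to be the main obstacle, is to show that $J_m(\varphi)$ is $\varepsilon$-flat if and only if $\dim J_m(X)_{s_m(y),J_m(\varphi)}\le\dim J_m(X)-\varepsilon\dim J_m(Y)$ for every $y\in\varphi(X)$, i.e.\ that it is enough to test the $\varepsilon$-flatness inequality over the zero-section locus $s_m(Y)\subseteq J_m(Y)$. One direction is trivial. For the converse, given $\widetilde y\in J_m(Y)$ put $y:=\pi_0^m(\widetilde y)$; the natural $\mathbb G_m$-action on $J_m(Y)$, which on points is $s\cdot(b,b^{(1)},\dots,b^{(m)})=(b,sb^{(1)},\dots,s^m b^{(m)})$, contracts the orbit of $\widetilde y$ to $s_m(y)$ as $s\to 0$, while the $\mathbb G_m$-equivariance of $J_m(\varphi)$ (cf.\ the discussion preceding Corollary \ref{cor:degeneration on jet schemes}) makes the fiber dimension of $J_m(\varphi)$ constant along this orbit. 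Hence the orbit lies in the closed set $\{\widetilde z:\dim J_m(\varphi)^{-1}(\widetilde z)\ge d\}$ with $d:=\dim J_m(X)_{\widetilde y,J_m(\varphi)}$, so $s_m(y)$ lies there too, giving $\dim J_m(X)_{\widetilde y,J_m(\varphi)}\le\dim J_m(X)_{s_m(y),J_m(\varphi)}$. This is the $\varepsilon$-flat counterpart of Corollary \ref{cor:degeneration on jet schemes}, proved by the same $\mathbb G_m$-degeneration idea combined with upper semicontinuity of fiber dimension.

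Finally I would assemble the pieces. Using $\dim J_m(X)=(m+1)\dim X$ and $\dim J_m(Y)=(m+1)\dim Y$ (Lemma \ref{lem:useful facts on jet schemes}) together with the previous two steps, $\varphi$ is $\varepsilon$-jet flat if and only if for every $m\ge 0$ and every $y\in\varphi(X)$ one has $\dim J_m(X_{y,\varphi})\le(m+1)(\dim X-\varepsilon\dim Y)$. For a fixed closed $x$ with $y=\varphi(x)$ this reads $\sup_{m\ge 0}\frac{\dim J_m(X_{y,\varphi})}{m+1}\le\dim X-\varepsilon\dim Y$, which by Theorem \ref{thm:log canonical threshold and jet schemes} is exactly $\mathrm{lct}(X,X_{\varphi(x),\varphi})\ge\varepsilon\dim Y$. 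Quantifying over all closed $x\in X$ yields the lemma. The degenerate cases --- $\dim Y=0$, or $\varphi$ constant so that $X_{\varphi(x),\varphi}=X$ and Theorem \ref{thm:log canonical threshold and jet schemes} is not directly applicable --- are immediate and I would dispatch them in a sentence, both conditions then holding, or failing, trivially together.
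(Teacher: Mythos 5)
Your overall strategy matches the paper's: translate both sides via Musta\c t\u a's jet-theoretic formula for the log canonical threshold, identify $J_m(X_{y,\varphi})$ with $J_m(X)_{s_m(y),J_m(\varphi)}$, and show that the $\varepsilon$-flatness of $J_m(\varphi)$ can be tested over the zero section using the $\mathbb{G}_m$-degeneration. The adjunction argument for $J_m(X)_{s_m(y),J_m(\varphi)}\simeq J_m(X_{y,\varphi})$ is fine, as is the final assembly.

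The gap is in your ``key geometric step.'' You run the $\mathbb{G}_m$-degeneration on the \emph{target} $J_m(Y)$ and then claim that $\{\widetilde z\in J_m(Y):\dim J_m(\varphi)^{-1}(\widetilde z)\ge d\}$ is closed, citing ``upper semicontinuity of fiber dimension.'' This is not what Chevalley's semicontinuity theorem (EGA~IV, 13.1.3, the result the paper cites) says: the function that is upper semicontinuous is $\widetilde x\mapsto\dim_{\widetilde x}J_m(\varphi)^{-1}(J_m(\varphi)(\widetilde x))$ on the \emph{source} $J_m(X)$; the corresponding jumping locus on the target is only constructible for a general (non-proper) morphism, and the inclusion $\mathbb{G}_m\hookrightarrow\mathbb{A}^1$ with the scaling action already shows it need not be closed. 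What actually makes the degeneration work here is the contraction property of the $\mathbb{G}_m$-action on the \emph{source}: for any $\widetilde x\in J_m(\varphi)^{-1}(\widetilde y)$ one has $\lim_{s\to 0}s\cdot\widetilde x=s_m(\pi_0^m(\widetilde x))\in J_m(\varphi)^{-1}(s_m(y))$, so the orbit closure of $\widetilde x$ lies inside $J_m(X)$ and hits the zero section. Applying upper semicontinuity on $J_m(X)$ (where the jumping locus genuinely is closed and is $\mathbb{G}_m$-stable) then gives $\dim_{s_m(\pi_0^m(\widetilde x))}J_m(X)_{s_m(y)}\ge\dim_{\widetilde x}J_m(X)_{\widetilde y}$, which yields the inequality you want. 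This is also exactly how the paper phrases it (``upper semi-continuity of the dimension of the fiber \emph{on the source} \dots and using the $\mathbb{G}_m$-action on $J_m(X)$''), and how the proof of Corollary~\ref{cor:degeneration on jet schemes} that you reference is actually structured: the equivariant section there goes into $J_m(X)$, not $J_m(Y)$. Your conclusion is right, but the justification as written invokes a false closedness on the target, so you should move the degeneration to the source.
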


\begin{proof}
From Theorem \ref{thm:log canonical threshold and jet schemes} we
see that if $\varphi$ is $\varepsilon$-jet flat then $\mathrm{lct}(X,X_{\varphi(x),\varphi})\geq\varepsilon\mathrm{dim}Y$
for all $x\in X$, and that if the latter holds then $J_{m}(\varphi)$
is $\varepsilon$-flat at $X\subseteq J_{m}(X)$. By upper semi-continuity
of the dimension of the fiber on the source (see \cite[Theorem 13.1.3]{Gro66})
and using the $\mathbb{G}_{m}$-action on $J_{m}(X)$, we get that
$J_{m}(\varphi)$ is $\varepsilon$-flat. 
\end{proof}
Motivated by Definition \ref{def:epsilon jet flat}, we give the following
definition for word maps: 
\begin{defn}
\label{def:epsilon jet flat for words}A word $w\in F_{r}$ is called
\textit{$\varepsilon$-flat} (resp.\textit{~$\varepsilon$-jet flat})
if for every simple algebraic $K$-group $\underline{G}$, the map
$\varphi_{w}:\underline{G}^{r}\rightarrow\underline{G}$ is $\varepsilon$-flat
(resp.~$\varepsilon$-jet flat).
\end{defn}

\begin{rem}
\label{rem:A-similar-notion of epsilon jet flat}A similar notion
of $\varepsilon$-(jet) flatness can be defined for Lie algebra words
$w\in\mathcal{L}_{r}$ (resp.~algebra words $w\in\mathcal{A}_{r}$),
by varying over all simple $K$-Lie algebras $\mathfrak{g}$ (resp.~$\{M_{n}\}_{n\in\nats}$)
for which $\varphi_{w}:\mathfrak{g}^{r}\rightarrow\mathfrak{g}$ (resp.~$\varphi_{w}:M_{n}^{r}\rightarrow M_{n}$)
is non-trivial. 
\end{rem}

The following observation is useful for showing that a morphism is
$\varepsilon$-flat (resp.~$\varepsilon$-jet flat): 
\begin{lem}
\label{lem:convolution gives bounds on epsilon flatness}Let $X$
be a geometrically irreducible smooth $K$-variety, $\underline{G}$
be a  connected algebraic $K$-group and let $\varphi:X\rightarrow\underline{G}$
be a morphism. Assume that $\varphi^{*t}$ is flat (resp.~jet-flat).
Then $\varphi$ is $\frac{1}{t}$-flat (resp.~$\frac{1}{t}$-jet
flat). 
\end{lem}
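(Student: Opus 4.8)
The plan is to translate the flatness (resp.~jet-flatness) of $\varphi^{*t}$ into a dimension bound on the fibers of $\varphi^{*t}$, and then to extract from that a dimension bound on the fibers of $\varphi$ itself by a straightforward counting argument, using the set-theoretic product structure of the fibers of a convolution. First I would observe that if $\varphi^{*t}:X^t\to\underline{G}$ is flat, then since $X^t$ and $\underline{G}$ are smooth, every fiber has pure dimension $t\cdot\dim X-\dim\underline{G}$. Now fix $g\in\varphi(X)$ and a point $x\in X$ with $\varphi(x)=g$. The key point is the set-theoretic identity
\[
(\varphi^{*t})^{-1}(g^t)\supseteq \varphi^{-1}(g)\times\cdots\times\varphi^{-1}(g)=\bigl(\varphi^{-1}(g)\bigr)^{t},
\]
obtained by taking the same target $g$ in each coordinate and multiplying back (here one uses that $g\in\varphi(X)$, so such a decomposition is nonempty; strictly one should use $(\varphi^{*t})^{-1}(g^{t-1}\cdot g)$ with $g^{t}$ interpreted as a fixed element in the image of $\varphi^{*t}$, which lies in the image since $g$ does). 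Taking dimensions, $t\cdot\dim X_{g,\varphi}=\dim\bigl(\varphi^{-1}(g)\bigr)^{t}\leq\dim(\varphi^{*t})^{-1}(g^{t})\leq t\cdot\dim X-\dim\underline{G}$, and dividing by $t$ gives $\dim X_{g,\varphi}\leq \dim X-\tfrac{1}{t}\dim\underline{G}$, which is exactly $\tfrac{1}{t}$-flatness of $\varphi$ at $x$. Since $x$ was arbitrary, $\varphi$ is $\tfrac{1}{t}$-flat.

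For the jet-flat case, I would run the same argument after applying $J_m(-)$: by Lemma \ref{lem:convolution behaves well with jets} we have $J_m(\varphi^{*t})=J_m(\varphi)^{*t}$ as a morphism $J_m(X)^t\to J_m(\underline{G})$, and $J_m(\underline{G})$ is again an algebraic group (of dimension $(m+1)\dim\underline{G}$) while $J_m(X)$ is smooth of dimension $(m+1)\dim X$ by Lemma \ref{lem:useful facts on jet schemes}. Thus flatness of $J_m(\varphi^{*t})=J_m(\varphi)^{*t}$ for every $m$ lets me apply the $\varepsilon$-flat argument above to each $J_m(\varphi)$, concluding that $J_m(\varphi)$ is $\tfrac{1}{t}$-flat for every $m$, i.e.~$\varphi$ is $\tfrac{1}{t}$-jet flat. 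One small wrinkle is that $J_m(X)$ and $J_m(\underline{G})$ need to be geometrically irreducible for Definition \ref{def:epsilon jet flat} to literally apply; but $X$ smooth geometrically irreducible implies $J_m(X)$ is as well (it is a locally trivial fibration over $X$ with affine-space fibers, by Lemma \ref{lem:useful facts on jet schemes}), and likewise for $\underline{G}$ connected.

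The only genuinely delicate point — and the one I would write out carefully — is the justification of the inclusion $(\varphi^{*t})^{-1}(h)\supseteq(\varphi^{-1}(g))^{t}$ for a suitable $h$ in the image: one needs $h=\varphi(x_1)\cdots\varphi(x_t)$ to be a well-defined element that does not depend on the chosen $x_i\in\varphi^{-1}(g)$, which forces us to take all $\varphi(x_i)$ equal to the \emph{same} $g$, and then $h=g^{t}$ lies in $\varphi^{*t}(X^t)$. Everything else is just the dimension formula for fibers of flat morphisms between smooth varieties (miracle flatness / equidimensionality) plus division by $t$. I do not expect any real obstacle here; the lemma is essentially bookkeeping, and its content is that convolution cannot improve $\varepsilon$-flatness beyond the trivial rate $1/t$, which is the reverse direction to the substantive results of the paper.
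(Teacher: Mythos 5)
Your proof is correct and is essentially the same as the paper's: the paper also reduces to the flat case and uses the inclusion $(X_{g,\varphi})^{t}\subseteq X^{t}_{g^{t},\varphi^{*t}}$ together with the dimension bound from flatness (phrased contrapositively, but that is the same argument). The point you flag about choosing a single $g$ for all coordinates, and the remark on geometric irreducibility of the jet schemes, are both handled implicitly in the paper and are fine as you state them.
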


\begin{proof}
It is enough to prove the claim for flatness. Assume that $\varphi$
is not $\frac{1}{t}$-flat. Then there exists $g\in\underline{G}$
such that $\mathrm{dim}X_{g,\varphi}>\mathrm{dim}X-\frac{1}{t}\mathrm{dim}\underline{G}$.
In particular, $X_{g^{t},\varphi^{*t}}^{t}$ contains $(X_{g,\varphi})^{t}$
so we have 
\[
\mathrm{dim}X_{g^{t},\varphi^{*t}}^{t}>\mathrm{dim}X^{t}-\mathrm{dim}\underline{G},
\]
and hence $\varphi^{*t}$ is not flat. 
\end{proof}

\section{\label{sec:Lie-algebra-word}Lie algebra word maps}

\subsection{Definitions and discussion of Lie algebra word maps}
\begin{defn}
Let $\mathcal{A}_{r}$ (resp.~$\mathcal{L}_{r}$) be the free associative
$K$-algebra (resp.~the free $K$-Lie algebra) on a finite set $\{X_{1},\ldots,X_{r}\}$. 
\begin{enumerate}
\item We refer to an element $w$ of $\mathcal{A}_{r}$ (resp.~of $\mathcal{L}_{r}$)
as an \textit{algebra word} (resp.~a \textit{Lie algebra word}). 
\item For a matrix algebra $M_{n}:=\mathrm{Mat}_{n\times n}$ (resp.~a
Lie algebra $\frak{\frak{\mathfrak{g}}}$), the word $w$ in $\mathcal{A}_{r}$
(resp.~in $\mathcal{L}_{r}$) induces a \textit{matrix word map}
(resp.~a \textit{Lie word map}) $\varphi_{w}:M_{n}^{r}\rightarrow M_{n}$
(resp.~$\varphi_{w}:\frak{\mathfrak{g}}^{r}\rightarrow\frak{\mathfrak{g}}$). 
\item Given two words $w_{1}$ and $w_{2}$ (either algebra words or Lie
algebra words), we may define their (additive) convolution $w_{1}*w_{2}:=w_{1}+w_{2}$. 
\item Note that both $\mathcal{A}_{r}$ and $\mathcal{L}_{r}$ have natural
gradations. We define the \textit{degree} of a word $w$ as the maximal
grade $d\in\nats$ in which the image of $w$ is non-trivial. A word
$w$ is said to be \textit{$d$-homogeneous} if it lies in the $d$-th
grade. 
\end{enumerate}
\end{defn}

\begin{rem}
We can also define a multiplicative convolution operation. Unlike
the addition map, the multiplication map $M_{n}\times M_{n}\rightarrow M_{n}$
is not smooth, so one does not expect the multiplicative convolution
operation to improve singularity properties of morphisms. 
\end{rem}

In this section we would like to study the smoothing effect of the
additive convolution operation on Lie (and matrix) algebra word maps,
and show that under very mild assumptions, any Lie algebra word map
$\varphi_{w}:\mathfrak{g}^{r}\rightarrow\mathfrak{g}$ where $\mathfrak{g}$
is a semisimple Lie algebra, becomes flat and eventually (FRS) after
a number of convolutions $N(w)$ which is independent of $\mathfrak{g}$.

Having no assumptions on $w$ turns out to be too optimistic. It is
a classical theorem of Borel, that given a non-trivial word $w\in F_{r}$,
the corresponding word map $\varphi_{w}:\underline{G}^{r}\rightarrow\underline{G}$
on any semisimple algebraic group $\underline{G}$ is dominant \cite[Theorem B]{Bor83}.
Unlike the group case, a non-trivial Lie algebra word might induce
a trivial word map on certain simple Lie algebras.
\begin{example}[{\cite[Example 3.8]{BGKP12}, \cite{Raz73}}]
\label{exa:trivial word maps}The word 
\[
w(X,Y,Z)=[[[[Z,Y],Y],X],Y]-[[[[Z,Y],X],Y],Y]
\]
is non-trivial, but induces the zero map on $\mathfrak{sl}_{2}$. 
\end{example}

Aside of this pathology, the situation is not too bad. In \cite{BGKP12},
a Lie algebra analogue of Borel's theorem was proven: 
\begin{thm}[{\cite[Theorem 3.2]{BGKP12}}]
\label{thm:Lie algebra analogue of Borel theorem}Let $w\in\mathcal{L}_{r}$
be a Lie algebra word, and assume $\varphi_{w}:\mathfrak{sl}_{2}^{r}\rightarrow\mathfrak{sl}_{2}$
is non-trivial. Then $\varphi_{w}:\mathfrak{g}^{r}\rightarrow\mathfrak{g}$
is dominant for any semisimple $K$-Lie algebra $\mathfrak{g}$. 
\end{thm}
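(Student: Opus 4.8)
The plan is to prove the stronger‑looking but equivalent statement that $\overline{\mathrm{Im}\,\varphi_{w}}=\mathfrak g$ for every simple $\mathfrak g$. Since a semisimple Lie algebra is a direct sum of simple ones and $\varphi_{w}$ on a direct sum is the product of the maps $\varphi_{w}$ on the summands, the problem reduces at once to $\mathfrak g$ simple. Two further easy reductions: the hypothesis ``$\varphi_{w}$ is non‑trivial on $\mathfrak{sl}_{2}$'' only concerns the abstract $\mathfrak{sl}_{2}$, hence it passes to \emph{every} $\mathfrak{sl}_{2}$‑subalgebra of every simple Lie algebra; and we may assume $w$ has no degree‑one part (otherwise $\varphi_{w}$ is a submersion at the origin, hence dominant). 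Finally, for the base cases below one reduces to $w$ homogeneous: strip off the lowest homogeneous component of $w$ whenever it is trivial on the relevant $\mathfrak{sl}_{2}$ (this does not change $\varphi_{w}$ there), and observe that $\varphi_{w}$ is a $\mathbb{G}_{m}$‑degeneration of its lowest homogeneous component in the sense of Corollary~\ref{cor:reduction to degeneration}; dominance is inherited from the special fibre of such a family, because the image of the total family is constructible, irreducible, and maps onto the parameter line.

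The organizing dichotomy is this: $\overline{\mathrm{Im}\,\varphi_{w}}$ is an irreducible, closed, $\mathrm{Ad}(G)$‑invariant subvariety of $\mathfrak g$ (by conjugation‑equivariance of $\varphi_{w}$ and irreducibility of $\mathfrak g^{r}$), where $G:=\mathrm{Aut}(\mathfrak g)^{\circ}$. A \emph{proper} such subvariety must map into a proper closed subset of the adjoint quotient $\mathfrak g/\!\!/ G=\mathrm{Spec}\,K[\mathfrak g]^{G}$: over the regular semisimple locus the fibres of $\pi\colon\mathfrak g\to\mathfrak g/\!\!/ G$ are single $G$‑orbits, so any closed invariant set meeting a generic fibre contains it, and being invariant and closed it then contains all fibres over a dense set, forcing it to be all of $\mathfrak g$. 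Hence it suffices to exhibit in $\overline{\mathrm{Im}\,\varphi_{w}}$ the closure $\overline{G\cdot\mathfrak t}=\mathfrak g$ of the set of semisimple elements, for a Cartan subalgebra $\mathfrak t$.

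I would establish this by induction on $\dim\mathfrak g$. If $\mathfrak g$ is not of type $A$, the Borel--de Siebenthal construction gives a \emph{proper} maximal‑rank semisimple subalgebra $\mathfrak h\subsetneq\mathfrak g$; it has smaller dimension, $[\mathfrak h,\mathfrak h]$ contains a Cartan of $\mathfrak g$, and each simple factor of $\mathfrak h$ has smaller dimension and contains an $\mathfrak{sl}_{2}$, so by induction $\varphi_{w}$ is dominant on each factor. Since $w$ has no degree‑one part, $\varphi_{w}|_{\mathfrak h^{r}}$ only sees the $[\mathfrak h,\mathfrak h]$‑components and factors as a product over the simple factors, so its image is dense in $[\mathfrak h,\mathfrak h]$; by $G$‑invariance, $\overline{\mathrm{Im}\,\varphi_{w}}\supseteq\overline{G\cdot[\mathfrak h,\mathfrak h]}=\mathfrak g$. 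If $\mathfrak g=\mathfrak{sl}_{n}$ with $n\ge 4$, there is no proper maximal‑rank semisimple subalgebra, so I instead use the block subalgebras $\mathfrak{sl}_{n-1}$ and $\mathfrak{sl}_{a}\oplus\mathfrak{sl}_{b}$ with $a,b\ge 2$: by induction $\varphi_{w}$ is dominant on all of their simple factors, so $\overline{\mathrm{Im}\,\varphi_{w}}$ contains the two distinct irreducible hypersurfaces $\overline{G\cdot\mathfrak{sl}_{n-1}}$ (the locus $\{\det=0\}$ in $\mathfrak{sl}_{n}$) and $\overline{G\cdot(\mathfrak{sl}_{a}\oplus\mathfrak{sl}_{b})}$; an irreducible variety containing two distinct hypersurfaces is everything, so $\overline{\mathrm{Im}\,\varphi_{w}}=\mathfrak{sl}_{n}$. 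This leaves only the base cases $\mathfrak g=\mathfrak{sl}_{2}$ and $\mathfrak g=\mathfrak{sl}_{3}$.

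For $\mathfrak{sl}_{2}$ and homogeneous $w$, $\overline{\mathrm{Im}\,\varphi_{w}}$ is an $\mathrm{SL}_{2}\times\mathbb{G}_{m}$‑stable irreducible cone, hence $\{0\}$, the nilpotent cone $\mathcal N$, or $\mathfrak{sl}_{2}$; non‑triviality excludes $\{0\}$, and $\mathcal N$ is excluded by a parity argument, namely a nonzero nilpotent in $\mathfrak{sl}_{2}$ determines a line in $\mathbb{P}^{1}$ on which $\mathrm{SL}_{2}$ acts through the standard (odd) representation, whereas $\mathfrak{sl}_{2}^{r}$ is built from copies of the adjoint (even) representation, so $-I\in\mathrm{SL}_{2}$ would force $\varphi_{w}\equiv 0$. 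For $\mathfrak{sl}_{3}$ the dichotomy together with restriction to a root $\mathfrak{sl}_{2}$ (on which the Killing form is non‑degenerate, so the degree‑$2$ invariant is non‑constant on the image) shows that the only surviving obstruction is $\det\circ\,\varphi_{w}\equiv 0$ on $\mathfrak{sl}_{3}^{r}$. I expect \textbf{this last point --- that a non‑trivial Lie word cannot take only singular values on $\mathfrak{sl}_{3}$ --- to be the main obstacle}; I would attack it by analyzing $\varphi_{w}$ on configurations mixing two different root $\mathfrak{sl}_{2}$‑subalgebras, so that the brackets are pushed off the ``zero is an eigenvalue'' locus, bootstrapping from the $\mathfrak{sl}_{2}$ base case. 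The remaining care points are the reduction to homogeneous words and the verification that the one‑parameter families used above are honest degenerations to which stability of dominance applies --- routine, but non‑trivial because a generating morphism can degenerate to a non‑generating one.
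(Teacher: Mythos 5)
The paper does not prove this theorem; it is cited verbatim from \cite{BGKP12}, so there is no proof in the present paper to compare against. Evaluating your proposal on its own terms, the overall architecture (reduce to simple $\mathfrak g$, show $\overline{\mathrm{Im}\,\varphi_w}$ is too big to be a proper $G$-invariant irreducible subvariety, induct via well-chosen semisimple subalgebras) is a reasonable plan, but both base cases have real gaps.

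The $\mathfrak{sl}_2$ parity argument does not work as stated. A nonzero nilpotent $N\in\mathfrak{sl}_2$ determines a point of $\mathbb P^1$ (its kernel $=$ image line), and the map $\mathcal N\setminus\{0\}\to\mathbb P^1$ is indeed $\mathrm{SL}_2$-equivariant; but $-I$ acts \emph{trivially} on $\mathbb P^1$ as well as on $\mathfrak{sl}_2^r$, since $\mathrm{SL}_2\curvearrowright\mathbb P^1$ factors through $\mathrm{PGL}_2$ (and in fact $N\in L^{\otimes 2}$ for $L$ its kernel line, so the $\mathbb G_m$-bundle $\mathcal N\setminus 0\to\mathbb P^1$ is $\mathcal O(-2)$, on which $-I$ again acts trivially). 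So there is no contradiction from parity, and you have not excluded $\overline{\mathrm{Im}\,\varphi_w}=\mathcal N$. A correct argument is shorter and uses a compact form: base change to $\mathbb C$ (dominance is geometric), and note that $\mathfrak{su}(2)$ is Zariski-dense in $\mathfrak{sl}_2(\mathbb C)$, so $\varphi_w$ is non-trivial on $\mathfrak{su}(2)^r$; but every nonzero element of $\mathfrak{su}(2)$ is semisimple, so $\varphi_w$ takes a nonzero semisimple value, which by $\mathrm{SL}_2\times\mathbb G_m$-invariance forces $\overline{\mathrm{Im}\,\varphi_w}=\mathfrak{sl}_2$.

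You explicitly flag the $\mathfrak{sl}_3$ case as the main obstacle and leave it open, so the induction has no base beyond $\mathfrak{sl}_2$: your $\mathfrak{sl}_n$ step needs both $\mathfrak{sl}_{n-1}$ and $\mathfrak{sl}_a\oplus\mathfrak{sl}_b$ with $a,b\ge 2$, which first becomes available at $n=4$. This is a genuine missing piece, not a ``routine care point.'' There is also a subtlety in the ``reduce to homogeneous $w$'' step that you should not wave off: when you degenerate $\varphi_w$ on $\mathfrak g^r$ to a homogeneous component, that component may well be trivial on $\mathfrak{sl}_2$ even though $w$ is not (cf.\ Example~\ref{exa:trivial word maps}), so the inductive hypothesis is not automatically inherited; the paper deliberately phrases Conjecture~\ref{que:conjecture on (FRS) and flatness}-adjacent statements and \cite[Question 3.1]{BGKP12} to address exactly this issue, which indicates it is not cosmetic. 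In short: nice scaffolding, but the $\mathfrak{sl}_2$ step is incorrect as argued, the $\mathfrak{sl}_3$ step is absent, and the homogenization step needs care.
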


In addition, one can observe the following: 
\begin{prop}
\label{prop:Lie algebra word maps are generating}Let $\mathfrak{g}$
be a semisimple $K$-Lie algebra, and $w\in\mathcal{L}_{r}$ a Lie
algebra word. Assume $\varphi_{w}:\mathfrak{g}^{r}\rightarrow\mathfrak{g}$
is non-zero. Then $\varphi_{w}$ is generating. 
\end{prop}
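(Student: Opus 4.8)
The plan is to reduce the statement to a familiar fact about the image of a word map under the adjoint action, together with the structure theory of semisimple Lie algebras. Recall that $\varphi_w : \mathfrak{g}^r \to \mathfrak{g}$ is equivariant with respect to the simultaneous adjoint action of the adjoint group $\underline{G}_{\mathrm{ad}}$ on source and target: for $g \in \underline{G}_{\mathrm{ad}}$ one has $\varphi_w(\mathrm{Ad}(g)X_1,\ldots,\mathrm{Ad}(g)X_r) = \mathrm{Ad}(g)\varphi_w(X_1,\ldots,X_r)$. Hence the image $\varphi_w(\mathfrak{g}^r)$ is an $\mathrm{Ad}$-invariant (constructible, cone-like) subset of $\mathfrak{g}$, and its Zariski closure $\overline{\varphi_w(\mathfrak{g}^r)}$ is an $\mathrm{Ad}$-invariant closed subvariety. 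The goal is to show that this image is not contained in any coset $v + \mathfrak{h}$ of a proper $K$-subspace $\mathfrak{h} \subsetneq \mathfrak{g}$ that is the Lie algebra of a proper algebraic subgroup of the additive group $(\mathfrak{g},+)$ — but the only algebraic subgroups of the vector group $(\mathfrak{g},+)$ are linear subspaces, so the generating condition of Definition \ref{def:generating morphism} here just says: $\varphi_w(\mathfrak{g}^r)$ is not contained in any proper affine-linear subspace of $\mathfrak{g}$.

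First I would observe that since $\varphi_w$ is nonzero and $\varphi_w(0,\ldots,0) = 0$, a homogeneity/scaling argument (the free Lie algebra is graded, so one may replace $w$ by a homogeneous component and $\varphi_w$ by a homogeneous map, using that $\varphi_w(tX_1,\ldots,tX_r) = \sum_d t^d \varphi_{w_d}(X_1,\ldots,X_r)$) shows that $\overline{\varphi_w(\mathfrak{g}^r)}$ is a closed cone through the origin; in particular, if it were contained in an affine subspace $v + \mathfrak{h}$ it would be contained in the linear subspace $\mathfrak{h}$ itself. So it suffices to rule out $\overline{\varphi_w(\mathfrak{g}^r)} \subseteq \mathfrak{h}$ for a proper $\mathrm{Ad}$-invariant linear subspace $\mathfrak{h} \subsetneq \mathfrak{g}$. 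But an $\mathrm{Ad}(\underline{G})$-invariant subspace of $\mathfrak{g}$ is the same thing as an ideal of $\mathfrak{g}$ (the adjoint representation integrates the bracket action), and since $\mathfrak{g}$ is semisimple its ideals are exactly the sums of its simple factors. Thus the proposition reduces to: if $\varphi_w$ is nonzero on $\mathfrak{g}$, then the closure of its image is not contained in a proper sum of simple factors — equivalently, $\varphi_w$ projects to a nonzero map on every simple factor of $\mathfrak{g}$.

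This last point is where I expect the real content to sit, and it should follow from the fact that $\mathcal{L}_r$ is a free Lie algebra: evaluating $w$ on elements of $\mathfrak{g}_1 \oplus \cdots \oplus \mathfrak{g}_k$ (a tuple whose $i$-th component lies in $\mathfrak{g}_j$) and using that the bracket on a direct sum is computed componentwise, one gets $\varphi_w$ on $\mathfrak{g}$ equals the "product" of the maps $\varphi_w$ on each simple factor, up to the obvious identification; so $\varphi_w$ is nonzero on $\mathfrak{g}$ iff it is nonzero on at least one simple factor — but in fact, by Theorem \ref{thm:Lie algebra analogue of Borel theorem} (the Lie-algebra Borel theorem of \cite{BGKP12}), if $\varphi_w$ is nonzero on $\mathfrak{sl}_2$ it is dominant, hence generating, on every semisimple $\mathfrak{g}$; and if $\varphi_w$ vanishes identically on $\mathfrak{sl}_2$ but is nonzero on some simple $\mathfrak{g}$, I would need a separate argument — this is precisely the "pathological" regime of Example \ref{exa:trivial word maps}. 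The cleanest route here is: $\varphi_w$ nonzero on a simple $\mathfrak{g}$ forces $\overline{\varphi_w(\mathfrak{g}^r)}$ to be a nonzero $\mathrm{Ad}(\underline{G})$-invariant closed cone in the simple $\mathfrak{g}$; since $\mathfrak{g}$ has no proper nonzero ideals, $\overline{\varphi_w(\mathfrak{g}^r)}$ is not contained in any proper subspace, which is exactly generation. The only subtlety to nail down carefully is the compatibility of the scaling argument with the convolution/degree bookkeeping, and the claim that $\mathrm{Ad}$-invariant subspaces coincide with ideals; both are standard, so the proof should be short once these are in place. The main obstacle is handling the non-dominant (Razmyslov-type) words uniformly — but the argument above sidesteps it, since generation (unlike dominance) only requires the image to span, and spanning is automatic from irreducibility of the adjoint action on a simple factor.
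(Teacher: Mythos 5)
Your core argument matches the paper's: $\mathrm{Ad}$-equivariance of $\varphi_w$ makes $\mathrm{span}_K\varphi_w(\mathfrak{g}^r)$ an $\mathrm{Ad}(\underline{G})$-invariant subspace, hence an ideal, hence all of $\mathfrak{g}$ when $\mathfrak{g}$ is simple; and since $\varphi_w(0,\dots,0)=0$, ``not contained in a coset of a proper subspace'' already reduces to ``spans,'' so the scaling and Zariski-closure considerations are heavier than needed (and the closure of the image is a cone, not the subspace you actually want -- it is the span that matters). The subtlety you flag about the semisimple-to-simple reduction is a real one: a Razmyslov-type word can be nonzero on $\mathfrak{g}_1\oplus\mathfrak{g}_2$ while vanishing identically on $\mathfrak{g}_1$, so its image lies in the proper ideal $0\oplus\mathfrak{g}_2$ and the map is not generating; the paper's proof dispatches this with a bare ``it is enough to prove the statement when $\mathfrak{g}$ is simple,'' and the proposition is in fact invoked elsewhere (e.g.~in Theorem \ref{thm: main thm Lie algebra word maps}) only under the stronger hypothesis that $\varphi_w$ is nonzero on each simple constituent of $\mathfrak{g}$, which is the hypothesis under which the reduction is legitimate.
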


\begin{proof}
It is enough to prove the statement when $\mathfrak{g}$ is simple
and when $K=\complex$. Let $W:=\mathrm{span}_{\complex}\varphi_{w}(\mathfrak{g}(\complex)^{r})$
and let $\underline{G}$ be a simply connected group with $\mathrm{Lie}(\underline{G})=\mathfrak{g}$.
Notice that for any $g\in\underline{G}(\complex)$ and $X_{1},\ldots,X_{r}\in\mathfrak{g}(\complex)$,
\[
\varphi_{w}(\mathrm{Ad}_{g}X_{1},\dots,\mathrm{Ad}_{g}X_{r})=\mathrm{Ad}_{g}\varphi_{w}(X_{1},\dots,X_{r}),
\]
so $W$ is $\underline{G}(\complex)$-invariant. By the simplicity
of $\mathfrak{g}(\complex)$ we must have $W=\mathfrak{g}(\complex)$. 
\end{proof}
\begin{conjecture}[{\cite[Question 3.1]{BGKP12}}]
Let $\mathfrak{g}$ be a semisimple $K$-Lie algebra, and $w\in\mathcal{L}_{r}$.
Assume $\varphi_{w}:\mathfrak{g}^{r}\rightarrow\mathfrak{g}$ is non-zero
(i.e.~$\varphi_{w}(\mathfrak{g}(\overline{K})^{r})\neq\{0\}$). Then
$\varphi_{w}$ is dominant.
\end{conjecture}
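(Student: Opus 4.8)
\begin{rem}
We do not know how to prove this conjecture; we only indicate the line of attack that seems most natural, and where it stalls. The first step would be to reduce to the case where $\mathfrak{g}$ is simple and $K=\complex$. Base change handles the passage to $\complex$. For the passage to simple factors, write $\mathfrak{g}=\bigoplus_{i}\mathfrak{g}_{i}$ as a sum of simple ideals; since the $\mathfrak{g}_{i}$ pairwise commute, $\varphi_{w}(X_{1},\dots,X_{r})=\sum_{i}\varphi_{w}(\mathrm{pr}_{i}X_{1},\dots,\mathrm{pr}_{i}X_{r})$ with the $i$-th summand lying in $\mathfrak{g}_{i}$, hence $\overline{\varphi_{w}(\mathfrak{g}^{r})}=\prod_{i}\overline{\varphi_{w}(\mathfrak{g}_{i}^{r})}$, so $\varphi_{w}$ is dominant on $\mathfrak{g}$ if and only if it is dominant on each $\mathfrak{g}_{i}$ (one should accordingly read the hypothesis as: $\varphi_{w}$ is non-zero on every simple factor). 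By Theorem \ref{thm:Lie algebra analogue of Borel theorem} the only remaining content is that of a word $w$ that vanishes on $\mathfrak{sl}_{2}$, as in Example \ref{exa:trivial word maps}, but not on $\mathfrak{g}$.

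The second step would exploit $\mathrm{Ad}$-equivariance. Let $\underline{G}$ be the adjoint group of $\mathfrak{g}$ and $\chi\colon\mathfrak{g}\to\mathfrak{g}/\!\!/\underline{G}=\mathrm{Spec}(\complex[\mathfrak{g}]^{\underline{G}})\cong\mathbb{A}^{\mathrm{rk}\,\mathfrak{g}}$ the adjoint quotient. Since $\varphi_{w}(\mathrm{Ad}_{g}X_{1},\dots,\mathrm{Ad}_{g}X_{r})=\mathrm{Ad}_{g}\varphi_{w}(X_{1},\dots,X_{r})$, the closure $V:=\overline{\varphi_{w}(\mathfrak{g}^{r})}$ is an irreducible $\mathrm{Ad}(\underline{G})$-invariant subvariety, which by the proof of Proposition \ref{prop:Lie algebra word maps are generating} linearly spans $\mathfrak{g}$ -- though, as the nilpotent cone shows, spanning alone does not force $V=\mathfrak{g}$. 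I claim the conjecture is equivalent to the assertion that $\chi\circ\varphi_{w}\colon\mathfrak{g}^{r}\to\mathfrak{g}/\!\!/\underline{G}$ is dominant, i.e.\ that for a system $p_{1},\dots,p_{\mathrm{rk}\,\mathfrak{g}}$ of basic invariants the polynomials $p_{1}(\varphi_{w}(X)),\dots,p_{\mathrm{rk}\,\mathfrak{g}}(\varphi_{w}(X))$ are algebraically independent. One direction is immediate since $\chi$ is surjective; conversely, if $\chi\circ\varphi_{w}$ is dominant then, for every point of a dense open subset of $\mathfrak{g}/\!\!/\underline{G}$, the closure $V$ (being closed and $\mathrm{Ad}$-invariant) contains the unique closed, hence semisimple, orbit of $\mathfrak{g}$ lying over it; the union of these orbits contains a dense subset of $\mathfrak{g}$ (a dense open part of the regular semisimple locus), so $V=\mathfrak{g}$. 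Thus the problem reduces to establishing that algebraic independence, for every simple $\mathfrak{g}$ on which $\varphi_{w}$ is non-zero.

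The third step is exactly the obstacle. For $\mathfrak{g}=\mathfrak{sl}_{2}$ one has $\mathrm{rk}\,\mathfrak{g}=1$, so ``algebraically independent'' just means that $p_{1}\circ\varphi_{w}$ is not identically zero, which follows from $\varphi_{w}\neq0$ by a direct computation with the Killing form; this is essentially the content of Theorem \ref{thm:Lie algebra analogue of Borel theorem}. For $\mathrm{rk}\,\mathfrak{g}\geq2$ one genuinely needs several algebraically independent polynomials, and I see no uniform mechanism producing them. The low-rank phenomena discussed in Sections \ref{sec:Lie-algebra-word} and \ref{sec:Proof-of-Theorems- Lie algebra} show the difficulty is real: when $\mathrm{rk}\,\mathfrak{g}$ is small compared to $\mathrm{deg}(w)$ the map $\varphi_{w}$ can vanish identically, and when it does not, one seems forced into a case-by-case treatment of the finitely many such $\mathfrak{g}$ (including the exceptional Lie algebras), together with a uniform argument for the large-rank classical families via the combinatorial reductions of Section \ref{sec:Lie-algebra-word}. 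Finally, the convolution machinery of Theorems \ref{thm A}--\ref{Thm B} does not help here: $\varphi_{w}^{*m}$ being flat only tells us that the $m$-fold sumset $V+\dots+V$ equals $\mathfrak{g}$, which is consistent with $V$ being contained in the nilpotent cone, as the latter linearly spans $\mathfrak{g}$ and is stable under scaling, so finitely many of its sumsets already fill $\mathfrak{g}$. In short, the conjecture concerns $\varphi_{w}$ before any convolution and lies outside the reach of the smoothing methods of this paper.
\end{rem}
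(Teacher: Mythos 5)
The paper does not prove this statement; it is recorded as an open conjecture quoted from \cite{BGKP12} and is never invoked, the paper needing only the weaker facts that a non-zero Lie algebra word map is generating (Proposition \ref{prop:Lie algebra word maps are generating}) and that generating maps into a vector space have dominant, and eventually (FRS), self-convolutions (Lemma \ref{lem:generating becomes dominant}, Theorem \ref{Main result}). Writing a remark rather than a purported proof was therefore the right call, and your analysis is essentially sound.

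Your reduction to dominance of $\chi\circ\varphi_{w}$ is a genuine observation and can be stated a bit more cleanly: $V:=\overline{\varphi_{w}(\mathfrak{g}^{r})}$ is an irreducible closed $\mathrm{Ad}$-invariant subvariety, and since the GIT quotient map sends closed invariant subsets to closed subsets, dominance of $\chi\circ\varphi_{w}$ forces $\chi(V)$ to be all of $\mathfrak{g}/\!\!/\underline{G}$; each fiber of $\chi$ then meets $V$ in a nonempty closed $\mathrm{Ad}$-invariant set, which contains the unique closed (semisimple) orbit, so $V$ contains all semisimple elements and hence $V=\mathfrak{g}$ — no restriction to a dense open locus is needed. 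Two smaller points. First, the $\mathfrak{sl}_{2}$ input you invoke (that $\varphi_{w}\neq0$ on $\mathfrak{sl}_{2}$ forces the image out of the nilpotent cone) is the base case of \cite[Theorem 3.2]{BGKP12} and not a triviality; it is better to cite it than to assert ``a direct computation with the Killing form.'' Second, your correction of the hypothesis to ``non-zero on each simple factor'' is necessary (as literally stated the conjecture fails for reducible $\mathfrak{g}$) and is surely what \cite{BGKP12} intend. Finally, your diagnosis of why the convolution machinery of Theorems \ref{thm A}--\ref{Thm B} cannot help is accurate: dominance or the (FRS) property of $\varphi_{w}^{*m}$ only constrains $m$-fold sumsets of $V$, and proper closed $\mathrm{Ad}$-invariant cones such as the nilpotent cone $\mathcal{N}$ already have $\mathcal{N}+\mathcal{N}=\mathfrak{g}$ (e.g.\ for $\mathfrak{sl}_{n}$); ruling out that the image of a non-zero $\varphi_{w}$ can lie inside $\mathcal{N}$ is precisely what the paper's methods do not address.
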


By Theorem \ref{Main result} and Lemma \ref{lem:generating becomes dominant}
(or by \cite[Theorem 1.7]{GH19}), any generating map into a vector
space becomes dominant, and eventually (FRS), after sufficiently many
self-convolutions. Thus, the assumption that $\varphi_{w}:\mathfrak{g}^{r}\rightarrow\mathfrak{g}$
is non-zero is sufficient for our purposes. Now, from the Amitsur-Levitzki
theorem, we can deduce that any Lie algebra word map is non-trivial
if the rank of $\mathfrak{g}$ is large enough: 
\begin{thm}[Amitsur-Levitzki, \cite{AL50}]
\label{thm:(Amitsur-Levitzky)-Let-}Let $R$ be a commutative ring.
Any polynomial identity on $M_{d}(R)$ is of degree at least $2d$. 
\end{thm}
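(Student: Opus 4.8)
The stated assertion is the ``lower bound'' half of the Amitsur--Levitzki theorem; the companion statement --- that the standard polynomial $s_{2d}(x_1,\dots,x_{2d})=\sum_{\sigma\in S_{2d}}\mathrm{sgn}(\sigma)\,x_{\sigma(1)}\cdots x_{\sigma(2d)}$ is itself an identity on $M_d$, so that the bound $2d$ is attained --- is the deep part of the theorem, has several known proofs, and is not needed here. So the plan is to prove only that $M_d(R)$ admits no nonzero polynomial identity of degree $<2d$, by an elementary substitution argument with matrix units.

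First I would reduce to multilinear identities. If $M_d(R)$ satisfies a nonzero identity of degree $n$, then, applying the standard multilinearization (polarization) procedure one variable at a time, it satisfies a nonzero \emph{multilinear} identity of degree $\le n$; replacing the given identity by this one, write
\[
f(x_1,\dots,x_n)=\sum_{\sigma\in S_n}r_\sigma\, x_{\sigma(1)}\cdots x_{\sigma(n)},\qquad r_\sigma\in R,
\]
with not all $r_\sigma$ equal to $0$, and suppose towards a contradiction that $n<2d$, i.e. $n\le 2d-1$. (Over an arbitrary commutative ring one needs a small extra argument to see that multilinearization preserves nontriviality; alternatively one first passes to the subring of $R$ generated by the finitely many coefficients of $f$ and then to a suitable residue field, using that a \emph{multilinear} identity over $R$ remains an identity over every commutative $R$-algebra.)

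The heart of the proof is the following matrix-unit substitution. Fix a target permutation $\tau\in S_n$ and substitute, using the matrix units $e_{ij}\in M_d$ (so $e_{ij}e_{kl}=\delta_{jk}e_{il}$),
\[
x_{\tau(2k-1)}:=e_{kk}\qquad\text{and}\qquad x_{\tau(2k)}:=e_{k,k+1},
\]
which is legitimate precisely because $n\le 2d-1$ forces every index that occurs to be $\le d$. Writing $y_i$ for the value assigned to $x_{\tau(i)}$, one computes $y_1y_2\cdots y_n=e_{1,\lceil (n+1)/2\rceil}\neq 0$, and I claim that $y_{\pi(1)}\cdots y_{\pi(n)}=0$ for every $\pi\in S_n$ with $\pi\neq\mathrm{id}$. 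Indeed, such a product is a product of pairwise distinct matrix units, hence equals $0$ unless they ``chain up'': there must be indices $c_0,c_1,\dots,c_n$ with each step $c_l-c_{l-1}\in\{0,1\}$, where a $0$-step at position $l$ consumes the unit $e_{c_{l-1}c_{l-1}}$ and a $+1$-step consumes $e_{c_{l-1},c_{l-1}+1}$, each available unit occurring exactly once. Counting the $+1$-steps forces $c_0=1$ and pins down exactly which levels are visited; and since there are no backward steps, once the chain leaves a level it cannot return, so the unique loop $e_{kk}$ at each visited level $k$ must be used before the chain leaves level $k$ --- and this forces the order of the $y_i$'s to be exactly $y_1,y_2,\dots,y_n$, i.e. $\pi=\mathrm{id}$. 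Consequently, since evaluating $f$ at the substitution above turns it into $\sum_{\pi\in S_n} r_{\tau\pi}\, y_{\pi(1)}\cdots y_{\pi(n)}$ and $f$ is an identity,
\[
0=\sum_{\pi\in S_n} r_{\tau\pi}\,y_{\pi(1)}\cdots y_{\pi(n)}=r_\tau\, e_{1,\lceil(n+1)/2\rceil},
\]
whence $r_\tau=0$; as $\tau$ was arbitrary, $f=0$, contradicting nontriviality.

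The main obstacle is the combinatorial step showing that \emph{only} the identity permutation survives the substitution; the ``no backward step'' observation is the crucial point that makes this clean. A secondary technical nuisance is carrying the reduction to multilinear identities over an arbitrary commutative ring $R$ rather than over a field of characteristic $0$. Both are classical, which is why this theorem is quoted rather than proved in the main text.
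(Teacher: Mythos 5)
The paper cites this result directly (as \cite{AL50}) and does not supply a proof, so there is no internal argument to compare against. Your proof of the lower-bound half is correct and is the classical staircase argument. One remark on your parenthetical about arbitrary commutative $R$: the direct route is cleaner than passing to a residue field, and is in fact the one that works in full generality. Multilinearization preserves nontriviality over any commutative ring, because in the free algebra distinct monomials of $f$ produce disjoint families of ``mixed'' monomials after the substitution $x_1\mapsto y+z$, so no cancellation can occur; and once $g=\sum_{\sigma} r_\sigma x_{\sigma(1)}\cdots x_{\sigma(n)}$ is multilinear, the staircase substitution lives in $M_d(R)$ itself (the matrix units have entries $0,1\in R$), yielding $0=r_\tau\, e_{1,\lceil(n+1)/2\rceil}$ and hence $r_\tau=0$ in $R$. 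By contrast, the suggested alternative of passing to a residue field of $R$ can fail: a nonzero coefficient $r_\tau$ may lie in the Jacobson radical and vanish in every residue field (e.g.\ $2\in\mathbb{Z}/4\mathbb{Z}$). The combinatorial core --- monotonicity of the walk, each level carrying a unique loop that must be consumed before the level is exited, hence the identity permutation is the only admissible Eulerian path --- is exactly right.
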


\begin{cor}
~\label{cor: commutator relations on simple Lie algebras} 
\begin{enumerate}
\item Any $0\neq w\in\mathcal{L}_{r}$ such that $\varphi_{w}:\mathfrak{sl}_{d}^{r}\rightarrow\mathfrak{sl}_{d}$
is trivial is of degree at least $2d$. 
\item Any $0\neq w\in\mathcal{L}_{r}$ such that $\varphi_{w}:\mathfrak{g}^{r}\rightarrow\mathfrak{g}$
is trivial on a simple classical Lie algebra $\mathfrak{g}$ of rank
$d$ is of degree at least $2d$. 
\end{enumerate}
\end{cor}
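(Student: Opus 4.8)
The plan is to reduce both items to the Amitsur--Levitzki theorem (Theorem~\ref{thm:(Amitsur-Levitzky)-Let-}) via the classical realization of the free Lie algebra inside the free associative algebra. Recall that $\mathcal{L}_{r}$ is the Lie subalgebra of $(\mathcal{A}_{r},[\,\cdot\,,\cdot\,])$ generated by $X_{1},\dots,X_{r}$, that this realizes $\mathcal{L}_{r}$ as a \emph{graded} subspace of $\mathcal{A}_{r}$, and that the inclusion is injective and degree-preserving; hence a nonzero $w\in\mathcal{L}_{r}$ of degree $D=\deg(w)$ becomes a nonzero associative polynomial of degree $D$, and the Lie word map $\varphi_{w}$ on a Lie algebra $\mathfrak{g}$ is simply the evaluation of this associative polynomial on $r$-tuples from any matrix realization of $\mathfrak{g}$.

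For item $(1)$, suppose $\varphi_{w}\colon\mathfrak{sl}_{d}^{r}\to\mathfrak{sl}_{d}$ is trivial, with $d\geq2$. First I would dispose of the degree-one homogeneous component $\ell=\sum_{i}c_{i}X_{i}$ of $w$: substituting $X_{i}\mapsto tX_{i}$ and comparing coefficients of powers of $t$ (legitimate as $\mathrm{char}\,K=0$), every homogeneous component of $w$ induces a trivial word map on $\mathfrak{sl}_{d}$; applying this to $\ell$ and evaluating at $(E_{12},0,\dots,0)$ forces $c_{1}=0$, and likewise all $c_{i}=0$. Thus $w\in\bigoplus_{k\geq2}\mathcal{L}_{r}^{(k)}\subseteq[\mathcal{A}_{r},\mathcal{A}_{r}]$, so $\mathrm{tr}\,\varphi_{w}(A_{1},\dots,A_{r})=0$ for all $A_{i}\in M_{d}(K)$. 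Next I would exploit naturality of word maps under the Lie algebra homomorphism $p\colon\mathfrak{gl}_{d}\to\mathfrak{sl}_{d}$, $p(A)=A-\tfrac{\mathrm{tr}\,A}{d}I$ (a homomorphism since $\ker p=K\cdot I$ is a central ideal): for any $A_{1},\dots,A_{r}\in M_{d}(K)$ one has $p\bigl(\varphi_{w}(A_{1},\dots,A_{r})\bigr)=\varphi_{w}\bigl(p(A_{1}),\dots,p(A_{r})\bigr)=0$, hence $\varphi_{w}(A_{1},\dots,A_{r})\in\ker p=K\cdot I$; being also traceless, it vanishes. So $w$ is a polynomial identity of $M_{d}(K)$, and Theorem~\ref{thm:(Amitsur-Levitzky)-Let-} gives $\deg(w)\geq2d$.

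For item $(2)$, let $\mathfrak{g}$ be simple classical of rank $d$. The key point is that $\mathfrak{g}$ contains a Lie subalgebra isomorphic to $\mathfrak{sl}_{d}$: for type $A_{d}$ we have $\mathfrak{g}=\mathfrak{sl}_{d+1}\supseteq\mathfrak{sl}_{d}$ (upper-left block), while for types $B_{d},C_{d},D_{d}$ the map $A\mapsto\operatorname{diag}(A,-A^{\mathsf T})$ (with an extra zero block in the odd-orthogonal case), taken with respect to a hyperbolic/symplectic form, embeds $\mathfrak{gl}_{d}$ and hence $\mathfrak{sl}_{d}$ into $\mathfrak{g}$ — this is just the Levi factor of a maximal parabolic stabilizing a maximal isotropic subspace. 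Restricting $\varphi_{w}$ to tuples from this subalgebra, triviality of $\varphi_{w}$ on $\mathfrak{g}$ forces triviality on $\mathfrak{sl}_{d}$, and item $(1)$ gives $\deg(w)\geq2d$ (for type $A_{d}$ one even gets $\deg(w)\geq2(d+1)$).

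This is essentially a bookkeeping reduction, so I do not expect a serious obstacle; the two points requiring care are (a) the passage from ``trivial on $\mathfrak{sl}_{d}$'' to ``polynomial identity of $M_{d}(K)$'', where one must kill the linear part of $w$ and use the vanishing of traces of commutators, and (b) exhibiting the subalgebra $\mathfrak{sl}_{d}\subseteq\mathfrak{g}$ uniformly over the classical families. Both are standard, so the real content is entirely carried by Amitsur--Levitzki.
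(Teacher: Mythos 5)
Your proof is correct and follows the same route as the paper: reduce to the Amitsur--Levitzki theorem for item $(1)$, and for item $(2)$ embed $\mathfrak{sl}_{d}$ into the rank-$d$ classical Lie algebra and invoke $(1)$. The paper's proof of $(1)$ is just the one-line remark that it ``follows easily from Amitsur--Levitzki''; your passage from triviality on $\mathfrak{sl}_{d}$ to a polynomial identity of $M_{d}(K)$ via the projection $p\colon\mathfrak{gl}_{d}\to\mathfrak{sl}_{d}$ is a clean way to fill that gap (one small simplification: once the degree-one part is gone, $w$ is a sum of iterated brackets, and the central part of each input is killed by the brackets, so $\varphi_{w}(A)=\varphi_{w}(p(A))=0$ directly, making the trace computation redundant).
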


\begin{proof}
(1) follows easily from Amitsur-Levitzki and (2) follows from (1)
since each of $\mathfrak{so}_{2d},\mathfrak{so}_{2d+1}$ and $\mathfrak{sp}_{2d}$
contains $\mathfrak{sl}_{d}$ as a Lie subalgebra. 
\end{proof}

\subsection{Formulation of the main theorems}

We turn to formulate some of our main results. 
\begin{thm}
\label{thm: main thm Lie algebra word maps}Let $\{w_{i}\in\mathcal{L}_{r_{i}}\}_{i\in\nats}$
be a collection of Lie algebra words of degree at most $d\in\nats$.
Then there exists $0<C<10^{6}$,\textcolor{red}{{} }such that for any
semisimple $K$-Lie algebra $\mathfrak{g}$ for which $\{\varphi_{w_{i}}\}_{i\in\nats}$
are non-trivial when restricted to each simple constituent of $\mathfrak{g}$,
we have the following: 
\begin{enumerate}
\item If $m\geq Cd^{3}$ then $\varphi_{w_{1}}*\dots*\varphi_{w_{m}}$ is
flat. 
\item If $m\geq Cd^{3}+2$ then $\varphi_{w_{1}}*\dots*\varphi_{w_{m}}$
is flat with normal fibers. 
\item If $m\geq Cd^{4}$ then $\varphi_{w_{1}}*\dots*\varphi_{w_{m}}$ is
(FRS). 
\item If $m\geq Cd^{4}+2$ then $\varphi_{w_{1}}*\dots*\varphi_{w_{m}}$
is (FTS). 
\end{enumerate}
\end{thm}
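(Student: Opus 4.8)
The plan is to follow the scheme laid out in Section~\ref{subsec:Singularities-of-word further discussion}: alternately apply two operations to the morphism at hand --- a self-convolution, which only improves the singularities (Propositions~\ref{prop: properties preserved under convolution} and~\ref{Prop: singularity properties obtained after convolution}), and a degeneration, which preserves every property in $\mathcal{P}$ (Corollaries~\ref{cor:reduction to degeneration}, \ref{cor:(cf.---Linearization)} and~\ref{cor:(cf.---Elimination)}) --- while arranging the bookkeeping so that the number of convolutions stays polynomial in $d$ and every intermediate morphism remains generating. First, since a product morphism lies in a given class of $\mathcal{P}$ precisely when each factor does, one reduces to $\mathfrak{g}$ simple. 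Next, as in Section~\ref{subsec:Reduction to self convolutions+homogeneous}, one applies the elimination method (Corollary~\ref{cor:(cf.---Elimination)}) with the weight scaling all inputs, degenerating $\varphi_{w_i}$ to its lowest homogeneous component acting non-trivially on $\mathfrak{g}$ (a word may have even lower components that vanish on $\mathfrak{g}$ by Razmyslov-type identities, cf.~Example~\ref{exa:trivial word maps}), so one is reduced to self-convolutions $\varphi_w^{*m}$ of a homogeneous word map with $\deg w\le d$ on a simple Lie algebra; and since $\varphi_w^{*m}$ is then homogeneous, the locus where a property $P\in\mathcal{P}$ holds is $\mathbb{G}_m$-stable and open, hence is empty or everything as soon as it contains the origin, so it suffices to verify $P$ at $0$. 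Finally, items (2) and (4) follow from (1) and (3): once $\varphi_{w_1}*\cdots*\varphi_{w_{Cd^3}}$ is flat, two further convolutions yield flat with normal fibers by Proposition~\ref{prop:extended properties of convolutions}(2), and once $\varphi_{w_1}*\cdots*\varphi_{w_{Cd^4}}$ is (FRS) --- hence jet-flat --- two further convolutions yield (FTS) by Corollary~\ref{cor:jet-flat implies FTS after two convolusions}.

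For $\mathfrak{g}$ classical one presents $\varphi_w:\mathfrak{g}^r\to\mathfrak{g}$ in coordinates as a $d$-homogeneous polynomial map and, generalizing \cite{AA16}, attaches to it a $d$-polyhypergraph: a hypergraph whose hyperedges are multisets of $d$ vertices carrying a type (a choice of target coordinate), each hyperedge recording a sum of monomials in $\varphi_w$. One defines flatness and the (FRS) property for such a gadget and, crucially, translates the two operations combinatorially: a self-convolution adjoins a disjoint copy of the polyhypergraph with the same types on its hyperedges, while a degeneration coming from Corollary~\ref{cor:(cf.---Elimination)} with a suitable coordinate weight deletes a family of hyperedges sharing one type. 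Since every step keeps us among homogeneous maps, the cone observation above keeps ``$P$ at $0$'' equivalent to ``$P$'' throughout.

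For $\mathrm{rk}(\mathfrak{g})$ large relative to $d$, Amitsur--Levitzki (Corollary~\ref{cor: commutator relations on simple Lie algebras}) ensures the word maps produced along the way remain non-trivial, and one carries out an explicit bookkeeping (Sections~\ref{subsec:Proof-for-SLn} and~\ref{subsec:Proof-for-the general case}): convolve to build up redundancy, then degenerate to delete hyperedges, iterating until the polyhypergraph becomes tame --- for flatness it suffices to reach a structure on which a direct dimension count applies, costing $O(d^3)$ convolutions, whereas for the (FRS) property one drives it all the way down to a disjoint union of single-hyperedge hypergraphs, costing $O(d^4)$. A single hyperedge is a $d$-homogeneous polynomial map $\varphi:\mathbb{A}^n\to\mathbb{A}^1$, which is automatically flat, and whose zero fiber has $\mathrm{lct}\ge 1/d$ by Fact~\ref{fact:properties of log cannonical threshold}(2); a Thom--Sebastiani type estimate, as in the example after Theorem~\ref{thm:log canonical threshold and jet schemes}, then gives $\mathrm{lct}\bigl(\mathbb{A}^{n(d+1)},(\varphi^{*(d+1)})^{-1}(0)\bigr)=1$, so $\varphi^{*(d+1)}$ is jet-flat and hence $\varphi^{*(d+2)}$ is (FRS) (even (FTS) after one more) by Corollary~\ref{cor:jet-flat implies FTS after two convolusions}, the case of a disjoint union of hyperedges being handled coordinate by coordinate. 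The complementary range, $\mathrm{rk}(\mathfrak{g})$ below a suitable multiple of $d$, involves only finitely many simple Lie algebras, all of dimension $O(d^2)$; there the soft estimates of Proposition~\ref{prop:Lie algebra word maps are generating}, Lemma~\ref{lem:generating becomes dominant} and Proposition~\ref{Prop: singularity properties obtained after convolution} suffice, uniformly over them (Section~\ref{subsec:Proof-for low rank and few edges}).

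The heart of the argument --- and the main obstacle --- is the bookkeeping just described: one must exhibit a \emph{universal}, polynomial-in-$d$ sequence of convolutions and degenerations that tames the polyhypergraph of an \emph{arbitrary} $d$-homogeneous word map on \emph{every} high-rank classical Lie algebra, while never losing the generating property --- a degeneration can destroy it, for instance $\psi_0(x,y)=(x,x)$ is a degeneration of the dominant $\psi_1(x,y)=(x,x+y^2)$ --- and several of the degenerations one wants to use are valid only once $\mathrm{rk}(\mathfrak{g})$ exceeds a multiple of $d$, which is precisely why the low-rank range must be quarantined and treated by the softer generating-map estimates. Squeezing the exponents down to $3$ and $4$, and the constant below $10^6$, is the delicate, computation-heavy part.
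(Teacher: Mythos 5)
Your overall architecture matches the paper's: reduce to simple $\mathfrak{g}$, reduce to homogeneous words of pure type and self-convolutions (Section~\ref{subsec:Reduction to self convolutions+homogeneous}), encode the resulting word map in a $d$-polyhypergraph, alternate coloring and elimination (Section~\ref{subsec:Coloring-and-elimination for d-PH}), and quarantine the low-rank range. Your treatment of a single hyperedge via Fact~\ref{fact:properties of log cannonical threshold}(2) and a Thom--Sebastiani estimate is a legitimate alternative to the paper's proof of Proposition~\ref{prop:(FRS) for generating morphisms} by $p$-adic Fourier analysis (it is exactly what the introduction sketches; the Fourier route just trades your $d+1$ for the slightly sharper $d_{\mathrm{mon}}(\varphi)+1\leq d$).

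However, your homogenization step has a genuine gap. You propose to degenerate each $\varphi_{w_i}$ to its \emph{lowest} non-trivial homogeneous component via the elimination method (Corollary~\ref{cor:(cf.---Elimination)}). But that corollary only transfers the property $P$ \emph{at the origin}: if $\sigma_{\omega}(\varphi)$ is $P$ at $0$, then $\varphi$ is $P$ at $0$, and nothing more. Your cone argument --- that for a homogeneous map the $P$-locus is open, $\mathbb{G}_m$-stable, hence all or nothing --- applies to the degenerated map, not to $\varphi_{w_1}*\cdots*\varphi_{w_m}$ itself, which is not homogeneous once some $w_i$ has several non-vanishing graded pieces; so you are left with ``$P$ at $(0,\dots,0)$'' for a non-homogeneous morphism, which does not yield $P$ globally. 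The paper instead degenerates to the \emph{highest}-degree part (Lemma~\ref{lem: reduction to largest degree}): writing $\varphi_{1,i}=\sum_k P_{i,k}$ with top part $P_{i,l_i}$, one forms the family $\psi_i(t,x)=\sum_k t^{l_i-k}P_{i,k}(x)$, which is $\mathbb{G}_m$-equivariant for $s.(t,x)=(st,sx)$ and $s.y_i=s^{l_i}y_i$; the equivariant section $f(t)=(t,ta_1,\dots,ta_n)$ then transports, via Corollary~\ref{cor:reduction to degeneration}, ``$P$ at $f(0)=(0,\dots,0)$'' for the top part to ``$P$ at the \emph{arbitrary} point $f(1)=(a_1,\dots,a_n)$'' for the original. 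This mechanism is not available for the bottom part: the analogous family $\psi_i(t,x)=\sum_k t^{k-l_0}P_{i,k}(x)$ forces any $\mathbb{G}_m$-equivariant section to have the shape $(t,a/t)$, which is not regular at $t=0$, so the only regular section is the zero section and one recovers nothing beyond the origin. Degenerating to the top degree is not a cosmetic choice --- it is precisely what makes the reduction to the homogeneous case valid.
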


Note that by Corollary \ref{cor: commutator relations on simple Lie algebras},
if the rank of every simple constituent of a semi-simple Lie algebra
$\mathfrak{g}$ is larger than $d/2$, then every Lie algebra word
of degree at most $d$ induces a non-trivial Lie algebra word map
on $\mathfrak{g}$.

An analogue of Theorem \ref{thm: main thm Lie algebra word maps}
holds also for matrix word maps: 
\begin{thm}
\label{thm: main theorem for matrix word map}Let $\{w_{i}\in\mathcal{A}_{r_{i}}\}_{i\in\nats}$
be a collection of matrix words of degree at most $d\in\nats$. Then
for any $n\in\nats$ such that $\{\varphi_{w_{i}}:M_{n}^{r}\rightarrow M_{n}\}_{i\in\nats}$
are generating, the assertions of Theorem \ref{thm: main thm Lie algebra word maps}
hold. 
\end{thm}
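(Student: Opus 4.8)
The plan is to run the proof in close parallel with that of Theorem~\ref{thm: main thm Lie algebra word maps}, with the matrix algebra $M_n$ --- viewed as the additive group $\mathbb{G}_a^{n^2}$, which is the target of the convolution operation on matrix word maps --- playing the role of the classical simple Lie algebra $\mathfrak{g}$. As in the first step of that proof (Section~\ref{subsec:Reduction to self convolutions+homogeneous}) I would reduce to the case where all the $w_i$ are homogeneous: using that convolution only improves the relevant singularity properties (Proposition~\ref{prop: properties preserved under convolution}) together with the elimination method (Corollary~\ref{cor:(cf.---Elimination)}) applied to a suitable weight to pass from each $\varphi_{w_i}$ to its symbol, one reduces to showing that for homogeneous generating words $w_i\in\mathcal{A}_{r_i}$ of degree at most $d$ the convolution $\varphi_{w_1}*\cdots*\varphi_{w_m}$ is flat once $m\geq Cd^3$ and (FRS) once $m\geq Cd^4$; the ``$+2$'' in parts (2) and (4) is then supplied by Proposition~\ref{prop:extended properties of convolutions} and Corollary~\ref{cor:jet-flat implies FTS after two convolusions}. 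One must arrange these homogeneous degenerations so that the morphisms stay inside a class that remains generating --- the difficulty flagged in Subsection~\ref{subsec:Singularities-of-word further discussion}, and precisely where the hypothesis that the $\varphi_{w_i}$ are generating is used; by Lemma~\ref{lem:generating becomes dominant} a generating $\varphi_w$ becomes dominant, and by Theorem~\ref{Main result} eventually (FRS), so the entire content of the theorem is the polynomial-in-$d$, independent-of-$n$ bound.

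To obtain this $n$-independent bound I would attach to each homogeneous matrix word map $\varphi_w\colon M_n^r\to M_n$ its $d$-polyhypergraph, just as in the Lie algebra case: writing $X_k=(x^{(k)}_{ij})$, the $(a,b)$-entry of $\varphi_w(X_1,\dots,X_r)$ is a sum of degree-$d$ monomials $x^{(i_1)}_{ac_1}x^{(i_2)}_{c_1c_2}\cdots x^{(i_d)}_{c_{d-1}b}$, i.e.\ products along length-$d$ ``paths'' in the matrix indices; the hyperedges are these monomial sums, and the types record the target coordinate $(a,b)$. This polyhypergraph is of a particularly uniform kind, as all of $M_n$ is available with none of the symmetry constraints that cut $\mathfrak{so}_n$ or $\mathfrak{sp}_{2n}$ out of a matrix algebra, so the analysis carried out for $\mathfrak{sl}_n$-word maps (Section~\ref{subsec:Proof-for-SLn}) transfers --- if anything more easily --- to $M_n$: convolution adjoins a disjoint copy of the polyhypergraph with identical hyperedge types, and a degeneration (via Corollaries~\ref{cor:reduction to degeneration} and~\ref{cor:(cf.---Elimination)}, and through jets via Corollary~\ref{cor:degeneration on jet schemes}) deletes hyperedges of a common type. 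When $n$ is large compared to $d$, the economical sequence of such operations brings the object, after $O(d^3)$ convolutions, to a tame form --- a disjoint union of single-hyperedge polyhypergraphs, which is already flat (a nonzero $d$-homogeneous polynomial $\psi\colon\mathbb{A}^N\to\mathbb{A}^1$ is a nonconstant morphism from a smooth irreducible variety to the affine line, hence flat) --- and, iterating, after $O(d^4)$ convolutions to a map of the form $\psi_1^{*(d+1)}\times\cdots\times\psi_k^{*(d+1)}$; since $\mathrm{lct}(\mathbb{A}^N,\psi_j^{-1}(c))\geq 1/d$ by Fact~\ref{fact:properties of log cannonical threshold}(2) (because $\mathrm{ord}_x\le\deg\psi_j=d$), a Thom--Sebastiani type estimate gives that each $\psi_j^{*(d+1)}$, hence the product, is (FRS). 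For the complementary range $n=O(d)$ --- the ``low rank'' case --- one instead applies the low-rank analysis of Section~\ref{subsec:Proof-for low rank and few edges} directly, where $\dim M_n=O(d^2)$ makes the coarser bounds affordable. Feeding the $O(d^3)$, resp.\ $O(d^4)$, counts into the jet-flat / jet-(FGI) criteria of Corollary~\ref{cor: singularity properties through dim of jets} yields the flatness, resp.\ (FRS), assertions with the same constant $C<10^6$ as in Theorem~\ref{thm: main thm Lie algebra word maps}.

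The main obstacle, exactly as in the Lie algebra case, is obtaining the $O(d^3)$ and $O(d^4)$ bounds \emph{independently of $n$}: the only general tool, Lemma~\ref{lem:generating becomes dominant}, gives merely the useless bound $(\dim M_n)^2=n^4$, so the economical degeneration sequences on polyhypergraphs are indispensable, and the delicate part is to carry them out while keeping every intermediate morphism generating (a generic morphism can become non-generating under degeneration). A further point requiring care, absent in the Lie algebra setting, is that a degree-$d$ matrix word $w\in\mathcal{A}_r$ is far less rigid than a Lie word --- it is an arbitrary linear combination of up to $r^d$ monomials --- so the combinatorial reduction must be made insensitive to the number of monomials and to depend only on $d$ and $n$. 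In the other direction the matrix case is if anything cleaner, since there are no trivial word maps to worry about: by the Amitsur--Levitzki theorem (Theorem~\ref{thm:(Amitsur-Levitzky)-Let-}) a nonzero word of degree $d$ already acts non-trivially on $M_n$ as soon as $2n>d$, while for $2n\leq d$ the generating hypothesis is imposed by assumption.
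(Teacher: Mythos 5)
Your broad outline (degenerate to homogeneous pure-type words, attach a polyhypergraph that mirrors the $\mathfrak{sl}_n$ analysis, fall back to the low-rank argument of Proposition~\ref{prop:(FRS) for generating morphisms} when $n=O(d)$) is indeed the skeleton of the paper's proof. But there is a concrete gap at exactly the point you wave at: the reduction to homogeneous words.

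For a Lie algebra word $w\in\mathcal{L}_r$, the top-degree homogeneous part is automatically a non-trivial Lie algebra word on the same $\mathfrak{g}$ (hence generating by Proposition~\ref{prop:Lie algebra word maps are generating}), so Lemma~\ref{lem: reduction to largest degree} applies cleanly. For a matrix word $w\in\mathcal{A}_r$, the top-degree part $\varphi_{w_0}$ is still conjugation-equivariant, so its image generates a conjugation-invariant subspace of $M_n$ --- but that subspace may be $\mathfrak{sl}_n$ rather than all of $M_n$. The paper's own example, $\varphi_w(X,Y)=X^3+[[[X,Y],Y],Y]$, has top part $[[[X,Y],Y],Y]$ landing in trace-zero matrices, so after degenerating to the symbol the resulting map is no longer generating on $M_n$ and your machinery simply does not apply. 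The actual proof resolves this by replacing $\varphi_w$ with the \emph{hybrid} map $\psi_{w_0,w_1}=(\widetilde{\varphi}_{w_0},\mathrm{Tr}(\varphi_{w_1}))$, where $\widetilde{\varphi}_{w_0}$ is the $\mathfrak{sl}_n$-part of the top-degree symbol and $\mathrm{Tr}(\varphi_{w_1})$ is the trace of a possibly lower-degree homogeneous component $w_1$ whose trace is non-zero; the degeneration to this map is justified by the weighted version of Lemma~\ref{lem: reduction to largest degree}. The target thus splits as $\mathfrak{sl}_n\oplus(\text{trace line})$, with the two factors of different degrees. You say the morphisms must ``stay in a class that remains generating,'' but this inhomogeneity is precisely the obstruction, and it requires an argument, not just a flag.

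The downstream consequence is also missed: once the trace coordinate has a different degree from the $\mathfrak{sl}_n$ coordinates, the $\Psi_0$-piece of the coloring (the coordinates with types $\hat e_{1,1},\dots,\hat e_{n-1,n-1},\mathrm{Tr}$) is no longer encoded by a single assignment of a $d$-polyhypergraph, and the paper has to bolt on Lemma~\ref{lem:Auxilery lemma for matrix word maps} --- a dimension-of-fibers argument --- to absorb the extra trace coordinate, at the cost of doubling the flatness count ($12d^2\mapsto 24d^2$) and pushing the (FRS) count for $\Psi_0$ from $60d^3$ to $200d^4$. Your claim that the matrix case is ``if anything cleaner'' is therefore backwards: $\Psi_+,\Psi_-$ transfer verbatim from $\mathfrak{sl}_n$, but $\Psi_0$ is strictly harder and is where the $d^4$ (not $d^3$) shows up even for flatness of the full map after accounting for the reduction to self-convolutions.
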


Theorems \ref{thm: main thm Lie algebra word maps} and \ref{thm: main theorem for matrix word map}
combined with Lemma \ref{lem:convolution gives bounds on epsilon flatness},
imply that (Lie) algebra word maps are $\varepsilon$-jet flat, for
$\varepsilon\sim d^{-4}$. In fact, we can achieve slightly better
lower bounds on the $\varepsilon$. 
\begin{thm}
\label{thm:lower bounds on epsilon jet flatness}Let $w_{1}\in\mathcal{A}_{r}$
and $w_{2}\in\mathcal{L}_{r}$ be words of degree at most $d$. Then
$w_{1}$ is $\frac{1}{25d^{3}}$-jet flat and $w_{2}$ is $\frac{1}{2\cdot10^{5}d^{3}}$-jet
flat. 
\end{thm}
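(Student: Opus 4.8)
The plan is to reduce first, as in the proofs of Theorems~\ref{thm: main thm Lie algebra word maps} and~\ref{thm: main theorem for matrix word map}, to the case of a $d$-homogeneous word, and then to combine two complementary estimates on $\varepsilon$: an elementary pointwise bound on the log canonical threshold, which is efficient when the rank is small, and the degeneration--convolution scheme, which is efficient when the rank is large. Throughout I would use Lemma~\ref{lem:epsilon jet flat and log canonical threshold} (a morphism to $Y$ is $\varepsilon$-jet flat iff every fiber $Z\subseteq X$ satisfies $\mathrm{lct}(X,Z)\geq\varepsilon\dim Y$) and Lemma~\ref{lem:convolution gives bounds on epsilon flatness} (if $\varphi^{*t}$ is jet-flat then $\varphi$ is $\tfrac1t$-jet flat).

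\emph{The small-rank range.} Let $w$ be $d$-homogeneous and suppose $\varphi_w$ is non-trivial on a simple Lie algebra $\mathfrak g$ (resp.\ on $M_n$). Then some component of $\varphi_w$ is a non-zero homogeneous polynomial of degree $d$, so for every $x$ the ideal of the fiber of $\varphi_w$ through $x$ contains a polynomial of order $\leq d$ at $x$; by Fact~\ref{fact:properties of log cannonical threshold}(2) every fiber has log canonical threshold $\geq 1/d$, and Lemma~\ref{lem:epsilon jet flat and log canonical threshold} gives that $\varphi_w$ is $\tfrac{1}{d\dim\mathfrak g}$-jet flat (resp.\ $\tfrac{1}{dn^2}$-jet flat). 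Since $\dim\mathfrak g\leq 2\cdot 10^5 d^2$ for every exceptional simple Lie algebra and for every classical one of rank $\lesssim 300d$, and $n^2\leq 25d^2$ once $n\leq 5d$, this already yields the asserted bound whenever the rank is below the relevant threshold (by Corollary~\ref{cor: commutator relations on simple Lie algebras} non-triviality in any case forces the rank to be $\geq d/2$ in the classical case, but the estimate above needs no lower bound on the rank).

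\emph{The large-rank range.} For the remaining high-rank classical Lie algebras (resp.\ $M_n$ with $n$ large) I would re-run the degeneration--convolution machinery underlying Theorems~\ref{thm: main thm Lie algebra word maps} and~\ref{thm: main theorem for matrix word map}. It produces, after at most $25d^3$ self-convolutions (resp.\ $2\cdot10^5 d^3$), a $\mathbb G_m$-equivariant degeneration of $\varphi_w^{*t}$ (as in Corollaries~\ref{cor:reduction to degeneration} and~\ref{cor:degeneration on jet schemes}) to a morphism $\psi=\prod_j\psi_j$ into $\mathfrak g=\mathbb A^{\dim\mathfrak g}$ whose distinct target coordinates lie in disjoint blocks of variables, with each $\psi_j$ a sum of at least $d$ single-hyperedge $d$-homogeneous polynomials in pairwise disjoint variables. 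By the Thom--Sebastiani computation recalled in Subsection~\ref{subsec:Log-cannonical-threshold} (and since a non-constant $d$-homogeneous polynomial has smooth, hence $\mathrm{lct}$ equal to $1$, non-zero fibers, by Euler's identity) every fiber of $\psi_j$ has log canonical threshold $\min\{1,d\cdot\tfrac1d\}=1=\dim\mathbb A^1$, so $\psi_j$ is jet-flat by Lemma~\ref{lem:epsilon jet flat and log canonical threshold}; a product of jet-flat morphisms is jet-flat, and jet-flatness --- being flatness of every jet morphism --- transfers back along the degeneration by applying Corollary~\ref{cor:reduction to degeneration} to the $m$-th jet of the degenerating family, for each $m$ (using the relative form of Lemma~\ref{lem:useful facts on jet schemes}(1) and the argument of Corollary~\ref{cor:degeneration on jet schemes}). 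Hence $\varphi_w^{*t}$ is jet-flat for this $t$, and Lemma~\ref{lem:convolution gives bounds on epsilon flatness} gives that $w$ is $\tfrac{1}{25d^3}$-jet flat (resp.\ $\tfrac{1}{2\cdot10^5 d^3}$-jet flat).

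\emph{The main obstacle.} The delicate point is the large-rank step: one must extract from the proof of Theorems~\ref{thm: main thm Lie algebra word maps}--\ref{thm: main theorem for matrix word map} the refined statement that $O(d^3)$ convolutions already bring the polyhypergraph into the shape ``disjoint single hyperedges, each repeated at least $d$ times'', and then carry the explicit constants $25$ and $2\cdot10^5$ through the reduction to homogeneous words, the elimination and linearization steps, and the Thom--Sebastiani estimate. Conceptually, $d^3$ rather than the $d^4$ of Theorem~\ref{thm: main thm Lie algebra word maps}(3) is the correct exponent because making a degeneration \emph{jet-flat} only requires pushing the log canonical threshold of each single-hyperedge component up to $1$, costing one extra factor of $d$ over the $O(d^2)$ reduction to single hyperedges, whereas the (FRS) conclusion demands that those fibers acquire rational singularities, which is what forces the additional factor.
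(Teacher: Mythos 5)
Your small-rank step is exactly the paper's: for $\mathrm{rk}(\mathfrak g)$ (resp.\ $n$) below an explicit threshold, every fiber of $\varphi_w$ is cut out by polynomials of degree $\leq d$, so Fact~\ref{fact:properties of log cannonical threshold}(2) gives $\mathrm{lct}\geq 1/d$ and Lemma~\ref{lem:epsilon jet flat and log canonical threshold} gives $\tfrac{1}{d\dim\mathfrak g}$-jet flatness; absorbing $\dim\mathfrak g$ (resp.\ $n^2$) into a multiple of $d^2$ finishes that case.

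The large-rank step, however, has a genuine gap, and the intended heuristic at the end is the wrong explanation for why $d^3$ appears. You propose to show that $\varphi_w^{*t}$ is \emph{jet-flat} for some $t=O(d^3)$ and then invoke Lemma~\ref{lem:convolution gives bounds on epsilon flatness}. But the degeneration machinery does not reach a ``disjoint union of $d$-fold sums of single hyperedges'' in $O(d^3)$ convolutions: the bottleneck is the piece $\mathcal{G}_{\mathrm{small}}$ (short roots and Cartan). Reducing it to single hyperedges already costs $\sim 64d^3$ colors, and those hyperedges can carry monomials with $d_{\mathrm{mon}}$ as large as $d$ (e.g.\ a vertex repeated $d$ times gives a monomial $x^d$ with $\mathrm{lct}=1/d$), so pushing their log canonical threshold up to $1$ via Thom--Sebastiani needs an extra factor of $d$, i.e.\ $O(d^4)$ total. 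Indeed, if $\varphi_w^{*O(d^3)}$ were jet-flat, then by Corollary~\ref{cor:jet-flat implies FTS after two convolusions} one more convolution would make it (FRS), contradicting the $d^4$ exponent in Theorem~\ref{thm: main thm Lie algebra word maps}. The paper's actual argument circumvents $\mathcal{G}_{\mathrm{small}}$ entirely: it composes $\psi_w$ with the projection $\pi_+$ onto the root coordinates $\{e_\beta\}$ with $\beta$ a positive root of length $>4d$, whose number $N$ satisfies $N>\tfrac14\dim\mathfrak g$ once $\mathrm{rk}(\mathfrak g)\geq 50d$. That projected map \emph{is} (FRS) after $O(d^3)$ convolutions (Subsection~\ref{subsec:Proof-for-the hypergraph G+- and Gsmall}), hence $\tfrac{1}{O(d^3)}$-jet flat, and then the crucial monotonicity Fact~\ref{fact:properties of log cannonical threshold}(1) --- $\psi_w^{-1}(Y)\subseteq(\pi_+\circ\psi_w)^{-1}(\pi_+(Y))$ so $\mathrm{lct}$ can only go up --- transfers the bound to $\psi_w$, costing only the factor $\dim\mathfrak g/N\leq 4$. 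Your proof is missing precisely this projection-and-monotonicity step; without it, running the full degeneration scheme on all target coordinates would only yield an exponent of $d^4$.
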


The arguments appearing in the proofs of Theorems \ref{thm: main thm Lie algebra word maps}
and \ref{thm: main theorem for matrix word map} generalize the arguments
used in \cite[Theorem 2.1]{AA16}, in which a version of Theorem \ref{thm: main thm Lie algebra word maps}
was proved for the specific case of the commutator Lie algebra word.

The main approach for proving Theorem \ref{thm: main thm Lie algebra word maps}
is as follows; we first reduce to the case of self-convolutions of
$d$-homogeneous Lie algebra word maps (of pure type). Given a $d$-homogeneous
word $w$ and Lie algebra $\mathfrak{g}$, we apply a series of $\mathbb{G}_{m}$-degenerations
to the word map $\varphi_{w}:\mathfrak{g}^{r}\rightarrow\mathfrak{g}$,
such that the resulting map $\psi_{w}$ is much easier to analyze.
We then use Corollary \ref{cor:reduction to degeneration} to conclude
$\varphi_{w}^{*t}$ is flat (resp.~(FRS)), given we managed to show
$\psi_{w}^{*t}$ is flat (resp.~(FRS)), for some $t\in\nats$. The
price of degenerating a map is that its regularity properties might
diminish. Our main goal is to apply a series of carefully chosen degenerations
to $\varphi_{w}$, such that the number of convolutions of $\psi_{w}$
needed to get a flat (resp.~(FRS)) morphism is uniformly bounded
over all semisimple Lie algebras $\frak{\mathfrak{g}}$, and behaves
nicely with respect to $d$.

To achieve this goal, we construct a certain 'combinatorial gadget'
which encodes the word map $\varphi_{w}:\mathfrak{g}^{r}\rightarrow\mathfrak{g}$,
and generalizes the construction given in \cite{AA16} in the case
of the commutator map. We then translate the flatness and (FRS) properties,
as well as the convolution operation and the degeneration methods
we use to properties of and operations on this gadget.

Let us first explain the combinatorial construction in the case of
the commutator map (see also \cite{AA16}).

\subsection{\label{subsec:Combinatirical-picture-for commutator}Combinatorial
picture in the case of the commutator word}

As before, let $[\cdot,\cdot]$ be the commutator Lie algebra word,
and by abusing notation, given a Lie algebra $\frak{\mathfrak{g}}$
we denote by $[\cdot,\cdot]:\frak{\mathfrak{g}}\times\frak{\frak{\mathfrak{g}}}\to\frak{\mathfrak{g}}$
the corresponding Lie algebra word map. For simplicity, we work over
the field of complex numbers. The proof that there exists a number
$t$ such that $[\cdot,\cdot]^{*t}:\frak{\mathfrak{g}}^{2t}\to\frak{\mathfrak{g}}$
is (FRS) for every semisimple Lie algebra $\frak{\mathfrak{g}}$ can
be explained in six main steps: 
\begin{enumerate}
\item \textbf{Construct a polygraph from a classical Lie algebra $\mathfrak{g}$;}
We attach to each simple Lie algebra $\mathfrak{g}$ a polygraph $\Gamma_{\mathfrak{g}}=(\mathcal{I},\mathcal{J},\mathcal{S})$,
which is a triplet consisting of finite sets $\mathcal{I},\mathcal{J}$
and a subset $\mathcal{S}\subseteq\mathcal{I}^{\{2\}}\times\mathcal{J}$,
where $\mathcal{I}^{\{2\}}$ denotes the set of subsets of $\mathcal{I}$
of size $2$. The difference between a polygraph and a graph is that
a polygraph has edges of different \textit{types}, where each type
is an element of $\mathcal{J}$. Here, the vertices $\{e_{i}\}_{i\in\mathcal{I}}$
of the polygraph correspond to a Chevalley basis of $\mathfrak{g}$
and the types $\mathcal{J}$ correspond to elements of the dual basis
$\{\widehat{e}_{l}\}_{l\in\mathcal{J}}$ of $\mathfrak{g}^{*}$. The
edges $\mathcal{S}$ are pairs $(\{e_{i},e_{j}\},\widehat{e}_{l})$,
such that $\widehat{e}_{l}([e_{i},e_{j}])\neq0$.

For example, taking $\mathfrak{g}=\mathfrak{sl}_{d}$, one can choose
the basis $\mathcal{B}=\{E_{i,j}\}_{i\neq j}\cup\{H_{i,i}\}$, where
$(E_{i,j})_{i',j'}=\delta_{i,i'}\delta_{j,j'}$ (here $\delta_{x,y}$
stands for the Kronecker delta function), and $H_{i,i}=[E_{i,i+1},E_{i+1,i}]$.
The set of types $\mathcal{J}$ corresponds to the basis dual to $\mathcal{B}$,
denoted $\widehat{\mathcal{B}}=\{\hat{E}_{i,j}\}\cup\{\hat{H}_{i,i}\}$.
The vertices $E_{i,j}$ and $E_{j,k}$ satisfy $\hat{E}_{i,k}([E_{i,j},E_{j,k}])=\hat{E}_{i,k}(E_{i,k})\neq0$,
so $(\{E_{i,j},E_{j,k}\},\hat{E}_{i,k})\in\mathcal{S}$ represents
an edge of type $\hat{E}_{i,k}$. 
\item \textbf{Describe $[\cdot,\cdot]^{*t}:\mathfrak{g}^{2t}\rightarrow\mathfrak{g}$
in terms of the constructed polygraph $\Gamma_{\mathfrak{g}}$; }Let
$\Gamma_{\mathfrak{g}}=(\mathcal{I},\mathcal{J},\mathcal{S})$ as
above. We have the following identification of vector spaces $\mathfrak{g}^{2t}\simeq\mathfrak{g}\otimes V\simeq V^{\mathcal{I}}$,
where $V$ is a $2t$-dimensional vector space. Let $(a_{i,j,l}:=\hat{e}_{l}([e_{i},e_{j}]))_{i,j\in\mathcal{I},l\in\mathcal{J}}$
be the structure coefficients of $\mathfrak{g}$ with respect to $\mathcal{I}$
and $\mathcal{J}$. Then under the identifications above, $[\cdot,\cdot]^{*t}$
corresponds to the map $\Phi_{\mathrm{comm}}^{*t}:V^{\mathcal{I}}\rightarrow\complex^{\mathcal{J}}$
which is given by 
\begin{align*}
\Phi_{\mathrm{comm}}^{*t}((v_{i})_{i\in\mathcal{I}}) & =\left(\sum_{i,j\in\mathcal{I}}a_{i,j,l}\cdot\left(\sum_{k=1}^{t}v_{i,2k-1}v_{j,2k}\right)\right)_{l\in\mathcal{J}}\\
 & =\left(\sum_{i<j\in\mathcal{I}}a_{i,j,l}\cdot\left(\sum_{k=1}^{t}v_{i,2k-1}v_{j,2k}-v_{j,2k-1}v_{i,2k}\right)\right)_{l\in\mathcal{J}},
\end{align*}
where each $v_{i}\in V$ is of the form $(v_{i,1},\dots,v_{i,2t})$.
Note that the sum $\sum\limits _{k=1}^{t}v_{i,2k-1}v_{j,2k}-v_{j,2k-1}v_{i,2k}$
is just a standard symplectic form on a $2t$-dimensional symplectic
space $(V,\langle\,\cdot,\cdot\,\rangle)$. The map $\Phi_{\mathrm{comm}}^{*t}:V^{\mathcal{I}}\rightarrow\complex^{\mathcal{J}}$
can then be written as 
\[
\Phi_{\mathrm{comm}}^{*t}((v_{i})_{i\in\mathcal{I}})=\left(\sum_{i<j\in\mathcal{I}}a_{i,j,l}\cdot\langle v_{i},v_{j}\rangle\right)_{l\in\mathcal{J}}.
\]
This operation can be considered more abstractly. 
\begin{defn}[{\cite[Definition 2.22]{AA16}}]
Given a polygraph $\Gamma=(\mathcal{I},\mathcal{J},\mathcal{S})$,
an ordering $<$ of $\mathcal{I}$, a function $a:\mathcal{S}\rightarrow\complex^{\times}$
and a symplectic space $(V,\langle\,\cdot,\cdot\,\rangle)$, we define
a map $\Phi_{\Gamma,<,a,V}:V^{\mathcal{I}}\rightarrow\complex^{\mathcal{J}}$
by 
\[
\Phi_{\Gamma,<,a,V}((v_{i})_{i\in\mathcal{I}})=\left(\sum_{i<j\in\mathcal{I}}a(\{i,j\},l)\cdot\langle v_{i},v_{j}\rangle\right)_{l\in\mathcal{J}}.
\]
Let $\Gamma$ be a polygraph and $V$ be a symplectic space. We say
that the pair $(\Gamma,V)$ is \textit{(FRS)} if for all $<$ and
$a$ as above, the map $\Phi_{\Gamma,<,a,V}$ is (FRS). 
\end{defn}

Notice that the (FRS) property of $[\cdot,\cdot]^{*t}:\mathfrak{g}^{2t}\rightarrow\mathfrak{g}$
is equivalent to the (FRS) property of the polygraph $\Gamma_{\mathfrak{g}}=(\mathcal{I},\mathcal{J},\mathcal{S})$.
It is also important to note that taking convolution powers of $[\cdot,\cdot]$
(or $\Phi_{\mathrm{comm}}$), translates to choosing a larger symplectic
space, where $t$ self-convolutions, correspond to assigning a $2t$-dimensional
symplectic space to each element of $\mathcal{I}$. 
\item \textbf{Relate the elimination method to certain combinatorial operations
on the polygraph;} Let $\Phi_{\Gamma,<,a,V}$ be the map induced by
a polygraph $\Gamma$. Let $\omega\in\mathbb{Z}^{\mathcal{I}}$ be
a vector, which we call an \textit{$\mathcal{I}$-weight}. An $\mathcal{I}$-weight
$\omega$ induces an $\mathcal{I}^{\{2\}}$-weight $\widetilde{\omega}:\mathcal{I}^{\{2\}}\rightarrow\mathbb{Z}$
by $\widetilde{\omega}(\{i_{1},i_{2}\})=\omega(i_{1})+\omega(i_{2})$.
We furthermore define $\mathrm{gr}_{\omega}(\Gamma):=(\mathcal{I},\mathcal{J},\mathrm{gr}_{\omega}\mathcal{S})$
by 
\[
\mathrm{gr}_{\omega}\mathcal{S}=\{(\{i_{1},i_{2}\},l)\in\mathcal{S}:\forall(\{j_{1},j_{2}\},l)\in\mathcal{S}\text{ we have }\widetilde{\omega}(i_{1},i_{2}\})\leq\widetilde{\omega}(j_{1},j_{2}\})\}.
\]
An application of the elimination method (Corollary \ref{cor:(cf.---Elimination)})
gives the following: 
\begin{cor}[{\cite[Corollary 2.28]{AA16}, elimination for polygraphs}]
\label{cor:(-elimination-for-polygraphs)}Let $\Gamma:=(\mathcal{I},\mathcal{J},\mathcal{S})$
be a polygraph and $V$ be a symplectic space. Let $\omega\in\mathbb{Z}^{\mathcal{I}}$
be an $\mathcal{I}$-weight. If $(\mathrm{gr}_{\omega}(\Gamma),V)$
is (FRS), then so is $(\Gamma,V)$. 
\end{cor}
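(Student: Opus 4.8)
The plan is to realize $\mathrm{gr}_{\omega}(\Gamma)$ as the symbol of $\Gamma$ with respect to an auxiliary weight, quote the elimination method (Corollary \ref{cor:(cf.---Elimination)}), and then use homogeneity to pass from the origin to all of $V^{\mathcal{I}}$. First, fix an ordering $<$ of $\mathcal{I}$ and a function $a\colon\mathcal{S}\to\complex^{\times}$, and identify $V^{\mathcal{I}}\simeq\mathbb{A}^{(\dim V)|\mathcal{I}|}$ and $\complex^{\mathcal{J}}\simeq\mathbb{A}^{|\mathcal{J}|}$, so that $\Phi_{\Gamma,<,a,V}$ is a morphism of affine spaces. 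On the coordinate ring of $V^{\mathcal{I}}$ put the weight $\widehat{\omega}$ assigning the value $\omega(i)$ to every coordinate of the $i$-th copy of $V$. Each monomial occurring in $\langle v_{i},v_{j}\rangle$ is a product of one coordinate of copy $i$ with one coordinate of copy $j$, so it has $\widehat{\omega}$-degree $\omega(i)+\omega(j)=\widetilde{\omega}(\{i,j\})$; in particular all monomials inside a single block $\langle v_{i},v_{j}\rangle$ share the same $\widehat{\omega}$-degree. Hence the $l$-th component $\sum_{i<j}a(\{i,j\},l)\langle v_{i},v_{j}\rangle$ of $\Phi_{\Gamma,<,a,V}$ is a sum of homogeneous pieces indexed by the values $\widetilde{\omega}(\{i,j\})$ with $(\{i,j\},l)\in\mathcal{S}$, and its $\widehat{\omega}$-symbol retains exactly those blocks $\langle v_{i},v_{j}\rangle$ for which $\widetilde{\omega}(\{i,j\})$ is minimal among all $(\{j_{1},j_{2}\},l)\in\mathcal{S}$. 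By the definition of $\mathrm{gr}_{\omega}\mathcal{S}$ this yields
\[
\sigma_{\widehat{\omega}}\bigl(\Phi_{\Gamma,<,a,V}\bigr)=\Phi_{\mathrm{gr}_{\omega}(\Gamma),\,<,\,a|_{\mathrm{gr}_{\omega}\mathcal{S}},\,V}.
\]

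Since $(\mathrm{gr}_{\omega}(\Gamma),V)$ is (FRS), the right-hand side is an (FRS) morphism for every ordering and every coefficient function, so in particular $\sigma_{\widehat{\omega}}(\Phi_{\Gamma,<,a,V})$ is (FRS) at $(0,\dots,0)$. Applying Corollary \ref{cor:(cf.---Elimination)} with the (FRS) property in the role of $P$ gives that $\Phi_{\Gamma,<,a,V}$ is (FRS) at the origin. To upgrade this to (FRS) everywhere, note that each component of $\Phi_{\Gamma,<,a,V}$ is a quadratic form, so $\Phi_{\Gamma,<,a,V}$ is $\mathbb{G}_{m}$-equivariant for the actions $t\cdot(v_{i})=(tv_{i})$ on $V^{\mathcal{I}}$ and $t\cdot(x_{l})=(t^{2}x_{l})$ on $\complex^{\mathcal{J}}$, with the origin as the unique fixed point of the source. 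The locus where $\Phi_{\Gamma,<,a,V}$ is (FRS) is open (flatness is an open condition, and reducedness and rational singularities of the fibers are open over the flat locus, cf.\ Proposition \ref{Prop:properties preserved under deformations}) and is $\mathbb{G}_{m}$-stable, because (FRS)-ness at a point is invariant under the isomorphisms of source and target induced by the action. Since the origin lies in the closure of every $\mathbb{G}_{m}$-orbit, an open $\mathbb{G}_{m}$-stable subset containing $0$ is all of $V^{\mathcal{I}}$, so $\Phi_{\Gamma,<,a,V}$ is (FRS) everywhere. As $<$ and $a$ were arbitrary, $(\Gamma,V)$ is (FRS).

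I expect the only genuinely delicate point to be the bookkeeping in the symbol computation: one must check that the monomials surviving in $\sigma_{\widehat{\omega}}$ are governed precisely by the minimality condition defining $\mathrm{gr}_{\omega}\mathcal{S}$, using crucially that each bilinear block $\langle v_{i},v_{j}\rangle$ is $\widehat{\omega}$-homogeneous so that the symbol of a component is never a strict sub-sum of some block. Everything else is a direct appeal to the elimination method and to openness of the (FRS)-locus, together with the routine homogeneity argument for passing from the origin to the whole space.
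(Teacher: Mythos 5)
Your proof is correct and follows essentially the same route the paper takes: the paper cites this corollary from \cite{AA16} without proof, but its polyhypergraph generalization, Corollary \ref{cor:elimination for dPH}, is proved exactly as you do — compute that $\sigma_{\widehat{\omega}}(\Phi_{\Gamma,<,a,V})$ coincides with the polygraph map of $\mathrm{gr}_{\omega}(\Gamma)$ and then invoke Corollary \ref{cor:(cf.---Elimination)}. You are a bit more explicit than the paper in two places worth noting: you spell out why no cross-block cancellation can occur (each $\langle v_{i},v_{j}\rangle$ involves a disjoint set of variables), and you spell out the upgrade from ``(FRS) at the origin'' to ``(FRS) everywhere'' via $\mathbb{G}_{m}$-equivariance and openness of the (FRS)-locus, which the paper leaves implicit (it is the remark after Lemma \ref{lem: reduction to largest degree} applied to the degree-$2$ homogeneous map $\Phi_{\Gamma,<,a,V}$); both of these extra points are correct.
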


Notice that this elimination procedure introduces a new polygraph
$\mathrm{gr}_{\omega}(\Gamma)$ which has the same vertices but eliminates
some of the edges, leaving only the edges $\mathrm{gr}_{\omega}\mathcal{S}$.
Furthermore, this operation does not increase $\mathrm{dim}V$, as
no convolutions are involved. 
\item \textbf{Reduction from a polygraph to a graph; }It turns out that
for the special case of polygraphs induced from a classical Lie algebra
$\mathfrak{g}$, one can use the elimination method twice, and obtain
a new polygraph, $\Gamma_{2}$, which has a unique edge of each type
$\widehat{e}_{l}\in\mathcal{J}$ (see \cite[Section 2.6.2]{AA16}).
In this situation, the polygraph $\Gamma_{2}$ is just a graph. Since
graphs $\Gamma=(\mathcal{V},\mathcal{E})$, where $\mathcal{V}$ denotes
the set of vertices, and $\mathcal{E}$ the edges, are special cases
of polygraphs, we can define a map $\Phi_{\Gamma,V}:V^{\mathcal{V}}\rightarrow\complex^{\mathcal{E}}$
as before. The graph $\Gamma=(\mathcal{V},\mathcal{E})$ is then (FRS)
if $\Phi_{\Gamma,V}$ is. 
\item \textbf{A method of coloring and reduction to an edge; }Let $\Gamma=(\mathcal{V},\mathcal{E})$
be a graph, let $M$ be a finite set, which we think of as our set
of \textit{colors}, and let $\omega:\mathcal{V}\rightarrow\ints^{M}$
be a choice of an $M$-tuple of weights to each vertex. We call such
$\omega$ a \textit{coloring function.} As in the case of polygraphs,
$\omega$ induces a weight function on $\mathcal{V}^{\{2\}}$ by $\widetilde{\omega}(\{v_{1},v_{2}\}):=\omega(v_{1})+\omega(v_{2})$.
We say that $\omega$ is \textit{admissible} if for each edge $\varepsilon=\{v_{1},v_{2}\}\in\mathcal{E}$,
the tuple $\widetilde{\omega}(\varepsilon)\in\mathbb{Z}^{M}$ has
a unique minimum. We say that an edge $\varepsilon$ has color $i\in M$
if for each $j\in M$ we have $\widetilde{\omega}(\varepsilon)_{i}\leq\widetilde{\omega}(\varepsilon)_{j}$.
Notice that if $\omega$ is admissible, then each edge is colored
in a unique color. This means we can define a new graph $\mathrm{gr}_{\omega,i}\Gamma:=(\mathcal{V},\mathrm{gr}_{\omega,i}\mathcal{E})$
where $\mathrm{gr}_{\omega,i}\mathcal{E}=\{\varepsilon\in\mathcal{E}:\varepsilon\text{ has color }i\}$.
The following corollary relates the (FRS) property of $\Gamma$ to
the (FRS) property of each of the $\mathrm{gr}_{\omega,i}\Gamma$: 
\begin{cor}[{\cite[Corollary 2.33]{AA16}, coloring method}]
\label{cor:(-Coloring-method)-Let}Let $\Gamma:=(\mathcal{V},\mathcal{E})$
be a graph, $M$ a finite set and $\omega:\mathcal{V}\rightarrow\mathbb{Z}^{M}$
an admissible coloring. Let $\{V_{i}\}_{i\in M}$ be symplectic spaces
and $V=\bigoplus\limits _{i\in M}V_{i}$. Suppose that $(\mathrm{gr}_{\omega,i}\Gamma,V_{i})$
is (FRS). Then so is $(\Gamma,V)$. 
\end{cor}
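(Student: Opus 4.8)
The plan is to realise $\Phi_{\Gamma,V}$ as a single $\mathbb{G}_m$-degeneration away from an explicit \emph{product} of the colored pieces $\Phi_{\mathrm{gr}_{\omega,i}\Gamma,V_i}$, and then to combine the elimination method with the stability of the (FRS) property under products.

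First I would reduce to checking that $\Phi_{\Gamma,V}:V^{\mathcal{V}}\to\complex^{\mathcal{E}}$ is (FRS) at the origin. Since $\Phi_{\Gamma,V}$ is a homogeneous quadratic map, it is $\mathbb{G}_m$-equivariant for the scaling action (weight $1$ on $V^{\mathcal{V}}$, weight $2$ on $\complex^{\mathcal{E}}$); its (FRS)-locus is open by Proposition~\ref{Prop:properties preserved under deformations} (with $S=\mathrm{Spec}\,K$), it is $\mathbb{G}_m$-stable, and $0$ lies in the closure of every orbit, so being (FRS) at $0$ forces $\Phi_{\Gamma,V}$ to be (FRS) everywhere (alternatively, feed the trivial family over $\mathbb{A}^1$ with the scaling action and the sections $t\mapsto (tp,t)$ into Corollary~\ref{cor:reduction to degeneration}). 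Next I would compute a symbol. Write $V=\bigoplus_{i\in M}V_i$ with the symplectic form the orthogonal sum of the forms on the $V_i$, and decompose $v_v=\sum_{i\in M}v_v^{(i)}$, so that $V^{\mathcal{V}}=\bigoplus_{i\in M}V_i^{\mathcal{V}}$ and, for an edge $\varepsilon=\{v_1,v_2\}$,
\[
\langle v_{v_1},v_{v_2}\rangle_V=\sum_{i\in M}\langle v_{v_1}^{(i)},v_{v_2}^{(i)}\rangle_{V_i}.
\]
Assign to every coordinate of $v_v^{(i)}$ the integer weight $\omega(v)_i$, and call $\omega^{*}$ the resulting weight on $V^{\mathcal{V}}$; then the $i$-th summand above is $\omega^{*}$-homogeneous of degree $\widetilde{\omega}(\varepsilon)_i$. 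Admissibility of $\omega$ says $\widetilde{\omega}(\varepsilon)$ has a unique minimum, attained at $i$ equal to the color $c(\varepsilon)$ of $\varepsilon$, so all summands but the one indexed by $c(\varepsilon)$ are discarded in the symbol, giving $\sigma_{\omega^{*}}(\Phi_{\Gamma,V})\big((v_v)_v\big)_\varepsilon=\langle v_{v_1}^{(c(\varepsilon))},v_{v_2}^{(c(\varepsilon))}\rangle_{V_{c(\varepsilon)}}$. Sorting the edge-coordinates by color, $\complex^{\mathcal{E}}=\bigoplus_{i\in M}\complex^{\mathrm{gr}_{\omega,i}\mathcal{E}}$, the $i$-th block of $\sigma_{\omega^{*}}(\Phi_{\Gamma,V})$ involves only the coordinates in $V_i^{\mathcal{V}}$ and is precisely $\Phi_{\mathrm{gr}_{\omega,i}\Gamma,V_i}$. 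Hence, under these identifications, $\sigma_{\omega^{*}}(\Phi_{\Gamma,V})=\prod_{i\in M}\Phi_{\mathrm{gr}_{\omega,i}\Gamma,V_i}$.

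To finish: a finite product of (FRS) morphisms is (FRS) --- flatness and reducedness of fibers pass to products, and a product of varieties with rational singularities has rational singularities in characteristic zero --- so by hypothesis $\sigma_{\omega^{*}}(\Phi_{\Gamma,V})$ is (FRS), in particular (FRS) at the origin. The elimination method (Corollary~\ref{cor:(cf.---Elimination)}, applied with $P=(\mathrm{FRS})$) then yields that $\Phi_{\Gamma,V}$ is (FRS) at the origin, and by the reduction of the first step $\Phi_{\Gamma,V}$ is (FRS), i.e.~$(\Gamma,V)$ is (FRS). The step I expect to require the most care is the identification $\sigma_{\omega^{*}}(\Phi_{\Gamma,V})=\prod_{i}\Phi_{\mathrm{gr}_{\omega,i}\Gamma,V_i}$: it is exactly here that admissibility is used, to guarantee a single surviving monomial-block in each edge-coordinate, and one must verify that the color-$i$ block of the symbol genuinely decouples, depending only on the $V_i$-part of every vertex. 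The product-stability of the (FRS) property is routine (a weak form of the Thom--Sebastiani type input already used elsewhere in the paper).
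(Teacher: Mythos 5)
Your proof is correct and follows essentially the same route as the paper's argument for the more general polyhypergraph version, Corollary~\ref{cor:Coloring for for dPH}: there the auxiliary polyhypergraph $\Gamma_M$ with vertex set $\mathcal{I}\times M$ and the weight $\omega'(i,m)=\omega(i)_m$ are exactly your decomposition $V^{\mathcal{V}}=\bigoplus_{i\in M}V_i^{\mathcal{V}}$ and weight $\omega^{*}$, and admissibility again forces the graded piece to split as a product of the colored pieces. The only expository difference is that you spell out the reduction to the origin via homogeneity (justifying why ``$P$ at $0$'' in the elimination corollary upgrades to ``$P$'' for the quadratic map $\Phi_{\Gamma,V}$), a step the paper leaves implicit.
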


It is important to note that unlike the elimination method for polygraphs
(Corollary \ref{cor:(-elimination-for-polygraphs)}), the coloring
method comes with a price. Namely, we may divide our graph $\Gamma:=(\mathcal{V},\mathcal{E})$
to simpler graphs $\mathrm{gr}_{\omega,i}\Gamma$ with fewer edges,
and prove each is (FRS) separately, but we ``pay'' by enlarging
our symplectic space, which means more convolutions of $[\cdot,\cdot]:\mathfrak{g}^{2}\rightarrow\mathfrak{g}$.
Thus, the coloring method can be seen as applying $M$ self-convolutions,
followed by an elimination of edges.

In \cite[Section 2.6.3]{AA16}, Avni and Aizenbud showed that using
3 colors one can reduce the graph $\Gamma_{2}$ from Step 4 to a disjoint
union of trees (i.e.~a forest), each with degree bounded by a number
$D$. The bound on the degree $D$ depends on the type of $\mathfrak{g}$
(e.g.~for $\mathfrak{sl}_{n}$, taking $D=3$ suffices).

After reducing to a forest of maximal degree $D$, one can easily
reduce to a disjoint union of edges with the use of $2(D-1)$ colors
(see \cite[Theorem 2.35]{AA16}). 
\item \textbf{Proof for the case of an edge;} Given a pair $(\Gamma,V),$where
$\Gamma$ is a graph consisting of a single edge, the map $\Phi_{\Gamma,V}$
has a very simple description. It corresponds to the map $\Psi^{*\frac{\mathrm{dim}V}{2}}:\mathbb{A}^{2\dim V}\to\mathbb{A}^{1}$
where $\Psi(x_{1},x_{2},x_{3},x_{4})=x_{1}x_{2}-x_{3}x_{4}$. Since
$\Psi$ can be easily shown to be (FRS), it is enough to take $\mathrm{dim}V=2$.
\end{enumerate}
Taking $\mathfrak{sl}_{n}$ for example, the above steps are as follows.
We first construct the polygraph $\Gamma=(\mathcal{I},\mathcal{J},\mathcal{S})$
corresponding to $[\cdot,\cdot]:\mathfrak{sl}_{n}^{2}\to\mathfrak{sl}_{n}$.
We then reduce the polygraph $\Gamma$ to a graph $\Gamma_{2}$. Three
colors are needed to reduce $\Gamma_{2}$ to a forest $\Gamma_{3}$
of degree$\leq3$. Then, using four more colors, one can reduce to
an edge, which is (FRS) if $V$ is at least two dimensional. Altogether,
we deduce that if $\mathrm{dim}V=2\cdot4\cdot3=24$ then $(\Gamma,V)$
is (FRS) and hence also $[\cdot,\cdot]^{*12}:\mathfrak{sl}_{n}^{24}\rightarrow\mathfrak{sl}_{n}$
is (FRS), regardless of $n$.

\subsubsection{Generalizing to word maps: motivation}

We would like to prove Theorems \ref{thm: main thm Lie algebra word maps}
and \ref{thm: main theorem for matrix word map} using similar tools
as above. It would be convenient to reduce our problem to word maps
of homogeneous degree $d$, and to the special case of self-convolutions
of word maps (i.e.~to consider $\varphi_{w_{1}}^{*m}$ rather then
$\varphi_{w_{1}}*\dots*\varphi_{w_{m}}$).

A different combinatorial gadget is needed in order to deal with words
$w$ of homogeneous degree $d>2$; the edges in the polygraph in the
case of the commutator word, correspond to commutator relations between
various pairs of elements in a chosen basis $\mathcal{B}$ of the
Lie algebra $\mathfrak{g}$. In the case of a $d$-homogeneous word,
the edges should encode relations between $d$ elements in $\mathcal{B}$,
so one should consider hypergraphs instead of graphs, where edges
now consist of sets (or multisets) of $d$ vertices.

As discussed in the introduction, the main challenge is to degenerate
the hypergraph such that the resulting map will be simpler to analyze,
with similar singularity properties (or only slightly worse), and
generating. This is much more involved in the general case than the
commutator case. Certain pathologies such as in Example \ref{exa:trivial word maps}
appear when the rank of the Lie algebra is comparable to the degree
$d$ of $w$. Such situations will require a more careful treatment.

Before we construct the combinatorial gadget, we first make some reductions.

\subsection{\label{subsec:Reduction to self convolutions+homogeneous}Reduction
to self-convolutions of homogeneous word maps of pure type}
\begin{defn}
\label{def:simple commutator}An element in $\mathcal{L}_{r}(X_{1},\dots,X_{r})$
of the form $[[\dots[[X_{s_{1}},X_{s_{2}}],X_{s_{3}}],\dots],X_{s_{d}}]$
is called a \textit{(left-normed) Lie monomial of degree $d$,} and
it is denoted $X_{S}:=[X_{s_{1}},\ldots,X_{s_{d}}]$, for $S=(s_{1},\ldots,s_{d})\in[r]^{d}$.
Note that here, $X_{s}$ counts as a Lie monomial of degree 1. 
\end{defn}

The Jacobi identity and the anti-commutativity imply the following: 
\begin{fact}[{e.g.~\cite[Lemma 3.3]{Chi06}}]
\label{fact:sum of left normed monomials}Any $d$-homogeneous Lie
algebra word $w\in\mathcal{L}_{r}$ can be written as a linear combination
of left-normed Lie monomials of degree $d$. 
\end{fact}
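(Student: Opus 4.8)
The plan is to prove the statement by induction on the degree, with everything reduced to repeated use of the Jacobi identity and anti-commutativity. First I would recall that, by the definition of the free Lie algebra, $\mathcal{L}_r$ is spanned as a $K$-vector space by iterated bracket monomials in the generators $X_1,\dots,X_r$, and its degree-$d$ homogeneous component $(\mathcal{L}_r)_d$ is spanned by such monomials of total degree $d$. Hence it suffices to show that every iterated bracket monomial of degree $d$ is a $K$-linear combination of left-normed Lie monomials of degree $d$; call this statement $P(d)$.

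For the main induction on $d$, the base case $P(1)$ is immediate, since a degree-$1$ monomial is a single generator $X_i$, which is left-normed by convention. For $d\geq 2$, an arbitrary degree-$d$ monomial has the form $[M_1,M_2]$ with $M_1,M_2$ homogeneous of degrees $a,b\geq 1$ and $a+b=d$; by $P(a)$ and $P(b)$ each $M_i$ is a linear combination of left-normed monomials, so by bilinearity of the bracket it is enough to establish the following auxiliary claim: for any left-normed monomials $u$ of degree $a$ and $v$ of degree $b$, the bracket $[u,v]$ lies in the span of left-normed monomials of degree $a+b$.

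I would prove the auxiliary claim by a secondary induction on $b=\deg v$. When $b=1$ we have $v=X_t$ and $[u,X_t]$ is left-normed by definition, which is the base case. For $b\geq 2$, write $v=[v',X_{t_b}]$ with $v'$ left-normed of degree $b-1$, and apply the Jacobi identity in the form $[u,[v',X_{t_b}]]=[[u,v'],X_{t_b}]+[v',[u,X_{t_b}]]$. In the first term, $[u,v']$ is a linear combination of left-normed monomials by the secondary induction hypothesis (as $\deg v'=b-1$), and bracketing each such monomial on the right by $X_{t_b}$ again produces a left-normed monomial of degree $a+b$. In the second term, rewrite $[v',[u,X_{t_b}]]=-[[u,X_{t_b}],v']$; here $[u,X_{t_b}]$ is left-normed of degree $a+1$ while $\deg v'=b-1$, so the secondary induction hypothesis applies once more and expresses it in the required form. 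This closes both inductions and yields the Fact.

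The argument is elementary, and the only point that requires genuine care is organizing the two nested inductions so that every invocation of the Jacobi identity reduces to brackets with a strictly shorter right-hand factor, which is exactly what makes the secondary induction on $\deg v$ terminate. I would also remark in passing that the spanning set so obtained is far from a basis — the left-normed monomials of fixed degree satisfy linear relations coming precisely from Jacobi — but this redundancy is immaterial for the combinatorial constructions built on Fact~\ref{fact:sum of left normed monomials} in the sequel.
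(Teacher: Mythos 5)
Your proof is correct, and it is the standard argument; the paper itself does not prove this Fact but cites it to \cite{Chi06}, so there is no in-paper proof to compare against. The double induction is organized soundly: the secondary induction on $\deg v$, applied uniformly over all degrees of the left factor, is exactly what makes the Jacobi rewriting terminate, since in both terms $[[u,v'],X_{t_b}]$ and $-[[u,X_{t_b}],v']$ the right-hand factor has degree $b-1<b$. One could slightly streamline by noting the outer induction is just structural induction on the binary bracketing tree, with the auxiliary claim doing all the real work, but this is cosmetic. The observation at the end, that left-normed monomials of fixed degree span but are not independent, is accurate and worth keeping in mind given how the paper later manipulates expansions into Lie monomials (e.g.\ in Lemma~\ref{lem:combinatorial word map}).
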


\begin{defn}
\label{def:pure word}A word $w\in\mathcal{L}_{r}(X_{1},\dots,X_{r})$
is called \textit{$d$-homogeneous of pure type}, if it is of the
form 
\[
w(X_{1},\dots,X_{r})=\sum\limits _{S\in\mathrm{MS}^{-1}(S_{0})}c_{S}X_{S},
\]
where $c_{S}\in K$, $S_{0}$ is an element of $[r]^{(d)}$, the set
of multisets of $[r]$ of size $d$, and $\mathrm{MS}:[r]^{d}\rightarrow[r]^{(d)}$
is the forgetful map from $d$-tuples to $d$-multisets. In this case
we say that $w$ is \textsl{of pure type $S_{0}$}. 
\end{defn}

\begin{prop}
\label{prop:reduction to balanced homogeneous maps}It is enough to
prove Theorem \ref{thm: main thm Lie algebra word maps} for $d$-homogeneous
word maps of pure type. 
\end{prop}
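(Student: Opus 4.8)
The plan is to reduce the general statement of Theorem \ref{thm: main thm Lie algebra word maps} to the case of a single $d$-homogeneous word map of pure type in three successive reductions, each of which costs at most an explicitly bounded number of extra convolutions.

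\emph{Step 1: reduce to self-convolutions of a single word.} Given the collection $\{w_i \in \mathcal{L}_{r_i}\}$ of words of degree $\le d$, I first want to pass from the mixed convolution $\varphi_{w_1}*\dots*\varphi_{w_m}$ to a self-convolution $\varphi_w^{*m'}$ of one word $w$. The key point is that the singularity properties in question (flatness, (FRS), etc.) are symmetric and only improve under further convolution by any dominant/generating map (Proposition \ref{prop: properties preserved under convolution}), together with the fact that for semisimple $\mathfrak g$ each non-trivial $\varphi_{w_i}$ is generating (Proposition \ref{prop:Lie algebra word maps are generating}) and hence becomes dominant after finitely many convolutions (Lemma \ref{lem:generating becomes dominant} with Theorem \ref{Main result}). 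So it suffices to prove the stated bounds for $\varphi_w^{*m}$ where $w$ is an arbitrary word of degree $\le d$; one checks that replacing the $w_i$ by a single $w$ does not worsen the asymptotic constant $C$.

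\emph{Step 2: reduce to a homogeneous word.} Write $w = \sum_{j=1}^{d} w^{(j)}$ as a sum of its homogeneous components, where $w^{(j)}$ is $j$-homogeneous. Using the $\mathbb{G}_m$-action on $\mathfrak g^r$ that rescales all variables simultaneously, the symbol $\sigma_\omega(\varphi_w)$ with respect to the weight $\omega = (1,\dots,1)$ is exactly the word map of the lowest-degree nonzero homogeneous component of $w$ — but that is not quite what is wanted, since one wants the component of degree $d$ (the top one, since $\deg(w)=d$). Instead I would argue directly on convolution powers: an elementary dimension count (as in the remark before Theorem \ref{thm D}, or by writing the fiber explicitly) shows that if $\varphi_{w^{(d)}}^{*t}$ is flat (resp. (FRS)) for the homogeneous piece, then after adjusting $t$ by a bounded additive constant $\varphi_w^{*t}$ is too, via the elimination method (Corollary \ref{cor:(cf.---Elimination)}) applied with a suitable weight degenerating $\varphi_w$ to $\varphi_{w^{(d)}}$ — provided $\varphi_{w^{(d)}}$ remains generating, which holds whenever the relevant simple constituent has rank $> d/2$ by Corollary \ref{cor: commutator relations on simple Lie algebras}, and which one handles separately in low rank (this is exactly the ``low rank phenomena'' flagged in the methods section). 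So we reduce to $w$ being $d$-homogeneous.

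\emph{Step 3: reduce to pure type.} By Fact \ref{fact:sum of left normed monomials}, the $d$-homogeneous $w$ is a linear combination of left-normed Lie monomials $X_S$, $S \in [r]^d$. Grouping these by their underlying multiset $\mathrm{MS}(S) \in [r]^{(d)}$ decomposes $w = \sum_{S_0 \in [r]^{(d)}} w_{S_0}$ into a sum of words of pure type, \emph{each of which uses a disjoint set of ``slots''} once we relabel variables. Concretely, introduce fresh variables for each pure-type summand so that $\varphi_w$ becomes (a degeneration of, via Corollary \ref{cor:(cf.---Elimination)} with a weight separating the monomial groups by their multiset type) a map whose coordinates split as independent blocks, one per pure type $S_0$; the fiber is then a product of the fibers of the pure-type pieces, so flatness/(FRS) of the whole follows from that of each piece (using that (FRS) and flatness with [normal/locally integral] fibers are preserved under products of morphisms, cf. the Thom–Sebastiani-type input and Proposition \ref{Prop:properties preserved under deformations}). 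This is the content of Proposition \ref{prop:reduction to balanced homogeneous maps}.

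\textbf{Main obstacle.} The delicate point is Step 2--3: each degeneration (from $w$ to $w^{(d)}$, and then to the pure-type pieces) must be \emph{carefully chosen so that the degenerate map stays generating}, since a generating morphism can degenerate to a non-generating one (the $\psi_t(x,y)=(x,x+ty^2)$ example in the methods section). For semisimple Lie algebras with all simple constituents of rank $> d/2$ this is automatic from Amitsur--Levitzki (Corollary \ref{cor: commutator relations on simple Lie algebras}), but the low-rank constituents ($\mathrm{rk} = O(d)$) require a genuinely different argument — presumably the ``Step (5)'' treatment promised in the introduction, bounding the log canonical threshold of the fibers of a single $d$-homogeneous polynomial map $\mathbb{A}^n \to \mathbb{A}^1$ directly via Fact \ref{fact:properties of log cannonical threshold}(2) and a Thom--Sebastiani argument. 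I expect the bookkeeping of exactly how many additive convolutions are consumed at each reduction (to keep the final constant $C < 10^6$) to be the most laborious part, but the conceptual obstacle is preserving the generating property through the low-rank degenerations.
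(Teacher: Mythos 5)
Your proposal misidentifies the scope of the proposition and, within that scope, misses the two key mechanisms.

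First, Proposition \ref{prop:reduction to balanced homogeneous maps} only reduces Theorem \ref{thm: main thm Lie algebra word maps} to the case of a \emph{collection} of $d$-homogeneous pure-type words; the further reduction to self-convolutions of a \emph{single} such word is a separate statement (Proposition \ref{prop:reduction to self convolutions}, whose proof relies on Proposition \ref{prop:reduction to self convolution- the general case} and a genuinely different tool — Fourier analysis over $\mathbb{F}_p$ and a H\"older inequality — not on "one checks that replacing the $w_i$ by a single $w$ does not worsen the constant"). Your Step 1 therefore does not belong here, and is not actually argued.

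Second, your Step 2 correctly notices the obstacle — the weight $\omega=(1,\dots,1)$ picks out the \emph{lowest}-degree homogeneous component via the elimination method, while one needs the top one — but does not resolve it. The paper's resolution is Lemma \ref{lem: reduction to largest degree}: adjoin an auxiliary parameter $t\in\mathbb{A}^1$, write $\varphi_{1,i}=\sum_{k=0}^{l_i}P_{i,k}$ with $P_{i,k}$ homogeneous of degree $k$, set $\psi_i(t,x)=\sum_k t^{l_i-k}P_{i,k}(x)$, and observe that $\psi$ is $\mathbb{G}_m$-equivariant for $s\cdot(t,x)=(st,sx)$ on the source and $s\cdot(t,y_1,\dots,y_m)=(st,s^{l_1}y_1,\dots,s^{l_m}y_m)$ on the target, with $\psi(1,\cdot)=\varphi$ and $\psi(0,\cdot)$ the top-degree part. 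Corollary \ref{cor:reduction to degeneration} along the $\mathbb{G}_m$-equivariant section $f(t)=(t,ta_1,\dots,ta_n)$ then gives the degeneration at every point. Your fallback to "an elementary dimension count" plus the elimination method is not a substitute, since the elimination method by itself degenerates in the wrong direction.

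Third, your Step 3 is wrong as described. The pure-type summands $w_{S_0}=\sum_{S\in\mathrm{MS}^{-1}(S_0)}c_S X_S$ all live on the \emph{same} $r$ variables $X_1,\dots,X_r$ — they do not become disjoint under any relabelling, so the fiber does not factor as a product of pure-type fibers and no Thom--Sebastiani argument applies at this stage. The paper instead applies the elimination method (Corollary \ref{cor:(cf.---Elimination)}) with the weight $\omega(X_i)=(d+1)^i$: since base-$(d+1)$ expansions with digits $\le d$ are unique, two degree-$d$ Lie monomials $X_I,X_J$ have $\deg_\omega(X_I)=\deg_\omega(X_J)$ if and only if $\mathrm{MS}(I)=\mathrm{MS}(J)$, so the symbol $\sigma_\omega(\varphi_w)$ is a \emph{single} pure-type word map (the one of minimal $\omega$-degree), not a disjoint union. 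Showing the theorem for that single pure-type word then suffices. Your concern about the degeneration remaining generating is legitimate in general, but it is not the mechanism of this particular proposition; the low-rank and Thom--Sebastiani machinery you invoke belongs further downstream, in Subsection \ref{subsec:Proof-for low rank and few edges}.
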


The proof of Proposition \ref{prop:reduction to balanced homogeneous maps}
relies on the following lemma: 
\begin{lem}
\label{lem: reduction to largest degree}Let $\{\varphi_{i}=(\varphi_{i,1},\dots,\varphi_{i,m}):\mathbb{A}^{n_{i}}\rightarrow\mathbb{A}^{m}\}_{i=1}^{N}$
be polynomial maps, and let $\widetilde{\varphi}_{i,j}\in K[x_{1},\ldots,x_{n_{i}}]$
be the homogeneous part of $\varphi_{i,j}$ of maximal degree. Assume
that $\widetilde{\varphi}_{1}*\ldots*\widetilde{\varphi}_{N}:\mathbb{A}^{n_{1}}\times\ldots\times\mathbb{A}^{n_{N}}\to\mathbb{A}^{m}$
is flat (resp.~(FRS)) at $(0,\dots,0)$. Then $\varphi_{1}*\ldots*\varphi_{N}$
is flat (resp.~(FRS)). 
\end{lem}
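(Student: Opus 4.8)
The plan is to exhibit $\varphi_1 * \ldots * \varphi_N$ as the generic member of a one-parameter family whose special fiber is (a map closely related to) $\widetilde{\varphi}_1 * \ldots * \widetilde{\varphi}_N$, and then invoke the degeneration machinery already set up in Subsection \ref{subsec:Degeneations}, in particular Corollary \ref{cor:reduction to degeneration} (and the elimination method, Corollary \ref{cor:(cf.---Elimination)}). First I would reduce to a single map: set $n = n_1 + \ldots + n_N$, write $\varphi := \varphi_1 * \ldots * \varphi_N : \mathbb{A}^n \to \mathbb{A}^m$, and note that each coordinate $\varphi_j = \varphi_{1,j}(x^{(1)}) + \ldots + \varphi_{N,j}(x^{(N)})$ where $x^{(i)}$ are the coordinates on the $i$-th factor $\mathbb{A}^{n_i}$. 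The homogeneous part of $\varphi_j$ of maximal degree need not equal $\widetilde{\varphi}_{1,j} + \ldots + \widetilde{\varphi}_{N,j}$ if the $\widetilde\varphi_{i,j}$ have different degrees; but that is exactly the situation the symbol/weight formalism handles, so I would use a weight $\omega$ that rescales each block of variables.

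The key step is the choice of weight. Let $d_{i,j} = \deg \varphi_{i,j}$ and let $\widetilde\varphi_{i,j}$ be its top-degree part, of degree $d_{i,j}$. Pick a large integer $D$ (bigger than all $d_{i,j}$) and assign to the variables $x^{(i)}$ in the $i$-th block the weight $\omega^{(i)} := D^{i}$ (the same value on all $n_i$ coordinates of that block), say for $i = 1, \ldots, N$. Then a monomial of $\varphi_{i,j}$ of ordinary degree $e$ has $\omega$-degree $e D^{i}$, so within the $i$-th block the $\omega$-minimal part of $\varphi_{i,j}$ is… wait — here I actually want the \emph{maximal} degree part, so I would instead use Corollary \ref{cor:(cf.---Elimination)} after the coordinate change $x \mapsto t^{-1} x$ combined with multiplying the target by a suitable power of $t$, i.e. apply the symbol construction with a \emph{negative} weight $\omega^{(i)} = -D^{i}$ on the $i$-th block. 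With this weight, in each coordinate $j$ the monomials of lowest $\omega$-degree are those coming from the block $i$ with $d_{i,j} D^{i}$ largest, and only the top-degree part $\widetilde\varphi_{i,j}$ of that block survives. So $\sigma_\omega(\varphi)_j = \widetilde\varphi_{i(j),j}$ for a single block index $i(j)$ depending on $j$. This is not literally $\widetilde\varphi_1 * \ldots * \widetilde\varphi_N$, but it is a map each of whose coordinates is a coordinate of some $\widetilde\varphi_{i}$, hence it is a "coordinate projection composed with" $\widetilde\varphi_1 * \ldots * \widetilde\varphi_N$; since the latter is flat (resp.\ (FRS)) at the origin by hypothesis, and flatness and the (FRS) property pass to such induced maps (this is where I would want a small auxiliary observation, or a second application of a weight separating the degrees $d_{i,j}$ for fixed $i$ across different $j$), we conclude $\sigma_\omega(\varphi)$ is flat (resp.\ (FRS)) at $(0,\dots,0)$. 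Then Corollary \ref{cor:(cf.---Elimination)} gives that $\varphi$ is flat (resp.\ (FRS)) \emph{at the origin}; finally, flatness (resp.\ the (FRS) property) of $\varphi$ at \emph{every} point follows from conjugation-like homogeneity is not available here, so instead I would note that $\widetilde\varphi_1 * \ldots * \widetilde\varphi_N$ is homogeneous, hence its flatness/(FRS) at $0$ gives it everywhere (using the $\mathbb{G}_m$-action and Corollary \ref{cor:reduction to degeneration} / Proposition \ref{Prop:properties preserved under deformations}), and then flatness/(FRS) of $\varphi$ everywhere follows because $\varphi$ is a deformation of the homogeneous map and these are open conditions by Proposition \ref{Prop:properties preserved under deformations}.

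The main obstacle I expect is the bookkeeping in the previous paragraph: making precise the claim that "$\sigma_\omega(\varphi)$ is a map whose flatness/(FRS) at the origin follows from that of $\widetilde\varphi_1 * \ldots * \widetilde\varphi_N$", because the coordinates of $\sigma_\omega(\varphi)$ are spread across the various $\widetilde\varphi_i$ in a way that depends on the relative sizes of the degrees $d_{i,j}$. The clean way around this is to choose the weight more carefully — a lexicographic-type weight that first separates blocks and then, within each block, is generic enough that no cancellation across coordinates occurs — so that $\sigma_\omega(\varphi)$ literally becomes (after reindexing coordinates on the source) the map $\widetilde\varphi_1 * \ldots * \widetilde\varphi_N$ with some of the source variables simply not appearing; dropping unused source variables preserves flatness and the (FRS) property trivially (it multiplies all fibers by an affine space). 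With that choice, the argument reduces cleanly to Corollary \ref{cor:(cf.---Elimination)} plus the homogeneity-plus-openness remark, and no further input is needed.
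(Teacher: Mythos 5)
You correctly reach ``flat (resp.\ (FRS)) at the origin'' via the elimination method (Corollary \ref{cor:(cf.---Elimination)}), but the step from there to ``flat (resp.\ (FRS)) everywhere'' is a genuine gap. You argue that $\widetilde{\varphi}_1*\dots*\widetilde{\varphi}_N$ is good everywhere by homogeneity, and that $\varphi$ is a deformation of it with the good locus open (Proposition \ref{Prop:properties preserved under deformations}). But openness in the total space $\mathbb{A}^1\times\mathbb{A}^n$ only says the good locus contains an open neighborhood of the special fiber $\{0\}\times\mathbb{A}^n$; it says nothing about the generic fiber $\{1\}\times\mathbb{A}^n$, which is what you need. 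The paper avoids this entirely: it applies Corollary \ref{cor:reduction to degeneration} directly, choosing for every $a\in\mathbb{A}^n$ the $\mathbb{G}_m$-equivariant section $f(t)=(t,ta_1,\dots,ta_n)$ of the family $\psi_j(t,x)=\sum_k t^{l_j-k}P_{j,k}(x)$ (with source action $s.(t,x)=(st,sx)$, target action $s.(t,y_j)=(st,s^{l_j}y_j)$, $l_j=\mathrm{deg}\,\varphi_j$). That section hits $(1,a)$ at $t=1$ and $(0,\dots,0)$ at $t=0$, so ``good at the origin for $\psi_0=\widetilde{\varphi}$'' is transported to ``good at $a$ for $\psi_1=\varphi$'' with no openness appeal. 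Equivalently, translating by $a$ does not change the top-degree part, so your origin result applied to $x\mapsto\varphi(x+a)$ already gives the conclusion at $a$; either way, the hypothesis is used \emph{only} at the origin, and one never needs to know $\widetilde{\varphi}$ is good away from it.

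The difficulty you flagged about $\sigma_\omega(\varphi)$ versus $\widetilde{\varphi}_1*\dots*\widetilde{\varphi}_N$ is also real, and your suggested repair (a lexicographic weight, then dropping unused variables) does not settle it: when $\mathrm{deg}\,\widetilde{\varphi}_{i,j}$ is not constant in $j$ within a block, no choice of block-uniform weight makes the symbol retain all the $\widetilde{\varphi}_{i,j}$, and in fact the (FRS) part of the statement fails without a hypothesis to this effect. With $\varphi_1(x)=(x^2+x,\,x)$ and $\varphi_2(y)=(y,\,y^2+y)$, the hypothesis map $\widetilde{\varphi}_1*\widetilde{\varphi}_2=(x^2+y,\,x+y^2)$ is \'etale at $(0,0)$, yet the fiber of $\varphi_1*\varphi_2=(x^2+x+y,\,x+y^2+y)$ over $(0,0)$ is $\mathrm{Spec}\,K[x]/(x^3(x+2))$, which is non-reduced. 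In the paper's applications the nonzero $\widetilde{\varphi}_{i,j}$ in block $i$ all have the same degree $d_i$ (they are coordinates of the $d_i$-homogeneous word map $\varphi_{\widetilde{w}_i}$); under that implicit hypothesis, source weights $w_i$ proportional to $1/d_i$ equalize the $\omega$-degrees across blocks, $\sigma_\omega(\varphi)=\widetilde{\varphi}_1*\dots*\widetilde{\varphi}_N$ as you wanted, and the $\mathbb{G}_m$-equivariant degeneration of the previous paragraph (with blockwise source weights $s.x^{(i)}=s^{w_i}x^{(i)}$) goes through for general $N$.
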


\begin{proof}
We prove the lemma in the case $N=1$, the general case is similar.
Write $\varphi_{1,i}(x_{1},\dots,x_{n})=\sum\limits _{k=0}^{l_{i}}P_{i,k}(x_{1},\ldots,x_{n})$,
where $P_{i,k}$ is a homogeneous polynomial of degree $k$, and $P_{i,l_{i}}$
is non-zero. Now, define a map $\psi=(t,\psi_{1},\dots,\psi_{m}):\mathbb{A}^{1}\times\mathbb{A}^{n}\rightarrow\mathbb{A}^{1}\times\mathbb{A}^{m}$
by
\[
\psi_{i}(t,x_{1},\dots,x_{n})=\sum\limits _{k=0}^{l_{i}}t^{l_{i}-k}P_{i,k}(x_{1},\dots,x_{n}).
\]
Then $\psi$ is an $\mathbb{A}^{1}$-family of morphisms parameterized
by $t$, where $\psi(1)=\varphi$ and $\psi(0)=\widetilde{\varphi}$.
Consider the following action of $\mathbb{G}_{m}$; for $s\in K^{\times}$
set $s.(t,x_{1},\dots,x_{n})=(st,sx_{1},\dots,sx_{n})$ and $s.(t,y_{1},\dots,y_{m}))=(st,s^{l_{1}}y_{1},\dots,s^{l_{m}}y_{m})$.
Note that $\psi$ is $\mathbb{G}_{m}$-equivariant, as for any $s\in K^{\times}$,
\[
\psi_{i}(st,sx_{1},\dots,sx_{n})=\sum\limits _{k=0}^{l_{i}}s^{l_{i}-k}t^{l_{i}-k}P_{i,k}(sx_{1},\dots,sx_{n})=s^{l_{i}}\sum\limits _{k=0}^{l_{i}}t^{l_{i}-k}P_{i,k}(x_{1},\dots,x_{n})=s.\psi_{i}(t,x_{1},\dots,x_{n}).
\]
Now, take $(a_{1},\dots,a_{n})\in K^{n}$ and choose the section $f:\mathbb{A}^{1}\rightarrow\mathbb{A}^{1}\times\mathbb{A}^{n}$
defined by $f(t)=(t,ta_{1},\dots,ta_{n})$. Clearly $f$ is $\mathbb{G}_{m}$-equivariant,
so by Corollary \ref{cor:reduction to degeneration} $\varphi$ is
flat (resp.~(FRS)) at $(a_{1},\dots,a_{n})$. 
\end{proof}
\begin{rem}
Note that if $\varphi:\mathbb{A}^{n}\to\mathbb{A}^{m}$ is a homogeneous
map which is flat (resp.~(FRS)) at $(0,\ldots,0)$, then it is flat
(resp.~(FRS)). 
\end{rem}

\begin{proof}[Proof of Proposition \ref{prop:reduction to balanced homogeneous maps}]
Let $\{w_{i}\in\mathcal{L}_{r_{i}}\}_{i\in\nats}$ be a collection
of Lie algebra words of degree at most $d$, and set $w=w_{i_{1}}*\dots*w_{i_{m}}$.
For each $i$, let $\widetilde{w}_{i}$ be the homogeneous part of
maximal degree of $w_{i}$. By the above lemma, in order to prove
that $\varphi_{w}=\varphi_{w_{i_{1}}}*\dots*\varphi_{w_{i_{m}}}$
is flat (resp.~(FRS)) it is enough to show that $\varphi_{\widetilde{w}_{i_{1}}}*\dots*\varphi_{\widetilde{w}_{i_{m}}}$
is flat (resp.~(FRS)). We have therefore reduced Theorem \ref{thm: main thm Lie algebra word maps}
to the case of homogeneous words.

To reduce to the case of homogeneous words of pure type, we use the
elimination method (Corollary \ref{cor:(cf.---Elimination)}). Given
a homogeneous word $v\in\mathcal{L}(X_{1},\dots,X_{r})$ of degree
$d$, we choose a weight function $\omega(X_{i})=(d+1)^{i}$, that
is, we assign the same weight $(d+1)^{i}$ to all of the variables
in the $i$-th copy of $\mathfrak{g}$ which is represented by $X_{i}$.
This degenerates $v$ to a homogeneous word of pure type, since two
Lie monomials $X_{I}$ and $X_{J}$ of degree $d$ satisfy $\mathrm{deg}_{\omega}(X_{I})=\mathrm{deg}_{\omega}(X_{J})$
if and only if $\mathrm{MS}(I)=\mathrm{MS}(J)$. 
\end{proof}
We would now like to further reduce to the case of self-convolutions: 
\begin{prop}
\label{prop:reduction to self convolutions}It is enough to prove
Theorem \ref{thm: main thm Lie algebra word maps} in the case of
self-convolutions of homogeneous word maps of pure type. 
\end{prop}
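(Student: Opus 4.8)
The plan is to reduce a general convolution $\varphi_{w_1}*\dots*\varphi_{w_m}$ of homogeneous word maps of pure type to a \emph{single} self-convolution $\varphi_v^{*M}$ of one homogeneous word map of pure type, at the cost of only a constant factor in the number of convolutions (so that the polynomial-in-$d$ bounds of Theorem \ref{thm: main thm Lie algebra word maps} are unaffected). By Proposition \ref{prop:reduction to balanced homogeneous maps} we may assume each $w_i\in\mathcal{L}_{r_i}$ is $d_i$-homogeneous of pure type with $d_i\le d$; after padding (see below) we may even assume each $w_i$ is exactly $d$-homogeneous. The key observation is that if $v\in\mathcal{L}_R$ is a single word which, in its list of Lie monomials, ``contains'' each $w_i$ on a disjoint block of variables (together with possibly some auxiliary monomials), then a carefully chosen $\mathbb{G}_m$-degeneration of $\varphi_v^{*M}$ collapses onto $\varphi_{w_1}*\dots*\varphi_{w_m}$, possibly tensored with extra factors which are themselves self-convolutions. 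Concretely: given the finite collection $\{w_i\}$ appearing in a fixed convolution, set $v := w_1 \oplus \dots \oplus w_m$, the word on $r_1+\dots+r_m$ disjoint variables obtained by renaming the variables of each $w_i$ to a fresh block and adding the resulting Lie polynomials — but this is not homogeneous of pure type in general, so instead one takes $v$ to be the pure-type homogeneous word whose monomials are exactly those of the $w_i$ (on disjoint blocks), discarding cross terms, which is legitimate because pure type is preserved by the forgetful-map argument already used in the proof of Proposition \ref{prop:reduction to balanced homogeneous maps}.

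The mechanism is the elimination method (Corollary \ref{cor:(cf.---Elimination)}) combined with the weight trick: in $\varphi_v^{*M}$ one assigns weights to the variable-blocks so that, after passing to the symbol $\sigma_\omega$, only the monomials coming from a prescribed $w_i$ on a prescribed convolution-slot survive, and the surviving map is (up to reindexing of the copies) precisely $\varphi_{w_1}*\dots*\varphi_{w_m}$ — here one uses that taking $M$ large simply means each block gets $M$ convolution copies, which is exactly the freedom needed to realize $\varphi_{w_1}*\dots*\varphi_{w_m}$ as a degeneration of $\varphi_v^{*M}$ for $M$ bounded by a constant times $m$, hence (after the reductions of Section \ref{subsec:Reduction to self convolutions+homogeneous}, where $m=O(d^3)$ or $O(d^4)$) by a constant times $d^3$ or $d^4$. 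Since flatness and the (FRS) property are preserved under $\mathbb{G}_m$-degeneration (Corollary \ref{cor:reduction to degeneration}, via the classes in $\mathcal{P}$), establishing these properties for $\varphi_v^{*M}$ establishes them for $\varphi_{w_1}*\dots*\varphi_{w_m}$. One must also check that passing from the given semisimple $\mathfrak{g}$ to the word $v$ keeps the non-triviality hypothesis intact — this is automatic since the monomials of $v$ are literally the monomials of the $w_i$, so $\varphi_v$ is non-trivial on each simple constituent whenever all the $\varphi_{w_i}$ are.

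The main obstacle I anticipate is bookkeeping rather than conceptual: one must verify that the degeneration does not merely produce $\varphi_{w_1}*\dots*\varphi_{w_m}$ ``plus noise'', but genuinely produces it as a \emph{direct factor} (or the whole thing), so that flatness/(FRS) of the degeneration forces the same for the target map; this requires choosing the extra auxiliary monomials of $v$ and the weights so that the symbol decomposes the map into a product of the desired map and maps that are visibly at least as regular (e.g.\ trivial factors, or self-convolutions already covered). A secondary subtlety is the padding step: a $d'$-homogeneous pure-type word with $d'<d$ must be promoted to a $d$-homogeneous pure-type word without destroying the word-map structure — this is done by multiplying (bracketing) with a fixed extra variable an appropriate number of times, which only enlarges the monomial structure in a controlled way and does not affect the singularity properties after degeneration, again by Corollary \ref{cor:reduction to degeneration}. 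None of these steps should cost more than a bounded multiplicative factor in the convolution count, so the final bound remains $O(d^3)$ for flatness and $O(d^4)$ for (FRS), as claimed.
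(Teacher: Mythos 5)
Your proposal takes a genuinely different route from the paper, but it has two gaps that I do not see how to repair.

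First, the direction of the degeneration argument is backwards. Corollary \ref{cor:reduction to degeneration} gives: if the degeneration $\widetilde{\varphi}_{0}$ is $P$ at $s(0)$, then the original $\widetilde{\varphi}_{1}$ is $P$ at $s(1)$. You set up $\varphi_{w_1}*\dots*\varphi_{w_m}$ as a degeneration of $\varphi_v^{*M}$, and then assert that establishing flatness/(FRS) for $\varphi_v^{*M}$ establishes them for $\varphi_{w_1}*\dots*\varphi_{w_m}$. That is exactly the wrong inference: these properties are not preserved upon passing to the degenerate limit (Remark \ref{rem:warning-elimination} already gives a dominant map whose symbol is not even generating). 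Knowing $\varphi_v^{*M}$ has the property would tell you nothing about its degeneration; you would instead need to degenerate $\varphi_{w_1}*\dots*\varphi_{w_m}$ itself to something already known to be regular.

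Second, the word $v$ you want does not exist. After Proposition \ref{prop:reduction to balanced homogeneous maps}, each $w_i$ is homogeneous of some pure type $S_0^{(i)}$, and by Definition \ref{def:pure word} a pure-type word is supported on a \emph{single} multiset of variables. After renaming to disjoint blocks, the monomials of distinct $w_i$ involve disjoint variable sets, so they lie in distinct multisets and there is no single pure type containing them; there are no cross terms to discard, and $w_1\oplus\dots\oplus w_m$ is intrinsically of $m$ distinct pure types. Moreover, the elimination method retains the \emph{exact} coefficients of the surviving monomials, so a fixed $v$ cannot degenerate onto the arbitrary coefficient profiles $(c_S)_S$ appearing in the various $w_i$. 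The paper's proof avoids all of this: Proposition \ref{prop:reduction to self convolution- the general case} shows directly, via $\mathbb{F}_p$-point counting, Fourier transform and a H\"older-type inequality (and \cite[Lemma 7.3]{GH19} for (FRS)), that if each $\varphi_i^{*t}$ is flat (resp.\ (FRS)) then $\varphi_1*\dots*\varphi_{2t}$ is too; no master word is constructed, and the cost is just the factor $2$.
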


The proof of Proposition \ref{prop:reduction to self convolutions}
follows directly from the following proposition: 
\begin{prop}
\label{prop:reduction to self convolution- the general case}Let $\{\varphi_{i}:X_{i}\rightarrow V\}_{i\in I}$
be a collection of morphisms, with $X_{i}$ smooth and geometrically
irreducible for each $i$, and $V$ a $K$-vector space. Assume that
there exists $t\in\nats$ such that $\varphi_{i}^{*t}$ is flat (resp.~(FRS))
for every $i$. Then $\varphi_{1}*\dots*\varphi_{2t}$ is flat (resp.~(FRS)). 
\end{prop}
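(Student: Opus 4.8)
The plan is to leverage the two foundational facts about convolutions established earlier: first, that flatness (resp.\ the (FRS) property) of a single factor propagates to any convolution containing it (Proposition \ref{prop: properties preserved under convolution}); and second, the quantitative dimension bound in Lemma \ref{lem:Auxilery lemma for matrix word maps}. The idea is that if $\varphi_i^{*t}$ is flat for each $i$, then in particular each $\varphi_i$ is dominant (being a factor of a flat, hence surjective, morphism into the connected group $V$, viewed additively), and moreover each $\varphi_i$ is $\tfrac{1}{t}$-flat by Lemma \ref{lem:convolution gives bounds on epsilon flatness}. So I would first record these two consequences: $\varphi_i$ is dominant and $\dim (X_i)_{v,\varphi_i}\le \dim X_i - \tfrac{1}{t}\dim V$ for all $v$.

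For the flatness statement, I would argue as follows. Group the $2t$ morphisms into two blocks of $t$: apply Proposition \ref{prop: properties preserved under convolution} to see it suffices to understand the interaction of a flat morphism with the rest. More concretely, $\varphi_1 * \dots * \varphi_t$ need not be flat, but each $\varphi_i$ is $\tfrac1t$-flat, so by an iterated application of Lemma \ref{lem:Auxilery lemma for matrix word maps} (or rather its proof, tracking dimensions of fibers) the convolution of $t$ such $\tfrac1t$-flat dominant morphisms has fibers of dimension at most $\sum_{i=1}^t \dim X_i - \dim V$, i.e.\ it is flat; this is exactly the content one extracts by running the dimension count in the proof of Lemma \ref{lem:Auxilery lemma for matrix word maps} with $\psi$ replaced successively by each $\varphi_i$ and $k = \dim V / t$ at each step, summing the deficiencies. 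Wait—one must be careful that $k$ need not be an integer; the cleaner route is: since $\varphi_i^{*t}$ is flat, $\varphi_i^{*t}$ has fibers of the expected dimension, and then $\varphi_1^{*t} * \varphi_2^{*t} * \dots$ would be flat by Proposition \ref{prop: properties preserved under convolution}, but that uses $2t^2$ factors, not $2t$. So instead I would directly use Lemma \ref{lem:Auxilery lemma for matrix word maps}: take $\psi = \varphi_1$ and note $\varphi_1^{*t}$ flat gives, for the morphism $\varphi_2$, that $\dim (X_2)_{v,\varphi_2} \le \dim X_2 - \dim V + (\dim V - \dim V/t)$; hmm, this still needs the integer hypothesis. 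The honest fix: replace "flat" in the hypothesis on $\varphi_i^{*t}$ by what it literally gives, namely $\dim (X_i^t)_{v, \varphi_i^{*t}} = t\dim X_i - \dim V$, hence $\dim (X_i)_{v',\varphi_i} \le \dim X_i - \tfrac1t \dim V$ for every $v'$; then for the convolution $\varphi_1 * \dots * \varphi_t$, a union bound over fibers (stratifying the target by the dimension of the $\varphi_1$-fiber, exactly as in \eqref{eq:(3.1)}--\eqref{eq:(3.2)}) shows its fibers have dimension $\le \sum_{i=1}^{t}\dim X_i - \dim V$. Doing the same for $\varphi_{t+1} * \dots * \varphi_{2t}$ and then convolving the two — using that the convolution of two dominant morphisms whose fiber-dimension deficiencies add up to at least $\dim V$ is flat — gives flatness of $\varphi_1 * \dots * \varphi_{2t}$.

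For the (FRS) statement, I would instead invoke Proposition \ref{prop:extended properties of convolutions}: once we know (by the flatness argument above, applied to the first $2t$ — actually the first convolution already suffices) that some initial convolution $\varphi_1 * \dots * \varphi_j$ is flat for $j$ as small as possible, and that each $\varphi_i^{*t}$ is (FRS) hence jet-flat (Corollary \ref{cor: singularity properties through dim of jets}), one promotes jet-flatness of the $\varphi_i$ via Lemma \ref{lem:convolution gives bounds on epsilon flatness} to $\tfrac1t$-jet-flatness, then reruns the dimension count on every jet level $J_m$; concluding that $J_m(\varphi_1 * \dots * \varphi_{2t})$ is flat for all $m$, i.e.\ $\varphi_1 * \dots * \varphi_{2t}$ is jet-flat. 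Combined with the generic geometric irreducibility that comes for free once we have $\ge 2$ factors (here via Corollary \ref{cor:jet-flat implies FTS after two convolusions}, noting jet-flatness of one factor plus two more convolutions yields (FTS), hence (FRS)), this gives the claim — in fact we only need that $\varphi_1^{*t}$ being (FRS), hence jet-flat, makes $\varphi_1^{*t} * \varphi_2^{*t}$ (FRS) by Corollary \ref{cor:jet-flat implies FTS after two convolusions}, and this is a sub-convolution of $\varphi_1 * \dots * \varphi_{2t}$ only up to reindexing; since (FRS) of a factor propagates (Proposition \ref{prop: properties preserved under convolution}), it is enough to exhibit \emph{one} (FRS) sub-convolution among $\varphi_1 * \dots * \varphi_{2t}$, and $\varphi_1^{*t} * \varphi_2^{*t}$ is not literally one. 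So the cleanest argument really is the jet-flatness dimension count: show $\varphi_1 * \dots * \varphi_{2t}$ is jet-flat by the same stratification argument applied to $J_m$, then apply Corollary \ref{cor:jet-flat implies FTS after two convolusions} to the decomposition $(\varphi_1 * \dots * \varphi_{2t-2}) * \varphi_{2t-1} * \varphi_{2t}$, using that $\varphi_1 * \dots * \varphi_{2t-2}$ is jet-flat (same count with $2t-2 \ge t$ factors, which still suffices since $t\ge 1$... actually one needs $\ge t$ factors to reach flatness, so $2t-2 \ge t$ iff $t \ge 2$; the case $t=1$ is trivial as then each $\varphi_i$ is already (FRS)).

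The main obstacle I anticipate is precisely the bookkeeping in the dimension count when $\dim V$ is not divisible by $t$: Lemma \ref{lem:Auxilery lemma for matrix word maps} is stated with an integer $k$, so I will need to either reprove the relevant special case of that lemma with the rational deficiency $\tfrac1t\dim V$ built in directly (summing $t$ copies to recover an integer total deficiency $\ge \dim V$), or phrase everything in terms of the total-fiber-dimension inequality $\dim (X_i)_{v,\varphi_i} \le \dim X_i - \tfrac1t\dim V$ and carry out the stratification-and-union-bound argument from the proof of Lemma \ref{lem:Auxilery lemma for matrix word maps} by hand across all $2t$ factors at once. The jet version then follows verbatim by replacing $\varphi_i$ with $J_m(\varphi_i)$ throughout and using Lemma \ref{lem:convolution behaves well with jets} to identify $J_m(\varphi_1 * \dots * \varphi_{2t})$ with $J_m(\varphi_1) * \dots * J_m(\varphi_{2t})$, together with the fact that $J_m(\varphi_i^{*t}) = J_m(\varphi_i)^{*t}$ is flat so that each $J_m(\varphi_i)$ is $\tfrac1t$-flat.
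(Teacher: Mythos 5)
Your argument replaces the hypothesis ``$\varphi_i^{*t}$ is flat'' by its weaker consequence ``$\varphi_i$ is $\tfrac1t$-flat'' (via Lemma \ref{lem:convolution gives bounds on epsilon flatness}), and then tries to deduce flatness of the $2t$-fold convolution by a stratified dimension count modeled on Lemma \ref{lem:Auxilery lemma for matrix word maps}. This is the central gap: $\tfrac1t$-flatness of the factors is genuinely insufficient. The stratification in the proof of Lemma \ref{lem:Auxilery lemma for matrix word maps} reduces the fiber-dimension excess by only $1$ per convolution --- the sole geometric input for the exceptional strata $S_l$, $l>0$, is $\dim\psi^{-1}(S_l)\le\dim X-1$, which $\tfrac1t$-flatness does not improve --- so a fixed number $2t$ of convolutions cannot beat an excess as large as $(1-\tfrac1t)\dim V$, which grows with $\dim V$. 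Both of your intermediate claims (that the convolution of $t$ morphisms each $\tfrac1t$-flat has fibers of the expected dimension, and that two dominant morphisms whose deficiencies sum to $\ge\dim V$ convolve to a flat morphism) are false. Concretely, for $n\ge3$ let $V=\mathbb{A}^{2n}$, $X=\mathbb{A}^{2n+1}$, and $\varphi(x,y_1,\dots,y_n,z_1,\dots,z_n)=(xy_1,\dots,xy_n,z_1,\dots,z_n)$. The worst fiber of $\varphi$ is $\{x=0,\ z=0\}$ of dimension $n\le\dim X-\tfrac12\dim V=n+1$, so $\varphi$ is $\tfrac12$-flat with deficiency $n+1$, and the deficiencies of two copies sum to $2n+2\ge\dim V$; yet $(\varphi^{*2})^{-1}(0)$ contains $\{x^{(1)}=x^{(2)}=0,\ z^{(1)}+z^{(2)}=0\}$, of dimension $3n>2\dim X-\dim V=2n+2$, so $\varphi^{*2}$ is not flat. (For this $\varphi$ the hypothesis $\varphi^{*t}$ flat fails as well, so the proposition is safe; the point is that your dimension count, which sees nothing beyond $\tfrac1t$-flatness, cannot tell it apart from a morphism for which the hypothesis holds.)

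The paper's actual proof exploits the full hypothesis via a Fourier-analytic counting argument, a fundamentally different route. Reducing to $K=\rats$ and a $\ints$-model, set $\tau_{i,p}:=(\varphi_i)_*(\mu_{X_i(\mathbb{F}_p)})$ and $\tau_{\psi,p}:=\psi_*(\mu_{X(\mathbb{F}_p)})$ for $\psi=\varphi_1*\dots*\varphi_{2t}$. Theorem \ref{thm: flatness and counting points} converts flatness of $\varphi_i^{*t}$ into a uniform bound $\|\tau_{i,p}^{*t}\|_\infty<C$ for $p\gg0$; Plancherel gives $\|\mathcal{F}(\tau_{i,p})\|_{2t}^t=\|\tau_{i,p}^{*t}\|_2\le\|\tau_{i,p}^{*t}\|_\infty<C$, and H\"older applied to $\mathcal{F}(\tau_{\psi,p})=\prod_{i=1}^{2t}\mathcal{F}(\tau_{i,p})$ yields $\|\tau_{\psi,p}\|_\infty\le\|\mathcal{F}(\tau_{\psi,p})\|_1\le\prod_i\|\mathcal{F}(\tau_{i,p})\|_{2t}<C^2$, whence $\psi_\rats$ is flat by Theorem \ref{thm: flatness and counting points} again. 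The decisive resource is the $L^{2t}$-control on $\mathcal{F}(\tau_{i,p})$, which encodes flatness of the \emph{$t$-fold} convolution and not merely a bound on the worst single fiber of $\varphi_i$; your approach at best controls $\|\tau_{i,p}\|_\infty$, i.e.\ $\|\mathcal{F}(\tau_{i,p})\|_2$, which is strictly weaker. The (FRS) half is deferred in the paper to \cite[Lemma 7.3]{GH19}, proved by an analogous analytic argument through Theorem \ref{thm:analytic criterion of the (FRS) property}; your plan of establishing jet-flatness of $\varphi_1*\dots*\varphi_{2t-2}$ and then applying Corollary \ref{cor:jet-flat implies FTS after two convolusions} would be a reasonable alternative if the dimension count worked, but the jet-level count fails for exactly the same reason.
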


\begin{proof}
Write $\psi:=\varphi_{1}*\dots*\varphi_{2t}$ and $X:=\prod\limits _{i=1}^{2t}X_{i}$.
The case of (FRS) was proved in \cite[Lemma 7.3]{GH19}. For flatness,
we prove the claim in the case $K=\rats$, and the reduction to $\rats$
is similar to \cite[Section 6]{GH19}.

By choosing a $\ints$-model, we may assume that $\psi$ is defined
over $\ints$. Denote by $\mu_{X(\mathbb{F}_{p})}$ and $\mu_{V(\mathbb{F}_{p})}$
the uniform probability measures on $X(\mathbb{F}_{p})$ and $V(\mathbb{F}_{p})$,
and let $\tau_{\psi,p}:=\psi_{*}(\mu_{X(\mathbb{F}_{p})})$, and $\tau_{i,p}:=\varphi_{i*}(\mu_{X_{i}(\mathbb{F}_{p})})$.
There exists $p_{0}$ such that for every $p>p_{0}$, the map $\varphi_{i}^{*t}$
is flat over $\mathbb{F}_{p}$ for every $i$. By Corollary \ref{thm: flatness and counting points},
there exists $C>0$ such that for every $p>p_{0}$ and every $i$,
we have $\left\Vert \tau_{i,p}^{*t}\right\Vert _{\infty}<C$. Therefore
also 
\[
\left\Vert \mathcal{F}(\tau_{i,p})\right\Vert _{2t}^{t}=\left\Vert \mathcal{F}(\tau_{i,p})^{t}\right\Vert _{2}=\left\Vert \tau_{i,p}^{*t}\right\Vert _{2}\leq\left\Vert \tau_{i,p}^{*t}\right\Vert _{\infty}<C.
\]
By a generalization of H\"older's inequality:
\[
\left\Vert \tau_{\psi,p}\right\Vert _{\infty}\leq\left\Vert \mathcal{F}(\tau_{\psi,p})\right\Vert _{1}=\left\Vert \prod_{i=1}^{2t}\mathcal{F}(\tau_{i,p})\right\Vert _{1}\leq\prod_{i=1}^{2t}\left\Vert \mathcal{F}(\tau_{i,p})\right\Vert _{2t}<C^{2}.
\]
By Corollary \ref{thm: flatness and counting points}, this implies
that $\psi$ is flat so we are done. 
\end{proof}
\begin{rem}
\label{rem:also works for algebraic groups}Proposition \ref{prop:reduction to self convolution- the general case}
is also true if we replace $V$ with a connected algebraic $K$-group.
Instead of the commutative Fourier transform, one has to work with
the non-commutative one. For a very similar argument, see \cite[Proposition 5.3]{GH}.
\end{rem}

\subsection{Combinatorial description of homogeneous word maps}

We would like to generalize the construction discussed in Section
\ref{subsec:Combinatirical-picture-for commutator} to the case of
homogeneous Lie algebra word maps of any degree.

\subsubsection{The combinatorial gadget}
\begin{defn}
~\label{def:polyhypergraph}
\begin{enumerate}
\item A\textit{ $d$-polyhypergraph (or $d$-PH)} is a triplet $\Gamma:=(\mathcal{I},\mathcal{J},\mathcal{S})$
consisting of finite sets $\mathcal{I},\mathcal{J}$ and a subset
$\mathcal{S}\subseteq\mathcal{I}^{(d)}\times\mathcal{J}$, where $\mathcal{I}^{(d)}$
is the set of multisets of $\mathcal{I}$ of size $d$. An element
$i\in\mathcal{I}$ is called a \textsl{vertex} of $\Gamma$. An element
$\{i_{1},\dots,i_{d}\}\in\mathcal{I}^{(d)}$ is called a \textsl{hyperedge
of }$\Gamma$, if there exists $l\in\mathcal{J}$ such that $(\{i_{1},\dots,i_{d}\},l)\in\mathcal{S}$.
A hyperedge $\{i_{1},\dots,i_{d}\}\in\mathcal{I}^{(d)}$ is \textsl{of
type $l\in\mathcal{J}$} if $(\{i_{1},\dots,i_{d}\},l)\in\mathcal{S}$.
In particular, $\mathcal{S}$ is the collection of all tuples consisting
of a hyperedge together with a type. 
\item Given a $d$-polyhypergraph $\Gamma=(\mathcal{I},\mathcal{J},\mathcal{S})$,
an assignment $\mathrm{V}=(V(i))_{i\in\mathcal{I}}$ of a vector space
$V(i)$ to each vertex $i\in\mathcal{I}$, and a collection $\{\tau_{\gamma}\}_{\gamma\in\mathcal{S}}$
of $d$-homogeneous multilinear forms $\tau_{\{i_{1},\dots,i_{d}\},l}:\bigoplus\limits _{i\in\{i_{1},\dots,i_{d}\}}V(i)\rightarrow K$,
we define a map $\Phi_{\Gamma,\mathrm{V},\{\tau_{\gamma}\}}:\bigoplus\limits _{i\in\mathcal{I}}V(i)\rightarrow K^{\mathcal{J}}$
by 
\begin{equation}
\Phi_{\Gamma,\mathrm{V},\{\tau_{\gamma}\}}(v)=\left(\sum\limits _{(\{i_{1},\dots,i_{d}\},l)\in\mathcal{S}}\tau_{\{i_{1},\dots,i_{d}\},l}(v)\right)_{l\in\mathcal{J}}.\label{eq:(4.1)}
\end{equation}
\item Let $\Gamma$ be a $d$-PH. A triplet $(\Gamma,\mathrm{V},\{\tau_{\gamma}\})$
is called an \textit{assignment} of $\Gamma$. If all of the $V(i)$
are of the same dimension $N$, we write $(\Gamma,V,\{\tau_{\gamma}\})$,
where $V$ is just an $N$-dimensional vector space. 
\item We say that the assignment $(\Gamma,\mathrm{V},\{\tau_{\gamma}\})$
is flat (resp.~(FRS)) if the map $\Phi_{\Gamma,\mathrm{V},\{\tau_{\gamma}\}}$
is flat (resp.~(FRS)). 
\item Let $\Gamma=(\mathcal{I},\mathcal{J},\mathcal{S})$ be a $d$-PH.
Assume that $\mathcal{I}=\bigsqcup\limits _{u=1}^{N}\mathcal{I}_{u}$,
$\mathcal{J}=\bigsqcup\limits _{u=1}^{N}\mathcal{J}_{u}$ and that
$\mathcal{S}=\bigsqcup\limits _{u=1}^{N}\mathcal{S}_{u}$, where $\mathcal{S}_{u}:=\mathcal{S}\cap\left(\mathcal{I}_{u}^{(d)}\times\mathcal{J}_{u}\right)$.
In this case we say that $\Gamma$ is a \textsl{disjoint union of}
$\{\Gamma_{u}\}_{u\in[N]}$, where $\Gamma_{u}=(\mathcal{I}_{u},\mathcal{J}_{u},\mathcal{S}_{u})$. 
\end{enumerate}
\end{defn}

\begin{rem}
\label{rem:The forms =00005Ctau behave nicely}We further make the
assumption that
\[
\tau_{\{i_{1},\dots,i_{d}\},l}((\lambda_{i}v_{i})_{i\in\mathcal{I}})=\left(\prod\limits _{i=1}^{d}\lambda_{i}^{k_{i}}\right)\tau_{\{i_{1},\dots,i_{d}\},l}((v_{i})_{i\in\mathcal{I}}),
\]
where $k_{i}:=\left|\{j\in[d]:i_{j}=i\}\right|$. 
\end{rem}

Let us give a concrete example for a $d$-PH induced from a $d$-homogeneous
word map $\varphi_{w}:\mathfrak{g}^{r}\rightarrow\mathfrak{g}$, which
is the main object of study in this section and in Section \ref{sec:Proof-of-Theorems- Lie algebra}.

\subsubsection{\label{subsec:Attaching-a-polyhypergraph to a word map}Attaching
a polyhypergraph to a homogeneous Lie algebra word map}

Assume that $K$ is algebraically closed. Let $\mathfrak{g}$ be a
simple Lie algebra, $\mathfrak{t}$ a Cartan subalgebra, and write
$\mathfrak{g}=\mathfrak{t}\oplus\bigoplus\limits _{\alpha\in\Sigma(\mathfrak{g},\mathfrak{t})}\mathfrak{g}_{\alpha}$,
where $\Sigma(\mathfrak{g},\mathfrak{t})$ is the corresponding root
system. Let $\triangle\subseteq\Sigma(\mathfrak{g},\mathfrak{t})$
be the set of simple roots. We choose a Chevalley basis $\mathcal{B}=\{h_{\alpha}\}_{\alpha\in\triangle}\cup\{e_{\beta}\}_{\beta\in\Sigma(\mathfrak{g},\mathfrak{t})}$,
such that $e_{\beta}\in\mathfrak{g}_{\beta}$, $h_{\alpha}=[e_{\alpha},e_{-\alpha}]\in\mathfrak{t}$,
and $[e_{\beta},e_{-\beta}]\in\sum\limits _{\gamma\in\triangle}\ints h_{\gamma}$,
for any $\alpha\in\triangle$ and $\beta\in\Sigma(\mathfrak{g},\mathfrak{t})$.
Now let $w\in\mathcal{L}_{r}(X_{1},\dots,X_{r})$ be a $d$-homogeneous
word and let $\varphi_{w}:\mathfrak{g}^{r}\rightarrow\mathfrak{g}$
be its corresponding Lie algebra word map. Recall from Fact \ref{fact:sum of left normed monomials}
that $w$ can be written as 
\[
w(X_{1},\dots,X_{r})=\sum\limits _{S\in[r]^{d}}c_{S}X_{S},
\]
for constants $c_{S}$. Set $\mathcal{I}=\mathcal{B}$, and let $\mathcal{J}=\widehat{\mathcal{B}}$
be the dual basis of $\mathcal{B}$. Consider $\varphi_{w}^{*t}:\mathfrak{g}^{rt}\rightarrow\mathfrak{g}$,
let $\{X_{s,k}=\sum\limits _{i\in\mathcal{I}}a_{i,s,k}e_{i}\}_{(s,k)\in[r]\times[t]}$
be $tr$ elements in $\mathfrak{g}$, and write $\mathrm{MS}:\mathcal{I}^{d}\rightarrow\mathcal{I}^{(d)}$
for the forgetful map from $d$-tuples to $d$-multisets. Then 
\begin{align*}
\varphi_{w}^{*t}(X_{1,1},\dots,X_{r,t}) & =\sum\limits _{k=1}^{t}\varphi_{w}(X_{1,k},\dots,X_{r,k})=\sum\limits _{k=1}^{t}\sum\limits _{S\in[r]^{d}}c_{S}[X_{s_{1},k},\ldots,X_{s_{r},k}]\\
 & =\sum\limits _{k=1}^{t}\sum\limits _{S\in[r]^{d}}c_{S}[\sum\limits _{i\in\mathcal{I}}a_{i,s_{1},k}e_{i},\dots,\sum\limits _{i\in\mathcal{I}}a_{i,s_{d},k}e_{i}]\\
 & =\sum\limits _{k=1}^{t}\sum\limits _{I\in\mathcal{I}^{(d)}}\sum\limits _{(i_{1},\dots,i_{d})\in\mathrm{MS}^{-1}(I)}\sum\limits _{S\in[r]^{d}}c_{S}\prod\limits _{u=1}^{d}a_{i_{u},s_{u},k}[e_{i_{1}},\dots,e_{i_{d}}].
\end{align*}
Rewriting this equation gives the following: 
\begin{lem}
\label{lem:combinatorial word map}Let $V$ be an $rt$-dimensional
vector space. Under the identification $\mathfrak{g}^{rt}\simeq\mathfrak{g}\otimes V\simeq V^{\mathcal{I}}$,
the map $\varphi_{w}^{*t}:\mathfrak{g}^{rt}\simeq V^{\mathcal{I}}\rightarrow K^{\mathcal{J}}\simeq\mathfrak{g}$
is given by 
\[
\varphi_{w}^{*t}((v_{i})_{i\in\mathcal{I}})=\left(\sum\limits _{k=1}^{t}\sum\limits _{\{i_{1},\dots,i_{d}\}\in\mathcal{I}^{(d)}}\tau_{\{i_{1},\dots,i_{d}\},l}((v_{i})_{i\in\mathcal{I}})\right)_{l\in\mathcal{J}}
\]
where $v_{i}=(v_{i,s,k})_{s\in[r],k\in[t]}$ and 
\[
\tau_{\{i_{1},\dots,i_{d}\},l}((v_{i})_{i\in\mathcal{I}})=\sum\limits _{(i'_{1},\dots,i'_{d})\in\mathrm{MS}^{-1}(\{i_{1},\dots,i_{d}\})}\sum\limits _{S\in[r]^{d}}c_{S}\prod\limits _{u=1}^{d}v_{i'_{u},s_{u},k}\widehat{e}_{l}([e_{i'_{1}},\dots,e_{i'_{d}}]).
\]
\end{lem}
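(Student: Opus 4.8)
The statement in question (Lemma \ref{lem:combinatorial word map}) is really just a bookkeeping identity: it rewrites the chain of equalities displayed just above it by collecting terms according to the multiset $\{i_1,\dots,i_d\}\in\mathcal{I}^{(d)}$ they contribute to, and then extracting the $l$-th coordinate of the output in $\mathfrak{g}\simeq K^{\mathcal{J}}$ via pairing with the dual basis vector $\widehat{e}_l$. So the "proof" is a direct computation that unwinds the identifications. I would organize it as follows.

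\begin{proof}
Fix the identifications $\mathfrak{g}^{rt}\simeq\mathfrak{g}\otimes V\simeq V^{\mathcal I}$ coming from the Chevalley basis $\mathcal{B}=\mathcal{I}$ of $\mathfrak{g}$ and a basis of $V$ indexed by $[r]\times[t]$; concretely, the tuple $(X_{s,k})_{(s,k)\in[r]\times[t]}$ with $X_{s,k}=\sum_{i\in\mathcal I}a_{i,s,k}e_i$ corresponds to $(v_i)_{i\in\mathcal I}$ with $v_i=(v_{i,s,k})_{s,k}$ and $v_{i,s,k}=a_{i,s,k}$. Starting from the four-line display preceding the lemma, the key point is that $[e_{i_1},\dots,e_{i_d}]$ depends only on the ordered tuple $(i_1,\dots,i_d)\in\mathcal I^d$, and we may regroup the sum over $S\in[r]^d$ and over ordered tuples $(i_1,\dots,i_d)\in\mathcal I^d$ according to the underlying multiset $I=\mathrm{MS}(i_1,\dots,i_d)\in\mathcal I^{(d)}$; that is, $\sum_{(i_1,\dots,i_d)\in\mathcal I^d}=\sum_{I\in\mathcal I^{(d)}}\sum_{(i_1',\dots,i_d')\in\mathrm{MS}^{-1}(I)}$. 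This is exactly the step carried out in the last line of the display, and it expresses $\varphi_w^{*t}$ as an element of $\mathfrak{g}$.
\end{proof}

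For the second part I would then take coordinates: write $\widehat{e}_l\in\mathcal J=\widehat{\mathcal B}$ for the element of the dual basis, and apply $\widehat{e}_l$ to the expression for $\varphi_w^{*t}(X_{1,1},\dots,X_{r,t})\in\mathfrak g$. Since $\widehat{e}_l$ is linear, it passes through all the (finite) sums and the scalar products $c_S\prod_{u}a_{i_u',s_u,k}$, and one is left with $\widehat{e}_l([e_{i_1'},\dots,e_{i_d'}])$ inside. Substituting $a_{i,s,k}=v_{i,s,k}$ and collecting the $k$-sum outside gives precisely
\[
\varphi_w^{*t}\bigl((v_i)_{i\in\mathcal I}\bigr)=\Bigl(\sum_{k=1}^t\sum_{\{i_1,\dots,i_d\}\in\mathcal I^{(d)}}\tau_{\{i_1,\dots,i_d\},l}\bigl((v_i)_{i\in\mathcal I}\bigr)\Bigr)_{l\in\mathcal J},
\]
with $\tau_{\{i_1,\dots,i_d\},l}$ as defined in the statement. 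I would also remark that each $\tau_{\{i_1,\dots,i_d\},l}$ is visibly a multilinear form of multidegree $(k_i)_{i}$ with $k_i=|\{u:i_u=i\}|$ in the blocks $V(i)$, so it fits the framework of Definition \ref{def:polyhypergraph} and Remark \ref{rem:The forms =00005Ctau behave nicely}; this is immediate from the formula since $\tau$ is a sum of monomials $\prod_{u=1}^d v_{i_u',s_u,k}$.

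Honestly, there is no real obstacle here — the only thing requiring a little care is making the index-juggling precise: separating cleanly the three summations (over the convolution index $k$, over ordered $d$-tuples in $\mathrm{MS}^{-1}(I)$, and over $S\in[r]^d$ giving the word coefficients $c_S$), and checking that the regrouping by multiset is legitimate because $[e_{i_1},\dots,e_{i_d}]$ and hence its $\widehat{e}_l$-component is a function of the tuple only. Once the bookkeeping is set up, the identity is forced. The main value of the lemma is conceptual rather than technical: it is the bridge that lets the rest of Sections \ref{sec:Lie-algebra-word}--\ref{sec:Proof-of-Theorems- Lie algebra} replace the study of $\varphi_w^{*t}$ by the study of the assignment $(\Gamma,V,\{\tau_\gamma\})$ attached to $w$ and $\mathfrak g$, so I would keep the proof short and emphasize that $t$ self-convolutions correspond exactly to taking $\dim V=rt$.
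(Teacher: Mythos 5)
Your proposal is correct and matches the paper's approach exactly: the paper does not give a separate proof at all but simply states ``Rewriting this equation gives the following'' immediately after the displayed computation, and that rewriting (regrouping the ordered-tuple sum by multiset fibers of $\mathrm{MS}$, applying the linear functionals $\widehat{e}_l$, and identifying $a_{i,s,k}$ with $v_{i,s,k}$) is precisely what you carry out. Your closing remark verifying that the resulting $\tau_{\{i_1,\dots,i_d\},l}$ have the multidegree behavior required by Remark~\ref{rem:The forms =00005Ctau behave nicely} is a small but useful addition that the paper leaves implicit.
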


\begin{rem}
\label{rem:different description for Lie algebra word maps}Given
an embedding $\mathfrak{g}\hookrightarrow\mathfrak{gl}_{N}$, we may
replace $[X,Y]$ with $X\cdot Y-Y\cdot X$ and write any Lie algebra
word map $\varphi_{w}:\mathfrak{g}^{r}\rightarrow\mathfrak{g}$ as
a restriction to $\mathfrak{g}$ of a matrix word map $\varphi_{\widetilde{w}}:M_{N}^{r}\rightarrow M_{N}$,
with $\widetilde{w}=\sum\limits _{S\in[r]^{d}}d_{S}X_{s_{1}}\cdot...\cdot X_{s_{d}}$
for some constants $d_{S}$. 
\end{rem}

\begin{example}
\label{exa:for a D-PH of a word}Let $\mathfrak{g}$ be a simple Lie
algebra, let $w(X_{1},X_{2})=[[X_{1},X_{2}],X_{2}]$, let $t=1$,
and consider $\varphi_{w}^{*t}=\varphi_{w}$. In particular $\mathfrak{g}^{rt}=\mathfrak{g}^{2}\simeq\mathfrak{g}\otimes V\simeq V^{\mathcal{I}}$
for $V$ a two dimensional vector space. We have the following: 
\begin{align*}
\left(\varphi_{w}((v_{i})_{i\in\mathcal{I}})\right)_{l} & =\widehat{e}_{l}\left(\varphi_{w}\left(\sum\limits _{i\in\mathcal{I}}v_{i,1}e_{i},\sum\limits _{i\in\mathcal{I}}v_{i,2}e_{i}\right)\right)\\
 & =\widehat{e}_{l}\left([\sum\limits _{i\in\mathcal{I}}v_{i,1}e_{i},\sum\limits _{i\in\mathcal{I}}v_{i,2}e_{i},\sum\limits _{i\in\mathcal{I}}v_{i,2}e_{i})]\right)\\
 & =\sum\limits _{\{i'_{1},i'_{2},i'_{3}\}\in\mathcal{I}^{(3)}}\sum\limits _{(i_{1},i_{2},i_{3})\in\mathrm{MS}^{-1}(\{i'_{1},i'_{2},i'_{3}\})}v_{i_{1},1}v_{i_{1},2}v_{i_{3},2}\left(\hat{e}_{l}([e_{i_{1}},e_{i_{2}},e_{i_{3}}])\right).
\end{align*}
For example, taking $\{i'_{1},i'_{2},i'_{3}\}=\{1,2,2\}$, we have
$\mathrm{MS}^{-1}(\{i'_{1},i'_{2},i'_{3}\})=\{(1,2,2),(2,1,2),(2,2,1)\}$,
and
\begin{align*}
 & \tau_{\{1,2,2\},l}(v_{1,1},v_{1,2},v_{2,1},v_{2,2})\\
 & =v_{1,1}v_{2,2}^{2}\hat{e}_{l}([e_{1},e_{2},e_{2}])+v_{2,1}v_{1,2}v_{2,2}\hat{e}_{l}([e_{2},e_{1},e_{2}])+v_{2,1}v_{2,2}v_{1,2}\hat{e}_{l}([e_{2},e_{2},e_{1}])\\
 & =\left(v_{1,1}v_{2,2}^{2}-v_{2,1}v_{1,2}v_{2,2}\right)\hat{e}_{l}([e_{1},e_{2},e_{2}]).
\end{align*}
\end{example}

\begin{defn}
\label{def:D-PH assigned to a Lie algebra word map }Let $w\in\mathcal{L}_{r}$
be a $d$-homogeneous word, and $\mathfrak{g}$ a simple Lie algebra.
Define the $d$-PH corresponding to $\mathfrak{g}$ and $w$ denoted
$\Gamma_{\mathfrak{g},w}=(\mathcal{I},\mathcal{J},\mathcal{S})$ as
follows. Take $\mathcal{I}$ and $\mathcal{J}$, to be a chosen Chevalley
basis $\mathcal{B}=\{e_{i}\}_{i\in\mathcal{I}}$, and its dual $\widehat{\mathcal{B}}=\{\widehat{e}_{j}\}_{j\in\mathcal{J}}$.
Define $\mathcal{S}\subseteq\mathcal{I}^{(d)}\times\mathcal{J}$ by
\[
\mathcal{S}:=\{(\{i_{1},\dots,i_{d}\},l)\in\mathcal{I}^{(d)}\times\mathcal{J}:\tau_{\{i_{1},\dots,i_{d}\},l}\neq0\},
\]
with $\tau_{\{i_{1},\dots,i_{d}\},l}$ as in Lemma \ref{lem:combinatorial word map}.
Notice that for each $l\in\mathcal{J}$ there exists at least one
$\{i_{1},\dots,i_{d}\}$ such that $\tau_{\{i_{1},\dots,i_{d}\},l}$
is non-zero, as otherwise the map $\varphi_{w}$ is not generating,
contrary to our assumption. Let $V$ be an $tr$-dimensional vector
space. Then $(\Gamma_{\mathfrak{g},w},V,\{\tau_{\gamma}\}_{\gamma\in\mathcal{S}})$
is an assignment of $\Gamma_{\mathfrak{g},w}$. Notice that the collection
$\{\tau_{\gamma}\}_{\gamma\in\mathcal{S}}$ satisfies the demand of
Remark \ref{rem:The forms =00005Ctau behave nicely}.
\end{defn}

\subsubsection{\label{subsec:Coloring-and-elimination for d-PH}Coloring and elimination
for polyhypergraphs}

We next describe two combinatorial procedures on a given assignment
of a $d$-PH: the elimination method and the coloring method. The
elimination method is a direct generalization of Corollary \ref{cor:(-elimination-for-polygraphs)}
(c.f. \cite[Corollary 2.28]{AA16}) to the case of polyhypergraphs.
The coloring method was described in \cite[Corollary 2.33]{AA16}
in the case of graphs, but can be easily generalized to polygraphs
and polyhypergraphs.

Let $\Gamma=(\mathcal{I},\mathcal{J},\mathcal{S})$ be a $d$-PH.
Let $\omega\in\mathbb{Z}^{\mathcal{I}}$ be a vector (termed an $\mathcal{I}$-weight).
Then $\omega$ induces an $\mathcal{I}^{(d)}$-weight $\widetilde{\omega}:\mathcal{I}^{(d)}\rightarrow\mathbb{Z}$
by $\widetilde{\omega}(\{i_{1},\dots,i_{d}\})=\sum\limits _{u=1}^{d}\omega(i_{u})$.
We define $\mathrm{gr}_{\omega}(\Gamma):=(\mathcal{I},\mathcal{J},\mathrm{gr}_{\omega}\mathcal{S})$
by 
\[
\mathrm{gr}_{\omega}\mathcal{S}=\{(I,l)\in\mathcal{S}:\forall(I',l)\in\mathcal{S}\text{ we have }\widetilde{\omega}(I)\leq\widetilde{\omega}(I')\},
\]
where $I,I'\in\mathcal{I}^{(d)}$ and $l\in\mathcal{J}$. 
\begin{cor}[{cf.~\cite[Corollary 2.28]{AA16}}]
\label{cor:elimination for dPH}Let $(\Gamma=(\mathcal{I},\mathcal{J},\mathcal{S}),\mathrm{V},\{\tau_{\gamma}\}_{\gamma\in\mathcal{S}})$
be an assignment of a $d$-PH, and let $\omega\in\mathbb{Z}^{\mathcal{I}}$
be an $\mathcal{I}$-weight. If $(\mathrm{gr}_{\omega}(\Gamma),\mathrm{V},\{\tau_{\gamma}\})$
is flat (resp.~(FRS)), then so is $(\Gamma,\mathrm{V},\{\tau_{\gamma}\})$. 
\end{cor}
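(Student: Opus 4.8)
The plan is to reduce this to the elimination method for morphisms of affine spaces, Corollary~\ref{cor:(cf.---Elimination)}, by exhibiting $\Phi_{\mathrm{gr}_{\omega}(\Gamma),\mathrm{V},\{\tau_{\gamma}\}}$ as the $\omega$-symbol of $\Phi_{\Gamma,\mathrm{V},\{\tau_{\gamma}\}}$. First I would fix linear identifications $\bigoplus_{i\in\mathcal{I}}V(i)\simeq\mathbb{A}^{n}$ and $K^{\mathcal{J}}\simeq\mathbb{A}^{m}$, and promote $\omega\in\mathbb{Z}^{\mathcal{I}}$ to a weight $\bar{\omega}\in\mathbb{Z}^{n}$ by assigning the value $\omega(i)$ to every coordinate lying in the block $V(i)$.

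The crucial input is Remark~\ref{rem:The forms =00005Ctau behave nicely}: the identity $\tau_{\{i_{1},\dots,i_{d}\},l}((\lambda_{i}v_{i})_{i\in\mathcal{I}})=(\prod_{i}\lambda_{i}^{k_{i}})\,\tau_{\{i_{1},\dots,i_{d}\},l}((v_{i})_{i\in\mathcal{I}})$ says precisely that each form $\tau_{\{i_{1},\dots,i_{d}\},l}$ is $\bar{\omega}$-homogeneous of $\bar{\omega}$-degree $\sum_{u=1}^{d}\omega(i_{u})=\widetilde{\omega}(\{i_{1},\dots,i_{d}\})$; moreover distinct multisets $I\in\mathcal{I}^{(d)}$ produce forms with disjoint monomial supports, since $\tau_{I,l}$ is multihomogeneous of multidegree $(k_{i}(I))_{i\in\mathcal{I}}$, where $k_{i}(I)$ is the multiplicity of $i$ in $I$. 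Hence, for each fixed type $l\in\mathcal{J}$, the $l$-th component $\sum_{(I,l)\in\mathcal{S}}\tau_{I,l}$ of $\Phi_{\Gamma,\mathrm{V},\{\tau_{\gamma}\}}$ is a sum of $\bar{\omega}$-homogeneous pieces of degrees $\widetilde{\omega}(I)$, and passing to its $\bar{\omega}$-symbol retains exactly the $\tau_{I,l}$ with $\widetilde{\omega}(I)$ minimal among all $(I',l)\in\mathcal{S}$ — that is, the terms indexed by $\mathrm{gr}_{\omega}\mathcal{S}$ — with no cancellation occurring. This yields $\sigma_{\bar{\omega}}(\Phi_{\Gamma,\mathrm{V},\{\tau_{\gamma}\}})=\Phi_{\mathrm{gr}_{\omega}(\Gamma),\mathrm{V},\{\tau_{\gamma}\}}$.

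Now I would apply Corollary~\ref{cor:(cf.---Elimination)} with $P\in\mathcal{P}$ taken to be flatness (resp.\ (FRS)): if $\Phi_{\mathrm{gr}_{\omega}(\Gamma),\mathrm{V},\{\tau_{\gamma}\}}$ has $P$, it in particular has $P$ at the origin, whence $\Phi_{\Gamma,\mathrm{V},\{\tau_{\gamma}\}}$ has $P$ at the origin. Finally, since every $\tau_{I,l}$ is a $d$-homogeneous form, $\Phi_{\Gamma,\mathrm{V},\{\tau_{\gamma}\}}$ is a homogeneous polynomial map, so by the remark following Lemma~\ref{lem: reduction to largest degree} it is $P$ everywhere, as desired. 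There is no serious obstacle here; the only step requiring care is the middle one, namely verifying via Remark~\ref{rem:The forms =00005Ctau behave nicely} that the $\bar{\omega}$-symbol of $\Phi_{\Gamma,\mathrm{V},\{\tau_{\gamma}\}}$ equals $\Phi_{\mathrm{gr}_{\omega}(\Gamma),\mathrm{V},\{\tau_{\gamma}\}}$ on the nose, with the disjointness of supports ruling out accidental cancellation between terms of the same minimal $\bar{\omega}$-degree.
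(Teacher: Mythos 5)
Your proposal is correct and follows the same route as the paper's proof: invoke Corollary~\ref{cor:(cf.---Elimination)} and verify $\sigma_{\omega}(\Phi_{\Gamma,\mathrm{V},\{\tau_{\gamma}\}})=\Phi_{\mathrm{gr}_{\omega}\Gamma,\mathrm{V},\{\tau_{\gamma}\}}$ using Remark~\ref{rem:The forms =00005Ctau behave nicely} and Equation~(\ref{eq:(4.1)}). You merely spell out what the paper treats as immediate, namely the multihomogeneity giving disjoint monomial supports (hence no cancellation) and the reduction from the origin to all points via homogeneity.
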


\begin{proof}
By Corollary \ref{cor:(cf.---Elimination)}, it is enough to show
that the symbol $\sigma_{\omega}(\Phi_{\Gamma,\mathrm{V},\{\tau_{\gamma}\}})$
is equal to $\Phi_{\mathrm{gr}_{\omega}\Gamma,\mathrm{V},\{\tau_{\gamma}\}}$.
By our assumption on the forms $\{\tau_{\gamma}\}_{\gamma\in\mathcal{S}}$
(Remark \ref{rem:The forms =00005Ctau behave nicely}) and by Equation
(\ref{eq:(4.1)}) it is easy to see that the symbol $\sigma_{\omega}(\Phi_{\Gamma,\mathrm{V},\{\tau_{\gamma}\}})$
is indeed $\Phi_{\mathrm{gr}_{\omega}\Gamma,\mathrm{V},\{\tau_{\gamma}\}}$. 
\end{proof}
\begin{rem}[Warning!]
\label{rem:warning-elimination}Consider the map $\varphi:\mathbb{A}^{2}\rightarrow\mathbb{A}^{2}$
defined by $\varphi(x,y)=(y^{2}+x,x)$, and let $\omega=(1,1)$ be
the weight which induces the standard degree of monomials. Note that
$\varphi$ is dominant, while $\sigma_{\omega}(\varphi)(x,y)=(x,x)$
is not even generating, so $\sigma_{\omega}(\varphi)$ will never
become (FRS) after any number of convolutions. This phenomenon might
occur when applying the elimination method, so we must choose our
weight functions carefully. 
\end{rem}

Let $\Gamma:=(\mathcal{I},\mathcal{J},\mathcal{S})$ be a $d$-PH.
We describe the coloring method for $\Gamma$. Let $M$ be a finite
set, and let $\omega:\mathcal{I}\rightarrow\mathbb{Z}^{M}$ be a map
which we call a \textit{coloring function}. Define $\widetilde{\omega}:\mathcal{I}^{(d)}\rightarrow\mathbb{Z}^{M}$
by $\widetilde{\omega}(\{i_{1},\dots,i_{d}\}):=\sum\limits _{u=1}^{d}\omega(i_{u})$,
and for any $m\in M$ set $\mathrm{gr}_{\omega,m}\Gamma:=(\mathcal{I},\mathrm{gr}_{\omega,m}\mathcal{J},\mathrm{gr}_{\omega,m}\mathcal{S})$,
where 
\begin{equation}
\mathrm{gr}_{\omega,m}\mathcal{S}=\{(I,l)\in\mathcal{S}:\forall m'\in M\text{ we have }\widetilde{\omega}(I)_{m}\leq\widetilde{\omega}(I)_{m'}\}\subseteq\mathcal{S},\label{eq:coloring for d-PH}
\end{equation}
\[
\mathrm{gr}_{\omega,m}\mathcal{J}=\{l\in\mathcal{J}:\exists I\in\mathcal{I}^{(d)}\text{ such that }(I,l)\in\mathrm{gr}_{\omega,m}\mathcal{S}\}.
\]
We say that the coloring function $\omega$ is \textit{admissible}
if for any $l\in\mathcal{J}$, there exists $m\in M$ such that for
any hyperedge $(I,l)\in\mathcal{S}$, the number $\widetilde{\omega}(I)_{m}$
is the unique minimum of the tuple $\widetilde{\omega}(I)\in\mathbb{Z}^{M}$.
We have the following analogue of Corollary \ref{cor:(-Coloring-method)-Let}: 
\begin{cor}[{cf.~\cite[Corollary 2.33]{AA16}, the coloring method}]
\label{cor:Coloring for for dPH}Let $\Gamma:=(\mathcal{I},\mathcal{J},\mathcal{S})$
be a $d$-PH, let $M$ be a finite set, let $\omega:\mathcal{I}\rightarrow\mathbb{Z}^{M}$
be an admissible coloring function and let $(\mathrm{gr}_{\omega,m}\Gamma,\mathrm{V}_{m},\{\tau_{\gamma}^{m}\}_{\gamma})$
be assignments of $\mathrm{gr}_{\omega,m}\Gamma$ for each $m\in M$.
Suppose that $(\mathrm{gr}_{\omega,m}\Gamma,\mathrm{V}_{m},\{\tau_{\gamma}^{m}\}_{\gamma})$
is flat (resp.~(FRS)) for each $m\in M$. Then so is $(\Gamma,\mathrm{V},\{\tau_{\gamma}\}_{\gamma})$,
with $\mathrm{V}(i):=\bigoplus\limits _{m=1}^{M}\mathrm{V}_{m}(i)$
and $\tau_{\gamma}=\bigoplus\limits _{i=1}^{M}\tau_{\gamma}^{m}$
for each $\gamma\in\mathcal{S}$. 
\end{cor}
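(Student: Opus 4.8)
The plan is to mimic the proof of the elimination method for polyhypergraphs (Corollary \ref{cor:elimination for dPH}), but now carrying along $M$ different weight functions simultaneously and summing the underlying symplectic-type spaces. The statement to prove is the coloring method for $d$-polyhypergraphs: given an admissible coloring $\omega:\mathcal{I}\to\mathbb{Z}^{M}$ and flat (resp.~(FRS)) assignments $(\mathrm{gr}_{\omega,m}\Gamma,\mathrm{V}_{m},\{\tau_{\gamma}^{m}\})$ for every $m\in M$, the ``combined'' assignment $(\Gamma,\mathrm{V},\{\tau_{\gamma}\})$ with $\mathrm{V}(i)=\bigoplus_{m}\mathrm{V}_{m}(i)$ and $\tau_{\gamma}=\bigoplus_{m}\tau_{\gamma}^{m}$ is flat (resp.~(FRS)). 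The key point is that the map $\Phi_{\Gamma,\mathrm{V},\{\tau_{\gamma}\}}$ degenerates, via a single well-chosen weight on $\bigoplus_{i\in\mathcal{I}}\mathrm{V}(i)$, to the disjoint union $\bigsqcup_{m\in M}\Phi_{\mathrm{gr}_{\omega,m}\Gamma,\mathrm{V}_{m},\{\tau_{\gamma}^{m}\}}$.

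First I would set up coordinates: each vector $v\in\bigoplus_{i\in\mathcal{I}}\mathrm{V}(i)$ decomposes as $v=(v^{(m)})_{m\in M}$ with $v^{(m)}\in\bigoplus_{i\in\mathcal{I}}\mathrm{V}_{m}(i)$. Define a weight $\Omega$ on this space by scaling the block $\mathrm{V}_{m}(i)$ by $\Omega$-weight $\omega(i)_{m}$. Using the homogeneity property of the forms $\tau_\gamma$ from Remark \ref{rem:The forms =00005Ctau behave nicely} — namely that $\tau_{\{i_1,\dots,i_d\},l}$ scales by $\prod\lambda_i^{k_i}$ — one checks that the $\Omega$-degree of the summand $\tau^{m}_{\{i_1,\dots,i_d\},l}$ coming from the $m$-th block equals $\widetilde{\omega}(\{i_1,\dots,i_d\})_{m}=\sum_{u}\omega(i_u)_{m}$. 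Hence for a fixed type $l\in\mathcal{J}$, the $l$-th coordinate of $\Phi_{\Gamma,\mathrm{V},\{\tau_{\gamma}\}}$ is a sum over all $m$ and all hyperedges of type $l$ of terms of $\Omega$-degree $\widetilde{\omega}(I)_m$. Admissibility says precisely that there is one $m=m(l)$ for which $\widetilde{\omega}(I)_{m(l)}$ is the strict minimum over $\mathbb{Z}^M$ for every hyperedge $(I,l)\in\mathcal{S}$; taking the symbol $\sigma_{\Omega}$ therefore keeps exactly the $m(l)$-block contributions, i.e.\ the $l$-coordinate of $\Phi_{\mathrm{gr}_{\omega,m(l)}\Gamma,\mathrm{V}_{m(l)},\{\tau^{m(l)}_{\gamma}\}}$, and kills everything else. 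Since distinct blocks $\mathrm{V}_{m}$ involve disjoint sets of coordinate variables, $\sigma_{\Omega}(\Phi_{\Gamma,\mathrm{V},\{\tau_{\gamma}\}})$ is literally the disjoint union (coordinate-wise product of maps on disjoint variable sets) of the maps $\Phi_{\mathrm{gr}_{\omega,m}\Gamma,\mathrm{V}_{m},\{\tau^{m}_{\gamma}\}}$ over $m\in M$.

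Then I would invoke two facts already available: the elimination method (Corollary \ref{cor:(cf.---Elimination)}), which says that if the symbol $\sigma_\Omega(\varphi)$ is flat (resp.~(FRS)) at the origin then so is $\varphi$; and a Thom–Sebastiani type statement for disjoint unions, namely that a disjoint union of flat (resp.~(FRS)) morphisms is flat (resp.~(FRS)) — flatness is immediate from the product structure of fibers, and the (FRS) case follows since a product of varieties with rational singularities has rational singularities (and reducedness, flatness are multiplicative). Combining: each $\Phi_{\mathrm{gr}_{\omega,m}\Gamma,\mathrm{V}_{m},\{\tau^{m}_{\gamma}\}}$ is flat (resp.~(FRS)) by hypothesis, hence so is their disjoint union, hence so is $\sigma_\Omega(\Phi_{\Gamma,\mathrm{V},\{\tau_{\gamma}\}})$ at the origin, hence so is $\Phi_{\Gamma,\mathrm{V},\{\tau_{\gamma}\}}$ itself, which is exactly the assertion that $(\Gamma,\mathrm{V},\{\tau_{\gamma}\})$ is flat (resp.~(FRS)).

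The main obstacle I anticipate is the bookkeeping in the symbol computation: one must verify that admissibility — the condition that the minimum of $\widetilde{\omega}(I)$ over $\mathbb{Z}^M$ is \emph{unique} and \emph{achieved in the same coordinate} for \emph{all} hyperedges sharing a fixed type $l$ — is exactly what guarantees that no cross-terms between different blocks survive in the symbol and that for each type a nonempty set of hyperedges survives (so that the surviving map is still generating and the hypergraph $\mathrm{gr}_{\omega,m}\Gamma$ is the intended one, with $\mathrm{gr}_{\omega,m}\mathcal{J}$ nonempty for the relevant $m$). A secondary subtlety, exactly as flagged in Remark \ref{rem:warning-elimination}, is that one is not free to choose an arbitrary weight: here, however, the weight $\Omega$ is dictated by the given admissible coloring, and the hypothesis that each $(\mathrm{gr}_{\omega,m}\Gamma,\mathrm{V}_{m},\{\tau^{m}_{\gamma}\})$ is flat/(FRS) (in particular generating) is built into the assumptions, so the pathology does not arise. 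One should also note that, unlike pure elimination, this procedure enlarges the assigned vector spaces — $\mathrm{V}(i)=\bigoplus_m \mathrm{V}_m(i)$ has dimension $\sum_m \dim \mathrm{V}_m(i)$ — which is the ``price'' of coloring and is the source of the extra convolutions in the main theorems.
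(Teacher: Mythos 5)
Your proof is correct and is essentially the paper's own argument: where you put the weight $\Omega$ directly on the coordinate space $\bigoplus_{i,m}\mathrm{V}_m(i)$ and invoke Corollary \ref{cor:(cf.---Elimination)}, the paper packages the same data as a weight $\omega'$ on a ``duplicated'' $d$-PH $\Gamma_M$ with vertex set $\mathcal{I}\times M$ (which induces the identical morphism as $(\Gamma,\mathrm{V},\{\tau_\gamma\})$) and invokes the polyhypergraph elimination Corollary \ref{cor:elimination for dPH}. Both arguments then identify the degenerated map with the product $\prod_{m\in M}\Phi_{\mathrm{gr}_{\omega,m}\Gamma,\mathrm{V}_m,\{\tau_\gamma^m\}}$ over disjoint variable sets, whose flatness resp.\ (FRS) follows from that of each factor.
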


\begin{proof}
Set $\Gamma_{M}:=(\mathcal{I}\times M,\mathcal{J},\mathcal{S}_{M})$,
where $\mathcal{S}_{M}=\{((I,m),l):(I,l)\in\mathcal{S},m\in M\}$,
to be the $d$-PH consisting of $M$ copies of $\Gamma=(\mathcal{I},\mathcal{J},\mathcal{S})$.
Then we define a weight vector $\omega'\in\mathbb{Z}^{\mathcal{I}\times M}$,
by $\omega'(i,m):=\omega(i)_{m}$. Consider the assignment $(\Gamma_{M},\widetilde{\mathrm{V}},\{\widetilde{\tau}_{(\gamma,m)}\})$
of $\Gamma_{M}$, with $\widetilde{\mathrm{V}}(i,m)=\mathrm{V}_{m}(i)$
and $\widetilde{\tau}_{(\gamma,m)}:=\tau_{\gamma}^{m}$ for any $m\in M$
and $\gamma\in\mathcal{S}$. Notice that $(\Gamma_{M},\widetilde{\mathrm{V}},\{\widetilde{\tau}_{(\gamma,m)}\})$
is flat (resp.~(FRS)) if and only if $(\Gamma,\mathrm{V},\{\tau_{\gamma}\})$
is flat (resp.~(FRS)), as both polygraphs induce the same morphism.
Notice that by the admissibility of $\omega$, we get that $\mathrm{gr}_{\omega'}(\Gamma_{M})$
is a disjoint union (in the sense of Definition \ref{def:polyhypergraph})
of the polyhypergraphs $\mathrm{gr}_{\omega,m}\Gamma$ for $1\leq m\leq M$.
In particular, the fiber $\Phi_{\mathrm{gr}_{\omega'}(\Gamma_{M}),\widetilde{\mathrm{V}},\{\widetilde{\tau}_{(\gamma,m)}\}}^{-1}(0)$
is isomorphic to the product $\prod\limits _{m\in M}\Phi_{\mathrm{gr}_{\omega,m}\Gamma,\mathrm{V}_{m},\{\tau_{\gamma}^{m}\}}^{-1}(0)$.
The claim now follows from Corollary \ref{cor:elimination for dPH}. 
\end{proof}
\begin{rem}
\label{rem:The-coloring-method is convolution+elimination}The coloring
method is just a different manifestation of the elimination method,
after first applying convolutions; let $M$ be a finite set, $(t_{m})_{m\in M}\in\ints_{>0}^{|M|}$,
with $t=\sum\limits _{m=1}^{\left|M\right|}t_{m}$, let $\varphi:\mathbb{A}^{n_{2}}\rightarrow\mathbb{A}^{n_{1}}$
be a morphism, and let $\omega:[n_{2}]\rightarrow\ints^{M}$ be a
(coloring) function. Then the coloring method can be seen as first
considering $\varphi^{*t}:\mathbb{A}^{n_{2}t}\rightarrow\mathbb{A}^{n_{1}}$,
and then applying the elimination method, by assigning the weight
$\nu_{\omega}:[n_{2}]\times[t]\rightarrow\ints$ by $\nu_{\omega}(i,l)=\omega(i)_{m(l)}$,
where $m(l)=m'$ if $\sum\limits _{j=1}^{m'-1}t_{j}<l\leq\sum\limits _{j=1}^{m'}t_{j}$.
The admissibility of the coloring $\omega$ guarantees that $\sigma_{\nu_{\omega}}(\varphi^{*t})$
is a disjoint union of morphisms $\sigma_{\omega_{m}}(\varphi^{*t_{m}})$.
In particular, if $\sigma_{\omega_{m}}(\varphi^{*t_{m}})$ is flat
or (FRS) for every $m\in M$, then so is $\varphi^{*t}$. 
\end{rem}

\begin{defn}
\label{def:convolutions of a d-PH}Let $\Gamma:=(\mathcal{I},\mathcal{J},\mathcal{S})$
be a $d$-PH and let $(\Gamma,\mathrm{V},\{\tau_{\gamma}\})$ be an
assignment. Then $(\Gamma,\mathrm{V}^{\oplus t},\{\tau_{\gamma}^{\oplus t}\})$
is called the\textsl{ $t$-th self-convolution} of $(\Gamma,\mathrm{V},\{\tau_{\gamma}\})$.
In particular, $(\Gamma,\mathrm{V},\{\tau_{\gamma}\})$ is said to
be \emph{flat} (resp.~\emph{(FRS)}) \emph{after $t$ convolutions}
if $(\Gamma,\mathrm{V}^{\oplus t},\{\tau_{\gamma}^{\oplus t}\})$
is flat (resp.~(FRS)). 
\end{defn}

The following is a special case of a $d$-PH which will be particularly
important to us, and is easier to analyze: 
\begin{defn}
A \textit{$d$-hypergraph $\Gamma$} is a pair $\Gamma=(\mathcal{V},\mathcal{E})$,
where $\mathcal{V}$ a set of vertices and $\mathcal{E}\subseteq\mathcal{V}^{(d)}$
is the set of $d$-hyperedges, where $\mathcal{V}^{(d)}$ is the set
of multisubsets of $\mathcal{V}$ of size $d$. Any $d$-hypergraph
induces a natural $d$-PH 
\[
\mathcal{P}(\Gamma):=(\mathcal{V},\mathcal{E},\triangle\mathcal{E})
\]
where $\triangle\mathcal{E}\subseteq\mathcal{V}^{(d)}\times\mathcal{E}$
embedded diagonally. An \textsl{assignment} of $\Gamma$ is just an
assignment of $\mathcal{P}(\Gamma)$. Similarly as for polyhypergraphs,
we say that an assignment $(\Gamma,\mathrm{V},\{\tau_{I}\}_{I\in\mathcal{E}})$
of $\Gamma$ is (FRS) if $(\mathcal{P}(\Gamma),V,\{\tau_{I}\}_{I\in\mathcal{E}})$
is (FRS). 
\end{defn}

Summarizing the above two methods, the elimination method helps us
eliminate edges of a given $d$-PH, and results in a $d$-PH with
fewer edges. The coloring method consists of first constructing a
larger $d$-PH $\Gamma_{M}$ which corresponds to taking $M$ self-convolutions
and consists of $M$ copies of our $d$-PH $\Gamma$, and then using
the elimination method on $\Gamma_{M}$. The use of the coloring method
is essential in following two cases: 
\begin{enumerate}
\item Once we manage to reduce a polyhypergraph to a hypergraph, the elimination
method will usually not be effective. 
\item In case we are given a $d$-PH, denoted $\Gamma$, but we cannot guarantee
that the elimination method will result in new polygraph $\mathrm{gr}_{\omega}(\Gamma)$
whose corresponding map $\Phi_{\mathrm{gr}_{\omega}\Gamma,\mathrm{V},\{\tau_{\gamma}\}}$
is generating. 
\end{enumerate}

\section{\label{sec:Proof-of-Theorems- Lie algebra}Lie algebra word maps:
proof of the main theorems}

In this section we prove Theorems \ref{thm: main thm Lie algebra word maps},
\ref{thm: main theorem for matrix word map} and \ref{thm:lower bounds on epsilon jet flatness}.
By Proposition \ref{prop:reduction to self convolutions} we may consider
self-convolutions of a single $d$-homogeneous word $w\in\mathcal{L}_{r}$
of pure type. Our main methodology is to encode $\varphi_{w}^{*t}:\mathfrak{g}^{r}\rightarrow\mathfrak{g}$
in terms of an assignment $(\Gamma_{\mathfrak{g},w},V,\{\tau_{\gamma}\}_{\gamma\in\mathcal{S}})$
of a $d$-PH $\Gamma_{\mathfrak{g},w}$, as in Definition \ref{def:D-PH assigned to a Lie algebra word map },
with $V$ a $tr$-dimensional vector space. Using the elimination
and coloring methods, we reduce $\Gamma_{\mathfrak{g},w}$ to a simpler
object, at the price of enlarging $\mathrm{dim}V$. Here, a simpler
object is either a $d$-PH consisting of (disjoint unions of) a single
hyperedge, or a $d$-PH with a small number of types (relative to
$d$). A complication arises when $\mathrm{rk}(\mathfrak{g})$ is
small. This requires special attention. The structure of this section
is as follows: 
\begin{itemize}
\item In Subsection \ref{subsec:Discussion-on-the methods} we discuss the
main methods we use - the elimination and coloring methods. These
methods are used repeatedly throughout Section \ref{sec:Proof-of-Theorems- Lie algebra}.
\item In Subsection \ref{subsec:Proof-for low rank and few edges} we deal
with low rank simple Lie algebras (in particular with all exceptional
Lie algebras) and with polyhypergraphs with a few types of hyperedges. 
\item Subsections \ref{subsec:Proof-for-SLn} and \ref{subsec:Proof-for-the general case}
deal with the case of $\text{\ensuremath{\mathfrak{g}}=}\mathfrak{sl}_{n}$
and the general case, respectively. 
\item In Subsection \ref{subsec:Matrix-word-maps} we discuss matrix word
maps, and prove Theorem \ref{thm: main theorem for matrix word map}. 
\end{itemize}

\subsection{\label{subsec:Discussion-on-the methods}Discussion of the main methods}

Throughout Section \ref{sec:Proof-of-Theorems- Lie algebra} we use
the elimination and coloring methods abundantly, for polyhypergraphs,
hypergraphs and general polynomial maps. Our applications of the coloring
and elimination methods can be clustered into three types: 
\begin{enumerate}
\item The \textbf{\textit{averaging method}}, using $\omega_{\mathrm{av}}$. 
\item The \textbf{\textit{monomialization method}}, using $\omega_{\mathrm{mon}}$. 
\item The \textbf{\textit{level separation method}}, using $\omega_{\mathrm{LS}}$. 
\end{enumerate}
Let us explain these procedures using a simple example, which is very
similar to the analysis of the jets of the commutator map considered
in Section \ref{sec:The-commutator-map-revisited}. Consider the following
map (which is the $2t$-th jet map of $(x,y)\mapsto xy$): 
\[
\varphi(x_{0},y_{0},x_{1},y_{1},\dots,x_{2t},y_{2t})=(x_{0}y_{0},x_{0}y_{1}+x_{1}y_{0},\sum\limits _{j=0}^{2}x_{j}y_{2-j},\dots,\sum\limits _{j=0}^{2t}x_{j}y_{2t-j}).
\]
We first apply the weight $\omega_{\mathrm{av}}(x_{u})=\omega_{\mathrm{av}}(y_{u})=3^{u}$.
Notice that $\mathrm{deg}_{\omega_{\mathrm{av}}}(x_{j}y_{u-j})=3^{j}+3^{u-j}$
so that $\mathrm{deg}_{\omega_{\mathrm{av}}}$ is minimized when $j$
is as close as possible to $\frac{u}{2}$. More precisely, 
\[
\sigma_{\omega_{\mathrm{av}}}(\varphi)(x_{0},y_{0},x_{1},y_{1},\dots,x_{2t},y_{2t})=(x_{0}y_{0},x_{0}y_{1}+x_{1}y_{0},x_{1}y_{1},\dots,x_{t-1}y_{t}+x_{t}y_{t-1},x_{t}y_{t}).
\]
We see that $\omega_{\mathrm{av}}$ not only eliminated most of the
monomials, but also that for any $u_{1},u_{2}$ such that $\left|u_{1}-u_{2}\right|>1$,
the polynomials $\sigma_{\omega_{\mathrm{av}}}(\varphi)_{u_{1}}$
and $\sigma_{\omega_{\mathrm{av}}}(\varphi)_{u_{2}}$ have disjoint
variables, while for example, in $\varphi_{2t}$ all of the variables
$x_{0},y_{0},\dots,x_{t},y_{t}$ are involved. We call the use of
$\omega_{\mathrm{av}}$ and its variations the \textbf{\textit{averaging
method}}.

Let us now apply a different type of weight. We first order the variables
by $\mathrm{ord}(x_{u})=2u+1$, and $\mathrm{ord}(y_{u})=2u+2$ such
that $x_{0}<y_{0}<\ldots<x_{2t}<y_{2t}$. Set 
\[
\omega_{\mathrm{mon}}(x_{u}):=3^{\mathrm{ord}(x_{u})}\text{ and }\omega_{\mathrm{mon}}(y_{u}):=3^{\mathrm{ord}(y_{u})}.
\]
Notice that $\mathrm{deg}_{\omega_{\mathrm{mon}}}(x_{u-1}y_{u})>\mathrm{deg}_{\omega_{\mathrm{mon}}}(x_{u}y_{u-1})$
and therefore: 
\[
\psi:=\sigma_{\omega_{\mathrm{mon}}}\left(\sigma_{\omega_{\mathrm{av}}}(\varphi)\right)=(x_{0}y_{0},x_{1}y_{0},x_{1}y_{1},\dots,x_{t}y_{t-1},x_{t}y_{t})
\]
is a monomial map. The use of $\omega_{\mathrm{mon}}$ is designed
to degenerate a polynomial map to a monomial map, so we refer to the
use of $\omega_{\mathrm{mon}}$ and its variations as the \textbf{\textit{monomialization
method.}} In the case of polyhypergraphs, we will use $\omega_{\mathrm{mon}}$
to turn a polyhypergraph into a hypergraph.

Since the map is already monomial, we cannot use the elimination method
(at least without a change of variables). Instead, we apply the coloring
method, by applying one convolution, and then use the elimination
method. The fact that the various coordinates of $\psi$ have small
interactions with their neighbors (a feature we earned by using $\omega_{\mathrm{av}}$),
makes this method effective. Explicitly, we consider 
\[
\psi^{*2}(x_{0},y_{0},z_{0},w_{0},\dots,x_{2t},y_{2t},z_{2t},w_{2t})=\psi(x_{0},y_{0},\dots,x_{2t},y_{2t})+\psi(z_{0},w_{0},\dots.,z_{2t},w_{2t})
\]
and then use the weights 
\[
\omega_{\mathrm{LS}}(x_{u})=2\cdot10^{u},~\omega_{\mathrm{LS}}(y_{u})=3\cdot10^{u},~\omega_{\mathrm{LS}}(z_{u})=3\cdot10^{u},~\text{ and }\omega_{\mathrm{LS}}(w_{u})=10^{u}.
\]
Notice that $\mathrm{deg}_{\omega_{\mathrm{LS}}}(x_{u}y_{u})>\mathrm{deg}_{\omega_{\mathrm{LS}}}(z_{u}w_{u})$
but $\mathrm{deg}_{\omega_{\mathrm{LS}}}(x_{u+1}y_{u})<\mathrm{deg}_{\omega_{\mathrm{LS}}}(z_{u+1}w_{u})$
for any $u$. Therefore we get 
\[
\phi:=\sigma_{\omega_{\mathrm{LS}}}(\psi*\psi)=(z_{0}w_{0},x_{1}y_{0},z_{1}w_{1},\dots,x_{t}y_{t-1},z_{t}w_{t}).
\]
Notice that each pair of coordinates of $\phi$ has disjoint variables,
and hence $\phi^{-1}(0)$ is isomorphic to the product $\prod\limits _{u=0}^{t}(z_{u}w_{u})^{-1}(0)\times(x_{u+1}y_{u})^{-1}(0)$.
We have thus reduced the problem to a single monomial $x_{u}y_{u}$.
Note that $x_{1}x_{2}+x_{3}x_{4}=0$ has rational singularities, so
$\phi^{*2}=\sigma_{\omega_{\mathrm{LS}}}(\psi^{*4})$ is (FRS), which
implies by Corollary \ref{cor:(cf.---Elimination)} that $\varphi^{*4}$
is (FRS). The last method we used is a special case of the coloring
method (as it involves an application of convolution followed by elimination)
which we call \textbf{\textit{level separation}}, as we use $\omega_{\mathrm{LS}}$
to separate a given map into 'levels' which have pairwise disjoint
variables.

\subsubsection{\label{subsec:Degenerations-of-jets of word maps}Degenerations of
jets of Lie algebra word maps}

We next describe a certain manifestation of the degeneration techniques
presented above, applied to the jets of Lie algebra word maps. Since
a formal derivation of a Lie algebra word map is a Lie algebra word
map (Lemma \ref{lem: jets of word maps are towers of word maps}),
and since the (FRS) property of a morphism is closely related to the
flatness of its $m$-th jets (Corollary \ref{cor: singularity properties through dim of jets}),
this allows us to reduce certain questions on the (FRS) property of
Lie algebra word maps to question about flatness of word maps or maps
built out of bounded ``layers'' of word maps. For example, in Section
\ref{sec:The-commutator-map-revisited} we use this method to reduce
the question about the (FRS) property of the commutator map to a question
about flatness of the commutator map. This is further used in Section
\ref{subsec:Matrix-word-maps}.

Let $w\in\mathcal{L}_{r}$ be a word of degree $d\in\nats$, and $\mathfrak{g}$
be a simple $K$-Lie algebra. Following the notation from Section
\ref{subsec:Basic-properties-of}, for any $m\in\nats$, we consider
the $m$-th jet maps $J_{m}(\varphi_{w}):J_{m}(\mathfrak{g})^{r}\rightarrow J_{m}(\mathfrak{g})$
where $J_{m}(\mathfrak{g})$ is identified with $\mathfrak{g}^{m+1}$
as vector spaces. We can then write 
\[
J_{m}(\varphi_{w})^{*t}(X_{1},\ldots,X_{r,t},\ldots,X_{1}^{(m)},\ldots,X_{r,t}^{(m)})=\left(\varphi_{w}^{*t},\left(\varphi_{w}^{*t}\right)^{(1)},\ldots,\left(\varphi_{w}^{*t}\right)^{(m)}\right),
\]
where $\left(\varphi_{w}^{*t}\right)^{(u)}$ is the $u$-th formal
derivative of $\varphi_{w}^{*t}$. 
\begin{lem}
\label{lem: jets of word maps are towers of word maps}For each $t,u\in\mathbb{N}$,
the map $\left(\varphi_{w}^{*t}\right)^{(u)}$ is a $d$-homogeneous
word map. 
\end{lem}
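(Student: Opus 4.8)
The statement to prove is Lemma \ref{lem: jets of word maps are towers of word maps}: for a Lie algebra word $w\in\mathcal{L}_r$ of degree $d$, each formal derivative $\left(\varphi_w^{*t}\right)^{(u)}$ of the $t$-th self-convolution is again a $d$-homogeneous Lie algebra word map. The plan is to reduce immediately to the case $t=1$, since $\varphi_w^{*t}(X_{1,1},\dots,X_{r,t})=\sum_{k=1}^t\varphi_w(X_{1,k},\dots,X_{r,k})$ is a sum of copies of $\varphi_w$ in disjoint blocks of variables, and formal differentiation is additive and acts blockwise; a sum of $d$-homogeneous word maps (in disjoint variables) is still a $d$-homogeneous word map after relabeling. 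So it suffices to show that for a single $d$-homogeneous $w$, the $u$-th formal derivative $\varphi_w^{(u)}$ is a $d$-homogeneous word map.

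For the single-word case, I would use Fact \ref{fact:sum of left normed monomials} to write $w=\sum_{S\in[r]^d}c_S X_S$ as a linear combination of left-normed Lie monomials $X_S=[X_{s_1},\dots,X_{s_d}]$, and note that formal differentiation is linear, so it is enough to handle a single left-normed monomial. For $\varphi_{X_S}(A_1,\dots,A_r)=[A_{s_1},\dots,A_{s_d}]$ (an alternating $K$-multilinear expression in the Lie bracket), the key computation is that the $u$-th formal derivative is obtained by the Leibniz rule: since the $d$-fold bracket is multilinear in its $d$ slots, one gets
\[
\varphi_{X_S}^{(u)}=\sum_{\substack{(u_1,\dots,u_d)\in\mathbb{N}^d\\ u_1+\dots+u_d=u}}\binom{u}{u_1,\dots,u_d}\bigl[X_{s_1}^{(u_1)},\dots,X_{s_d}^{(u_d)}\bigr],
\]
where $X_{s_j}^{(u_j)}$ denotes the formal derivative of the corresponding jet-variable block. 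Each summand is again a left-normed Lie monomial of degree $d$ in the enlarged set of jet variables $\{X_{s,k}^{(v)}\}$, with a combinatorial coefficient; hence $\varphi_w^{(u)}$ is a $K$-linear combination of degree-$d$ left-normed Lie monomials in the variables $J_m(\mathfrak{g})$ identified with $\mathfrak{g}^{m+1}$, i.e.\ a $d$-homogeneous Lie algebra word map. This is exactly the structure already illustrated in Example \ref{exa:example for jet of the commutator map} for the commutator ($d=2$), where $\phi_1$'s second component $[X,Y^{(1)}]+[X^{(1)},Y]$ is visibly a degree-$2$ word map; the general case is the same phenomenon via multilinearity of the iterated bracket.

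The one genuinely careful point — and the main (mild) obstacle — is bookkeeping: one must be precise that "formal derivative" here means the jet-scheme derivation, i.e.\ the derivation $D$ on the polynomial/Lie-algebra-valued functions sending $X^{(v)}\mapsto X^{(v+1)}$ (with $X^{(0)}=X$), extended as a $K$-derivation, and that this derivation is a \emph{Lie algebra derivation} of $\mathcal{L}_r$ extended by jet variables, so $D[A,B]=[DA,B]+[A,DB]$. This is precisely what makes $D$ preserve the property "being a linear combination of degree-$d$ left-normed monomials", and it is compatible with Definition \ref{def:basic definition jet schemes} of $J_m(\varphi)$ as formal derivative. So the proof is: (i) reduce to $t=1$ by blockwise additivity; (ii) reduce to a single left-normed monomial by linearity; (iii) apply the multilinear Leibniz rule to the $d$-fold bracket and observe each term is a degree-$d$ left-normed Lie monomial in the jet variables; (iv) conclude homogeneity of degree $d$ is preserved since differentiation in the jet variables does not change the number of bracketed factors. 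I expect step (iii), making the Leibniz expansion for the iterated bracket explicit and noting it lands back in $\mathcal{L}$ (using the Jacobi identity only implicitly, via Fact \ref{fact:sum of left normed monomials}, to rewrite if one insists on left-normed form), to be the only place requiring a line of care; everything else is formal.
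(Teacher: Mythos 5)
Your proof is correct and follows essentially the same route as the paper: reduce to $t=1$ (the paper states this as $(\varphi_w^{*t})^{(u)}=(\varphi_w^{(u)})^{*t}$, you phrase it as blockwise additivity), reduce by linearity to a single left-normed Lie monomial $X_S$ via Fact~\ref{fact:sum of left normed monomials}, and apply the Leibniz rule to the $d$-fold bracket. The only small divergence is expositional: the paper first embeds $\mathfrak{g}\hookrightarrow\mathfrak{gl}_n$ to compute the derivative of a matrix monomial, derives the $u=1$ formula~(\ref{eq:(5.1)}), and then inducts on $u$, whereas you argue abstractly that $D$ is a Lie derivation and write the full multinomial Leibniz expansion directly — which is exactly the content of the paper's subsequent Lemma~\ref{lem:formal derivatives of word maps}.
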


\begin{proof}
Embed $\mathfrak{g}\hookrightarrow\mathfrak{gl}_{n}$ and then by
functoriality of $J_{m}$, it is enough to prove the claim for $\mathfrak{g}=\mathfrak{gl}_{n}$.
Note that $\left(\varphi_{w}^{*t}\right)^{(u)}=\left(\varphi_{w}^{(u)}\right)^{*t}$
so we may assume $t=1$. By Fact \ref{fact:sum of left normed monomials}
we may assume that $w=X_{S}$ is a left-normed Lie monomial.

The formal derivative of any monomial of matrices $X_{s_{1}}\cdot X_{s_{2}}\cdot\ldots\cdot X_{s_{k}}$
is $\sum\limits _{j=1}^{k}X_{s_{1}}\cdot\ldots\cdot X_{s_{j}}^{(1)}\cdot\ldots\cdot X_{s_{k}}$.
In particular, by induction on $d$ it can be shown that 
\begin{equation}
X_{S}^{(1)}=\sum\limits _{k=1}^{d}[X_{s_{1}},\ldots,X_{s_{k-1}},X_{s_{k}}^{(1)},X_{s_{k+1}},X_{s_{d}}].\label{eq:(5.1)}
\end{equation}
By induction on $u$, we deduce that $\varphi_{w}^{(u)}$ is a $d$-homogeneous
word map. 
\end{proof}
\begin{defn}
\label{def:pure summands, equivalence class}Recall that a word is
of pure type $S_{0}\in[r]^{(d)}$, if it can be written as $\sum\limits _{S\in\mathrm{MS}^{-1}(S_{0})}c_{S}X_{S}$,
and note that any word $w$ can be written as a sum of words of pure
type 
\[
w=\sum\limits _{S_{0}\in[r]^{(d)}}w_{S_{0}}=\sum\limits _{S_{0}\in[r]^{(d)}}\left(\sum\limits _{S\in\mathrm{MS}^{-1}(S_{0})}c_{S}X_{S}\right).
\]
We call each $w_{S_{0}}$ a \textsl{pure summand} of $w$.

For $J\in\nats^{d}$ and $S\in\mathrm{MS}^{-1}(S_{0})$ define $X_{S}^{J}:=[X_{s_{1}}^{(j_{1})},\ldots,X_{s_{d}}^{(j_{d})}]$.
We say that $(S,J)\sim(S',J')$ are equivalent if there exists $\sigma\in S_{d}$
such that $(\sigma S,\sigma J)=(S',J')$. We denote equivalence classes
with respect to this action by $[(S,J)]$. Notice that $X_{S}^{J}$
and $X_{S'}^{J'}$ have the same multiset of variables if and only
if $(S,J)\sim(S',J')$. We therefore say that $X_{S}^{J}$ is of type
$[(S,J)]$. 
\end{defn}

Using Formula (\ref{eq:(5.1)}), by induction, we get the following
technical lemma. Note that $\binom{u}{j_{1},\ldots,j_{d}}=\frac{u!}{j_{1}!\ldots j_{d}!}$
are the multinomial coefficients counting the number of ways to put
$u$ distinct objects in $d$ distinct bins, with $j_{k}$ objects
in the $k$-th bin. 
\begin{lem}
\label{lem:formal derivatives of word maps}Let $S\in[r]^{d}$ and
$w=X_{S}$. Then $w^{(u)}=\sum\limits _{J\in\nats^{d}:\left|J\right|=u}\binom{u}{j_{1},\ldots,j_{d}}X_{S}^{J}$. 
\end{lem}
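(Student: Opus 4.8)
The plan is to induct on $u$, the key input being that the formal derivative $\partial$ (the operation $v\mapsto v^{(1)}$) acts as a derivation with respect to the left-normed Lie bracket. As in the proof of Lemma \ref{lem: jets of word maps are towers of word maps}, I would embed $\mathfrak{g}\hookrightarrow\mathfrak{gl}_n$, so that $\partial$ is a derivation of the associative product and hence of $[X,Y]=XY-YX$; an induction on $d$ (generalizing Formula (\ref{eq:(5.1)})) then gives, for arbitrary elements $Y_1,\dots,Y_d$,
\[
\partial[Y_1,\dots,Y_d]=\sum\limits_{k=1}^{d}[Y_1,\dots,Y_{k-1},\partial Y_k,Y_{k+1},\dots,Y_d].
\]
Specializing $Y_k=X_{s_k}^{(j_k)}$ yields $\partial\bigl(X_S^J\bigr)=\sum_{k=1}^{d}X_S^{J+e_k}$ for every $J\in\nats^d$, where $e_k\in\nats^d$ denotes the $k$-th standard basis vector.

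With this in hand the induction is routine. The base case $u=0$ is $w^{(0)}=X_S=X_S^{(0,\dots,0)}$ together with $\binom{0}{0,\dots,0}=1$. For the inductive step I would differentiate the hypothesis $w^{(u)}=\sum_{|J|=u}\binom{u}{j_1,\dots,j_d}X_S^J$ to obtain
\[
w^{(u+1)}=\sum\limits_{J:\,|J|=u}\binom{u}{j_1,\dots,j_d}\sum\limits_{k=1}^{d}X_S^{J+e_k},
\]
and then collect terms: for a fixed $J'$ with $|J'|=u+1$, the coefficient of $X_S^{J'}$ equals $\sum_{k:\,j'_k\geq1}\binom{u}{j'_1,\dots,j'_k-1,\dots,j'_d}$. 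Adopting the convention that a multinomial coefficient with a negative entry is $0$, the standard recursion
\[
\binom{u+1}{j'_1,\dots,j'_d}=\sum\limits_{k=1}^{d}\binom{u}{j'_1,\dots,j'_k-1,\dots,j'_d},
\]
which follows by conditioning on which of the $d$ bins receives the last of $u+1$ distinguishable objects, identifies this coefficient with $\binom{u+1}{j'_1,\dots,j'_d}$, completing the induction.

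I do not expect a genuine obstacle here: the only point requiring care is the first step, namely verifying that $\partial$ is a derivation for the left-normed bracket so that the Leibniz rule applies termwise; everything downstream is the multinomial Pascal recursion. I would also note that although $X_S^J$ depends only on the equivalence class $[(S,J)]$ of Definition \ref{def:pure summands, equivalence class}, that grouping is not needed for the identity as stated, which is an equality in $\mathcal{L}_r$ prior to any collection of terms by type.
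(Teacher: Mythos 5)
Your proof is correct and follows exactly the route the paper intends: the lemma is stated as a consequence of Formula (\ref{eq:(5.1)}) by induction, and your write-up fills in precisely those inductive details — the Leibniz rule for left-normed brackets applied termwise to obtain $\partial(X_S^J)=\sum_k X_S^{J+e_k}$, followed by the multinomial Pascal recursion — so there is no meaningful divergence from the paper's argument.
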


\begin{cor}
\label{cor:all parial derivatives survive in the formal derivative of a word}Let
$\mathfrak{g}$ be a simple $K$-Lie algebra, and let $w$ be a $d$-homogeneous
word of pure type $S_{0}$ such that $\varphi_{w}:\mathfrak{g}^{r}\to\mathfrak{g}$
is not trivial. Then for any $u\in\nats$, $\tilde{J}\in\nats^{d}$
with $|\tilde{J}|=u$, and any $\tilde{S}\in\mathrm{MS}^{-1}(S_{0})$,
the map $\varphi_{w}^{(u)}$ contains a non-trivial pure summand $w'$
of type $[(\tilde{S},\tilde{J})]$. 
\end{cor}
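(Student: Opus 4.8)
The plan is to reduce the statement to a statement about a single pure summand $w$ of pure type $S_0$, and then track a single monomial of type $[(\tilde S,\tilde J)]$ through the formal derivative, using Lemma \ref{lem:formal derivatives of word maps} together with the non-triviality hypothesis. First I would write $w=\sum_{S\in\mathrm{MS}^{-1}(S_0)}c_S X_S$, and observe (via Lemma \ref{lem: jets of word maps are towers of word maps}, or directly from Formula \eqref{eq:(5.1)}) that $\varphi_w^{(u)}$ decomposes into pure summands indexed by types $[(S,J)]$ with $S\in\mathrm{MS}^{-1}(S_0)$ and $|J|=u$, since a formal derivative only attaches derivative-superscripts to existing variables and never changes the underlying multiset of indices. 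So the only content is that the pure summand of $\varphi_w^{(u)}$ of the prescribed type $[(\tilde S,\tilde J)]$ is non-trivial. By Lemma \ref{lem:formal derivatives of word maps}, for each $S\in\mathrm{MS}^{-1}(S_0)$ we have $X_S^{(u)}=\sum_{|J|=u}\binom{u}{j_1,\dots,j_d}X_S^J$, so
\[
\varphi_w^{(u)}=\sum_{S\in\mathrm{MS}^{-1}(S_0)}c_S\sum_{|J|=u}\binom{u}{j_1,\dots,j_d}X_S^J,
\]
and collecting the terms whose type equals $[(\tilde S,\tilde J)]$ gives a coefficient which, after reindexing by the $S_d$-action, is (up to a nonzero multinomial factor) a fixed nonzero scalar multiple of $\sum_{S}c_S$ over a suitable orbit-sum — more precisely, the pure summand of type $[(\tilde S,\tilde J)]$ is, as a Lie polynomial, a rational multiple of the specialization obtained from $w$ itself by relabeling.

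The key point that makes it non-trivial is then exactly the hypothesis that $\varphi_w:\mathfrak g^r\to\mathfrak g$ is not the zero map: the pure summand of $\varphi_w^{(u)}$ of type $[(\tilde S,\tilde J)]$ is, up to a nonzero scalar and relabeling of the variables, nothing but a fresh copy of the word $w$ (or of the sub-word-map obtained from $w$ by grouping some derivative-variables together). I would make this precise by introducing, for a fixed $(\tilde S,\tilde J)$, the specialization map $\mathfrak g^{?}\to\mathfrak g^r$ that sends the distinct derivative-variables occurring in $X_{\tilde S}^{\tilde J}$ to the original variables $X_1,\dots,X_r$ according to the index pattern; under this specialization the pure summand of type $[(\tilde S,\tilde J)]$ pulls back to $c\cdot\varphi_w$ for some nonzero constant $c$ (a product of multinomial coefficients and the orbit count). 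Since $\varphi_w\neq 0$ on $\mathfrak g^r$, this forces the pure summand to be a non-trivial word map on $\mathfrak g$. I would also invoke Lemma \ref{lem: jets of word maps are towers of word maps} to know a priori that every pure summand is itself a $d$-homogeneous word map, so ``non-trivial pure summand'' is a meaningful statement.

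The main obstacle I anticipate is purely bookkeeping: one must be careful that the various terms $X_S^J$ of a given type $[(\tilde S,\tilde J)]$, as $S$ ranges over $\mathrm{MS}^{-1}(S_0)$ and $J$ over $\{|J|=u\}$, genuinely add up with a nonzero total coefficient rather than cancelling — this is where the $S_d$-equivalence and the positivity of the multinomial coefficients $\binom{u}{j_1,\dots,j_d}>0$ are used, and where one must check that distinct $(S,J)$ in the same orbit contribute with the \emph{same} sign to the relabeled word (they do, because passing to $(\sigma S,\sigma J)$ literally permutes the bracket slots of a left-normed monomial only up to the Lie-algebra relations already accounted for in the definition of ``pure type''). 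A secondary subtlety is the reduction from general $\tilde J$ with $|\tilde J|=u$ to the combinatorics of Lemma \ref{lem:formal derivatives of word maps}: since $w$ need not be a single monomial but a linear combination, one should argue type-by-type and note that the map $S\mapsto$ (type of $X_S^{\tilde J}$) is compatible with the $S_d$-action, so no unexpected collisions across different $S_0$-summands occur (and indeed $w$ is assumed of pure type $S_0$, which is what keeps everything inside a single type-orbit). Once these are handled, the conclusion is immediate.
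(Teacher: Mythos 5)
Your proposal is correct and takes essentially the same route as the paper: decompose $\varphi_w^{(u)}$ via Lemma \ref{lem:formal derivatives of word maps}, isolate the pure summand of type $[(\tilde S,\tilde J)]$, and specialize the derivative variables $X_s^{(u')}\mapsto X_s$ (your ``specialization map'' is exactly the paper's restriction to the diagonal subspace $W\subseteq J_m(\mathfrak g^r)$), under which the summand becomes a positive-constant multiple of $\varphi_w$ by orbit-counting and positivity of the multinomial coefficients. The only cosmetic slip is the arrow direction in ``$\mathfrak g^{?}\to\mathfrak g^r$'' (the diagonal inclusion goes $\mathfrak g^r\hookrightarrow J_m(\mathfrak g^r)$), but this does not affect the substance.
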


\begin{proof}
Write $w=\sum\limits _{S\in\mathrm{MS}^{-1}(S_{0})}c_{S}X_{S}$. Then
by Lemma \ref{lem:formal derivatives of word maps}, 
\[
\varphi_{w}^{(u)}=\sum\limits _{S\in\mathrm{MS}^{-1}(S_{0})}\sum\limits _{J:\left|J\right|=u}c_{S}\binom{u}{j_{1},\ldots,j_{d}}X_{S}^{J}.
\]
In particular, $\varphi_{[(\tilde{S},\tilde{J})]}:=\sum\limits _{S\in\mathrm{MS}^{-1}(S_{0})}\sum\limits _{J:(S,J)\in[(\tilde{S},\tilde{J})]}c_{S}\binom{u}{j_{1},\ldots,j_{d}}X_{S}^{J}$
is the pure summand of $\varphi_{w}^{(u)}$ of type $[(\tilde{S},\tilde{J})]$.
It is left to verify it is not trivial. Let $W$ be the subspace of
$J_{m}(\mathfrak{g}^{r})$ defined by $X_{s}^{(u)}=X_{s}$ for all
$s$ and $u$. Note that
\[
\left|\{J:(S,J)\in[(\tilde{S},\tilde{J})]\}\right|=\frac{\left|\mathrm{Stab}_{S_{d}}(\tilde{S})\right|}{\left|\mathrm{Stab}_{S_{d}}((\tilde{S},\tilde{J}))\right|}=:C([\tilde{S},\tilde{J}]),
\]
and hence there exists some $C'([\tilde{S},\tilde{J}])\in\ints_{>0}$
such that 
\[
\varphi_{[(\tilde{S},\tilde{J})]}|_{W}=\sum\limits _{S\in\mathrm{MS}^{-1}(S_{0})}C(\tilde{S},\tilde{J})c_{S}\binom{u}{j_{1},\ldots,j_{d}}X_{S}=C'([\tilde{S},\tilde{J}])\cdot\varphi_{w},
\]
which is non-trivial by our assumption.
\end{proof}
We next use the methods discussed in Subsection \ref{subsec:Discussion-on-the methods}
to reduce $J_{m}(\varphi_{w})$ to a simpler map.
\begin{prop}
\label{prop:reduction of the jet map to a simpler map}There exists
a degeneration of $J_{m}(\varphi_{w})$ into a generating map $\phi:\mathfrak{g}^{r(m+1)}\rightarrow\mathfrak{g}^{m+1}$
of the form $\phi=(\phi_{0},...,\phi_{m})$, where each $\phi_{u}:\mathfrak{g}^{r(m+1)}\rightarrow\mathfrak{g}$
is a Lie algebra word map. Furthermore there is a partition of the
interval $[0,m]$ into intervals $\{[i_{j},i_{j+1}]\}_{j}$ of length
at most $2d+1$, such that the collection $\{\psi_{j}\}_{j}$, where
$\psi_{j}:=(\phi_{i_{j}},\phi_{i_{j}+1},...,\phi_{i_{j+1}})$, has
mutually disjoint variables. In particular, $\phi^{-1}(0)=\prod_{j}\psi_{j}^{-1}(0)$.
\end{prop}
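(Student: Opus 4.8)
The plan is to combine the three degeneration procedures of Subsection \ref{subsec:Discussion-on-the methods} --- the averaging, monomialization and level separation methods --- exactly as illustrated by the model example $\varphi(x_0,y_0,\ldots)=(x_0y_0,\ldots,\sum_j x_jy_{2t-j})$, but now carried out uniformly across all pure summands occurring in the formal derivatives of $w$. First I would fix, for each $s\in[r]$, an ordering of the jet variables $X_s^{(0)}<X_s^{(1)}<\cdots<X_s^{(m)}$, and apply an averaging weight of the form $\omega_{\mathrm{av}}(X_s^{(u)})=N^u$ for a large base $N=N(d)$. By Lemma \ref{lem:formal derivatives of word maps} the $u$-th coordinate $\phi_u=\left(\varphi_w^{*t}\right)^{(u)}$ is a $\ints_{>0}$-combination of the bracket monomials $X_S^J$ with $|J|=u$; the $\omega_{\mathrm{av}}$-degree of $X_S^J$ is $\sum_k N^{j_k}$, which for fixed total $|J|=u$ is minimized precisely when the multiset $\{j_k\}$ is as ``balanced'' as possible. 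Taking the symbol $\sigma_{\omega_{\mathrm{av}}}$ therefore kills every monomial whose derivative-indices are far from balanced, and crucially, as in the example, it forces $\sigma_{\omega_{\mathrm{av}}}(\phi_{u_1})$ and $\sigma_{\omega_{\mathrm{av}}}(\phi_{u_2})$ to involve disjoint jet variables whenever $|u_1-u_2|>d$ (since the balanced indices for level $u$ live in a window of radius $\sim d$ around $u/d$-type positions). The key point making this work uniformly is Corollary \ref{cor:all parial derivatives survive in the formal derivative of a word}: no pure summand disappears under differentiation, so after taking the symbol each $\sigma_{\omega_{\mathrm{av}}}(\phi_u)$ is still a non-trivial word map, hence (by Proposition \ref{prop:Lie algebra word maps are generating}) generating; thus the degenerated map $\phi$ is generating and Corollary \ref{cor:(cf.---Elimination)} applies. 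After this step I would group the levels $[0,m]$ into consecutive blocks of length at most $2d+1$: within a block the coordinates may share variables, but between non-adjacent blocks they are disjoint by the window estimate, and a single further level-separation weight $\omega_{\mathrm{LS}}$ (of the type used at the end of Subsection \ref{subsec:Discussion-on-the methods}, assigning slightly different linear scalings to the two copies appearing after one convolution) degenerates away the cross-block interactions, leaving $\phi^{-1}(0)=\prod_j\psi_j^{-1}(0)$ with $\psi_j=(\phi_{i_j},\ldots,\phi_{i_{j+1}})$.

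Let me be more careful about the window radius and the block length, since the claimed bound is exactly $2d+1$. The monomial $X_S^J$ contributing to $\phi_u$ has $d$ derivative-slots summing to $u$; the $\omega_{\mathrm{av}}$-minimal configurations are those where each $j_k\in\{\lfloor u/d\rfloor,\lceil u/d\rceil\}$, so every surviving monomial in $\sigma_{\omega_{\mathrm{av}}}(\phi_u)$ uses only jet variables $X_s^{(j)}$ with $j\in\{\lfloor u/d\rfloor,\lceil u/d\rceil\}$. Hence $\sigma_{\omega_{\mathrm{av}}}(\phi_{u_1})$ and $\sigma_{\omega_{\mathrm{av}}}(\phi_{u_2})$ can share a variable only when the sets $\{\lfloor u_1/d\rfloor,\lceil u_1/d\rceil\}$ and $\{\lfloor u_2/d\rfloor,\lceil u_2/d\rceil\}$ meet, which forces $|u_1-u_2|\le$ (something like) $d$; more precisely, the levels $u$ with a common value $\lfloor u/d\rfloor=\lceil u/d\rceil = q$ or with $q$ adjacent form an interval of length at most $2d+1$. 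So the natural partition is by the value of $\lfloor u/d\rfloor$ (with adjacent values possibly merged once), and each block has length $\le 2d+1$. The level-separation step then only needs to separate each such block from the next; since after one convolution I have two independent copies $\phi$ and $\phi'$, choosing $\omega_{\mathrm{LS}}$ so that within a block the first copy wins and at the block boundary the second copy wins (exactly the pattern $\mathrm{deg}(x_uy_u)>\mathrm{deg}(z_uw_u)$ but $\mathrm{deg}(x_{u+1}y_u)<\mathrm{deg}(z_{u+1}w_u)$ from the example) decouples the blocks. Finally I would invoke Remark \ref{rem:The-coloring-method is convolution+elimination} to package this as a single coloring, so that no extra self-convolutions beyond a bounded number are spent.

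The main obstacle I anticipate is ensuring, after all the degenerations, that the resulting $\phi$ and each block-map $\psi_j$ remains \emph{generating} (Definition \ref{def:generating morphism}) --- this is the pitfall flagged in Remark \ref{rem:warning-elimination}, where a dominant map degenerates to a non-generating one. The averaging symbol is safe coordinatewise because Corollary \ref{cor:all parial derivatives survive in the formal derivative of a word} guarantees each $\sigma_{\omega_{\mathrm{av}}}(\phi_u)$ is a non-zero Lie algebra word map and hence generating by Proposition \ref{prop:Lie algebra word maps are generating}; but I must also check that the \emph{joint} map $(\phi_0,\ldots,\phi_m)$ onto $\mathfrak{g}^{m+1}$ does not land in a proper coset of a subgroup of $J_m(\underline G)\cong\underline G^{m+1}$ --- equivalently, that the different levels are ``algebraically independent'' in the appropriate sense. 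Here I would use that on the diagonal subspace $W=\{X_s^{(u)}=X_s \text{ for all }u\}$ (as in the proof of Corollary \ref{cor:all parial derivatives survive in the formal derivative of a word}) each $\phi_u|_W$ is a non-zero scalar multiple of $\varphi_w$, which already gives surjectivity of the image onto each factor, and then a dimension/Zariski-density argument exploiting the extra jet directions upgrades this to generating for the full product. A secondary technical point is bookkeeping the weights so that $\sigma_{\omega_{\mathrm{LS}}}$ applied after $\sigma_{\omega_{\mathrm{av}}}$ (and after one convolution) genuinely produces the clean product decomposition $\phi^{-1}(0)=\prod_j\psi_j^{-1}(0)$ with no residual monomials crossing block boundaries; this is routine but requires choosing the base $N$ of $\omega_{\mathrm{av}}$ large relative to the ranges used in $\omega_{\mathrm{LS}}$, so that the two degenerations do not interfere. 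Once these are in hand, Corollary \ref{cor:(cf.---Elimination)} (the elimination method) and Corollary \ref{cor:reduction to degeneration} transfer flatness/(FRS) back from $\phi$ to $J_m(\varphi_w^{*t})$, completing the proof.
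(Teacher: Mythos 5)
Your overall strategy matches the paper's: average, then apply one self-convolution and a level-separation weight, and use Corollary \ref{cor:all parial derivatives survive in the formal derivative of a word} together with Proposition \ref{prop:Lie algebra word maps are generating} to keep each coordinate generating. You also correctly flag the two delicate points --- survival of the word map under the symbol and the generating property of the \emph{joint} map $(\phi_0,\ldots,\phi_m)$. For the latter, the paper's resolution is crisper than the proposed Zariski-density argument: each $\phi_u$ is a homogeneous polynomial of total jet-weight $u$ (every monomial has $\sum_k j_k = u$), so the monomials appearing in distinct levels are disjoint, and any linear functional annihilating the image must annihilate each $\phi_u$ separately, whence it vanishes by the coordinatewise generating property.

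The genuine gap is in the level-separation step, and there are really two issues. First, your window analysis is internally inconsistent: you first assert a window of ``radius $\sim d$'' and a disjointness threshold $|u_1-u_2|>d$, but your later (correct) observation is that the surviving jet indices are exactly $\{\lfloor u/d\rfloor,\lceil u/d\rceil\}$, a window of radius $\le 1$, from which the correct threshold is $|u_1-u_2|>2d$. Second, and more importantly, the description ``within a block the first copy wins and at the block boundary the second copy wins'' does not produce the required disjointness: if the same copy wins on consecutive blocks (with only a boundary level in between), those blocks still share a jet index. What is actually needed --- and what the paper's explicit $\bmod\,4$ weight $\omega_{\mathrm{LS}}$ achieves --- is that the winning copy \emph{alternates from one block to the next}: an $X$-block is followed by a $Y$-block of comparable length, and by the time the next $X$-block begins, its jet-index window $\{\lfloor u/d\rfloor,\lceil u/d\rceil\}$ has advanced by at least two, clearing the overlap with the previous $X$-block. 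Your phrasing conflates the per-level alternation of the warm-up $d=2$ example with the per-$\sim 2d$-levels alternation required here; the merged-$\lfloor u/d\rfloor$ blocks you describe are never pairwise disjoint on their own, so without the correct alternation the ``$\phi^{-1}(0)=\prod_j\psi_j^{-1}(0)$'' conclusion does not follow.
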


\begin{proof}
We choose an averaging weight 
\[
\omega_{\mathrm{av}}(X_{s}^{(u)}):=(d+1)^{u},
\]
and observe that 
\[
\sigma_{\omega_{\mathrm{av}}}(J_{m}(\varphi_{w}))=\left(\sigma_{\omega_{\mathrm{av}}}(\varphi_{w}),\sigma_{\omega_{\mathrm{av}}}(\varphi_{w}^{(1)}),\ldots,\sigma_{\omega_{\mathrm{av}}}(\varphi_{w}^{(m)})\right),
\]
where each $\sigma_{\omega_{\mathrm{av}}}(\varphi_{w}^{(u)})$ is
a sum of pure summands of $\varphi_{w}$, and each pure summand is
of type $[(S,J)]$, for some $J=(j_{1},\ldots,j_{d})$ such that $\left|j_{k}-\frac{u}{d}\right|\leq1$
for any $k\in[d]$. Note that each of these summands does not vanish
by Corollary \ref{cor:all parial derivatives survive in the formal derivative of a word},
and they cannot cancel each other as they consist of different monomials.
We now order the variables $X_{1},\ldots,X_{r},\ldots,X_{1}^{(m)},\ldots,X_{r}^{(m)}$
from $1$ to $r(m+1)$, by $\mathrm{ord}(X_{s}^{(u)})=ur+s$, and
set a monomialization weight 
\[
\omega_{\mathrm{mon}}(X_{s}^{(u)}):=(d+1)^{\mathrm{ord}(X_{s}^{(u)})}.
\]
Observe that for any $\sigma_{\omega_{\mathrm{av}}}(\varphi_{w}^{(u)})$,
there is a unique pure summand of type $[(S,J(u))]$, with $J(u)=(j_{u,1},\ldots,j_{u,d})$,
such that $\mathrm{deg}_{\omega_{\mathrm{mon}}}([X_{s_{1}}^{(j_{u,1})},\ldots,X_{s_{d}}^{(j_{u,d})}])$
is minimal. In particular, we have the following.
\begin{cor}
\label{Cor:after monomialization of jets of words}The map $\widetilde{w}_{u}:=\sigma_{\omega_{\mathrm{mon}}}\left(\sigma_{\omega_{\mathrm{av}}}(\varphi_{w}^{(u)})\right)$
is a non-trivial $d$-homogeneous word map of pure type $[(S,J(u))]$
with $\left|J(u)\right|=u$ and $\left|j_{u,k}-\frac{u}{d}\right|\leq1$
for any $k\in[d]$. 
\end{cor}

Write $X^{(u)}$ (resp.~$Y^{(u)}$) for the set of variables $X_{1}^{(u)},\ldots,X_{r}^{(u)}$
(resp.~$Y_{1}^{(u)},\ldots,Y_{r}^{(u)}$). We now use the level separation
method; we apply one convolution to $J_{m}(\varphi_{w})$ to create
a new set of $r(m+1)$ variables $Y,\ldots,Y^{(m)}$, and set 
\[
\omega_{\mathrm{LS}}(X^{(u)})=\begin{cases}
0 & \text{ if }u\mathrm{\,mod\,}4=0,1\\
d^{2}+1 & \text{ if }u\mathrm{\,mod\,}4=2,3,
\end{cases}
\]
\[
\omega_{\mathrm{LS}}(Y^{(u)})=\begin{cases}
d^{2} & \text{ if }u\mathrm{\,mod\,}4=0,1\\
0 & \text{ if }u\mathrm{\,mod\,}4=2,3.
\end{cases}
\]
By Corollary \ref{Cor:after monomialization of jets of words} and
by the definition of $\omega_{\mathrm{LS}}$, the map $\phi:=\sigma_{\omega_{\mathrm{LS}}}\left(\sigma_{\omega_{\mathrm{mon}}}\left(\sigma_{\omega_{\mathrm{av}}}(J_{m}(\varphi_{w})^{*2})\right)\right)$
is of the form $\phi=(\phi_{0},\ldots,\phi_{m})$ where the dependence
of $\phi_{j}$ on $\{X^{(u)},Y^{(u)}\}_{u=0}^{m}$ is as follows: 
\begin{enumerate}
\item The polynomials $\phi_{0},\ldots,\phi_{\left\lceil \frac{3d}{2}\right\rceil -1}$
depend only on the variables $X,X^{(1)},X^{(2)}$. 
\item For every $j\in\nats$, the polynomials $\phi_{\left\lceil \frac{(3+8j)d}{2}\right\rceil },\ldots,\phi_{\left\lfloor \frac{(7+8j)d}{2}\right\rfloor }$
depend only on the variables $Y^{(4j+1)},Y^{(4j+2)},Y^{(4j+3)},Y^{(4j+4)}$. 
\item For every $j\in\nats$, the polynomials $\phi_{\left\lfloor \frac{(7+8j)d}{2}\right\rfloor +1},\ldots,\phi_{\left\lceil \frac{(11+8j)d}{2}\right\rceil -1}$
depend only on the variables $X^{(4j+3)},X^{(4j+4)},X^{(4j+5)},X^{(4j+6)}$. 
\end{enumerate}
Using the level separation method, we separated $\sigma_{\omega_{\mathrm{mon}}}\left(\sigma_{\omega_{\mathrm{av}}}(J_{m}(\varphi_{w})^{*2}\right)$
into levels with pairwise disjoint variables, where each level is
of size at most $(2d+1)\mathrm{dim}\mathfrak{g}$. Notice that $\phi_{u}=\widetilde{w}_{u}$
for any $u$, in either the variables $X,\ldots,X^{(u)}$ or $Y,\ldots,Y^{(u)}$,
so every $\phi_{u}$ is non-trivial and thus generating. Since for
each $u$ the polynomial $\phi_{u}$ involves different monomials,
it follows that the map $(\phi_{s_{1}},\ldots,\phi_{s_{2}})$ where
$(s_{1},s_{2})\in\left\{ (0,\left\lceil \frac{3d}{2}\right\rceil -1),(\left\lceil \frac{(3+8j)d}{2}\right\rceil ,\left\lfloor \frac{(7+8j)d}{2}\right\rfloor ),(\left\lfloor \frac{(7+8j)d}{2}\right\rfloor +1,\left\lceil \frac{(11+8j)d}{2}\right\rceil -1)\right\} $
is generating as well. This finishes the proof of the proposition. 
\end{proof}

\subsection{\label{subsec:Proof-for low rank and few edges}Lie algebras of low
rank and small polyhypergraphs}

In order to treat word maps on low rank Lie algebras and polyhypergraphs
with few types of edges, we need the following Proposition \ref{prop:(FRS) for generating morphisms}.
For a polynomial $f(x_{1},\ldots,x_{n})=\sum\limits _{I\in\nats^{n}}a_{I}x^{I}$
set
\[
d_{\mathrm{mon}}(f):=\max\{i_{j}:I=(i_{1},\ldots,i_{n})\in\nats^{n}\text{ and }a_{I}\neq0\}.
\]
For a polynomial map $\varphi=(\varphi_{1},...,\varphi_{m}):\mathbb{A}_{K}^{n}\rightarrow\mathbb{A}_{K}^{m}$
set $d_{\mathrm{mon}}(\varphi):=\underset{i}{\max}\{d_{\mathrm{mon}}(\varphi_{i})\}$.
\begin{prop}
\label{prop:(FRS) for generating morphisms}Let $\varphi=(\varphi_{1},...,\varphi_{m}):\mathbb{A}_{K}^{n}\rightarrow\mathbb{A}_{K}^{m}$
be a generating morphism. Then $\varphi^{*(md_{\mathrm{mon}}(\varphi)+1)}$
is (FRS). 
\end{prop}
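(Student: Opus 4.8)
The plan is to combine the elimination and convolution machinery already set up (Corollaries \ref{cor:(cf.---Elimination)} and \ref{cor:(cf.---Elimination)}, and Proposition \ref{prop: properties preserved under convolution}) with the effective log canonical threshold bounds from Fact \ref{fact:properties of log cannonical threshold} and a Thom--Sebastiani type argument. First I would reduce to the case of a homogeneous morphism. Since $\varphi=(\varphi_{1},\ldots,\varphi_{m})$ is generating, each $\varphi_{j}$ is a nonconstant polynomial; let $\widetilde{\varphi}_{j}$ be its top-degree homogeneous part, of degree $d_{j}\le d_{\mathrm{mon}}(\varphi)\cdot$(number of variables appearing), but more to the point $d_{\mathrm{mon}}(\widetilde{\varphi}_{j})\le d_{\mathrm{mon}}(\varphi)$. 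By Lemma \ref{lem: reduction to largest degree}, if $\widetilde{\varphi}_1 * \cdots * \widetilde{\varphi}_1$ (the $(m\,d_{\mathrm{mon}}(\varphi)+1)$-fold self-convolution of the homogeneous model, arranged coordinatewise) is (FRS) at the origin, then $\varphi^{*(m d_{\mathrm{mon}}(\varphi)+1)}$ is (FRS); a homogeneous map which is (FRS) at $0$ is (FRS) everywhere (Remark following Lemma \ref{lem: reduction to largest degree}). So it suffices to treat a generating homogeneous $\varphi$ with $d_{\mathrm{mon}}(\varphi)=d_0$, and to show its $(md_0+1)$-fold self-convolution is (FRS) at $0$.

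Next, the key mechanism: taking the $t$-fold self-convolution $\varphi^{*t}:\mathbb{A}^{nt}\to\mathbb{A}^{m}$ and then applying a well-chosen weight $\omega$ (the elimination/level-separation method of Subsection \ref{subsec:Discussion-on-the methods}) to degenerate the fiber $(\varphi^{*t})^{-1}(0)$ into a product. Explicitly, I would introduce one fresh block of $n$ variables for each of the $t$ copies and assign weights so that the symbol $\sigma_{\omega}(\varphi^{*t})$ becomes a map each of whose $m$ coordinates is, after separating variables across copies, a single monomial in one copy's variables times a nonzero scalar — using that $d_{\mathrm{mon}}$ bounds how many factors of a single variable can occur, exactly as in the monomialization step. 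The crucial fiber statement is then Thom--Sebastiani: $\mathrm{lct}\big(\mathbb{A}^{\bullet},(\psi_1*\cdots*\psi_t)^{-1}(0)\big)=\min\{1,\sum_k \mathrm{lct}(\mathbb{A}^{\bullet},\psi_k^{-1}(0))\}$ via \cite[Corollary 1]{MSS18}, combined with the bound $\mathrm{lct}(\mathbb{A}^{n},\varphi_j^{-1}(0))\ge 1/\mathrm{ord}_0(\varphi_j)\ge 1/d$ coming from Fact \ref{fact:properties of log cannonical threshold}(2) applied to the homogeneous model — more carefully, for a single monomial $x^{I}$ with $|I|\le n d_0$ and each exponent $\le d_0$, one has $\mathrm{lct}=\sum 1/i_j \ge 1/d_0$ for each variable actually appearing, giving $\mathrm{lct}$ of the convolved monomial hypersurface $\ge$ (number of copies)$\,/d_0$. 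Choosing the number of copies to be $\ge m d_0 + 1$ I expect to force, via an accounting that distributes the $t$ copies among the $m$ target coordinates (roughly $d_0+\tfrac1m$ copies per coordinate on average, but one uses $m d_0 +1$ so that every coordinate sees at least $d_0+1$ usable copies — this is where the "$+1$" is spent), each coordinate's fiber to have $\mathrm{lct}\ge 1$, hence log canonical, hence — since a homogeneous hypersurface with lct $\ge 1$ that is a local complete intersection has rational singularities by the inversion-of-adjunction / Mustață-type criterion — (FRS). I then transport this back through Corollary \ref{cor:(cf.---Elimination)} (elimination/level separation preserves (FRS) upward) and Lemma \ref{lem: reduction to largest degree}.

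The main obstacle I anticipate is the bookkeeping that makes the constant come out to exactly $m d_{\mathrm{mon}}(\varphi)+1$ rather than something larger like $m d_0 + $ (number of variables). The naive Thom--Sebastiani bound $\sum_k \mathrm{lct}(\psi_k^{-1}(0))$ where each $\psi_k$ is the full map $\varphi$ would only give $\mathrm{lct}\ge t/(\text{something like } n d_0)$, which is too weak; one must exploit that after the averaging and monomialization degenerations each individual target coordinate $\phi_u$ of the degenerate map depends on very few variables and is a \emph{single monomial} of $d_{\mathrm{mon}}$-degree at most $d_0$ — so its hypersurface fiber in the copies has lct growing like (copies)$/d_0$, and one needs $d_0+1$ copies per coordinate. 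Ensuring the weight functions can simultaneously (i) monomialize every coordinate, (ii) keep the degenerate map generating (the warning in Remark \ref{rem:warning-elimination}), and (iii) route at least $d_0+1$ of the $md_0+1$ copies to each of the $m$ coordinates with pairwise disjoint variables — that simultaneous choice is the delicate part, and I would handle it by a pigeonhole-style assignment of copies to coordinates exactly as in the level-separation construction of Proposition \ref{prop:reduction of the jet map to a simpler map}, verifying non-triviality of each resulting monomial summand as in Corollary \ref{cor:all parial derivatives survive in the formal derivative of a word}.
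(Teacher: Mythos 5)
Your approach is genuinely different from the paper's, but it has gaps that I do not think can be repaired within the strategy you outline. The paper's proof is a short $p$-adic harmonic analysis argument: it reduces to $K=\rats$, sets $\tau_{p,\varphi}:=\varphi_*(\mu_{\Zp}^n)$, invokes Cluckers' theorem (via Chubarikov's oscillatory-integral estimate) that $\mathcal{F}(\tau_{p,\varphi})\in L^{md_{\mathrm{mon}}(\varphi)+\epsilon}$ for all $\epsilon>0$ -- this is exactly what $d_{\mathrm{mon}}$ controls -- so that $\mathcal{F}(\tau_{p,\varphi}^{*t})=(\mathcal{F}(\tau_{p,\varphi}))^t$ lies in $L^1$ for $t>md_{\mathrm{mon}}(\varphi)$, giving a continuous density, and then concludes by the analytic criterion for (FRS) (Theorem \ref{thm:analytic criterion of the (FRS) property}). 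No degeneration, coloring, or Thom--Sebastiani is needed; the bound $md_{\mathrm{mon}}(\varphi)+1$ falls out of $L^1\cdot L^\infty$ duality and $\ell^p$ interpolation applied to a power of a single function.

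Your degeneration-based route runs into three concrete problems. First, the monomialization/elimination step is not available here: the proposition is about an arbitrary generating polynomial map, not a Lie algebra word map or a $d$-PH, so there is no Chevalley-basis combinatorics guaranteeing that a weight exists which degenerates each coordinate to a single monomial while keeping the map generating. The worry in Remark \ref{rem:warning-elimination} ($\psi(x,y)=(y^2+x,x)\rightsquigarrow(x,x)$) applies in full force, and you explicitly defer this as ``the delicate part'' without a construction. Second, the inference ``$\mathrm{lct}\ge 1 \Rightarrow$ log canonical $\Rightarrow$ rational singularities'' for a (homogeneous) hypersurface is false: the cone over an elliptic curve is a log canonical LCI hypersurface that is not a rational singularity, and the paper itself records (FTS) $\Rightarrow$ (FRS) $\Rightarrow$ (FLCS) as strict implications. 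Your argument at best gives jet-flatness (i.e.\ (FLCS) after adding geometric irreducibility), from which you would still need Corollary \ref{cor:jet-flat implies FTS after two convolusions} and one further convolution to reach (FRS); as written, the ``$+1$'' is not spent there. Third, the pigeonhole accounting does not close: to guarantee that \emph{every} one of the $m$ target coordinates sees at least $d_0+1$ fresh copies via disjoint variables, you need $m(d_0+1)=md_0+m$ convolutions, not $md_0+1$ (for $m\ge 2$); distributing $md_0+1$ copies guarantees $d_0+1$ only for \emph{some} coordinate. So even granting the degeneration, the Thom--Sebastiani additivity in your computation does not reach $\mathrm{lct}\ge 1$ for each coordinate simultaneously with the claimed budget. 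You would obtain a correct but weaker bound, not the stated $md_{\mathrm{mon}}(\varphi)+1$.
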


\begin{proof}
By \cite[Section 6]{GH19} we may assume that $K=\rats$. Write $\mu_{\ints_{p}}$
for the normalized Haar measure on $\ints_{p}$ and set $\tau_{p,\varphi}=\varphi_{*}(\mu_{\Zp}^{n})$,
for any large enough prime $p$. By \cite[Theorem 6.1]{Clu04} and
\cite{Chu76}, it follows that $\mathcal{F}(\tau_{p,f})\in L^{md_{\mathrm{mon}}(\varphi)+\epsilon}$
for every $\epsilon>0$. In particular, for any $t>md_{\mathrm{mon}}(\varphi)$
we have:
\[
\left\Vert \mathcal{F}(\tau_{p,\varphi^{*t}})\right\Vert _{1}=\left\Vert \mathcal{F}(\tau_{p,\varphi}^{*t})\right\Vert _{1}=\left\Vert \left(\mathcal{F}(\tau_{p,\varphi})\right)^{t}\right\Vert _{1}=\left\Vert \mathcal{F}(\tau_{p,\varphi})\right\Vert _{t}^{t}<C,
\]
so $\tau_{p,\varphi}^{*t}$ has continuous density. It now follows
from \cite[Proposition 3.16]{GH19} that $\varphi^{*t}$ is (FRS)
(this is a consequence of an analytic criterion for the (FRS) property,
see Theorem \ref{thm:analytic criterion of the (FRS) property}). 
\end{proof}
Note that $d_{\mathrm{mon}}(\varphi_{w})\leq d-1$ for any word map
$\varphi_{w}:\mathfrak{g}^{r}\rightarrow\mathfrak{g}$ of degree $d\geq2$.
Corollaries \ref{cor:low dimensional Lie algebras} and \ref{cor:small polyhypergraphs are (FRS)}
now follow directly from Proposition \ref{prop:(FRS) for generating morphisms}
and Lemma \ref{lem:generating becomes dominant}:
\begin{cor}
\label{cor:low dimensional Lie algebras}Let $w\in\mathcal{L}_{r}$
be a word of degree $d\in\nats$ and let $\mathfrak{g}$ be a simple
$K$-Lie algebra such that $\varphi_{w}$ is non-trivial on $\mathfrak{g}$.
Then, 
\begin{enumerate}
\item If $t\geq\left(\mathrm{dim}\mathfrak{g}\right)^{2}$ then $\varphi_{w}^{*t}$
is flat. 
\item If $t\geq d\left(\mathrm{dim}\mathfrak{g}\right)$ then $\varphi_{w}^{*t}$
is (FRS). 
\end{enumerate}
\end{cor}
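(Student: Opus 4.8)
The plan is to deduce the statement directly from Proposition~\ref{prop:Lie algebra word maps are generating}, Lemma~\ref{lem:generating becomes dominant} and Proposition~\ref{prop:(FRS) for generating morphisms}, together with the fact (Proposition~\ref{prop: properties preserved under convolution}) that both flatness and the (FRS) property are preserved under taking further convolutions.

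First, since $\mathfrak{g}$ is simple and $\varphi_{w}:\mathfrak{g}^{r}\to\mathfrak{g}$ is non-trivial, Proposition~\ref{prop:Lie algebra word maps are generating} shows that $\varphi_{w}$ is generating; here $\mathfrak{g}$ is regarded as the additive group $\mathbb{G}_{a}^{\dim\mathfrak{g}}$, whose proper algebraic subgroups (in characteristic zero) are linear subspaces, so being generating amounts to the image of $\varphi_{w}$ not being contained in any affine hyperplane, which indeed follows from $\mathrm{span}_{K}\varphi_{w}(\mathfrak{g}^{r})=\mathfrak{g}$. Note also that $w$ has no constant term, so $\varphi_{w}(0,\dots,0)=0$; in particular the image of $\varphi_{w}$ contains the identity element of $\mathbb{G}_{a}^{\dim\mathfrak{g}}$, which is needed to invoke Lemma~\ref{lem:generating becomes dominant}.

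For (1): Lemma~\ref{lem:generating becomes dominant}, applied with $\underline{G}=\mathbb{G}_{a}^{\dim\mathfrak{g}}$, gives that $\varphi_{w}^{*(\dim\mathfrak{g})^{2}}$ is flat. By Proposition~\ref{prop: properties preserved under convolution}, convolving a flat morphism with any further word maps keeps it flat, so $\varphi_{w}^{*t}$ is flat for every $t\geq(\dim\mathfrak{g})^{2}$. For (2): we may assume $d\geq 2$ (for $d=1$ the map $\varphi_{w}$ is linear and, being generating, surjective, hence smooth, and so is any self-convolution, so all the asserted convolution powers are (FRS)). Choosing coordinates, $\varphi_{w}$ is a generating morphism $\mathbb{A}_{K}^{r\dim\mathfrak{g}}\to\mathbb{A}_{K}^{\dim\mathfrak{g}}$ with $d_{\mathrm{mon}}(\varphi_{w})\leq d-1$ (as recorded just before the statement). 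Proposition~\ref{prop:(FRS) for generating morphisms}, with $m=\dim\mathfrak{g}$, then yields that $\varphi_{w}^{*(\dim\mathfrak{g}\cdot(d-1)+1)}$ is (FRS). Since $\dim\mathfrak{g}\cdot(d-1)+1\leq d\cdot\dim\mathfrak{g}$, and the (FRS) property is preserved under further convolutions by Proposition~\ref{prop: properties preserved under convolution}, we conclude that $\varphi_{w}^{*t}$ is (FRS) for all $t\geq d\cdot\dim\mathfrak{g}$.

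The argument is a short deduction and I do not expect a genuine obstacle at this stage: the substance is carried entirely by Proposition~\ref{prop:(FRS) for generating morphisms}, whose proof rests on the integrability bounds of \cite{Clu04,Chu76} for the Fourier transforms of the relevant push-forward measures together with the analytic criterion for the (FRS) property. The only points requiring minor care are the bookkeeping above: matching the notion of ``generating'' of Proposition~\ref{prop:Lie algebra word maps are generating} with the hypotheses of Lemma~\ref{lem:generating becomes dominant} and Proposition~\ref{prop:(FRS) for generating morphisms} in the additive-group setting, verifying that $0$ lies in the image of $\varphi_{w}$, and the trivial degree-one case.
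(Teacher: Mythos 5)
Your proposal is correct and follows exactly the route the paper indicates: the paper states that Corollary \ref{cor:low dimensional Lie algebras} "follows directly from Proposition \ref{prop:(FRS) for generating morphisms} and Lemma \ref{lem:generating becomes dominant}," together with the observation that $d_{\mathrm{mon}}(\varphi_w)\leq d-1$ and Proposition \ref{prop:Lie algebra word maps are generating} for the generating property. Your bookkeeping (the inequality $\dim\mathfrak{g}\cdot(d-1)+1\leq d\cdot\dim\mathfrak{g}$, the separate treatment of $d=1$, and the appeal to Proposition \ref{prop: properties preserved under convolution} to pass from the threshold to all larger $t$) is exactly what makes the deduction go through.
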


Let $\Gamma:=(\mathcal{I},\mathcal{J},\mathcal{S})$ be a $d$-PH
and let $(\Gamma,\mathrm{V},\{\tau_{\gamma}\})$ be an assignment,
where $\mathrm{V}=(V(i))_{i\in\mathcal{I}}$ and $\{\tau_{\gamma}\}_{\gamma\in\mathcal{S}}$
is a collection of $d$-homogeneous multilinear forms $\tau_{\{i_{1},\dots,i_{d}\},l}:\bigoplus\limits _{i\in\{i_{1},\dots,i_{d}\}}V(i)\rightarrow K$.
Write $\Phi_{\Gamma,\mathrm{V},\{\tau_{\gamma}\}}:\bigoplus\limits _{i\in\mathcal{I}}V(i)\rightarrow K^{\mathcal{J}}$
for the corresponding map (as in (\ref{eq:(4.1)})). 
\begin{cor}
\label{cor:small polyhypergraphs are (FRS)}The $t$-th self-convolution
$(\Gamma,\mathrm{V}^{\oplus t},\{\tau_{\gamma}^{\oplus t}\})$ of
$(\Gamma,\mathrm{V},\{\tau_{\gamma}\})$ (Definition \ref{def:convolutions of a d-PH})
is
\begin{enumerate}
\item Flat if $t\geq\left|\mathcal{J}\right|^{2}$. 
\item (FRS) if $t\geq d\left|\mathcal{J}\right|+1$. 
\end{enumerate}
Moreover, if $\Gamma$ is a hyperedge consisting of a single hyperedge,
with an assignment $(\Gamma,V,\tau)$, then $(\Gamma,V,\tau)$ is
flat, and $(\Gamma,V^{\oplus t},\{\tau^{*t}\})$ is (FRS) if $t\geq d_{\mathrm{mon}}(\tau)+1$. 
\end{cor}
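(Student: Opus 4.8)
The plan is to reduce the entire statement to the single polynomial morphism underlying the assignment and then quote Lemma \ref{lem:generating becomes dominant} and Proposition \ref{prop:(FRS) for generating morphisms} directly. Given an assignment $(\Gamma,\mathrm{V},\{\tau_\gamma\})$ of a $d$-PH $\Gamma=(\mathcal{I},\mathcal{J},\mathcal{S})$, I would work with the associated map $\Phi:=\Phi_{\Gamma,\mathrm{V},\{\tau_\gamma\}}\colon\bigoplus_{i\in\mathcal{I}}V(i)\to K^{\mathcal{J}}$. Identifying $K^{\mathcal{J}}$ with the connected vector group $\mathbb{G}_a^{|\mathcal{J}|}$, this is a morphism between smooth geometrically irreducible $K$-varieties, each coordinate $\Phi_l=\sum_{(I,l)\in\mathcal{S}}\tau_{I,l}$ is $d$-homogeneous, and $\Phi(0)=0$, so $0$ lies in the image of $\Phi$. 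Unwinding Definition \ref{def:convolutions of a d-PH}, the morphism attached to the $t$-th self-convolution $(\Gamma,\mathrm{V}^{\oplus t},\{\tau_\gamma^{\oplus t}\})$ is exactly $\Phi^{*t}$, so the corollary is a set of statements about convolution powers of $\Phi$; moreover, by Proposition \ref{prop: properties preserved under convolution} flatness and the (FRS) property are inherited by all higher convolution powers, so it suffices to treat the claimed threshold values of $t$.

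For item (1), since $\Phi$ is generating with $0$ in its image and $\dim\mathbb{G}_a^{|\mathcal{J}|}=|\mathcal{J}|$, Lemma \ref{lem:generating becomes dominant} gives that $\Phi^{*|\mathcal{J}|}$ is dominant and $\Phi^{*|\mathcal{J}|^2}$ is flat. For item (2), $\Phi$ is a generating morphism $\mathbb{A}^n\to\mathbb{A}^{|\mathcal{J}|}$, and since each $\Phi_l$ is $d$-homogeneous no variable occurs to a power exceeding $d$, so $d_{\mathrm{mon}}(\Phi)\leq d$; Proposition \ref{prop:(FRS) for generating morphisms} with $m=|\mathcal{J}|$ then shows $\Phi^{*(|\mathcal{J}|\,d_{\mathrm{mon}}(\Phi)+1)}$ is (FRS), and $|\mathcal{J}|\,d_{\mathrm{mon}}(\Phi)+1\leq d|\mathcal{J}|+1$ closes the case. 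For the ``moreover'' clause, when $|\mathcal{J}|=1$ and $\Gamma$ consists of a single hyperedge, $\Phi=\tau\colon\mathbb{A}^n\to\mathbb{A}^1$ is a nonzero $d$-homogeneous polynomial, hence non-constant; all its fibers are then hypersurfaces of pure codimension one, so $\tau$ is flat by miracle flatness, and Proposition \ref{prop:(FRS) for generating morphisms} with $m=1$ gives that $\tau^{*(d_{\mathrm{mon}}(\tau)+1)}$ is (FRS).

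The only step that is not purely formal is the verification that $\Phi$ is \emph{generating}, i.e.\ that its image is not contained in a coset of a proper linear subspace of $K^{\mathcal{J}}$; since $\Phi$ is homogeneous of positive degree this is equivalent to the $\Phi_l$ being $K$-linearly independent. For the assignments $(\Gamma_{\mathfrak{g},w},V,\{\tau_\gamma\})$ attached to non-trivial Lie algebra word maps this is built into the construction in Definition \ref{def:D-PH assigned to a Lie algebra word map } (relying on Proposition \ref{prop:Lie algebra word maps are generating}), and it is precisely the property one must preserve under the elimination and coloring degenerations used elsewhere in Section \ref{sec:Proof-of-Theorems- Lie algebra}; once it is in force the corollary follows with no further work, and in the single-hyperedge case it is automatic, being equivalent to $\tau\not\equiv\mathrm{const}$.
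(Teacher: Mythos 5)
Your proof is correct and takes essentially the same route the paper intends: the paper states that this corollary ``follows directly from Proposition~\ref{prop:(FRS) for generating morphisms} and Lemma~\ref{lem:generating becomes dominant},'' and you have filled in precisely those details — identifying the $t$-th self-convolution with $\Phi^{*t}$, applying Lemma~\ref{lem:generating becomes dominant} with $\dim\mathbb{G}_a^{|\mathcal{J}|}=|\mathcal{J}|$ for item (1), applying Proposition~\ref{prop:(FRS) for generating morphisms} with $m=|\mathcal{J}|$ and the bound $d_{\mathrm{mon}}(\Phi)\leq d$ for item (2), and specializing to $m=1$ together with a direct flatness check for the single-hyperedge case. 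You have also correctly identified that the generating hypothesis on $\Phi_{\Gamma,\mathrm{V},\{\tau_\gamma\}}$, while not written into the corollary's statement, is a needed implicit assumption inherited from the way the assignments are constructed and degenerated in this section; this reading is consistent with how the paper uses the result.
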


\begin{rem}
Note that the map $\Phi_{\Gamma,\mathrm{V}^{\oplus t},\{\tau_{\gamma}^{*t}\}}$
can be naturally identified with $\Phi_{\Gamma,\mathrm{V},\{\tau_{\gamma}\}}^{*t}$. 
\end{rem}

\subsection{\label{subsec:Proof-for-SLn}Proof for $\mathfrak{sl}_{n}$ }

We assume that $n>d$, as otherwise we may use the results of Subsection
\ref{subsec:Proof-for low rank and few edges}. Let $\mathfrak{t}$
be the Cartan subalgebra of $\mathfrak{sl}_{n}$ consisting of diagonal
matrices, write $\mathfrak{sl}_{n}=\mathfrak{t}\oplus\bigoplus\limits _{1\leq i\neq j\leq n}\mathfrak{g}_{i,j}$
and consider the basis $\mathcal{B}=\{E_{i,j}:1\leq i\neq j\leq n\}\cup\{H_{i}\}_{i=1}^{n-1}$
with $0\neq E_{i,j}\in\mathfrak{g}_{i,j}$ and $H_{i}=E_{i,i}-E_{i+1,i+1}\in\mathfrak{t}$.
Set $e_{i,j}=E_{i,j}$ if $i\neq j$ and $e_{i,i}=H_{i}$ if $i=j<n$.
We denote by $\Sigma(\mathfrak{sl}_{n},\mathfrak{t})$ the corresponding
root system, consisting of the roots $\{\epsilon_{i}-\epsilon_{j}\}_{i\neq j}$,
where $\epsilon_{i}\in\mathfrak{t}^{*}$ is the projection map to
the $i$-th coordinate.

Let $\mathcal{B}^{*}=\{\hat{e}_{i,j}\}$ be the dual basis of $\mathcal{B}$.
Let $w$ be a $d$-homogeneous word of pure type and consider the
$d$-PH $\Gamma_{\mathfrak{sl}_{n},w}$ and its assignment $(\Gamma_{\mathfrak{sl}_{n},w},V,\{\tau_{\gamma}\}_{\gamma\in\mathcal{S}})$
as defined in Subsection \ref{subsec:Attaching-a-polyhypergraph to a word map},
for $\mathcal{I}=\mathcal{J}=[n]^{2}\backslash\{(n,n)\}$. Recall
that $\Gamma_{\mathfrak{sl}_{n},w}$ has vertices $\{e_{i,j}\}_{(i,j)\in\mathcal{I}}$
and hyperedges $\mathcal{S}$ of the form $(\{e_{i_{1},j_{1}},\ldots,e_{i_{d},j_{d}}\},\hat{e}_{k,l})$.
Here, $(k,l)\in\mathcal{J}$ is the type of the hyperedge. 
\begin{rem}
\label{rem:edges with more than one type}Note that if $[e_{i_{1},j_{1}},\ldots,e_{i_{d},j_{d}}]\in\mathfrak{g}_{k,l}$
with $k\neq l$, then for each permutation $\sigma\in S_{d}$, we
have $[e_{i_{\sigma(1)},j_{\sigma(1)}},\ldots,e_{i_{\sigma(d)},j_{\sigma(d)}}]\in\mathfrak{g}_{k,l}$.
In particular, each edge $\{e_{i_{1},j_{1}},\ldots,e_{i_{d},j_{d}}\}$
is of unique type $\hat{e}_{k,l}$. On the other hand, if $[e_{i_{1},j_{1}},\ldots,e_{i_{d},j_{d}}]\in\mathfrak{t}$,
the hyperedge $\{e_{i_{1},j_{1}},\ldots,e_{i_{d},j_{d}}\}$ might
be of several types. For example, take $w=[X_{1},X_{2}]$, then 
\[
[e_{1,4},e_{4,1}]=E_{1,1}-E_{4,4}=e_{1,1}+e_{2,2}+e_{3,3},
\]
so the edge $\{e_{1,4},e_{4,1}\}$ is of types $\hat{e}_{11},\hat{e}_{22}$
and $\hat{e}_{33}$. 
\end{rem}

Our goal in this subsection is to prove the following theorem: 
\begin{thm}
\label{thm:main theorem on d-ph for SLn}Let $w\in\mathcal{L}_{r}$
be a $d$-homogeneous word of pure type and let $n>d$. 
\begin{enumerate}
\item The $d$-PH $(\Gamma_{\mathfrak{\mathfrak{sl}_{n}},w},V,\{\tau_{\gamma}\}_{\gamma\in\mathcal{S}})$
is flat if $\mathrm{dim}V\geq20d^{3}r$. 
\item The $d$-PH $(\Gamma_{\mathfrak{\mathfrak{sl}_{n}},w},V,\{\tau_{\gamma}\}_{\gamma\in\mathcal{S}})$
is (FRS) if $\mathrm{dim}V\geq80d^{3}r$. 
\end{enumerate}
\end{thm}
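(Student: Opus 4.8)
The plan is to treat $(\Gamma_{\mathfrak{sl}_{n},w},V,\{\tau_{\gamma}\}_{\gamma\in\mathcal{S}})$ by a bounded sequence of elimination and coloring operations (Corollaries \ref{cor:elimination for dPH} and \ref{cor:Coloring for for dPH}), degenerating it to a disjoint union of ``small'' $d$-PHs, each carrying only $O(d)$ types of hyperedges, to which the small-polyhypergraph part of Corollary \ref{cor:small polyhypergraphs are (FRS)} applies, while keeping track of the growth of $\dim V$. Since an elimination does not change $\dim V$ and a coloring with $m$ colors multiplies it by $m$, and since a $d$-PH with $|\mathcal{J}|=O(d)$ becomes flat after $O(d^{2})$ self-convolutions and $(\mathrm{FRS})$ after $O(d^{2})$ self-convolutions (here $d_{\mathrm{mon}}\le d-1$ for every form), the whole argument will cost $O(d)$ colors times $O(d^{2})$ convolutions, i.e.\ $O(d^{3})$; the explicit constants $20$ and $80$ will come from optimizing the number of colors against the per-piece convolution requirement.

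\textbf{Step 1: monomialization.} Using Remark \ref{rem:different description for Lie algebra word maps} I would write each bracket $[e_{i_{1},j_{1}},\dots,e_{i_{d},j_{d}}]$ as a signed sum of matrix monomials, so that every form $\tau_{I,l}$ becomes an explicit sum of monomials in the $v_{i,s,k}$. First apply the averaging weight $\omega_{\mathrm{av}}$ in the convolution direction $k\in[t]$ to decouple the convolution copies (as in Subsection \ref{subsec:Discussion-on-the methods}); then apply a lexicographic monomialization weight $\omega_{\mathrm{mon}}$ on the indices $(i,s)$ so that for each type $l$ a single monomial of $\Phi_{l}$ survives. One checks, exactly as in Corollary \ref{cor:all parial derivatives survive in the formal derivative of a word}, that (generically in $\omega_{\mathrm{mon}}$) no cancellation occurs and at least one monomial survives for each type, so the degenerated assignment is still generating; it is now the assignment of a genuine $d$-hypergraph $\mathcal{H}$ on the vertex set $\mathcal{B}$, whose hyperedge of type $\widehat{e}_{k,l}$ is the multiset of basis vectors occurring in the surviving monomial.

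\textbf{Step 2: separating and coloring $\mathcal{H}$, then finishing.} Here the structure of $\mathfrak{sl}_{n}$ enters: a nonzero bracket $[e_{i_{1},j_{1}},\dots,e_{i_{d},j_{d}}]$ of type $\widehat{e}_{k,l}$ corresponds, after reordering, to a directed walk of length $\le d$ in $[n]$ running from $k$ to $l$ when $k\neq l$, and to a closed walk of length $\le d$ when $k=l$ (a Cartan type), in which case by Remark \ref{rem:edges with more than one type} the same hyperedge may carry up to $d$ diagonal types, but never more. I would first use $O(1)$ colors to split $\mathcal{H}$ into an ``off-diagonal'' part and a ``Cartan'' part, and then on each part build, out of the endpoints and the interior vertices of these walks, admissible coloring functions with $O(d)$ colors that give different colors to any two hyperedges sharing a basis vector --- this is where the hypothesis $n>d$ is used, ensuring a walk of length $d$ has room to be encoded by its vertex sequence, and is the $\mathfrak{sl}_{n}$ analogue of the reduction of a graph to a bounded-degree forest in \cite[Section 2.6.3]{AA16}. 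After these colorings and interspersed eliminations, $\mathcal{H}$ degenerates to a disjoint union of sub-hypergraphs, each essentially a disjoint union of short chains; grouping $O(d)$ consecutive coordinates of each chain-union together yields a disjoint union of $d$-PHs with $O(d)$ types, and Corollary \ref{cor:small polyhypergraphs are (FRS)} then gives flatness (resp.\ $(\mathrm{FRS})$) of each piece once its own $\dim V$ exceeds a constant times $d^{2}$; multiplying by the $O(d)$ colors spent overall and optimizing the constants produces the bounds $\dim V\ge 20d^{3}r$ for flatness and $\dim V\ge 80d^{3}r$ for $(\mathrm{FRS})$.

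\textbf{Main obstacle.} The genuinely new work is all in Step 2: proving that after monomialization the hypergraph $\mathcal{H}$ is sparse enough --- uniformly in $n$ --- to be edge-separated with only $O(d)$ colors. This demands a careful choice of $\omega_{\mathrm{mon}}$ so that the surviving hyperedges are ``short chains'' with controlled overlap, together with an honest combinatorial analysis of directed walks of length $d$ on $[n]$; the Cartan types, where one hyperedge carries several types at once, are the subtlest point and will likely require a dedicated coloring that first isolates the ``into-Cartan'' hyperedges and then exploits that a closed walk of length $d$ meets at most $d$ diagonal coordinates.
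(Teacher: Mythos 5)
Your plan is in the same spirit as the paper's (split off the Cartan types, reduce the remainder to a $d$-hypergraph via averaging and monomialization, then use colorings/level separations to chop into small pieces and invoke Corollary~\ref{cor:small polyhypergraphs are (FRS)}), but it has two genuine gaps, both of which the paper treats with dedicated arguments that are missing from your sketch.

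\textbf{The averaging weight is misplaced.} You propose to ``apply the averaging weight $\omega_{\mathrm{av}}$ in the convolution direction $k\in[t]$''. This does nothing: the convolution direction is a symmetric direct sum $V^{\oplus t}$, and the forms $\tau_{\gamma}^{\oplus t}$ carry no interaction between convolution copies that an averaging weight could decouple. In the paper $\omega_{\mathrm{av}}(e_{i,j})=(d+1)^{|i-j|}$ is a weight on the \emph{vertices themselves}, graded by distance from the diagonal. Its purpose is to force every surviving hyperedge of type $\hat e_{k,l}$ to be a chain of $d$ roots of \emph{nearly equal length} $\approx |k-l|/d$ (Lemma~\ref{lemma:not so far away from average}); this uniform-length property is exactly what makes the subsequent level separations $\omega_{\mathrm{LS},i}$ admissible and is what controls the overlap you flag as your ``main obstacle''. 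A lexicographic $\omega_{\mathrm{mon}}$ alone, without this prior averaging, gives a hypergraph whose hyperedges can share many vertices across long distances, and no $O(d)$-color scheme will separate them.

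\textbf{The Cartan part cannot be monomialized first, and your argument does not preserve generation.} You assert that after Step~1 the assignment is ``the assignment of a genuine $d$-hypergraph'' --- but you also note (correctly) that a Cartan hyperedge may carry up to $d$ diagonal types. These two statements conflict, and the second one is the relevant one: monomializing the diagonal types $\hat e_{k,k}$ naively can send two different types to \emph{the same} surviving multiset, and then the degenerated morphism ceases to be generating; as the paper warns (Remark~\ref{rem:warning-elimination}), no number of further convolutions repairs a non-generating morphism, so the entire downstream use of Corollary~\ref{cor:small polyhypergraphs are (FRS)} (which requires a generating morphism via Proposition~\ref{prop:(FRS) for generating morphisms}) collapses. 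Your appeal to ``no cancellation occurs, as in Corollary~\ref{cor:all parial derivatives survive in the formal derivative of a word}'' does not address this: that corollary concerns non-vanishing of individual pure summands in jet coordinates, not linear independence of the surviving types. The paper sidesteps this by applying the coloring $\nu_0$ to split off $\Gamma_0$ (the Cartan-type part) \emph{before} any monomialization, and then for $\Gamma_0$ it proves generation is preserved after elimination by restricting the degenerated map to an $\mathfrak{sl}_d$ subalgebra (living inside the ``corner'' $\{e_{i,j}: i,j\le d\}$ preserved by $\nu_1$) and invoking Amitsur--Levitzki (Lemma~\ref{lem:still generating} via Corollary~\ref{cor: commutator relations on simple Lie algebras}) --- this is where $n>d$ does the real work, not merely to ``give a walk of length $d$ room''. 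Without some substitute for that step your Step~2 does not go through for the diagonal types.

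A smaller point: your accounting of $O(d)$ colors times $O(d^{2})$ per-piece convolutions yields the right asymptotic $O(d^3)$, but the paper's $\Gamma_{\pm}$ reduction goes all the way down to \emph{single} hyperedges at the cost of roughly $4d^{3}$ colors (levels $2\cdot 2d\cdot d\cdot d$) with $O(1)$ per-piece convolutions; getting the advertised constants $20d^3r$ and $80d^3r$ requires this more aggressive reduction plus the separate $\Gamma_0$ accounting ($12d^2r$ / $60d^3r$), not the milder stopping point you propose.
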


We would first like to reduce $\Gamma_{\mathfrak{\mathfrak{sl}_{n}},w}$
to three simpler polyhypergraphs. Let $M=[3]$ and set a coloring
function $\nu_{0}(e_{i,j})=(2d(i-j),1,2d(j-i))$. Notice that given
a hyperedge $(\{e_{i_{1},j_{1}},\ldots,e_{i_{d},j_{d}}\},\hat{e}_{k,l})\in\mathcal{S}$,
\[
\widetilde{\nu}_{0}(\{e_{i_{1},j_{1}},\ldots,e_{i_{d},j_{d}}\})=\sum\limits _{u=1}^{d}\nu_{0}(e_{i_{u},j_{u}})=(2\sum\limits _{u=1}^{d}d(i_{u}-j_{u}),d,2\sum\limits _{u=1}^{d}d(j_{u}-i_{u})).\tag{\ensuremath{\star}}
\]
Recall that in any simple Lie algebra $\mathfrak{g}$, for any $\alpha,\beta\in\Sigma(\mathfrak{g},\mathfrak{t})$
we have $[\mathfrak{g}_{\alpha},\mathfrak{g}_{\beta}]\subseteq\mathfrak{g}_{\alpha+\beta}$
if $\alpha+\beta\in\Sigma(\mathfrak{g},\mathfrak{t})$, and zero otherwise.
Since the hyperedge above is of type $(k,l)$, it follows that $\sum\limits _{u=1}^{d}\epsilon_{i_{u}}-\epsilon_{j_{u}}=\epsilon_{k}-\epsilon_{l}$.
In particular, we get 
\[
(\star)=(2d(k-l),d,2d(l-k)),
\]
so $\nu_{0}$ is admissible, and hence by Corollary \ref{cor:Coloring for for dPH},
we reduce to the following $d$-PHs: 
\begin{enumerate}
\item $\Gamma_{+}:=\mathrm{gr}_{\nu_{0},1}(\Gamma_{\mathfrak{\mathfrak{sl}_{n}},w})$
consisting of hyperedges of type $\hat{e}_{k,l}$ with $k<l$. 
\item $\Gamma_{0}:=\mathrm{gr}_{\nu_{0},2}(\Gamma_{\mathfrak{\mathfrak{sl}_{n}},w})$
consisting of hyperedges of type $\hat{e}_{k,k}$ with $k\in[n-1]$. 
\item $\Gamma_{-}:=\mathrm{gr}_{\nu_{0},3}(\Gamma_{\mathfrak{\mathfrak{sl}_{n}},w})$
consisting of hyperedges of type $\hat{e}_{k,l}$ with $k>l$. 
\end{enumerate}
As we will see soon, the $d$-PHs $\Gamma_{+}$ and $\Gamma_{-}$
can rather easily be reduced to hypergraphs. $\Gamma_{0}$ requires
a special treatment, since by Remark \ref{rem:edges with more than one type},
the hyperedges of $\Gamma_{0}$ may not have a unique type, so a non-careful
enough use of the elimination method might result in a non-generating
map (see Remark \ref{rem:warning-elimination}). We start by analyzing
$\Gamma_{+}$ and $\Gamma_{-}$.

\subsubsection{\label{subsec:Reduction-of-=00005Cgamma +-}Reduction of $\Gamma_{+}$
and $\Gamma_{-}$ to $d$-hypergraphs}

We consider $\Gamma_{+}$, the proof for $\Gamma_{-}$ is similar.
Recall $\Gamma_{+}:=(\mathcal{I},\mathcal{J}_{+},\mathcal{S}_{+})$
with $\mathcal{J}_{+}=\mathrm{gr}_{\nu_{0},1}(\mathcal{J})=\{(k,l)\in\mathcal{J}:k<l\}$
and $\mathcal{S}_{+}=\mathrm{gr}_{\nu_{0},1}(\mathcal{S})$. The reduction
to a hypergraph is done by applying the averaging method followed
by the monomialization method.

Set a weight function $\omega_{\mathrm{av}}(e_{i,j})=(d+1)^{\left|i-j\right|}$
for any $i,j$. Following the discussion in Subsection \ref{subsec:Discussion-on-the methods},
$\omega_{\mathrm{av}}$ leaves hyperedges $(\{e_{i_{1},j_{1}},\ldots,e_{i_{d},j_{d}}\},\hat{e}_{k,l})$
such that each of $e_{i_{1},j_{1}},\ldots,e_{i_{d},j_{d}}$ has length
$\left|i_{m}-j_{m}\right|$ which is as close as possible to the average,
i.e.~to $\frac{\left|k-l\right|}{d}$. Denote $\Gamma_{1}=\mathrm{gr}_{\omega_{\mathrm{av}}}(\Gamma_{+})=(\mathcal{I},\mathcal{J}_{+},\mathcal{S}_{1})$,
where 
\[
\mathcal{S}_{1}:=\{(\{e_{i_{1},j_{1}},\ldots,e_{i_{d},j_{d}}\},\hat{e}_{k,l})\in\mathcal{S}_{+}\text{ such that }\widetilde{\omega}_{\mathrm{av}}(\{e_{i_{1},j_{1}},\ldots,e_{i_{d},j_{d}}\})\text{ is minimal}\}.
\]
Order $\{e_{i,j}\}$ by $\mathrm{ord}(e_{i,j})=(i-1)n+j$ and set
a monomialization weight 
\[
\omega_{\mathrm{mon}}(e_{i,j}):=(d+1)^{\mathrm{ord}(e_{i,j})}.
\]
Note that for any $(k,l)\in\mathcal{J}_{+}$, there is a unique edge
$(\{e_{i_{1},j_{1}},\dots,e_{i_{d},j_{d}}\},\hat{e}_{k,l})\in\mathcal{S}_{1}$
with minimal weight $\widetilde{\omega}_{\mathrm{mon}}(\{e_{i_{1},j_{1}},\dots,e_{i_{d},j_{d}}\})$.
Indeed, 
\begin{equation}
\widetilde{\omega}_{\mathrm{mon}}(\{e_{i_{1},j_{1}},\ldots,e_{i_{d},j_{d}}\})=\sum\limits _{u=1}^{d}(d+1)^{\mathrm{ord}(e_{i_{u},j_{u}})}=\sum\limits _{p=1}^{n^{2}-1}\left(\#\{u:\mathrm{ord}(e_{i_{u},j_{u}})=p\}\right)\cdot(d+1)^{p},\label{eq:uniqueness of monomialization}
\end{equation}
so this decomposition is unique. Denote the resulting $d$-PH by $\Gamma_{2}=\mathrm{gr}_{\omega_{\mathrm{mon}}}(\Gamma_{1}):=(\mathcal{I},\mathcal{J}_{+},\mathcal{S}_{2})$. 
\begin{prop}
\label{prop:reduction to a hypergraph}For any $(k,l)\in\mathcal{J}_{+}$
we have 
\begin{enumerate}
\item $\Gamma_{2}$ contains exactly one hyperedge $\{e_{i_{1},j_{1}},\dots,e_{i_{d},j_{d}}\}$
of type $(k,l)$ and it is of unique type. 
\item Any hyperedge $(\{e_{i_{1},j_{1}},\ldots,e_{i_{d},j_{d}}\},\hat{e}_{k,l})\in\mathcal{S}_{2}$
with $\left|k-l\right|\leq d$ consists of vertices $e_{i_{u},j_{u}}$
which are either simple roots $e_{i,i+1}$ or elements $e_{i,i}$
of the Cartan $\mathfrak{t}$. 
\end{enumerate}
\end{prop}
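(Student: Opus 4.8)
The plan is to prove both parts by tracking what the weights $\omega_{\mathrm{av}}$ and then $\omega_{\mathrm{mon}}$ do to the hyperedges of a fixed type $(k,l)\in\mathcal{J}_{+}$ (so $k<l$). Two structural facts are used throughout. First, $\Gamma_{+}$ contains at least one hyperedge of each type $(k,l)$ with $k<l$: by the computation $(\star)$ the $\nu_{0}$-coloring retains \emph{all} type-$(k,l)$ hyperedges of $\Gamma_{\mathfrak{sl}_{n},w}$ with $k<l$, and at least one such hyperedge exists because $\varphi_{w}$ is generating. Second, if $(\{e_{i_{1},j_{1}},\ldots,e_{i_{d},j_{d}}\},\widehat{e}_{k,l})$ is a hyperedge with $k\neq l$, then the order-independent weight identity $\sum_{u}(\epsilon_{i_{u}}-\epsilon_{j_{u}})=\epsilon_{k}-\epsilon_{l}$ holds, and by Remark \ref{rem:edges with more than one type} such a hyperedge has the unique type $(k,l)$.

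For part $(1)$: by (\ref{eq:uniqueness of monomialization}), $\widetilde{\omega}_{\mathrm{mon}}(\{e_{i_{1},j_{1}},\ldots,e_{i_{d},j_{d}}\})$ is the base-$(d+1)$ number whose $p$-th digit is $\#\{u:\mathrm{ord}(e_{i_{u},j_{u}})=p\}\in\{0,\ldots,d\}$; since $\mathrm{ord}$ is injective on $\mathcal{I}$, this value determines and is determined by the multiset $\{e_{i_{u},j_{u}}\}$, so $\widetilde{\omega}_{\mathrm{mon}}$ is injective on hyperedges. Now fix $(k,l)\in\mathcal{J}_{+}$. By the first fact $\Gamma_{+}$ has a type-$(k,l)$ hyperedge, hence so does $\Gamma_{1}=\mathrm{gr}_{\omega_{\mathrm{av}}}(\Gamma_{+})$ (the one(s) of minimal $\widetilde{\omega}_{\mathrm{av}}$ among type-$(k,l)$ hyperedges), and $\Gamma_{2}=\mathrm{gr}_{\omega_{\mathrm{mon}}}(\Gamma_{1})$ then keeps those of minimal $\widetilde{\omega}_{\mathrm{mon}}$ among them --- by injectivity, exactly one. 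By the second fact it is of unique type.

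For part $(2)$: let $H=\{e_{i_{1},j_{1}},\ldots,e_{i_{d},j_{d}}\}$ be a hyperedge of $\mathcal{S}_{2}$ of type $(k,l)$ with $m:=|k-l|=l-k\leq d$, and put $\ell_{u}:=|i_{u}-j_{u}|$. Since $H\in\mathcal{S}_{1}$, its $\widetilde{\omega}_{\mathrm{av}}$-weight $\sum_{u}(d+1)^{\ell_{u}}$ is minimal among type-$(k,l)$ hyperedges. Applying the linear functional $\epsilon_{a}\mapsto a$ to the weight identity gives $\sum_{u}(i_{u}-j_{u})=k-l$, whence $\sum_{u}\ell_{u}\geq m$ by the triangle inequality. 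I will use the elementary fact that, for $m\leq d$, the minimum of $\sum_{u=1}^{d}(d+1)^{\ell_{u}}$ over $(\ell_{u})\in\mathbb{Z}_{\geq 0}^{d}$ with $\sum_{u}\ell_{u}\geq m$ equals $m(d+1)+(d-m)$ and is attained only at the multiset $\{1^{m},0^{d-m}\}$; this follows from an exchange argument (any configuration with some $\ell_{u}\geq 2$ must, since $m\leq d$, have some $\ell_{v}=0$, and replacing $(\ell_{u},\ell_{v})$ by $(\ell_{u}-1,1)$ strictly decreases the sum). The matching upper bound is the step where the structure of $w$ genuinely enters: one must exhibit a type-$(k,l)$ hyperedge with lengths $\{1^{m},0^{d-m}\}$. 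Granting this, $\widetilde{\omega}_{\mathrm{av}}(H)=m(d+1)+(d-m)$, so by the uniqueness statement above $H$ has lengths $\{1^{m},0^{d-m}\}$. The $d-m$ length-zero vertices are of the form $e_{i,i}\in\mathfrak{t}$, and the $m$ length-one vertices have weights $\pm\alpha_{a}$ summing to $\epsilon_{k}-\epsilon_{l}=\alpha_{k}+\cdots+\alpha_{l-1}$; writing these weights in the simple-root basis and noting that equality holds in the triangle inequality forces each to be a distinct $+\alpha_{a}$ with $a\in\{k,\ldots,l-1\}$, i.e.\ the length-one vertices are exactly $e_{k,k+1},\ldots,e_{l-1,l}$, as claimed.

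\textbf{Main obstacle.} It remains to exhibit, for every $(k,l)$ with $k<l\leq k+d$, a type-$(k,l)$ hyperedge with balanced lengths $\{1^{m},0^{d-m}\}$. The plan here is a substitution argument. Since $\deg(w)=d<n$, the map $\varphi_{w}$ is non-trivial on $\mathfrak{sl}_{n}$ (Corollary \ref{cor: commutator relations on simple Lie algebras}), and one evaluates a chosen monomial (a coefficient in the $v$-variables) of the coordinate $\widehat{e}_{k,l}\circ\varphi_{w}$ on a configuration assembled from the root vectors $e_{k,k+1},\ldots,e_{l-1,l}$ together with suitable Cartan elements $e_{i,i}$, arranged so that the relevant left-normed brackets chain up to a nonzero multiple of $E_{k,l}$; the non-triviality of $\varphi_{w}$ on an $\mathfrak{sl}$ of rank larger than $d$ --- hence with enough room --- is what guarantees that the corresponding coefficient of $\tau_{H,\widehat{e}_{k,l}}$ does not vanish. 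Care is needed here because several orderings of $H$ and several pure summands of $w$ may contribute to the same $v$-monomial, so the cancellation bookkeeping, rather than the weight estimates, is the delicate part of the argument.
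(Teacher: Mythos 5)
Your part (1) matches the paper's argument (injectivity of $\widetilde{\omega}_{\mathrm{mon}}$ on multisets, plus the root-sum identity and Remark \ref{rem:edges with more than one type} for unique type), so that part is fine. The issue is in part (2), where you take a genuinely different and more ambitious route, and the step you flag as the ``main obstacle'' is a real gap that the paper deliberately avoids rather than fills.

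You aim to identify the \emph{exact} $\widetilde{\omega}_{\mathrm{av}}$-minimizer as a hyperedge with length profile $\{1^m,0^{d-m}\}$ and value $m(d+1)+(d-m)=d(m+1)$, then read off from the uniqueness of that combinatorial minimizer both the bound on vertex lengths and the precise root vectors $e_{k,k+1},\dots,e_{l-1,l}$. This hinges on exhibiting an actual type-$(k,l)$ hyperedge in $\mathcal{S}_+$ with this length profile, i.e.\ with $\tau\neq 0$, for an arbitrary word $w$ of pure type. This is genuinely delicate, and your own example suggests why: with $d-m$ Cartan vertices, many orderings $\sigma\in S_d$ have $\hat{e}_{k,l}([e_{\sigma(1)},\dots,e_{\sigma(d)}])\neq 0$, several of them collapse to the same $v$-monomial, and the resulting coefficients can cancel. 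Worse, the Cartan vertices themselves are constrained: $\alpha_a(H_b)$ vanishes unless $|a-b|\leq 1$, so for small $m$ there simply aren't $d-m$ distinct Cartan elements ``seeing'' the relevant roots, forcing repeated vertices and further complicating the bookkeeping. So the existence claim is neither obvious nor, as far as I can tell, easy to extract from a generic ``substitution'' argument.

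The paper sidesteps all of this. It does \emph{not} claim $(\star)$ is the exact minimum; it only needs an upper bound $\widetilde{\omega}_{\mathrm{av}}\leq(d+1)d$, and it exhibits a witness of that weight consisting of $d$ length-one vertices (or $d-1$ and one Cartan element in the odd case) arranged along a ``V-shaped'' path: the pairs $(i_{0,u},j_{0,u})$ are pairwise distinct and form a directed path from $k$ to $l$ with a unique Eulerian ordering, so exactly one $\sigma$ contributes and the monomials $\prod_u a_{i_{0,u},j_{0,u},s_u}$ indexed by $S\in\mathrm{MS}^{-1}(S_0)$ are all distinct. That kills cancellation outright, with no assumption on $w$ beyond $d_S\neq 0$ for some $S$. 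Then the implication to (2) is immediate: any hyperedge of weight $\leq(d+1)d$ must have all $\ell_u\leq 1$, since a single $\ell_u\geq 2$ already contributes $(d+1)^2>(d+1)d$. (As a side remark: the literal wording of (2), ``simple roots $e_{i,i+1}$'', is a slight overstatement of what the paper's proof yields, which is $|i_u-j_u|\leq 1$; the downstream use in $\mathcal{G}_{\mathrm{small}}$ only needs $|i-j|\leq 1$. Your argument, had it gone through, would give the sharper form, but the sharper form is not actually required and is not what the paper's proof establishes either.)

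A smaller point: your exchange argument ``any configuration with some $\ell_u\geq 2$ must have some $\ell_v=0$'' is only valid once you fix $\sum_u\ell_u=m$; under the constraint $\sum\ell_u\geq m$ it fails (e.g.\ $\ell=(2,1,\dots,1)$). It is easily repaired by first noting the optimum occurs at $\sum\ell_u=m$. But the real problem remains the unproved existence step. If you want to keep your structure of proof, you should either prove the sharp existence statement carefully for all $w$ (taking into account repeated Cartan vertices and the vanishing of $\alpha(H)$), or fall back to the paper's weaker bound $(d+1)d$ with the no-cancellation witness, which is what the downstream argument actually uses.
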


\begin{proof}
Indeed, (\ref{eq:uniqueness of monomialization}) and Remark \ref{rem:edges with more than one type}
imply (1). It is left to prove (2). Assume $k<l\leq k+d$. Since $\mathcal{S}_{2}\subseteq\mathcal{S}_{1}$,
it is enough to show that any $(\{e_{i_{1},j_{1}},\ldots,e_{i_{d},j_{d}}\},\hat{e}_{k,l})\in\mathcal{S}_{1}$
satisfies (2), which is equivalent to 
\[
\widetilde{\omega}_{\mathrm{av}}(\{e_{i_{1},j_{1}},\ldots,e_{i_{d},j_{d}}\})\leq(d+1)d\tag{\ensuremath{\star}}.
\]
It is left to find a hyperedge of type $\hat{e}_{k,l}$ in $\Gamma_{+}$
that satisfies $(\star)$. Without loss of generality assume that
$k>\frac{d}{2}$ (a similar construction can be done for $k\leq\frac{d}{2}$).
Further recall that $n>d$. Let $I_{0}=((i_{0,1},j_{0,1}),\ldots,(i_{0,d},j_{0,d}))\in\mathcal{I}^{d}$
be the $d$-tuple defined by 
\[
\begin{cases}
(i_{0,u},j_{0,u}):=(k-u+1,k-u) & \text{if }u\leq\frac{d-(l-k)}{2}\\
(i_{0,u},j_{0,u}):=(l+u-d-1,l+u-d) & \text{if }u>\frac{d-(l-k)}{2}
\end{cases}
\]
if $d-(l-k)$ is even, and as follows if $d-(l-k)$ is odd: 
\[
\begin{cases}
(i_{0,u},j_{0,u}):=(k-u+1,k-u) & \text{if }u<\frac{d-(l-k)+1}{2}\\
(i_{0,u},j_{0,u}):=(k-u+1,k-u+1) & \text{if }u=\frac{d-(l-k)+1}{2}\\
(i_{0,u},j_{0,u}):=(l+u-d-1,l+u-d) & \text{if }u>\frac{d-(l-k)+1}{2}.
\end{cases}
\]
In view of Remark \ref{rem:different description for Lie algebra word maps},
$\varphi_{w}:\mathfrak{sl}_{n}^{r}\rightarrow\mathfrak{sl}_{n}$ can
be written as a restriction of a matrix word map $\varphi_{\widetilde{w}}:M_{n}^{r}\rightarrow M_{n}$,
with $\widetilde{w}=\sum\limits _{S\in\mathrm{MS}^{-1}(S_{0})}d_{S}\cdot X_{s_{1}}\cdot...\cdot X_{s_{d}}$
for some $S_{0}\in[r]^{(d)}$ and constants $d_{S}$. Write $\widetilde{I}_{0}:=\mathrm{MS}(I_{0})$.
By Lemma \ref{lem:combinatorial word map} we have 
\[
\tau_{\widetilde{I}_{0},(k,l)}(X_{1},\ldots,X_{r})=\sum\limits _{((i_{1},j_{1}),\ldots,(i_{d},j_{d}))\in\mathrm{MS}^{-1}(\widetilde{I}_{0})}\sum\limits _{S\in\mathrm{MS}^{-1}(S_{0})}d_{S}\prod\limits _{u=1}^{d}a_{i_{u},j_{u},s_{u},}\hat{e}_{k,l}\left(e_{i_{1},j_{1}}\cdot\ldots\cdot e_{i_{d},j_{d}}\right),
\]
where $X_{s}=\sum\limits _{(i,j)\in\mathcal{I}}a_{i,j,s}e_{i,j}$.
Now, note that $e_{(i_{0,1},j_{0,1})}\cdot\ldots\cdot e_{(i_{0,d},j_{0,d})}\neq0$,
but any reordering of these terms is the zero matrix, namely $e_{(i_{0,\sigma(1)},j_{0,\sigma(1)})}\cdot\ldots\cdot e_{(i_{0,\sigma(d)},j_{0,\sigma(d)})}=0$
for any $1\neq\sigma\in S_{d}$. We get, 
\begin{align*}
\tau_{\widetilde{I}_{0},(k,l)}(X_{1},\ldots,X_{r}) & =\sum\limits _{S\in\mathrm{MS}^{-1}(S_{0})}d_{S}\prod\limits _{u=1}^{d}a_{i_{0,u},j_{0,u},s_{u}}\hat{e}_{k,l}\left(e_{i_{0,1},j_{0,1}}\cdot\ldots\cdot e_{i_{0,d},j_{0,d}}\right)\\
 & =\sum\limits _{S\in\mathrm{MS}^{-1}(S_{0})}d_{S}\prod\limits _{u=1}^{d}a_{i_{0,u},j_{0,u},s_{u}.}
\end{align*}
Note that the monomials $\{\prod\limits _{u=1}^{d}a_{i_{0,u},j_{0,u},s_{u}}\}_{S\in\mathrm{MS}^{-1}(S_{0})}$
are mutually different (since the pairs $\{(i_{0,u},j_{0,u})\}_{u=1}^{d}$
are mutually different), and $d_{S}\neq0$ for some $S\in\mathrm{MS}^{-1}(S_{0})$,
so $\tau_{\widetilde{I}_{0},(k,l)}(X_{1},\ldots,X_{r})\neq0$, but
by our construction of $\Gamma_{\mathfrak{sl}_{n},w}$, we have $(\widetilde{I}_{0},\hat{e}_{k,l})\in\mathcal{S}$
and as it satisfies $(\star)$ we are done. 
\end{proof}
Proposition \ref{prop:reduction to a hypergraph} implies the following: 
\begin{cor}
The $d$-PH $\Gamma_{2}$ is induced from a $d$-hypergraph $\mathcal{G}=(\mathcal{V},\mathcal{E})$,
with $\mathcal{V}=\mathcal{I}$ and 
\[
\mathcal{E}=\{\text{the unique hyperedges \ensuremath{(\{e_{i_{1},j_{1}},\dots,e_{i_{d},j_{d}}\},\hat{e}_{k,l})\in\mathcal{S}_{2}} of type }\hat{e}_{k,l},\text{ for any }(k,l)\in\mathcal{J}_{+}\}.
\]
\end{cor}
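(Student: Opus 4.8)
The plan is to read the claim off directly from Proposition~\ref{prop:reduction to a hypergraph}, by producing an explicit isomorphism of $d$-polyhypergraphs between $\Gamma_2$ and $\mathcal{P}(\mathcal{G})$, where $\mathcal{G}=(\mathcal{I},\mathcal{E})$ and $\mathcal{E}$ is declared to be the set of hyperedges occurring in $\mathcal{S}_2$.

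First I would recall that $\mathcal{P}(\mathcal{G})=(\mathcal{I},\mathcal{E},\triangle\mathcal{E})$, with $\triangle\mathcal{E}\subseteq\mathcal{I}^{(d)}\times\mathcal{E}$ the diagonal embedding. By Proposition~\ref{prop:reduction to a hypergraph}(1), for each type $(k,l)\in\mathcal{J}_+$ there is exactly one hyperedge $\{e_{i_1,j_1},\dots,e_{i_d,j_d}\}$ with $(\{e_{i_1,j_1},\dots,e_{i_d,j_d}\},\hat e_{k,l})\in\mathcal{S}_2$, and this hyperedge carries no other type; denote it $E_{k,l}$. I would then check that $\beta\colon\mathcal{J}_+\to\mathcal{E}$, $(k,l)\mapsto E_{k,l}$, is a bijection: it is well defined and surjective by the very definition of $\mathcal{E}$, and it is injective because an equality $E_{k,l}=E_{k',l'}$ with $(k,l)\neq(k',l')$ would force that hyperedge to have two distinct types, contradicting the ``unique type'' clause of Proposition~\ref{prop:reduction to a hypergraph}(1).

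Finally I would verify that $(\mathrm{id}_{\mathcal{I}},\beta)$ is the desired isomorphism: for $I\in\mathcal{I}^{(d)}$ and $(k,l)\in\mathcal{J}_+$ one has $(I,\hat e_{k,l})\in\mathcal{S}_2$ iff $I=E_{k,l}=\beta(k,l)$ iff $(I,\beta(k,l))\in\triangle\mathcal{E}$, so relabeling the type set $\mathcal{J}_+$ by $\mathcal{E}$ via $\beta$ carries $\mathcal{S}_2$ bijectively onto $\triangle\mathcal{E}$, i.e.\ $\Gamma_2\cong\mathcal{P}(\mathcal{G})$. Transporting the forms $\{\tau_\gamma\}_{\gamma\in\mathcal{S}_2}$ along $\beta$ then gives an assignment of $\mathcal{G}$ inducing the same map $\Phi$, so the flatness and (FRS) questions for $\Gamma_2$ and for $\mathcal{G}$ are identical. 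There is no hard content here beyond Proposition~\ref{prop:reduction to a hypergraph}; the only point needing care is the bookkeeping, where the existence half of Proposition~\ref{prop:reduction to a hypergraph}(1) gives well-definedness and surjectivity of $\beta$ while the uniqueness half gives injectivity, so I expect this step to be routine.
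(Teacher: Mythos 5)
Your proof is correct and follows exactly the route the paper intends: the paper gives no separate proof and simply presents the corollary as an immediate consequence of Proposition~\ref{prop:reduction to a hypergraph}(1). Your explicit construction of the bijection $\beta\colon\mathcal{J}_+\to\mathcal{E}$ (well-defined and surjective by the existence-and-uniqueness of the hyperedge of each type, injective by the ``unique type'' clause), together with the check that relabeling $\mathcal{J}_+$ by $\mathcal{E}$ via $\beta$ carries $\mathcal{S}_2$ onto $\triangle\mathcal{E}$, is just a careful unfolding of what ``follows'' means here.
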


Before we start to simplify $\mathcal{G}$ (or $\Gamma_{2}$) we would
like to have a better understanding of its structure. 
\begin{lem}
\label{lemma:not so far away from average}Let $\Gamma_{2}=(\mathcal{I},\mathcal{J}_{+},\mathcal{S}_{2})$
as above, and let $k,l$ with $d<l-k$. Write $l-k=pd+q$, where $p,q\in\nats$
and $0\leq q\leq d-1$. Then the hyperedge $(\{e_{i_{1},j_{1}},\ldots,e_{i_{d},j_{d}}\},\hat{e}_{k,l})\in\mathcal{S}_{2}$
satisfies $(i_{u},j_{u})=(k+(u-1)p,k+up)$ for any $1\leq u\leq d-q$,
and 
\[
(i_{d-q+g},j_{d-q+g})=(k+(d-q)p+(g-1)(p+1),k+(d-q)p+g(p+1)),
\]
for $g\in\{1,\ldots,q\}$. 
\end{lem}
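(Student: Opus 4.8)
The statement concerns the unique surviving hyperedge of type $(k,l)$ in $\Gamma_2 = \mathrm{gr}_{\omega_{\mathrm{mon}}}(\mathrm{gr}_{\omega_{\mathrm{av}}}(\Gamma_+))$ when the ``length'' $l-k$ exceeds $d$. By Proposition \ref{prop:reduction to a hypergraph}(1) we already know there is exactly one such hyperedge, so the task is purely to identify it explicitly. The plan is to pin it down in two stages, mirroring the two-weight degeneration that produced $\Gamma_2$: first use $\omega_{\mathrm{av}}$ to constrain the \emph{multiset of lengths} $\{|i_u - j_u|\}_{u=1}^d$ of the vertices, then use $\omega_{\mathrm{mon}}$ together with the constraint $\sum_u (\epsilon_{i_u} - \epsilon_{j_u}) = \epsilon_k - \epsilon_l$ and the non-vanishing of the corresponding form $\tau$ to force the precise indices.

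First I would analyze the averaging step. Write $l - k = pd + q$ with $0 \le q \le d-1$. For a hyperedge of type $(k,l)$ in $\Gamma_+$, the lengths $\ell_u := i_u - j_u$ (all negative here since $k<l$, or one can work with $j_u - i_u > 0$) sum to $-(l-k)$, i.e. $\sum_u (j_u - i_u) = pd + q$. The weight $\widetilde\omega_{\mathrm{av}} = \sum_u (d+1)^{|i_u - j_u|}$ is, by strict convexity of $t \mapsto (d+1)^t$, minimized over non-negative integer tuples summing to $pd+q$ exactly when the tuple is ``as balanced as possible'': $d-q$ of the lengths equal $p$ and $q$ of them equal $p+1$. (Here one must also check that lengths $0$ are not forced or helpful; since each $e_{i,j}$ with $i\neq j$ has length $\ge 1$ and the elements $e_{i,i}$ have ``length $0$'' but cannot appear with positive multiplicity while still summing the $\epsilon$-characters to $\epsilon_k - \epsilon_l$ with the right total, balancing is genuinely forced — this is the same convexity bookkeeping as in Subsection \ref{subsec:Discussion-on-the methods}.) So every hyperedge of $\mathrm{gr}_{\omega_{\mathrm{av}}}(\Gamma_+) = \Gamma_1$ of type $(k,l)$ has exactly this length multiset $\{p^{d-q}, (p+1)^q\}$.

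Next I would run the monomialization step. Among the hyperedges of $\Gamma_1$ of type $(k,l)$ — all sharing the length multiset above — the weight $\omega_{\mathrm{mon}}(e_{i,j}) = (d+1)^{\mathrm{ord}(e_{i,j})}$ with $\mathrm{ord}(e_{i,j}) = (i-1)n + j$ selects the one minimizing $\widetilde\omega_{\mathrm{mon}}$, and by the digit-uniqueness argument in Equation (\ref{eq:uniqueness of monomialization}) this minimum is attained at a unique multiset of vertices. I claim the explicit tuple in the statement — the ``staircase'' starting at column/row $k$, taking $d-q$ steps of size $p$ and then $q$ steps of size $p+1$, ending at $l$ — is both (a) a genuine hyperedge (i.e. $\tau_{\widetilde I_0,(k,l)} \neq 0$) and (b) the $\omega_{\mathrm{mon}}$-minimal one. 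For (a) I would reuse the mechanism in the proof of Proposition \ref{prop:reduction to a hypergraph}: for this consecutive staircase $e_{i_{0,1},j_{0,1}} \cdots e_{i_{0,d},j_{0,d}}$ the matrix product is a single nonzero matrix unit $e_{k,l}$ while every nontrivial reordering vanishes (consecutive blocks chain up uniquely), hence $\tau_{\widetilde I_0,(k,l)}(X_1,\dots,X_r) = \sum_{S} d_S \prod_u a_{i_{0,u},j_{0,u},s_u}$ is a nonzero polynomial since the monomials are distinct and some $d_S \neq 0$. For (b), among all length-balanced chains from $\epsilon_k$ to $\epsilon_l$ the lexicographically-smallest-in-$\mathrm{ord}$ choice is precisely the one that keeps all indices as small as possible and as ``early'' as possible, which is the monotone consecutive staircase placed at the smallest available starting index $k$; a short exchange/interchange argument comparing two chains that differ in where a step of size $p+1$ versus $p$ occurs shows that pushing the larger steps to the end (larger $u$) strictly decreases $\widetilde\omega_{\mathrm{mon}}$ — matching the stated indexing $(i_{d-q+g},j_{d-q+g}) = (k+(d-q)p + (g-1)(p+1),\, k+(d-q)p+g(p+1))$.

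\textbf{Main obstacle.} The delicate point is step (b): justifying that the $\omega_{\mathrm{mon}}$-minimizer is exactly this ordered staircase rather than some other balanced chain, and in particular that the $p$-steps must all precede the $(p+1)$-steps. This requires carefully comparing $\widetilde\omega_{\mathrm{mon}}$-values of chains differing by a local swap; the base $(d+1)$ is large enough that the comparison is dominated by the highest-order vertex, so an inductive/greedy argument from the top index downward should close it, but one must be careful that the chain stays within $[n]$ — which is exactly where the hypothesis $n > d$ (hence $l \le n$ leaves enough room, and when $l$ is close to $n$ the staircase still fits because its total span is $l-k \le n-1$) is used. Everything else is routine convexity and matrix-unit bookkeeping already deployed earlier in the section.
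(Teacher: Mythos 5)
Your proposal is correct and follows essentially the same route as the paper. The paper's proof has exactly the same skeleton: it first verifies, via the chaining argument (only the trivial permutation yields a nonzero product of matrix units, so $\tau_{\widetilde I_0,(k,l)}\neq 0$ without cancellation), that the explicit staircase lies in $\mathcal{S}$; it then asserts that this staircase minimizes $\widetilde\omega_{\mathrm{av}}$ among all hyperedges of type $\hat e_{k,l}$ and minimizes $\widetilde\omega_{\mathrm{mon}}$ among those in $\mathcal{S}_1$, and concludes membership in $\mathcal{S}_2$, uniqueness being supplied by Proposition \ref{prop:reduction to a hypergraph}(1). You fill in the ``it is clear'' parts with the convexity argument for $\omega_{\mathrm{av}}$ and the exchange argument (larger steps pushed to later $u$) for $\omega_{\mathrm{mon}}$, both of which are sound. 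One small imprecision worth noting: Cartan elements $e_{i,i}$ \emph{can} appear in hyperedges of type $\hat e_{k,l}$ consistently with the $\epsilon$-character sum (they contribute $0$); what rules them out in the regime $l-k>d$ is that inserting a length-$0$ vertex forces the remaining $d-1$ vertices to carry total length at least $l-k$, which by strict convexity of $t\mapsto(d+1)^t$ yields a strictly larger $\widetilde\omega_{\mathrm{av}}$ than the balanced multiset $\{p^{d-q},(p+1)^q\}$; the same convexity argument excludes negative roots (which increase total length by twice the backtracking). With that correction, your argument matches the paper's.
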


\begin{proof}
We first show that for $(i_{1},j_{1}),\ldots,(i_{d},j_{d})$ as in
the lemma, we have $(\{e_{i_{1},j_{1}},\ldots,e_{i_{d},j_{d}}\},\hat{e}_{k,l})\in\mathcal{S}$.
Indeed, since $i_{1}=k,j_{d}=l$, $i_{u+1}=j_{u}$, and $\{i_{u}\}_{u=1}^{d}\cup\{j_{d}\}$
are mutually different, it follows that $e_{i_{\sigma(1)}j_{\sigma(1)}}\cdot\ldots\cdot e_{i_{\sigma(d)}j_{\sigma(d)}}\neq0$
if and only if $\sigma\in S_{d}$ is the trivial permutation. Using
an argument similar to that in Proposition \ref{prop:reduction to a hypergraph},
we have $\tau_{\{(i_{1},j_{1}),\ldots,(i_{d},j_{d})\},(k,l)}\neq0$.\textcolor{red}{{}
}It is clear from the definitions of $\omega_{\mathrm{av}}$ and $\omega_{\mathrm{mon}}$,
that $\widetilde{\omega}_{\mathrm{av}}(\{e_{i_{1},j_{1}},\dots,e_{i_{d},j_{d}}\})$
is minimal over all hyperedges of type $\hat{e}_{k,l}$, and that
$\widetilde{\omega}_{\mathrm{\mathrm{mon}}}(\{e_{i_{1},j_{1}},\dots,e_{i_{d},j_{d}}\})$
is minimal over all hyperedges in $\mathcal{S}_{1}$ of type $\hat{e}_{k,l}$.
This implies that $(\{e_{i_{1},j_{1}},\dots,e_{i_{d},j_{d}}\},\hat{e}_{k,l})\in\mathcal{S}_{2}$
as required. 
\end{proof}

\subsubsection{Proof for the $d$-hypergraph\textup{ $\mathcal{G}=(\mathcal{V},\mathcal{E})$}}

We now reduce our hypergraph $\mathcal{G}=(\mathcal{V},\mathcal{E})$
to two simpler hypergraphs. We set $M=\{1,2\}$ and 
\[
\omega_{2}(e_{i,j})=\begin{cases}
(d,0) & \text{if }i<j\\
(0,d^{2}) & \text{if }i\geq j.
\end{cases}
\]
This coloring results in two sub-hypergraphs: 
\begin{enumerate}
\item $\mathcal{G}_{+}=(\mathcal{V}_{+},\mathcal{E}_{+})$, consisting of
vertices $\mathcal{V}_{+}=\{e_{i,j}\}_{i<j}$ corresponding to positive
roots, and whose edges are 
\[
\mathcal{E}_{+}=\{(\{e_{i_{1},j_{1}},\dots,e_{i_{d},j_{d}}\},\hat{e}_{k,l})\in\mathcal{E}:j_{u}>i_{u}\forall u\}=\{\hat{e}_{k,l}:l-k\geq d\}.
\]
\item $\mathcal{G}_{\mathrm{small}}=(\mathcal{V}_{\mathrm{small}},\mathcal{E}_{\mathrm{small}})$,
consisting of $\mathcal{E}_{\mathrm{small}}=\mathcal{E}\backslash\mathcal{E}_{+}$.
By Proposition \ref{prop:reduction to a hypergraph} it follows that
$\mathcal{V}_{\mathrm{small}}$ consists of vertices $e_{i,j}$ with
$\left|i-j\right|\leq1$. 
\end{enumerate}
We treat each of these hypergraphs separately.

\textbf{Proof for }$\mathcal{G}_{+}$ \textbf{:}

The idea is to apply a series of level separations (see Subsection
\ref{subsec:Discussion-on-the methods}), to reduce $\mathcal{G}_{+}$
to the case of a hyperedge. We set the following coloring function
$\omega_{\mathrm{LS},1}:\mathcal{V}_{+}\rightarrow\mathbb{N}^{\{0,1\}}$,
\[
\omega_{\mathrm{LS},1}(e_{i,j})=\begin{cases}
(0,2d) & \text{if }(j-i)\mathrm{\,mod\,}4=0,1\\
(2d+1,0) & \text{if }(j-i)\mathrm{\,mod\,}4=2,3.
\end{cases}
\]
Notice that an edge $\hat{e}_{k,l}\in\mathcal{E}_{+}$ (i.e.~an edge
where $l-k\geq d$) 
\begin{enumerate}
\item Has color $0$ if and only if $|k-l|\mathrm{\,mod\,}4d\in\left[0,\left\lceil \frac{3d}{2}\right\rceil -1\right]\cup\left[\left\lfloor \frac{7d}{2}\right\rfloor +1,4d-1\right]$. 
\item Has color $1$ if and only if $|k-l|\mathrm{\,mod\,}4d\in\left[\left\lceil \frac{3d}{2}\right\rceil ,\left\lfloor \frac{7d}{2}\right\rfloor \right]$. 
\end{enumerate}
As a result, $\omega_{\mathrm{LS},1}$ splits the hypergraph $\mathcal{G}_{+}$
into levels (of size $\sim2dn$): 
\begin{enumerate}
\item $\mathrm{gr}_{\omega_{\mathrm{LS},1},0}\mathcal{G}_{+}$ consisting
of a disjoint union of hypergraphs $\mathcal{G}_{+,0,m}=(\mathcal{V}_{+,0,m},\mathcal{E}_{+,0,m})$,
for $m\in\nats$ such that 
\begin{enumerate}
\item $\mathcal{E}_{+,0,0}$ consists of edges $\hat{e}_{k,l}$ where $l-k\in\left[0,\left\lceil \frac{3d}{2}\right\rceil -1\right]$, 
\item $\mathcal{E}_{+,0,m}$ consists of edges $\hat{e}_{k,l}$ where $l-k\in\left[4d(m-1)+\left\lfloor \frac{7d}{2}\right\rfloor +1,4dm+\left\lceil \frac{3d}{2}\right\rceil -1\right]$. 
\end{enumerate}
\item $\mathrm{gr}_{\omega_{\mathrm{LS},1},1}\mathcal{G}_{+}$ consisting
of a disjoint union of hypergraphs $\mathcal{G}_{+,1,m}=(\mathcal{V}_{+,1,m},\mathcal{E}_{+,1,m})$,
for $m\in\nats$, where $\mathcal{E}_{+,1,m}$ consists of edges $\hat{e}_{k,l}$
where $l-k\in\left[4dm+\left\lceil \frac{3d}{2}\right\rceil ,4dm+\left\lfloor \frac{7d}{2}\right\rfloor \right]$. 
\end{enumerate}
It is left now to deal with each of the hypergraphs $\mathcal{G}_{+,1,m}$
and $\mathcal{G}_{+,0,m}$. We prove our statement for $\mathcal{G}_{+,1,m}$,
the proof for $\mathcal{G}_{+,0,m}$ is similar. Notice that any edge
$\hat{e}_{k,l}$ consists of vertices $e_{i_{1},j_{1}},\ldots,e_{i_{d},j_{d}}\in\mathcal{V}_{+,1,m}$
such that $j_{u}-i_{u}$ are as in Lemma \ref{lemma:not so far away from average}.
In particular, $j_{u}-i_{u}\in[4m+1,4m+4]$, implying the following: 
\begin{lem}
\label{lem:A-sufficient-condition for disjointness}A sufficient condition
for two edges $\hat{e}_{k,l},\hat{e}_{k',l'}\in\mathcal{E}_{+,1,m}$
not to share a common vertex is $\left|k-k'\right|>(4m+4)d$. 
\end{lem}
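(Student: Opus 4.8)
The plan is to read off the assertion directly from the explicit combinatorial description of the hyperedges supplied by Lemma \ref{lemma:not so far away from average}. That lemma tells us that a hyperedge $\hat{e}_{k,l}\in\mathcal{E}_{+,1,m}$, with $l-k=pd+q$ and $0\le q\le d-1$, consists of the vertices $e_{i_1,j_1},\dots,e_{i_d,j_d}$ where the indices form an increasing chain: $i_1=k$, $i_{u+1}=j_u$, $j_d=l$, and each $j_u-i_u\in\{p,p+1\}$ with $p\ge 1$ (the last inequality because $l-k\ge d$ for edges of $\mathcal{G}_{+,1,m}$). The first step is therefore the observation that every vertex $e_{i,j}$ occurring in $\hat{e}_{k,l}$ satisfies $k\le i<j\le l$; in particular all of its indices lie in the integer interval $[k,l]$.

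The second step is to bound the length of that interval. Since $\hat{e}_{k,l}\in\mathcal{E}_{+,1,m}$ we have $l-k\le 4dm+\left\lfloor \tfrac{7d}{2}\right\rfloor$, and since $\left\lfloor \tfrac{7d}{2}\right\rfloor=3d+\left\lfloor \tfrac{d}{2}\right\rfloor\le 4d-1$, this gives $l-k<(4m+4)d$. The identical bound holds for any other edge $\hat{e}_{k',l'}\in\mathcal{E}_{+,1,m}$.

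Finally I would conclude by contraposition: if $\hat{e}_{k,l}$ and $\hat{e}_{k',l'}$ share a common vertex $e_{i,j}$, then by the first step $i\in[k,l]\cap[k',l']$, so these two intervals meet; assuming without loss of generality $k\le k'$, this forces $k'\le l$, whence $|k-k'|=k'-k\le l-k<(4m+4)d$. Thus $|k-k'|>(4m+4)d$ precludes a common vertex, which is exactly the claim. I do not anticipate any real obstacle here: the only points requiring care are correctly extracting from Lemma \ref{lemma:not so far away from average} that the index set of a hyperedge lies in $[k,l]$, and the elementary inequality $\left\lfloor \tfrac{7d}{2}\right\rfloor<4d$; everything else is bookkeeping on intervals of integers.
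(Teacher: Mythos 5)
Your proof is correct and matches the paper's (implicit) argument: the paper treats this lemma as immediate from the observation that each $j_u - i_u \in [4m+1,4m+4]$, which gives the same window bound on the indices that you extract by bounding $l - k < (4m+4)d$ directly from the definition of $\mathcal{E}_{+,1,m}$; you then finish by the same interval-disjointness contraposition.
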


\begin{example}
\label{exa:close and far interactions}Take $n=1000$, $d=10$ and
consider $\mathcal{G}_{+,1,20}$. Then $\mathcal{E}_{+,1,20}$ consists
of the edges $\hat{e}_{k,l}$ such that $l-k\in\left[815,835\right]$.
Note that $\hat{e}_{1,821}$ shares common vertices with some of its
neighbors such as $\hat{e}_{1,822},\ldots,\hat{e}_{1,830}$ and $\hat{e}_{2,821},\hat{e}_{3,821},\dots$.
Apart from these close interactions, note that $\hat{e}_{1,821}$
and $\hat{e}_{83,903}$, which are far from each other, share the
vertex $e_{83,165}$. Both types of interactions between edges will
be taken into account. 
\end{example}

We would therefore like to use level separation to minimize such interactions
between edges. We start with taking care of far interactions. For
any $i$ let $\bar{i}=\left\lfloor \frac{i}{4m}\right\rfloor \mathrm{\,mod\,}2d$,
and set $\omega_{\mathrm{LS},2}:\mathcal{V}_{+,1,m}\rightarrow\mathbb{N}^{\{0,\ldots,2d-1\}}$
as follows: 
\[
\omega_{\mathrm{LS},2}(e_{i,j})(s):=n+(n-j)\cdot\delta_{s\bar{j}}-(n-i)\cdot\delta_{s\bar{i}}.
\]
Note that 
\[
\widetilde{\omega}_{\mathrm{LS},2}(\hat{e}_{k,l})(s)=dn+(n-l)\cdot\delta_{s\bar{l}}-(n-k)\cdot\delta_{s\bar{k}},
\]
and in particular, the color of the edge $\hat{e}_{k,l}$ is always
$\bar{k}$. As a consequence, for any $t\in\{0,\ldots,2d-1\}$ the
hypergraph $\mathrm{gr}_{\omega_{\mathrm{LS},2},t}(\mathcal{G}_{+,1,m})$
consists of edges $\hat{e}_{k,l}$ with $\bar{k}=t$. By Lemma \ref{lem:A-sufficient-condition for disjointness},
each $\mathrm{gr}_{\omega_{\mathrm{LS},2},t}(\mathcal{G}_{+,1,m})$
is a disjoint union of hypergraphs $\{\mathcal{G}_{+,1,m}^{t,e}\}_{e\in\nats}$
with edges $\hat{e}_{k,l}$ where $k\in\left[4m(t+2ed),4m(t+2ed)+4m-1\right]$.
Restricting to each of the smaller hypergraphs $\{G_{+,1,m}^{t,e}\}_{e\in\nats}$,
we obtain the following: 
\begin{lem}
\label{lem:Another sufficient condition for disjointness }A sufficient
condition for two edges $\hat{e}_{k,l},\hat{e}_{k',l'}\in\mathcal{E}_{+,1,m}^{t,e}$
not to share a common vertex is that either $\left|k-k'\right|>d$
or $\left|l-l'\right|>d$. 
\end{lem}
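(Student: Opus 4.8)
The statement to prove is Lemma \ref{lem:Another sufficient condition for disjointness }: within a fixed hypergraph $\mathcal{G}_{+,1,m}^{t,e}$, two hyperedges $\hat{e}_{k,l},\hat{e}_{k',l'}$ fail to share a common vertex whenever $|k-k'|>d$ or $|l-l'|>d$. The plan is to use the explicit description of the hyperedge $\hat{e}_{k,l}$ given by Lemma \ref{lemma:not so far away from average}, which lists the vertices $e_{i_u,j_u}$ participating in $\hat{e}_{k,l}$ as an arithmetic-progression-like chain starting at index $k$ and ending at index $l$, with all intermediate ``breakpoints'' $i_1=k<j_1=i_2<j_2=i_3<\cdots<j_d=l$ spaced by either $p$ or $p+1$ where $l-k=pd+q$.

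First I would record, from Lemma \ref{lemma:not so far away from average} (and from the fact that on $\mathcal{E}_{+,1,m}^{t,e}$ the spread $l-k$ lies in the interval $[4dm+\lceil 3d/2\rceil, 4dm+\lfloor 7d/2\rfloor]$, so that the step size $p=\lfloor (l-k)/d\rfloor$ is the same constant $\sim 4m$ for all relevant edges up to a bounded ambiguity), the precise set of pairs $\{(i_u,j_u)\}_{u=1}^d$ occurring in $\hat e_{k,l}$. The key structural observation is that every vertex of $\hat e_{k,l}$ is of the form $e_{a,b}$ with $a$ lying in the closed interval $[k, l-p]$ (roughly) and, more importantly, $a$ congruent to $k$ in a controlled way, with consecutive first-indices $k = i_1 < i_2 < \cdots < i_d$ forming a ``ladder'' all of whose rungs are $p$ or $p+1$ apart; symmetrically the second indices $j_1<\cdots<j_d=l$ form a ladder ending at $l$.

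The main step is then a pigeonhole/interval argument: suppose $e_{a,b}$ is a common vertex of $\hat e_{k,l}$ and $\hat e_{k',l'}$. Then $a = i_u$ for some $u$ in the first edge's ladder and $a = i'_{u'}$ for some $u'$ in the second's; since in each ladder the first index ranges over an interval of total length $< (p+1)d = l-k+O(d) $, and since the successive gaps are $p$ or $p+1$, the value $a$ determines $k$ up to an additive error strictly less than $d$ — concretely, $a - k$ is a sum of $u-1$ gaps each in $\{p,p+1\}$, so $a-k \in [(u-1)p,(u-1)(p+1)]$, an interval of length $u-1<d$; combining the two edges gives $|k-k'| < d$. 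The symmetric argument using $b$ and the second-index ladders gives $|l-l'| < d$. Hence if a common vertex exists then both $|k-k'|\le d$ and $|l-l'|\le d$ (I would check whether the inequality is strict or not and state $>d$ to be safe, matching the lemma), which is the contrapositive of the claim.

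The main obstacle I anticipate is bookkeeping the ``odd'' case in Lemma \ref{lemma:not so far away from average} where $d-(l-k)$ — or rather the remainder $q$ in $l-k=pd+q$ — forces one rung of the ladder to be a Cartan element $e_{i,i}$ or to have a slightly different spacing, so the vertices are not a perfectly uniform arithmetic progression; I would handle this by noting that the spacing is always in $\{p,p+1\}$ (with the single possibly-degenerate rung absorbed into this bound), so the interval-length estimate $a-k\in[(u-1)p,(u-1)(p+1)]$ still holds with $u-1\le d-1$. A secondary point to be careful about is that $p$ itself may differ by $1$ between the two edges $\hat e_{k,l}$ and $\hat e_{k',l'}$ (since $l-k$ and $l'-k'$ range over an interval of length $\sim 2d$, not a single value), but this only perturbs the estimates by a further $O(1)\cdot u = O(d)$ term, which is why the lemma is stated with a clean threshold $>d$ rather than an exact constant — I would make the constants explicit enough to confirm that strict inequality $|k-k'|>d$ (resp. $|l-l'|>d$) indeed suffices, adjusting the ``$d$'' to the correct small multiple if the arithmetic demands it, though the downstream use in Theorem \ref{thm:main theorem on d-ph for SLn} only needs disjointness after $O(d)$ further separations so a slightly larger constant is harmless.
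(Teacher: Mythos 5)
The core of your argument — that $a$ ``determines $k$ up to an additive error strictly less than $d$'' — does not hold as written, and the gap is precisely the one you flag only in passing at the end. The interval $[(u-1)p,(u-1)(p+1)]$ in which $a-k$ must lie depends on the \emph{unknown} index $u$ marking which rung of the ladder $e_{a,b}$ occupies inside $\hat{e}_{k,l}$. If that same pair $(a,b)$ sits at a different position $u'\neq u$ in the ladder of $\hat{e}_{k',l'}$, then $a-k$ and $a-k'$ lie in intervals centered roughly $|u-u'|\,p$ apart, and ``combining'' them gives $|k-k'|\approx |u-u'|\,p$, which is far larger than $d$ since $p\geq 4m+1$ can be arbitrary. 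So your proposal needs to first pin down $u$ (establish $u=u'$, or at least $|u-u'|\leq 1$). The mechanism for doing so is the constraint, never invoked in your write-up, that both $k$ and $k'$ lie in a common interval of length $4m<p$ because the two edges are in the same $\mathcal{G}_{+,1,m}^{t,e}$; this is what forces the two prefix sums to agree to within less than one full gap and is the essential input that makes any version of this lemma true.

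Even with that input, the threshold exactly $d$ is delicate. When the shared vertex has the ``long'' gap $b-a=p+1=p'+1$ (so $p=p'$), the relation $k-k'=(u'-u)(p+1)+(q'-q)$ permits $u'=u+1$ with $|k-k'|\in(p+2-d,\ 4m-1]$ for $d\geq 4$, which can exceed $d$. Concretely, with $d=10$, $m=5$, the edges $\hat{e}_{19,248}$ and $\hat{e}_{0,225}$ both lie in $\mathcal{G}_{+,1,5}^{0,0}$, share the five vertices $e_{110,133},\dots,e_{202,225}$, yet have $|k-k'|=19>d$ and $|l-l'|=23>d$. So the clean threshold $d$ is not exactly right, and your hedge — ``adjusting $d$ to the correct small multiple if the arithmetic demands it'' — is exactly on point but needs to actually be carried out, with the $u\neq u'$ case analyzed and the constraint $|k-k'|\leq 4m-1$ deployed. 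As written, the proposal proves neither the stated claim nor a corrected version of it; in particular, the sentence ``combining the two edges gives $|k-k'|<d$'' is the step where the argument breaks.
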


Following Lemma \ref{lem:Another sufficient condition for disjointness },
using two more level separations $\omega_{\mathrm{LS},3}$ and $\omega_{\mathrm{LS},4}$
(with $d$ colors each), which are defined in a way similar to $\omega_{\mathrm{LS},2}$,
only with $\bar{i}:=i\mathrm{\,mod\,}d$ instead of $\bar{i}:=\left\lfloor \frac{i}{4m}\right\rfloor \mathrm{\,mod\,}2d$,
we get that 
\begin{enumerate}
\item $\mathrm{gr}_{\omega_{\mathrm{LS},3},t_{1}}\mathcal{G}_{+,1,m}^{t,e}$
is a disjoint union of hypergraphs, each supported on the hyperedges
$\hat{e}_{k_{0},l_{0}},\ldots,\hat{e}_{k_{0},l_{0}+2d}$ with $l_{0}=k_{0}+4dm+\left\lceil \frac{3d}{2}\right\rceil $,
for a fixed $k_{0}$. 
\item $\mathrm{gr}_{\omega_{\mathrm{LS},4},t_{2}}\mathrm{gr}_{\omega_{\mathrm{LS},3},t_{1}}\mathcal{G}_{+,1,m}^{t,e}$
is a disjoint union of hypergraphs, each consisting of a single hyperedge
$\hat{e}_{k_{0},l_{0}}$. 
\end{enumerate}
Write $(\{e_{i_{1},j_{1}},\ldots,e_{i_{d},j_{d}}\},\hat{e}_{k_{0},l_{0}})$
for any hyperedge of $\mathrm{gr}_{\omega_{\mathrm{LS},4},t_{2}}\mathrm{gr}_{\omega_{\mathrm{LS},3},t_{1}}\mathcal{G}_{+,1,u}^{t,e}$.
Note that by Lemma \ref{lem:combinatorial word map}, since each hyperedge
of $\mathrm{gr}_{\omega_{\mathrm{LS},4},t_{2}}\mathrm{gr}_{\omega_{\mathrm{LS},3},t_{1}}\mathcal{G}_{+,1,u}^{t,e}$
is supported on distinct vertices, it follows that $\tau_{\{(i_{1},j_{1}),\ldots,(i_{d},j_{d})\},(k_{0},l_{0})}$
is a sum of monomials of the form $x_{1}x_{2}\dots x_{d}$. By Corollary
\ref{cor:small polyhypergraphs are (FRS)}, since $d_{\mathrm{mon}}(x_{1}\cdot...\cdot x_{d})=1$,
it follows that $\mathrm{gr}_{\omega_{\mathrm{LS},4},t_{2}}\mathrm{gr}_{\omega_{\mathrm{LS},3},t_{1}}\mathcal{G}_{+,1,u}^{t,e}$
is flat if $\mathrm{dim}V\geq r$ and is (FRS) if $\mathrm{dim}V\geq2r$.

\textbf{Summary for }$\mathcal{G}_{+}$ and $\mathcal{G}_{-}$:

We have used the level separation method four times, with coloring
functions $\omega_{\mathrm{LS},1},\omega_{\mathrm{LS},2},\omega_{\mathrm{LS},3}$
and $\omega_{\mathrm{LS},4}$, with $2,2d,d$ and $d$ colors respectively,
to reduce to a case of a hyperedge. For a hyperedge we need $\mathrm{dim}V\geq r$
to guarantee flatness, and $\mathrm{dim}V\geq2r$ for the (FRS) property.
Thus the hypergraph $\mathcal{G}_{+}$ (and similarly $\mathcal{G}_{-}$)
is flat after $4d^{3}=2\cdot2d\cdot d\cdot d$ convolutions, and is
(FRS) after $8d^{3}$ convolutions.

\textbf{Proof for }$\mathcal{G}_{\mathrm{small}}$:

Set $\bar{i}=i\mathrm{\,mod\,}2d$ and $\omega_{\mathrm{LS},5}:\mathcal{V}_{\mathrm{small}}\rightarrow\mathbb{N}^{\{0,\ldots,2d-1\}}$
by
\[
\omega_{\mathrm{LS},5}(e_{i,j})(s):=n+(n-j)\cdot\delta_{s\bar{j}}-(n-i)\cdot\delta_{s\bar{i}},
\]
and note that $\hat{e}_{k.l}$ is of color $\bar{k}$ (recall we still
have $l>k$).

By Proposition \ref{prop:reduction to a hypergraph}, since each of
the vertices $e_{i_{u}.j_{u}}$ on which the hyperedge $\hat{e}_{k.l}$
is supported is either a simple root or an element of the Cartan,
it follows that for each $s\in\{0,\ldots,2d-1\}$, $\mathrm{gr}_{\omega_{\mathrm{LS},5},s}(\mathcal{G}_{\mathrm{small}})$
is a disjoint union of hypergraphs, each with at most $d$ edges.
Using another level separation $\omega_{\mathrm{LS},6}(e_{i,j})(s):=n-j\cdot\delta_{s\bar{j}}+i\cdot\delta_{s\bar{i}}$
of $d$ colors (with $\bar{i}=i\mathrm{\,mod\,}d$), we reduce to
the case of a single hyperedge.

\textbf{Summary for }$\mathcal{G}_{\mathrm{small}}$: Since we used
the coloring method twice, with $2d$ and $d$ colors, by Corollary
\ref{cor:small polyhypergraphs are (FRS)}, we need $\mathrm{dim}V\geq2d^{2}r$
for $\mathcal{G}_{\mathrm{small}}$ to be flat, and $\mathrm{dim}V\geq2d^{3}r$
for $\mathcal{G}_{\mathrm{small}}$ to be (FRS).

We have therefore finished the cases of $\Gamma_{+}$ and $\Gamma_{-}$,
so it is left to deal with $\Gamma_{0}$.

\subsubsection{\label{subsec:The-case-of gamma zero}The case of $\Gamma_{0}$}

Recall that $\Gamma_{0}:=\mathrm{gr}_{\nu_{0},2}(\Gamma_{\mathfrak{\mathfrak{sl}_{n}},w})=(\mathcal{I},\mathcal{J}_{0},\mathcal{S}_{0})$.
In order to show that $(\Gamma_{0},V,\{\tau_{\gamma}\}_{\gamma\in\mathcal{S}_{0}})$
is (FRS) for $V$ sufficiently large, we can use the coloring and
elimination methods, to reduce to a $d$-PH with few types of hyperedges,
and then use Corollary \ref{cor:small polyhypergraphs are (FRS)}.
Instead, we prove that $\Phi_{\Gamma_{0},V,\{\tau_{\gamma}\}}$ is
jet-flat when $V$ is large enough, using similar coloring and elimination
techniques, only on the $m$-th jet map $J_{m}(\Phi_{\Gamma_{0},V,\{\tau_{\gamma}\}})$.
This approach will put us in a better position to treat the case of
matrix word maps in Section \ref{subsec:Matrix-word-maps}.

Similarly to as done in Subsection \ref{subsec:Degenerations-of-jets of word maps},
using an averaging weight, a monomialization weight, and the level
separation method, we reduce to proving the flatness property of a
map $\varphi_{m}=(\varphi_{m}^{(k_{0})},\ldots,\varphi_{m}^{(k_{0}+2d)})$
whose target is a vector space of dimension $\mathrm{dim}\mathfrak{t}\cdot(2d+1)=(2d+1)(n-1)$,
and each $\varphi_{m}^{(u)}$ is a certain pure summand of $\varphi_{w}^{(u)}$
(composed with $\{\hat{e}_{l,l}^{(u)}\}_{l\in[n-1]}$). The variables
of this map are $\{e_{i,j}^{(u')}\}$ for $0\leq u'\leq k_{0}+2d$
and $(i,j)\in[n]^{2}\backslash\{(n,n)\}$. We now set the following
weight 
\[
\nu_{1}(e_{i,j}^{(u')})=\begin{cases}
d & \text{if }\left|i-j\right|>1\text{ and }(i>d\text{ or }j>d)\\
0 & \text{if }\left|i-j\right|\leq1\text{ or }i,j\leq d.
\end{cases}
\]

\begin{lem}
\label{lem:still generating}The map $\mathrm{gr}_{\nu_{1}}(\varphi_{m})$
is a generating polynomial map on variables $\{e_{i,j}^{(u')}\}$
with either $i,j\leq d$ or $\left|i-j\right|\leq1$. 
\end{lem}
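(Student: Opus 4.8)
\textbf{Proof strategy for Lemma \ref{lem:still generating}.}

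The plan is to reinterpret the map $\mathrm{gr}_{\nu_{1}}(\varphi_{m})$ using the elimination method (Corollary \ref{cor:(cf.---Elimination)}) and to show that the remaining monomials, together with the fact that the weight $\nu_{1}$ only kills variables $e_{i,j}^{(u')}$ with $\left|i-j\right|>1$ and $\max(i,j)>d$, still leave enough monomials to span the target $\mathfrak{t}^{2d+1}$. Concretely, first I would recall that each coordinate $\varphi_m^{(u)}$ is a nontrivial pure summand of $\varphi_w^{(u)}$ composed with the projections $\{\hat e_{l,l}^{(u)}\}_{l\in[n-1]}$, so after applying $\sigma_{\nu_1}$ the surviving monomials of $\varphi_m^{(u)}$ are exactly those supported on multisets $\{e_{i_1,j_1}^{(\cdot)},\ldots,e_{i_d,j_d}^{(\cdot)}\}$ of minimal $\nu_1$-weight among all hyperedges of the same type $\hat e_{l,l}$. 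The key observation is the same one already used in Proposition \ref{prop:reduction to a hypergraph} and Lemma \ref{lemma:not so far away from average}: for a type $\hat e_{l,l}$ contributing to the Cartan, one can realize it by a chain of root vectors $e_{a,a+1}$ and $e_{a+1,a}$ (i.e.\ simple roots and their opposites) together with Cartan elements, and such a chain uses only vertices with $\left|i-j\right|\leq 1$, hence has $\nu_1$-weight zero. So $\mathrm{gr}_{\nu_1}(\varphi_m)$ retains these minimal-weight contributions and its variables are confined to the stated set $\{e_{i,j}^{(u')}: i,j\leq d \text{ or } \left|i-j\right|\leq1\}$.

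The heart of the argument is the generation claim. Here I would argue as in Proposition \ref{prop:Lie algebra word maps are generating}: since $\mathrm{gr}_{\nu_1}(\varphi_m)$ is a degeneration of a generating map (the original $J_m(\varphi_w)$ is generating because $\varphi_w$ is, and each degeneration step via coloring/elimination was chosen so generating-ness is preserved — this is exactly the caveat flagged in Remark \ref{rem:warning-elimination}), it suffices to exhibit, for each target coordinate $\hat e_{l,l}^{(u)}$, at least one surviving monomial whose variables lie in the allowed set and which does not cancel. For the block $l\leq d$ this is immediate since all of $\mathfrak{t}\cap\mathrm{span}\{e_{i,i}: i\leq d\}$ is already reachable through $\mathfrak{sl}_d\subseteq\mathfrak{sl}_n$ by the Amitsur--Levitzki-type nonvanishing (Corollary \ref{cor: commutator relations on simple Lie algebras}(1)) applied in the $\mathfrak{sl}_d$-corner, and formal differentiation preserves nontriviality by Corollary \ref{cor:all parial derivatives survive in the formal derivative of a word}. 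For a type $\hat e_{l,l}$ with $l>d$, one uses instead a chain passing through the simple roots $e_{l',l'+1}, e_{l'+1,l'}$ adjacent to index $l$; these have $\left|i-j\right|=1$ so survive $\sigma_{\nu_1}$, and the corresponding $\tau$-form is a nonzero sum of distinct monomials by the usual distinctness-of-structure-coefficients argument (the pairs $(i_u,j_u)$ are mutually distinct so the monomials $\prod a_{i_u,j_u,s_u}$ do not cancel), exactly as in the proof of Proposition \ref{prop:reduction to a hypergraph}.

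Finally, to conclude generation in the sense of Definition \ref{def:generating morphism}, I would note that the image spans the target as a vector space — which is what the monomial-exhibiting argument above gives — and that for a map into an affine (vector) space, $\mathrm{Ad}$-type invariance is not needed; spanning the target plus being a polynomial map whose image is not contained in any proper affine subspace is exactly generating-ness for a vector space target. The main obstacle I anticipate is the bookkeeping needed to verify that after the four or five nested degenerations ($\omega_{\mathrm{av}},\omega_{\mathrm{mon}},\omega_{\mathrm{LS}},\ldots,\nu_1$) the surviving hyperedge of each Cartan type $\hat e_{l,l}$ really can be taken to use only vertices with $\left|i-j\right|\leq1$ or $i,j\leq d$; this requires re-running the minimal-weight analysis of Lemma \ref{lemma:not so far away from average} in the Cartan case while tracking that the ``short'' root vectors are never penalized by $\nu_1$, and checking the boundary indices near $l=d$ carefully so that no needed vertex $e_{i,j}$ with $\left|i-j\right|>1$ and $\max(i,j)\leq d$ is accidentally excluded (it is not, since such vertices have $\nu_1$-weight $0$ by definition). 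Once that is in place, the lemma follows.
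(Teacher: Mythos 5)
Your overall scaffolding is right — reduce to the single-derivative case, anchor the low-index Cartan coordinates via the $\mathfrak{sl}_{d}$-corner, then reach the high-index ones by explicit chains of length-one root vectors — and you cite the right auxiliary results (Corollaries \ref{cor:all parial derivatives survive in the formal derivative of a word}, \ref{cor: commutator relations on simple Lie algebras}, Proposition \ref{prop:Lie algebra word maps are generating}). But there is a genuine gap in the final step, and it is precisely the one the paper flags as the special difficulty of $\Gamma_{0}$.

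You conclude generation by saying it suffices, for each target coordinate $\hat{e}_{l,l}^{(u)}$, to exhibit one surviving monomial that does not cancel, and then you assert that this shows the image spans the target. That inference is false here. Having a nonzero monomial in each coordinate function only shows that each $\hat{e}_{l,l}^{(u)}\circ\mathrm{gr}_{\nu_{1}}(\varphi_{m})$ is nonzero; it does not rule out a linear dependence $\sum_{l}c_{l}\,\hat{e}_{l,l}^{(u)}\circ\mathrm{gr}_{\nu_{1}}(\varphi_{m})\equiv c_{0}$, which is exactly what "generating" into a vector space forbids. And the reason such a dependence is a real threat in $\Gamma_{0}$ — but not in $\Gamma_{\pm}$ — is Remark \ref{rem:edges with more than one type}: a single Cartan hyperedge is typically supported on \emph{several} types $\hat{e}_{l,l}$ at once, with proportional coefficients. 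In particular the chain hyperedges $\widetilde{I_{0}}(k)$ the proof uses are each supported on the whole window of types $\{\hat{e}_{l,l}\}_{l\in[k-\frac{d}{2},\,k-1]}$, so "one nonzero monomial per type" is automatic but completely uninformative about independence across $l$.

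The paper closes this gap with an induction that your proposal is missing. It first reduces to $u=0$ by restricting to the diagonal subspace $W$ (using Corollary \ref{cor:all parial derivatives survive in the formal derivative of a word}, which you cite but do not deploy as a reduction). Then it uses Proposition \ref{prop:Lie algebra word maps are generating} on $\mathfrak{h}\simeq\mathfrak{sl}_{d}$ to get generation on the types $\{\hat{e}_{l,l}\}_{l\in[d-1]}$, and finally feeds the hyperedges $\widetilde{I_{0}}(k)$ in one at a time, $k=d+1,d+2,\dots$, using that $\widetilde{I_{0}}(k)$ introduces new monomials (distinct from all previously used ones) and touches the not-yet-generated type $\hat{e}_{k-1,k-1}$ together with previously-handled types. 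The inductive hypothesis is what isolates the new coordinate. Without some version of this step — or another argument that defeats the linear-dependence obstruction among the coordinates touched by a common hyperedge — the claim that the map is generating does not follow from the ingredients you have assembled.
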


\begin{proof}
Note that $\mathrm{gr}_{\nu_{1}}(\varphi_{m})=\left(\mathrm{gr}_{\nu_{1}}\varphi_{m}^{(k_{0})},\ldots,\mathrm{gr}_{\nu_{1}}\varphi_{m}^{(k_{0}+2d)}\right)$.
Since $\varphi_{m}^{(u_{1})}$ and $\varphi_{m}^{(u_{2})}$ involve
distinct monomials if $u_{1}\neq u_{2}$, the maps $\mathrm{gr}_{\nu_{1}}\varphi_{m}^{(u_{1})}$
and $\mathrm{gr}_{\nu_{1}}\varphi_{m}^{(u_{2})}$ involve distinct
monomials as well. Therefore, $\mathrm{gr}_{\nu_{1}}(\varphi_{m})$
is generating if and only if $\mathrm{gr}_{\nu_{1}}\varphi_{m}^{(u)}$
is generating for every $k_{0}\leq u\leq k_{0}+2d$. Let $W$ be the
subspace of $J_{m}(\mathfrak{sl}_{n}^{r})$ defined by the relations
$X_{s}^{(u')}=X_{s}$ for every $u'$ and $s\in[r]$. By Corollary
\ref{cor:all parial derivatives survive in the formal derivative of a word},
$\varphi_{m}^{(u)}|_{W}=\lambda\cdot\varphi_{w}^{(0)}$ for some $\lambda>0$.
Thus we may assume that $u=0$ and it is enough to show that $\Phi_{(\mathrm{gr}_{\nu_{1}}\Gamma_{0},V,\{\tau_{\gamma}\}_{\gamma\in\mathcal{S}_{0}})}$
is generating. 

Consider the Lie subalgebra $\mathfrak{h}\subseteq\mathfrak{sl}_{n}$
generated by the simple roots $e_{1,2},\dots,e_{d-1,d}$. Since $\mathfrak{h}\simeq\mathfrak{sl}_{d}$,
Corollary \ref{cor: commutator relations on simple Lie algebras}
implies that $\mathrm{gr}_{\nu_{1}}\varphi_{w}|_{\mathfrak{h}}=\varphi_{w}|_{\mathfrak{h}}\neq0$,
and Proposition \ref{prop:Lie algebra word maps are generating} implies
that $\Phi_{(\mathrm{gr}_{\nu_{1}}\Gamma_{0},V,\{\tau_{\gamma}\}_{\gamma\in\mathcal{S}_{0}})}$
is generating on the types $\{\hat{e}_{l,l}\}_{l\in[d-1]}$ using
only the variables of $\mathfrak{h}$. 

We use a similar construction as in the proof of Proposition \ref{prop:reduction to a hypergraph}(2),
only now for hyperedges of type $\hat{e}_{k,k}$. For simplicity assume
that $d$ is even (the case of an odd $d$ is similar). For any $d+1\leq k\leq n$,
consider the $d$-tuple $I_{0}(k)=((i_{0,1},j_{0,1}),\ldots,(i_{0,d},j_{0,d}))\in\mathcal{I}^{d}$
defined by 
\[
\begin{cases}
(i_{0,c},j_{0,c}):=(k-c+1,k-c) & \text{if }1\leq c\leq\frac{d}{2}\\
(i_{0,c},j_{0,c}):=(k+c-d-1,k+c-d) & \text{if }\frac{d}{2}<c\leq d,
\end{cases}
\]
and let $\widetilde{I_{0}}(k)=\mathrm{MS}(I_{0}(k))$. Note that here,
unlike in the case of $\Gamma_{+}$ and $\Gamma_{-}$, the hyperedge
$\widetilde{I_{0}}(k)$ is supported on several types, i.e.~on the
types $\{\hat{e}_{l,l}\}_{l\in[k-\frac{d}{2},k-1]}\in\mathcal{J}_{0}$.
By a similar computation as in Proposition \ref{prop:reduction to a hypergraph},
and by induction on $k$, it can be shown that the forms $\{\tau_{\gamma}\}$
supported on $\mathfrak{h}$ and on the hyperedges $\{\widetilde{I_{0}}(l)\}_{l\leq k}$
are generating on the types $\{\hat{e}_{l,l}\}_{l\in[k-1]}$, so we
are done.
\end{proof}
Back to our morphism $\mathrm{gr}_{\nu_{1}}(\varphi_{m})$. Our choice
of $\nu_{1}$ guarantees that $\hat{e}_{k,k}^{(u)}\circ\mathrm{gr}_{\nu_{1}}(\varphi_{m})$
is supported on variables $e_{i,j}^{(u')}$ with $\left|i-k\right|\leq2d$.
We set the following (monomialization) weight 
\[
\nu_{2}(e_{i,j}^{(u')})=\begin{cases}
(d+1)^{i} & \text{if }i>2d\\
0 & \text{if }i\leq2d.
\end{cases}
\]
Similarly to the proof of Lemma \ref{lem:still generating}, it can
be shown that $\mathrm{gr}_{\nu_{2}}(\mathrm{gr}_{\nu_{1}}(\varphi_{m}))$
is generating. We now use the level separation method. Set $\bar{i}=\left\lfloor \frac{i}{4d}\right\rfloor \mathrm{\,mod\,}2$,
and define a coloring function $\nu_{\mathrm{LS}}$ with two colors
$s\in\{0,1\}$ by 
\[
\nu_{\mathrm{LS}}(e_{i,j}^{(u)})(s)=(d^{2}+1)\cdot\delta_{s\bar{i}}+d^{2}\cdot\delta_{s(1-\bar{i})}.
\]
In view of Remark \ref{rem:The-coloring-method is convolution+elimination},
our choice of $\nu_{2}$ makes $\nu_{\mathrm{LS}}$ an admissible
coloring for the map $\mathrm{gr}_{\nu_{2}}(\mathrm{gr}_{\nu_{1}}(\varphi_{m}))$.
Then each of $\{\mathrm{gr}_{\nu_{\mathrm{LS}},s}(\mathrm{gr}_{\nu_{2}}(\mathrm{gr}_{\nu_{1}}(\varphi_{m})))\}_{s=1}^{2}$
is a collection of morphisms in distinct variables into affine spaces
of dimension at most $6d\cdot(2d+1)\leq15d^{2}$. By Corollary \ref{prop:(FRS) for generating morphisms},
the map $\mathrm{gr}_{\nu_{\mathrm{LS}},s}(\mathrm{gr}_{\nu_{2}}(\mathrm{gr}_{\nu_{1}}(\varphi_{m})))^{*15d^{3}}$
is normal (as $d_{\mathrm{mon}}\leq d-1$). By Corollaries \ref{cor:(cf.---Elimination)}
and \ref{cor: singularity properties through dim of jets}, and since
we have used four colors, the assignment $(\Gamma_{0},V,\{\tau_{\gamma}\}_{\gamma\in\mathcal{S}_{0}})$
is (FRS) if $\mathrm{dim}V\geq60d^{3}r=4\cdot15d^{3}r$. In particular,
following the same proof for $m=0$, we reduce to a disjoint set of
morphisms into a $6d$-dimensional vector space, so $6d^{2}$ convolutions
are enough to ensure each map is flat, and $(\Gamma_{0},V,\{\tau_{\gamma}\}_{\gamma\in\mathcal{S}_{0}})$
is flat if $\mathrm{dim}V\geq12d^{2}r$.

\subsubsection{Proof of Theorem \ref{thm:main theorem on d-ph for SLn}}

Assume $n>d\geq2$. We have seen that the hypergraphs $\mathcal{G}_{+}$
and $\mathcal{G}_{-}$ are flat after $4d^{3}$ convolutions, and
(FRS) after $8d^{3}$ convolutions. $\mathcal{G}_{\mathrm{small}}$
is flat after $2d^{2}$ convolutions and (FRS) after $2d^{3}$ convolutions.
Therefore, each of the $d$-PHs $\Gamma_{+}$ and $\Gamma_{-}$ are
flat after $5d^{3}$ convolution and (FRS) after $10d^{3}$ convolutions.
We have seen that $\Gamma_{0}$ is flat after $12d^{2}$ and (FRS)
after $60d^{3}$ convolutions. Theorem \ref{thm:main theorem on d-ph for SLn}
now follows by combining the results for $\Gamma_{+}$, $\Gamma_{-}$
and $\Gamma_{0}$ and using Corollary \ref{cor:Coloring for for dPH}
repeatedly.

\subsection{Matrix word maps\label{subsec:Matrix-word-maps} }

The proof for matrix word maps (Theorem \ref{thm: main theorem for matrix word map})
is similar to the $\mathfrak{sl}_{n}$ case. With the notations of
Section \ref{subsec:Proof-for-SLn}, let $e_{\mathrm{cent}}=H_{\mathrm{cent}}=\stackrel[i=1]{n}{\sum}\frac{E_{i,i}}{n}$
and $\hat{e}_{\mathrm{cent}}=\mathrm{Tr}(\cdot)$ be such that $\{\hat{e}_{i,j}\}\cup\{\hat{e}_{\mathrm{cent}}\}$
is the basis dual to $\{e_{i,j}\}\cup\{e_{\mathrm{cent}}\}$. Let
$w\in\mathcal{A}_{r}$ be a matrix word of degree $d$. We identify
$\varphi_{w}$ with a map $(\widetilde{\varphi}_{w},\mathrm{Tr}(\varphi_{w})):M_{n}^{r}\rightarrow\mathbb{A}^{n^{2}}$,
where $(\widetilde{\varphi}_{w})_{i,j}=\hat{e}_{i,j}\ensuremath{\circ\varphi_{w}}$
and $\mathrm{Tr}(\varphi_{w})=\hat{e}_{\mathrm{cent}}\circ\varphi_{w}$.
Let $\varphi_{w_{0}}$ be the highest degree homogeneous part of $\varphi_{w}$.
Since the image of $\varphi_{w_{0}}$ is conjugate invariant, it must
generate either $M_{n}$ or $\mathfrak{sl}_{n}$. In the first case,
using Lemma \ref{lem: reduction to largest degree} we reduce to a
generating $d$-homogeneous matrix word map $\varphi_{w_{0}}$. In
the second case, we reduce $(\widetilde{\varphi}_{w},\mathrm{Tr}(\varphi_{w}))$
to a map $(\widetilde{\varphi}_{w_{0}},\mathrm{Tr}(\varphi_{w_{1}}))$,
where $\varphi_{w_{1}}$ is the highest degree homogeneous part of
$\varphi_{w}$ whose trace is non-zero. Notice that the pair $(\widetilde{\varphi}_{w_{0}},\mathrm{Tr}(\varphi_{w_{1}}))$
is not necessarily a matrix word map, but it is generating. Write
the resulting morphism by $\psi_{w_{0},w_{1}}$. 
\begin{example}
Let $\varphi_{w}(X,Y)=X^{3}+[[[X,Y],Y],Y]$. Then $\varphi_{w_{0}}=[[[X,Y],Y],Y]$
and $\varphi_{w_{1}}=X^{3}$. 
\end{example}

\begin{defn}
\label{def:pure type for Matrix word}A $d$-homogeneous word $w\in\mathcal{A}_{r}$
is said to be of \textsl{pure type} $S_{0}\in[r]^{(d)}$ if it is
of the form 
\[
w=\sum\limits _{S\in\mathrm{MS}^{-1}(S_{0})}c_{S}X_{s_{1}}\cdot X_{s_{2}}\cdot\ldots\cdot X_{s_{d}}.
\]
\end{defn}

\begin{lem}
\label{lem:reduction to pure Matrix words}We may assume that the
words $w_{0}$ and $w_{1}$ appearing in $\psi_{w_{0},w_{1}}=(\widetilde{\varphi}_{w_{0}},\mathrm{Tr}(\varphi_{w_{1}}))$
are homogeneous matrix words of pure type. 
\end{lem}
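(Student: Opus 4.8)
The plan is to run, for the map $\psi_{w_0,w_1}=(\widetilde{\varphi}_{w_0},\mathrm{Tr}(\varphi_{w_1}))$, the same pure-type reduction used in the Lie-algebra setting, i.e.\ the second part of the proof of Proposition~\ref{prop:reduction to balanced homogeneous maps}. Apply the elimination method (Corollary~\ref{cor:(cf.---Elimination)}) with the weight $\omega(X_i)=(d+1)^i$ on $M_n^r$, assigning weight $(d+1)^i$ to all $n^2$ entries of the $i$-th matrix variable, where $d=\deg(w_0)\ge\deg(w_1)$. Write $w_0=\sum_{S'\in[r]^{(d)}}(w_0)_{S'}$ and $w_1$ as sums of their pure summands. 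Every monomial occurring in an entry of $\varphi_{(w_0)_{S'}}$, or in $\mathrm{Tr}(\varphi_{(w_1)_{S'}})$, is $\omega$-homogeneous of degree $\sum_u(d+1)^{s_u}$, which depends only on the multiset $S'$; since this assignment is injective on multisets (no carries in base $d+1$, the digits being multiplicities $\le d$), distinct pure summands occupy distinct $\omega$-degrees. Hence taking symbols coordinatewise isolates a single pure summand in each coordinate, and the thing to check is that it is the \emph{same} summand in every coordinate, so that $\sigma_\omega(\psi_{w_0,w_1})$ is again of the form $\psi_{w_0',w_1'}$ with $w_0',w_1'$ of pure type.

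To verify this, let $S_0$ be the multiset of minimal $\omega$-weight with $\widetilde{\varphi}_{(w_0)_{S_0}}\ne0$; it exists because $\varphi_{w_0}$ is generating, so $\mathrm{span}(\mathrm{im}\,\varphi_{w_0})$ linearly spans $\mathfrak{sl}_n$ or $M_n$ (as recalled just before the lemma), in particular $\widetilde{\varphi}_{w_0}\ne0$. The pure summands of smaller weight contribute nothing to $\widetilde{\varphi}_{w_0}$, and $\mathrm{span}(\mathrm{im}\,\varphi_{(w_0)_{S_0}})$ is again $\mathrm{GL}_n$-invariant and, having a nonzero $\mathfrak{sl}_n$-part, is not contained in the scalars, hence equals $\mathfrak{sl}_n$ or $M_n$; a subspace of full span lies in no coordinate hyperplane $\{\hat{e}_{k,l}=0\}$ of $M_n$, so $\hat{e}_{k,l}(\varphi_{(w_0)_{S_0}})\ne0$ for every basis covector $\hat{e}_{k,l}$ coming from $\mathfrak{sl}_n$. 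Thus $\sigma_\omega(\widetilde{\varphi}_{w_0})=\widetilde{\varphi}_{(w_0)_{S_0}}$, and $w_0':=(w_0)_{S_0}$ is a $d$-homogeneous word of pure type with $\varphi_{w_0'}$ still generating. Applying the same argument to the single coordinate $\mathrm{Tr}(\varphi_{w_1})$, which is nonzero by the choice of $w_1$, gives $\sigma_\omega(\mathrm{Tr}(\varphi_{w_1}))=\mathrm{Tr}(\varphi_{(w_1)_{S_1}})$ for the minimal-weight multiset $S_1$ with $\mathrm{Tr}(\varphi_{(w_1)_{S_1}})\ne0$; set $w_1':=(w_1)_{S_1}$. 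Then $\sigma_\omega(\psi_{w_0,w_1})=\psi_{w_0',w_1'}$ with $w_0',w_1'$ of pure type.

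The step I expect to be the main obstacle — in view of the warning in Remark~\ref{rem:warning-elimination} — is to show that $\psi_{w_0',w_1'}$ is again generating, for otherwise the degeneration is worthless. I would argue by cases on $\widetilde{\omega}(S_0)$ versus $\widetilde{\omega}(S_1)$. If these differ, a linear functional $\lambda=(\lambda',c)$ constant on $\mathrm{im}\,\psi_{w_0',w_1'}$ must vanish on it (both components are $\omega$-homogeneous of positive degree, hence vanish at $0$), and matching $\omega$-degrees forces $\lambda'(\widetilde{\varphi}_{w_0'})\equiv0$ and $c\,\mathrm{Tr}(\varphi_{w_1'})\equiv0$ separately; the first gives $\lambda'=0$ since $\mathrm{im}\,\widetilde{\varphi}_{w_0'}$ linearly spans $\mathbb{A}^{n^2-1}$, the second gives $c=0$. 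If $\widetilde{\omega}(S_0)=\widetilde{\omega}(S_1)$, then by injectivity of the weight together with $\deg w_1\le d$ one must have $\deg w_1=d$, so $w_1=w_0$ (uniqueness of the top homogeneous part) and $S_0=S_1=:S$; then $v:=(w_0)_S$ has nonzero $\mathfrak{sl}_n$-part and nonzero trace, so $\mathrm{span}(\mathrm{im}\,\varphi_v)$, being $\mathrm{GL}_n$-invariant and contained neither in the scalars nor in $\mathfrak{sl}_n$, equals $M_n$, and $\psi_{w_0',w_1'}$ is $\varphi_v$ composed with the linear isomorphism $M_n\simeq\mathbb{A}^{n^2-1}\oplus\mathbb{A}^1$, $A\mapsto(\widetilde{A},\mathrm{Tr}\,A)$, hence is generating.

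In either case $\psi_{w_0',w_1'}$ is generating. Finally, since $\sigma_\omega$ commutes with self-convolution when applied with the weight induced by $\omega$ on the product source, Corollary~\ref{cor:(cf.---Elimination)} transfers flatness and the (FRS) property back from $\psi_{w_0',w_1'}^{*t}$ to $\psi_{w_0,w_1}^{*t}$, which is exactly what "we may assume $w_0,w_1$ are of pure type" means. (When $\varphi_{w_0}$ already generates $M_n$, the first of the two cases preceding the lemma, there is no trace coordinate and one simply runs the Lie-algebra argument for the word map $\varphi_{w_0}$ itself; if $\mathrm{Tr}(\varphi_w)\equiv0$ the trace coordinate is likewise absent.)
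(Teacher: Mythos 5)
Your proof is correct and takes the same approach as the paper: the same weight $\omega(X_i)=(d+1)^i$ and the same elimination method. The paper dispatches this lemma in a single line, so the real content of your write-up is the two checks that the paper leaves implicit — that by $\mathrm{GL}_n$-invariance of $\mathrm{span}(\mathrm{im}\,\varphi_{(w_0)_{S_0}})$ the surviving pure summand is the \emph{same} in every $\mathfrak{sl}_n$-coordinate, and, in view of Remark~\ref{rem:warning-elimination}, that $\sigma_{\omega}(\psi_{w_0,w_1})$ is still generating — and you argue both correctly. One small observation: your Case~2 (where $\widetilde{\omega}(S_0)=\widetilde{\omega}(S_1)$) is vacuous in the intended setting, since it forces $S_0=S_1$, hence $\deg w_1 = d$ and $w_1=w_0$, hence $\mathrm{Tr}(\varphi_{w_0})\ne0$; but then $\varphi_{w_0}$ already generates $M_n$, which is the first case handled before the lemma and needs no trace coordinate, so that branch never actually occurs (it does no harm, though).
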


\begin{proof}
We use the elimination method similarly as in Proposition \ref{prop:reduction to balanced homogeneous maps}.
We set a monomialization weight $\omega_{\mathrm{mon}}(X_{i})=(d+1)^{i}$
and then $\mathrm{gr}_{\omega_{\mathrm{mon}}}(\psi_{w_{0},w_{1}})$
is of the required form. 
\end{proof}
\begin{proof}[Proof of Theorem \ref{thm: main theorem for matrix word map}]
If $n\leq d$, then Proposition \ref{prop:(FRS) for generating morphisms}
implies that for any $w\in\mathcal{A}_{r}$ with $\varphi_{w}:M_{n}^{r}\rightarrow M_{n}$
generating, the map $\varphi_{w}^{*d^{3}+1}$ is (FRS). 

Now assume that $n>d$. We use an idea similar to the one used in
the reduction to the $d$-PHs $\Gamma_{+},\Gamma_{-}$ and $\Gamma_{0}$
in the case of $\mathrm{\mathfrak{sl}_{n}}$. We use the coloring
function $\nu_{0}(e_{i,j})=(2d(i-j),1,2d(j-i))$ for the polynomial
map $\psi_{w_{0},w_{1}}$ (see discussion after Theorem \ref{thm:main theorem on d-ph for SLn}).
We therefore reduce to a disjoint union of three morphisms 
\begin{gather*}
\Psi_{+}:=(\hat{e}_{k,l})_{k<l}\circ(\widetilde{\varphi}_{w_{0}}):M_{n}^{r}\rightarrow\mathbb{A}^{\frac{n(n-1)}{2}},\\
\Psi_{-}:=(\hat{e}_{k,l})_{k>l}\circ(\widetilde{\varphi}_{w_{0}}):M_{n}^{r}\rightarrow\mathbb{A}^{\frac{n(n-1)}{2}},\\
\Psi_{0}:=((\hat{e}_{1,1},\ldots,\hat{e}_{n-1,n-1})\circ\widetilde{\varphi}_{w_{0}},\mathrm{Tr}(\varphi_{w_{1}})):M_{n}^{r}\rightarrow\mathbb{A}^{n}.
\end{gather*}
$\Psi_{+}$ and $\Psi_{-}$ can be encoded in terms of assignments
of $d$-PHs named $\widetilde{\Gamma}_{+}$ and $\widetilde{\Gamma}_{-}$
as in the $\mathfrak{sl}_{n}$ case, only now the forms in play will
encode matrix word maps instead of Lie algebra word maps. The proof
for $\widetilde{\Gamma}_{+}$ and $\widetilde{\Gamma}_{-}$ is identical
to the proof for $\Gamma_{+}$ and $\Gamma_{-}$ in the $\mathfrak{sl}_{n}$
case. In particular, the maps $\Psi_{+}^{*t}$ and $\Psi_{-}^{*t}$
are flat if $t\geq5d^{3}$ and (FRS) if $t\geq10d^{3}$. 

The proof for $\Psi_{0}$ is done by first dealing with $\psi_{w_{0}}:=(\hat{e}_{1,1},\ldots,\hat{e}_{n-1,n-1})\circ\widetilde{\varphi}_{w_{0}}$,
similarly as in $\Gamma_{0}$ for $\mathfrak{sl}_{n}$, so that $\psi_{w_{0}}^{*12d^{2}}$
is flat and $\psi_{w_{0}}^{*60d^{3}}$ is (FRS). In particular, we
get that the dimensions of the fibers of $\Psi_{0}^{*12d^{2}}$ are
at most $12d^{2}rn^{2}-n+1$, so by Lemma \ref{lem:Auxilery lemma for matrix word maps},
$\Psi_{0}^{*24d^{2}}$ is flat. Using a similar argument (Lemma \ref{lem:Auxilery lemma for matrix word maps})
on a certain degeneration of the $m$-th jets of $\Psi_{0}$ (as in
the case of $\Gamma_{0}$ for $\mathfrak{sl}_{n}$), we deduce that
$\Psi_{0}^{*200d^{4}}$ is (FRS). 

We therefore get that $\varphi_{w}^{*t}$ is flat if $t\geq30d^{3}>10d^{3}+24d^{2}$
and is (FRS) if $t\geq300d^{4}$. Proposition \ref{prop:reduction to self convolutions}
allows us to reduce to the case of self-convolutions of matrix word
maps (recall that when doing this the required number of convolutions
is multiplied by $2$). Combining this with the above arguments concludes
the proof. 
\end{proof}

\subsection{Proof of the general case\label{subsec:Proof-for-the general case}}

Let $\mathfrak{g}$ be a simple $K$-Lie algebra. We may assume that
$K$ is algebraically closed. Since we already dealt with Lie algebras
of small rank we may assume that $\mathfrak{g}$ is either $\mathfrak{so}_{2n+1},\mathfrak{sp}_{2n}$
or $\mathfrak{so}_{2n}$, and $n>4d$. Write $\mathfrak{g}=\mathfrak{t}\oplus\bigoplus\limits _{\alpha\in\Sigma(\mathfrak{g},\mathfrak{t})}\mathfrak{g}_{\alpha}$
with $\mathfrak{t}$ a Cartan subalgebra, $\Sigma(\mathfrak{g},\mathfrak{t})$
the corresponding root system and $\triangle$ a basis of $\Sigma(\mathfrak{g},\mathfrak{t})$.
Choose a Chevalley basis $\{h_{\alpha}\}_{\alpha\in\triangle}\cup\{e_{\alpha}\}_{\alpha\in\Sigma(\mathfrak{g},\mathfrak{t})}$.
We define the \textsl{length }$\ell(\alpha)$ of a root $\alpha\in\Sigma^{+}(\mathfrak{g},\mathfrak{t})$
to be the number of simple roots $\alpha_{i_{1}},\dots,\alpha_{i_{\ell(\alpha)}}$
such that $\alpha=\sum\limits _{j=1}^{\ell(\alpha)}\alpha_{i_{j}}$.

We index this basis and its dual as $\{e_{i}\}_{i\in\mathcal{I}}$
and $\{\widehat{e}_{j}\}_{j\in\mathcal{J}}$ respectively. Let $w\in\mathcal{L}_{r}$
be a $d$-homogeneous word of pure type. We consider the assignment
$(\Gamma_{\mathfrak{g},w},V,\{\tau_{\gamma}\}_{\gamma\in\mathcal{S}})$
of the $d$-PH $\Gamma_{\mathfrak{g},w}=(\mathcal{I},\mathcal{J},\mathcal{S})$.
Recall that $\Gamma_{\mathfrak{g},w}$ has vertices $\{e_{i}\}_{i\in\mathcal{I}}$
and hyperedges of the form $(\{e_{i_{1}},\dots,e_{i_{d}}\},\widehat{e}_{j})$.
Such an edge has type $j$. We would like to prove the following: 
\begin{thm}
\label{thm:main theorem for general Lie algebra}Let $w\in\mathcal{L}_{r}$
be a $d$-homogeneous word of pure type and let $n>4d$. Then the
$d$-PH $(\Gamma_{\mathfrak{g},w},V,\{\tau_{\gamma}\}_{\gamma\in\mathcal{S}})$
is flat if $\mathrm{dim}V\geq4\cdot10^{4}d^{3}r$, and (FRS) if $\mathrm{dim}V\geq8\cdot10^{4}d^{4}r$. 
\end{thm}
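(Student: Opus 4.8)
The strategy mirrors the $\mathfrak{sl}_n$ case (Theorem \ref{thm:main theorem on d-ph for SLn}), but the root systems of $B_n, C_n, D_n$ are more intricate, so I will need to localize the argument to the part of the root system that behaves like $\mathfrak{sl}_n$ and handle the ``short'' roots and the extra structure separately. First, I would fix a Chevalley basis $\{h_\alpha\}\cup\{e_\beta\}$ adapted to the standard realization, indexing positive roots by $\epsilon_i\pm\epsilon_j$ (and $\epsilon_i$, resp.\ $2\epsilon_i$, for $B_n$, resp.\ $C_n$). As in the $\mathfrak{sl}_n$ proof, I would introduce a first coloring $\nu_0$ that separates the $d$-PH $\Gamma_{\mathfrak{g},w}$ according to whether a hyperedge has type in the ``positive non-toral'' part, the ``negative non-toral'' part, or the Cartan $\mathfrak{t}$; because $[\mathfrak{g}_\alpha,\mathfrak{g}_\beta]\subseteq\mathfrak{g}_{\alpha+\beta}$, a weight that records the image of each $e_\beta$ under a fixed linear functional on the root lattice (e.g.\ $\beta\mapsto\langle\beta,\varpi\rangle$ for a suitable $\varpi$) will be admissible, giving three (or a bounded number of) sub-polyhypergraphs $\Gamma_+,\Gamma_-,\Gamma_0$, plus possibly a separate piece for short roots.

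The bulk of the work is the analogue of Subsections \ref{subsec:Reduction-of-=00005Cgamma +-}--\ref{subsec:The-case-of gamma zero}. For $\Gamma_+$ and $\Gamma_-$: I would apply the averaging weight $\omega_{\mathrm{av}}(e_\beta)=(d+1)^{\ell(\beta)}$ (using the length function $\ell(\alpha)$ defined above), which forces a hyperedge of type $\widehat e_\gamma$ to be supported on roots whose lengths are as close as possible to $\ell(\gamma)/d$; then a monomialization weight $\omega_{\mathrm{mon}}$, using a total order on $\mathcal I$, to reduce each $\Gamma_\pm$ to an honest $d$-hypergraph (the analogue of Proposition \ref{prop:reduction to a hypergraph}; here one uses Remark \ref{rem:different description for Lie algebra word maps} to realize $\varphi_w$ as a restriction of a matrix word map inside $\mathfrak{gl}_{2n+1}$ or $\mathfrak{gl}_{2n}$, and checks non-vanishing of the relevant $\tau_{\gamma}$ on an explicitly constructed $d$-tuple of root vectors). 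Once we have a hypergraph, I would iterate the level separation method $\omega_{\mathrm{LS},1},\dots,\omega_{\mathrm{LS},j}$ exactly as in the $\mathfrak{sl}_n$ case: first separate edges by the residue of $\ell(\gamma)$ mod $\sim 4d$ so that edges at different ``levels'' have disjoint supports, then further separate by the residues of the endpoints mod $d$ (and mod $2d$), reducing to the case of a single hyperedge, which is $(\Gamma,V,\tau)$ with $\tau$ a sum of squarefree monomials $x_1\cdots x_d$; this is flat for $\dim V\geq r$ and (FRS) for $\dim V\geq 2r$ by Corollary \ref{cor:small polyhypergraphs are (FRS)}. The small-length part $\mathcal G_{\mathrm{small}}$, supported on short roots and roots of length $\leq 1$, has edges with a bounded number $O(d)$ of types in each connected component, so two more level separations of $O(d)$ colors each suffice. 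For $\Gamma_0$ (Cartan types), as in Subsection \ref{subsec:The-case-of gamma zero}, hyperedges may carry several types, so a careless elimination risks destroying the generating property; I would instead pass to the jet maps $J_m(\Phi_{\Gamma_0,V,\{\tau_\gamma\}})$, degenerate (averaging $+$ monomialization $+$ level separation, via Proposition \ref{prop:reduction of the jet map to a simpler map} and Corollary \ref{cor:all parial derivatives survive in the formal derivative of a word}) to a map of the form $\varphi_m=(\varphi_m^{(k_0)},\dots,\varphi_m^{(k_0+2d)})$ into a vector space of dimension $O(d)\cdot\mathrm{rk}(\mathfrak g)$, then apply a weight $\nu_1$ killing all root vectors $e_\beta$ except those generating a copy of $\mathfrak{sl}_d$ inside $\mathfrak g$ together with a controlled band near each Cartan coordinate; Corollary \ref{cor: commutator relations on simple Lie algebras} and Proposition \ref{prop:Lie algebra word maps are generating} guarantee the degenerated map stays generating, and finally $\nu_2,\nu_{\mathrm{LS}}$ reduce to disjoint generating maps into $O(d^2)$-dimensional spaces, to which Corollary \ref{prop:(FRS) for generating morphisms} (with $d_{\mathrm{mon}}\leq d-1$) and Lemma \ref{lem:Auxilery lemma for matrix word maps} / Corollary \ref{cor: singularity properties through dim of jets} apply. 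Collecting the multiplicative constants ($O(d^3)$ convolutions for $\Gamma_\pm$, $O(d^3)$ for $\Gamma_0$ flat / $O(d^4)$ for $\Gamma_0$ (FRS)) and combining via Corollary \ref{cor:Coloring for for dPH} gives the bounds $4\cdot 10^4 d^3 r$ and $8\cdot 10^4 d^4 r$; Theorem \ref{thm: main thm Lie algebra word maps} then follows from Theorem \ref{thm:main theorem for general Lie algebra} together with the $\mathfrak{sl}_n$ and low-rank cases, after the reductions of Subsection \ref{subsec:Reduction to self convolutions+homogeneous} and the use of Lemma \ref{lem:generating becomes dominant} and Lemma \ref{lem:convolution gives bounds on epsilon flatness} (the latter also yielding Theorem \ref{thm:lower bounds on epsilon jet flatness}).

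\textbf{Main obstacle.} The hard part is the analogue of Proposition \ref{prop:reduction to a hypergraph} for $B_n,C_n,D_n$: showing that after $\omega_{\mathrm{av}}$ and $\omega_{\mathrm{mon}}$ each type $\widehat e_\gamma$ in $\Gamma_\pm$ is realized by a \emph{unique} surviving hyperedge, and — crucially — that this hyperedge is actually present in $\mathcal S$, i.e.\ that the corresponding multilinear form $\tau_{\gamma}$ is non-zero. In type $A$ this came down to the clean fact that a product $e_{i_1,j_1}\cdots e_{i_d,j_d}$ of matrix units is non-zero for exactly one ordering; for the other classical types the root vectors in the standard matrix model are (anti-)symmetrized pairs of matrix units, so the vanishing/non-vanishing of an ordered product $e_{\beta_1}\cdots e_{\beta_d}$ is governed by a more delicate combinatorics of paths in $\{\pm 1,\dots,\pm n\}$ (with the $i\mapsto -i$ symmetry and, for $B_n$, the extra index $0$), and one must produce an explicit length-$\leq 2d$ ``snake'' of roots adding up to $\gamma$ whose ordered product is the unique nonzero one. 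Managing this — together with keeping the $\Gamma_0$ degeneration generating while $n$ can be only slightly larger than $4d$, where the pathologies of Example \ref{exa:trivial word maps} lurk — is where the real case-analysis lives; the level-separation bookkeeping that produces the numerical constants is routine by comparison.
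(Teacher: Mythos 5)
Your plan is in all essentials the paper's own proof: the same first coloring by $\Sigma^+/\Sigma^-/\mathfrak t$, the same averaging + monomialization reduction of $\Gamma_\pm$ to a $d$-hypergraph (with the same rank-$4d$-subdiagram trick to guarantee the surviving form $\tau_\gamma$ is nonzero, which you correctly single out as the crux), the same passage to jets for $\Gamma_0$ with a low-rank subalgebra keeping things generating, and the same bookkeeping. So the route is not genuinely different. But there are two concrete places where "iterate the level separation exactly as in the $\mathfrak{sl}_n$ case" breaks down and you would get stuck.

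First, for $\mathcal G_+$, after the length-band separation $\omega_{\mathrm{LS},1}$ the remaining hypergraphs mix roots of type $a$ ($\epsilon_i-\epsilon_j$), type $b$ ($\epsilon_i+\epsilon_j$), and, for $B_n$, type $c$ ($\epsilon_i$). These have incompatible "band geometry": for $a_{k,l}$ the surviving support sits near $[k,l]$ and $\ell(a_{k,l})=l-k$, whereas for $b_{k,l}$ the support's $a$-type roots spread up toward index $n$ and $\ell(b_{k,l})\approx 2n-k-l$; so a single level separation "by residues of the endpoints mod $d$" does not render the surviving supports disjoint across types. The paper inserts an extra coloring (the $\omega_{\mathrm{LS},2}$ step, based on Lemma~\ref{lem:structure of roots of various types}) that splits the level into $\widetilde{\mathcal G}_a$, $\widetilde{\mathcal G}_b$, $\widetilde{\mathcal G}_{c,b}$ before doing any endpoint-residue separations; only then does the type-$b$ geometry admit the uniqueness argument (Lemma~\ref{lem:Unique length}) that lets you reduce to $\leq 2$ hyperedges per piece. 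Your sketch omits this type-splitting step, and it is load-bearing.

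Second, you describe $\mathcal G_{\mathrm{small}}$ as supported on "short roots and roots of length $\le 1$." That is the $\mathfrak{sl}_n$ situation; in the general case Proposition~\ref{prop:reduction to hypergraph-general case}(2) only guarantees vertices of length $\le 8d$ (or Cartan), because the rank-$4d$ subdiagram you invoke to produce a nonzero $\tau_\gamma$ has to be a diagram of the same type as $\mathfrak g$, and the shortest realizing $d$-tuple for a $b$- or $c$-type $\delta$ may climb $O(d)$ steps toward the end node. Consequently $\mathcal G_{\mathrm{small}}$ needs a heavier level-separation budget ($8d^2\cdot 8d$ colors) than the $\mathfrak{sl}_n$ version; if you keep your "$O(d)$ colors each" estimate here, the final constants drift. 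These are fixable with the machinery you already have in hand, but as written the proposal under-specifies exactly the two steps where types $B,C,D$ genuinely diverge from type $A$.
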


As in the case of $\mathfrak{sl}_{n}$, in order to split $\Gamma_{\mathfrak{g},w}$
into simpler $d$-PHs we use the coloring function 
\[
\nu_{0}(\alpha)=\begin{cases}
(-2d\cdot\ell(\alpha),1,2d\cdot\ell(\alpha)) & \text{if }\alpha\in\Sigma^{+}(\mathfrak{g},\mathfrak{t})\\
(2d\cdot\ell(\alpha),1,-2d\cdot\ell(\alpha)) & \text{if }\alpha\in\Sigma^{-}(\mathfrak{g},\mathfrak{t})\\
(0,1,0) & \text{else}.
\end{cases}
\]
We get three simpler $d$-PHs: 
\begin{enumerate}
\item $\Gamma_{+}:=\mathrm{gr}_{\nu_{0},1}(\Gamma_{\mathfrak{g},w})=(\mathcal{I},\mathcal{J}_{+},\mathcal{S}_{+})$
consisting of hyperedges of type $\widehat{e}_{\delta}$ with $\delta\in\Sigma^{+}(\mathfrak{g},\mathfrak{t})$. 
\item $\Gamma_{0}:=\mathrm{gr}_{\nu_{0},2}(\Gamma_{\mathfrak{g},w})=(\mathcal{I},\mathcal{J}_{0},\mathcal{S}_{0})$
consisting of hyperedges of type $\widehat{h}_{\alpha}$ with $\alpha\in\triangle$. 
\item $\Gamma_{-}:=\mathrm{gr}_{\nu_{0},3}(\Gamma_{\mathfrak{g},w})=(\mathcal{I},\mathcal{J}_{-},\mathcal{S}_{-})$
consisting of hyperedges of type $\widehat{e}_{\delta}$ with $\delta\in\Sigma^{-}(\mathfrak{g},\mathfrak{t})$. 
\end{enumerate}

\subsubsection{Reduction of $\Gamma_{+}$ and $\Gamma_{-}$ to $d$-hypergraphs}

Let $\mathfrak{g}$ be either $\mathfrak{so}_{2n},\mathfrak{so}_{2n+1}$
or $\mathfrak{sp}_{2n}$ and consider the following bilinear forms:
\[
\mathrm{B}_{2n}:=\left(\begin{array}{cc}
0 & I_{n}\\
I_{n} & 0
\end{array}\right),~~\mathrm{B}_{2n+1}:=\left(\begin{array}{ccc}
0 & I_{n} & 0\\
I_{n} & 0 & 0\\
0 & 0 & 1
\end{array}\right)~\text{ and }~\mathrm{J}_{n}:=\left(\begin{array}{cc}
0 & I_{n}\\
-I_{n} & 0
\end{array}\right).
\]
These forms induce embeddings $\mathfrak{so}_{2n}\hookrightarrow\mathfrak{so}_{2n+1}\hookrightarrow\mathfrak{gl}_{2n+1}$
and $\mathfrak{sp}_{2n}\hookrightarrow\mathfrak{gl}_{2n}$ such that
$\mathrm{diag}_{2n+1}\cap\mathfrak{g}$ is a Cartan subalgebra of
$\mathfrak{g}$, where $\mathrm{diag}_{2n+1}\subseteq\mathfrak{gl}_{2n+1}$
is the space of diagonal matrices. Let $\epsilon_{i}\in\mathfrak{t}^{*}$
be the restriction of the projection to $\mathrm{diag}_{n}$ to its
$i$-th coordinate. 
\begin{defn}
\label{def:roots of different types}Let $\mathfrak{g}$ be a simple
classical Lie algebra. A root $\alpha\in\Sigma(\mathfrak{g},\mathfrak{t})$
is said to be of 
\begin{enumerate}
\item Type $a$ if it is of the form $a_{i,j}:=\epsilon_{i}-\epsilon_{j}$,
for $i\neq j$. 
\item Type $b$ if it is of the form $b_{i,j}:=\epsilon_{i}+\epsilon_{j}$,
for any $i,j$ (including $2\epsilon_{i}$ in the case of $\mathfrak{sp}_{2n}$). 
\item Type $c$ if it is of the form $c_{i}:=\epsilon_{i}$. 
\end{enumerate}
Notice that $\mathfrak{sl}_{n}$ contains only roots of type $a$,
$\mathfrak{so}_{2n}$ and $\mathfrak{sp}_{2n}$ contain only roots
of type $a$ and $b$ and $\mathfrak{so}_{2n+1}$ contains roots of
type $a,b$ and $c$. 
\end{defn}

\begin{lem}
\label{lem:structure of roots of various types}Let $\delta\in\Sigma^{+}(\mathfrak{g},\mathfrak{t})$
and assume that there exist $\alpha_{i_{1}},\dots,\alpha_{i_{d}}\in\Sigma^{+}(\mathfrak{g},\mathfrak{t})$
such that $\widehat{e}_{\delta}([e_{\alpha_{i_{1}}},\dots,e_{\alpha_{i_{d}}}])\neq0$.
Then 
\begin{enumerate}
\item $\delta$ is of type $a$ if and only if all $\{\alpha_{i_{j}}\}$
are of type $a$. 
\item $\delta$ is of type $b$ if and only if either, 
\begin{enumerate}
\item exactly $d-1$ of $\{\alpha_{i_{j}}\}$ are of type $a$, and $\alpha_{i_{k}}$
is of type $b$ for a single $k\in[d]$, or; 
\item exactly $d-2$ of $\{\alpha_{i_{j}}\}$ are of type $a$, and $\alpha_{i_{k_{1}}}$
and $\alpha_{i_{k_{2}}}$ are of type $c$ for $k_{1},k_{2}\in[d]$. 
\end{enumerate}
\item $\delta$ is of type $c$ if and only if exactly $d-1$ of $\{\alpha_{i_{j}}\}$
are of type $a$, and for a single $k\in[d]$ the root $\alpha_{i_{k}}$
is of type $c$. 
\end{enumerate}
\end{lem}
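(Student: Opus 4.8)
\textbf{Proof proposal for Lemma \ref{lem:structure of roots of various types}.}

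The plan is to work entirely inside the concrete matrix embeddings $\mathfrak{so}_{2n}\hookrightarrow\mathfrak{so}_{2n+1}\hookrightarrow\mathfrak{gl}_{2n+1}$ and $\mathfrak{sp}_{2n}\hookrightarrow\mathfrak{gl}_{2n}$ fixed just before the statement, where the Cartan is diagonal and the roots are the familiar $\pm\epsilon_i\pm\epsilon_j$, $\pm\epsilon_i$. The root vectors $e_\alpha$ have an explicit description: for $\mathfrak{so}_m$ using the form $\mathrm{B}_m$, the vector $e_{\epsilon_i-\epsilon_j}$ is (a scalar multiple of) $E_{i,j}-E_{n+j,n+i}$, the vector $e_{\epsilon_i+\epsilon_j}$ is $E_{i,n+j}-E_{j,n+i}$, and $e_{\epsilon_i}$ is $E_{i,2n+1}-E_{2n+1,n+i}$ (with the analogous signs/normalizations for $\mathfrak{sp}_{2n}$). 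The key elementary observation is that for matrix units, $E_{i,j}E_{k,l}=\delta_{jk}E_{i,l}$, so any iterated bracket $[e_{\alpha_{i_1}},\dots,e_{\alpha_{i_d}}]$ expands, via Remark \ref{rem:different description for Lie algebra word maps}, into a $\pm$-combination of products of matrix units, and the product of $d$ matrix units of the above shapes is nonzero only when the index-matching conditions chain up. I would record the small "fusion table": an $a$-type times an $a$-type can give an $a$-type; an $a$ and a $b$ can give a $b$; an $a$ and a $c$ can give a $c$; a $c$ and a $c$ can give a $b$; and a $b$ (or $2\epsilon_i$) occurring inside a nonzero product forces everything else to be of type $a$, since $\mathfrak{so}_{2n}$ and $\mathfrak{sp}_{2n}$ have no $c$-roots and two $b$'s cannot fuse to a positive root (their sum has a coefficient $2$ on two distinct $\epsilon$'s). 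This is really just additivity of roots: $\delta=\sum_{j=1}^d\alpha_{i_j}$ in $\bigoplus_i\mathbb{Z}\epsilon_i$, combined with the sign/parity constraints coming from which $\pm\epsilon_i$ can appear.

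The proof then proceeds by a bookkeeping argument on the $\epsilon$-coefficients. Write each $\alpha_{i_j}$ in coordinates and note that $\widehat{e}_\delta([e_{\alpha_{i_1}},\dots,e_{\alpha_{i_d}}])\neq 0$ forces $\sum_j \alpha_{i_j}=\delta$. For (1): if $\delta$ is of type $a$, i.e.\ $\delta=\epsilon_k-\epsilon_l$, then the total $\epsilon$-content is "one $+$ and one $-$"; any type-$b$ summand contributes two $+$'s and any type-$c$ summand contributes one $+$ with no compensating $-$, and since all $\alpha_{i_j}\in\Sigma^+$ there is no cancellation of the kind that would remove a surplus $+$ — so all summands must be type $a$, and conversely a sum of type-$a$ positive roots telescoping to $\epsilon_k-\epsilon_l$ is itself type $a$. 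For (2) and (3) one argues the same way: a type-$b$ target $\epsilon_k+\epsilon_l$ has "$\epsilon$-weight $2$", which can only be reached either by exactly one type-$b$ summand plus type-$a$ summands (case 2(a)) or, in $\mathfrak{so}_{2n+1}$, by exactly two type-$c$ summands plus type-$a$ summands (case 2(b)); a type-$c$ target $\epsilon_k$ has weight $1$, reached by exactly one type-$c$ summand plus type-$a$ summands. In each case the converse direction is immediate from additivity together with a check that such a configuration of root vectors does admit a nonvanishing iterated bracket — here one exhibits an explicit ordering of the $e_{\alpha_{i_j}}$ whose product of matrix units chains up, exactly as in the constructions in the proofs of Proposition \ref{prop:reduction to a hypergraph}(2) and Lemma \ref{lemma:not so far away from average}.

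The main obstacle I anticipate is not the coefficient bookkeeping but the careful verification of the converse ("if ... then there exist $\alpha_{i_j}$ with $\widehat{e}_\delta([e_{\alpha_{i_1}},\dots,e_{\alpha_{i_d}}])\neq0$"), because the bracket is a genuine Lie bracket (an alternating sum over orderings), so one must rule out that all the nonzero matrix-unit products cancel. The clean way around this is the same device used repeatedly in Section \ref{subsec:Proof-for-SLn}: choose the $\alpha_{i_j}$ so that \emph{only one} ordering gives a nonzero product of matrix units (a "staircase" chain), whence $[e_{\alpha_{i_1}},\dots,e_{\alpha_{i_d}}]=\pm e_{\alpha_{i_1}}\cdots e_{\alpha_{i_d}}\neq0$ with no cancellation. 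For the $b$-type-from-two-$c$'s subtlety in $\mathfrak{so}_{2n+1}$ one needs $n>4d$ (which holds by hypothesis) to have enough room to place the two $\epsilon$-indices involved in the $c$-roots away from the rest of the staircase. I expect this to be a short but slightly fiddly case analysis over the three families $\mathfrak{so}_{2n},\mathfrak{so}_{2n+1},\mathfrak{sp}_{2n}$, and I would organize it by first proving the "forward" (necessity) direction uniformly via the additive root identity, then handling existence family-by-family with explicit staircases.
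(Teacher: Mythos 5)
The paper gives no proof for Lemma \ref{lem:structure of roots of various types}; it is stated as an immediate consequence of root additivity. The complete argument is simply this: because $[e_{\alpha_{i_1}},\ldots,e_{\alpha_{i_d}}]\in\mathfrak{g}_{\alpha_{i_1}+\cdots+\alpha_{i_d}}$, the nonvanishing of $\widehat{e}_\delta([e_{\alpha_{i_1}},\ldots,e_{\alpha_{i_d}}])$ forces $\sum_j\alpha_{i_j}=\delta$. Assign to each root $\alpha$ the integer $s(\alpha):=\sum_i(\epsilon_i\text{-coefficient of }\alpha)$, so that $s=0,2,1$ for roots of types $a,b,c$ respectively. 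Additivity of $s$ gives $s(\delta)=2n_b+n_c$, where $n_b,n_c$ count the $\alpha_{i_j}$ of types $b,c$; matching $s(\delta)\in\{0,2,1\}$ with the nonnegative solutions $(n_b,n_c)$ yields all six implications simultaneously, in both directions.

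Your bookkeeping on the $\epsilon$-coefficients is exactly this argument and it is correct. However, the proposal is substantially over-engineered, and it also contains a misreading of the statement. The matrix-unit realizations, the fusion table, and especially the staircase constructions for ruling out cancellation in the alternating sum are unnecessary here, because the lemma makes no existence claim: the hypothesis already hands you a tuple with nonvanishing bracket, so the only information to extract from it is that the roots sum to $\delta$ in the root lattice. Your anticipated ``main obstacle'' --- that the converse direction requires exhibiting some $\alpha_{i_j}$ with $\widehat{e}_\delta([e_{\alpha_{i_1}},\ldots,e_{\alpha_{i_d}}])\neq0$ --- does not arise in this lemma at all. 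Such existence statements are proved separately, in Proposition \ref{prop:reduction to hypergraph-general case}(2) and Lemma \ref{lem:nice description of the hypergraph}, and there the staircase device (and the hypothesis $n>4d$) is indeed what does the work. For the present lemma the weight-space inclusion plus the additive invariant $s(\cdot)$ already furnish a short proof, and none of the explicit root-vector computations are needed.
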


We set an averaging weight (as in Subsection \ref{subsec:Reduction-of-=00005Cgamma +-})
\[
\omega_{\mathrm{av}}(e_{\alpha})=(d+1)^{\ell(\alpha)}\text{ and }\omega_{\mathrm{av}}(h_{\alpha})=1,
\]
and denote by $\Gamma_{1}=\mathrm{gr}_{\omega_{\mathrm{av}}}(\Gamma_{+})=(\mathcal{I},\mathcal{J}_{+},\mathcal{S}_{1})$.
Order $\{e_{i}\}_{i\in\mathcal{I}}$ as follows; choose an arbitrary
order $\mathrm{ord}(h_{\alpha})\in[n]$ for $\{h_{\alpha}\}_{\alpha\in\triangle}$,
and, 
\[
\mathrm{ord}(e_{\alpha}):=\begin{cases}
n+(i-1)n+j & \text{if }\alpha=a_{i,j}\\
n+4n^{2}+(i-1)n+j & \text{if }\alpha=b_{i,j}\\
n+8n^{2}+(i-1)n & \text{if }\alpha=c_{i}.
\end{cases}
\]
Set a monomialization weight 
\[
\omega_{\mathrm{mon}}(e_{i}):=(d+1)^{\mathrm{ord}(e_{i})},
\]
and denote the resulting $d$-PH by $\Gamma_{2}=\mathrm{gr}_{\omega_{\mathrm{mon}}}(\Gamma_{1})=(\mathcal{I},\mathcal{J}_{+},\mathcal{S}_{2})$. 
\begin{prop}
\label{prop:reduction to hypergraph-general case}Let $\delta\in\Sigma^{+}(\frak{g},\frak{t})$
be a positive root. Then we have the following: 
\begin{enumerate}
\item $\Gamma_{2}$ contains exactly one hyperedge $\{e_{i_{1}},\dots,e_{i_{d}}\}$
of type $\widehat{e}_{\delta}$, and it is unique of this type. 
\item If $\ell(\delta)\leq4d$, then the edge $\left(\{e_{i_{1}},\dots,e_{i_{d}}\},\widehat{e}_{\delta}\right)\in\mathcal{S}_{2}$
consists of vertices $e_{i_{j}}=e_{\alpha_{i_{j}}}$ with $\ell(\alpha_{i_{j}})\leq8d$
or $e_{\alpha_{i_{j}}}=h_{\alpha_{i_{j}}}$. 
\end{enumerate}
\end{prop}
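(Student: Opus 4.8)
The plan is to mirror, in the setting of the other classical Lie algebras, the two-step degeneration used for $\mathfrak{sl}_n$ in Subsection \ref{subsec:Reduction-of-=00005Cgamma +-}: first apply the averaging weight $\omega_{\mathrm{av}}$ and then the monomialization weight $\omega_{\mathrm{mon}}$, and track what survives. The key input is Lemma \ref{lem:structure of roots of various types}, which tells us that a hyperedge of type $\widehat{e}_\delta$ is built from $d$ positive roots $\alpha_{i_1},\dots,\alpha_{i_d}$ whose lengths sum (up to a bounded correction coming from type-$b$ and type-$c$ roots) to $\ell(\delta)$. Concretely, combining Lemma \ref{lem:structure of roots of various types} with the fact that $[\mathfrak{g}_\alpha,\mathfrak{g}_\beta]\subseteq\mathfrak{g}_{\alpha+\beta}$, if $\widehat{e}_\delta([e_{\alpha_{i_1}},\dots,e_{\alpha_{i_d}}])\neq 0$ then $\sum_j \alpha_{i_j}$ equals $\delta$ as a weight, so $\sum_j\ell(\alpha_{i_j})$ is either $\ell(\delta)$ (type $a$), or $\ell(\delta)\pm O(1)$ in the type-$b$ or type-$c$ cases (since a single type-$b$ root has ``length'' roughly $2\ell$ of its constituent simple roots in the $\epsilon_i$-coordinates, etc.). This is the analogue of the identity $\sum_u(i_u-j_u) = k-l$ used for $\mathfrak{sl}_n$.

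For part (1): I would argue that $\widetilde{\omega}_{\mathrm{av}}$ of a hyperedge of type $\widehat{e}_\delta$ is determined by the multiset of lengths $\{\ell(\alpha_{i_j})\}$, and among all hyperedges of that type it is minimized precisely when the lengths are as balanced as possible around $\ell(\delta)/d$; this pins down the multiset of lengths but not the hyperedge itself. Then $\widetilde{\omega}_{\mathrm{mon}}$ breaks all remaining ties: exactly as in Equation (\ref{eq:uniqueness of monomialization}), $\widetilde{\omega}_{\mathrm{mon}}(\{e_{i_1},\dots,e_{i_d}\}) = \sum_p (\#\{u:\mathrm{ord}(e_{i_u})=p\})(d+1)^p$ is a base-$(d+1)$ expansion, hence the multiset $\{i_1,\dots,i_d\}$ is uniquely recovered from this number. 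So at most one hyperedge of each type $\widehat{e}_\delta$ survives in $\Gamma_2$. Uniqueness of the type (no $\widehat{e}_{\delta'}$ with $\delta'\neq\delta$ sharing the same vertex multiset) follows, as in Remark \ref{rem:edges with more than one type}, from the observation that a hyperedge supported on root vectors $e_{\alpha}$ with $\alpha$ non-zero roots has a unique type determined by $\sum_j\alpha_{i_j}$; the only ambiguity arises for hyperedges landing in $\mathfrak{t}$, which by construction lie in $\Gamma_0$, not $\Gamma_+$. One must also check the surviving hyperedge actually exists, i.e.\ that $\mathcal{S}_2$ is non-empty over each $\delta$ — this uses that $\varphi_w$ is generating together with an explicit balanced construction of a $d$-tuple of positive roots summing to $\delta$ whose associated structure constant $\tau$ is non-zero (the matrix-word-map computation from Remark \ref{rem:different description for Lie algebra word maps} and Lemma \ref{lem:combinatorial word map}, exactly as in Proposition \ref{prop:reduction to a hypergraph}(2)).

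For part (2): if $\ell(\delta)\leq 4d$, then $\widetilde{\omega}_{\mathrm{av}}$ minimality forces the surviving hyperedge to have $\sum_j\ell(\alpha_{i_j})\leq\ell(\delta)+O(1)\leq 4d+O(1)$, hence each $\ell(\alpha_{i_j})\leq 8d$ (the bound $8d$ rather than $4d$ absorbs the type-$b$/type-$c$ discrepancy from Lemma \ref{lem:structure of roots of various types}, where one type-$b$ root can inflate the length count), or the vertex is a Cartan element $h_{\alpha}$. The cleanest way to make this precise is to exhibit, for each such $\delta$, a hyperedge of type $\widehat{e}_\delta$ with $\widetilde{\omega}_{\mathrm{av}}$ bounded by $d(d+1)^{8d}$ or so, using the explicit $d$-tuple of short roots constructed as in Lemma \ref{lemma:not so far away from average} and Proposition \ref{prop:reduction to a hypergraph}(2); then any $\widetilde{\omega}_{\mathrm{av}}$-minimal hyperedge must also satisfy this bound, whence its vertices are short. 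I expect the main obstacle to be bookkeeping the three root types uniformly: unlike $\mathfrak{sl}_n$, a single hyperedge may mix type-$a$, type-$b$ and type-$c$ root vectors, and the ``length'' function $\ell(\alpha)$ is not simply additive under the embedding into $\mathfrak{gl}_{2n+1}$, so the precise constants (and the choice of $\mathrm{ord}$ separating the three blocks by additive shifts of $4n^2$) need care to ensure $\omega_{\mathrm{mon}}$ still genuinely monomializes and that the non-vanishing of $\tau$ survives after reordering — this last point again reduces, via Remark \ref{rem:different description for Lie algebra word maps}, to the fact that the relevant products of elementary matrices $e_{\alpha_{i_1}}\cdots e_{\alpha_{i_d}}$ are non-zero exactly for the trivial permutation.
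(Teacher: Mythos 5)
For part (1) your reasoning matches the paper: the paper simply says the proof is the same as Proposition \ref{prop:reduction to a hypergraph}(1), and your base-$(d+1)$ expansion observation and the remark about types (ambiguity only over $\mathfrak{t}$, which belongs to $\Gamma_0$) captures the needed ingredients.

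For part (2), you and the paper both correctly reduce to the \emph{existence} of some $(\{e_{i_1},\dots,e_{i_d}\},\hat{e}_\delta)\in\mathcal{S}_+$ with all $\ell(\alpha_{i_j})\leq 8d$ or $e_{i_j}\in\mathfrak{t}$ (minimality of $\widetilde\omega_{\mathrm{av}}$ then caps every exponent, via $(d+1)^{\ell(\alpha_{i_j})}\le d(d+1)^{8d}$, not via additivity of lengths as your intermediate step suggests — that additivity fails once negative roots or Cartan elements appear in the hyperedge). Where you diverge is in how you establish that existence: you propose to generalize the explicit $d$-tuple of Lemma \ref{lemma:not so far away from average} and Proposition \ref{prop:reduction to a hypergraph}(2) to types $B$, $C$, $D$, and you correctly flag the bookkeeping across the three root types as the obstacle. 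The paper instead avoids explicit construction altogether: take the (at most $\ell(\delta)\leq 4d$) simple roots appearing in $\delta$, complete to a connected subdiagram of $4d$ simple roots (possible since $\mathrm{rk}(\mathfrak{g})>4d$), obtaining a classical simple subalgebra $\mathfrak{h}$ of rank $4d$ with $\delta\in\Sigma^+(\mathfrak{h})$; by Corollary \ref{cor: commutator relations on simple Lie algebras} (Amitsur-Levitzki with $2\cdot 4d > d$) and Proposition \ref{prop:Lie algebra word maps are generating}, $\varphi_w|_{\mathfrak{h}^r}$ is generating, so a hyperedge of type $\hat{e}_\delta$ supported in $\mathfrak{h}$ exists — and every root of $\mathfrak{h}$ has $\ell\leq 8d$ automatically. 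This replaces the entire case analysis you anticipate with a one-step rank bound, and it is the reason the paper gets away without a $B/C/D$-analogue of the explicit $d$-tuple construction at this stage.
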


\begin{proof}
The proof of (1) is similar to (1) of Proposition \ref{prop:reduction to a hypergraph}.
To prove Item (2), it is enough by the definition of the weight $\omega_{\mathrm{av}}$,
to prove the existence of $(\{e_{i_{1}},\dots,e_{i_{d}}\},\hat{e}_{\delta})\in\mathcal{S}_{+}$
with $e_{i_{j}}=e_{\alpha_{i_{j}}}$ and $\ell(\alpha_{i_{j}})\leq8d$,
or $e_{i_{j}}=h_{\alpha_{i_{j}}}$. Let $\beta_{1},\dots,\beta_{t}$
be the simple roots appearing when writing $\delta$ as a sum of simple
roots. Complete it to any connected subdiagram that consists of $4d$
simple roots (recall our assumption that $\mathrm{rk}(\mathfrak{g})>4d$).
Such a subdiagram is isomorphic to a Dynkin diagram of some classical
simple Lie subalgebra $\mathfrak{h}$ of rank $4d$. By Corollary
\ref{cor: commutator relations on simple Lie algebras} and Proposition
\ref{prop:Lie algebra word maps are generating}, $\varphi_{w}|_{\mathfrak{h}^{r}}:\mathfrak{h}^{r}\rightarrow\mathfrak{h}$
is generating, so we may find $(\{e_{i_{1}},\dots,e_{i_{d}}\},\hat{e}_{\delta})\in\mathcal{S}_{+}$
with $e_{i_{j}}\in\mathfrak{h}$. Since $\mathrm{rk}\mathfrak{h}=4d$
it follows that $\ell(e_{i_{j}})\leq8d$, so we are done. 
\end{proof}
Proposition \ref{prop:reduction to hypergraph-general case} implies
the following: 
\begin{cor}
\label{cor:reduction to d-PH for Gamma plus}The $d$-PH $\Gamma_{2}$
is induced from a $d$-hypergraph $\mathcal{G}=(\mathcal{V},\mathcal{E})$,
with $\mathcal{V}=\mathcal{I}$ and 
\[
\mathcal{E}=\{\text{the unique hyperedges }\{e_{i_{1}},\dots,e_{i_{d}}\}\text{ in }\Gamma_{2}\text{ of type }\widehat{e}_{\delta},\text{ for any }\delta\in\mathcal{J}_{+}\}.
\]
\end{cor}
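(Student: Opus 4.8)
The statement is a direct bookkeeping consequence of Proposition~\ref{prop:reduction to hypergraph-general case}(1), so the plan is simply to translate that proposition into the language of hypergraphs. First I would observe that no type of $\mathcal{J}_{+}$ is lost along the way. Since $\varphi_{w}$ is non-trivial on $\mathfrak{g}$ it is generating by Proposition~\ref{prop:Lie algebra word maps are generating}, so by the last remark in Definition~\ref{def:D-PH assigned to a Lie algebra word map} every type $\widehat{e}_{l}\in\mathcal{J}$ supports at least one hyperedge of $\Gamma_{\mathfrak{g},w}$. Passing to $\Gamma_{+}$ via the admissible coloring $\nu_{0}$ only restricts the type set to $\mathcal{J}_{+}\subseteq\mathcal{J}$, and the two subsequent steps $\Gamma_{1}=\mathrm{gr}_{\omega_{\mathrm{av}}}(\Gamma_{+})$ and $\Gamma_{2}=\mathrm{gr}_{\omega_{\mathrm{mon}}}(\Gamma_{1})$ are elimination steps which, by the formula for $\mathrm{gr}_{\omega}\mathcal{S}$ in Corollary~\ref{cor:elimination for dPH}, act type-by-type (retaining precisely the minimal-weight hyperedges carrying each fixed type). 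Hence every $\widehat{e}_{\delta}\in\mathcal{J}_{+}$ still supports at least one hyperedge of $\mathcal{S}_{2}$.

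Next I would invoke Proposition~\ref{prop:reduction to hypergraph-general case}(1): for each $\delta\in\mathcal{J}_{+}$ there is \emph{exactly} one hyperedge $I_{\delta}:=\{e_{i_{1}},\dots,e_{i_{d}}\}\in\mathcal{I}^{(d)}$ with $(I_{\delta},\widehat{e}_{\delta})\in\mathcal{S}_{2}$, and moreover $I_{\delta}$ carries no type other than $\widehat{e}_{\delta}$. The uniqueness of the type immediately gives that $\delta\mapsto I_{\delta}$ is injective: if $I_{\delta}=I_{\delta'}$, then this common multiset would be of both types $\widehat{e}_{\delta}$ and $\widehat{e}_{\delta'}$, forcing $\delta=\delta'$. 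Setting $\mathcal{E}:=\{I_{\delta}:\delta\in\mathcal{J}_{+}\}$ and $\mathcal{G}:=(\mathcal{I},\mathcal{E})$, the map $\widehat{e}_{\delta}\mapsto I_{\delta}$ is therefore a bijection $\mathcal{J}_{+}\to\mathcal{E}$ under which
\[
\mathcal{S}_{2}=\{(I_{\delta},\widehat{e}_{\delta}):\delta\in\mathcal{J}_{+}\}
\]
is identified with the diagonally embedded $\triangle\mathcal{E}\subseteq\mathcal{I}^{(d)}\times\mathcal{E}$. This exhibits $\Gamma_{2}=(\mathcal{I},\mathcal{J}_{+},\mathcal{S}_{2})$ as $\mathcal{P}(\mathcal{G})$, i.e.\ $\Gamma_{2}$ is the $d$-PH induced from the $d$-hypergraph $\mathcal{G}=(\mathcal{V},\mathcal{E})$ with $\mathcal{V}=\mathcal{I}$ and $\mathcal{E}$ the set of unique hyperedges of each type, as claimed; the forms $\{\tau_{\gamma}\}$ are transported unchanged along the same relabeling.

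I do not expect a genuine obstacle here, since all the analytic content already sits in Proposition~\ref{prop:reduction to hypergraph-general case} (which in turn rests on the monomial-uniqueness identity~(\ref{eq:uniqueness of monomialization}) and Lemma~\ref{lem:structure of roots of various types}). The only point meriting care is the one treated in the first paragraph — that the elimination steps do not annihilate every hyperedge of some type — and this is handled purely formally via the type-wise action of $\mathrm{gr}_{\omega}$ together with the generating property of $\varphi_{w}$.
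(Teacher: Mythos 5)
Your proposal is correct and follows the same route as the paper: the paper simply states that Corollary~\ref{cor:reduction to d-PH for Gamma plus} follows immediately from Proposition~\ref{prop:reduction to hypergraph-general case}(1), and your argument is precisely the bookkeeping needed to unfold that implication (existence of a hyperedge per type is already built into the construction of $\Gamma_{\mathfrak{g},w}$ and is preserved by each $\mathrm{gr}_{\omega}$, and the uniqueness claims of the Proposition give the injective relabeling $\delta\mapsto I_{\delta}$ identifying $\mathcal{S}_{2}$ with $\triangle\mathcal{E}$). No gaps.
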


We would now like to reduce $\mathcal{G}$ (or $\Gamma_{2}$) into
simpler hypergraphs. For this we need a more general analogue of Lemma
\ref{lemma:not so far away from average}. 
\begin{lem}
\label{lem:nice description of the hypergraph}Let $\Gamma_{2}=(\mathcal{I},\mathcal{J}_{+},\mathcal{S}_{2})$
be as above, and let $\hat{e}_{\delta}\in\mathcal{J}_{+}$ be such
that $\ell(\delta)\geq4d$. Then there exist $\alpha_{i_{1}},\alpha_{i_{2}},\dots,\alpha_{i_{d}}\in\Sigma^{+}(\mathfrak{g},\mathfrak{t})$
such that $(\{e_{\alpha_{i_{1}}},e_{\alpha_{i_{2}}},\dots,e_{\alpha_{i_{d}}}\},\widehat{e}_{\delta})\in\mathcal{S}_{2}$
and $\ell(\alpha_{i_{j}})\in[\left\lfloor \frac{\ell(\delta)}{d}\right\rfloor -1,\left\lceil \frac{\ell(\delta)}{d}\right\rceil +1]$
for every $j\in[d]$. 
\end{lem}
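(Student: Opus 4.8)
The plan is to follow the template of Lemma \ref{lemma:not so far away from average} and Proposition \ref{prop:reduction to hypergraph-general case}: exhibit an explicit near-balanced hyperedge of type $\widehat{e}_{\delta}$ in $\mathcal{S}_+$, and then read off the length profile of the actual hyperedge in $\mathcal{S}_2$ from the minimality built into $\omega_{\mathrm{av}}$. Write $\ell(\delta)=pd+q$ with $0\le q<d$. Two preliminary facts: (i) for every hyperedge $(\{e_{\alpha_{i_1}},\dots,e_{\alpha_{i_d}}\},\widehat{e}_{\delta})\in\mathcal{S}_+$ one has $\sum_{j=1}^{d}\ell(\alpha_{i_j})=\ell(\delta)$, where a Cartan vertex is counted with length $0$ (consistent with $\omega_{\mathrm{av}}(h_{\alpha})=1=(d+1)^{0}$) — this is additivity of the height function on the root lattice together with $[\mathfrak{g}_{\alpha},\mathfrak{g}_{\beta}]\subseteq\mathfrak{g}_{\alpha+\beta}$; and (ii) by Proposition \ref{prop:reduction to hypergraph-general case}(1) there is a unique hyperedge $\sigma^{\ast}$ of type $\widehat{e}_{\delta}$ in $\mathcal{S}_2$, and since $\mathcal{S}_2\subseteq\mathcal{S}_1\subseteq\mathcal{S}_+$ it is $\widetilde{\omega}_{\mathrm{av}}$-minimal among all hyperedges of type $\widehat{e}_{\delta}$. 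Using (i) and strict convexity of $t\mapsto(d+1)^{t}$ one also checks that a hyperedge containing a Cartan vertex has only $d-1$ root-vertices and hence a strictly larger value of $\widetilde{\omega}_{\mathrm{av}}$ than the balanced all-root profile; so $\sigma^{\ast}$ consists of $d$ positive roots, accounting for the clause $\alpha_{i_j}\in\Sigma^{+}(\mathfrak{g},\mathfrak{t})$.

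The heart of the matter is constructing an explicit hyperedge $\sigma_{0}=(\{e_{\alpha_{1}},\dots,e_{\alpha_{d}}\},\widehat{e}_{\delta})\in\mathcal{S}_+$ whose root-lengths all lie in $[\lfloor\ell(\delta)/d\rfloor-1,\lceil\ell(\delta)/d\rceil+1]$ and which differs from the exactly balanced profile ($q$ entries $p+1$, the other $d-q$ entries $p$) by at most a bounded number of unit shifts. I would split on the type of $\delta$ in the sense of Definition \ref{def:roots of different types}. If $\delta=\epsilon_{i}-\epsilon_{j}$ is of type $a$, I take the chain decomposition of Lemma \ref{lemma:not so far away from average} verbatim — a strictly monotone index sequence from $i$ to $j$ with consecutive gaps as equal as possible, giving type-$a$ roots of length $p$ or $p+1$; the associated product of matrix units is triangular (a unique ordering is nonzero), so by the computation in Proposition \ref{prop:reduction to a hypergraph} the form $\tau_{\,\cdot\,,\widehat{e}_{\delta}}$ does not vanish. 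If $\delta$ is of type $b$ or $c$, Lemma \ref{lem:structure of roots of various types} forces exactly one summand (or two, in case (2)(b)) to be of type $b$ resp. $c$ and the rest of type $a$; I would choose the non-type-$a$ summand to be a root $\epsilon_{i'}+\epsilon_{j'}$ (resp. $\epsilon_{i'}$) of length as close to $p$ as possible — this is where $\mathrm{rk}(\mathfrak{g})>4d$ enters, since the lengths $\ell(\epsilon_{i'}+\epsilon_{j'})$ and $\ell(\epsilon_{i'})$ run over an interval of values, in particular one near $p$, while leaving enough unused indices to route the two type-$a$ chains from $\epsilon_{i}$ to $\epsilon_{i'}$ and from $\epsilon_{j}$ to $\epsilon_{j'}$ without collisions — and then split those chains into type-$a$ roots of length $p$ or $p+1$, again arranging a triangular configuration so that $\tau\ne 0$.

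Finally I would close the argument: $\widetilde{\omega}_{\mathrm{av}}(\sigma^{\ast})\le\widetilde{\omega}_{\mathrm{av}}(\sigma_{0})$ by minimality, while by strict convexity of $t\mapsto(d+1)^{t}$ and the fixed-sum constraint $\sum_{j}\ell(\alpha_{i_j})=\ell(\delta)$, any hyperedge having a vertex of length $\ge\lceil\ell(\delta)/d\rceil+2$ or $\le\lfloor\ell(\delta)/d\rfloor-2$ has $\widetilde{\omega}_{\mathrm{av}}$-value strictly exceeding that of $\sigma_{0}$ (which is within $O(d^{2}(d+1)^{p-1})$ of the balanced value $q(d+1)^{p+1}+(d-q)(d+1)^{p}$). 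Hence $\sigma^{\ast}$ has all root-lengths in $[\lfloor\ell(\delta)/d\rfloor-1,\lceil\ell(\delta)/d\rceil+1]$, which is the assertion. The step I expect to be the main obstacle is precisely the type-$b$/type-$c$ part of the construction: balance wants the non-type-$a$ summand to be a long root of height $\approx p$, whereas the easy triangular configurations use a short (often simple) $b$- or $c$-root, and reconciling these two demands while also respecting the counting constraints of Lemma \ref{lem:structure of roots of various types} needs a careful, somewhat case-heavy choice of indices — which is exactly why the hypothesis $\mathrm{rk}(\mathfrak{g})>4d$ is invoked.
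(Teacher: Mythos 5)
Your route coincides with the paper's: construct a near-balanced hyperedge $\sigma_0\in\mathcal{S}_+$ of type $\hat{e}_\delta$, and read off the length profile of the hyperedge $\sigma^*\in\mathcal{S}_2$ from $\widetilde{\omega}_{\mathrm{av}}$-minimality. You correctly flag the type-$b$/type-$c$ construction as the hard step; the paper's own proof is likewise terse there, saying only that the ``flexible choice of lengths'' lets one keep the indices $\{k_j\}$ distinct.

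Two points in your sketch need tightening. First, the additivity $\sum_{j}\ell(\alpha_{i_j})=\ell(\delta)$ (your fact (i)) holds only when every vertex is a Cartan element or a \emph{positive} root vector; a negative-root vertex $e_{-\gamma}$ contributes $-\ell(\gamma)$ to the height balance but $(d+1)^{\ell(\gamma)}$ to $\widetilde{\omega}_{\mathrm{av}}$, so on such a hyperedge the length total strictly exceeds $\ell(\delta)$. This still excludes them (a larger total forces, by Jensen, a larger $\widetilde{\omega}_{\mathrm{av}}$-value), but you should say so explicitly: you pass from ``no Cartan vertex'' directly to ``$d$ positive roots.'' Second, the closing convexity estimate is more delicate than ``$\sigma_0$ is within $O(d^{2}(d+1)^{p-1})$ of balanced'' suggests. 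Write $p=\lfloor\ell(\delta)/d\rfloor$, $P=\lceil\ell(\delta)/d\rceil$, and $A=q(d+1)^{p+1}+(d-q)(d+1)^{p}$. A profile with $\lfloor d/2\rfloor$ slots at $p+1$ and $\lfloor d/2\rfloor$ at $p-1$ lies entirely in $[p-1,P+1]$ yet exceeds $A$ by roughly $\tfrac{1}{2}d^{3}(d+1)^{p-1}$, while a profile with one slot at $p-2$ and the rest as balanced as possible exceeds $A$ by only roughly $2d^{2}(d+1)^{p-1}$ --- smaller once $d\geq 5$. So merely knowing that $\sigma_0$ has lengths in $[p-1,P+1]$ does \emph{not} rule out $\sigma^*$ having a slot at $p-2$. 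What the construction must, and does, deliver is that $\sigma_0$ is within a uniformly bounded number of unit perturbations of the exact profile $\{p,P\}$: the type-$a$ slots split exactly as in Lemma~\ref{lemma:not so far away from average}, and only the one $b$/$c$-slot plus whichever slots get nudged to avoid an index collision are off by one. It is this bounded-perturbation property, not a soft $O$-estimate, that closes the argument; you would need to extract it from the type-$b$/$c$ construction you deferred.
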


\begin{proof}
Let $\delta\in\Sigma^{+}(\mathfrak{g},\mathfrak{t})$. If $\delta$
is of type $a$, this is just Lemma \ref{lemma:not so far away from average}.
If $\delta$ is of type $b$, we may always find a tuple $(\alpha_{i_{1}},\dots,\alpha_{i_{d}})$
of positive roots $\alpha_{i_{j}}=\epsilon_{k_{j}}\pm\epsilon_{k_{j+1}}$
where $d-1$ roots are of type $a$, and one root is of type $b$,
whose lengths are in the interval $[\left\lfloor \frac{\ell(\delta)}{d}\right\rfloor -1,\left\lceil \frac{\ell(\delta)}{d}\right\rceil +1]$,\textcolor{red}{{}
}such that $\hat{e}_{\delta}([e_{\alpha_{i_{1}}},\dots,e_{\alpha_{i_{d}}}])\neq0$
and such that the $\{k_{j}\}_{j\in[d]}$ are distinct (the flexible
choice of their lengths allows us to make sure that the $\{k_{j}\}$
are indeed distinct). A similar argument as in the proof of Proposition
\ref{prop:reduction to a hypergraph} gives $(\{e_{\alpha_{i_{1}}},\dots,e_{\alpha_{i_{d}}}\},\widehat{e}_{\delta})\in\mathcal{S}_{+}$.
By definition of $\omega_{\mathrm{av}}$, any hyperedge $(\{e_{\beta_{j_{1}}},\dots,e_{\beta_{i_{d}}}\},\widehat{e}_{\delta})\in\mathcal{S}_{1}$
must satisfy that $\ell(\beta_{j_{u}})\in[\left\lfloor \frac{\ell(\delta)}{d}\right\rfloor -1,\left\lceil \frac{\ell(\delta)}{d}\right\rceil +1]$.
Since $\mathcal{S}_{2}\subseteq\mathcal{S}_{1}$ we are done. Similar
analysis can be done for $\delta$ of type $c$. 
\end{proof}

\subsubsection{\textsl{\label{subsec:Proof-for-the hypergraph G+- and Gsmall}Proof
for the $d$-hypergraph $\mathcal{G}=(\mathcal{V},\mathcal{E})$ in
the general case}}

We now reduce our hypergraph $\mathcal{G}=(\mathcal{V},\mathcal{E})$
into two simpler hypergraphs $\mathcal{G}_{+}$ and $\mathcal{G}_{\mathrm{small}}$,
as was done in the case of $\mathfrak{sl}_{n}$. Let $M=\{0,1\}$
and set 
\[
\omega_{2}(e_{\alpha})=\begin{cases}
(d,0) & \text{if }\alpha\in\Sigma^{+}(\mathfrak{g},\mathfrak{t})\\
(0,d^{2}) & \text{if }\alpha\in\Sigma^{-}(\mathfrak{g},\mathfrak{t})
\end{cases}\text{~ and ~}\omega_{2}(h_{\alpha})=(0,d^{2}).
\]

This coloring results in two sub-hypergraphs: 
\begin{enumerate}
\item $\mathcal{G}_{+}=(\mathcal{V}_{+},\mathcal{E}_{+})$ consisting of
vertices $\mathcal{V}_{+}=\{e_{\alpha}\}_{\alpha\in\Sigma^{+}(\mathfrak{g},\mathfrak{t})}$
and whose edges are 
\[
\mathcal{E}_{+}=\{(\{e_{\alpha_{i_{1}}},\dots,e_{\alpha_{i_{d}}}\},\widehat{e}_{\delta})\in\mathcal{E}:\alpha_{i_{k}}\in\Sigma^{+}(\mathfrak{g},\mathfrak{t})\text{ for any }k\in[d]\}.
\]
\item $\mathcal{G}_{\mathrm{small}}=(\mathcal{V}_{\mathrm{small}},\mathcal{E}_{\mathrm{small}})$
consisting of $\mathcal{E}_{\mathrm{small}}=\mathcal{E}\backslash\mathcal{E}_{+}$.
By Proposition \ref{prop:reduction to hypergraph-general case} and
Lemma \ref{lem:nice description of the hypergraph}, it follows that
all of the hyperedges in $\mathcal{E}_{\mathrm{small}}$ are of type
$\widehat{e}_{\delta}$ with $\ell(\delta)<4d$, and that $\mathcal{V}_{\mathrm{small}}$
consists of vertices $e_{i}\in\{h_{\alpha}\}_{\alpha\in\triangle}\cup\{e_{\alpha}\}_{\ell(\alpha)\leq8d}$. 
\end{enumerate}
\textbf{Proof for $\mathcal{G}_{+}$:}

We apply level separation,
\[
\omega_{\mathrm{LS},1}(e_{\alpha})=\begin{cases}
(2d,0) & \text{if }\left\lfloor \frac{\ell(\alpha)}{8}\right\rfloor \mathrm{\,mod\,}2=0\\
(0,2d+1) & \text{if }\left\lfloor \frac{\ell(\alpha)}{8}\right\rfloor \mathrm{\,mod\,}2=1.
\end{cases}
\]
The difference from the coloring $\omega_{\mathrm{LS},1}$ in the
case of $\mathfrak{sl}_{n}$ is that now the vertices $e_{\alpha_{i_{1}}},\dots,e_{\alpha_{i_{d}}}$
of each hyperedge $(\{e_{\alpha_{i_{1}}},\dots,e_{\alpha_{i_{d}}}\},\widehat{e}_{\delta})$
are not uniquely determined as in the $\mathfrak{sl}_{n}$ case (Lemma
\ref{lem:nice description of the hypergraph} versus Lemma \ref{lemma:not so far away from average}).
Note that each of $\mathrm{gr}_{\omega_{\mathrm{LS},1},0}\mathcal{G}_{+}$
and $\mathrm{gr}_{\omega_{\mathrm{LS},1},1}\mathcal{G}_{+}$ is a
disjoint union of hypergraphs 
\[
\left\{ \mathcal{G}_{+,u}=(\mathcal{V}_{+,u},\mathcal{E}_{+,u})\right\} _{u\in\nats},
\]
where $\mathcal{E}_{+,u}$ consists of certain edges $\widehat{e}_{\delta}$
with $\ell(\delta)\in[L(u)+1,L(u)+12d]$, for some $L(u)\in\nats$
depending on $u$.

Denote $\mathcal{G}_{+,u}$ by $\widetilde{\mathcal{G}}$ (as we deal
with each of $\mathcal{G}_{+,u}$ in the same method). Using two level
separations, we can separate these hypergraphs to ones that contain
hyperedges $\widehat{e}_{\delta}$ with all $\delta$ of the same
type (either $a,b$ or $c$). Explicitly, set 
\[
\omega_{\mathrm{LS},2}(e_{\alpha})=\begin{cases}
(-2,-1,0) & \text{if }\alpha\text{ is of type \ensuremath{a}}\\
(0,-d,2-2d) & \text{if }\alpha\text{ is of type \ensuremath{b}}\\
(0,0,-3d) & \text{if }\alpha\text{ is of type \ensuremath{c}},
\end{cases}
\]
and notice that by Lemma \ref{lem:structure of roots of various types},
$\omega_{\mathrm{LS},2}$ splits $\widetilde{\mathcal{G}}$ into three
hypergraphs: 
\begin{enumerate}
\item $\widetilde{\mathcal{G}}_{a}:=\mathrm{gr}_{\omega_{\mathrm{LS},2},1}(\widetilde{\mathcal{G}})$
which contains only edges $(\{e_{\alpha_{i_{1}}},\dots,e_{\alpha_{i_{d}}}\},\widehat{e}_{\delta})$
with $\delta$ of type $a$. 
\item $\widetilde{\mathcal{G}}_{b}:=\mathrm{gr}_{\omega_{\mathrm{LS},2},2}(\widetilde{\mathcal{G}})$
which contains only edges $(\{e_{\alpha_{i_{1}}},\dots,e_{\alpha_{i_{d}}}\},\widehat{e}_{\delta})$
with $\delta$ of type $b$, and such that $\{\alpha_{i_{1}},\dots,\alpha_{i_{d}}\}$
contains $d-1$ roots of type $a$ and one root of type $b$. 
\item $\widetilde{\mathcal{G}}_{c,b}:=\mathrm{gr}_{\omega_{\mathrm{LS},2},3}(\widetilde{\mathcal{G}})$
which contains only edges with $\delta$ of type $c$, and edges of
type $b$ such that $\{\alpha_{i_{1}},\dots,\alpha_{i_{d}}\}$ contains
$d-2$ roots of type $a$ and two roots of type $c$ (this only happens
in the case of $\mathfrak{g}=\mathfrak{so}_{2n+1}$). 
\end{enumerate}
Using another coloring function (with two colors) on the hypergraph
$\widetilde{\mathcal{G}}_{c,b}$ we can differentiate between the
edges of type $b$ and edges of type $c$, so that $\widetilde{\mathcal{G}}_{c,b}$
splits into $\widetilde{\mathcal{G}}'_{b}$ and $\widetilde{\mathcal{G}}_{c}$.
Notice that $\widetilde{\mathcal{G}}_{c}$ is a hypergraph with at
most $12d$ hyperedges (as there are at most $12d$ edges $\widehat{e}_{\delta}$
of type $c$ with $\ell(\delta)\in[L(u)+1,L(u)+12d]$), so by Corollary
\ref{cor:small polyhypergraphs are (FRS)}, the graph $\widetilde{\mathcal{G}}_{c}$
is (FRS) after $12d^{2}$ convolutions. The graph $\widetilde{\mathcal{G}}_{a}$
can be solved as in the $\mathfrak{sl}_{n}$ case.

It is left to deal with $\widetilde{\mathcal{G}}_{b}$ and $\widetilde{\mathcal{G}}'_{b}$.
We consider $\widetilde{\mathcal{G}}_{b}$ (the proof for $\widetilde{\mathcal{G}}'_{b}$
is similar). For any $i$ let $\bar{i}=i\mathrm{\,mod\,}12d$ and
for $s\in\{0,\dots,12d-1\}$ set 
\[
\omega_{\mathrm{LS},3}(\epsilon_{i}\pm\epsilon_{j})(s):=-i\cdot\delta_{s\bar{i}}\mp j\cdot\delta_{s\bar{j}},
\]
and note that 
\[
\widetilde{\omega}_{\mathrm{LS},3}(\hat{e}_{b_{k,l}})(s)=-k\cdot\delta_{s\bar{k}}-l\cdot\delta_{s\bar{l}},
\]
and in particular, the color of the hyperedge $\hat{e}_{b_{k,l}}$
is always $\bar{l}$ (with $k\leq l$). As a consequence, for any
$s\in\{0,\dots,12d-1\}$ the hypergraph $\mathrm{gr}_{\omega_{\mathrm{LS},3},s}(\widetilde{\mathcal{G}}_{b})$
consists only of hyperedges $\hat{e}_{b_{k,l}}$ with $l\,\mathrm{mod}\,12d=s$.
Using a similar weight, 
\[
\omega_{\mathrm{LS},4}(\epsilon_{i}\pm\epsilon_{j})(s):=n+(i-n)\delta_{s\bar{i}}\pm(j-n)\cdot\delta_{s\bar{j}},
\]
we get 
\[
\widetilde{\omega}_{\mathrm{LS},4}(\hat{e}_{b_{k,l}})(s)=nd+(k-n)\cdot\delta_{s\bar{k}}+(l-n)\delta_{s\bar{l}},
\]
and then for any $s_{1},s_{2}\in\{0,\dots,12d-1\}$ we get that $\widetilde{\mathcal{G}}_{b,s_{1},s_{2}}:=\mathrm{gr}_{\omega_{\mathrm{LS},4},s_{2}}\left(\mathrm{gr}_{\omega_{\mathrm{LS},3},s_{1}}(\widetilde{\mathcal{G}}_{b})\right)$
consists only of hyperedges $\hat{e}_{b_{k,l}}$ such that
\[
l\,\mathrm{mod}\,12d=s_{1}\text{ and }k\,\mathrm{mod}\,12d=s_{2}.
\]

\begin{lem}
\label{lem:Unique length}Fix $s_{1},s_{2}\in\{0,\dots,12d-1\}$.
Then all the hyperedges in $\widetilde{\mathcal{G}}_{b,s_{1},s_{2}}$
are of types $b_{k,l}$ of the same length. 
\end{lem}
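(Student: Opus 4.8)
The plan is to read off, from the descriptions of $\widetilde{\mathcal G}_{b,s_1,s_2}$ and of $\widetilde{\mathcal G}=\mathcal G_{+,u}$ given just before the lemma, exactly which type-$b$ roots occur as types of hyperedges of $\widetilde{\mathcal G}_{b,s_1,s_2}$, and then to combine this with the fact that the length $\ell(b_{k,l})$ depends on $(k,l)$ only through the sum $k+l$ (up to an additive constant). First I would record the elementary root-system computation: writing a type-$b$ root as $b_{k,l}=\epsilon_k+\epsilon_l$ with the convention $k\le l$, one checks by expanding in the simple roots of $B_n$, $C_n$, $D_n$ respectively that $\ell(b_{k,l})=2n-k-l+c_{\mathfrak g}$, where $c_{\mathfrak g}\in\{0,1,2\}$ depends only on the Cartan type of $\mathfrak g$ (for instance $c_{\mathfrak g}=0$ for $\mathfrak{so}_{2n}$, $c_{\mathfrak g}=1$ for $\mathfrak{sp}_{2n}$, $c_{\mathfrak g}=2$ for $\mathfrak{so}_{2n+1}$; the formula is uniform in $k,l$, in particular it is also the correct value for the root $2\epsilon_k$ in type $C$). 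This is the only computation in the proof and it is routine. Its consequence is that for any two type-$b$ roots, $\ell(b_{k,l})-\ell(b_{k',l'})=(k'+l')-(k+l)$.

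Next, by the two displayed identities for $\widetilde\omega_{\mathrm{LS},3}$ and $\widetilde\omega_{\mathrm{LS},4}$ established immediately above (which give that the color of $\widehat e_{b_{k,l}}$ under $\omega_{\mathrm{LS},3}$ is $\bar l$ and under $\omega_{\mathrm{LS},4}$ is $\bar k$, with $k\le l$), every hyperedge of
\[
\widetilde{\mathcal G}_{b,s_1,s_2}=\mathrm{gr}_{\omega_{\mathrm{LS},4},s_2}\left(\mathrm{gr}_{\omega_{\mathrm{LS},3},s_1}(\widetilde{\mathcal G}_b)\right)
\]
has a type of the form $\widehat e_{b_{k,l}}$ with $k\le l$, $l\bmod 12d=s_1$ and $k\bmod 12d=s_2$. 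Hence if $\widehat e_{b_{k,l}}$ and $\widehat e_{b_{k',l'}}$ are the types of two hyperedges of $\widetilde{\mathcal G}_{b,s_1,s_2}$, then $k+l\equiv k'+l'\ (\mathrm{mod}\ 12d)$, and therefore $\ell(b_{k,l})\equiv\ell(b_{k',l'})\ (\mathrm{mod}\ 12d)$ by the previous paragraph.

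Finally, since $\widetilde{\mathcal G}_{b,s_1,s_2}$ is obtained from $\widetilde{\mathcal G}=\mathcal G_{+,u}$ only by applying colorings, all of its hyperedge types $\widehat e_\delta$ satisfy $\ell(\delta)\in[L(u)+1,L(u)+12d]$, an interval consisting of $12d$ consecutive integers. Two integers in such an interval that are congruent modulo $12d$ must be equal; applying this to $\ell(b_{k,l})$ and $\ell(b_{k',l'})$ yields $\ell(b_{k,l})=\ell(b_{k',l'})$, which is the assertion. I do not expect a genuine obstacle; the only points needing a little care are keeping the convention $k\le l$ fixed throughout (so that the two colorings really pin down the residues of the larger and of the smaller index), and verifying that the length formula $\ell(b_{k,l})=2n-k-l+c_{\mathfrak g}$ holds uniformly, including for the long root $2\epsilon_k$ in type $C_n$.
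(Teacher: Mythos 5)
Your proof is correct and follows essentially the same route as the paper's: you use the length formula $\ell(b_{k,l})=2n-k-l+c_{\mathfrak g}$, the fact that the two colorings fix $k$ and $l$ modulo $12d$, and the constraint $\ell(\delta)\in[L(u)+1,L(u)+12d]$ inherited from $\mathcal G_{+,u}$. The only difference is that you spell out the modular-arithmetic step that the paper leaves implicit in the phrase ``it follows that $\ell(b_{k,l})$ has a unique value inside $[L(u)+1,L(u)+12d]$.''
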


\begin{proof}
Note that $\ell(b_{k,l})=2n-l-k$ (resp.~$2n-l-k+1$, $2n-l-k+2$)
for $\text{\ensuremath{\mathfrak{g}}}=\mathfrak{so}_{2n}$ (resp.~$\mathfrak{sp}_{2n},\mathfrak{so}_{2n+1}$).
But by the definition of $\mathcal{G}_{+,u}$, its edges $\mathcal{E}_{+,u}$
consists of $\widehat{e}_{\delta}$ with $\ell(\delta)\in[L(u)+1,L(u)+12d]$.
Since $\hat{e}_{b_{k,l}}\in\widetilde{\mathcal{G}}_{b,s_{1},s_{2}}$
it follows that $\ell(b_{k,l})$ has a unique value inside $[L(u)+1,L(u)+12d]$.
\end{proof}
Now for a fixed $s_{1},s_{2}\in\{0,\dots,12d-1\}$, let $L$ be the
unique length of the edges of $\widetilde{\mathcal{G}}_{b,s_{1},s_{2}}$.
Recall from Lemma \ref{lem:nice description of the hypergraph}, that
the vertices of $\widetilde{\mathcal{G}}_{b,s_{1},s_{2}}$ have lengths
inside the interval $[\left\lfloor \frac{L}{d}\right\rfloor -1,\left\lceil \frac{L}{d}\right\rceil +1]$.
Set $t:=\left\lfloor \frac{L}{d}\right\rfloor -2$. For any $i$ let
$\bar{i}=\left\lfloor \frac{i}{t}\right\rfloor \,\mathrm{mod}\,8d$.
Set
\[
\omega_{\mathrm{LS},5}(\epsilon_{i}\pm\epsilon_{j})(s):=n+(i-n)\delta_{s\bar{i}}\pm(j-n)\cdot\delta_{s\bar{j}},
\]
for $s\in\{0,\dots,8d-1\}$. Again we have
\[
\widetilde{\omega}_{\mathrm{LS},5}(\hat{e}_{b_{k,l}})(s)=nd+(k-n)\cdot\delta_{s\bar{k}}+(l-n)\delta_{s\bar{l}},
\]
and thus for any $s_{3}\in\{0,\dots,8d-1\}$ we get that $\widetilde{\mathcal{G}}_{b,s_{1},s_{2},s_{3}}:=\mathrm{gr}_{\omega_{\mathrm{LS},5},s_{3}}(\widetilde{\mathcal{G}}_{b,s_{1},s_{2}})$
consists only of hyperedges $\hat{e}_{b_{k,l}}$ such that
\[
l\,\mathrm{mod}\,12d=s_{1}\text{ and }k\,\mathrm{mod}\,12d=s_{2}\text{ and }\left\lfloor \frac{k}{t}\right\rfloor \,\mathrm{mod}\,8d=s_{3}.
\]

\begin{lem}
For any $s_{1},s_{2}\in\{0,\dots,12d-1\}$ and $s_{3}\in\{0,\dots,8d-1\}$,
the hypergraph $\widetilde{\mathcal{G}}_{b,s_{1},s_{2},s_{3}}$ is
a disjoint union of hypergraphs, each consisting of at most two hyperedges. 
\end{lem}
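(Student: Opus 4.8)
The plan is to describe explicitly the set of vertices supporting each hyperedge of $\widetilde{\mathcal G}_b$, and then to show that after the three level‑separation weights $\omega_{\mathrm{LS},3},\omega_{\mathrm{LS},4},\omega_{\mathrm{LS},5}$ the relation ``shares a vertex'' on the surviving hyperedges is a partial matching, so that every connected component of $\widetilde{\mathcal G}_{b,s_1,s_2,s_3}$ has at most two hyperedges.

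\emph{Structure of a hyperedge.} Fix $s_1,s_2$. By Lemma \ref{lem:Unique length} every hyperedge of $\widetilde{\mathcal G}_{b,s_1,s_2}$ has type $b_{k,l}$ with $\ell(b_{k,l})=L$ a fixed value; since $\ell(b_{k,l})=2n-k-l+\varepsilon$ with $\varepsilon\in\{0,1,2\}$, the sum $k+l=S$ is fixed and $1\le k\le S/2$, so the hyperedge is determined by $k$, which after $\omega_{\mathrm{LS},3}$ and $\omega_{\mathrm{LS},4}$ (whose telescoping formulas for $\widetilde\omega_{\mathrm{LS},3},\widetilde\omega_{\mathrm{LS},4}$ are already recorded) runs over $\mathcal K:=(s_2+12d\mathbb Z)\cap[1,S/2]$. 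By Lemma \ref{lem:nice description of the hypergraph} and Lemma \ref{lem:structure of roots of various types}, the unique hyperedge $\hat e_{b_{k,S-k}}$ is carried by a telescoping chain of type‑$a$ roots $\epsilon_{x_0}-\epsilon_{x_1},\dots,\epsilon_{x_{r-1}}-\epsilon_{x_r}$ with $x_0=k$, a telescoping chain $\epsilon_{y_0}-\epsilon_{y_1},\dots$ with $y_0=S-k$, and the single type‑$b$ root $\epsilon_{x_r}+\epsilon_{y_{d-1-r}}$; since the constituent lengths lie in $[\lfloor L/d\rfloor-1,\lceil L/d\rceil+1]$, writing $t'=\lfloor L/d\rfloor$ we get $x_i=k+it'+\Delta_i$ and $y_i=(S-k)+it'+\Delta_i'$ with $|\Delta_i|,|\Delta_i'|<2d$ and $0\le i\le d-1$.

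\emph{Disjointness.} A common vertex of $\hat e_{b_{k,S-k}}$ and $\hat e_{b_{k',S-k'}}$ forces two of their supporting roots to coincide; comparing endpoints (a type‑$b$ root can coincide only with a type‑$b$ root) this produces, for some $0\le i,j\le d-1$, an equation of one of the two shapes $k-k'=mt'+\delta$ or $(k+k')-S=mt'+\delta$, where $m=j-i\in[-(d-1),d-1]$, $|\delta|<4d$, and the second shape forces $m\le0$ (because $k,k'\le S/2$). Now restrict to colour $s_3$ of $\omega_{\mathrm{LS},5}$, whose telescoping formula forces $\lfloor k/t\rfloor\equiv s_3\pmod{8d}$ with $t=\lfloor L/d\rfloor-2=t'-2$; so the $k$'s lie in the blocks $\mathcal K_\mu:=\mathcal K\cap[t\mu,t(\mu+1))$ with $\mu\equiv s_3\pmod{8d}$. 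If $t'\le 20d$ then each $\mathcal K_\mu$ has at most two elements and the lemma follows, so assume $t'>20d$. Consecutive values of the right‑hand side $mt'+\delta$ differ by at least $t'-8d>12d$, and for each fixed $m$ the set $\{k':(k+k')-S-mt'\in(-4d,4d)\}$ (respectively $\{k':(k-k')-mt'\in(-4d,4d)\}$) is an interval of width $8d<12d$, hence contains at most one element of $s_2+12d\mathbb Z$, say $k'(m)$. As $m$ varies the points $k'(m)$ move by $\approx t'$, so $\lfloor k'(m)/t\rfloor$ changes by at most $2$ per unit of $m$; since $m$ ranges over fewer than $8d$ values, $\lfloor k'(m)/t\rfloor$ spans fewer than $8d$ values, so at most one $m$ yields a $k'(m)$ with $\lfloor k'(m)/t\rfloor\equiv s_3\pmod{8d}$, hence with $k'(m)$ in any block $\mathcal K_\mu$. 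Thus each $k$ has at most one partner in $\widetilde{\mathcal G}_{b,s_1,s_2,s_3}$.

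\emph{Conclusion and main obstacle.} Consequently ``shares a vertex'' is a partial matching on $\mathcal K$, so every connected component of $\widetilde{\mathcal G}_{b,s_1,s_2,s_3}$ has at most two hyperedges; the case of $\widetilde{\mathcal G}'_b$ is identical (the preceding two‑colour split of $\widetilde{\mathcal G}_{c,b}$ having already isolated the $b$‑type edges). I expect the real work to be the second paragraph: listing the admissible pairs $(i,j)$ in each coincidence equation (including the two sub‑cases of a type‑$b$/type‑$b$ coincidence occurring for $\mathfrak{so}_{2n+1}$), propagating the slacks $\Delta_i,\Delta_i'$ through them, and confirming that the combination of the two moduli $12d$ and $8d$, the block length $t'-2$, and the index range $[0,d-1]$ leaves at most one partner per $k$ in every case — this is precisely the accounting that pins the component size at two rather than larger, and it forces one to peel off the bounded‑$t'$ range first.
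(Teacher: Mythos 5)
Your strategy matches the paper's: read off the two coincidence shapes $k-k'=mt'+\delta$ and $(k+k')-S=mt'+\delta$ from the chain description of a type-$b$ hyperedge, and then use the $\bmod\,12d$ congruence on $k$ (coming from $\omega_{\mathrm{LS},3},\omega_{\mathrm{LS},4}$) together with the block constraint $\lfloor k/t\rfloor\equiv s_3\pmod{8d}$ (coming from $\omega_{\mathrm{LS},5}$) to force at most one non-trivial partner per hyperedge, whence a partial matching and components of size at most two. In the range $t'>20d$ your accounting closely parallels, and indeed spells out more of, the paper's terse two-sentence argument; the remaining work there is exactly the slack propagation you flag at the end.

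The concrete gap is the dispatch of the easy branch. From ``each block $\mathcal K_\mu$ contains at most two elements of $\mathcal K$'' you conclude ``the lemma follows'' when $t'\le 20d$, but that inference is not sound. The admissible blocks $\mathcal K_\mu$ with $\mu\equiv s_3\pmod{8d}$ repeat every $8dt$, so whenever $S/2>8dt$ (which is typical for small $L$, i.e.\ small $t'$) there are several of them in range; a shape-two coincidence $(k+k')-S=mt'+\delta$ can link a $k\in\mathcal K_\mu$ to a $k'\in\mathcal K_{\mu'}$ with $\mu\ne\mu'$. Nothing in your argument rules this out, and two such cross-block links can merge two 2-element blocks into a connected component of size $3$ or $4$, defeating the lemma. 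You need either to show directly that cross-block shape-two coincidences are impossible when $t'\le 20d$, or to run the shape-two counting from your $t'>20d$ paragraph in this regime as well — and there one has to be careful, because the step ``$\lfloor k'(m)/t\rfloor$ changes by at most $2$ per unit of $m$'' degrades when $t$ is of order $d$: the per-step increment $(t'\pm 8d)/t$ is then of order $1+4d/t$, and the span over $|m|<d$ values may exceed $8d$, so the ``at most one $m$'' conclusion no longer follows from the bound you give. Your closing paragraph correctly identifies the second paragraph as the real work; the small-$t'$ range must be absorbed into that accounting rather than dispatched by the block-size observation alone.
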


\begin{proof}
Let $k_{0}$ be minimal such that $\hat{e}_{b_{k_{0},l_{0}}}$ is
a hyperedge in $\widetilde{\mathcal{G}}_{b,s_{1},s_{2},s_{3}}$. Then
the types of the hyperedges in $\widetilde{\mathcal{G}}_{b,s_{1},s_{2},s_{3}}$
are a subset of the types $\{b_{k_{0}+12jd,l_{0}-12jd}\}_{j\in\nats}$,
where $k_{0}+12jd\leq l_{0}-12jd$ for any $j$. Furthermore, for
any hyperedge $\hat{e}_{b_{k,l}}$ in $\widetilde{\mathcal{G}}_{b,s_{1},s_{2},s_{3}}$,
$k$ belongs to a specific interval $[t_{0},t_{0}+\left\lfloor \frac{L}{d}\right\rfloor -1]$,
which is roughly the length of a vertex of $\hat{e}_{b_{k,l}}$. Lemma
\ref{lem:nice description of the hypergraph} and the weight $\omega_{\mathrm{LS},5}$
guarantee that two edges $\hat{e}_{b_{k,l}}$ and $\hat{e}_{b_{k',l'}}$
share a common vertex only if $\left|k-k'\right|<4d$, or $\left|k-l'-e\frac{L}{d}\right|<4d$
for some $e\in\ints$. The assumption that $k=k'\mathrm{\,mod\,}12d$
and $\omega_{\mathrm{LS},5}$ guarantee that either $b_{k,l}=b_{k',l'}$
or there is precisely one $\hat{e}_{b_{k',l'}}\neq\hat{e}_{b_{k,l}}$
with a common vertex. 
\end{proof}
Finally, using one more coloring function $\omega_{\mathrm{LS},6}$
(involving two colors) we may split $\widetilde{\mathcal{G}}_{b,s_{1},s_{2},s_{3}}$
to two hypergraphs, each a disjoint union of hyperedges. Moreover,
each of these hyperedges is of type $b_{k,l}$ and by Lemma \ref{lem:nice description of the hypergraph},
the corresponding monomials (via Lemma \ref{lem:combinatorial word map})
are of the form $x_{1}^{n_{1}}\cdot\dots\cdot x_{k}^{n_{k}}$ where
$\sum\limits _{i=1}^{k}n_{i}=d$ and $n_{i}\leq2$. By Corollary \ref{cor:small polyhypergraphs are (FRS)},
$\mathrm{dim}V\geq r$ (resp.~$\mathrm{dim}V\geq3r$) is enough for
each hyperedge to be flat (resp.~(FRS)). In summary, we have 
\begin{enumerate}
\item The hypergraph $\widetilde{\mathcal{G}}_{a}$ is flat after $4d^{3}$
convolutions and (FRS) after $8d^{3}$ convolutions. 
\item The hypergraph $\widetilde{\mathcal{G}}_{c}$ is (FRS) (and flat)
after $12d^{2}$ convolutions. 
\item The hypergraphs $\widetilde{\mathcal{G}'}_{b}$ and $\widetilde{\mathcal{G}}_{b}$
are flat after $12d\cdot12d\cdot8d\cdot2\cdot1=2304d^{3}$ convolutions,
and (FRS) after $12d\cdot12d\cdot8d\cdot2\cdot3=6912d^{3}$ convolutions. 
\end{enumerate}
Altogether we get that $\mathcal{G}_{+}$ is flat after $10^{4}d^{3}>2\cdot\left(4d^{3}+12d^{2}+2\cdot2304d^{3}\right)$
convolutions and (FRS) after $3\cdot10^{4}d^{3}$ convolutions.

\textbf{Proof for $\mathcal{G}_{\mathrm{small}}$:}

Since in the general case we have a weaker version of Proposition
\ref{prop:reduction to a hypergraph} (namely Proposition \ref{prop:reduction to hypergraph-general case}),
we need to apply a more careful analysis than in the $\mathfrak{sl}_{n}$
case. Set $\bar{i}=i\,\mathrm{mod\,}8d^{2}$ and $\omega_{\mathrm{LS},7}:\mathcal{V}_{\mathrm{small}}\rightarrow\mathbb{N}^{\{0,\dots,8d^{2}-1\}}$
by
\[
\omega_{\mathrm{LS},7}(\pm\epsilon_{i}\pm\epsilon_{j})(s):=n\pm(i-n)\cdot\delta_{s\bar{i}}\pm(j-n)\cdot\delta_{s\bar{j}},
\]
for $s\in\{0,1\}$. Note that both $\hat{e}_{a_{k,l}},\hat{e}_{b_{k,l}}$
or $\hat{e}_{c_{k}}$ has the color $\bar{k}$ (with $k<l$). By Proposition
\ref{prop:reduction to hypergraph-general case} (and by the definition
of $\mathcal{E}_{\mathrm{small}}$), we have that $\mathrm{gr}_{\omega_{\mathrm{LS},s}}(\mathcal{G}_{\mathrm{small}})$
is a disjoint union of hypergraphs with at most $8d$ hyperedges.
Using a similar coloring function with $8d$ colors, we reduce to
a disjoint union of hyperedges (i.e., each hypergraph consists of
hyperedges $\hat{e}_{a_{k,l}},\hat{e}_{b_{k,l}}$ or $\hat{e}_{c_{k}}$
with $k=k_{0}\,\mathrm{mod\,}8d^{2}$ and $l=l_{0}\,\mathrm{mod\,}8d$
for some $k_{0},l_{0}$). By Corollary \ref{cor:small polyhypergraphs are (FRS)},
$d$ convolutions are enough for the (FRS) property of a hyperedge.

\textbf{Summary for $\mathcal{G}_{\mathrm{small}}$ and for $\Gamma_{+}$
and $\Gamma_{-}$:}

$\mathcal{G}_{\mathrm{small}}$ is flat after $64d^{3}=8d^{2}\cdot8d\cdot1$
convolutions and (FRS) after $64d^{4}=8d^{2}\cdot8d\cdot d$ convolutions.
Combining with the case of $\mathcal{G}_{+}$ we get that: 
\begin{enumerate}
\item The hypergraphs $\Gamma_{+}$ and $\Gamma_{-}$ are flat after $2\cdot10^{4}d^{3}>10^{4}d^{3}+64d^{3}$. 
\item The hypergraphs $\Gamma_{+}$ and $\Gamma_{-}$ are (FRS) after $4\cdot10^{4}d^{4}>3\cdot10^{4}d^{3}+64d^{4}$. 
\end{enumerate}

\subsubsection{Proof for $\Gamma_{0}$: the general case}

The proof is almost identical to the case of $\mathfrak{sl}_{n}$
(see Subsection \ref{subsec:The-case-of gamma zero}). Let $\mathfrak{g}$
be either $\mathfrak{so}_{2n}$, $\mathfrak{so}_{2n+1}$ or $\mathfrak{sp}_{2n}$.
By eliminating edges containing vertices of type $c$, we may assume
that $\mathfrak{g}$ is $\mathfrak{so}_{2n}$ or $\mathfrak{sp}_{2n}$.
As in the case of $\mathfrak{sl}_{n}$, using an averaging weight,
a monomialization weight, and the level separation method, we can
reduce the $m$-th jet map $J_{m}(\Phi_{\Gamma_{0},V,\{\tau_{\gamma}\}})$
to a disjoint union of morphisms to a $\mathrm{dim}\mathfrak{t}\cdot(2d+1)$-dimensional
vector space.

Let $\mathfrak{sl}_{n}\hookrightarrow\mathfrak{g}$ be the map $A\mapsto\left(\begin{array}{cc}
A & 0\\
0 & -A^{t}
\end{array}\right)$. Note that $\triangle$ contains $n-1$ simple roots $\{a_{12},...,a_{n-1,n}\}$
of type $a$, and one simple root ($b_{n}:=b_{n,n}$ or $b_{n}:=b_{n-1,n}$)
of type $b$. Let $\mathfrak{h}$ be the simple Lie algebra generated
by $e_{a_{n-d,n-d+1}},...,e_{a_{n,n-1}}$ and $e_{b_{n}}$ (of the
same type as $\mathfrak{g}$). Using an elimination weight, we may
eliminate all monomials involving variables outside of $\mathrm{span}\{\mathfrak{h},\mathfrak{sl}_{n}\}$.
From here, the proof is essentially the same as in the case of $\mathfrak{sl}_{n}$,
where $\mathfrak{h}$ takes the role of $\mathrm{span}\{e_{i,j}\}_{i,j\leq d}$.
In particular, $(\Gamma_{0},V,\{\tau_{\gamma}\})$ is flat if $\mathrm{dim}V\geq12d^{2}r$
and it is (FRS) for $\mathrm{dim}V\geq60d^{3}r$, as in the case of
$\mathfrak{sl}_{n}$.

\subsubsection{\label{subsec:Proof-of-the main theorems}Proof of Theorems \ref{thm: main thm Lie algebra word maps},
\ref{thm:lower bounds on epsilon jet flatness} and \ref{thm:main theorem for general Lie algebra}}

Theorem \ref{thm:main theorem for general Lie algebra} follows from
the analysis we did for $\Gamma_{+},\Gamma_{-}$ and $\Gamma_{0}$. 
\begin{proof}[Proof of Theorem \ref{thm: main thm Lie algebra word maps}]
By Propositions \ref{prop:reduction to self convolutions} and \ref{prop:reduction to self convolution- the general case}
it is enough to prove the theorem in the case of self-convolutions
of a single $d$-homogeneous word $w\in\mathcal{L}_{r}$ of pure type,
at the price of twice the required number of convolutions. By Proposition
\ref{prop:extended properties of convolutions} and Corollary \ref{cor:jet-flat implies FTS after two convolusions},
it is enough to prove we can obtain the flatness and (FRS) properties. 
\begin{enumerate}
\item For a simple Lie algebra $\mathfrak{g}$ with $\mathrm{rk}(\mathfrak{g})>4d$,
the map $\varphi_{w}^{*t}$ is flat (resp.~(FRS)) if $t\geq4\cdot10^{4}d^{3}$
(resp.~$t\geq8\cdot10^{4}d^{4}$). 
\item For a simple Lie algebra $\mathfrak{g}$ with $\mathrm{rk}(\mathfrak{g})\leq4d$,
we have $\mathrm{dim}\mathfrak{g}\leq100d^{2}$. By Corollary \ref{cor:low dimensional Lie algebras},
the map $\varphi_{w}^{*t}$ is (FRS) if $t\geq100d^{3}$.
\end{enumerate}
This finishes the proof. 
\end{proof}
\begin{proof}[Proof of Theorem \ref{thm:lower bounds on epsilon jet flatness}]
Let $w\in\mathcal{L}_{r}$ be a Lie algebra word of degree $d$ and
let $\mathfrak{g}$ be a simple $K$-Lie algebra. At first assume
that $\mathrm{rk}(\mathfrak{g})<50d$ (and thus $\mathrm{dim}(\mathfrak{g})<10^{4}d^{2}$).
Then for any $Y\in\mathfrak{g}$ the fiber $\varphi_{w}^{-1}(Y)$
is defined by polynomials of degree at most $d$. By Part (2) of Fact
\ref{fact:properties of log cannonical threshold}, we have $\mathrm{lct}(\mathfrak{g}^{r},\varphi_{w}^{-1}(Y))\geq\frac{1}{d}$
and therefore by Lemma \ref{lem:epsilon jet flat and log canonical threshold}
$\varphi_{w}:\mathfrak{g}^{r}\rightarrow\mathfrak{g}$ is $\frac{1}{10^{4}d^{3}}$-jet
flat.

Now assume that $\mathrm{rk}\mathfrak{g}\geq50d$. Choose a Chevalley
basis $\mathcal{B}=\{e_{\beta}\}_{\beta\in\Sigma(\mathfrak{g},\mathfrak{t})}\cup\{h_{\alpha}\}_{\alpha\in\mathfrak{g}}$
and let $\hat{\mathcal{B}}$ be its dual basis. Identify $\varphi_{w}:\mathfrak{g}^{r}\rightarrow\mathfrak{g}$
with $\psi_{w}:\mathfrak{g}^{r}\rightarrow\mathbb{A}^{\mathrm{dim}\mathfrak{g}}$
via $\mathcal{B}$ and $\hat{\mathcal{B}}$. Let $\pi_{+}:\mathbb{A}^{\mathrm{dim}\mathfrak{g}}\rightarrow\mathbb{A}^{N}$
be the projection to the root subspaces $\{e_{\beta}\}$, such that
$\beta$ is a positive root and $\ell(\beta)>4d$. By our assumption
on $\mathfrak{g}$, we have $N>\frac{1}{4}\mathrm{dim}\mathfrak{g}$.
Notice that by Subsection \ref{subsec:Proof-for-the hypergraph G+- and Gsmall}
($\pi_{+}\circ\psi_{w})^{*t}$ is (FRS) for $t\geq3\cdot10^{4}d^{3}$.
By Lemma \ref{lem:convolution gives bounds on epsilon flatness},
$\pi_{+}\circ\psi_{w}$ is $\frac{1}{3\cdot10^{4}d^{3}}$-jet flat,
so for any $Y\in\mathbb{A}^{\mathrm{dim}\mathfrak{g}}$ we have by
Fact \ref{fact:properties of log cannonical threshold},
\[
\mathrm{lct}(\mathfrak{g}^{r},\psi_{w}^{-1}(Y))\geq\mathrm{lct}(\mathfrak{g}^{r},(\pi_{+}\circ\psi_{w})^{-1}(\pi_{+}(Y)))\geq\frac{N}{3\cdot10^{4}d^{3}}\geq\frac{\mathrm{dim}\mathfrak{g}}{2\cdot10^{5}d^{3}}.
\]
Altogether we get that $w$ is $\frac{1}{2\cdot10^{5}d^{3}}$-jet
flat. 

For matrix word map, the same analysis yields that $w\in\mathcal{A}_{r}$
is $\frac{1}{25d^{3}}$-jet flat. 
\end{proof}

\section{\label{sec:The-commutator-map-revisited}The commutator map revisited}

In this section we use the methods of Subsection \ref{subsec:Degenerations-of-jets of word maps}
in order to provide a simple proof to the following theorem: 
\begin{thm}
\label{thm:Commutator is (FRS) after 4 convolutions}Let $w_{0}=xyx^{-1}y^{-1}\in F_{r}$
and $w=[X,Y]\in\mathcal{L}_{r}$ be the commutator words. Then for
any semisimple algebraic $K$-group $\underline{G}$, with Lie algebra
$\mathfrak{g}$, the corresponding word maps $\varphi_{w_{0}}(x,y)=xyx^{-1}y^{-1}$
and $\psi_{w}(X,Y)=[X,Y]$ satisfy 
\begin{enumerate}
\item $\varphi_{w_{0}}^{*4}:\underline{G}^{8}\rightarrow\underline{G}$
is (FRS). 
\item $\psi_{w}^{*4}:\mathfrak{g}^{8}\rightarrow\mathfrak{g}$ is (FRS). 
\end{enumerate}
\end{thm}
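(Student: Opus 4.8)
By \cite[Theorem IV]{AA16} it suffices to prove the Lie algebra statement (2), and moreover it is enough to show that $\psi_w^{*4}:\mathfrak{g}^8\to\mathfrak{g}$ is (FRS) at $(0,\ldots,0)\in\mathfrak{g}^8$; indeed, since (FRS) is an open condition and $\psi_w^{*4}$ is conjugation-equivariant and homogeneous, the bad locus is a conical conjugation-invariant closed set, which (by \cite{AA16}) for the commutator word is concentrated at the origin once we know flatness. So I would first establish part (1) of Theorem \ref{thm F:commutator map is (FRS)}, namely that $\psi_w^{*2}:\mathfrak{g}^4\to\mathfrak{g}$ is flat with reduced, geometrically irreducible fibers --- this is Theorem \ref{Thm:commutator is flat}, which the excerpt tells us is proved via Fourier analysis; I would simply invoke it. Combined with Corollary \ref{cor: word maps are (FAI) } (or the direct statement in the excerpt) this gives that $\psi_w^{*2}$ is (FGI), and then by Corollary \ref{cor: singularity properties through dim of jets}(2) it remains to show that $J_m(\psi_w^{*2}):J_m(\mathfrak{g}^4)\to J_m(\mathfrak{g})$ is flat with geometrically irreducible fibers for every $m$, after two more convolutions --- i.e. that $J_m(\psi_w^{*4})$ is (FGI) for all $m$.

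\textbf{Reduction via jets and degeneration.} Identify $J_m(\mathfrak{g})\simeq\mathfrak{g}^{m+1}$, so that $J_m(\psi_w^{*2})=(\psi_w^{*2},(\psi_w^{*2})^{(1)},\ldots,(\psi_w^{*2})^{(m)})$, where by Lemma \ref{lem: jets of word maps are towers of word maps} each formal derivative $(\psi_w^{*2})^{(u)}=(\psi_w^{(u)})^{*2}$ is a $2$-homogeneous Lie algebra word map. By Proposition \ref{Prop:properties preserved under deformations} (item (3), flatness with locally integral fibers) and Corollary \ref{cor:degeneration on jet schemes}, it is enough to check the desired property at $(x,0)\in J_m(\mathfrak{g}^4)$ and, moreover, to replace $J_m(\psi_w^{*4})$ by a suitable $\mathbb{G}_m$-degeneration of it. Here I would run the averaging/monomialization/level-separation machinery of Subsection \ref{subsec:Discussion-on-the methods}, exactly as in Proposition \ref{prop:reduction of the jet map to a simpler map}: using $\omega_{\mathrm{av}}(X^{(u)})=\omega_{\mathrm{av}}(Y^{(u)})=3^u$ to collapse each $(\psi_w^{(u)})$ to its "balanced" pure summand, and then, after one more pair of convolutions, a level-separation weight $\omega_{\mathrm{LS}}$ separating the coordinates into blocks with pairwise disjoint variables. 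Since each $\varphi_w^{(u)}$ survives non-trivially under these degenerations (Corollary \ref{cor:all parial derivatives survive in the formal derivative of a word}), and each resulting block is a product of commutator-type maps in disjoint variables, the degenerated map takes the form $\phi_m(Y_1,\ldots,Y_{2m})=([Y_1,Y_2],[Y_3,Y_4],\ldots,[Y_{2m-1},Y_{2m}])$ up to harmless reindexing, whose fiber over $0$ is literally a product of fibers of the commutator map $\psi_w:\mathfrak{g}^2\to\mathfrak{g}$.

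\textbf{Conclusion.} A product of flat maps with geometrically irreducible, reduced fibers has the same properties, so $\phi_m^{-1}(0)$ is integral and of the expected dimension; tracing back through Corollaries \ref{cor:degeneration on jet schemes}, \ref{cor:reduction to degeneration} and \ref{cor: singularity properties through dim of jets} we get that $J_m(\psi_w^{*4})$ is flat with geometrically irreducible fibers for all $m$, hence $\psi_w^{*4}$ is (FRS), proving (2); and then \cite[Theorem IV]{AA16} upgrades this to (1). The one input I cannot bypass and which I expect to be the real work is Theorem \ref{Thm:commutator is flat} itself --- that $\psi_w^{*2}$ (equivalently $\varphi_{w_0}^{*2}$) is already flat with reduced, geometrically irreducible fibers on an arbitrary semisimple $\mathfrak{g}$. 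This is where the Fourier-analytic estimate lives (bounding $\|\mathcal{F}(\tau_{p,\psi_w})\|_2$ uniformly, via the point-count criterion Theorem \ref{thm: flatness and counting points}), and in the Lie algebra case outside type $A$ no prior proof exists; everything else in the argument is formal manipulation of jets, degenerations, and the Thom--Sebastiani-type behavior of a single commutator block.
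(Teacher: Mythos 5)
Your proposal follows essentially the same route as the paper's proof: reduce via \cite[Theorem~IV]{AA16} to the Lie algebra statement at the origin, take Theorem~\ref{Thm:commutator is flat} (proved by Fourier analysis) as the key input, pass to $m$-th jets, degenerate $J_m(\psi_w^{*4})$ using averaging/monomialization/level-separation weights to a tower of disjoint commutator blocks, and conclude via Proposition~\ref{Prop:properties preserved under deformations} and Corollary~\ref{cor: singularity properties through dim of jets}. The overall architecture matches the paper exactly.

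There is, however, a gap in your concluding step. You assert that the degenerated map $\phi_m(Y_1,\ldots,Y_{2m})=([Y_1,Y_2],\ldots,[Y_{2m-1},Y_{2m}])$ has fiber over $0$ ``literally a product of fibers of the commutator map $\psi_w:\mathfrak{g}^2\to\mathfrak{g}$,'' and then appeal to ``a product of flat maps with geometrically irreducible, reduced fibers'' to conclude that $\phi_m^{-1}(0)$ is integral and of the expected dimension. This is false as written: $\psi_w$ itself is \emph{not} flat --- $\psi_w^{-1}(0)$ is the commuting variety, of dimension $\dim\mathfrak{g}+\mathrm{rk}(\mathfrak{g})>\dim\mathfrak{g}$ --- so $\phi_m^{-1}(0)$ is integral but strictly too large, and ``product of flat maps'' does not apply. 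The correct object, and the one the paper actually uses, is $\phi_m^{*2}$, whose fiber over $0$ is a product of fibers of $\psi_w^{*2}$; it is $\psi_w^{*2}$, not $\psi_w$, that Theorem~\ref{Thm:commutator is flat} makes flat with reduced geometrically irreducible fibers. This dropped factor of $2$ is precisely the source of the exponent $4$ in the theorem: two copies come from the two-color level separation, and two more are needed for each commutator block to become flat. With $\phi_m$ and $\psi_w$ replaced by $\phi_m^{*2}$ and $\psi_w^{*2}$ in the final paragraph, your argument closes exactly as in the paper.
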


It was shown in \cite{AA16}, that $\varphi_{w_{0}}^{*21}$ is (FRS)
for all semisimple algebraic groups $\underline{G}$, and that $\psi_{w}^{*21}$
is (FRS) for all classical Lie algebras. By using a more efficient
degenerations of graphs, these bounds were improved to $11$ in \cite{Kap}.
It is conjectured that $\varphi_{w_{0}}^{*2}$ and $\psi_{w}^{*2}$
are (FRS), and this conjecture was verified for $\mathrm{SL}_{n}$
in \cite{Bud}. Theorem \ref{thm:Commutator is (FRS) after 4 convolutions}
improves these bounds (outside of the $\mathrm{SL}_{n}$ and $\mathfrak{sl}_{n}$
cases), and its proof is significantly simpler. It also treats the
case of exceptional Lie algebras, which was untreated before. Furthermore,
in \cite{AA16,AA18}, the (FRS) property of $\varphi_{w_{0}}^{*21}$
was used to bound the representation growth of compact $p$-adic and
arithmetic groups by a polynomial independent of the rank of the group.
Theorem \ref{thm:Commutator is (FRS) after 4 convolutions} improves
these bounds (see Corollary \ref{cor: representation growth}).

By \cite[Theorem IV]{AA16}, in order to prove Theorem \ref{thm:Commutator is (FRS) after 4 convolutions},
it is enough to show that $\psi_{w}^{*4}:\mathfrak{g}^{4}\rightarrow\mathfrak{g}$
is (FRS) at $(0,\ldots,0)$. We start by proving the following theorem,
which is interesting on its own. 
\begin{thm}
\label{Thm:commutator is flat}The map $\psi_{w}^{*2}$ is flat with
geometrically irreducible, reduced fibers. 
\end{thm}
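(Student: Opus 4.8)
\textbf{Proof proposal for Theorem \ref{Thm:commutator is flat}.}

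The plan is to establish the three properties separately, reducing each to a computation over finite fields or to the jet-scheme criteria developed in Section \ref{sec:Singulrities-of-word}. First, flatness of $\psi_w^{*2}:\mathfrak{g}^4\to\mathfrak{g}$. We may assume $\mathfrak{g}$ is simple. By Proposition \ref{Prop:properties preserved under deformations} and the fact that the commutator map $[\cdot,\cdot]:\mathfrak{g}^2\to\mathfrak{g}$ is a $2$-homogeneous word map which is generating (Proposition \ref{prop:Lie algebra word maps are generating}, since $\varphi_w(\mathfrak{sl}_2^2)\neq 0$), we can apply the combinatorial machinery of Sections \ref{sec:Lie-algebra-word}--\ref{sec:Proof-of-Theorems- Lie algebra} in the special case $d=2$. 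The point is that the abstract bounds $Cd^3$ and $Cd^4$ are wasteful for $d=2$: the polyhypergraph $\Gamma_{\mathfrak{g},w}$ attached to the commutator is honestly a polygraph (each hyperedge has exactly two vertices), and the analysis of \cite[Section 2.6]{AA16} shows that for any classical $\mathfrak{g}$ it reduces, after the elimination/coloring steps summarized in Subsection \ref{subsec:Combinatirical-picture-for commutator}, to disjoint unions of single edges, each of which corresponds to $\Psi(x_1,x_2,x_3,x_4)=x_1x_2-x_3x_4$ (for positive/negative root types) or to small polygraphs with few types (for the Cartan type). A single edge with $\dim V=2$ is already flat by Corollary \ref{cor:small polyhypergraphs are (FRS)}, since $d_{\mathrm{mon}}(x_1x_2)=1$; tracking the number of colours used in the $d=2$ case (which is a bounded constant, not growing), one finds that two copies of $V$ — i.e.\ $\psi_w^{*2}$ — suffice for flatness. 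For exceptional $\mathfrak{g}$ one instead invokes Corollary \ref{cor:low dimensional Lie algebras}(1), or more carefully notes that $\dim\mathfrak{g}$ is bounded by an absolute constant so a fixed number of convolutions works, and then uses Lemma \ref{lem:convolution gives bounds on epsilon flatness} together with a direct check that two convolutions already land in the flat regime; alternatively, flatness of $\psi_w^{*2}$ for all $\mathfrak{g}$ can be deduced uniformly from the Fourier-analytic argument below, since an $L^\infty$ bound on the pushforward measure is equivalent to flatness over finite fields by Theorem \ref{thm: flatness and counting points}.

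The cleanest route to \emph{all three} properties simultaneously is the Fourier-analytic one, which is how I would organize the proof. By \cite[Section 6]{GH19} we may reduce to $K=\rats$ and choose a $\ints$-model. Write $\mathfrak{g}_p=\mathfrak{g}(\mathbb{F}_p)$, let $\mu$ be the uniform measure on $\mathfrak{g}_p^2$ and $\tau_p:=(\psi_w)_*\mu$ the word measure of the commutator. The key input is the classical formula expressing the Fourier coefficients of $\tau_p$ in terms of characters: for $\xi\in\widehat{\mathfrak{g}_p}\cong\mathfrak{g}_p^*$, using an invariant nondegenerate form to identify $\xi$ with an element of $\mathfrak{g}_p$, one computes $\widehat{\tau_p}(\xi)$ via a Kirillov-orbit / Frobenius-type sum, and the bound $|\widehat{\tau_p}(\xi)|\le C\cdot p^{-1/2}$ holds for all $\xi\neq 0$ uniformly in $p$ and in $\mathfrak{g}$, with $C$ absolute — this is exactly the content of \cite{GS09}, whose commutator-word estimate is stated in the excerpt to give almost $\{\mathbb{F}_q\}$-uniformity in $L^1$. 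Consequently $\|\widehat{\tau_p}^{\,2}\|_\infty\le C^2 p^{-1}$ off the trivial character, so $\|\tau_p^{*2}-\pi\|_2^2=\sum_{\xi\neq 0}|\widehat{\tau_p}(\xi)|^4\le C^4 p^{-2}\cdot|\mathfrak{g}_p|\to 0$ relative to the uniform measure, and in particular $\|\tau_p^{*2}\|_\infty=|\mathfrak{g}_p|\max_y\tau_p^{*2}(y)$ is bounded: $|h_{(\psi_w^{*2})^{-1}(y)}(\mathbb{F}_p)-1|<C' p^{-1/2}$ uniformly. By Theorem \ref{thm: flatness and counting points} (the finite-field characterization of flatness) this gives that $\psi_w^{*2}$ is flat; and since the fibers of a flat morphism between smooth varieties are local complete intersections, the stronger estimate $|h-1|<C'p^{-1/2}$ together with Theorem \ref{thm: rational singularities and point count}(2) — applied fiber-by-fiber, which is legitimate here because a uniform bound over all $y\in\mathfrak{g}_p$ and all $p$ localizes to each individual $\ints$-fiber — yields that every geometric fiber is reduced and geometrically irreducible (indeed has rational singularities, though only reducedness and irreducibility are asserted). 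One must be slightly careful that Theorem \ref{thm: rational singularities and point count} is stated over $\ints/p^k\ints$ rather than over all $\mathbb{F}_q$; the passage to $\mathbb{F}_q=\mathbb{F}_{p^r}$ is handled by base change and the same character estimate over $\mathbb{F}_q$, and reducedness plus geometric irreducibility of the fibers is detected already at the $\mathbb{F}_q$-level via Lang--Weil once we know the normalized point counts tend to $1$.

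I expect the main obstacle to be making the character/Fourier estimate genuinely uniform over all simple Lie algebras $\mathfrak{g}$, including the exceptional ones and small ranks, with an absolute constant $C$ — \cite{GS09} gives this for the commutator word, but one should double-check that the cited estimate is of the form $|\widehat{\tau_p}(\xi)|\ll p^{-1/2}$ rather than merely $o(1)$, since only a genuine $p^{-1/2}$ decay feeds Theorem \ref{thm: rational singularities and point count}(2). If the available estimate is weaker, the fallback is to prove flatness purely combinatorially as sketched in the first paragraph (which needs no uniform character bound, only the elimination/coloring reductions for $d=2$), then obtain geometric irreducibility of the fibers from Corollary \ref{cor: word maps are (FAI) }-type arguments in the Lie algebra setting — namely, that $\psi_w*\psi_w$ has a geometrically irreducible generic fiber (a Lie-algebra analogue of Theorem \ref{thm:-convolution of two word maps is generically absolutely irreducible}, which holds since the commutator map is dominant and one extra convolution spreads irreducibility to every fiber exactly as in the proof of Corollary \ref{cor: word maps are (FAI) }), and finally deduce reducedness from generic smoothness plus the fact that a flat map with generically reduced, Cohen--Macaulay fibers has reduced fibers (Serre's $R_0+S_1$, using $\dim(\text{singular locus of a fiber})\le\dim(\text{fiber})-1$ via generic smoothness of the dominant commutator map, as in Proposition \ref{prop:extended properties of convolutions}). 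This second route avoids \cite{GS09} entirely at the cost of a longer argument, so I would present the Fourier proof as the main line and remark on the combinatorial alternative.
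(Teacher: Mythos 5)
Your proposal is structured around the same Fourier-analytic framework as the paper's proof, but the central estimate is wrong and the argument does not close. You appeal to a pointwise Fourier bound $|\widehat{\tau_p}(\xi)|\le Cp^{-1/2}$ for $\xi\neq 0$, attributed to \cite{GS09}, and then claim $\|\tau_p^{*2}-\pi\|_2^2 = \sum_{\xi\neq 0}|\widehat{\tau_p}(\xi)|^4\le C^4 p^{-2}\cdot|\mathfrak{g}_p|\to 0$. But $|\mathfrak{g}_p| = p^{\dim\mathfrak{g}}$, so the right-hand side is $C^4p^{\dim\mathfrak{g}-2}$, which diverges for every simple $\mathfrak{g}$. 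In fact no \emph{pointwise} Fourier bound can close this argument in high rank: the commutator word measure has the explicit Fourier transform $\widehat{\tau_p}(Z)=\frac{|\mathrm{Cent}_{\mathfrak{g}(\mathbb{F}_p)}(Z)|}{|\mathfrak{g}(\mathbb{F}_p)|}$, and even the sharp bound $|\widehat{\tau_p}(Z)|\lesssim p^{-\mathrm{rk}(\mathfrak{g})-1}$ (Lemma \ref{lem:6.4}) makes $\sum_{Z\neq0}|\widehat{\tau_p}(Z)|^2\lesssim p^{\dim\mathfrak{g}-2\mathrm{rk}(\mathfrak{g})-2}$, which still blows up once $\dim\mathfrak{g}>2\mathrm{rk}(\mathfrak{g})+2$ (i.e.\ for all but the smallest $\mathfrak{g}$). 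What the paper actually does is evaluate the \emph{sum} exactly rather than bounding it term by term: using the formula above,
\[
\tau_w^{*2}(0)=\sum_{Z\in\mathfrak{g}(\mathbb{F}_p)}\frac{|\mathrm{Cent}_{\mathfrak{g}(\mathbb{F}_p)}(Z)|^2}{|\mathfrak{g}(\mathbb{F}_p)|^3}=\frac{|\Upsilon(\mathbb{F}_p)|}{|\mathfrak{g}(\mathbb{F}_p)|^3},\quad\text{with }\Upsilon=\{(X,Y,Z):[X,Z]=[Y,Z]=0\},
\]
and then proves $|\Upsilon(\mathbb{F}_p)|=|\mathfrak{g}(\mathbb{F}_p)|^2(1+O(p^{-1/2}))$ by a stratified dimension count over the centralizer strata $S_k$ (Lemmas \ref{lem:6.3}--\ref{lem:6.4} and Proposition \ref{prop:good point count}). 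That dimension analysis of $\Upsilon$, not a uniform decay of individual Fourier coefficients, is the missing idea. (Also note \cite{GS09} is invoked in the paper only for $L^1$-uniformity, not for a $p^{-1/2}$ bound on character sums.)

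Your combinatorial fallback also overstates what is available. The polyhypergraph machinery of Theorem \ref{thm: main thm Lie algebra word maps} specialized to $d=2$ yields flatness only after $Cd^3$ self-convolutions with $C$ in the hundreds or thousands; nothing in that machinery gives flatness of the commutator after \emph{two} convolutions, and the paper does not derive flatness that way. Likewise the paper never establishes a Lie-algebra analogue of Theorem \ref{thm:-convolution of two word maps is generically absolutely irreducible}, so your route to geometric irreducibility via a hypothetical such analogue is unsupported. The parts you do have right are: the symmetry observation that $\tau_w$ is symmetric so $0$ maximizes $\tau_w^{*2}$ (though you need this to reduce the uniform bound over all $y$ to the single evaluation at $y=0$), and the final reducedness argument via Cohen--Macaulayness, Serre's $R_0+S_1$ criterion, generic smoothness of $\psi_w$ over a dense open $U\subseteq\mathfrak{g}$, and $U+U=\mathfrak{g}$, which essentially matches the paper's last paragraph.
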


The flatness of $\varphi_{w_{0}}^{*2}:\underline{G}^{4}\rightarrow\underline{G}$
follows from \cite{Li93} and \cite[Theorem 2.8]{Sha09}. For Lie
algebras, as far as we know, only flatness of $\mathfrak{sl}_{n}$
was previously known (see \cite{CB01} and also \cite{Bud}). 
\begin{proof}
We may assume that $K=\complex$. Let $\mathfrak{g}$ be a simple
Lie algebra and $\langle\cdot,\cdot\rangle$ be the Killing form on
$\mathfrak{g}$. We can endow $\mathfrak{g}$ with a $\ints$-structure,
by choosing a Chevalley basis. For almost any prime $p$, the form
$\langle\cdot,\cdot\rangle$ is non-degenerate on $\mathfrak{g}(\mathbb{F}_{p})$
(see e.g.~\cite{Sel67}), and thus we have an identification $Z\mapsto\Psi_{Z}$
between $\mathfrak{g}(\mathbb{F}_{p})$ and $\mathfrak{g}(\mathbb{F}_{p})^{\vee}$,
where $\Psi_{Z}(W):=\Psi(\langle Z,W\rangle)$ and $\Psi$ is a fixed
non-trivial additive character of $\mathbb{F}_{p}$. Set $\tau_{w}(X):=\frac{\left|\psi_{w}^{-1}(X)\right|}{\left|\mathfrak{g}(\mathbb{F}_{p})\right|^{2}}$.
In order to prove that $\psi_{w}^{*2}$ is (FGI) it is enough to show
\[
\left|\mathfrak{g}(\mathbb{F}_{p})\right|\cdot\left|\tau_{w}^{*2}(0)-\frac{1}{\left|\mathfrak{g}(\mathbb{F}_{p})\right|}\right|=O(p^{-1/2}),\tag{\ensuremath{\star}}
\]
for almost any prime $p$. Indeed, the Lang-Weil bounds and $(\star)$
imply that $\psi_{w}^{*2}$ is (FGI) over $0$. Note that $\tau_{w}$
is a symmetric measure (i.e.~$\tau_{w}(X)=\tau_{w}(-X)$ for every
$X\in\frak{g}(\mathbb{F}_{p})$), so $\left|(\psi_{w}^{*2})^{-1}(0)\right|\geq\left|(\psi_{w}^{*2})^{-1}(X)\right|$
for every $X\in\mathfrak{g}(\mathbb{F}_{p})$. This will imply that
$\psi_{w}^{*2}$ is (FGI), as otherwise, using Chebotarev's density
theorem, we may find infinitely many primes $p$ that contradict $(\star)$
(see e.g.~Theorem \ref{thm: flatness and counting points}). Using
Fourier transform $\mathcal{F}$, we may write, 
\[
\tau_{w}(X)=\frac{1}{\left|\mathfrak{g}(\mathbb{F}_{p})\right|}\sum_{Z\in\mathfrak{g}(\mathbb{F}_{p})}\mathcal{F}(\tau_{w})(Z)\cdot\Psi_{Z}(X).
\]
We then have,
\begin{align*}
\mathcal{F}(\tau_{w})(Z) & =\sum_{W\in\mathfrak{g}(\mathbb{F}_{p})}\tau_{w}(W)\cdot\Psi_{Z}(W)=\frac{1}{\left|\mathfrak{g}(\mathbb{F}_{p})\right|^{2}}\sum_{W\in\mathfrak{g}(\mathbb{F}_{p})}\left|\psi_{w}^{-1}(W)\right|\cdot\Psi_{Z}(W)\\
 & =\frac{1}{\left|\mathfrak{g}(\mathbb{F}_{p})\right|^{2}}\sum_{W\in\mathfrak{g}(\mathbb{F}_{p})}\sum_{\begin{array}{c}
X,Y\in\mathfrak{g}(\mathbb{F}_{p})\\{}
[X,Y]=W
\end{array}}\Psi_{Z}(W)=\frac{1}{\left|\mathfrak{g}(\mathbb{F}_{p})\right|^{2}}\sum_{X,Y\in\mathfrak{g}(\mathbb{F}_{p})}\Psi_{Z}([X,Y])\\
 & =\frac{1}{\left|\mathfrak{g}(\mathbb{F}_{p})\right|^{2}}\sum_{X,Y\in\mathfrak{g}(\mathbb{F}_{p})}\Psi(\langle Z,[X,Y]\rangle)\\
 & =\frac{1}{\left|\mathfrak{g}(\mathbb{F}_{p})\right|^{2}}\sum_{X,Y\in\mathfrak{g}(\mathbb{F}_{p})}\Psi(\langle X,[Y,Z]\rangle)=\frac{\left|\mathrm{Cent}_{\mathfrak{g}(\mathbb{F}_{p})}(Z)\right|}{\left|\mathfrak{g}(\mathbb{F}_{p})\right|},
\end{align*}
where the last equality follows from the Schur orthogonality relations
(i.e.~averaging any non-trivial character over $\mathfrak{g}(\mathbb{F}_{p})$
yields $0$). In particular, we have the following:
\[
\tau_{w}^{*2}(0)=\frac{1}{\left|\mathfrak{g}(\mathbb{F}_{p})\right|}\sum_{Z\in\mathfrak{g}(\mathbb{F}_{p})}\left(\mathcal{F}(\tau_{w})(Z)\right)^{2}\cdot\Psi_{Z}(0)=\sum_{Z\in\mathfrak{g}(\mathbb{F}_{p})}\frac{\left|\mathrm{Cent}_{\mathfrak{g}(\mathbb{F}_{p})}(Z)\right|^{2}}{\left|\mathfrak{g}(\mathbb{F}_{p})\right|^{3}}.\tag{\ensuremath{\star\star}}
\]
Notice that ($\star\star$) has a geometric description. It equals
$\frac{\left|\Upsilon(\mathbb{F}_{p})\right|}{\left|\mathfrak{g}(\mathbb{F}_{p})\right|^{3}}$,
where 
\[
\Upsilon:=\{(X,Y,Z)\in\mathfrak{g}^{3}:[X,Z]=[Y,Z]=0\}.
\]
\begin{lem}
\label{lem:6.3}For any $\mathrm{dim}\mathfrak{g}\geq k\in\nats$,
the set 
\[
S_{k}:=\{Z\in\mathfrak{g}:\mathrm{dim}\mathrm{Cent}_{\mathfrak{g}}(Z)=k\}
\]
satisfies the following if $k<\mathrm{dim}\mathfrak{g}$, 
\[
\mathrm{dim}S_{k}\leq\mathrm{dim}\mathfrak{g}+\mathrm{rk}(\mathfrak{g})-k,
\]
and furthermore $\mathrm{dim}S_{\mathrm{dim}\mathfrak{g}}=0$. 
\end{lem}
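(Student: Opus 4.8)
The plan is to realise $S_{k}$ as the base of a vector bundle sitting inside the commuting variety of $\mathfrak{g}$, and to read off the bound from the known dimension of that variety. Throughout write $\ell=\mathrm{rk}(\mathfrak{g})$ and note that $\dim\mathrm{Cent}_{\mathfrak{g}}(Z)=\dim\ker(\mathrm{ad}\,Z)=\dim\mathfrak{g}-\mathrm{rank}(\mathrm{ad}\,Z)$.

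First I would observe that $Z\mapsto\mathrm{rank}(\mathrm{ad}\,Z)$ is lower semicontinuous, so $S_{k}=\{Z:\mathrm{rank}(\mathrm{ad}\,Z)=\dim\mathfrak{g}-k\}$ is locally closed in $\mathfrak{g}$, and over $S_{k}$ the kernels $\mathrm{Cent}_{\mathfrak{g}}(Z)=\ker(\mathrm{ad}\,Z)$ form a rank $k$ subbundle $E\subseteq S_{k}\times\mathfrak{g}$; hence $\dim E=\dim S_{k}+k$. On the other hand $E$ is contained in the commuting variety $C:=\{(Z,v)\in\mathfrak{g}\times\mathfrak{g}:[Z,v]=0\}$, which by Richardson's theorem is irreducible of dimension $\dim\mathfrak{g}+\ell$ (it contains as a dense subset the irreducible set $\{(Z,v):Z\text{ regular semisimple},\ v\in\mathrm{Cent}_{\mathfrak{g}}(Z)\}$, of dimension $\dim\mathfrak{g}+\ell$). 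Therefore $\dim S_{k}+k\leq\dim\mathfrak{g}+\ell$, which is exactly the claimed bound for $k<\dim\mathfrak{g}$. For $k=\dim\mathfrak{g}$ one has $\mathrm{Cent}_{\mathfrak{g}}(Z)=\mathfrak{g}$ if and only if $Z$ is central, and since $\mathfrak{g}$ is (semi)simple its centre vanishes, so $S_{\dim\mathfrak{g}}=\{0\}$ and $\dim S_{\dim\mathfrak{g}}=0$.

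The only nontrivial ingredient is the value $\dim C=\dim\mathfrak{g}+\ell$. If one prefers a self-contained argument I would instead stratify $\mathfrak{g}$ by Jordan type: writing $Z=Z_{s}+Z_{n}$ and $\mathfrak{c}=\mathrm{Cent}_{\mathfrak{g}}(Z_{s})$, a reductive subalgebra of rank $\ell$ with $\mathrm{Cent}_{\mathfrak{g}}(Z)=\mathrm{Cent}_{\mathfrak{c}}(Z_{n})$, one notes that there are only finitely many $G$-classes of such $\mathfrak{c}$, and for a fixed one the locus in question is the $G$-saturation of $\{Z_{s}+Z_{n}:Z_{s}\in\mathfrak{z}(\mathfrak{c})^{\circ},\ Z_{n}\in\mathcal{N}(\mathfrak{c}),\ \dim\mathrm{Cent}_{\mathfrak{c}}(Z_{n})=k\}$; a tangent-space count — using $\dim(G\cdot Z)=\dim\mathfrak{g}-k$, the identity $[\mathfrak{g},Z]\cap\mathfrak{c}=[\mathfrak{c},Z_{n}]$, and the fact that the nilpotent strata in $\mathfrak{c}$ are finite unions of orbits of dimension $\dim\mathfrak{c}-k$ — bounds its dimension by $\dim\mathfrak{g}-k+\dim\mathfrak{z}(\mathfrak{c})\leq\dim\mathfrak{g}+\ell-k$, and taking the maximum over the finitely many classes finishes the proof. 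I expect the bundle/semicontinuity bookkeeping in the first approach to be entirely routine; the main obstacle — in the second approach — is controlling the dimension of the $G$-saturation without overcounting the conjugation directions, which is where the tangent-space estimate and the structure of centralizers of semisimple elements are needed.
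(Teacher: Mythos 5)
Your first approach is exactly the paper's: the paper also bounds $\dim S_{k}$ by noting that $\{(Z,W)\in S_{k}\times\mathfrak{g}:[Z,W]=0\}$ has dimension $\dim S_{k}+k$ and sits inside the commuting variety $\psi_{w}^{-1}(0)$, whose dimension is $\dim\mathfrak{g}+\mathrm{rk}(\mathfrak{g})$ by Richardson's theorem. The vector-bundle phrasing is just a tidier way of stating the same dimension count, and the $k=\dim\mathfrak{g}$ case is the same triviality.
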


\begin{proof}
We have $\psi_{w}^{-1}(0)\supseteq\{(Z,W)\in S_{k}\times\mathfrak{g}:[Z,W]=0\}$,
so 
\[
\mathrm{dim}S_{k}\leq\mathrm{dim}\psi_{w}^{-1}(0)-k=\mathrm{dim}\mathfrak{g}+\mathrm{rk}(\mathfrak{g})-k,
\]
where the last equality is well known (e.g.~\cite[Theorem A]{Rich79}). 
\end{proof}
\begin{lem}
\label{lem:6.4}For any $0\neq Z\in\mathfrak{g}$, we have, 
\[
\mathrm{dim}\mathrm{Cent}_{\mathfrak{g}}(Z)\leq\mathrm{dim}\mathfrak{g}-\mathrm{rk}(\mathfrak{g})-1.
\]
\end{lem}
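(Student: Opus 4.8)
\textbf{Plan for Lemma \ref{lem:6.4}.}
The goal is to show that for any non-zero $Z\in\mathfrak{g}$ the centralizer $\mathrm{Cent}_{\mathfrak{g}}(Z)$ has dimension at most $\mathrm{dim}\mathfrak{g}-\mathrm{rk}(\mathfrak{g})-1$. The plan is to use the Jordan decomposition $Z=Z_s+Z_n$ into its semisimple and nilpotent parts (both lying in $\mathfrak{g}$, after passing to $K=\overline{K}$, which is harmless since dimensions of centralizers are unchanged under field extension), together with the standard fact that $\mathrm{Cent}_{\mathfrak{g}}(Z)=\mathrm{Cent}_{\mathrm{Cent}_{\mathfrak{g}}(Z_s)}(Z_n)$. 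Recall that $\mathrm{Cent}_{\mathfrak{g}}(Z_s)$ is a reductive subalgebra $\mathfrak{l}$ (a Levi of a parabolic) of the same rank as $\mathfrak{g}$, and it equals $\mathfrak{g}$ exactly when $Z_s$ is central, i.e.\ when $Z_s=0$ since $\mathfrak{g}$ is semisimple.

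First I would dispose of the semisimple case: if $Z=Z_s\neq 0$, then $\mathfrak{l}:=\mathrm{Cent}_{\mathfrak{g}}(Z)$ is a proper reductive subalgebra with $\mathrm{rk}(\mathfrak{l})=\mathrm{rk}(\mathfrak{g})$, so $\mathrm{dim}\mathfrak{l}\le \mathrm{dim}\mathfrak{g}-2$ (a proper reductive subalgebra of maximal rank differs from $\mathfrak{g}$ by at least one positive and one negative root space), which already beats the required bound since $\mathrm{dim}\mathfrak{g}-\mathrm{rk}(\mathfrak{g})-1 \ge \mathrm{dim}\mathfrak{g}-2$ would need checking — actually one wants $\mathrm{dim}\mathfrak{l}\le \mathrm{dim}\mathfrak{g}-\mathrm{rk}(\mathfrak{g})-1$, and since $\mathfrak{l}$ contains a Cartan, $\mathrm{dim}\mathfrak{l}=\mathrm{rk}(\mathfrak{g})+\#\{\text{roots of }\mathfrak{l}\}$; the complement has at least $\mathrm{rk}(\mathfrak{g})+1$ roots when $\mathfrak{l}\subsetneq\mathfrak{g}$ (in fact the number of roots of a proper Levi is at most $\mathrm{dim}\mathfrak{g}-\mathrm{rk}(\mathfrak{g})-2\cdot(\text{something})$), giving the claim after a short case-free argument using that the orthogonal complement of $\mathfrak{l}$ is a sum of at least two root spaces of opposite sign. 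For $Z$ nilpotent non-zero, I would invoke the well-known inequality $\mathrm{dim}\mathrm{Cent}_{\mathfrak{g}}(Z)\le \mathrm{dim}\mathfrak{g}-\mathrm{rk}(\mathfrak{g})-2$ for non-regular nilpotents and $=\mathrm{rk}(\mathfrak{g})$ is impossible; more simply, a nonzero nilpotent element sits in an $\mathfrak{sl}_2$-triple, and the centralizer dimension is $\mathrm{rk}(\mathfrak{g})$ only for regular nilpotents, in which case $\mathrm{dim}\mathrm{Cent}_{\mathfrak{g}}(Z)=\mathrm{rk}(\mathfrak{g})\le \mathrm{dim}\mathfrak{g}-\mathrm{rk}(\mathfrak{g})-1$ provided $\mathrm{dim}\mathfrak{g}\ge 2\,\mathrm{rk}(\mathfrak{g})+1$, which holds for every simple $\mathfrak{g}$. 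For the mixed case $Z_s\neq0$ and $Z_n\neq0$, I would work inside $\mathfrak{l}=\mathrm{Cent}_{\mathfrak{g}}(Z_s)$: write $\mathfrak{l}=\mathfrak{z}(\mathfrak{l})\oplus[\mathfrak{l},\mathfrak{l}]$ with $[\mathfrak{l},\mathfrak{l}]$ semisimple; $Z_n$ is a nonzero nilpotent in $[\mathfrak{l},\mathfrak{l}]$, so by the nilpotent case applied to $[\mathfrak{l},\mathfrak{l}]$ and the rank identity $\mathrm{rk}(\mathfrak{l})=\mathrm{rk}(\mathfrak{g})$, one bounds $\mathrm{dim}\mathrm{Cent}_{\mathfrak{g}}(Z)=\mathrm{dim}\mathfrak{z}(\mathfrak{l})+\mathrm{dim}\mathrm{Cent}_{[\mathfrak{l},\mathfrak{l}]}(Z_n)\le \mathrm{dim}\mathfrak{l}-\mathrm{rk}([\mathfrak{l},\mathfrak{l}])-1 = \mathrm{dim}\mathfrak{l}-\mathrm{rk}(\mathfrak{g})+\mathrm{dim}\mathfrak{z}(\mathfrak{l})-1$, and since $\mathfrak{l}\subsetneq\mathfrak{g}$ gives $\mathrm{dim}\mathfrak{l}\le\mathrm{dim}\mathfrak{g}-2$ and $\mathrm{dim}\mathfrak{z}(\mathfrak{l})\ge1$, this again yields the bound; the bookkeeping needs to be done carefully but is elementary.

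The cleanest route, which I would actually prefer to write, is uniform: for any $Z\neq 0$, the $K^\times$-orbit (or rather the $\underline{G}$-orbit together with scaling) shows the orbit of $Z$ under $\underline{G}$ has dimension $\ge \mathrm{rk}(\mathfrak{g})+1$ — indeed every nonzero orbit has dimension strictly greater than $\mathrm{rk}(\mathfrak{g})$, because the minimal dimension of a nonzero orbit is $2h^\vee-2$ (the minimal nilpotent orbit) which exceeds $\mathrm{rk}(\mathfrak{g})$ for all simple $\mathfrak{g}$, and semisimple orbits have dimension $\ge$ that of the minimal one among nonzero ones by a standard semicontinuity/specialization argument (the nilpotent cone lies in the closure). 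Then $\mathrm{dim}\mathrm{Cent}_{\mathfrak{g}}(Z)=\mathrm{dim}\mathfrak{g}-\mathrm{dim}(\underline{G}\cdot Z)\le \mathrm{dim}\mathfrak{g}-\mathrm{rk}(\mathfrak{g})-1$. The main obstacle is making the inequality "$\mathrm{dim}(\underline{G}\cdot Z)\ge \mathrm{rk}(\mathfrak{g})+1$ for all $Z\neq0$" both correct and self-contained: one must rule out that some nonzero semisimple $Z$ has an orbit of dimension exactly $\mathrm{rk}(\mathfrak{g})$, which amounts exactly to ruling out $\mathrm{Cent}_{\mathfrak{g}}(Z)$ being a Cartan-plus-nothing, i.e.\ to the (true) statement that a proper maximal-rank reductive subalgebra has dimension $>\mathrm{rk}(\mathfrak{g})$; and to rule out that a nonzero nilpotent has centralizer dimension $<\mathrm{rk}(\mathfrak{g})$ (false by Richardson) or exactly $\mathrm{rk}(\mathfrak{g})$ with $\mathrm{dim}\mathfrak{g}\le 2\mathrm{rk}(\mathfrak{g})$ (impossible since $\mathrm{dim}\mathfrak{g}=\mathrm{rk}(\mathfrak{g})+\#\Sigma(\mathfrak{g},\mathfrak{t})>2\mathrm{rk}(\mathfrak{g})$ always). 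I expect this last point — a clean, citation-light justification of the orbit-dimension lower bound — to be where most of the care goes; everything else is immediate from $\mathrm{dim}\mathfrak{g}+\mathrm{dim}\mathrm{Cent}_{\mathfrak{g}}(Z)=2\,\mathrm{dim}\mathrm{Cent}_{\mathfrak{g}}(Z)+\mathrm{dim}(\underline{G}\cdot Z)$ type identities, which I would cite from \cite{Rich79} as in the proof of Lemma \ref{lem:6.3}.
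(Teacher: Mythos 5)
Your proposal follows the same broad outline as the paper — reduce via Jordan decomposition to the semisimple and nilpotent cases — but the key technical inputs you invoke are either insufficient or incorrect, whereas the paper closes both cases in one stroke by citing the classification of minimal-codimension proper subalgebras of a simple Lie algebra: every proper subalgebra has codimension $\geq\mathrm{rk}(\mathfrak{g})$, with equality attained only in type $A$ and only by (maximal) parabolics (\cite{CX18}, \cite{Stu91}). With that fact in hand, the semisimple case follows because $\mathrm{Cent}_\mathfrak{g}(Z_s)$ is a proper Levi, hence \emph{strictly} contained in a parabolic, hence of codimension $>\mathrm{rk}(\mathfrak{g})$; and the nilpotent case follows because $\mathrm{Cent}_\mathfrak{g}(Z_n)$ contains no Cartan (if it did, $Z_n\in\mathrm{Cent}_\mathfrak{g}(\mathfrak{t})=\mathfrak{t}$ would be semisimple), so it cannot be a parabolic and again has codimension $>\mathrm{rk}(\mathfrak{g})$.

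There are genuine gaps in your version. In the semisimple case, the bound $\dim\mathfrak{l}\leq\dim\mathfrak{g}-2$ you first reach is strictly weaker than what is needed (codimension $\geq2$, not $\geq\mathrm{rk}(\mathfrak{g})+1$), and your attempted fix --- that the complement of a proper Levi contains at least $\mathrm{rk}(\mathfrak{g})+1$ root spaces --- is asserted but not argued; this is precisely the content of the codimension bound the paper imports from the literature, so you cannot take it for granted. In the nilpotent case, the ``well-known inequality'' $\dim\mathrm{Cent}_\mathfrak{g}(Z)\leq\dim\mathfrak{g}-\mathrm{rk}(\mathfrak{g})-2$ for non-regular nilpotents is not standard; what one actually needs is $\dim(\underline{G}\cdot Z_{\min})=2h^\vee-2\geq\mathrm{rk}(\mathfrak{g})+1$ for the minimal nilpotent orbit, which holds but requires checking against the dual Coxeter numbers rather than being immediate. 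Most seriously, the ``uniform'' route you say you would prefer rests on the claim that ``semisimple orbits have dimension $\geq$ that of the minimal nonzero orbit \ldots (the nilpotent cone lies in the closure),'' and this is false: semisimple adjoint orbits are \emph{closed}, so no nonzero nilpotent lies in their closure, and the proposed semicontinuity/specialization argument does not apply. That claim is where your preferred proof breaks down, and it is exactly the kind of uniform lower bound on orbit dimensions that the paper's citation of \cite{CX18,Stu91} is designed to supply without a case analysis.
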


\begin{proof}
We use the fact that any proper subalgebra of a simple Lie algebra
is of codimension at least $\mathrm{\mathrm{rk}(\mathfrak{g})}$ where
equality is achieved only for type $A$, in which the only subalgebra
of codimension $\mathrm{rk}(\mathfrak{g})$ is a parabolic subalgebra
(e.g.~\cite[p.47, Table 1]{CX18}, \cite{Stu91}). For $Z$ semisimple,
$\mathrm{Cent}_{\mathfrak{g}}(Z)$ is a Levi subalgebra so it is properly
contained in a parabolic subalgebra. For non-nilpotent elements, their
centralizer is contained in the centralizer of their semisimple part.
For nilpotent elements, their centralizer cannot contain a maximal
torus, so we are done. 
\end{proof}
\begin{prop}
\label{prop:good point count}We have $\mathrm{dim}\Upsilon=2\mathrm{dim}\mathfrak{g}$
and furthermore, 
\[
\left|\Upsilon(\mathbb{F}_{p})\right|=\left|\mathfrak{g}(\mathbb{F}_{p})\right|^{2}(1+O(p^{-1/2})).
\]
\end{prop}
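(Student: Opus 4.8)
The plan is to understand $\Upsilon$ through the projection $\mathrm{pr}_{Z}\colon\Upsilon\to\mathfrak{g}$, $(X,Y,Z)\mapsto Z$, whose fiber over a point $Z$ is $\mathrm{Cent}_{\mathfrak{g}}(Z)\times\mathrm{Cent}_{\mathfrak{g}}(Z)$, of dimension $2\,\mathrm{dim}\,\mathrm{Cent}_{\mathfrak{g}}(Z)$. I would stratify $\mathfrak{g}$ by the sets $S_{k}=\{Z:\mathrm{dim}\,\mathrm{Cent}_{\mathfrak{g}}(Z)=k\}$ of Lemma \ref{lem:6.3} and bound the part of $\Upsilon$ lying over each stratum. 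Since $\mathfrak{g}$ is simple, hence centerless, $S_{\mathrm{dim}\mathfrak{g}}=\{0\}$, and the portion of $\Upsilon$ over $0$ is the affine space $\mathfrak{g}\times\mathfrak{g}\times\{0\}$, which is irreducible (in fact geometrically irreducible, and defined over $\ints$) of dimension $2\,\mathrm{dim}\mathfrak{g}$.

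For the strata with $k<\mathrm{dim}\mathfrak{g}$, the preimage $\mathrm{pr}_{Z}^{-1}(S_{k})$ has dimension at most $\mathrm{dim}\,S_{k}+2k$, which by Lemma \ref{lem:6.3} is at most $(\mathrm{dim}\mathfrak{g}+\mathrm{rk}(\mathfrak{g})-k)+2k=\mathrm{dim}\mathfrak{g}+\mathrm{rk}(\mathfrak{g})+k$. Every $Z\in S_{k}$ with $k<\mathrm{dim}\mathfrak{g}$ is nonzero, so Lemma \ref{lem:6.4} gives $k\leq\mathrm{dim}\mathfrak{g}-\mathrm{rk}(\mathfrak{g})-1$; hence $\mathrm{pr}_{Z}^{-1}(\mathfrak{g}\setminus\{0\})$ has dimension at most $2\,\mathrm{dim}\mathfrak{g}-1$. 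Combining the two cases shows $\mathrm{dim}\,\Upsilon=2\,\mathrm{dim}\mathfrak{g}$ and that $\mathfrak{g}\times\mathfrak{g}\times\{0\}$ is the unique irreducible component of $\Upsilon$ of top dimension.

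To get the point count, I would fix the evident $\ints$-model of $\Upsilon$ (it is cut out of $\mathbb{A}^{3\mathrm{dim}\mathfrak{g}}$ by the bilinear equations $[X,Z]=[Y,Z]=0$, which have $\ints$-coefficients via a Chevalley basis) and invoke the Lang--Weil bounds \cite{LW54}: $|\Upsilon(\mathbb{F}_{p})|=p^{\mathrm{dim}\Upsilon_{\overline{\mathbb{F}}_{p}}}(C_{\Upsilon,p}+O(p^{-1/2}))$. By standard spreading-out applied component-by-component to the finitely many irreducible components of the characteristic-zero $\Upsilon$, for almost every $p$ the reduction $\Upsilon_{\overline{\mathbb{F}}_{p}}$ again has dimension $2\,\mathrm{dim}\mathfrak{g}$ with a single top-dimensional geometrically irreducible component, namely the reduction of $\mathfrak{g}\times\mathfrak{g}\times\{0\}$, which is defined over $\mathbb{F}_{p}$; thus $C_{\Upsilon,p}=1$ and $|\Upsilon(\mathbb{F}_{p})|=p^{2\,\mathrm{dim}\mathfrak{g}}(1+O(p^{-1/2}))$. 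Since $|\mathfrak{g}(\mathbb{F}_{p})|=p^{\mathrm{dim}\mathfrak{g}}$, this is exactly $|\mathfrak{g}(\mathbb{F}_{p})|^{2}(1+O(p^{-1/2}))$.

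The genuine content is the dimension estimate of the first two paragraphs, but with Lemmas \ref{lem:6.3} and \ref{lem:6.4} in hand this is just careful bookkeeping; the only delicate point is the transfer to finite fields, where one must ensure that no extra top-dimensional component of $\Upsilon$ appears modulo $p$ and that the relevant component remains geometrically irreducible over $\mathbb{F}_{p}$ — the routine spreading-out argument for the components of $\Upsilon$ over $\rats$. (Note that, together with the Fourier computation $(\star\star)$ and the symmetry of $\tau_{w}$, this proposition immediately yields the estimate $(\star)$ needed in the proof of Theorem \ref{Thm:commutator is flat}.)
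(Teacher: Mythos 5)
Your argument is correct and takes essentially the same approach as the paper: both stratify $\Upsilon$ via the projection to the $Z$-coordinate according to $\dim\mathrm{Cent}_{\mathfrak{g}}(Z)$, bound each stratum using Lemmas \ref{lem:6.3} and \ref{lem:6.4}, identify $\mathfrak{g}\times\mathfrak{g}\times\{0\}$ as the unique top-dimensional component, and conclude by Lang--Weil. You are somewhat more explicit than the paper in spelling out the numerical bound $\dim\mathfrak{g}+\mathrm{rk}(\mathfrak{g})+k\leq 2\dim\mathfrak{g}-1$ for nonzero $Z$ and in recording the spreading-out step.
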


\begin{proof}
Notice that 
\[
\Upsilon=\bigcup_{k=0}^{\mathrm{dim}\mathfrak{g}}\{(X,Y,Z)\in\Upsilon:Z\in S_{k}\}
\]
so 
\begin{align*}
\mathrm{dim}\Upsilon & =\max\limits _{k}\mathrm{dim}\{(X,Y,Z)\in\Upsilon:Z\in S_{k}\}=\max_{k}(2k+\mathrm{dim}S_{k})\\
 & =\mathrm{max}(2\mathrm{dim}\mathfrak{g},\max_{k<\mathrm{dim}\mathfrak{g}}(2k+\mathrm{dim}S_{k}))\leq2\mathrm{dim}\mathfrak{g}
\end{align*}
where the inequality follows by Lemmas \ref{lem:6.3} and \ref{lem:6.4}.
Furthermore, Lemma \ref{lem:6.4} also implies that $\mathrm{dim}\{(X,Y,Z)\in\Upsilon:Z\in S_{k}\}\leq2\mathrm{dim}\mathfrak{g}-1$
for $k\neq\mathrm{dim}\mathfrak{g}$. Thus $\mathfrak{g}\times\mathfrak{g}\times\{0\}$
is the only top dimensional irreducible component of $\Upsilon$ so
the proposition follows. 
\end{proof}
Proposition \ref{prop:good point count} implies that ($\star\star$)
can be written as 
\[
\sum_{Z\in\mathfrak{g}(\mathbb{F}_{p})}\frac{\left|\mathrm{Cent}_{\mathfrak{g}(\mathbb{F}_{p})}(Z)\right|^{2}}{\left|\mathfrak{g}(\mathbb{F}_{p})\right|^{3}}=\frac{\left|\Upsilon(\mathbb{F}_{p})\right|}{\left|\mathfrak{g}(\mathbb{F}_{p})\right|^{3}}=\frac{1}{\left|\mathfrak{g}(\mathbb{F}_{p})\right|}(1+O(p^{-\frac{1}{2}})),
\]
so $(\star)$ holds. This proves that $\psi_{w}^{*2}$ is flat and
has geometrically irreducible fibers.

Note that the fibers of $\psi_{w}^{*2}$ are local complete intersections
and in particular Cohen-Macaulay, so by the $R_{0}+S_{1}$-criterion
(e.g.~\cite[Proposition 5.8.5]{Gro66}), in order to show that the
fibers are reduced it is enough to show that they are generically
reduced. Since the fibers are irreducible, it is enough to find a
smooth point in $(\psi_{w}^{*2})^{-1}(Y)$ for every $Y\in\mathfrak{g}$.
Note that $\psi_{w}$ is generically smooth over $\mathfrak{g}$,
i.e.~smooth over an open (dense) set $U\subseteq\mathfrak{g}$. Since
$U+U=\mathfrak{g}$, we may write $Y=Y_{1}+Y_{2}$, with $Y_{1},Y_{2}\in U$.
Take any points $(X_{1},X_{2})\in\psi_{w}^{-1}(Y_{1})$ and $(X_{3},X_{4})\in\psi_{w}^{-1}(Y_{2})$
and then $\psi_{w}^{*2}$ is smooth at $(X_{1},X_{2},X_{3},X_{4})$.
This implies that $(X_{1},X_{2},X_{3},X_{4})$ is a smooth point of
$(\psi_{w}^{*2})^{-1}(Y)$ and we are done. We have finished proving
Theorem \ref{Thm:commutator is flat}. 
\end{proof}
We now turn to the proof of Theorem \ref{thm:Commutator is (FRS) after 4 convolutions}.

Recall from Section \ref{subsec:Basic-properties-of} (and Example
\ref{exa:example for jet of the commutator map}) that the $m$-th
jet map $J_{m}(\psi_{w}):J_{m}(\mathfrak{g}^{2})\rightarrow J_{m}(\mathfrak{g})$
is given as follows (where $a_{i}$ are some non-zero numbers)
\begin{flalign*}
J_{m}(\psi_{w}) & (X_{1},X_{2},X_{1}^{(1)},X_{2}^{(1)},\ldots,X_{1}^{(m)},X_{2}^{(m)})\\
 & =([X_{1},X_{2}],[X_{1}^{(1)},X_{2}^{(0)}]+[X_{1}^{(0)},X_{2}^{(1)}],\ldots,\sum_{i+j=m}a_{i}[X_{1}^{(i)},X_{2}^{(j)}]).
\end{flalign*}
Using an averaging weight $\omega_{\mathrm{av}}$ and a monomialization
weight $\omega_{\mathrm{mon}}$ as in Subsection \ref{subsec:Degenerations-of-jets of word maps},
we reduce to a map of the following form: 
\[
\widetilde{\phi}_{m}(X_{1},X_{2},X_{1}^{(1)},X_{2}^{(1)},\ldots,X_{1}^{(m)},X_{2}^{(m)}):=([X_{1},X_{2}],[X_{1}^{(1)},X_{2}],[X_{1}^{(1)},X_{2}^{(1)}],\ldots,[X_{1}^{\left(\left\lceil \frac{m}{2}\right\rceil \right)},X_{2}^{\left(\left\lfloor \frac{m}{2}\right\rfloor \right)}]).
\]
Using the level separation method, similarly to Subsection \ref{subsec:Discussion-on-the methods},
we set $\omega_{\mathrm{LS}}(X_{1}^{(u)})=(10^{u}\cdot2,10^{u}\cdot3)$
and $\omega_{\mathrm{LS}}(X_{2}^{(u)})=(10^{u}\cdot3,10^{u}\cdot1)$
such that $\mathrm{gr}_{\omega_{\mathrm{LS}},1}(\widetilde{\phi}_{m})$
and $\mathrm{gr}_{\omega_{\mathrm{LS}},2}(\widetilde{\phi}_{m})$
are maps of the form 
\[
\phi_{m}(Y_{1},Y_{2},\ldots,Y_{2m-1},Y_{2m})=([Y_{1},Y_{2}],[Y_{3},Y_{4}],\ldots,[Y_{2m-1},Y_{2m}]).
\]
We therefore see that $J_{m}(\psi_{w}^{*2})$ can be degenerated to
a morphism $\phi_{m}$ which consists of a tower of commutator maps
with disjoint variables. By Theorem \ref{Thm:commutator is flat}
it follows that $\phi_{m}^{*2}$ is flat, with geometrically irreducible,
reduced fibers. By Proposition \ref{Prop:properties preserved under deformations}
this property is well behaved under deformations. Since $\phi_{m}^{*2}$
is a degeneration of $J_{m}(\psi_{w}^{*4})$, we deduce that $J_{m}(\psi_{w}^{*4})$
is flat and locally integral at $(0,\ldots,0)$. Note that all irreducible
components of $(J_{m}(\psi_{w}^{*4}))^{-1}(0)$ touch $(0,..,0)$,
so $(J_{m}(\psi_{w}^{*4}))^{-1}(0)$ is irreducible. By Corollary
\ref{cor: singularity properties through dim of jets} this implies
that $\psi_{w}^{*4}$ is (FRS) as required.

\section{\label{sec:flatness-and-(FRS) of word maps}Local behavior of word
maps and a lower bound on the log canonical threshold of their fibers}

In this section we study certain singularity properties of word maps,
in terms of the complexity class of $w$. The most natural and commonly
used way to measure the complexity of a word $w$ is its length $\ell(w)$
(e.g.~$\ell(xyx^{3}y^{-1})=6$). An alternative way is to consider
its degree $\mathrm{deg}(w)$, which we will soon define. We will
see that in order to capture singularity properties of word maps,
locally around $(e,\ldots,e)$, it is best to consider the degree
of word maps, while if we are interested in global behavior, it is
better to consider the length of the word. Before we state the main
theorems of this section, we define the degree and symbol of a word
and of a word map.

\subsection{\label{sec:The-degree-of a word}The degree of a word and of a word
map}
\begin{defn}
Let $G$ be a group. The \textit{lower central series} of $G$ is
the sequence 
\[
G=\Gamma_{1}(G)\supseteq\Gamma_{2}(G)\supseteq\dots\supseteq\Gamma_{k}(G)\supseteq...
\]
where $\Gamma_{k+1}(G):=[G,\Gamma_{k}(G)]$. 
\end{defn}

The associated graded abelian group $\mathrm{gr}(G)=\underset{k\geq1}{\bigoplus}\mathrm{gr}_{k}(G)$,
where $\mathrm{gr}_{k}(G)=\Gamma_{k}(G)/\Gamma_{k+1}(G)$, has the
structure of a graded Lie ring, where the Lie bracket is induced from
the group commutator (e.g.~\cite{MKS76}). In particular, iterating
this construction for $G=F_{r}$, we get an isomorphism $\mathrm{gr}(F_{r})\simeq\mathcal{L}_{r}^{\ints}$,
where $\mathcal{L}_{r}^{\ints}$ is the free Lie ring on $r$ elements.
We write $\mathrm{symb}:G\rightarrow\mathrm{gr}(G)$ for the natural
symbol map. 
\begin{defn}
\label{def: degree and symbol of a word}A word $w\in F_{r}$ is said
to be of \textit{degree $d$} if $\mathrm{symb}(w)\in\mathcal{L}_{r}^{\ints}$
is of degree $d$.
\end{defn}

\begin{rem}
\label{rem:relation between degree and length}For the $n$-th iterated
commutator word $w_{n}(x,y)=[\ldots[[x,y],y],\ldots,y]$ it is easy
to see that $\ell(w_{n})$ is exponential in the degree (which is
$n+1$). Set $\alpha(k):=\min\{\ell(w):\mathrm{deg}(w)=k\}$. It was
shown in \cite[Theorem 1.2]{MP10} that $\alpha(k)\geq k$, so $\ell(w)\geq\mathrm{deg}(w)$
for any $w\in F_{r}$. The asymptotic properties of $\alpha(k)$ have
been studied in \cite{ET15}, where it was shown that $\alpha(k)$
grows slower then $k^{1.45}$. 
\end{rem}

\subsubsection{The symbol of a word map}

Let $\underline{G}$ be a simple algebraic $K$-group, $w\in F_{r}$
a word, and let $\varphi_{w}:\underline{G}^{r}\rightarrow\underline{G}$
be the corresponding word map. The tangent cone $C_{e}\underline{G}$
at $e\in G$ is the Lie algebra $\mathfrak{g}$. Using the linearization
method (Corollary \ref{cor:(cf.---Linearization)}), we can choose
minimal $l\in\mathbb{N}$ such that $D_{(e,\ldots,e)}^{l}(\varphi_{w})$
is non-trivial, and then the word map $\varphi_{w}:\underline{G}^{r}\rightarrow\underline{G}$
degenerates to a non-trivial map $D_{(e,\ldots,e)}^{l}(\varphi_{w}):\mathfrak{g}^{r}\rightarrow\mathfrak{g}$. 
\begin{defn}
\label{def:symbol of a word map}For $l$ minimal as above, we call
$D_{(e,\ldots,e)}^{l}(\varphi_{w}):\mathfrak{g}^{r}\rightarrow\mathfrak{g}$
the \textit{symbol} of the map $\varphi_{w}:\underline{G}^{r}\rightarrow\underline{G}$
and denote it by $\widetilde{\varphi}_{w}:\mathfrak{g}^{r}\rightarrow\mathfrak{g}$. 
\end{defn}

\begin{prop}
\label{prop:symbol of a word map is a Lie algebra word map}The map
$\widetilde{\varphi}_{w}:\mathfrak{g}^{r}\rightarrow\mathfrak{g}$
is a homogeneous Lie algebra word map of degree $l+1$. 
\end{prop}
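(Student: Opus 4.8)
The statement to prove is Proposition~\ref{prop:symbol of a word map is a Lie algebra word map}: that the symbol $\widetilde{\varphi}_{w}:\mathfrak{g}^{r}\rightarrow\mathfrak{g}$ of a group word map $\varphi_{w}:\underline{G}^{r}\rightarrow\underline{G}$ is a homogeneous Lie algebra word map of degree $l+1$, where $l$ is minimal with $D_{(e,\ldots,e)}^{l}(\varphi_{w})\neq 0$. The plan is to reduce everything to a local computation near the identity using the exponential map (or formal group law), and then invoke the Baker--Campbell--Hausdorff formula together with the identification $\mathrm{gr}(F_r)\simeq\mathcal{L}_r^{\ints}$ discussed in Subsection~\ref{sec:The-degree-of a word}. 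First I would work formally: pass to the completed group algebra / formal neighborhood of $e$, where $g = \exp(X)$ for $X\in\mathfrak{g}$ (or, more carefully over a general field $K$ of characteristic $0$, work with the formal group and the BCH series, which converges formally). In these coordinates the word map $\varphi_w$ becomes the map sending $(X_1,\dots,X_r)$ to the BCH expansion of $w(\exp X_1,\dots,\exp X_r)$, which is a formal power series in the $X_i$ with Lie-polynomial homogeneous components.

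The key step is the classical fact (e.g.\ Magnus, or the Lazard/Quillen correspondence underlying $\mathrm{gr}(F_r)\simeq\mathcal{L}_r^{\ints}$) that the lowest-degree nonvanishing homogeneous component of $\log(w(\exp X_1,\dots,\exp X_r))$ is precisely $\widetilde{w}(X_1,\dots,X_r)$, where $\widetilde{w}=\mathrm{symb}(w)\in\mathcal{L}_r^{\ints}$ is the image of $w$ in $\mathrm{gr}_{d}(F_r)=\Gamma_d(F_r)/\Gamma_{d+1}(F_r)$ under the symbol map, and $d=\mathrm{deg}(w)$. Concretely: if $w\in\Gamma_d(F_r)\setminus\Gamma_{d+1}(F_r)$, then the degree-$<d$ parts of the BCH expansion all cancel (this is exactly the statement that $w$ dies in all the lower graded pieces), and the degree-$d$ part is the Lie element $\widetilde{w}$. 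Granting this, the map $D_{(e,\ldots,e)}^{l}(\varphi_w)$ is, by the very definition of the operator $D_x^{l}$ as the associated-graded/symbol of the morphism in the local rings (Subsection~\ref{subsec:Degeneations}), equal to this lowest homogeneous component, so $l+1 = d = \mathrm{deg}(w)$ and $\widetilde{\varphi}_w = \varphi_{\widetilde{w}}$ is the Lie algebra word map attached to $\widetilde{w}$; in particular it is homogeneous of degree $l+1$. I should also check that $\widetilde{w}\neq 0$ in $\mathcal{L}_r$ (and hence $\widetilde{\varphi}_w$ is genuinely nontrivial, consistent with the minimality of $l$) — this holds because $\mathrm{symb}(w)$ is by construction a nonzero element of the free Lie ring when $w\neq e$, and a nonzero element of $\mathcal{L}_r^{\ints}$ remains nonzero in $\mathcal{L}_r=\mathcal{L}_r^{\ints}\otimes K$ since $K$ has characteristic $0$.

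The main obstacle I anticipate is making the passage to ``exponential coordinates'' rigorous over an arbitrary characteristic-zero field $K$ and matching the algebro-geometric definition of $D_{(e,\ldots,e)}^{l}(\varphi_w)$ (via filtrations on local rings and tangent cones, as in Corollary~\ref{cor:(cf.---Linearization)}) with the analytic/formal BCH picture. The clean way around this is to observe that $\widehat{\mathcal{O}}_{\underline{G},e}\cong K[[x_1,\dots,x_n]]$ with $n=\dim\mathfrak{g}$, that the formal group law is recovered from the Lie bracket via BCH (Hausdorff's theorem, valid over any $\mathbb{Q}$-algebra), and that the filtration by powers of the maximal ideal corresponds under $\log$ to the grading of the formal power series ring by total degree in a basis of $\mathfrak{g}^*$; then $\mathrm{gr}$ of $\varphi_w^{*}$ on local rings is literally ``take lowest-degree term of the pullback of each coordinate'', which is the lowest BCH component. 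A secondary, purely bookkeeping point is to verify that the degree-$d$ BCH component of a word of length $\ell$ depends only on the image of $w$ in $\Gamma_d/\Gamma_{d+1}$ and equals $\mathrm{symb}(w)$ evaluated on $X_1,\dots,X_r$; this is standard but I would cite \cite{MKS76} (or the discussion in Subsection~\ref{sec:The-degree-of a word}) rather than reprove it. Once these identifications are in place the proposition is immediate, and in fact one gets the slightly stronger statement $\mathrm{deg}(\varphi_w) = \mathrm{deg}(w)$ connecting the two complexity measures, which is what later sections (e.g.\ Theorem~\ref{thm C}) need.
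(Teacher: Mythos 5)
Your overall strategy — pass to coordinates around the identity, invoke Baker--Campbell--Hausdorff to see that each homogeneous component of $\log\bigl(w(\exp X_1,\dots,\exp X_r)\bigr)$ is a Lie polynomial, and identify $D^l_{(e,\dots,e)}(\varphi_w)$ with the lowest nonvanishing such component — is exactly the paper's argument. Where you differ from the paper is in the bookkeeping around the base field: the paper spreads out over a finitely generated subfield $K'\subseteq K$, embeds $K'\hookrightarrow\complex$, and then uses the analytic $\exp/\log$ as a local diffeomorphism; you instead propose to work formally with $\widehat{\mathcal{O}}_{\underline{G},e}$ and the formal BCH series over a general $\rats$-algebra, which is a cleaner, more intrinsic version of the same reduction. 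That is a genuine improvement in presentation, and it is fine.

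There is, however, a concrete error in your argument. You assert that "the lowest-degree nonvanishing homogeneous component of $\log(w(\exp X_1,\dots,\exp X_r))$ is precisely $\widetilde{w}=\mathrm{symb}(w)$," and you go on to conclude "$l+1 = d = \mathrm{deg}(w)$ and $\widetilde{\varphi}_w = \varphi_{\widetilde{w}}$." This is only true in the universal free Lie algebra. The operator $D^l_{(e,\dots,e)}$ is defined relative to the local rings of the specific $\underline{G}$ (equivalently, relative to the specific $\mathfrak{g}$), so what you actually get from the definition is the lowest homogeneous BCH component \emph{that does not vanish as a map $\mathfrak{g}^r\to\mathfrak{g}$}. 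Your later check that $\mathrm{symb}(w)\neq 0$ in $\mathcal{L}_r$ does not close this gap: a nonzero element of $\mathcal{L}_r$ can induce the zero map on a low-rank $\mathfrak{g}$ — the paper's Example~\ref{exa:trivial word maps} is precisely such a word on $\mathfrak{sl}_2$ — so in general one only has $l+1\geq\mathrm{deg}(w)$, and the symbol may be a higher-degree BCH component, not $\varphi_{\mathrm{symb}(w)}$. The paper is careful here: Proposition~\ref{prop:symbol of a word map is a Lie algebra word map} claims only that $\widetilde{\varphi}_w$ is \emph{some} homogeneous Lie algebra word map of degree $l+1$, and the identification $\widetilde{\varphi}_w=\varphi_{\mathrm{symb}(w)}$ is deferred to the separate Proposition~\ref{prop:symbol of word equal to sym of word map}, which requires $\mathrm{rk}(\underline{G})>\ell(w)$ precisely to rule out the degenerate case. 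The repair is easy — delete the claim $l+1=\mathrm{deg}(w)$ and say only that every homogeneous BCH component is a Lie algebra word, so the first nonvanishing one on $\mathfrak{g}$ is — but as written your proof asserts something false.
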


\begin{proof}
$\varphi_{w}$ is defined over a finitely generated field $K'\subset K$.
Embed $K'\hookrightarrow\complex$ and consider $\mathfrak{g}_{\complex}:=\mathfrak{g}\otimes_{K'}\complex$
and the word map $\varphi_{w}:\underline{G}(\complex)^{r}\rightarrow\underline{G}(\complex)$.
Recall that $\mathrm{exp}:\mathfrak{g}(\complex)\rightarrow\underline{G}(\complex)$
is a local diffeomorphism, whose inverse is $\mathrm{log}$. By a
generalization of the Baker-Campbell-Hausdorff formula we have, 
\[
\mathrm{log}(\mathrm{exp}(X_{1})\cdots\mathrm{exp}(X_{m}))=\sum_{n=1}^{\infty}\frac{(-1)^{n-1}}{n}\sum_{\sum_{j}r_{ij}>0}\frac{[X_{1}^{r_{11}}\cdots X_{m}^{r_{1m}}\cdots X_{1}^{r_{n1}}\cdots X_{m}^{r_{nm}}]}{\sum_{i=1}^{n}(r_{i1}+\ldots+r_{im})\cdot\prod_{i,j}^{n,m}r_{ij}!},
\]
where 
\[
[X_{1}^{r_{11}}\cdots X_{m}^{r_{1m}}\cdots X_{1}^{r_{n1}}\cdots X_{m}^{r_{nm}}]:=[\underbrace{X_{1},\cdots[X_{1}}_{r_{11}\text{ times}},\cdots[\underbrace{X_{m},\cdots[X_{m}}_{r_{1m}\text{ times}},\cdots[\underbrace{X_{1},\cdots[X_{1}}_{r_{n1}\text{ times}},\cdots[\underbrace{X_{m},\cdots,X_{m}}_{r_{nm}\text{ times}}]]\cdots].
\]
Now, given $\varphi_{w}(g_{1},\ldots,g_{r})=g_{i_{1}}^{\varepsilon_{1}}\cdot\ldots\cdot g_{i_{m}}^{\varepsilon_{m}}$
with $\varepsilon_{j}\in\{1,-1\}$, we have 
\[
\mathrm{log}\circ\varphi_{w}\circ(\mathrm{exp},\ldots,\mathrm{exp})(X_{1},\ldots,X_{r})=\mathrm{log}(\mathrm{exp}(\varepsilon_{1}X_{i_{1}})\cdot\ldots\cdot\mathrm{exp}(\varepsilon_{m}X_{i_{m}}))
\]
By the chain rule, and since $d_{(0,\ldots,0)}\mathrm{exp}=\mathrm{Id}$
and $d_{(e,\ldots,e)}\mathrm{log}=\mathrm{Id}$ we deduce that $D_{(e,\ldots,e)}^{l}\varphi_{w}$
is a homogeneous Lie algebra word map of degree $l+1$.
\end{proof}
\begin{prop}
\label{Word maps is generating and bounded degree}Let $\varphi_{w}:\underline{G}^{r}\rightarrow\underline{G}$
be a word map. Then $\widetilde{\varphi}_{w}$ is a Lie algebra word
map of degree $\leq\ell(w)$. 
\end{prop}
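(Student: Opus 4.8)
The plan is to combine Proposition \ref{prop:symbol of a word map is a Lie algebra word map}, which already tells us that $\widetilde{\varphi}_{w}$ is a homogeneous Lie algebra word map of degree $l+1$ where $l$ is minimal such that $D_{(e,\ldots,e)}^{l}(\varphi_{w})$ is non-trivial, with an elementary estimate $l+1\leq\ell(w)$. So the entire content of the statement is the inequality on the degree. I would argue this directly from the Baker-Campbell-Hausdorff expansion used in the proof of Proposition \ref{prop:symbol of a word map is a Lie algebra word map}.

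First I would recall that for $\varphi_{w}(g_{1},\ldots,g_{r})=g_{i_{1}}^{\varepsilon_{1}}\cdot\ldots\cdot g_{i_{m}}^{\varepsilon_{m}}$ with $m=\ell(w)$ and $\varepsilon_{j}\in\{1,-1\}$, the map $\mathrm{log}\circ\varphi_{w}\circ(\mathrm{exp},\ldots,\mathrm{exp})$ is given by the BCH series applied to $\varepsilon_{1}X_{i_{1}},\ldots,\varepsilon_{m}X_{i_{m}}$. Each term of this series is a bracket monomial of length $n_{1}+\ldots+n_{m}$ in the $X_{i_j}$'s (with the grading from $\mathcal{L}_{r}$), hence of degree $\geq 1$, and the lowest-degree terms that survive form the symbol. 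The key observation is that the degree-$1$ part of the BCH series in $m$ variables is $\varepsilon_{1}X_{i_{1}}+\ldots+\varepsilon_{m}X_{i_{m}}$, which can only vanish for all substitutions if it is zero as an element of $\mathcal{L}_r$; more generally, the lowest surviving homogeneous component of $\mathrm{symb}(w)\in\mathcal{L}_r^{\ints}$ has degree exactly $\mathrm{deg}(w)$, and by BCH this homogeneous component is (up to rescaling) precisely the symbol $\widetilde{\varphi}_{w}$. Since any nonzero bracket monomial appearing in a word of length $m$ has degree at most $m$ — indeed, the homogeneous components of $\mathrm{symb}(w)$ live in degrees $\leq m$ — we get $l+1=\mathrm{deg}(\widetilde{\varphi}_{w})\leq\ell(w)$.

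Concretely: I would note that $\mathrm{symb}(w)$ is the image of $w\in F_r$ in $\mathrm{gr}(F_r)\simeq\mathcal{L}_r^{\ints}$; writing $w$ as a product of $\ell(w)$ generators and their inverses, its class in $\Gamma_k(F_r)/\Gamma_{k+1}(F_r)$ is trivial for $k>\ell(w)$ (a product of $m$ generators lies in $\Gamma_1$ with ``mass'' $m$, and the induced Lie element is supported in degrees $\leq m$). Hence $\mathrm{deg}(w)\leq\ell(w)$, which is exactly Remark \ref{rem:relation between degree and length} (citing \cite[Theorem 1.2]{MP10} for the reverse inequality $\mathrm{deg}(w)\geq\ldots$, though we only need the easy direction here). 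Combining $\mathrm{deg}(\widetilde{\varphi}_{w})=\mathrm{deg}(w)$ — which follows because the BCH formula shows the lowest-order term of $\varphi_w$ near $(e,\ldots,e)$ is governed by $\mathrm{symb}(w)$ — with $\mathrm{deg}(w)\leq\ell(w)$ gives the claim.

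The main obstacle I anticipate is making precise the identification $\mathrm{deg}(\widetilde{\varphi}_{w})=\mathrm{deg}(w)$, i.e.\ that the first non-vanishing derivative order $l$ in the linearization at $(e,\ldots,e)$ corresponds exactly to the degree of $\mathrm{symb}(w)$ in the free Lie ring, rather than being strictly larger (which could happen a priori if the degree-$\mathrm{deg}(w)$ component of the BCH expansion vanished identically as a polynomial map on $\mathfrak{g}^r$ for the specific $\mathfrak{g}$ — but by Proposition \ref{prop:symbol of a word map is a Lie algebra word map} this is handled by passing to $\complex$ and noting the free Lie algebra embeds, so nontriviality in $\mathcal{L}_r$ is inherited). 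Since we only need the inequality $\mathrm{deg}(\widetilde{\varphi}_{w})\leq\ell(w)$ and not equality with $\mathrm{deg}(w)$, it in fact suffices to observe that $D_{(e,\ldots,e)}^{\ell(w)-1}(\varphi_w)$ already captures a nonzero bracket monomial (the degree-$\mathrm{deg}(w)$ part, with $\mathrm{deg}(w)\leq\ell(w)$), so the minimal $l$ satisfies $l+1\leq\ell(w)$; this sidesteps the subtlety entirely.
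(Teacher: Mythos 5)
Your argument has a genuine gap, and it is located exactly where you try to wave it away at the end. The step you need, and do not supply, is that the degree-$\mathrm{deg}(w)$ homogeneous component of the BCH expansion — which by your reasoning is $\mathrm{symb}(w)$, a nonzero element of the free Lie algebra $\mathcal{L}_{r}$ — induces a \emph{nonzero map} $\mathfrak{g}^{r}\to\mathfrak{g}$ for the particular simple Lie algebra $\mathfrak{g}$ at hand. Your claim that ``nontriviality in $\mathcal{L}_{r}$ is inherited'' is false for low-rank $\mathfrak{g}$: Example \ref{exa:trivial word maps} in the paper exhibits a nonzero $w\in\mathcal{L}_{r}$ of degree $5$ whose induced map on $\mathfrak{sl}_{2}$ is identically zero. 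There is no embedding of $\mathcal{L}_{r}$ into multilinear maps on a fixed $\mathfrak{g}$ that would transfer nonvanishing; that would contradict the Amitsur-Levitzki phenomenon (Corollary \ref{cor: commutator relations on simple Lie algebras}). Indeed the paper's own Proposition \ref{prop:symbol of word equal to sym of word map} only asserts $\widetilde{\varphi}_{w}=\varphi_{\mathrm{symb}(w)}$ under the hypothesis $\mathrm{rk}(\underline{G})>\ell(w)$; for smaller rank the symbol of the word map may live in a strictly higher degree than $\mathrm{deg}(w)$, and your argument gives no bound on that higher degree.

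Your final paragraph does not sidestep this. Saying that $D^{\ell(w)-1}_{(e,\ldots,e)}(\varphi_{w})$ ``already captures a nonzero bracket monomial'' is a statement about $\mathcal{L}_{r}$, not about the operator $D^{l}(\varphi_{w})$ on $\mathfrak{g}^{r}$. The whole point of $D^{l}$ in the linearization method is that it is a map on tangent cones of the concrete varieties; if the degree-$\mathrm{deg}(w)$ Lie polynomial specializes to the zero map on $\mathfrak{g}$, then $D^{\mathrm{deg}(w)-1}(\varphi_{w})$ is trivial and one must look at higher $l$, with no a priori bound from $\mathrm{symb}(w)$. So the inequality $\mathrm{deg}(w)\leq\ell(w)$, while true and easy, does not by itself give $\mathrm{deg}(\widetilde{\varphi}_{w})\leq\ell(w)$.

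The paper's proof instead reduces to $\underline{G}=\mathrm{SL}_{2}$ (via an embedding $\mathrm{SL}_{2}\hookrightarrow\underline{G}$, using that the symbol on $\mathfrak{g}$ has degree $\leq$ the symbol on $\mathfrak{sl}_{2}$ since $D^{l}(\varphi_{w})|_{\mathfrak{sl}_{2}^{r}}\neq 0$ forces $D^{l}(\varphi_{w})\neq 0$), and then does a direct computation for $\mathrm{SL}_{2}$. A clean way to see that computation: choose unipotent matrices $a=I+2e$, $b=I+2f$ generating a free group in $\mathrm{SL}_{2}$, assign each generator $g_{i}$ to $I+t A_{i}$ with $A_{i}\in\{2e,2f\}$ nilpotent. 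Because $(I+tA)^{-1}=I-tA$ for nilpotent $A$ of nilpotency order $2$, the restriction of $\varphi_{w}$ to this line through $(I,\ldots,I)$ is a polynomial in $t$ of degree $\leq\ell(w)$, and it is not identically $I$ since its value at $t=1$ is $w(a,b,\ldots)\neq I$ by freeness. Hence the lowest nonzero Taylor coefficient of $\varphi_{w}$ at $(I,\ldots,I)$ has order between $1$ and $\ell(w)$, i.e.\ the symbol has degree $\leq\ell(w)$. This direct argument on $\mathrm{SL}_{2}$ supplies precisely the nonvanishing input that your BCH argument lacks. If you want to salvage your approach, you would need to append such a nonvanishing argument at the end; as written, it does not establish the proposition for all simple $\underline{G}$.
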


\begin{proof}
We may assume that $\underline{G}$ is simply connected and thus we
have an embedding $\mathrm{SL}_{2}\hookrightarrow\underline{G}$.
By Proposition \ref{prop:symbol of a word map is a Lie algebra word map},
it is enough to prove the claim for $\mathrm{SL}_{2}$, which holds
by a direct calculation. 
\end{proof}
Note that we have two notions of symbol: $\mathrm{symb}(w)$ as the
image of $w$ in $\mathrm{gr}(F_{r})$ (see Definition \ref{def: degree and symbol of a word})
and $\widetilde{\varphi}_{w}=D_{(e,\ldots,e)}^{l}(\varphi_{w})$,
the symbol of the word map. If $\mathrm{rk}(\underline{G})>\ell(w)$,
then it turns out that these two notions agree, i.e.~$\varphi_{\mathrm{symb}(w)}=\widetilde{\varphi}_{w}$.
\begin{prop}
\label{prop:symbol of word equal to sym of word map}Let $w\in F_{r}$.
Then for any simple algebraic group $\underline{G}$ with $\mathrm{rk}(\underline{G})>\ell(w)$
its symbol $\widetilde{\varphi}_{w}:\mathfrak{g}^{r}\rightarrow\mathfrak{g}$
is identical to $\varphi_{\mathrm{symb}(w)}$. In particular, in this
case $w$ is of degree $d$ if and only if $\varphi_{w}$ has symbol
of degree $d$. 
\end{prop}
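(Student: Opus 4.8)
The plan is to identify both notions of symbol with the lowest homogeneous term of one and the same Baker--Campbell--Hausdorff (BCH) series, and then to use the hypothesis on the rank to rule out that this term degenerates on $\mathfrak g$. Write $w=x_{i_1}^{\varepsilon_1}\cdots x_{i_m}^{\varepsilon_m}$ in reduced form (so $m=\ell(w)$ and $\varepsilon_j\in\{\pm1\}$), and set
\[
L_w(X_1,\dots,X_r):=\log\bigl(\exp(\varepsilon_1X_{i_1})\cdots\exp(\varepsilon_mX_{i_m})\bigr)=\sum_{k\geq1}L_w^{(k)}(X_1,\dots,X_r),
\]
the associated formal Lie series over $\rats$, with $L_w^{(k)}$ its homogeneous degree-$k$ component (a Lie polynomial in $X_1,\dots,X_r$). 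As established in the proof of Proposition \ref{prop:symbol of a word map is a Lie algebra word map}, for every simple $K$-Lie algebra $\mathfrak g$ the symbol $\widetilde{\varphi}_w:\mathfrak g^r\to\mathfrak g$ is the lowest nonvanishing homogeneous component of the specialization of $L_w$ to $\mathfrak g$, and its degree is one more than the minimal $l$ with $D^l_{(e,\dots,e)}\varphi_w$ nontrivial.

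First I would carry out the computation universally, over $\mathcal L_r^{\ints}\otimes\rats$. Via the Magnus embedding $F_r\hookrightarrow\rats\langle\langle X_1,\dots,X_r\rangle\rangle$, $x_i\mapsto 1+X_i=\exp\bigl(\log(1+X_i)\bigr)$, together with the Magnus--Witt theorem (see \cite{MKS76}) identifying $\mathrm{gr}(F_r)\otimes\rats\simeq\mathcal L_r\otimes\rats$ with the graded object of the augmentation-ideal filtration, the element $\mathrm{symb}(w)$ is carried to the lowest nonvanishing homogeneous component of $\log$ of the Magnus image of $w$; since $\log(1+X_i)=X_i+O(\deg 2)$, comparing leading terms shows this component equals $L_w^{(d)}$, where $d:=\mathrm{deg}(w)$. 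Consequently $L_w^{(k)}=0$ for $k<d$ and $L_w^{(d)}=\varphi_{\mathrm{symb}(w)}$ as homogeneous Lie polynomials of degree $d$, independently of $\mathfrak g$.

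It then remains to prove that $\varphi_{\mathrm{symb}(w)}:\mathfrak g^r\to\mathfrak g$ is nontrivial whenever $\mathrm{rk}(\underline G)>\ell(w)$; granting this, $L_w^{(d)}|_{\mathfrak g}\neq 0$ is the lowest nonvanishing component of $L_w|_{\mathfrak g}$, so $\widetilde{\varphi}_w=\varphi_{\mathrm{symb}(w)}$, which is homogeneous of degree $d=\mathrm{deg}(w)$, and the ``in particular'' clause follows from Definitions \ref{def: degree and symbol of a word} and \ref{def:symbol of a word map}. We may assume $w\neq 1$, so $d\geq 1$, and by Remark \ref{rem:relation between degree and length} $d\leq\ell(w)\leq\mathrm{rk}(\underline G)-1$; also $\mathrm{symb}(w)$, being nonzero in $\mathcal L_r^{\ints}$, is nonzero in $\mathcal L_r$. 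Deleting suitable nodes from the Dynkin diagram of $\underline G$ yields a subset of simple roots inducing a connected classical sub-diagram of rank $\ell(w)$, hence a simple classical subalgebra $\mathfrak h\subseteq\mathfrak g$ of rank $\ell(w)$ on which the bracket restricts. If $\varphi_{\mathrm{symb}(w)}$ vanished on $\mathfrak h$, then Corollary \ref{cor: commutator relations on simple Lie algebras}(2) would force $d=\mathrm{deg}(\mathrm{symb}(w))\geq 2\ell(w)\geq 2d>d$, which is absurd; so $\varphi_{\mathrm{symb}(w)}$ is nontrivial on $\mathfrak h$, and a fortiori on $\mathfrak g$.

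I expect the main obstacle to be the universal identification of the second paragraph: one must carefully reconcile the filtration of $F_r$ by its lower central series (which defines $\mathrm{symb}(w)$) with the BCH/augmentation-ideal picture, tracking the change of variables $X_i\leftrightarrow\log(1+X_i)$ and checking that leading homogeneous terms are unaffected by it. This is classical but should be stated precisely. By contrast, the only use of the hypothesis $\mathrm{rk}(\underline G)>\ell(w)$ is the short non-degeneracy step, where it provides exactly the room needed to invoke Corollary \ref{cor: commutator relations on simple Lie algebras} and sidestep the low-rank pathology of Example \ref{exa:trivial word maps}.
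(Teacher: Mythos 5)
Your argument is correct, and it takes a route that is related to but cleaner than the paper's. The paper proceeds by factoring $w=w_1\cdot w_0$ with $w_1$ a product of depth-$d$ iterated commutators and $w_0\in\Gamma_{d+1}(F_r)$, then checking $\widetilde{\varphi}_{w_1}=\varphi_{\mathrm{symb}(w_1)}$ (via BCH and non-vanishing) and that $\widetilde{\varphi}_{w_0}$ contributes only in degree $>d$; you instead compute once and for all, over the free object, that the BCH series $L_w$ has vanishing components below degree $d$ and $L_w^{(d)}=\varphi_{\mathrm{symb}(w)}$, via the Magnus embedding and the change of variables $X_i\leftrightarrow\log(1+X_i)$. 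This avoids the explicit decomposition of $w$ and makes transparent that the identification $\widetilde{\varphi}_w=\varphi_{\mathrm{symb}(w)}$ can only fail by \emph{specialization}, i.e.\ by $\varphi_{\mathrm{symb}(w)}$ degenerating to zero on $\mathfrak g$. The hypothesis $\mathrm{rk}(\underline G)>\ell(w)$ then enters both proofs in the same single place, to kill that degeneration via Amitsur--Levitzki (Corollary~\ref{cor: commutator relations on simple Lie algebras} plus Remark~\ref{rem:relation between degree and length}); you make explicit the choice of a classical sub-diagram of rank $\ell(w)$ inside an arbitrary simple Dynkin diagram, which is a detail the paper leaves implicit. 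One point worth stating precisely if you write this up: the Magnus--Witt identification is with the free Lie \emph{ring} $\mathcal L_r^{\ints}$ and a priori lands integrally, while $L_w^{(d)}$ has rational coefficients, so the comparison should be carried out after tensoring with $\rats$; this is exactly what you do, and since $\mathcal L_r^{\ints}\hookrightarrow\mathcal L_r^{\ints}\otimes K$ is injective the degree statement is unaffected.
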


\begin{proof}
Assume that $w$ has degree $d$. By the identification $\mathrm{gr}(F_{r})\simeq\mathcal{L}_{r}^{\ints}$
it follows that $w$ can be written as $w=w_{1}\cdot w_{0}$ with
$w_{0}\in\Gamma_{d+1}(F_{r})$ and $w_{1}$ a product of iterated
commutators of the form $[\ldots[[g_{i_{1}},g_{i_{2}}],g_{i_{3}}],\ldots,g_{i_{d}}]$.
By Proposition \ref{prop:Lie algebra word maps are generating}, Corollary
\ref{cor: commutator relations on simple Lie algebras}, and by the
Baker-Campbell-Hausdorff formula, it follows that $\widetilde{\varphi}_{w_{1}}=\varphi_{\mathrm{symb}(w_{1})}$.
Since $\widetilde{\varphi}_{w_{0}}$ is of degree $>d$ it follows
that 
\[
\widetilde{\varphi}_{w}=\widetilde{\varphi}_{w_{1}}=\varphi_{\mathrm{symb}(w_{1})}=\varphi_{\mathrm{symb}(w)}.
\]
\end{proof}

\subsection{\label{subsec:The-main-results for group word maps}Algebro-geometric
results on word maps}

We now state the main results of this section. In Theorem \ref{thm: (FRS) at (e,...,e)}
we discuss local behavior of fibers of word maps at $(e,\ldots,e)$
in terms of the degree of the words. In Theorem \ref{thm:jet flatness of word maps},
we bound from below the log canonical threshold of the fibers of word
maps $\varphi_{w}:\mathrm{SL}_{n}^{r}\rightarrow\mathrm{SL}_{n}$
in terms of $\ell(w)$. 
\begin{thm}
\label{thm: (FRS) at (e,...,e)}Let $d\in\nats$, let $\underline{G}$
be a connected semisimple $K$-group, and let $\{\varphi_{w_{i}}:\underline{G}^{r_{i}}\rightarrow\underline{G}\}_{i=1}^{m}$
be a collection of word maps with symbols $\widetilde{\varphi}_{w_{i}}$
of degree at most $d$. Then there exists $0<C<10^{6}$, such that
for any $m\geq C\cdot d^{6}$ the morphism $\varphi_{w_{1}}*\ldots*\varphi_{w_{m}}$
is (FRS) at $(e,\ldots,e)$. 
\end{thm}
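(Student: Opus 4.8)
The plan is to reduce the assertion, via the linearization (tangent-cone) method, to Theorem \ref{thm: main thm Lie algebra word maps} applied to the Lie algebra word maps obtained as symbols. First I would reduce to the case where $\underline{G}$ is simple: an isogeny is \'etale in characteristic $0$ and the (FRS) property is \'etale-local, so we may replace $\underline{G}$ by a direct product $\underline{G}_1\times\cdots\times\underline{G}_s$ of its simple factors; then $\varphi_{w_1}*\cdots*\varphi_{w_m}$ is the product of the corresponding maps on the $\underline{G}_j$, a fibre over a point splits as a product of fibres, and (FRS)-ness at a point of a product is equivalent to (FRS)-ness at the projection to each factor (flatness, reducedness and rational singularities all behave well under products over a field of characteristic $0$). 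Here one uses that the symbol of each $\varphi_{w_i}$ on each simple factor still has degree $\le d$; this is the content of the hypothesis in the intended applications, and holds automatically when $d\ge\max_i\ell(w_i)$ by Proposition \ref{Word maps is generating and bounded degree}.

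So assume $\underline{G}$ simple with Lie algebra $\mathfrak{g}$, and let $\widetilde{\varphi}_{w_i}\colon\mathfrak{g}^{r_i}\to\mathfrak{g}$ be the symbol of $\varphi_{w_i}$: by Proposition \ref{prop:symbol of a word map is a Lie algebra word map} and Definition \ref{def:symbol of a word map} it is a homogeneous Lie algebra word map of some degree $d_i\le d$ which is non-trivial on $\mathfrak{g}$, hence generating (Proposition \ref{prop:Lie algebra word maps are generating}). The crucial point is that the symbol of the \emph{convolution} $\varphi:=\varphi_{w_1}*\cdots*\varphi_{w_m}$, computed with a suitably \emph{weighted} filtration, is exactly the Lie algebra convolution $\psi:=\widetilde{\varphi}_{w_1}*\cdots*\widetilde{\varphi}_{w_m}=\widetilde{\varphi}_{w_1}+\cdots+\widetilde{\varphi}_{w_m}\colon\mathfrak{g}^{r_1+\cdots+r_m}\to\mathfrak{g}$. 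Choose positive integers $\omega_1,\dots,\omega_m$ with $d_1\omega_1=\cdots=d_m\omega_m=:N$ (e.g.\ $\omega_i=d!/d_i$, $N=d!$), and put the weight $\omega_i$ on every coordinate of the $i$-th copy of $\mathfrak{g}$. Working in exponential coordinates around $(e,\dots,e)$ and expanding $\log\bigl(\varphi_{w_1}(\exp X^{(1)})\cdots\varphi_{w_m}(\exp X^{(m)})\bigr)$ by the Baker--Campbell--Hausdorff formula exactly as in the proof of Proposition \ref{prop:symbol of a word map is a Lie algebra word map}, the part depending only on the $i$-th block is $\log\varphi_{w_i}(\exp X^{(i)})=\widetilde{\varphi}_{w_i}(X^{(i)})+(\text{terms of degree}>d_i)$, of $\omega$-degree $\ge N$ with degree-$N$ part equal to $\widetilde{\varphi}_{w_i}(X^{(i)})$, while every BCH term mixing two distinct blocks $i\ne j$ has $\omega$-degree $\ge d_i\omega_i+d_j\omega_j=2N>N$. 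Hence, modulo $\omega$-degree $>N$, this $\mathfrak{g}$-valued map equals $\psi$.

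Now apply Theorem \ref{thm: main thm Lie algebra word maps}(3) to $\{\widetilde{\varphi}_{w_i}\}$: these are homogeneous Lie algebra word maps of degree $\le d$, non-trivial on the simple Lie algebra $\mathfrak{g}$, so for $m\ge Cd^4$ (with $0<C<10^6$ the constant there; a fortiori for $m\ge Cd^6$) the convolution $\psi$ is (FRS), hence flat, hence has open, nonempty, and thus dense image in $\mathfrak{g}$; in particular no coordinate of $\psi$ vanishes identically. Since each coordinate $\psi_k$ is $\omega$-homogeneous of degree $N$ and the remaining terms of $\varphi$ (in exponential coordinates) have $\omega$-degree $>N$, it follows that $\sigma_\omega(\varphi)=\psi$ on the nose. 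By the elimination method (Corollary \ref{cor:(cf.---Elimination)}, in the version allowing a general weighted filtration, via Corollary \ref{cor:reduction to degeneration}), $\sigma_\omega(\varphi)=\psi$ being (FRS) at $0$ implies that $\varphi$ is (FRS) at $(e,\dots,e)$, which proves the theorem (and in fact gives the stronger bound $m\ge Cd^4$, cf.\ Theorem \ref{thm C} in the introduction).

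The main obstacle is the third step: verifying through the BCH expansion and the balancing weights $\omega_i$ that the symbol of the group-word convolution is precisely $\psi$, rather than some coarser degeneration that would forget the higher-degree symbols (which is exactly what the choice $d_i\omega_i=N$ prevents), and then ruling out -- by flatness of $\psi$ -- the only remaining obstruction, namely that $\sigma_\omega(\varphi)$ could disagree with $\psi$ in a coordinate where $\psi$ vanished identically. A secondary point needing care is the semisimple-to-simple reduction together with the behaviour of symbol degrees on the individual simple factors.
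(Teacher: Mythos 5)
Your proof is essentially the paper's argument, but fleshed out; the paper's own proof is extremely terse ("By Corollary \ref{cor:(cf.---Linearization)} and Proposition \ref{prop:symbol of a word map is a Lie algebra word map}, it is enough to prove the theorem for homogeneous Lie algebra word maps. Then the claim follows from Theorem \ref{thm: main thm Lie algebra word maps}."). The one genuinely useful thing you add is the treatment of symbols of \emph{different} degrees $d_i$: if you linearize $\varphi_{w_1}*\cdots*\varphi_{w_m}$ at $(e,\ldots,e)$ with the standard (unweighted) filtration, as the reference to Corollary \ref{cor:(cf.---Linearization)} suggests, the resulting tangent-cone map $D^{l}_{(e,\ldots,e)}$ for the minimal $l=\min_i d_i$ retains only the blocks achieving the minimal degree, so you would not in general obtain the full convolution $\widetilde{\varphi}_{w_1}*\cdots*\widetilde{\varphi}_{w_m}$. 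Your balancing weights $\omega_i$ with $d_i\omega_i=N$, combined with the BCH expansion (cross terms have $\omega$-degree $\geq 2N$, intra-block error terms have $\omega$-degree $>N$) and the observation that flatness of $\psi$ rules out any identically vanishing coordinate of $\sigma_\omega(\varphi)$, close that gap cleanly; the degeneration it produces is exactly the kind of $\mathbb{G}_m$-equivariant $\mathbb{A}^1$-family handled by Corollary \ref{cor:reduction to degeneration}, so (FRS) descends. Your reduction to $\underline{G}$ simple via \'etale isogenies, products of fibers, and the caveat about symbol degrees on each simple factor, is also not spelled out in the paper but is correct. You are also right that the argument actually yields the sharper threshold $m\geq Cd^4$, matching Theorem \ref{thm C} in the introduction (the $d^6$ in the body appears to be a slip).
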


\begin{proof}
By Corollary \ref{cor:(cf.---Linearization)} and Proposition \ref{prop:symbol of a word map is a Lie algebra word map},
it is enough to prove the theorem for homogeneous Lie algebra word
maps. Then the claim follows from Theorem \ref{thm: main thm Lie algebra word maps}. 
\end{proof}
\begin{thm}
\label{thm:jet flatness of word maps}Let $w\in F_{r}$. Then $\varphi_{w}:\mathrm{SL}_{n}^{r}\rightarrow\mathrm{SL}_{n}$
is $\frac{1}{10^{17}\ell(w)^{9}}$-jet flat. In particular, $\mathrm{lct}(\mathrm{SL}_{n}^{r},\varphi_{w}^{-1}(g))\geq\frac{\mathrm{dim}\mathrm{SL}_{n}}{10^{17}\ell(w)^{9}}$
for every $g\in\mathrm{SL}_{n}$. 
\end{thm}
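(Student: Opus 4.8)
The plan is to reduce the statement about group word maps on $\mathrm{SL}_n$ to the already-established bound for matrix word maps (Theorem \ref{thm:lower bounds on epsilon jet flatness}(1), i.e.\ that an algebra word of degree $\le d$ is $\frac{1}{25d^3}$-jet flat), via the block-matrix observation sketched in Subsection \ref{subsec:Singularities-of-word further discussion}. First I would invoke Lemma \ref{lem:large generating unipotent subgroups}: the point is that $\mathrm{SL}_{2n}$ is boundedly generated — by some absolute constant number $c_0$ of factors — by the two abelian unipotent subgroups $U^+=\{\left(\begin{smallmatrix} I_n & A\\ 0 & I_n\end{smallmatrix}\right):A\in M_n\}$ and $U^-=\{\left(\begin{smallmatrix} I_n & 0\\ B & I_n\end{smallmatrix}\right):B\in M_n\}$, and crucially $\left(\begin{smallmatrix} I_n & A\\ 0 & I_n\end{smallmatrix}\right)^{-1}=\left(\begin{smallmatrix} I_n & -A\\ 0 & I_n\end{smallmatrix}\right)$, so inverses stay polynomial (in fact linear) in the matrix entries. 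Restricting the word map $\varphi_w:\mathrm{SL}_{2n}^r\to\mathrm{SL}_{2n}$ to substitutions where each $g_i$ is a product of $c_0$ such unipotent blocks (with free $M_n$-parameters), one can write each of the four $n\times n$ blocks of $\varphi_w(g_1,\dots,g_r)$ as a polynomial in the parameter matrices, of degree bounded polynomially in $\ell(w)$ — roughly $O(\ell(w))$ per block after accounting for the $c_0$ factors, so $d_{\mathrm{mon}}$ and hence the relevant "matrix word degree" is $O(\ell(w))$.

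\textbf{Key steps, in order.} (1) Fix $N=2n$ (reducing $\mathrm{SL}_n$ to $\mathrm{SL}_{2n}$ costs nothing for a jet-flatness bound, since one may embed and use that the fiber dimension estimate only improves, or simply work with $\mathrm{SL}_n$ directly for even/odd $n$ by padding). (2) Use bounded generation to express the restriction of $\varphi_w$ to the unipotent-block locus as a morphism $\mathbb{A}^{M}\to \mathfrak{sl}_N\subset M_N$ whose components are polynomials of degree $D=O(\ell(w))$; the $\mathrm{SL}_N$-equation is automatically satisfied. (3) Since $\varphi_w$ is generating (Borel, $\mathcal{G}_{\mathrm{Alg}}$ case) and conjugation-invariant, the restricted map is still dominant onto a Zariski-dense subset, so its fibers have the same dimension count as those of $\varphi_w$ up to the fixed-dimensional unipotent-coordinate data; hence an $\varepsilon$-jet flatness bound for the restricted polynomial map transfers to $\varphi_w$ itself via Lemma \ref{lem:epsilon jet flat and log canonical threshold} and Fact \ref{fact:properties of log cannonical threshold}(1) (restricting a polynomial system only decreases the lct, and bounding the lct of a map that factors through a generating restriction suffices). (4) Apply Theorem \ref{thm:lower bounds on epsilon jet flatness}(1) to the matrix word map of degree $D=O(\ell(w))$ appearing in each block, obtaining $\frac{1}{25D^3}$-jet flatness; then track how restriction to the $c_0$-fold product, passing from $M_n$ to the full $N^2$-dimensional target, and bounding $D$ by some explicit $c_1\ell(w)$ combine to give the exponent $9$ and the constant $10^{17}$: schematically $\varepsilon \sim \frac{1}{D^3}\cdot\frac{1}{(\text{number of blocks / generation length})^{\text{something}}}$, and since $D\sim \ell(w)$ while the generating procedure itself contributes another few powers of $\ell(w)$, one lands at $\ell(w)^9$. (5) Finally read off $\mathrm{lct}(\mathrm{SL}_n^r,\varphi_w^{-1}(g))\ge \frac{1}{10^{17}\ell(w)^9}\dim\mathrm{SL}_n$ from Lemma \ref{lem:epsilon jet flat and log canonical threshold}.

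\textbf{Main obstacle.} The delicate point is Step (3): controlling the fibers of $\varphi_w$ globally (over \emph{every} $g\in\mathrm{SL}_n$, not just near $e$ or over a generic point) from the block decomposition. The unipotent-block substitution is not surjective, so one cannot directly say the fibers of the restricted map are fibers of $\varphi_w$; instead one must argue that \emph{every} fiber $\varphi_w^{-1}(g)$ meets the bounded-generation locus (this is exactly what bounded generation of $\mathrm{SL}_N$ buys, combined with the fact that $\varphi_w$ restricted to that locus is still surjective by Borel applied after one convolution, or by \cite[Theorem 1]{LST19}-type irreducibility), and that the dimension of $\varphi_w^{-1}(g)$ can be read off from the restricted map's fiber plus the dimension of the (smooth, fixed-dimensional) fibers of the "reconstruct $g_i$ from unipotent blocks" map. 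Getting the \emph{uniform over $g$} statement, and doing the same for all jet levels $J_m$ simultaneously (which follows formally once one has the lct bound, via Theorem \ref{thm:log canonical threshold and jet schemes} and the $\mathbb{G}_m$-action argument of Lemma \ref{lem:epsilon jet flat and log canonical threshold}), is where the care lies; the bookkeeping to extract the clean exponent $9$ is then routine but tedious.
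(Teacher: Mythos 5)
Your overall strategy---Kassabov-type bounded generation of $\mathrm{SL}_{2n}$ by the two commutative unipotent blocks, the linearity of inversion on those blocks, and the reduction to a matrix word map via one block entry of the resulting product $\Phi_w$---is indeed the paper's approach. The transfer mechanism in your Step (3) is also essentially the paper's Lemma \ref{lem:reduction to matrix word maps}: since $J_m(\psi*\psi)$ is surjective (this needs one extra convolution of $\psi$ so that $J_m(\psi)*J_m(\psi)$ is surjective rather than merely dominant), the jet-fiber dimensions of $\varphi_w$ are controlled by those of $\Phi_w$, and an $\varepsilon$-jet-flatness bound for $\Phi_w$ passes down to $\varphi_w$.

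However, there is a genuine gap in Steps (1) and (4), and it concerns exactly the exponent $9$ you are trying to explain away as ``routine bookkeeping.'' Bounded generation uses a \emph{constant} number of factors (13, or 26 after the extra convolution), so the degree $D$ of the block polynomial $P_{12}$ is $\leq 26\ell(w)$ and the entire reduction for $\mathrm{SL}_{2n}$ yields $\frac{1}{2\cdot 10^6 \ell(w)^3}$-jet flatness, with exponent $3$, not $9$. Your claim that ``the generating procedure itself contributes another few powers of $\ell(w)$'' is wrong---it contributes only a constant. The exponent $9$ arises from the case you dismiss in Step (1): padding $\mathrm{SL}_{n}$ into $\mathrm{SL}_{n+1}$ does \emph{not} cost nothing. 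The containment $\varphi_{w,n}^{-1}(g)\subseteq\varphi_{w,n+1}^{-1}(g)$ gives an upper bound on jet-fiber dimensions in terms of the larger group, but the codimension gap $\dim\mathrm{SL}_{n+1}^r-\dim\mathrm{SL}_n^r \approx 2nr$ eats into the gain $\varepsilon\dim\mathrm{SL}_{n+1}\approx \frac{4n^2}{2\cdot 10^6\ell(w)^3}$ unless $n\gtrsim 10^6\ell(w)^3 r$. When $n$ is odd and smaller than roughly $10^7\ell(w)^4$, a completely different and cruder argument is required: one uses Kassabov-type generation for $\mathrm{SL}_{2n-1}$ directly (with rectangular off-diagonal blocks), so $\Phi_w$ is a polynomial map of degree $\leq 40\ell(w)$, and Fact \ref{fact:properties of log cannonical threshold}(2) gives $\mathrm{lct}(\cdot,\Phi_w^{-1}(g))\geq\frac{1}{40\ell(w)}$; combining this with $\dim\mathrm{SL}_{2n-1}<4\cdot 10^{14}\ell(w)^8$ produces the $\frac{\dim\mathrm{SL}_{2n-1}}{10^{17}\ell(w)^9}$ bound. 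Without this separate low-rank case your argument gives $\ell(w)^3$ (for even $n$) and fails for small odd $n$; you have no mechanism to arrive at $\ell(w)^9$.
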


We first prove Theorem \ref{thm:jet flatness of word maps} for $\mathrm{SL}_{2n}$.
Let $\varphi_{w}:\mathrm{SL}_{2n}^{r}\rightarrow\mathrm{SL}_{2n}$
be a word map. Recall that $\mathrm{SL}_{2n}$ contains two large
commutative unipotent subgroups; the $n\times n$ right upper unipotent
block $\mathrm{Mat}_{n}^{+}:=\left\{ \left(\begin{array}{cc}
I_{n} & A\\
0 & I_{n}
\end{array}\right):A\in\mathrm{Mat}_{n\times n}\right\} $, and similarly $\mathrm{Mat}_{n}^{-}:=\left\{ \left(\begin{array}{cc}
I_{n} & 0\\
B & I_{n}
\end{array}\right):B\in\mathrm{Mat}_{n\times n}\right\} $. Moreover, $\mathrm{Mat}_{n}^{\pm}$ boundedly generates $\mathrm{SL}_{2n}$: 
\begin{lem}
\label{lem:large generating unipotent subgroups}Let $\varepsilon_{i}=(-1)^{i+1}$.
The multiplication map $\psi:\prod\limits _{i=1}^{13}\mathrm{Mat}_{n}^{\varepsilon_{i}}\rightarrow\mathrm{SL}_{2n}$
is a surjective morphism. 
\end{lem}

\begin{proof}
This follows from the proof of \cite[Lemma 2.1]{Kas07}. We show that
any $g:=\left(\begin{array}{cc}
A_{11} & A_{12}\\
A_{21} & A_{22}
\end{array}\right)$ is in $\stackrel[i=1]{13}{\prod}\mathrm{Mat}_{n}^{\varepsilon_{i}}$.
Note that the row rank of $\left(\begin{array}{c}
A_{12}\\
A_{22}
\end{array}\right)$ is $n$. This means we can find $B_{1}\in\mathrm{Mat}_{n}^{+}$ such
that $B_{1}g:=\left(\begin{array}{cc}
A'_{11} & A'_{12}\\
A_{21} & A_{22}
\end{array}\right)$, where $A'_{12}=A_{12}+(B_{1})_{12}A_{22}$ is invertible. Taking
$B_{2}=\left(\begin{array}{cc}
I_{n} & 0\\
(I_{n}-A_{22})\cdot(A'_{12})^{-1} & I_{n}
\end{array}\right)\in\mathrm{Mat}_{n}^{-}$ we get $B_{2}B_{1}g=\left(\begin{array}{cc}
A'_{11} & A'_{12}\\
A'_{21} & I_{n}
\end{array}\right)$. Now, we can find $B_{3}\in\mathrm{Mat}_{n}^{+}$ and $B_{4}\in\mathrm{Mat}_{n}^{-}$
such that $B_{3}B_{2}B_{1}gB_{4}:=\left(\begin{array}{cc}
A''_{11} & 0\\
0 & I_{n}
\end{array}\right)$ where $A''_{11}\in\mathrm{SL}_{n}$. Since any element in $\mathrm{SL}_{n}(\overline{K})$
is a commutator, we can write $A''_{11}=[B,C]$ for $B,C\in\mathrm{SL}_{n}(\overline{K})$.
By \cite[p.314]{Kas07} we see that $\left(\begin{array}{cc}
[B,C] & 0\\
0 & I_{n}
\end{array}\right)\in\psi(\prod\limits _{i=1}^{9}\mathrm{Mat}_{n}^{\varepsilon_{i}})$, so we are done.
\end{proof}
Denote $\Phi_{w}:=\varphi_{w}\circ(\psi*\psi,\dots,\psi*\psi):(\prod\limits _{i=1}^{13}\mathrm{Mat}_{n}^{\varepsilon_{i}}\times\prod\limits _{i=1}^{13}\mathrm{Mat}_{n}^{\varepsilon_{i}})^{r}\rightarrow\mathrm{SL}_{2n}$.
\begin{lem}
\label{lem:reduction to matrix word maps}For any $\varepsilon>0$,
if $\Phi_{w}$ is $\varepsilon$-jet flat, then $\varphi_{w}$ is
$\varepsilon$-jet flat. 
\end{lem}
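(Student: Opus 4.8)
The plan is to exhibit $\varphi_w$ as the "$\varphi_w$-part" of $\Phi_w$ through the auxiliary map built from $\psi\ast\psi$, and then transport the fiber-dimension bounds back along that map on every jet level. Write $Y=\mathrm{SL}_{2n}^{r}$, $Z=\mathrm{SL}_{2n}$, let $S$ be the source of $\Phi_w$, and set $\Theta:=(\psi\ast\psi,\ldots,\psi\ast\psi):S\to Y$, the $r$-fold product of copies of $\psi\ast\psi$, so that $\Phi_w=\varphi_w\circ\Theta$ and, by functoriality of the jet functor, $J_m(\Phi_w)=J_m(\varphi_w)\circ J_m(\Theta)$ for every $m$. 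Since $S$ is a product of affine spaces and $Y,Z$ are smooth and connected, all the jet schemes in sight are smooth and irreducible, with $\dim J_m(\,\cdot\,)=(m+1)\dim(\,\cdot\,)$ by Lemma~\ref{lem:useful facts on jet schemes}.

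The crux of the argument is that $J_m(\Theta)$ is surjective for every $m$. As $\Theta=(\psi\ast\psi)^{\times r}$, it is enough to show $J_m(\psi\ast\psi)$ is surjective. By Lemma~\ref{lem:large generating unipotent subgroups} the morphism $\psi$ is surjective, hence dominant, so by generic smoothness there is a dense open $U\subseteq Z$ such that $\psi$ restricts to a smooth surjection $\psi^{-1}(U)\to U$. Using the infinitesimal lifting property of smooth morphisms against the nilpotent thickening $\mathrm{Spec}\,\overline{K}\hookrightarrow\mathrm{Spec}\,\overline{K}[t]/(t^{m+1})$, every $\overline{K}$-jet of $Z$ based at a point of $U$ lifts to a $\overline{K}$-jet of $S_0:=\prod_{i=1}^{13}\mathrm{Mat}_n^{\varepsilon_i}$; hence $\mathrm{im}(J_m(\psi))\supseteq J_m(U)$, which is a dense open subset of the connected algebraic group $J_m(Z)$. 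Because a dense open subset $\mathcal U$ of a connected algebraic group $\mathcal G$ satisfies $\mathcal U\cdot\mathcal U=\mathcal G$, and $\mathrm{im}(J_m(\psi\ast\psi))=\mathrm{im}(J_m(\psi))\cdot\mathrm{im}(J_m(\psi))$, we obtain $\mathrm{im}(J_m(\psi\ast\psi))\supseteq J_m(U)\cdot J_m(U)=J_m(Z)$. This is precisely why the statement is phrased with $\psi\ast\psi$ rather than $\psi$: $J_m(\psi)$ is only dominant, and one self-convolution upgrades this to surjectivity.

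With surjectivity of $J_m(\Theta)$ available, the reduction is a short dimension count. Fix $m$ and $y\in J_m(Y)$, put $z:=J_m(\varphi_w)(y)$ and $W:=J_m(\varphi_w)^{-1}(z)$, a nonempty closed subscheme of $J_m(Y)$ containing $y$. Choosing $s\in J_m(S)$ with $J_m(\Theta)(s)=y$ gives $J_m(\Phi_w)(s)=z$, so $z$ lies in the image of $J_m(\Phi_w)$; since $\Phi_w$ is $\varepsilon$-jet flat, Definition~\ref{def:epsilon jet flat} yields $\dim J_m(\Phi_w)^{-1}(z)\le\dim J_m(S)-\varepsilon\dim J_m(Z)$. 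On the other hand $J_m(\Phi_w)^{-1}(z)=J_m(\Theta)^{-1}(W)$, and because $J_m(\Theta)$ is a surjective morphism of irreducible varieties, every nonempty fiber of $J_m(\Theta)$ has dimension $\ge\dim J_m(S)-\dim J_m(Y)$ (theorem on dimension of fibers), whence $\dim J_m(\Theta)^{-1}(W)\ge\dim W+\dim J_m(S)-\dim J_m(Y)$. Combining the two inequalities gives $\dim W\le\dim J_m(Y)-\varepsilon\dim J_m(Z)$. As $m$ and $y$ were arbitrary, $J_m(\varphi_w)$ is $\varepsilon$-flat for all $m$, i.e.\ $\varphi_w$ is $\varepsilon$-jet flat.

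The only non-formal ingredient is the surjectivity of $J_m(\psi\ast\psi)$ for all $m$ simultaneously, which I expect to be the main obstacle: a bare smoothness/dimension argument yields dominance of $J_m(\psi)$ but not surjectivity, and closing this gap requires combining Lemma~\ref{lem:large generating unipotent subgroups}, generic smoothness, and the group-theoretic fact $\mathcal U\cdot\mathcal U=\mathcal G$ applied inside $J_m(Z)$. Everything else is functoriality of jets together with the standard lower bound on fiber dimensions of a dominant morphism.
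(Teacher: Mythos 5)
Your proof is correct and follows essentially the same route as the paper's: factor $\Phi_w = \varphi_w\circ\Theta$, establish surjectivity of $J_m(\psi\ast\psi)$ (the paper records this as ``$\psi$ surjective $\Rightarrow$ $J_m(\psi)$ dominant $\Rightarrow$ $J_m(\psi\ast\psi)$ surjective,'' which you unpack via generic smoothness and $\mathcal U\cdot\mathcal U=\mathcal G$), and then run the fiber-dimension count. The only cosmetic difference is that the paper first reduces to the zero section $\mathrm{SL}_{2n}\subset J_m(\mathrm{SL}_{2n})$ before the dimension inequality, whereas you argue directly at an arbitrary jet, but the underlying argument is the same.
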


\begin{proof}
Assume that $\Phi_{w}$ is $\varepsilon$-jet flat. It is enough to
show that $J_{m}(\varphi_{w})$ is $\varepsilon$-flat over $\mathrm{SL}_{2n}\subset J_{m}(\mathrm{SL}_{2n})$.
Since $\psi$ is surjective, $J_{m}(\psi)$ is dominant, thus $J_{m}(\psi*\psi)$
is surjective for each $m\in\nats$. We get that for every $g\in\mathrm{SL}_{2n}$,
\[
\mathrm{dim}J_{m}(\mathrm{SL}_{2n}^{r})-\mathrm{dim}J_{m}\left((\varphi_{w})^{-1}(g)\right)\geq\mathrm{dim}J_{m}(\prod_{i=1}^{13}\mathrm{Mat}_{n}^{\varepsilon_{i}})^{2r}-\mathrm{dim}J_{m}\left((\Phi_{w})^{-1}(g)\right)\geq\varepsilon\mathrm{dim}J_{m}(\mathrm{SL}_{2n})
\]
so $\varphi_{w}$ is $\varepsilon$-jet flat. 
\end{proof}
\begin{prop}
\label{prop:Proposition 7.13}The map $\Phi_{w}$ is of the form 
\[
\Phi_{w}(A_{1},\ldots,A_{26r})=\left(\begin{array}{cc}
P_{11}(A_{1},\ldots,A_{26r}) & P_{12}(A_{1},\ldots,A_{26r})\\
P_{21}(A_{1},\ldots,A_{26r}) & P_{22}(A_{1},\ldots,A_{26r})
\end{array}\right),
\]
where $P_{ij}$ are matrix word maps of degree at most $26\ell(w)$. 
\end{prop}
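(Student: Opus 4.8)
The plan is to unwind the definition of $\Phi_w$ and simply track how block‑unipotent matrices multiply. Write $\varphi_w(g_1,\dots,g_r)=g_{i_1}^{\varepsilon_1}\cdots g_{i_\ell}^{\varepsilon_\ell}$ with $\ell=\ell(w)$ and $\varepsilon_j\in\{\pm1\}$. By construction $\Phi_w$ is obtained from $\varphi_w$ by replacing each argument $g_j$ with $(\psi*\psi)(x_j,y_j)=\psi(x_j)\psi(y_j)$, where $\psi$ is the multiplication map of the $13$ factors $\prod_{i=1}^{13}\mathrm{Mat}_n^{\varepsilon_i}$. Hence each substituted generator is a product of $26$ matrices, each of the shape $I_{2n}+E$ with $E$ a single off‑diagonal $n\times n$ block (upper right if $i$ is odd, lower left if $i$ is even) equal to one of the $26r$ matrix variables, which I relabel $A_1,\dots,A_{26r}$. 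For such $E$ one has $(I_{2n}+E)^{-1}=I_{2n}-E$ (the observation already used in Section \ref{subsec:Singularities-of-word further discussion}), so a substituted $g_{i_j}^{-1}$ is again a product of $26$ matrices of the same shape, in reverse order and with the block entries negated. Collecting everything, $\Phi_w=\prod_{k=1}^{26\ell}(I_{2n}+N_k)$, where each $N_k$ is a single off‑diagonal block carrying $\pm A_{m(k)}$ for some index $m(k)\in[26r]$ and a sign depending only on $k$.

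Next I would expand this product. Writing $M_k:=I_{2n}+N_k$ and using block indices in $\{1,2\}$, the $(i,j)$ block of $M_1\cdots M_{26\ell}$ equals $\sum_{i=i_0,i_1,\dots,i_{26\ell}=j}(M_1)_{i_0i_1}\cdots(M_{26\ell})_{i_{26\ell-1}i_{26\ell}}$, and each factor $(M_k)_{i_{k-1}i_k}$ is $I_n$ when $i_{k-1}=i_k$, is the block $\pm A_{m(k)}$ of $N_k$ when $(i_{k-1},i_k)$ matches the position of $N_k$, and is $0$ otherwise. A path therefore contributes precisely when the set $S=\{k:i_{k-1}\neq i_k\}$ has the property that the block types of $N_k$ for $k\in S$, read in increasing order of $k$, alternate and begin/end so as to connect block‑row $i$ to block‑column $j$. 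Consequently the $(i,j)$ block $P_{ij}$ is $\delta_{ij}I_n$ plus a finite $\pm1$‑linear combination of the \emph{ordered} products $A_{m(k_1)}A_{m(k_2)}\cdots A_{m(k_s)}$ with $k_1<k_2<\dots<k_s$, each of length $s\le 26\ell$. Identifying $\mathrm{Mat}_n^{\pm}$ with $M_n$ via the off‑diagonal block, this exhibits $P_{ij}\colon M_n^{26r}\to M_n$ as the matrix word map attached to an element of the free associative algebra $\mathcal{A}_{26r}$ of degree at most $26\ell(w)$, which is exactly the assertion.

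The argument is essentially bookkeeping and has no deep obstacle; the only points requiring a little care are: (i) that the inverses of the block‑unipotent factors stay of the same form, via $(I_{2n}+E)^{-1}=I_{2n}-E$; (ii) that the scalar blocks $I_n$, and the resulting constant term $\delta_{ij}I_n$, cause no trouble, since elements of $\mathcal{A}_{26r}$ may have a scalar part; and (iii) that a generator $g_j$ may occur several times in $w$, possibly with both signs, so a given variable $A_m$ can appear in several factors $N_k$ — this is harmless, it merely means $P_{ij}$ is a word in which some letters repeat. Finally I would record the degree count: $\ell=\ell(w)$ letters of $w$, each unfolding into $13+13=26$ block‑unipotent factors, for a total of $26\ell(w)$ factors and hence monomials of length at most $26\ell(w)$.
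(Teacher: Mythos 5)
Your proof is correct and takes essentially the same approach as the paper: write $\varphi_w$ as a word of length $\ell(w)$ in the generators and their inverses, substitute $(\psi*\psi)$ for each generator, use $(I_{2n}+E)^{-1}=I_{2n}-E$ to keep the inverse factors block‑unipotent, and expand the resulting product of $26\ell(w)$ block‑unipotent matrices. The paper states this in one sentence; you fill in the bookkeeping of the block expansion, which is a fine elaboration but not a different argument.
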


\begin{proof}
Notice that any word map $\varphi_{w}:\underline{G}^{r}\rightarrow\underline{G}$
is of the form $\varphi_{w}(g_{1},\ldots,g_{r})=g_{i_{1}}^{\varepsilon_{1}}\ldots g_{i_{\ell(w)}}^{\varepsilon_{\ell(w)}}$,
with $\varepsilon_{i_{k}}\in\{\pm1\}$, since the inverse of $\left(\begin{array}{cc}
I & A\\
0 & I
\end{array}\right)$ is $\left(\begin{array}{cc}
I & -A\\
0 & I
\end{array}\right)$ we get that $\Phi_{w}$ has the above shape. 
\end{proof}
\begin{cor}
\label{cor:word maps on SL2n are jet flat}The map $\varphi_{w}:\mathrm{SL}_{2n}^{r}\rightarrow\mathrm{SL}_{2n}$
is $\frac{1}{2\cdot10^{6}\ell(w)^{3}}$-jet flat for any $w\in F_{r}$.
\end{cor}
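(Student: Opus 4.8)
The plan is to reduce, via Lemma \ref{lem:reduction to matrix word maps}, to showing that $\Phi_w\colon M_n^{26r}\to\mathrm{SL}_{2n}$ is $\varepsilon$-jet flat for $\varepsilon=\tfrac{1}{2\cdot10^6\ell(w)^3}$ (here we identify $\prod_{i=1}^{13}\mathrm{Mat}_n^{\varepsilon_i}\times\prod_{i=1}^{13}\mathrm{Mat}_n^{\varepsilon_i}\cong M_n^{26}$, and we may assume $w\neq1$ so that $\varphi_w$ is dominant by Borel's theorem \cite{Bor83}). By Proposition \ref{prop:Proposition 7.13}, $\Phi_w=(P_{ij})_{1\le i,j\le2}$ where each $P_{ij}\colon M_n^{26r}\to M_n$ is a matrix word map of degree at most $26\ell(w)$ — indeed every entry of a product of $26\ell(w)$ block-unipotent matrices $\bigl(\begin{smallmatrix}I_n&A\\0&I_n\end{smallmatrix}\bigr)^{\pm1}$, $\bigl(\begin{smallmatrix}I_n&0\\B&I_n\end{smallmatrix}\bigr)^{\pm1}$ is a $\ints$-linear combination of products of length $\le26\ell(w)$ in the block variables. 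The key point is that it suffices to control the fibers of a \emph{single} block.

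First I would check that $P_{12}$, the upper-right $n\times n$ block, is generating as a morphism $M_n^{26r}\to M_n$: since $\psi$, hence $(\psi*\psi)^r$, is surjective (Lemma \ref{lem:large generating unipotent subgroups}) and $\varphi_w$ is dominant, $\Phi_w$ is dominant onto $\mathrm{SL}_{2n}$; composing with the projection $\pi_{12}\colon\mathrm{SL}_{2n}\to M_n$ to the upper-right block, which is surjective (it has the section $A\mapsto\bigl(\begin{smallmatrix}I_n&A\\0&I_n\end{smallmatrix}\bigr)$), shows $P_{12}$ is dominant, hence generating into the vector space $M_n$. By Theorem \ref{thm:lower bounds on epsilon jet flatness}, $P_{12}$ is $\tfrac{1}{25(26\ell(w))^3}$-jet flat. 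For every $g$ in the image of $\Phi_w$ the fiber $\Phi_w^{-1}(g)$ is a closed subscheme of $P_{12}^{-1}(g_{12})$ (with $g_{12}$ the upper-right block of $g$), so $J_m(\Phi_w^{-1}(g))$ is a closed subscheme of $J_m(P_{12}^{-1}(g_{12}))=J_m(P_{12})^{-1}(s_m(g_{12}))$ and therefore
\[
\dim J_m(\Phi_w^{-1}(g))\ \le\ \dim J_m(M_n^{26r})-\tfrac{1}{25(26\ell(w))^3}\,\dim J_m(M_n).
\]
Since $\dim J_m(M_n)=n^2(m+1)\ge\tfrac14(4n^2-1)(m+1)=\tfrac14\dim J_m(\mathrm{SL}_{2n})$, this shows $\Phi_w$ is $\tfrac{1}{4\cdot25\cdot26^3\,\ell(w)^3}$-jet flat; as $4\cdot25\cdot26^3=1757600<2\cdot10^6$, Lemma \ref{lem:reduction to matrix word maps} yields that $\varphi_w$ is $\tfrac{1}{2\cdot10^6\ell(w)^3}$-jet flat, and the "in particular" clause then follows from Lemma \ref{lem:epsilon jet flat and log canonical threshold}.

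The steps here are essentially routine given the cited results; the only mild points to be careful about are that $P_{12}$ is genuinely a matrix word map of the asserted degree, that jet schemes respect closed immersions (so the fiber-dimension comparison is valid over all relevant $\overline{K}$-points, via the $\mathbb{G}_m$-action and upper semicontinuity as in Lemma \ref{lem:epsilon jet flat and log canonical threshold}), and the dimension bookkeeping relating $\dim J_m(M_n)$ to $\dim J_m(\mathrm{SL}_{2n})$. There is no real obstacle beyond the tight but positive numerical margin ($1757600$ vs.\ $2\cdot10^6$); if more room is wanted one can instead use the pair of generating blocks $(P_{12},P_{21})\colon M_n^{26r}\to M_n^2$ — the projection $\mathrm{SL}_{2n}\to M_n^2$ onto these blocks is surjective via $\bigl(\begin{smallmatrix}I_n&A\\0&I_n\end{smallmatrix}\bigr)\bigl(\begin{smallmatrix}I_n&0\\B&I_n\end{smallmatrix}\bigr)$ — together with the version of Theorem \ref{thm:lower bounds on epsilon jet flatness} for tuples of matrix words obtained in the course of the proof of Theorem \ref{thm: main theorem for matrix word map}, which replaces the factor $4$ above by $2$.
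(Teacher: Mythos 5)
Your proof is correct and follows the paper's own argument essentially step for step: reduce via Lemma \ref{lem:reduction to matrix word maps} to $\Phi_w$, observe $\Phi_w^{-1}(g)\subseteq P_{12}^{-1}(g_{12})$, and apply Theorem \ref{thm:lower bounds on epsilon jet flatness} to the degree-$\le 26\ell(w)$ matrix word $P_{12}$; the only cosmetic difference is that the paper phrases the comparison through log canonical thresholds (Fact \ref{fact:properties of log cannonical threshold}(1)) while you compare jet-scheme dimensions directly, and the two are equivalent by Theorem \ref{thm:log canonical threshold and jet schemes}. Your explicit check that $P_{12}$ is dominant, hence generating, before invoking Theorem \ref{thm:lower bounds on epsilon jet flatness} is a worthwhile detail the paper leaves implicit.
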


\begin{proof}
$\Phi_{w}^{-1}(e)\subseteq P_{12}(A_{1},\ldots,A_{26r})^{-1}(0)$
so by Item (1) of Fact \ref{fact:properties of log cannonical threshold}
we get the following: 
\[
\mathrm{lct}((\prod\limits _{i=1}^{13}\mathrm{Mat}_{n}^{\varepsilon_{i}})^{2r},\Phi_{w}^{-1}(e))\geq\mathrm{lct(}(\prod_{i=1}^{13}\mathrm{Mat}_{n}^{\varepsilon_{i}})^{2r},P_{12}^{-1}(0)).
\]
By Proposition \ref{prop:Proposition 7.13}, $P_{12}$ is a matrix
word map of degree at most $26\ell(w)$ so by Theorem \ref{thm:lower bounds on epsilon jet flatness}
\[
\mathrm{lct(}(\prod_{i=1}^{13}\mathrm{Mat}_{n}^{\varepsilon_{i}})^{2r},P_{12}^{-1}(0))\geq\frac{\mathrm{dim}\mathrm{Mat}_{n}}{25\left(26\ell(w)\right)^{3}}\geq\frac{\mathrm{dim}\mathrm{SL}_{2n}}{2\cdot10^{6}\ell(w)^{3}}.
\]
Thus, $\Phi_{w}$ is $\frac{1}{2\cdot10^{6}\ell(w)^{3}}$-jet flat,
and by Lemma \ref{lem:reduction to matrix word maps} $\varphi_{w}$
is $\frac{1}{2\cdot10^{6}\ell(w)^{3}}$-jet flat. 
\end{proof}
\begin{proof}[Proof of Theorem \ref{thm:jet flatness of word maps}]
We may assume that $r<\ell(w)$. We have proved the theorem for $\mathrm{SL}_{2n}$.
Consider $\varphi_{w,2n-1}:\mathrm{SL}_{2n-1}^{r}\rightarrow\mathrm{SL}_{2n-1}$
and note that $\varphi_{w,2n-1}^{-1}(g)\subseteq\varphi_{w,2n}^{-1}(g)$
for any $g\in\mathrm{SL}_{2n-1}$. In particular, by Corollary \ref{cor:word maps on SL2n are jet flat},
if $n\geq10^{7}\ell(w)^{3}r$ then
\begin{align*}
\sup_{m\geq0}\frac{\dim J_{m}\left(\varphi_{w,2n-1}^{-1}(g)\right)}{m+1} & \leq\sup_{m\geq0}\frac{\dim J_{m}\left(\varphi_{w,2n}^{-1}(g)\right)}{m+1}\leq\mathrm{dim}\mathrm{SL}_{2n}\left(r-\frac{1}{2\cdot10^{6}\ell(w)^{3}}\right)\\
 & \leq\mathrm{dim}\mathrm{SL}_{2n-1}\left(r-\frac{1}{2\cdot10^{6}\ell(w)^{3}}\right)+4nr\\
 & \leq\mathrm{dim}\mathrm{SL}_{2n-1}\left(r-\frac{1}{10^{6}\ell(w)^{3}}\right).
\end{align*}

Now assume that $n<10^{7}\ell(w)^{3}r<10^{7}\ell(w)^{4}$. By \cite[Lemma 2.1]{Kas07}
(and as explained in Lemma \ref{cor:word maps on SL2n are jet flat}),
any $g\in\mathrm{SL}_{2n-1}$ can be written as a product of at most
$20$ matrices of the form $\left(\begin{array}{cc}
I_{p} & 0\\
B & I_{q}
\end{array}\right)$ or $\left(\begin{array}{cc}
I_{p} & A\\
0 & I_{q}
\end{array}\right)$, where $A\in\mathrm{Mat}_{p\times q}$ and $B\in\mathrm{Mat}_{q\times p}$.
Similarly as in the case of $\mathrm{SL}_{2n}$, and by Lemma \ref{lem:reduction to matrix word maps},
this yields a map $\Phi_{w}:\mathbb{A}^{N}\rightarrow\mathrm{SL}_{2n-1}$,
such that
\[
\mathrm{lct}(\mathrm{SL}_{2n-1}^{r},\varphi_{w,2n-1}^{-1}(g))\geq\mathrm{lct}(\mathbb{A}^{N},\Phi_{w}^{-1}(g)).
\]
Since $\Phi_{w}$ is a polynomial map of degree at most $40\ell(w)$,
we have by Item (2) of Fact \ref{fact:properties of log cannonical threshold},
\[
\mathrm{lct}(\mathrm{SL}_{2n-1}^{r},\varphi_{w,2n-1}^{-1}(g))\geq\frac{1}{40\ell(w)}\geq\frac{\mathrm{dim}\mathrm{SL}_{2n-1}}{10^{17}\ell(w)^{9}}.
\]
\end{proof}

\section{\label{sec:The-(FRS)-property and counting points}Number theoretic
interpretation of the flatness, $\varepsilon$-jet flatness and (FRS)
properties}

Let $\varphi:X\to Y$ be a dominant morphism between smooth, geometrically
irreducible $\rats$-varieties. In this section, we provide number
theoretic interpretations to the flatness, (FRS) and $\varepsilon$-jet
flatness properties of $\varphi$. Flatness (and $\varepsilon$-flatness)
can be interpreted, via the Lang-Weil bounds, as certain uniform bounds
on the fibers of $\varphi:X(\mathbb{F}_{q})\rightarrow Y(\mathbb{F}_{q})$
over all finite fields. The (FRS) and $\varepsilon$-jet flatness
properties can be interpreted by certain uniform bounds on the fibers
over finite rings of the form $\ints/p^{k}\ints$. For the $\varepsilon$-jet
flatness property, the idea is to apply the Lang-Weil bounds to the
$k$-th jet maps $J_{k}(\varphi)$, and to use results in motivic
integration to deal with the fact that the family of morphisms $\{J_{k}(\varphi)\}_{k\in\nats}$
is of unbounded complexity.

\subsection{The Lang-Weil bounds and a number theoretic characterization of flatness}
\begin{defn}[{cf.~\cite[Definition 7.7]{GH19}}]
Let $K'$ be any field.
\begin{enumerate}
\item An affine $K'$-scheme $X$ has \textit{complexity at most $M$},
if $X$ has a closed embedding $\psi_{X}:X\hookrightarrow\mathbb{A}_{K'}^{n}$
with $X=\spec(K'[x_{1},\dots,x_{n}]/(f_{1},\dots,f_{k}))$, where
$n,k$ and $\max\limits _{i}\{\mathrm{deg}(f_{i})\}$ are at most
$M$. 
\item A morphism $\varphi:X\rightarrow Y$ of affine $K'$-schemes is said
to be \textit{of complexity at most }$M$, if there exist embeddings
$\psi_{X}:X\hookrightarrow\mathbb{A}_{K'}^{n_{1}}$ and $\psi_{Y}:Y\hookrightarrow\mathbb{A}_{K'}^{n_{2}}$
of complexity at most $M$, such that the polynomial map $\widetilde{\varphi}:\mathbb{A}_{K'}^{n_{1}}\rightarrow\mathbb{A}_{K'}^{n_{2}}$
induced from $\varphi$ is of the form $\widetilde{\varphi}=(\widetilde{\varphi}_{1},\dots,\widetilde{\varphi}_{n_{2}})$,
where the degrees of $\{\widetilde{\varphi}_{i}\}_{i=1}^{n_{2}}$
are at most $M$.
\end{enumerate}
This definition can be extended to non-affine schemes, and morphisms
between them (see e.g.~\cite[Definition 7.7]{GH19}). We denote by
$\mathcal{C}_{M}$ the class of all $K'$-schemes and $K'$-morphisms
of complexity at most $M$. Since $K'$ is usually clear from the
context, we omit it from the notation. 
\end{defn}

\begin{defn}
~\label{def:constant in Lang weil}Let $Z$ be a finite type $\mathbb{F}_{q}$-scheme,
and let $\varphi:X\to Y$ be a morphism between finite type $\ints$-schemes. 
\begin{enumerate}
\item We denote by $C_{Z}$ the number of top-dimensional geometrically
irreducible components of $Z$ which are defined over $\mathbb{F}_{q}$. 
\item We write $C_{X,q}$:=$C_{X_{\mathbb{F}_{q}}}$ and $C_{\varphi,q,y}:=C_{(X_{\mathbb{F}_{q}})_{y,\varphi}}$
for any $y\in Y(\mathbb{F}_{q})$.
\end{enumerate}
\end{defn}

\begin{thm}[The Lang-Weil estimates \cite{LW54}]
\label{thm:Lang-Weil}For any $M\in\nats$, there exists $C(M)>0$,
such that for each finite type $\mathbb{F}_{q}$-scheme $X\in\mathcal{C}_{M}$
we have 
\[
\left|\frac{|X(\mathbb{F}_{q})|}{q^{\mathrm{dim}X}}-C_{X}\right|<C(M)q^{-\frac{1}{2}}.
\]
\end{thm}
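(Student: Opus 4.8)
Here is my plan for proving the Lang--Weil estimates (Theorem \ref{thm:Lang-Weil}).

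The approach is to reduce the uniform statement over the class $\mathcal{C}_M$ to the classical Lang--Weil theorem for a single geometrically irreducible variety, and then to handle the bookkeeping of components, lower-dimensional pieces, and dependence on $q$ using a Noetherianity/spreading-out argument that makes the constant $C(M)$ depend only on $M$. First I would recall the basic Lang--Weil bound: if $V$ is a geometrically irreducible $\mathbb{F}_q$-variety of dimension $d$ that is a subvariety of $\mathbb{P}^n$ (or $\mathbb{A}^n$) of degree $\delta$, then $\bigl||V(\mathbb{F}_q)| - q^d\bigr| \le c(n,d,\delta)\, q^{d-1/2}$ for an explicit constant depending only on $n,d,\delta$; this is exactly \cite{LW54}. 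The content of the present theorem is that all the numerical invariants $n,d,\delta$ of the components of an arbitrary $X \in \mathcal{C}_M$ are themselves bounded in terms of $M$ alone, uniformly in $q$.

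The key steps, in order: (1) By definition of $\mathcal{C}_M$, write $X \hookrightarrow \mathbb{A}^n_{\mathbb{F}_q}$ cut out by polynomials $f_1,\dots,f_k$ with $n,k,\deg f_i \le M$. (2) Decompose $X_{\overline{\mathbb{F}_q}}$ into geometrically irreducible components; by a standard effective B\'ezout-type bound (e.g.\ the arithmetic B\'ezout theorem, or the bounds on degrees of components of a variety defined by equations of bounded degree in bounded ambient dimension), both the number of components and the degree of each component are bounded by a function of $n$ and $\max_i \deg f_i$, hence by a function $N(M)$ of $M$ only; moreover $\dim X \le n \le M$. (3) Separate the top-dimensional components from the rest: the top-dimensional ones defined over $\mathbb{F}_q$ contribute $\approx q^{\dim X}$ each (so together $C_X q^{\dim X}$), the top-dimensional ones not defined over $\mathbb{F}_q$ come in Galois orbits and contribute $O(q^{\dim X - 1/2})$ by applying Lang--Weil to each and summing over the (boundedly many) conjugates — here one uses that a variety together with its Frobenius-conjugates has $\mathbb{F}_q$-points only on the intersection, which is lower-dimensional, or more simply that the union of a full Galois orbit of size $\ge 2$ has point count $O(q^{\dim X - 1/2})$ — and the lower-dimensional components contribute $O(q^{\dim X - 1})$ by induction on dimension (or directly by a crude bound $|Z(\mathbb{F}_q)| \le c\, q^{\dim Z}$ with $c$ depending only on the complexity, which itself follows from Lang--Weil applied component-wise). (4) Apply classical Lang--Weil to each top-dimensional geometrically irreducible component defined over $\mathbb{F}_q$ with its bounded invariants, sum the error terms (boundedly many of them, each of size $\le c(M) q^{\dim X - 1/2}$), and collect everything into a single constant $C(M)$.

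The main obstacle, and the place where care is genuinely needed, is step (2): making the number and degrees of the geometrically irreducible components of $X$ effectively bounded in terms of $M$ uniformly in $q$. One cannot simply invoke Noetherian generic flatness over $\spec \mathbb{Z}$ because the theorem ranges over all prime powers including small ones, but the resolution is that bounds on degrees of irreducible components coming from effective elimination theory / B\'ezout are completely uniform in the characteristic — they depend only on the number of variables and the degrees of the defining equations. I would cite such an effective bound (for instance from Heintz's work on definability bounds, or the classical fact that the components of $V(f_1,\dots,f_k) \subseteq \mathbb{A}^n$ have degree at most $(\max_i \deg f_i)^n$) rather than reprove it. A secondary subtlety is ensuring the ``crude bound'' $|Z(\mathbb{F}_q)| \le c(M) q^{\dim Z}$ used for lower-dimensional strata is also uniform; this again follows from the same component-degree bounds together with the trivial estimate $|W(\mathbb{F}_q)| \le (\deg W)\, q^{\dim W}$ for $W$ irreducible, which one can prove directly by a generic linear projection argument. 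Once these uniform complexity bounds are in hand, the rest is assembling error terms, and the final $C(M)$ is an explicit (if large) function of $M$.
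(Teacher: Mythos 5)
The paper does not prove this theorem; it states it as a citation to Lang--Weil \cite{LW54}, whose original result gives the estimate $\bigl||V(\mathbb{F}_q)|-q^d\bigr|\le (\delta-1)(\delta-2)q^{d-1/2}+A(n,d,\delta)q^{d-1}$ for a single geometrically irreducible variety $V\subseteq\mathbb{P}^n$ of dimension $d$ and degree $\delta$. The uniform version over a complexity class $\mathcal{C}_M$ as stated here is a folklore consequence, and your sketch is the standard and correct way to obtain it: effective B\'ezout-type bounds give $n,\dim,\deg$ and the number of geometrically irreducible components all bounded by a function of $M$ alone, independently of $q$; classical Lang--Weil applies to each top-dimensional component defined over $\mathbb{F}_q$; top-dimensional components lying in Galois orbits of size $\ge 2$ have their $\mathbb{F}_q$-points supported on a proper closed subvariety of bounded complexity, hence contribute $O(q^{\dim X-1})$ (you can sharpen your $O(q^{\dim X-1/2})$ to this); lower-dimensional components contribute $O(q^{\dim X-1})$ via the elementary estimate $|W(\mathbb{F}_q)|\le(\deg W)\,q^{\dim W}$ for irreducible $W$, which needs only a generic linear projection, not Lang--Weil, so there is no circularity. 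Summing these boundedly many error terms gives a constant depending only on $M$. You have correctly identified the one nontrivial input beyond \cite{LW54}, namely the characteristic-free bound $\deg\le D^n$ on components of $V(f_1,\dots,f_k)\subseteq\mathbb{A}^n$ with $\deg f_i\le D$, and your observation that a spreading-out over $\mathrm{Spec}\,\mathbb{Z}$ is not the right tool here (since $X$ itself varies with $q$) is also on target.
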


For a set $S$ of primes, we set $\mathcal{P}_{S}:=\{q\in\nats:q=p^{r}\text{ for a prime }p\notin S\text{ and }r\in\nats\}$.
The following is a relative version of the Lang-Weil bounds which
will be very useful later on. 
\begin{thm}
\label{thm: flatness and counting points}Let $\varphi:X\to Y$ be
a dominant morphism between finite type $\ints$-schemes $X$ and
$Y$ such that $X_{\rats}$ and $Y_{\rats}$ are smooth and geometrically
irreducible. Then the following are equivalent: 
\begin{enumerate}
\item There exist $C_{1}>0$ and a set $S$ primes, such that for every
$p\notin S$, and every $y\in Y(\mathbb{F}_{p})$ 
\[
\left|\frac{|\varphi^{-1}(y)|}{p^{(\mathrm{dim}X_{\mathbb{Q}}-\mathrm{dim}Y_{\mathbb{Q}})}}-C_{\varphi,p,y}\right|<C_{1}p^{-\frac{1}{2}}.
\]
\item There exist $C_{2}>0$ and a set $S$ of primes, such that for every
$q\in\mathcal{P}_{S}$, and every $y\in Y(\mathbb{F}_{q})$ 
\[
\left|\frac{|\varphi^{-1}(y)|}{q^{(\mathrm{dim}X_{\mathbb{Q}}-\mathrm{dim}Y_{\mathbb{Q}})}}-C_{\varphi,q,y}\right|<C_{2}q^{-\frac{1}{2}}.
\]
\item There exist $C_{3}>0$ and a set $S$ of primes, such that for every
$q\in\mathcal{P}_{S}$, it holds that
\[
\left|\frac{\varphi_{*}(\mu_{X(\mathbb{F}_{q})})}{\mu_{Y(\mathbb{F}_{q})}}-\widetilde{C}_{\varphi,q}\right|_{L^{\infty}}<C_{3}q^{-\frac{1}{2}},
\]
where $\widetilde{C}_{\varphi,q}:Y(\mathbb{F}_{q})\rightarrow\nats$
is defined by $\widetilde{C}_{\varphi,q}(y):=C_{\varphi,q,y}$. 
\item $\varphi_{\rats}:X_{\rats}\to Y_{\rats}$ is flat. 
\end{enumerate}
\end{thm}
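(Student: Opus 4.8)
Looking at this, the statement is Theorem 8.8 (labeled `thm: flatness and counting points`), characterizing flatness of $\varphi_\rats$ in terms of Lang-Weil-type point counts over finite fields. Let me plan a proof.

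=== BEGIN PROOF PROPOSAL ===

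\textbf{Strategy.} The plan is to prove the chain of implications $(4)\Rightarrow(1)\Rightarrow(2)\Leftrightarrow(3)$ and $(2)\Rightarrow(4)$, with the Lang-Weil estimates (Theorem \ref{thm:Lang-Weil}) as the engine throughout. The passage between $(2)$ and $(3)$ is essentially a bookkeeping translation, since for $q\in\mathcal{P}_S$ one has $\frac{\varphi_*(\mu_{X(\mathbb{F}_q)})(y)}{\mu_{Y(\mathbb{F}_q)}(y)}=\frac{|\varphi^{-1}(y)|\cdot|Y(\mathbb{F}_q)|}{|X(\mathbb{F}_q)|}$, and applying Theorem \ref{thm:Lang-Weil} to $X$ and $Y$ (both geometrically irreducible over $\rats$, hence $C_{X,q}=C_{Y,q}=1$ for $q$ large) converts the normalization $q^{\dim X_\rats-\dim Y_\rats}$ into $|X(\mathbb{F}_q)|/|Y(\mathbb{F}_q)|$ up to a factor $1+O(q^{-1/2})$; one then absorbs the error into the constant. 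The implication $(1)\Rightarrow(2)$ is trivial if one simply enlarges the set $S$ --- but to get it for all prime powers and not just primes one invokes that the family $\{X_{y,\varphi}\}$ has bounded complexity, so a single Lang-Weil constant $C(M)$ works uniformly over all finite fields $\mathbb{F}_q$ once we spread $\varphi$ out over $\Spec\ints[1/N]$; this is the standard spreading-out argument (as in \cite{GH19}).

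\textbf{The two substantive implications.} For $(4)\Rightarrow(1)$: assume $\varphi_\rats$ is flat. Spreading out, there is a finite set $S$ of primes such that $\varphi:X_{\ints[1/N]}\to Y_{\ints[1/N]}$ is flat with $X,Y$ smooth over $\ints[1/N]$ (using generic flatness and that smoothness, flatness are open conditions). Then for $p\notin S$ the reduction $\varphi_{\mathbb{F}_p}$ is flat between smooth $\mathbb{F}_p$-schemes, so every fiber $(X_{\mathbb{F}_p})_{y,\varphi}$ is a local complete intersection of pure dimension $\dim X_\rats-\dim Y_\rats$, and the whole family lies in some complexity class $\mathcal{C}_M$ with $M$ independent of $p$ and $y$. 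Applying Theorem \ref{thm:Lang-Weil} to each fiber gives exactly $(1)$ with $C_1=C(M)$. For $(2)\Rightarrow(4)$: this is the contrapositive direction and the \emph{main obstacle}. If $\varphi_\rats$ is not flat, then since $X_\rats,Y_\rats$ are smooth, by miracle flatness there is a point where the fiber dimension jumps; more precisely there exists an irreducible subvariety $W\subseteq Y_\rats$ and a component $Z$ of $\varphi^{-1}(W)$ dominating $W$ with $\dim Z-\dim W>\dim X_\rats-\dim Y_\rats$. Spreading out, for infinitely many $p$ (in fact all large $p$, using Chebotarev/Lang-Weil to guarantee $\mathbb{F}_q$-points exist on the relevant components for suitable $q$) one finds $y\in Y(\mathbb{F}_q)$ with $|\varphi^{-1}(y)|\geq c\, q^{\dim Z-\dim W+\dim W}= c\,q^{\dim Z}$ contradicting the bound $|\varphi^{-1}(y)|\leq (C_{\varphi,q,y}+C_2 q^{-1/2})q^{\dim X_\rats-\dim Y_\rats}$ once $q$ is large, because $C_{\varphi,q,y}$ is itself bounded uniformly (again by Lang-Weil applied to fibers of bounded complexity --- so $C_{\varphi,q,y}\leq C(M)$).

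\textbf{Where the difficulty concentrates.} The delicate point in $(2)\Rightarrow(4)$ is producing an $\mathbb{F}_q$-point of the ``bad'' locus: the high-dimensional component $Z$ of a fiber over some $\overline{\mathbb{Q}}$-point need not be defined over a small field, so one cannot directly extract an $\mathbb{F}_p$-point. The fix is to work with the constructible stratification of $Y_\rats$ by fiber dimension: let $Y_{\geq e}=\{y:\dim\varphi^{-1}(y)\geq e\}$ for $e=\dim X_\rats-\dim Y_\rats+1$; if $\varphi_\rats$ is not flat then $Y_{\geq e}\neq\emptyset$, it is closed, and (after spreading out) $Y_{\geq e}$ has $\mathbb{F}_q$-points for all $q=p^r$ with $r$ divisible by the field of definition of some component of $Y_{\geq e}$ and $p$ large, by Theorem \ref{thm:Lang-Weil}; over each such point the fiber has $\geq c\, q^{e}$ points (again by Lang-Weil on the fiber, of bounded complexity, with $C\geq 1$ on its top component), which for large $q$ beats $(C(M)+C_2q^{-1/2})q^{e-1}$. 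This contradicts $(2)$, completing the cycle. One should also record that $(2)$ already forces $\varphi$ surjective on generic fibers, which pins down $C_{\varphi,q,y}=1$ generically and is what makes the normalization exponent $\dim X_\rats-\dim Y_\rats$ the correct one.

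=== END PROOF PROPOSAL ===
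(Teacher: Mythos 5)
Your overall strategy is sound and the ideas are in the right direction, but there is a genuine gap in the logical chain that prevents $(1)$ from being shown equivalent to the others.

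\textbf{The gap at $(1)\Rightarrow(2)$.} You dismiss this step as ``trivial if one simply enlarges the set $S$,'' then bolster it with ``bounded complexity, single Lang-Weil constant.'' Neither works. Enlarging $S$ only shrinks both quantifiers; it does nothing to pass from information about $\mathbb{F}_p$-points ($p$ prime) to information about $\mathbb{F}_q$-points ($q=p^r$, $r>1$), which is what $(2)$ demands. And bounded complexity is a red herring here: Lang-Weil on a fiber $X_{y,\varphi}$ over $\mathbb{F}_q$ gives $\left|\varphi^{-1}(y)\right| = C_{\varphi,q,y}\,q^{\dim X_{y,\varphi}}+O(q^{\dim X_{y,\varphi}-1/2})$ --- the exponent is the \emph{actual} fiber dimension, not the expected one $\dim X_{\rats}-\dim Y_{\rats}$. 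Condition $(1)$ only tells you these coincide for $\mathbb{F}_p$-points of $Y$; it says nothing about an $\mathbb{F}_{p^r}$-point $y$ that is not $\mathbb{F}_p$-rational. So $(1)\Rightarrow(2)$ is not established, and as a consequence your directed implication graph has no edge leaving $(1)$: you prove $(4)\Rightarrow(1)$, $(2)\Leftrightarrow(3)$, and $(2)\Rightarrow(4)$, which makes $(2),(3),(4)$ equivalent and implies $(1)$, but gives no proof that $(1)$ implies any of the others.

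\textbf{How the paper closes the cycle.} The paper proves $(1)\Rightarrow(4)$ directly: fix $x\in X(\overline{\rats})$, enlarge the number field $K$ so that both $x$ and all geometrically irreducible components of $X_{\varphi(x)}$ are defined over $K$ (using EGA IV 9.7.8 to spread this out), and then apply Chebotarev to find infinitely many primes of $K$ of residue degree one. For such primes the reduction has all its top-dimensional components defined over $\mathbb{F}_p$, so $C_{\varphi,p,\varphi(\bar x)}\geq 1$; condition $(1)$ then forces $\dim X_{\varphi(x)}=\dim X_\rats-\dim Y_\rats$. This degree-one Chebotarev trick is exactly what you avoid by working with prime powers in your $(2)\Rightarrow(4)$ argument (where you can pass to $\mathbb{F}_{q^r}$ to make the relevant components defined), and that avoidance is legitimate --- but it means your contrapositive needs $(2)$ as hypothesis, not $(1)$, and so it cannot substitute for a proof of $(1)\Rightarrow(4)$.

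\textbf{Minor issues.} In the first sketch of $(2)\Rightarrow(4)$ you wrote $\left|\varphi^{-1}(y)\right|\geq c\,q^{\dim Z-\dim W+\dim W}=c\,q^{\dim Z}$; the fiber over a single $y\in W$ has dimension $\dim Z-\dim W$, so the bound should be $c\,q^{\dim Z-\dim W}$ (and the contradiction is $\dim Z-\dim W>\dim X_\rats-\dim Y_\rats$, not $\dim Z$). Also, when you invoke $\mathbb{F}_q$-points of $Y_{\geq e}$, you should be explicit that you then pass to a further power $q^{r'}$ to make a top-dimensional component of the fiber over such a point defined over the residue field; you gesture at this but do not separate the two conditions (component of $Y_{\geq e}$ defined, component of the fiber defined) cleanly.

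\textbf{Repair.} The simplest fix keeping your structure: prove $(4)\Rightarrow(2)$ (identical to your $(4)\Rightarrow(1)$ with $p$ replaced by $q$), note $(2)\Rightarrow(1)$ trivially, keep $(2)\Leftrightarrow(3)$, and then add the paper's Chebotarev argument for $(1)\Rightarrow(4)$, since your $(2)\Rightarrow(4)$ by itself does not reach $(1)$.
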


\begin{proof}
We would like to show $(1)\Rightarrow(4)\Rightarrow(2)\Rightarrow(1)$
and $(3)\Leftrightarrow(2)$. Clearly $(2)\Rightarrow(1)$.

$(3)\Leftrightarrow(2)$. We prove $(2)\Rightarrow(3)$, the proof
of $(3)\Rightarrow(2)$ is similar. Choose $S$ large enough such
that $\mathrm{dim}X_{\rats}=\mathrm{dim}X_{\mathbb{F}_{q}}$, $\mathrm{dim}Y_{\rats}=\mathrm{dim}Y_{\mathbb{F}_{q}}$
and such that $X_{\mathbb{F}_{q}}$ and $Y_{\mathbb{F}_{q}}$ are
geometrically irreducible for every $q\in\mathcal{P}_{S}$. By the
triangle inequality it is enough to show that 
\begin{equation}
\left|\frac{\left|\varphi^{-1}(y)\right|}{q^{(\mathrm{dim}X_{\mathbb{Q}}-\mathrm{dim}Y_{\mathbb{Q}})}}-\frac{\varphi_{*}(\mu_{X(\mathbb{F}_{q})})}{\mu_{Y(\mathbb{F}_{q})}}(y)\right|<C_{4}q^{-\frac{1}{2}},\label{eq:(8.1)}
\end{equation}
for any $y\in Y(\mathbb{F}_{q})$. The left hand side of (\ref{eq:(8.1)})
can be written as 
\[
|\varphi^{-1}(y)|\left|\frac{1}{q^{(\mathrm{dim}X_{\mathbb{Q}}-\mathrm{dim}Y_{\mathbb{Q}})}}-\frac{\left|Y(\mathbb{F}_{q})\right|}{\left|X(\mathbb{F}_{q})\right|}\right|<\frac{C_{5}\left|\varphi^{-1}(y)\right|}{q^{(\mathrm{dim}X_{\mathbb{Q}}-\mathrm{dim}Y_{\mathbb{Q}})}}q^{-\frac{1}{2}}<C_{5}q^{-\frac{1}{2}}(C_{\varphi,q,y}+C_{2}q^{-\frac{1}{2}})<C_{4}q^{-\frac{1}{2}},
\]
where the first inequality follows from our assumption on $S$ and
the Lang-Weil bounds (Theorem \ref{thm:Lang-Weil}), and the last
inequality follows from the fact that the collection $\{C_{\varphi,q,y}\}_{y,q}$
is uniformly bounded (see e.g.~\cite[Corollary 9.7.9]{Gro66}).

$(4)\Rightarrow(2)$. Assume that $\varphi_{\rats}$ is flat. Then
there exists a set $S$ of primes such that for any $q\in\mathcal{P}_{S}$,
$\varphi_{\mathbb{F}_{q}}$ is flat, $\mathrm{dim}X_{\rats}=\mathrm{dim}X_{\mathbb{F}_{q}}$
and $\mathrm{dim}Y_{\rats}=\mathrm{dim}Y_{\mathbb{F}_{q}}$. Since
all of the fibers of $\varphi_{\mathbb{F}_{q}}$ belong to $\mathcal{C}_{M}$,
for some $M\in\nats$, the Lang-Weil estimates (Theorem \ref{thm:Lang-Weil})
imply $(2)$.

$(1)\Rightarrow(4)$. Since $X_{\rats}$ and $Y_{\rats}$ are smooth,
by enlarging $S$, we may assume that $X_{\mathbb{F}_{p}}$ and $Y_{\mathbb{F}_{p}}$
are smooth for every $p\notin S$. Using miracle flatness it is enough
to show that $\mathrm{dim}X_{\varphi(x)}=\mathrm{dim}X_{\rats}-\mathrm{dim}Y_{\rats}$
for every $x\in X(\overline{\rats})$.

Given $x\in X(\overline{\rats})$, there exists a finite extension
$K/\rats$ such that $x\in X(K)$ and such that the geometrically
irreducible components of the fiber $X_{\varphi(x)}$ are defined
over $K$. Note that $\varphi(x)\in Y(S_{1}^{-1}\mathcal{O}_{K})$,
where $\mathcal{O}_{K}$ is the ring of integers of $K$, and $S_{1}\subset\spec(\mathcal{O}_{K})$
is a finite set of primes. Write $Z=(X_{S_{1}^{-1}\mathcal{O}_{K}})_{\varphi(x)}$.
Since $\spec(K)$ is the generic point of $\spec(S_{1}^{-1}\mathcal{O}_{K})$,
by \cite[Proposition 9.7.8]{Gro66} (see also \cite[Theorem 2.3.5]{AA18}),
for all but finitely many primes $[\mathfrak{p}]\in\mathrm{Spec}(S_{1}^{-1}\mathcal{O}_{K})$
it holds that all irreducible components of $Z_{[\mathfrak{p}]}$
are geometrically irreducible.

Further note that $Z_{[\mathfrak{p}]}\simeq X_{\varphi(\overline{x})}$
where $\overline{x}$ is the image of $x$ under the natural map $X(S_{1}^{-1}\mathcal{O}_{K})\to X(S_{1}^{-1}\mathcal{O}_{K}/\mathfrak{p})$.
Now, using Chebotarev's density theorem, there exist infinitely many
primes $[\mathfrak{p}]\in\mathrm{Spec}(S_{1}^{-1}\mathcal{O}_{K})$
such that $S_{1}^{-1}\mathcal{O}_{K}/\mathfrak{p}\simeq\mathbb{F}_{p}$,
for (infinitely many) prime numbers $p$. Since by our construction
$C_{\varphi,p,\varphi(\overline{x})}\neq0$, using our assumption
and the Lang-Weil estimates we conclude $\mathrm{dim}Z_{[\mathfrak{p}]}=\mathrm{dim}X_{\varphi(\overline{x})}=\mathrm{dim}X_{\rats}-\mathrm{dim}Y_{\rats}$
for infinitely many primes $[\mathfrak{p}]\in\mathrm{Spec}(S_{1}^{-1}\mathcal{O}_{K})$.
This implies $\mathrm{dim}X_{\varphi(x)}=\mathrm{dim}X_{\rats}-\mathrm{dim}Y_{\rats}$. 
\end{proof}

\subsection{Number theoretic and analytic interpretation of the (FRS) property}

In \cite{AA16}, Aizenbud and Avni gave an analytic criterion for
the (FRS) property: 
\begin{thm}[{cf.~\cite[Theorem 3.4]{AA16}}]
\label{thm:analytic criterion of the (FRS) property}Let $\varphi:X\rightarrow Y$
be a map between smooth $\rats$-varieties. Then the following are
equivalent: 
\begin{enumerate}
\item $\varphi$ is (FRS). 
\item For any non-Archimedean local field $\rats\subseteq F$ and any smooth,
compactly supported measure $\mu$ on $X(F)$, the measure $\varphi_{*}(\mu)$
has continuous density. 
\item For any $x\in X(\overline{\rats})$ and any finite extension $K/\rats$
with $x\in X(K)$, there exists a non-Archimedean local field $F\supseteq K$
and a non-negative smooth, compactly supported measure $\mu$ on $X(F)$
that does not vanish at $x$ such that $\varphi_{*}(\mu)$ has continuous
density. 
\end{enumerate}
\end{thm}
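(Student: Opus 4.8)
\textbf{Plan of proof for Theorem~\ref{thm:analytic criterion of the (FRS) property}.}
The strategy is to reduce to the case of a morphism between smooth affine $\rats$-varieties and then to invoke the known equivalences from \cite[Theorem 3.4]{AA16}, supplying the bridges between the geometric statement (1), the local-field statement (2), and the pointwise version (3). The implications $(2)\Rightarrow(3)$ is essentially trivial (just pick $F$, shrink the support of a bump function near $x$), and $(1)\Rightarrow(2)$ is the heart of \cite{AA16}: it follows from the fact that the pushforward of a smooth compactly supported measure under an (FRS) morphism has continuous density, proved there via resolution of singularities and an explicit local computation of the fibral integral using the numerical data $(k_i,a_i)$ of a log resolution, with the rational-singularities condition $\mathrm{lct}\ge\dim Y$ (equivalently $k_i+1>a_i\dim Y$ in the relevant strata) ensuring local $L^1_{\mathrm{loc}}$-integrability of the density. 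The only genuinely new work is $(3)\Rightarrow(1)$.

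\textbf{Main steps.} First I would localize: the (FRS) property is Zariski-local on $X$ (Definition~\ref{def:(FRS)} together with Proposition~\ref{Prop:properties preserved under deformations}-type openness), so it suffices to treat $X,Y$ affine and to check the property at each closed point $x$. Second, given (3), for a fixed $x\in X(\overline{\rats})$ I fix $K/\rats$ with $x\in X(K)$, obtain the local field $F\supseteq K$ and the nonnegative smooth compactly supported $\mu$ on $X(F)$ with $\mu(x)>0$ and $\varphi_*(\mu)$ having continuous density. Third, I run the analytic-to-geometric direction of \cite[Theorem 3.4]{AA16} \emph{at the point $x$}: choose a log resolution $\pi:\widetilde X_{y}\to X_{y}$ of the fiber $X_{y,\varphi}$ over $y=\varphi(x)$ (using that $\varphi$ is flat, which must first be established — see below — so the fiber is a local complete intersection), pull back the local density of $\varphi_*(\mu)$ near $y$, and observe that continuity of this density at $y$ forces, via the standard monomial integral $\int |t_1|^{a_1}\cdots|t_N|^{a_N}\,|t_1|^{k_1}\cdots|t_N|^{k_N}\,d t$ estimate, the inequality $\min_i \frac{k_i+1}{a_i}\ge \dim Y$ among the divisors $E_i$ whose image contains $x$; since $\mu(x)>0$, this is a nonvacuous constraint exactly at $x$. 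This yields $\mathrm{lct}_x(X,X_{y,\varphi})\ge\dim Y$, hence by Musta\c{t}\u{a}'s criterion (or directly by the definition together with Theorem~\ref{thm:log canonical threshold and jet schemes}) the fiber has rational singularities at $x$; running over all $x$ gives (1).

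\textbf{The flatness subtlety and the main obstacle.} The delicate point is that statement (3), as phrased, does not a priori include flatness of $\varphi$, yet the reduction above wants to resolve the fibers as l.c.i.\ schemes. So the real first step is: deduce flatness of $\varphi_\rats$ from (3). Here I would argue as in \cite{AA16}: if $\varphi$ were not flat at some $x$, miracle flatness (as $X,Y$ smooth) gives $\dim X_{\varphi(x),\varphi}>\dim X-\dim Y$, and then a dimension count shows that the pushforward of \emph{any} nonnegative smooth measure not vanishing at $x$ must have density blowing up near $\varphi(x)$ — concretely, the density at $y'$ near $y$ is comparable to $\mathrm{vol}$ of the fiber $X_{y'}$, which is unbounded as $y'\to y$ along the non-flat locus — contradicting the continuity hypothesis. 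This is the step I expect to be the main obstacle to write cleanly, since it requires care with the normalization of the smooth measure, with the fact that a single bump function at one point $x$ controls only a piece of the fiber, and with passing between $F$-points and scheme dimension (via Chebotarev / the $p$-adic analogue of Theorem~\ref{thm: flatness and counting points}, or more directly via an $F$-analytic implicit function theorem argument). Once flatness is in hand, the l.c.i.\ property of fibers is automatic and the resolution-of-singularities computation goes through verbatim as in \cite[Theorem 3.4]{AA16}; I would simply cite that theorem for the routine parts and only spell out the localization at $x$ and the flatness deduction. Finally, $(1)\Rightarrow(2)$ and $(2)\Rightarrow(3)$ I would quote directly from \cite[Theorem 3.4]{AA16}, noting that our statement is literally that theorem with the conventions of this paper.
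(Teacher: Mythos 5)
The paper does not give its own proof of this statement: the header reads ``cf.~\cite[Theorem 3.4]{AA16}'' and the result is simply cited from there (the only change being that the ground field is $\rats$ rather than a general finitely generated field of characteristic $0$, which is the ``with the conventions of this paper'' you mention). Your proposal's bottom line — that one should quote \cite[Theorem 3.4]{AA16} verbatim — therefore matches what the paper actually does, and your sketch of the intermediate steps (localization, resolution-of-singularities computation of fibral integrals, log-canonical threshold inequality $\mathrm{lct}_x \geq \dim Y$, and the deduction of flatness from boundedness of the density via the monomial volume estimate) is a reasonable reconstruction of the argument in \cite{AA16}; no new content beyond the citation was expected or supplied by the paper.

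One small remark on your flatness step: your blow-up argument (``density blows up along the non-flat locus'') is the right intuition, but as you note yourself it needs care, since a bump near $x$ only sees an $F$-analytic neighborhood of $x$ in the fiber; the clean route in \cite{AA16} is to use a resolution of a suitable stratification together with the explicit monomial integral, rather than a bare dimension count on $F$-points. Since this all lives inside the cited proof, it does not affect the correctness of your overall approach.
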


In \cite{AA18}, a number theoretic characterization was given for
the property of having rational singularities in the case of local
complete intersection schemes. A strengthening of this characterization
was later given in \cite{Gla19}. 
\begin{defn}
\label{def:expected number of points}Let $X$ be a finite type $\ints$-scheme.
For a finite ring $A$, set $h_{X}(A):=\frac{\left|X(A)\right|}{\left|A\right|^{\mathrm{dim}X_{\rats}}}$
if $X_{\rats}$ is non-empty, and $h_{X}(A)=0$ otherwise. 
\end{defn}

\begin{thm}[{see \cite[Theorem A]{AA18} and \cite[Theorem 1.4]{Gla19}}]
\label{thm:number theoretic criterion for rational singularities}Let
$X$ be a finite type $\mathbb{Z}$-scheme such that $X_{\mathbb{Q}}$
is equidimensional and a local complete intersection. Then the following
are equivalent: 
\begin{enumerate}
\item For any $k$, $\lim\limits _{p\rightarrow\infty}h_{X}(\mathbb{Z}/p^{k}\mathbb{Z})=1$. 
\item $X_{\mathbb{\overline{Q}}}$ is irreducible, and there exists a finite
set $S$ of primes, such that for every prime $p\notin S$ the sequence
$k\mapsto h_{X}(\mathbb{Z}/p^{k}\mathbb{Z})$ is bounded. 
\item There exists $C>0$ such that $\left|h_{X}(\mathbb{Z}/p^{k}\mathbb{Z})-1\right|<Cp^{-1/2}$
for every prime $p$ and every $k\in\mathbb{N}$. 
\item $X_{\mathbb{\overline{Q}}}$ is reduced, irreducible and has rational
singularities. 
\end{enumerate}
\end{thm}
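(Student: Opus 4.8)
```latex
The statement to be proven (Theorem \ref{thm:number theoretic criterion for rational singularities}) is essentially \cite[Theorem A]{AA18} combined with the strengthening in \cite[Theorem 1.4]{Gla19}, so the plan is to reduce to those references while recording the logical structure one would set up to deduce them. The implications $(3)\Rightarrow(1)$ and $(3)\Rightarrow(2)$ are immediate: if $|h_X(\ints/p^k\ints)-1|<Cp^{-1/2}$ for all $p,k$, then for fixed $k$ letting $p\to\infty$ gives $(1)$, and for $p$ outside the finite set where $Cp^{-1/2}\geq 1$ the sequence $k\mapsto h_X(\ints/p^k\ints)$ is bounded by $2$; the geometric irreducibility in $(2)$ follows from $(1)$ via the Lang--Weil bounds applied to $X_{\mathbb{F}_p}$ (the $k=1$ case of $(1)$ forces $C_{X,p}\to 1$, which for almost all $p$ means $X_{\mathbb{F}_p}$ is geometrically irreducible, hence $X_{\overline{\rats}}$ is irreducible). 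So the substance is the cycle $(1)\Rightarrow(4)$, $(2)\Rightarrow(4)$, and $(4)\Rightarrow(3)$.

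For $(4)\Rightarrow(3)$: since $X_{\rats}$ is a local complete intersection with rational singularities and is reduced and irreducible, one uses Musta\c{t}\u{a}'s jet-scheme criterion (the version quoted before Corollary \ref{cor:relative criterion for (FRS)}) together with Denef's formula expressing $h_X(\ints/p^k\ints)$ via a $p$-adic integral, or more directly the analytic criterion Theorem \ref{thm:analytic criterion of the (FRS) property} applied to the structure morphism $X\to\mathrm{Spec}\,\ints$: rational singularities of an l.c.i.\ over $\rats$ translates into the statement that the pushforward of a smooth measure on $X(\mathbb{Q}_p)$ under the constant map (equivalently, the fact that $X(\mathbb{Z}_p)$ has the ``expected volume'') behaves well, which is exactly what \cite[Theorem A]{AA18} packages and \cite[Theorem 1.4]{Gla19} quantifies into the uniform bound $|h_X(\ints/p^k\ints)-1|<Cp^{-1/2}$. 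For $(1)\Rightarrow(4)$ and $(2)\Rightarrow(4)$: one argues contrapositively. If $X_{\overline{\rats}}$ is either non-reduced, or reducible, or has non-rational singularities, then by \cite{Den87} the Poincar\'e series $\sum_k h_X(\ints/p^k\ints) T^k$ is rational with a pole structure that either makes $h_X(\ints/p^k\ints)$ grow (when there are non-rational singularities, via the log canonical threshold being $<$ the expected value and Musta\c{t}\u{a}'s jet criterion forcing $\dim J_k(X_{\rats})>(k+1)\dim X_{\rats}$ for infinitely many $k$), or makes $\lim_p h_X(\ints/p^k\ints)\neq 1$ (when $X$ is reducible or non-reduced, by a Lang--Weil count of the extra or fat components). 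Either way both $(1)$ and $(2)$ fail, which is the needed contrapositive.

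The main obstacle is the implication ``non-rational singularities $\Rightarrow$ unboundedness of $k\mapsto h_X(\ints/p^k\ints)$'' inside $(2)\Rightarrow(4)$ (and its quantitative refinement in $(1)\Rightarrow(4)$): this is where the l.c.i.\ hypothesis is essential and where one needs the precise link between the growth rate of the jet schemes $\dim J_k(X_{\rats})$, the log canonical threshold $\mathrm{lct}(X_{\rats})$ (via Theorem \ref{thm:log canonical threshold and jet schemes}), and the $p$-adic point count via Denef's rationality theorem and a change-of-variables/resolution computation. In practice, the cleanest route is simply to invoke \cite[Theorem A]{AA18} for the equivalence $(2)\Leftrightarrow(4)$ (which is stated there for l.c.i.\ $\ints$-schemes) and \cite[Theorem 1.4]{Gla19} for the upgrade to the uniform estimate $(3)$ and to the ``$\lim_p = 1$'' formulation $(1)$; the role of this theorem in the present paper is only to have the equivalences available when applying them fiberwise to an (FRS) morphism, so a short proof citing these two results, plus the elementary deductions $(3)\Rightarrow(1),(2)$ and the Lang--Weil remark promoting $(1)$ to geometric irreducibility, is what I would write.
```
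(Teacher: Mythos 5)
The paper does not prove this theorem; it imports it directly by citing \cite[Theorem A]{AA18} and \cite[Theorem 1.4]{Gla19}, which is exactly what you propose to do, and your elaboration of the easy implications $(3)\Rightarrow(1)$, $(3)\Rightarrow(2)$ (including the Lang--Weil argument promoting $(1)$ at $k=1$ to geometric irreducibility) is correct. Your proposal therefore matches the paper's treatment.
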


\begin{rem}
\label{rem:family version for rational singularities}Notice that
an (FRS) morphism $\varphi:X\rightarrow Y$ between geometrically
irreducible, smooth $\rats$-varieties is in particular flat, so its
fibers are local complete intersections. The condition of having rational
singularities puts each individual $\ints$-fiber of $\varphi$ in
the framework of Theorem \ref{thm:number theoretic criterion for rational singularities}.
Two difficulties now arise when trying to provide a version of Theorem
\ref{thm:number theoretic criterion for rational singularities} for
families. Firstly, it is not enough to consider $\ints$-fibers (e.g.,
we might have $Y(\ints)=\varnothing$). Secondly, Theorem \ref{thm:number theoretic criterion for rational singularities}
gives us a different bound $C'(y)$ for each fiber $X_{y}$ where
$y\in Y(\ints)$, and it is a priori not clear why it should be uniform.
These difficulties are dealt with in an upcoming work \cite{CGH}.
\end{rem}

\subsection{\label{subsec:The-Denef-Pas-language}The Denef-Pas language and
motivic functions }

The \textsl{Presburger language}, denoted 
\[
\mathcal{L}_{\mathrm{Pres}}=(+,-,\leq,\{\equiv_{\mathrm{mod}~n}\}_{n>0},0,1)
\]
is a first order language of ordered abelian groups along with constants
$0,1$ and a family of relations $\{\equiv_{\mathrm{mod}~n}\}_{n>0}$
of congruences modulo $n$. The \textsl{Denef-Pas language, }denoted
\[
\Ldp=(\mathcal{L}_{\mathrm{Val}},\mathcal{L}_{\mathrm{Res}},\mathcal{L}_{\mathrm{Pres}},\mathrm{\val},\mathrm{\ac})
\]
is a first order three sorted language, where 
\begin{enumerate}
\item The valued field sort $\VF$ is endowed with the language of rings
$\mathcal{L}_{\mathrm{Val}}=(+,-,\cdot,0,1)$. 
\item The residue field sort $\RF$ is endowed with the language of rings
$\mathcal{L}_{\mathrm{Res}}=(+,-,\cdot,0,1)$. 
\item The value group sort $\VG$ (which we just call $\ints$), is endowed
with the Presburger language $\mathcal{L}_{\mathrm{Pres}}=(+,-,\leq,\{\equiv_{\mathrm{mod}~n}\}_{n>0},0,1)$. 
\item $\val:\VF\backslash\{0\}\rightarrow\ints$ and $\ac:\VF\rightarrow\RF$
are two function symbols. 
\end{enumerate}
Let $\mathrm{Loc}$ be the collection of all non-Archimedean local
fields. For $F\in\mathrm{Loc}$, we denote by $k_{F}$ its residue
field, and set $q_{F}:=\left|k_{F}\right|$. We use the notation $\mathrm{Loc}_{\gg}$
for the collection of $F\in\mathrm{Loc}$ with large enough residual
characteristic. By choosing a uniformizer $\pi$ of $\mathcal{O}_{F}$,
we can interpret $\val$ and $\ac$ as the usual valuation map $\val:F^{\times}\rightarrow\ints$
and angular component map $\ac:F\rightarrow k_{F}$. Hence, we can
interpret any formula $\phi$ in $\Ldp$ with $n_{1}$ free $\VF$-variables,
$n_{2}$ free $\RF$-variables and $n_{3}$ free $\ints$-variables,
in $F\in\mathrm{Loc}$, yielding a subset $\phi(F)\subseteq F^{n_{1}}\times k_{F}^{n_{2}}\times\mathbb{Z}^{n_{3}}$.
A collection $X=(X_{F})_{F\in\mathrm{Loc}_{\gg}}$ with $X_{F}=\phi(F)$
is called an \textsl{$\mathcal{L}_{\mathrm{DP}}$-definable set}.
Given definable sets $X$ and $Y$, an \textsl{$\mathcal{L}_{\mathrm{DP}}$-}\textit{definable
function} is a collection $f=(f_{F}:X_{F}\rightarrow Y_{F})_{F\in\mathrm{Loc}}$
of functions whose collection of graphs $(\Gamma_{f_{F}})_{F\in\mathrm{Loc}}$
is a definable set.

The class of definable functions is not preserved under integration,
which motivates us to consider a larger class of functions. 
\begin{defn}[{See \cite[definitions 2.3-2.6]{CGH16}}]
\label{def:motivic function}Let $X$ be a definable set. A collection
$h=(h_{F})_{F\in\mathrm{Loc}}$ of functions $h_{F}:X_{F}\rightarrow\mathbb{R}$
is called a \textit{motivic }function, if for $F\in\mathrm{Loc}_{\gg}$
it can be written as: 
\[
h_{F}(x)=\sum\limits _{i=1}^{N_{1}}|Y_{i,F,x}|q_{F}^{\alpha_{i,F}(x)}\left(\prod_{j=1}^{N_{2}}\beta_{ij,F}(x)\right)\left(\prod_{j=1}^{N_{3}}\frac{1}{1-q_{F}^{a_{ij}}}\right),
\]
where: 
\begin{itemize}
\item $N_{1},N_{2},N_{3}$ and $a_{il}$ are non-zero integers. 
\item $\alpha_{i}:X\rightarrow\mathbb{Z}$ and $\beta_{ij}:X\rightarrow\mathbb{Z}$
are definable functions. 
\item $Y_{i,F,x}=\{\xi\in k_{F}^{r_{i}}:(x,\xi)\in Y_{i,F}\}$ is the fiber
over $x$ of a definable set $Y_{i}\subseteq X\times\mathrm{RF}^{r_{i}}$
with $r_{i}\in\nats$. 
\end{itemize}
We denote by $\mathcal{C}(X)$ the ring of motivic functions on a
definable set $X$. Unlike the class of definable functions, this
class is preserved under integration: 
\end{defn}

\begin{thm}[{\cite[Theorem 4.3.1]{CGH14}}]
\label{integration of motivic}Let $X$ be an $\Ldp$-definable set,
and let $f\in\mathcal{C}(X\times\mathrm{VF}^{m})$. Then there exists
a function $g\in\mathcal{C}(X)$ such that for every $F\in\mathrm{Loc}_{\gg}$
and $x\in X_{F}$, if $f_{F}(x,y)\in L^{1}(F^{m})$ then 
\[
g_{F}(x)=\int_{y\in F^{m}}f_{F}(x,y)dy.
\]
\end{thm}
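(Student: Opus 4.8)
The statement to be proven is Theorem~\ref{integration of motivic}: for a motivic function $f\in\mathcal{C}(X\times\mathrm{VF}^m)$, integration over the $\mathrm{VF}^m$-variables produces a motivic function $g\in\mathcal{C}(X)$ computing the fiberwise integrals. The strategy is the standard one of Cluckers--Loeser and Cluckers--Gordon--Halupczok: reduce to the case $m=1$ by induction, and for $m=1$ decompose the integration domain using cell decomposition in the Denef--Pas language, then integrate cell-by-cell. First I would invoke the cell decomposition theorem (presentation theorem) for definable sets in $\Ldp$: any definable subset of $X\times\mathrm{VF}$ can be partitioned into finitely many \emph{cells}, each of which is the image under a definable map of a set of the form $\{(x,t,\xi,\eta):\xi\in Y_x,\ \operatorname{ord}(t-c(x,\xi))\ \square_1\ \alpha(x,\xi),\ \operatorname{ac}(t-c(x,\xi))=\eta\}$ where $c$ is a definable ``center'', $Y$ is a residue-field-fibered definable set, and $\square_1$ specifies a valuative box condition; moreover by presentation of motivic functions, $f$ restricted to such a cell is, after the change of variables $t\mapsto u=t-c(x,\xi)$, a motivic function of a controlled shape in $(x,\operatorname{ord}(u),\operatorname{ac}(u),\xi)$.

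The key computational step is then the integral over a single cell. After translating by the center (a measure-preserving change of variables on $F$), one integrates a motivic function of $u$ over a valuative annulus/ball; writing $F^\times\ni u$ in terms of its valuation $\ell=\operatorname{ord}(u)$ and angular component, the Haar measure decomposes as $\sum_{\ell}q_F^{-\ell}\cdot(\text{counting measure on }k_F^\times\text{-shells})$, so the integral becomes a sum over $\ell$ (a definable set in the value group) of terms $q_F^{-\ell}\cdot|(\text{residue-field fiber})|\cdot q_F^{\alpha(x,\ell,\dots)}\prod\beta\prod(1-q_F^{a})^{-1}$. Here I would use: (i) summation over the value-group variables of such expressions stays within $\mathcal{C}(X)$ --- this is the Presburger-level geometric-series lemma, that $\sum_{\ell\in P(x)}q_F^{\,\lambda\ell+\mu}$ (with $P$ a definable family of subsets of $\ints$ given by linear congruence conditions, and the sum convergent) is again a motivic function of $x$, because definable subsets of $\ints^n$ are finite unions of Presburger cells and the resulting geometric/arithmetic-geometric series evaluate to rational functions in $q_F$ of the allowed form; and (ii) summation over the residue-field variables $\xi$ of $|Y_{i,F,(x,\xi)}|$-type terms again lands in $\mathcal{C}(X)$, since a definable sum of residue-field cardinalities is itself such a cardinality (Fubini for the $\RF$-sort). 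Combining these over the finitely many cells gives the desired $g$, and the $L^1$ hypothesis is exactly what guarantees the geometric series in step (i) converge so that the formal identity is an honest equality of the integral with $g_F(x)$.

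The main obstacle is bookkeeping rather than a single hard idea: one must verify that the cell decomposition can be chosen \emph{compatibly} with the piecewise description of the motivic function $f$ (so that on each cell $f$ has the clean shape needed), and that the centers $c(x,\xi)$ and the box conditions interact correctly with the change of variables so that no Jacobian factors outside the motivic class appear --- in the Denef--Pas setting the relevant Jacobian is $q_F^{\operatorname{ord}(c')}$ or trivial, which is fine, but tracking it through nested cells (for $m>1$, peeling off one variable at a time) requires care that the ``parameter'' set $X$ at each stage is enlarged only by definable data. I would handle the induction on $m$ by applying the $m=1$ case to the innermost variable with parameter set $X\times\mathrm{VF}^{m-1}$, obtaining a motivic function on $X\times\mathrm{VF}^{m-1}$, and then applying the inductive hypothesis; the only subtlety is that the $L^1$ condition for the full integral must be propagated to the iterated integrals, which follows from Tonelli for the (eventually nonnegative, or real-and-absolutely-integrable) integrand together with the fact that absolute values of motivic functions are dominated by motivic functions. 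Since Theorem~\ref{integration of motivic} is quoted from \cite{CGH14} I would in practice cite it directly; the above is the proof sketch one would reconstruct if needed.
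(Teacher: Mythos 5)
The paper does not supply a proof of this statement; it is cited directly from Cluckers--Gordon--Halupczok \cite[Theorem 4.3.1]{CGH14}, and your closing sentence correctly acknowledges this. Your reconstruction follows the route the cited literature actually takes: induction on $m$ peeling off one valued-field variable, Denef--Pas cell decomposition with centers, a measure-preserving translation to the center so the remaining integral is over a valuative shell, decomposition of Haar measure into $\sum_{\ell}q_F^{-\ell}$ times counting on angular-component fibers, Presburger cell decomposition and geometric/arithmetic-geometric series to absorb the value-group sum into the allowed $q_F$-rational shape, and residue-field Fubini to absorb the $\RF$-sort sum into a single $|Y_{i,F,x}|$ term. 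That is the correct skeleton, and you also flag the two genuine bookkeeping obstacles (compatibility of the cell decomposition with the piecewise presentation of $f$, and propagating the $L^1$ condition through the iterated integral via Tonelli after dominating by a nonnegative motivic function). One clarification worth making: in the cited theorem, $g$ is constructed \emph{unconditionally} as a motivic function by purely formal manipulations in the definable/Grothendieck-ring formalism; the $L^1$ hypothesis plays no role in the construction of $g$ and enters only at the level of specializations $F\in\mathrm{Loc}_{\gg}$, where it guarantees that $g_F(x)$ coincides with the convergent Lebesgue integral. Your last sentence of the second paragraph gestures at this but could be read as saying convergence is needed for the construction. Since the paper uses this result as a black box, a citation is what is called for; the sketch is otherwise a faithful summary of the argument in \cite{CGH14} and its precursors in Cluckers--Loeser.
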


\begin{rem}
\label{rem:Larger generality of motivic functions}One can also define
motivic measures and functions on algebraic varieties, and show that
this class is preserved under integration. For more details, see for
example \cite[Section 3.3]{GH19}.
\end{rem}

We further need the following structure theory for $\mathcal{L}_{\mathrm{Pres}}$-definable
functions. 
\begin{defn}[{See \cite[Definition 1]{Clu03} and \cite[Section 4.1]{CL08}}]
\label{def:-Linear function}Let $X\subseteq\ints^{m}$ be an $\mathcal{L}_{\mathrm{Pres}}$-definable
set. An $\mathcal{L}_{\mathrm{Pres}}$-definable function $f:X\to\ints$
is \textit{linear} if there is $\gamma\in\ints$ and integers $a_{i}$
and $0\leq c_{i}<n_{i}$ for $1\leq i\leq m$ such that $x_{i}-c_{i}\equiv0\mathrm{\,mod\,}n_{i}$
and $f(x_{1},...,x_{m})=\sum\limits _{i=1}^{m}a_{i}(\frac{x_{i}-c_{i}}{n_{i}})+\gamma$. 
\end{defn}

It turns out that every $\mathcal{L}_{\mathrm{Pres}}$-definable function
is piece-wise linear: 
\begin{thm}[{Presburger cell decomposition \cite[Theorem 1]{Clu03}}]
\label{Presburger Cell decomposition}Let $X\subseteq\ints^{m}$
and let $f:X\to\ints$ be $\mathcal{L}_{\mathrm{Pres}}$-definable.
Then there exists a finite partition $X=\bigsqcup\limits _{i=1}^{N}A_{i}$,
such that the restriction $f|_{A_{i}}:A_{i}\to\ints$ is linear for
each $1\leq i\leq N$. 
\end{thm}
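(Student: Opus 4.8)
The plan is to derive the cell decomposition from quantifier elimination for Presburger arithmetic together with an induction on the number $m$ of domain variables, following the strategy of \cite{Clu03}. First I would invoke the classical fact (which underlies \cite[Theorem 1]{Clu03}) that over $\ints$ every formula in the language $\mathcal{L}_{\mathrm{Pres}}=(+,-,\leq,\{\equiv_{\mathrm{mod}\,n}\}_{n>0},0,1)$ is equivalent to a quantifier-free one. Applying this to the formula defining $X$ and to the formula defining the graph $\Gamma_f=\{(x,y)\in\ints^{m+1}:x\in X,\ y=f(x)\}$, I may assume both are cut out by finite Boolean combinations of atomic conditions of the two shapes $\sum_i a_i x_i + a_0 y \leq b$ and $\sum_i a_i x_i + a_0 y \equiv c\ (\mathrm{mod}\ n)$.

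For the induction, the base case $m=0$ (or $m=1$) is handled directly by inspecting the finitely many atomic conditions. For the inductive step, I would collect all the $\ints$-linear forms in the variables $x=(x_1,\dots,x_m)$ alone appearing in the descriptions of $X$ and $\Gamma_f$, all the forms genuinely involving $y$ (say $\ell'_j(x)+d_j y$), and all moduli $n$ that occur; let $N$ be a common multiple of all these moduli together with the coefficients $d_j$. I would then refine $X$ into finitely many definable pieces on each of which the sign of every $x$-only form is constant, the residue modulo $N$ of every $x$-only form is constant, and -- invoking the inductive hypothesis applied to the projection $\ints^{m+1}\to\ints^m$ -- the piece already has "cell" shape in the first $m$ coordinates, with all its bounding data and congruence data given by honest linear functions of $x_1,\dots,x_{m-1}$. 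This dimension-dropping step is where most of the work of the induction is actually spent.

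The heart of the matter is then: on one such fixed piece $A$, show $f|_A$ is linear. For $x\in A$, the value $y=f(x)$ is the \emph{unique} solution of the (now fixed, on $A$) conjunction of conditions $\ell'_j(x)+d_j y\leq b_j$, $\ell'_j(x)+d_j y\geq b'_j$, and $d_j y\equiv c_j-\ell'_j(x)\ (\mathrm{mod}\ n)$. Uniqueness forces one of two situations: either two opposite inequalities are tight, so $d_j f(x)=b_j-\ell'_j(x)$ for some $j$ and, since on $A$ all residues mod $N$ are fixed, this divides evenly and yields $f(x)$ as a function of the form $\sum a_i\tfrac{x_i-c_i}{n_i}+\gamma$ as in Definition~\ref{def:-Linear function}; or $f(x)$ is the unique integer in a bounded-length interval selected by the congruence conditions, which on the residue-fixed piece $A$ again collapses to a single $\mathcal{L}_{\mathrm{Pres}}$-linear expression. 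Intersecting over the finitely many pieces $A$ with the cell structure already produced on the first $m$ coordinates gives the required finite partition of $X$ with $f$ linear on each part.

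The main obstacle I anticipate is bookkeeping rather than conceptual: guaranteeing that the extracted formula for $f(x)$ is genuinely $\ints$-valued and $\mathcal{L}_{\mathrm{Pres}}$-linear (not merely a $\rats$-affine expression), which is exactly why one must first refine so that every relevant linear form has constant residue modulo the common modulus $N$, and why the induction must carry along not just $f$ but all auxiliary functions describing the cell containing $x$. The cleanest organization is to prove the slightly stronger statement for finitely many functions $f_1,\dots,f_r$ simultaneously, so that the output partition is compatible with all of them at once and no re-partitioning is needed at the end; the statement in the theorem is the case $r=1$.
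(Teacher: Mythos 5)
The paper does not prove this statement; it is cited verbatim from \cite[Theorem 1]{Clu03}, so there is no internal argument to compare your sketch against. That said, your outline does follow the standard route underlying Cluckers' result: quantifier elimination for Presburger arithmetic applied to $X$ and to the graph $\Gamma_f$, followed by an induction on $m$ that refines the domain until every $x$-linear form occurring in the (now quantifier-free) description of $\Gamma_f$ has constant sign and constant residue modulo a common modulus $N$, and proving the statement for a finite tuple of functions simultaneously so the induction closes.

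One organizational point is worth tightening. The dichotomy you propose, ``either two opposite inequalities are tight, or the congruence conditions select the unique integer,'' is not the cleanest way to extract linearity, and the first branch is really a degenerate case of the second. After residue-refinement the unique $y$ over $x\in A$ has the \emph{uniform} description ``smallest integer $\geq\alpha(x)$ congruent to $c\bmod N$'' for some linear $\alpha$ supplied by the induction in the first $m$ coordinates; once the residue of $\alpha(x)$ modulo $N$ is constant on $A$, this is $\alpha(x)$ plus a constant, i.e.\ linear in the sense of Definition~\ref{def:-Linear function}. (Tightness of the inequalities is what this reduces to when $N=1$.) You should also be explicit about clearing the coefficients $d_j\neq 1$ in front of $y$: setting $D=\mathrm{lcm}_j\,d_j$ and replacing $y$ by $z=Dy$ together with the extra congruence $z\equiv 0\bmod D$ reduces to the unit-coefficient case, and then dividing the resulting linear formula for $z$ by $D$ remains $\ints$-valued precisely because one has already refined to pieces where all relevant residues are constant. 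These are bookkeeping matters, as you anticipated, and none of them undermines the strategy; the proposal is a correct sketch of the argument.
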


We further have, 
\begin{thm}[{Rectilinearization, see \cite[Theorem 2]{Clu03}}]
\label{thm:Rectilinearization}Let $X\subseteq\ints^{m}$ be $\mathcal{L}_{\mathrm{Pres}}$-definable
set. Then there exists a finite partition $X=\bigsqcup\limits _{i=1}^{N}A_{i}$,
and linear bijections $f_{i}:A_{i}\rightarrow\nats^{l_{i}}$ for each
$i$. 
\end{thm}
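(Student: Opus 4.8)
The plan is to argue by induction on $m$, using Presburger cell decomposition (the set-theoretic form of the statement behind Theorem \ref{Presburger Cell decomposition}, see \cite[Theorem 1]{Clu03}) to strip off the last coordinate, together with an elementary combinatorial recoding to handle bounded cells. The base case $m\leq1$ is immediate: an $\mathcal{L}_{\mathrm{Pres}}$-definable subset of $\ints$ is a finite union of singletons and of bounded or unbounded arithmetic progressions; a singleton is in linear bijection with $\nats^{0}$ (a point), a bounded progression is a finite union of singletons, and an unbounded progression $\{c+nk:k\in\nats\}$ is in linear bijection with $\nats^{1}$ via $k\mapsto c+nk$.

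For the inductive step write points of $\ints^{m}$ as $(y,z)$ with $y\in\ints^{m-1}$, $z\in\ints$. By Presburger cell decomposition it suffices to treat the case where $X$ is a single cell over a definable base $C\subseteq\ints^{m-1}$: that is, $X$ is of one of the forms $\{(y,\alpha(y)):y\in C\}$, a half-line cell $\{(y,z):y\in C,\ \alpha(y)\leq z,\ z\equiv c\ \mathrm{mod}\ n\}$ (or the one bounded above by $\beta(y)$), or a bounded cell $\{(y,z):y\in C,\ \alpha(y)\leq z\leq\beta(y),\ z\equiv c\ \mathrm{mod}\ n\}$, with $\alpha,\beta$ linear on $C$. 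Partitioning $C$ further I may absorb the congruence $z\equiv c\ \mathrm{mod}\ n$ by the linear substitution $z\mapsto(z-c)/n$, and arrange that $\alpha,\beta$ are genuinely affine. Applying the induction hypothesis to $C$ produces a finite partition $C=\bigsqcup_{j}C_{j}$ and linear bijections $g_{j}\colon C_{j}\to\nats^{k_{j}}$; transporting each cell through $g_{j}\times\mathrm{id}_{\ints}$ --- which sends linear functions on $C_{j}$ to $\mathcal{L}_{\mathrm{Pres}}$-linear functions on $\nats^{k_{j}}$, hence, after one more partition into congruence classes, to genuinely affine ones --- reduces everything to cells over $C=\nats^{k}$.

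Over $\nats^{k}$ the graph cell is in linear bijection with $\nats^{k}$, and the half-line cells $z\geq\alpha(y)$ and $z\leq\beta(y)$ are in linear bijection with $\nats^{k+1}$ via $(y,z)\mapsto(y,z-\alpha(y))$ and $(y,z)\mapsto(y,\beta(y)-z)$ respectively. The remaining case, the bounded cell $X=\{(y,z):y\in\nats^{k},\ 0\leq z\leq\ell(y)\}$ obtained after the shift $z\mapsto z-\alpha(y)$, is the crux: here $\ell=\beta-\alpha$ is affine and nonnegative on all of $\nats^{k}$, so $\ell(y)=a_{1}y_{1}+\dots+a_{k}y_{k}+\gamma$ with all $a_{i},\gamma\in\nats$ (a negative coefficient would drive $\ell$ below $0$ by sending one variable to infinity). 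Partition $\{0,\dots,\ell(y)\}$ into the block $0\leq z\leq\gamma$ and, for those $i$ with $a_{i}\geq1$, the blocks $B_{i}=\{\gamma+a_{1}y_{1}+\dots+a_{i-1}y_{i-1}<z\leq\gamma+a_{1}y_{1}+\dots+a_{i}y_{i}\}$. The first block contributes $\gamma+1$ translated copies of $\nats^{k}$. On $B_{i}$ write $z=\gamma+a_{1}y_{1}+\dots+a_{i-1}y_{i-1}+1+(a_{i}y_{i}'+r)$ with $0\leq r<a_{i}$ and $0\leq y_{i}'<y_{i}$ the quotient and remainder of Euclidean division by $a_{i}$; fixing $r$ (one of $a_{i}$ choices) this recodes $B_{i}$ as $\nats^{k-1}\times\{(y_{i}',y_{i}):0\leq y_{i}'<y_{i}\}$, and $(y_{i}',y_{i})\mapsto(y_{i}',y_{i}-y_{i}'-1)$ identifies the triangle with $\nats^{2}$, giving a linear bijection onto $\nats^{k+1}$. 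Since $z\mapsto(\lfloor\cdot\rfloor,\ \cdot\bmod a_{i})$ are exactly the operations allowed in Definition \ref{def:-Linear function}, the recoding and its inverse are linear. Concatenating these pieces over all blocks, all cells, and all $C_{j}$ yields the required finite partition. The only real obstacle is this bounded-cell step: one must track the affine data faithfully through the induction (in particular, that composing an $\mathcal{L}_{\mathrm{Pres}}$-linear function with one of our linear bijections onto $\nats^{k}$ is again linear after a bounded refinement) and check that the block/Euclidean-division recoding is genuinely a linear bijection; everything else is routine bookkeeping on top of Presburger cell decomposition.
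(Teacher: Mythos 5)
Your proof is correct, and note that there is nothing in the paper to compare it against: Theorem \ref{thm:Rectilinearization} is imported verbatim from \cite[Theorem 2]{Clu03} and the paper supplies no proof. Your strategy --- induction on $m$, Presburger cell decomposition to split off the last coordinate, transport through the inductively obtained bijections $g_{j}:C_{j}\to\nats^{k_{j}}$, and then a case analysis on graph, half-line, and bounded cells over $\nats^{k}$ --- is essentially the strategy of Cluckers's original proof. You correctly identify the bounded cell as the only nontrivial step, and your handling of it is sound: since $\ell=\beta-\alpha$ is affine with integer coefficients and nonnegative on all of $\nats^{k}$, its coefficients and constant term must all lie in $\nats$ (a negative $a_{i}$ would be contradicted by $y_{i}\to\infty$, a negative $\gamma$ by $y=0$); partitioning $[0,\ell(y)]$ into the constant block $[0,\gamma]$ and the blocks $B_{i}$, and then recoding each $r$-slice of $B_{i}$ by Euclidean division followed by the triangle bijection $(y_{i}',y_{i})\mapsto(y_{i}',y_{i}-y_{i}'-1)$ from $\{0\le y_{i}'<y_{i}\}$ onto $\nats^{2}$, does produce a linear bijection onto $\nats^{k+1}$ in the sense of Definition \ref{def:-Linear function}, since fixing $r$ converts the floor into an exact division on a congruence class. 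The two places you flag as requiring care --- that composing an $\mathcal{L}_{\mathrm{Pres}}$-linear function with a transport map $g_{j}^{-1}$ is again linear after a further finite partition by congruence classes, and that the block recoding is linear in both directions --- both check out. Two very minor things worth making explicit in a full write-up: first, you should refine away the definable locus $\{y:\alpha(y)>\beta(y)\}$ before asserting $\ell\geq0$ on all of $\nats^{k}$ (if the cell decomposition you invoke does not already guarantee nonempty fibers); second, when absorbing $z\equiv c\bmod n$, the adjusted bounds $\lceil(\alpha(y)-c)/n\rceil$ and $\lfloor(\beta(y)-c)/n\rfloor$ depend on $\alpha(y),\beta(y)\bmod n$, so a further definable refinement of the base is needed there too, which you implicitly allow but do not state.
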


Another useful property of the model-theoretic formalism, is that
certain properties of motivic functions $g=(g_{F})_{F\in\mathrm{Loc}}$,
such as integrability and boundedness of $g_{F}$, depend only on
the isomorphism class of the residue field $k_{F}$ of $F$, if the
residual characteristic is large enough.
\begin{thm}[{Transfer principle for bounds, \cite[Theorem 3.1]{CGH16}}]
\label{thm:-transfer principle for bounds}Let $X$ be an $\mathcal{L}_{\mathrm{DP}}$-definable
set, and let $H,G\in\mathcal{C}(X)$ be motivic functions. Then the
following holds for $F\in\mathrm{Loc}_{\gg}$; if 
\[
\left|H_{F}(x)\right|\leq\left|G_{F}(x)\right|,
\]
for all $x\in X_{F}$, then for any $F'\in\mathrm{Loc}$ with the
same residue field as $F$, one also has 
\[
\left|H_{F'}(x)\right|\leq\left|G_{F'}(x)\right|,
\]
for all $x\in X_{F'}$. 
\end{thm}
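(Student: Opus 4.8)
The plan is to follow the standard route for transfer principles in the Denef--Pas formalism: eliminate the valued-field variables so that the inequality $|H_{F}(x)|\le|G_{F}(x)|$ becomes a statement about the value group sort and the residue field sort only, and then observe that neither of these changes when $F$ is replaced by $F'$ with the same residue field. This is precisely the argument of \cite[Theorem 3.1]{CGH16}, which we use as a black box in the sequel; what follows is a sketch of how it goes.

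First I would put $H$ and $G$ into a normal form. Using the Denef--Pas cell decomposition theorem one writes $X$ as a finite disjoint union of cells, on each of which the $\mathrm{VF}$-coordinates are parametrized by balls whose centers and radii are $\mathcal{L}_{\mathrm{DP}}$-definable functions of a remaining parameter living in a definable set $X'\subseteq\mathrm{RF}^{m}\times\mathrm{VG}^{n}$; since a motivic function (Definition \ref{def:motivic function}) is built out of $\mathrm{val}$, $\mathrm{ac}$ and residue-field cardinalities, its restriction to such a cell is pulled back from $X'$ up to the contribution of $\mathrm{ac}$ of the $\mathrm{VF}$-coordinate, which ranges over a residue-field variable and can be absorbed into $X'$. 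After this reduction $H$ and $G$ are motivic functions on $X'\subseteq\mathrm{RF}^{m}\times\mathrm{VG}^{n}$, each a finite sum of terms $|Y_{i,F,x}|\,q_{F}^{\alpha_{i,F}(x)}\,\prod_{j}\beta_{ij,F}(x)\,\prod_{j}(1-q_{F}^{a_{ij}})^{-1}$, with $\alpha_{i},\beta_{ij}$ Presburger-definable in the $\mathrm{VG}$-coordinates and $Y_{i}$ ring-definable in the $\mathrm{RF}$-coordinates.

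Next I would rectilinearize the value-group part. By Theorem \ref{thm:Rectilinearization} the projection of $X'$ to $\mathrm{VG}^{n}$ decomposes into finitely many pieces, each in $\mathcal{L}_{\mathrm{Pres}}$-definable bijection with $\nats^{l}$, and by Theorem \ref{Presburger Cell decomposition} the functions $\alpha_{i},\beta_{ij}$ become linear on each piece. On such a piece the dependence of $H_{F}$ and $G_{F}$ on the value-group parameter $x\in\nats^{l}$ has the fixed shape $\sum_{i}c_{i}(q_{F})\,\xi_{i}(q_{F})\,q_{F}^{\langle v_{i},x\rangle}$, where $v_{i}\in\ints^{l}$ is fixed, $\xi_{i}$ is a fixed rational function of $q_{F}$ coming from the linear parts and the geometric-series factors, and $c_{i}(q_{F})=|Y_{i,F,x}|$ is a residue-field cardinality. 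The crucial observation is that $c_{i}(q_{F})$ depends only on the residue field of $F$: $Y_{i,F,x}$ is literally a subset of $k_{F}^{r_{i}}$ cut out by a ring-language formula with parameters the $\mathrm{RF}$-coordinates of $x$, so for $F'$ with $k_{F'}\cong k_{F}$ one has $|Y_{i,F',x}|=|Y_{i,F,x}|$ (and for $F\in\mathrm{Loc}_{\gg}$ this cardinality depends, piecewise in the parameters, only on $q_{F}$). Consequently $H_{F}$ and $H_{F'}$, and likewise $G_{F}$ and $G_{F'}$, are given by the \emph{same} function of $x\in\nats^{l}$ once $q_{F}=q_{F'}$, so the pointwise inequality $|H_{F}|\le|G_{F}|$ on $X_{F}$ is literally the same assertion as $|H_{F'}|\le|G_{F'}|$ on $X_{F'}$.

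The main obstacle is the first step: making the elimination of the valued-field variables genuinely uniform over $F\in\mathrm{Loc}_{\gg}$ and compatible with the real-valued (rather than merely first-order) nature of the inequality. This is exactly where the full strength of the Denef--Pas cell decomposition enters, together with the fact that the measure-theoretic Jacobian factors $q_{F}^{-\mathrm{val}}$ produced in the process are again motivic of the controlled form above. Once the problem has been pushed down to $\nats^{l}$ with residue-field-cardinality coefficients, the transfer is essentially tautological, since the value group of every $F\in\mathrm{Loc}$ is $\ints$ and the residue field is unchanged by hypothesis. Accordingly, in the body of the paper we simply invoke \cite[Theorem 3.1]{CGH16}.
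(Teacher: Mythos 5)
The statement is not proved in the paper at all: it is stated and attributed directly to \cite[Theorem 3.1]{CGH16}, and the paper uses it as a black box. You acknowledge this explicitly in your last sentence, so there is no discrepancy with the paper's treatment.

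As a sketch of the underlying argument of \cite{CGH16}, your outline is essentially on track: Denef--Pas quantifier elimination plus cell decomposition to remove the valued-field variables so that everything factors through the residue-field and value-group sorts, Presburger rectilinearization to put the value-group dependence in a fixed piecewise-linear normal form, and finally the observation that the resulting expressions depend only on $k_{F}$ (and hence on $q_{F}$ and the choice of $\ints$-valued coordinates), which is unchanged when passing from $F$ to $F'$. Two small remarks. First, your mention of ``measure-theoretic Jacobian factors $q_{F}^{-\mathrm{val}}$'' is extraneous here: the transfer principle for bounds is a pointwise inequality of functions, not an integral identity, so no change-of-variables Jacobian is produced; what cell decomposition actually does is replace the $\mathrm{VF}$-coordinate by the pair $(\mathrm{val},\mathrm{ac})$ relative to the cell center, and the resulting extra factors are of the form $q_{F}^{\text{(linear in VG)}}$ already accounted for by the $q_{F}^{\alpha_{i}}$ terms. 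Second, to make the ``tautological'' final step honest one should note that the residue-field cardinalities $|Y_{i,F,x}|$, which a priori vary with $x$, become independent of the $\mathrm{VF}$-part of $x$ only after quantifier elimination, and that this is precisely where the uniformity-in-$F$ (hence the restriction to $\mathrm{Loc}_{\gg}$) is used. Neither point is a gap in the sense of invalidating the argument, but they are the places where a careless reader could think more is happening than actually is.
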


\subsection{\label{subsec:Number-theoretic-characterization of epsilon jet flat}Number
theoretic characterization of $\varepsilon$-jet flatness}

Let $q=p^{r}$ be a prime power. We denote by $\mathbb{Q}_{q}$ the
unique unramified extension of $\Qp$ of degree $r$, its ring of
integers by $\mathbb{Z}_{q}$, and the maximal ideal of $\mathbb{Z}_{q}$
by $\mathfrak{m}_{q}$.

Given an $\mathcal{O}_{F}$-scheme $X$, we have a natural reduction
map $r_{k}:X(\mathcal{O}_{F})\to X(\mathcal{O}_{F}/\frak{\mathfrak{m}}_{F}^{k})$.
If $\varphi:X\rightarrow Y$ is an $\mathcal{O}_{F}$-morphism between
$\mathcal{O}_{F}$-varieties $X$ and $Y$, by abuse of notation we
denote the natural maps $X(\mathcal{O}_{F}/\frak{\mathfrak{m}}_{F}^{k})\rightarrow Y(\mathcal{O}_{F}/\frak{\mathfrak{m}}_{F}^{k})$
by $\varphi$ as well, and so for any $\overline{y}\in Y(\mathcal{O}_{F}/\frak{\mathfrak{m}}_{F}^{k})$,
$\varphi^{-1}(\overline{y})$ is a finite set in $X(\mathcal{O}_{F}/\frak{\mathfrak{m}}_{F}^{k})$. 
\begin{defn}
\label{def:Schwartz measure}Let $X$ be a variety defined over a
local field $F$. A \textit{Schwartz measure} $\mu$ on $X(F)$ is
a compactly supported, smooth measure (i.e.~$\mu$ is locally a Haar
measure in the analytic topology on $X(F)$). 
\end{defn}

\begin{lem}[{see e.g.~\cite[Theorem 2.25]{Wei82} and \cite[Proposition 3.1.1, arXiv version]{AA18}}]
\label{lem:canonical measure}If $X$ is a smooth variety over $\mathcal{O}_{F}$
of pure relative dimension $d$, then there is a unique Schwartz measure,
denoted $\mu_{X(\mathcal{O}_{F})}$, such that for every $k\in\nats$
and $\bar{x}\in X(\mathcal{O}_{F}/\frak{\mathfrak{m}}_{F}^{k})$,
\[
\mu_{X(\mathcal{O}_{F})}(r_{k}^{-1}(\bar{x}))=\left|\mathcal{O}_{F}/\frak{\mathfrak{m}}_{F}\right|^{-kd}.
\]
\end{lem}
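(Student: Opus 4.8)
The plan is to prove both assertions by the standard theory of smooth schemes over a discrete valuation ring; much of this is classical (cf.\ \cite[Theorem 2.25]{Wei82}), so I will only indicate the structure, writing $q:=\left|\mathcal{O}_{F}/\mathfrak{m}_{F}\right|$ and $\kappa:=\mathcal{O}_{F}/\mathfrak{m}_{F}$.

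\textbf{Uniqueness.} First I would reduce to the affine case: covering $X$ by finitely many affine opens $X_{i}$, the valuative criterion (applied to the local scheme $\spec\mathcal{O}_{F}$) shows $X(\mathcal{O}_{F})=\bigcup_{i}X_{i}(\mathcal{O}_{F})$, and choosing closed embeddings $X_{i}\hookrightarrow\mathbb{A}_{\mathcal{O}_{F}}^{N_{i}}$ exhibits each $X_{i}(\mathcal{O}_{F})$ as a closed, hence compact, subset of $\mathcal{O}_{F}^{N_{i}}$. In this picture the cylinder sets $r_{k}^{-1}(\bar{x})$, for $k\in\nats$ and $\bar{x}\in X(\mathcal{O}_{F}/\mathfrak{m}_{F}^{k})$, form a basis of compact open sets for the analytic topology on $X(\mathcal{O}_{F})$. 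Since a Radon measure on a locally compact totally disconnected space is determined by its values on such a basis, there is at most one Schwartz measure satisfying the stated normalization.

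\textbf{Existence.} The key input is smoothness of $X/\mathcal{O}_{F}$: by the infinitesimal lifting criterion the truncation map $X(\mathcal{O}_{F}/\mathfrak{m}_{F}^{k+1})\to X(\mathcal{O}_{F}/\mathfrak{m}_{F}^{k})$ is surjective for $k\geq1$, and since $X$ has pure relative dimension $d$ each of its fibres is a torsor under a $\kappa$-vector space of dimension $d$ (namely $x_{0}^{*}T_{X/\mathcal{O}_{F}}\otimes_{\kappa}(\mathfrak{m}^{k}/\mathfrak{m}^{k+1})$, with $x_{0}$ the residue point), hence has exactly $q^{d}$ elements. Therefore every cylinder decomposes as $r_{k}^{-1}(\bar{x})=\bigsqcup_{\bar{y}}r_{k+1}^{-1}(\bar{y})$, a disjoint union over the $q^{d}$ lifts $\bar{y}$ of $\bar{x}$, so the prescription $\mu(r_{k}^{-1}(\bar{x})):=q^{-kd}$ is consistent under refinement because $q^{d}\cdot q^{-(k+1)d}=q^{-kd}$. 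This defines a nonnegative, finitely additive set function on the Boolean algebra generated by the cylinders; as that algebra consists of compact open sets, finite additivity upgrades to $\sigma$-additivity on it, and the Carathéodory extension yields a Radon measure $\mu_{X(\mathcal{O}_{F})}$ with the required values, supported on the compact set $X(\mathcal{O}_{F})$.

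\textbf{Smoothness of the measure, and the main difficulty.} It remains to check $\mu_{X(\mathcal{O}_{F})}$ is locally a Haar measure in the analytic topology. Since $X/\mathcal{O}_{F}$ is smooth of relative dimension $d$, it is covered by Zariski opens $U$ admitting étale $\mathcal{O}_{F}$-morphisms $\psi\colon U\to\mathbb{A}_{\mathcal{O}_{F}}^{d}$. On $\mathcal{O}_{F}$-points, étaleness together with Hensel's lemma shows $\psi$ restricts to an analytic isomorphism of $U(\mathcal{O}_{F})$ onto a compact open subset of $\mathcal{O}_{F}^{d}$ that is a union of cylinders and is compatible with the reduction maps, so $\psi$ matches $r_{k}^{-1}(\bar{x})\subseteq U(\mathcal{O}_{F})$ with $r_{k}^{-1}(\psi(\bar{x}))\subseteq\mathcal{O}_{F}^{d}$. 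For $X=\mathbb{A}^{d}$ the measure built above is precisely the additive Haar measure on $\mathcal{O}_{F}^{d}$ normalized by $\mu(\mathcal{O}_{F}^{d})=1$ (cosets of $(\mathfrak{m}^{k})^{d}$ have measure $q^{-kd}$), hence $\mu_{X(\mathcal{O}_{F})}|_{U(\mathcal{O}_{F})}=\psi^{*}(\text{Haar})$ and $\mu_{X(\mathcal{O}_{F})}$ is smooth. I expect the only genuinely fiddly part to be this last step: verifying that the étale-coordinate description of the measure as a pullback of Haar measure really does respect the reduction filtrations and therefore agrees with the intrinsic cylinder-set description, and then globalizing over the finite cover of $X$. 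Alternatively one may simply cite \cite[Theorem 2.25]{Wei82} and \cite[Proposition 3.1.1, arXiv version]{AA18}, where this canonical measure is constructed in exactly this manner.
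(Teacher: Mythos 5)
The paper does not prove this lemma; it is stated as a standard fact with citations to Weil and to Aizenbud--Avni, and your outline is essentially the standard construction found in those references. One small imprecision worth flagging in your smoothness step: an \'etale morphism $\psi\colon U\to\mathbb{A}^{d}_{\mathcal{O}_{F}}$ is in general not injective on $\mathcal{O}_{F}$-points (e.g.\ squaring on $\mathbb{G}_{m}$), so $\psi$ does not restrict to an analytic isomorphism of all of $U(\mathcal{O}_{F})$ onto a compact open subset of $\mathcal{O}_{F}^{d}$; rather, Hensel's lemma gives, around each $x\in U(\mathcal{O}_{F})$, a cylinder $r_{k}^{-1}(r_{k}(x))$ that $\psi$ maps bijectively onto the cylinder $r_{k}^{-1}(r_{k}(\psi(x)))\subseteq\mathcal{O}_{F}^{d}$ (for $k$ large enough depending on $x$), and it is these \emph{local} identifications that are measure-preserving. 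This is precisely the fiddly point you yourself single out, so your self-assessment is accurate; with that local formulation in place, the argument is correct.
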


We now prove the following theorem: 
\begin{thm}
\label{thm:epsilon jet flat and counting points}Let $\varphi:X\to Y$
be a dominant morphism between finite type $\ints$-schemes $X$ and
$Y$, where $X_{\rats}$ and $Y_{\rats}$ are smooth and geometrically
irreducible, and let $0<\varepsilon\leq1$. Then the following are
equivalent: 
\begin{enumerate}
\item For any $0<\varepsilon'<\varepsilon$ there exists a finite set $S$
of primes, such that for every $q\in\mathcal{P}_{S}$, $k\in\nats$
and $y\in Y(\ints_{q}/\frak{m}_{q}^{k})$, 
\[
\left|\varphi^{-1}(y)\right|<q^{k(\mathrm{dim}X_{\mathbb{Q}}-\varepsilon'\mathrm{dim}Y_{\mathbb{Q}})}.
\]
\item $\varphi_{\rats}:X_{\rats}\to Y_{\rats}$ is $\varepsilon$-jet flat. 
\end{enumerate}
\end{thm}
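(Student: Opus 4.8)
The plan is to reduce the statement to two intermediate facts: first, that the $\ints_q/\frak m_q^k$-point count of the fibers of $\varphi$ can be read off from the $\mathbb{F}_q$-point count of the fibers of the jet maps $J_{k-1}(\varphi)$, and second, that one can pass from the individual jet maps (which have unbounded complexity as $k\to\infty$) to a uniform statement via motivic integration. For the direction $(2)\Rightarrow(1)$, I would argue as follows. Fix $0<\varepsilon'<\varepsilon$. Since $\varphi_\rats$ is $\varepsilon$-jet flat, each $J_m(\varphi)_\rats$ is $\varepsilon$-flat, i.e.\ every fiber of $J_m(\varphi)$ has dimension at most $\dim J_m(X)-\varepsilon\dim J_m(Y)=(m+1)(\dim X_\rats-\varepsilon\dim Y_\rats)$. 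For a fixed $m=k-1$ this is a finite-complexity statement, so by the relative Lang--Weil bound (Theorem \ref{thm: flatness and counting points}, applied to $J_{k-1}(\varphi)$ rather than $\varphi$ itself — note the fibers all lie in some $\mathcal{C}_M$ for $M=M(k)$) there is a set $S=S(k)$ of primes with $|J_{k-1}(\varphi)^{-1}(\bar y)|\le C(k) q^{(k)(\dim X_\rats-\varepsilon\dim Y_\rats)}$ for $q\in\mathcal{P}_{S(k)}$ and $\bar y\in J_{k-1}(Y)(\mathbb{F}_q)$. Now identify $X(\ints_q/\frak m_q^k)$ with $J_{k-1}(X)(\ints_q/\frak m_q)=J_{k-1}(X)(\mathbb{F}_q)$ via the standard correspondence $\ints_q/\frak m_q^k\simeq (\mathbb{F}_q[t]/t^k)$ (after possibly enlarging $S(k)$ to control reduction behavior and smoothness), under which $\varphi:X(\ints_q/\frak m_q^k)\to Y(\ints_q/\frak m_q^k)$ becomes $J_{k-1}(\varphi)$ on $\mathbb{F}_q$-points. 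This gives $|\varphi^{-1}(y)|\le C(k)q^{k(\dim X_\rats-\varepsilon\dim Y_\rats)}<q^{k(\dim X_\rats-\varepsilon'\dim Y_\rats)}$ once $k$ is large enough that $q^{k(\varepsilon-\varepsilon')\dim Y_\rats}>C(k)$; the finitely many small $k$ are absorbed by enlarging $S$. The dependence of $S(k)$ and $C(k)$ on $k$ is the first place where uniformity must be argued carefully: here the key point is that the function $g:Y(\ints_p)\times\nats\to\reals$, $g(y,k):=|\varphi^{-1}(r_k(y))|$ is a motivic function (in the sense of Section \ref{subsec:The-Denef-Pas-language}), so that using Theorems \ref{integration of motivic} and \ref{thm:-transfer principle for bounds}, together with Presburger rectilinearization (Theorem \ref{thm:Rectilinearization}) to control the piecewise-linear behavior in $k$, one can extract a single finite set $S$ and a uniform bound valid for all $k$ simultaneously.

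For the converse $(1)\Rightarrow(2)$, I would go through the log canonical threshold. By Theorem \ref{thm:log canonical threshold and jet schemes} and Lemma \ref{lem:epsilon jet flat and log canonical threshold}, $\varepsilon$-jet flatness of $\varphi_\rats$ is equivalent to $\mathrm{lct}(X_\rats,(X_\rats)_{\varphi(x),\varphi})\ge\varepsilon\dim Y_\rats$ for all $x$, equivalently $\dim J_m((X_\rats)_{\varphi(x)})\le (m+1)(\dim X_\rats-\varepsilon\dim Y_\rats)$ for all $m$ and all fibers. Suppose this fails: there is a geometric point, hence (spreading out) a closed point $\bar y\in Y(\mathbb{F}_p)$ over $\mathbb{F}_p$ for infinitely many $p$, with $\dim J_m(X_{\bar y})>(m+1)(\dim X_\rats-\varepsilon\dim Y_\rats)$ for some fixed $m$. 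Again identifying $J_m(X_{\bar y})(\mathbb{F}_q)$ with the set of lifts in $X(\ints_q/\frak m_q^{m+1})$ reducing to a suitable point, and applying the easy half of Lang--Weil (a lower bound $|Z(\mathbb{F}_q)|\gg q^{\dim Z}$ for the top-dimensional $\mathbb{F}_q$-rational component, obtained after a finite base extension and Chebotarev to force rationality of the relevant component), one gets $|\varphi^{-1}(\bar y')|\gg q^{k(\dim X_\rats-\varepsilon\dim Y_\rats)+\delta k}$ for some fixed $\delta>0$, $k=m+1$, and $\bar y'$ the reduction of $\bar y$ mod $\frak m_q^k$, contradicting (1) for $\varepsilon'$ chosen with $\varepsilon-\varepsilon'<\delta/\dim Y_\rats$. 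One subtlety here: to deduce a bound for a single fixed $k$ from (1), which a priori quantifies over all $k$, is immediate, but to locate the bad fiber one must first pass from $\overline{\rats}$-points to $\mathbb{F}_p$-points, which is the standard spreading-out-and-Chebotarev argument as in the proof of Theorem \ref{thm: flatness and counting points}, $(1)\Rightarrow(4)$.

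The main obstacle I anticipate is the uniformity in $k$ in the direction $(2)\Rightarrow(1)$: naively one obtains, for each $k$, a set $S(k)$ of bad primes and a constant $C(k)$, and there is no reason a priori that $\bigcup_k S(k)$ is finite or that the resulting bounds combine into a single clean inequality $|\varphi^{-1}(y)|<q^{k(\dim X_\rats-\varepsilon'\dim Y_\rats)}$ holding for all $k\ge 1$ and all $q\in\mathcal{P}_S$ at once. Overcoming this is exactly where the motivic machinery is needed: one shows $(y,k)\mapsto q_F^{-k\dim Y_F}|\varphi^{-1}(r_k(y))|\cdot q_F^{-k(\dim X_F-\dim Y_F)}$, or a suitable avatar of it, is a motivic function of $(y,k)$, bounds it against $q_F^{-\varepsilon' k\dim Y_F}$ using the fixed-$k$ estimates as input for finitely many residue-characteristic classes, and then invokes the transfer principle (Theorem \ref{thm:-transfer principle for bounds}) and rectilinearization to propagate the bound to all $k$ and all large $q$ uniformly. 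I would structure the write-up so that the fixed-$k$ Lang--Weil estimate on jet maps is Lemma A, the motivic-function claim for $g(y,k)$ is Lemma B, and then $(2)\Rightarrow(1)$ follows by combining them, while $(1)\Rightarrow(2)$ is the more elementary spreading-out argument above.
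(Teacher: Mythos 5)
There is a genuine gap in your proposal, and it sits at the crux of the whole theorem: you invoke a ``standard correspondence $\ints_q/\mathfrak{m}_q^k \simeq \mathbb{F}_q[t]/t^k$'' to turn the $\ints_q/\mathfrak{m}_q^k$-fiber count of $\varphi$ into the $\mathbb{F}_q$-fiber count of $J_{k-1}(\varphi)$. That isomorphism does not exist for $k\geq 2$: $\ints_q/\mathfrak{m}_q^k = \ints_q/p^k\ints_q$ has characteristic $p^k$ (mixed-characteristic Artin local ring), while $\mathbb{F}_q[t]/t^k$ has characteristic $p$ (equal characteristic). The identification $Y(\mathbb{F}_q[t]/t^k)\simeq J_{k-1}(Y)(\mathbb{F}_q)$ is valid only for the equal-characteristic ring, so $J_{k-1}(X)(\mathbb{F}_q)$ does \emph{not} compute $X(\ints_q/\mathfrak{m}_q^k)$. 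Bridging these two families of rings is precisely the role of the transfer principle for bounds (Theorem \ref{thm:-transfer principle for bounds}), and the paper applies it for exactly this purpose in the direction $(1)\Rightarrow(2)$: first transfer the hypothesis from $\ints_q/\mathfrak{m}_q^k$-points to $\mathbb{F}_q[t]/t^k$-points, and only \emph{then} read the latter as a jet-fiber count over $\mathbb{F}_q$ and apply Lang--Weil. You do mention the transfer principle, but only in the role of handling the uniformity-in-$k$ problem, which is a different issue. As written, both directions of your argument lean on the false isomorphism.

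A second, smaller omission: in the direction $(2)\Rightarrow(1)$, the paper does not simply observe that $g(y,k)=|\varphi^{-1}(r_k(y))|$ is motivic and then ``extract a single $S$ and a uniform bound.'' There is a nontrivial intermediate step, \cite[Theorem 2.1.3]{CGH18}, which replaces $\sup_y g_F(y,k)$ by a motivic function $G_F(k)$ of the single variable $k$ (up to a bounded power of $q_F$), before Presburger cell decomposition and rectilinearization can be brought to bear on the exponents. Without this reduction, $g$ lives on $Y(\ints_p)\times\nats$, and the subsequent structure-in-$k$ analysis that kills the uniformity problem does not get off the ground. Your instinct that the heart of the matter is the $k$-uniformity, and that motivic/Presburger structure theory is the right tool, is correct — but the argument needs (a) the transfer principle installed at the correct joint (mixed vs.\ equal characteristic), and (b) the [CGH18] sup-elimination step, before the rectilinearization argument can close the loop.
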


\begin{rem}
\label{rem:version for epsilon flatness}If we restrict Condition
$(1)$ of Theorem \ref{thm:epsilon jet flat and counting points}
to $k=1$ (i.e.~when $\ints_{q}/\frak{m}_{q}^{k}\simeq\mathbb{F}_{q}$),
then $\left|\varphi^{-1}(y)\right|<q^{(\mathrm{dim}X_{\mathbb{Q}}-\varepsilon'\mathrm{dim}Y_{\mathbb{Q}})}$
is equivalent to $\varphi_{\rats}:X_{\rats}\to Y_{\rats}$ being $\varepsilon$-flat.
This follows directly by observing that the fibers of $\varphi$ are
of bounded complexity, and applying the Lang-Weil bounds. Since the
complexity of the fibers of $J_{k}(\varphi_{\rats})$ is arbitrarily
large for $k\gg0$, Theorem \ref{thm:epsilon jet flat and counting points}
deserves a special treatment. 
\end{rem}

For the proof of Theorem \ref{thm:epsilon jet flat and counting points}
we need some preparation. Let $\varphi:X\to Y$ be as in the theorem,
assume that $Y$ is affine and let $\mu_{X(\mathcal{O}_{F})}$ and
$\mu_{Y(\mathcal{O}_{F})}$ be the measures as in Lemma \ref{lem:canonical measure}
for $F\in\mathrm{Loc}_{\gg}$. Since $\varphi$ is dominant it follows
that $\varphi_{*}(\mu_{X(\mathcal{O}_{F})})$ is absolutely continuous
with respect to $\mu_{Y(\mathcal{O}_{F})}$, and thus has an $L^{1}$-density
(see e.g.~\cite[Corollary 3.6]{AA16}). Set $\tau_{F}:=\varphi_{*}(\mu_{X(\mathcal{O}_{F})})$,
and write $\tau_{F}=f_{F}(y)\cdot\mu_{Y(\mathcal{O}_{F})}$. Moreover,
for any $k\in\nats$ define a function 
\[
g_{F}(y,k)=\frac{1}{\mu_{Y(\mathcal{O}_{F})}(B(y,k))}\int_{B(y,k)}f_{F}(\widetilde{y})d\mu_{Y(\mathcal{O}_{F})},
\]
where $B(y,k)=r_{k}^{-1}(r_{k}(y))$. By Theorem \ref{integration of motivic}
and Remark \ref{rem:Larger generality of motivic functions} (see
also \cite[Theorem 4.2]{GH19}), it follows that both $(f_{F}:Y(F)\rightarrow\complex)_{F\in\mathrm{Loc}}$
and $(g_{F}:Y(F)\times\nats\rightarrow\complex)_{F\in\mathrm{Loc}}$
are $\mathcal{L}_{\mathrm{DP}}$-motivic functions. Note that for
any $F\in\mathrm{Loc}_{\gg}$, any $y\in Y(\mathcal{O}_{F})$ and
any $k\in\nats$ 
\begin{equation}
g_{F}(y,k)=\frac{\left|\varphi_{*}(\mu_{X(\mathcal{O}_{F})})(B(y,k)\right|}{\mu_{Y(\mathcal{O}_{F})}(B(y,k))}=\frac{\left|\varphi^{-1}(r_{k}(y))\right|}{q^{k(\mathrm{dim}X_{\mathbb{Q}}-\mathrm{dim}Y_{\mathbb{Q}})}}.\label{eq:(8.2)}
\end{equation}

\begin{proof}[Proof of Theorem \ref{thm:epsilon jet flat and counting points}]
$(1)\Rightarrow(2)$. By Theorem \ref{thm:-transfer principle for bounds}
and by possibly enlarging $S$, we have for each $k\in\nats$ and
each $y\in Y(\mathbb{F}_{q}[t]/t^{k})$ that 
\[
\left|\varphi^{-1}(y)\right|<q^{k(\mathrm{dim}X_{\mathbb{Q}}-\varepsilon'\mathrm{dim}Y_{\mathbb{Q}})}.
\]
Identify $y$ with $\tilde{y}\in J_{k-1}(Y)(\mathbb{F}_{q})$ via
the identification $Y(\mathbb{F}_{q}[t]/t^{k})\simeq J_{k-1}(Y)(\mathbb{F}_{q})$.
Note that 
\[
\left|\varphi^{-1}(y)\right|=\left|(J_{k-1}(X))_{\tilde{y},J_{k-1}(\varphi)}(\mathbb{F}_{q})\right|.
\]
By the Lang-Weil bounds, this implies that $\varphi_{\rats}$ is $\varepsilon$-jet
flat.

$(2)\Rightarrow(1)$. Assume that $\varphi_{\rats}$ is $\varepsilon$-jet
flat. Fix $\varepsilon'<\varepsilon$. By (\ref{eq:(8.2)}) it is
enough to show that $g_{F}(y,k)q_{F}^{-k(1-\varepsilon')\mathrm{dim}Y_{\rats}}<1$
for $F\in\mathrm{Loc}_{\gg}$. By \cite[Theorem 2.1.3]{CGH18}, there
exist a motivic function $G$ on $\nats$ and $d\in\ints$, such that
for any $F\in\mathrm{Loc}_{\gg}$ 
\[
\underset{y\in Y(F)}{\mathrm{sup}}g_{F}(y,k)\leq G_{F}(k)\leq q_{F}^{d}\underset{y\in Y(F)}{\mathrm{sup}}g_{F}(y,k).
\]
It is therefore enough to show that for $F\in\mathrm{Loc}_{\gg}$
\[
H_{F}(k):=G_{F}(k)q_{F}^{-k(1-\varepsilon')\mathrm{dim}Y_{\rats}}<1.\tag{\ensuremath{\star}}
\]
\begin{lem}
\label{lem:Nice description}There exist a definable partition $\nats=\bigsqcup\limits _{j=1}^{N}A_{j}$
of $\nats$, natural numbers $L,\{a_{i}\}_{i=1}^{L}\in\nats$ and
$\{b_{i}\}_{i=1}^{L}\in\rats$, such that on each definable part 
\[
G_{F}(k)=\sum\limits _{i=1}^{L}c_{i}(F)\cdot k^{a_{i}}q_{F}^{b_{i}k},
\]
for any $F\in\mathrm{Loc}_{\gg}$ and some constants $\{c_{i}(F)\}_{i=1}^{L}$
depending on $F$. 
\end{lem}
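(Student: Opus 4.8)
The plan is to prove Lemma \ref{lem:Nice description} by unwinding the explicit form of a motivic function on $\nats$ and applying the Presburger structure theory recalled above. First I would recall that $G_F(k)$ is a motivic function on the definable set $\nats \subseteq \ints$, so by Definition \ref{def:motivic function} it can be written for $F \in \mathrm{Loc}_{\gg}$ as a finite sum
\[
G_F(k) = \sum_{i=1}^{N_1} |Y_{i,F,k}| \, q_F^{\alpha_{i,F}(k)} \left(\prod_{j=1}^{N_2} \beta_{ij,F}(k)\right)\left(\prod_{j=1}^{N_3} \frac{1}{1-q_F^{a_{ij}}}\right),
\]
where $\alpha_i, \beta_{ij} : \nats \to \ints$ are $\mathcal{L}_{\mathrm{Pres}}$-definable functions and $Y_i \subseteq \nats \times \mathrm{RF}^{r_i}$ are definable. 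The factors $\prod_j (1-q_F^{a_{ij}})^{-1}$ are constants depending only on $F$, so they can be absorbed into the coefficients. The term $|Y_{i,F,k}|$ is, by the cell decomposition for definable subsets involving the residue sort, a polynomial in $q_F$ with definable exponents; after a further definable partition I may assume it equals $q_F^{\delta_{i,F}(k)}$ times a constant, with $\delta_i$ definable, or more precisely a fixed polynomial in $q_F$ whose coefficients are constants in $F$ (this uses the fact that the fiber cardinality of a definable set over the residue field is given by a motivic-type formula in $q_F$, cf.\ the orthogonality/elimination results underlying \cite{CGH16}).

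Next I would apply Presburger cell decomposition (Theorem \ref{Presburger Cell decomposition}) simultaneously to the finitely many definable functions $\alpha_i, \beta_{ij}, \delta_i$ appearing, obtaining a single finite partition $\nats = \bigsqcup_{j=1}^N A_j$ on each part of which every one of these functions is \emph{linear} in the sense of Definition \ref{def:-Linear function}. Fix one part $A_j$. On $A_j$, after passing to the linear reparametrization $k \mapsto (k-c)/n$ that makes the congruence conditions trivial, each $\alpha_i$ and $\delta_i$ becomes an affine function $m_i k + e_i$ with $m_i \in \ints$, $e_i \in \ints$, and each $\beta_{ij}$ becomes $p_{ij}k + s_{ij}$. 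Substituting, the $i$-th summand of $G_F(k)$ on $A_j$ becomes
\[
\big(\text{const}(F)\big)\cdot \prod_{j=1}^{N_2}(p_{ij}k + s_{ij}) \cdot q_F^{(m_i + \sum \text{from }\delta_i)k + e_i},
\]
and the product of the $N_2$ affine factors in $k$ is a polynomial in $k$ of degree at most $N_2$ with coefficients that are rational numbers. Collecting powers of $k$, this is a finite $\rats$-linear combination of terms $k^a q_F^{bk}$ with $a \le N_2$ and $b$ ranging over the finitely many integers arising as the linearized $q_F$-exponents; the coefficients are products of the rational coefficients from the $\beta$-factors with the $F$-dependent constant $\text{const}(F)$. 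Summing over $i$ and combining like terms $k^a q_F^{bk}$ gives exactly the claimed shape $G_F(k) = \sum_{i=1}^L c_i(F) k^{a_i} q_F^{b_i k}$ with $L, \{a_i\}\subseteq\nats$, $\{b_i\}\subseteq\rats$ (one gets $b_i \in \ints$ here, but allowing $\rats$ is harmless and convenient for the next step where $G_F$ is divided by $q_F^{k(1-\varepsilon')\dim Y_\rats}$), and the finitely many parts $A_j$ assemble into the required partition $\nats = \bigsqcup_{j=1}^N A_j$.

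The main obstacle I anticipate is bookkeeping the two sources of $q_F$-powers correctly: the ones from the explicit $q_F^{\alpha_{i,F}(k)}$ factor and the ones hidden inside $|Y_{i,F,k}|$, which must first be put into the form ``polynomial in $q_F$ with definable exponents'' before Presburger cell decomposition applies to those exponents. Making this precise requires invoking the structure of definable subsets of $\mathrm{RF}^{r_i}$ fibered over $\nats$ — that their fiber cardinalities are, after a definable partition of $\nats$, given uniformly by a fixed polynomial in $q_F$ with integer coefficients (a consequence of quantifier elimination in the residue sort and the rationality results in \cite{CGH16, CGH14}). Once this reduction is in hand the rest is routine linear algebra over $\rats$ on each Presburger cell. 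I would then note that this lemma feeds directly into establishing $(\star)$: on each part $A_j$, $H_F(k) = \sum_i c_i(F) k^{a_i} q_F^{(b_i - (1-\varepsilon')\dim Y_\rats)k}$, and the $\varepsilon$-jet flatness hypothesis together with Theorem \ref{thm:log canonical threshold and jet schemes} (via $\mathrm{lct} \ge \varepsilon \dim Y_\rats$) forces each $b_i \le (1-\varepsilon)\dim Y_\rats < (1-\varepsilon')\dim Y_\rats$, so every exponent of $q_F$ is negative and $H_F(k) \to 0$; hence $H_F(k) < 1$ for $F \in \mathrm{Loc}_{\gg}$ and all large $k$, and the finitely many small $k$ are handled by enlarging $S$, as in Remark \ref{rem:version for epsilon flatness}. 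I will defer that deduction to after the lemma and concentrate the proof here on the normal-form statement itself.
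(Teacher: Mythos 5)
Your overall skeleton is the right one and matches the paper's argument: unwind $G_F(k)$ via Definition~\ref{def:motivic function}, deal with the residue-field factor $|Y_{i,F,k}|$, apply Presburger cell decomposition (Theorem~\ref{Presburger Cell decomposition}) to the $\ints$-valued definable functions $\alpha_i,\beta_{ij}$ to make them linear on each piece, and rearrange. The factor $\prod_j(1-q_F^{a_{ij}})^{-1}$ being an $F$-constant is also correctly noted.

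However, your treatment of the residue-field term $|Y_{i,F,k}|$ is off, and this is the one non-routine step. You try to reduce $|Y_{i,F,k}|$ to ``a polynomial in $q_F$ with definable exponents'' or ``a fixed polynomial in $q_F$ with integer coefficients,'' and then feed those exponents into Presburger cell decomposition. This claim is false in general: the $\mathbb{F}_q$-point count of a definable subset of $\mathrm{RF}^{r}$ need not be a polynomial in $q_F$ (think of the point count of a fixed curve over $\mathbb{F}_q$, which fluctuates with the Frobenius). The attempted route also confuses where the $k$-dependence should be killed. What is actually needed --- and what the paper invokes --- is the \emph{orthogonality of the residue field and value group sorts} (cited in the paper as \cite[Theorem~2.1.1]{CL08}): a definable $Y_i\subseteq \nats\times\mathrm{RF}^{r_i}$ decomposes, after a finite $\mathcal{L}_{\mathrm{Pres}}$-definable partition $\nats=\bigsqcup A_j$, into rectangles, so that $Y_{i,F,k}=Y_{i,F}$ is independent of $k$ on each part. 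Once $k$-independence is established there is nothing further to prove about $|Y_{i,F}|$ --- it is simply a constant depending on $F$ (and on the part $A_j$), and is absorbed into $c_i(F)$ along with the $\prod(1-q_F^{a_{ij}})^{-1}$ factors. No structure of $|Y_{i,F}|$ as a function of $q_F$ is needed or used. After this correction, the remainder of your argument (Presburger cell decomposition applied only to $\alpha_i,\beta_{ij}$, followed by expansion of the linear pieces and collection of like terms $k^{a}q_F^{bk}$) goes through exactly as you describe, and this is the paper's proof.
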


\begin{proof}
Since $G_{F}(k)$ is a motivic function, by Definition \ref{def:motivic function},
it can be written as 
\[
G_{F}(k)=\sum\limits _{i=1}^{N_{1}}\left|Y_{i,F,k}\right|q_{F}^{\alpha_{i,F}(k)}\left(\prod_{j=1}^{N_{2}}\beta_{ij,F}(k)\right)\left(\prod_{j=1}^{N_{3}}\frac{1}{1-q_{F}^{a_{ij}}}\right),\tag{\ensuremath{\star\star}}
\]
where $\alpha_{i}:\nats\rightarrow\mathbb{Z}$ and $\beta_{ij}:\nats\rightarrow\mathbb{Z}$
are definable functions and $Y_{i}\subseteq\nats\times\mathrm{RF}^{r_{i}}$
are definable sets. By the disjointness of the $\mathrm{RF}$ and
$\VG$ sorts (see e.g.~\cite[Theorem 2.1.1]{CL08}), we may partition
$\nats$ into finitely many definable subsets, such that on each part
$Y_{i,F,k}=Y_{i,F}$ does not depend on $k$, and $\alpha_{i},\beta_{ij}$
are $\mathcal{L}_{\mathrm{Pres}}$-definable functions. By further
refining the partition, we may assume that $\alpha_{i},\beta_{ij}$
are linear (Theorem \ref{Presburger Cell decomposition}). By rearranging
$(\star\star)$, the lemma follows. 
\end{proof}
By Theorem \ref{thm:Rectilinearization}, we may refine the partition
$\nats=\bigsqcup\limits _{j=1}^{N}A_{j}$ of Lemma \ref{lem:Nice description},
so that we have linear bijections $f_{j}:A_{j}\rightarrow\nats^{l_{j}}$
for $l_{j}\in\{0,1\}$. It is enough to prove $(\star)$ for each
definable subset $A_{j}$, so by the last lemma we may assume that
$H_{F}(k)=\sum\limits _{i=1}^{L}c_{i}(F)\cdot k^{a_{i}}q_{F}^{k(b_{i}-(1-\varepsilon')\mathrm{dim}Y)}$.
Since $J_{k-1}(\varphi_{\rats})$ is $\varepsilon$-flat, it follows
from the Lang-Weil bounds that for any $\varepsilon'<\varepsilon$
and any $k\in\nats$, $\lim\limits _{q_{F}\rightarrow\infty}H_{F}(k)=0$.
It is enough to consider the case where $A_{j}$ is infinite. By changing
coordinates via $f_{j}$ we may assume that $A_{j}=\nats$ (where
now $a_{i}\in\nats$ and $b_{i}\in\ints$).

Without loss of generality we may assume that $b_{1}$ is maximal.
Set $T:=\{i\in[L]:b_{i}=b_{1}\}$ and $P_{F}(k):=\sum\limits _{i\in T}c_{i}(F)\cdot k^{a_{i}}$.
Then we can write
\[
H_{F}(k)=P_{F}(k)q_{F}^{k(b_{1}-(1-\varepsilon')\mathrm{dim}Y_{\rats})}+E_{F}(k),
\]
where $E_{F}(k)=\sum\limits _{i\in T^{c}}c_{i}(F)\cdot k^{a_{i}}q_{F}^{k(b_{i}-(1-\varepsilon')\mathrm{dim}Y_{\rats})}$.
We may assume that for any $N\in\nats$ there exists $F\in\mathrm{Loc}$
with $\mathrm{char}(k_{F})>N$ such that $P_{F}(k)$ is not identically
zero as a polynomial on $k$. Denote by $\mathrm{Loc}_{P}$ the collection
of such $F\in\mathrm{Loc}$, and set $\mathcal{S}_{P}=\{q_{F}:F\in\mathrm{Loc}_{P}\}$,
and $\mathrm{Loc}_{q,P}=\{F\in\mathrm{Loc}_{P}:q_{F}=q\}$. 
\begin{lem}
\label{lem:Auxiliary lemma}~
\begin{enumerate}
\item There exists $k_{0}\in\nats$ such that for any $k>k_{0}$, there
are infinitely many $q\in\mathcal{S}_{P}$, such that $\exists F\in\mathrm{Loc}_{q,P}$
with $P_{F}(k)\neq0$. 
\item There exists $e\in\nats$ such that if $P_{F}(k)\neq0$ for some $k\in\nats$,
then $q_{F}^{-e}<\left|P_{F}(k)\right|$ and moreover, $c_{i}(F)<q_{F}^{e}$
for each $i\in L$ and each $q_{F}>2$. 
\item For any $\delta>0$ and any $a\in\nats$ there exists $k_{1}\in\nats$
such that $k^{a}<q^{\delta k}$ for every $k>k_{1}$ and every $q\geq2$. 
\end{enumerate}
\end{lem}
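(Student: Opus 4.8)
\textbf{Plan for the proof of Lemma \ref{lem:Auxiliary lemma}.}

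This is an auxiliary lemma consisting of three unrelated elementary estimates about the motivic data appearing in the decomposition $H_F(k)=P_F(k)q_F^{k(b_1-(1-\varepsilon')\dim Y_\rats)}+E_F(k)$, so I would simply prove the three items in turn.

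For Item (1), the key observation is that the set $\{(F,k): P_F(k)\neq 0\}$, or rather its shadow on the $\VG$-sort, is controlled by definable data: by Lemma \ref{lem:Nice description} the coefficients $c_i(F)$ together with the exponents are built out of motivic functions, and $P_F(k)=\sum_{i\in T}c_i(F)k^{a_i}$ is a polynomial in $k$ of degree $\le \max_i a_i=:a_{\max}$ for each fixed $F$. A nonzero polynomial of degree $\le a_{\max}$ has at most $a_{\max}$ roots, so if $P_F$ is not identically zero then $P_F(k)\neq 0$ for all but at most $a_{\max}$ values of $k$. Taking $k_0:=a_{\max}$, for every $k>k_0$ and every $F\in\mathrm{Loc}_P$ with $P_F\not\equiv 0$ we get $P_F(k)\neq 0$. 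Since $\mathrm{Loc}_P$ by definition contains fields of arbitrarily large residue characteristic (that is exactly the assumption we made before introducing $\mathrm{Loc}_P$), the set $\mathcal S_P$ is infinite, and for each such $k>k_0$ we may pick infinitely many $q=q_F\in\mathcal S_P$ witnessed by such $F$. (Some care is needed because the $c_i(F)$ could conspire so that $P_F\equiv 0$ for a particular $F$; but that possibility is precisely excluded by restricting attention to $F\in\mathrm{Loc}_{q,P}$.)

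For Item (2), I would invoke the transfer principle for bounds (Theorem \ref{thm:-transfer principle for bounds}) applied to the motivic functions $c_i$ on the appropriate definable set: since each $c_i(F)$ is, up to the factors $(1-q_F^{a_{ij}})^{-1}$, a sum of terms of the form $|Y_{i,F}|q_F^{\text{(integer)}}$ with $|Y_{i,F}|\le (\text{const})q_F^{r_i}$, there is a uniform $e$ with $|c_i(F)|<q_F^e$ for all $i$ and all $q_F>2$. For the lower bound $q_F^{-e}<|P_F(k)|$ when $P_F(k)\neq0$: here $P_F(k)$ is a sum of at most $L$ terms each of which is $c_i(F)\cdot k^{a_i}$, and the nonvanishing combined with the fact that the $c_i(F)$ lie in a lattice whose denominators are bounded (they arise from motivic functions, so up to a common bounded denominator the $c_i(F)q_F^{M}$ are integers for a bounded $M$) forces $|P_F(k)|$ to be bounded below by $q_F^{-e}$ after possibly enlarging $e$; this is the same mechanism as in \cite[Theorem 2.1.3]{CGH18}. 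I would phrase this carefully using that a nonzero element of $\tfrac{1}{D}\ints$ has absolute value $\ge 1/D$.

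Item (3) is completely elementary: for fixed $a\in\nats$ and $\delta>0$, the function $k\mapsto k^a q^{-\delta k}$ tends to $0$ as $k\to\infty$ uniformly over $q\ge 2$ (the worst case is $q=2$, and $k^a 2^{-\delta k}\to 0$), so one simply takes $k_1$ large enough that $k^a<2^{\delta k}\le q^{\delta k}$ for all $k>k_1$. The only mild subtlety is to make the choice of $k_1$ genuinely uniform in $q$, which follows since $q^{\delta k}\ge 2^{\delta k}$ for all $q\ge 2$. The main obstacle, such as it is, is Item (2): getting a clean uniform lower bound on $|P_F(k)|$ requires tracking the denominators in the motivic presentation, and I expect to lean on \cite[Theorem 2.1.3]{CGH18} (or a direct rerun of its proof) rather than reproving it from scratch. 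The remaining items are routine.
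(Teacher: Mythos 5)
Your proof of Item (1) has a genuine gap. You correctly observe that for each fixed $F$ with $P_{F}\not\equiv 0$, the polynomial $P_{F}$ has at most $a_{\max}$ roots, but you then slide from ``at most $a_{\max}$ roots'' to ``all roots lie in $\{0,\ldots,a_{\max}\}$,'' which is false: the roots can be located anywhere. Concretely, if $T=\{1,2\}$, $a_{1}=0$, $a_{2}=1$, and $c_{1}(F)/c_{2}(F)=-100$, then $P_{F}(k)=c_{2}(F)(k-100)$ is nonzero yet vanishes at $k=100\gg a_{\max}=1$. So choosing $k_{0}:=a_{\max}$ does not give $P_{F}(k)\neq 0$ for all $k>k_{0}$ and all $F$ with $P_{F}\not\equiv 0$. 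The statement you need is weaker (only infinitely many $q$ for each $k>k_{0}$), but your argument does not establish even that, since for a fixed large $k$ it is a priori possible that $P_{F}(k)=0$ simultaneously for all $F\in\mathrm{Loc}_{q,P}$ over all but finitely many $q$.

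The paper's argument is a proof by contradiction using a pigeonhole over $k$. Suppose Item (1) fails. Then for every $k_{0}$ there is some $k'>k_{0}$ such that $P_{F}(k')=0$ for all but finitely many $q\in\mathcal{S}_{P}$ and every $F\in\mathrm{Loc}_{q,P}$. Iterating this $d+1$ times, where $d=\deg(P)$, one produces $d+1$ distinct values $k'_{1}<\cdots<k'_{d+1}$; intersecting the associated cofinite sets of admissible $q$ (still cofinite, hence nonempty) yields some $q\in\mathcal{S}_{P}$ and $F\in\mathrm{Loc}_{q,P}$ for which $P_{F}$ has at least $d+1$ roots, hence $P_{F}\equiv 0$, contradicting $F\in\mathrm{Loc}_{q,P}$. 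This is the missing pigeonhole step. Items (2) and (3) of your proposal are correct and in the spirit of the paper: Item (3) is the elementary estimate you describe, and Item (2) follows by inspecting the form of the $c_{i}(F)$ as in the motivic presentation and clearing denominators as you indicate.
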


\begin{proof}
$(1)$. Assume the contrary, and set $d:=\mathrm{deg}(P)$. Then for
any $k_{0}\in\nats$, there exists $k'>k_{0}$ such that $P_{F}(k')=0$
for any $F\in\mathrm{Loc}_{q,P}$ with $q\in\mathcal{S}_{P}$ large
enough. By induction, this implies that for $q_{F}$ large enough,
$P_{F}(k)$ has more than $d$ roots yielding a contradiction.

Item $(2)$ follows by analyzing the possible terms appearing in a
motivic function (e.g.~$(\star\star)$). Item $(3)$ is obvious. 
\end{proof}
\begin{cor}
\label{cor:8.22}There exists $k_{2}\in\nats$ such that for each
$k>k_{2}$,
\[
\lim\limits _{q_{F}\rightarrow\infty}P_{F}(k)q_{F}^{k(b_{1}-(1-\varepsilon')\mathrm{dim}Y_{\rats})}=0,
\]
and $P_{F}(k)\neq0$ for infinitely many $q\in\mathcal{S}_{P}$. 
\end{cor}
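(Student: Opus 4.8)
The plan is to read off both assertions of the corollary from the decomposition $H_F(k)=P_F(k)\,q_F^{k(b_1-(1-\varepsilon')\mathrm{dim}Y_\rats)}+E_F(k)$ already in hand, feeding in the three items of Lemma \ref{lem:Auxiliary lemma} together with the input established just above it, namely that $\varepsilon$-jet flatness of $\varphi_\rats$ forces $\lim_{q_F\to\infty}H_F(k)=0$ for every fixed $k\in\nats$ (via Lang--Weil applied to the $\varepsilon$-flat morphism $J_{k-1}(\varphi_\rats)$). The non-vanishing clause is essentially immediate: by Lemma \ref{lem:Auxiliary lemma}(1) there is $k_0$ such that for every $k>k_0$ infinitely many $q\in\mathcal{S}_P$ carry some $F\in\mathrm{Loc}_{q,P}$ with $P_F(k)\neq0$, which is exactly the second statement once we impose $k_2\ge k_0$.

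For the limit, recall $T=\{i:b_i=b_1\}$ with $b_1$ maximal and (after the rectilinearization reduction) the $b_i$ integers, so $\gamma:=\min_{i\in T^c}(b_1-b_i)$ is a positive integer — unless $T^c=\varnothing$, in which case $E_F\equiv 0$ and $\lim_{q_F\to\infty}P_F(k)q_F^{k(b_1-(1-\varepsilon')\mathrm{dim}Y_\rats)}=\lim_{q_F\to\infty}H_F(k)=0$, so assume $T^c\neq\varnothing$. Put $a:=\max_i a_i$, choose $\delta:=\gamma/2$, let $k_1$ be the threshold from Lemma \ref{lem:Auxiliary lemma}(3) for this $\delta$ and this $a$, and let $e$ be as in Lemma \ref{lem:Auxiliary lemma}(2). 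Then for every $k>\max\{k_1,\,2e/\gamma\}$ and every $F\in\mathrm{Loc}_{\gg}$,
\[
\bigl|E_F(k)\bigr|\,q_F^{-k(b_1-(1-\varepsilon')\mathrm{dim}Y_\rats)}=\Bigl|\sum_{i\in T^c}c_i(F)\,k^{a_i}\,q_F^{k(b_i-b_1)}\Bigr|\le L\,q_F^{e}\,k^{a}\,q_F^{-\gamma k}\le L\,q_F^{e}\,q_F^{\delta k}\,q_F^{-\gamma k}=L\,q_F^{\,e-\gamma k/2},
\]
using $|c_i(F)|<q_F^{e}$, $k^{a_i}\le k^{a}$, $q_F^{k(b_i-b_1)}\le q_F^{-\gamma k}$ and finally $k^{a}<q_F^{\delta k}$. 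Since $e-\gamma k/2<0$, this tends to $0$ as $q_F\to\infty$; hence $E_F(k)\to 0$ as $q_F\to\infty$ for each such fixed $k$, and subtracting from $H_F(k)\to 0$ yields $P_F(k)q_F^{k(b_1-(1-\varepsilon')\mathrm{dim}Y_\rats)}\to 0$. Taking $k_2:=\max\{k_0,\,k_1,\,\lceil 2e/\gamma\rceil\}$ then proves the corollary.

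The computation itself is routine; the one point that genuinely needs care — and the reason for the preparatory Lemmas \ref{lem:Nice description} and \ref{lem:Auxiliary lemma} — is that the number $L$ of summands, the exponents $a_i,b_i$, and above all the coefficient bound $|c_i(F)|<q_F^{e}$ must be uniform over all $F$ of sufficiently large residue characteristic; it is precisely this uniformity that lets the single limit $q_F\to\infty$ annihilate the error term $E_F(k)$ for all large fixed $k$ simultaneously. I expect no real obstacle beyond arranging the thresholds $\gamma,\delta,k_1,e,k_2$ in the correct order, as above.
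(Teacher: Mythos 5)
You correctly dispose of the case $T^{c}=\varnothing$ and you extract the non-vanishing clause from Lemma \ref{lem:Auxiliary lemma}(1) exactly as needed. But the limit argument has a real gap. The chain of inequalities you display bounds the \emph{renormalized} quantity $|E_{F}(k)|\,q_{F}^{-k(b_{1}-(1-\varepsilon')\mathrm{dim}Y_{\rats})}$ and shows it tends to $0$ as $q_{F}\to\infty$ once $k>\max\{k_{1},2e/\gamma\}$. You then assert ``hence $E_{F}(k)\to 0$'', but that implication is unavailable: to recover $|E_{F}(k)|$ you must multiply back by $q_{F}^{k(b_{1}-(1-\varepsilon')\mathrm{dim}Y_{\rats})}$, and at this point in the argument the sign of $b_{1}-(1-\varepsilon')\mathrm{dim}Y_{\rats}$ is unknown. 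Indeed, establishing $b_{1}\le(1-\varepsilon)\mathrm{dim}Y_{\rats}$ (hence $b_{1}-(1-\varepsilon')\mathrm{dim}Y_{\rats}<0$) is precisely the content of the \emph{next} corollary, which is deduced \emph{from} Corollary \ref{cor:8.22}, so invoking it here would be circular. Concretely, your inequality unravels to $|E_{F}(k)|\le L\,q_{F}^{e+k(b_{1}-\gamma/2-(1-\varepsilon')\mathrm{dim}Y_{\rats})}$, and if $b_{1}-(1-\varepsilon')\mathrm{dim}Y_{\rats}\ge\gamma/2$ this exponent is nonnegative for all $k$, so $E_{F}(k)$ need not tend to $0$ and ``subtracting from $H_{F}(k)\to 0$'' does not deliver the claim.

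The ingredient you are missing is the \emph{lower} bound $q_{F}^{-e}<|P_{F}(k)|$ from Lemma \ref{lem:Auxiliary lemma}(2) --- you invoke that item only for the upper bound on the $c_{i}(F)$, not for the lower bound on $|P_{F}(k)|$, which is the part that matters here. Using it, compare $|E_{F}(k)|$ not with $1$ but with $|P_{F}(k)\,q_{F}^{k(b_{1}-(1-\varepsilon')\mathrm{dim}Y_{\rats})}|$: for $P_{F}(k)\neq 0$ one gets
\[
\frac{|E_{F}(k)|}{|P_{F}(k)\,q_{F}^{k(b_{1}-(1-\varepsilon')\mathrm{dim}Y_{\rats})}|}\le L\,q_{F}^{\,2e-\gamma k/2},
\]
which for $k$ with $\gamma k/2>2e$ and $q_{F}\gg 1$ is $<\frac{1}{2}$, independently of the sign of $b_{1}-(1-\varepsilon')\mathrm{dim}Y_{\rats}$. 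The reverse triangle inequality applied to $H_{F}(k)=P_{F}(k)q_{F}^{k(b_{1}-(1-\varepsilon')\mathrm{dim}Y_{\rats})}+E_{F}(k)$ then gives $\frac{1}{2}|P_{F}(k)q_{F}^{k(b_{1}-(1-\varepsilon')\mathrm{dim}Y_{\rats})}|<|H_{F}(k)|\to 0$, and the case $P_{F}(k)=0$ is trivial. This is the route the paper takes, and it explains why Lemma \ref{lem:Auxiliary lemma}(2) records a lower bound on $|P_{F}(k)|$ rather than only an upper bound on the coefficients.
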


\begin{proof}
Let $\delta=\frac{1}{2}\min\limits _{i\in T^{c}}\left|b_{1}-b_{i}\right|$.
By the last lemma, for any $q_{F}>2$ we have
\[
\left|E_{F}(k)\right|<\underset{i\in T^{c}}{\sum}q_{F}^{e+k(\delta+b_{i}-(1-\varepsilon')\mathrm{dim}Y_{\rats})}.
\]
Hence, there exists $k_{2}\in\nats$ such that for every $k>k_{2}$
and any $F\in\mathrm{Loc}_{q,P}$ with $P_{F}(k)\neq0$, we have
\begin{equation}
\left|E_{F}(k)\right|<\frac{1}{2}\left|P_{F}(k)q_{F}^{k(b_{1}-(1-\varepsilon')\mathrm{dim}Y_{\rats})}\right|.\label{eq:(8.3)}
\end{equation}
By Lemma \ref{lem:Auxiliary lemma}(1), we may choose $k_{2}$ large
enough such that for any $k>k_{2}$, we have $P_{F}(k)\neq0$ for
infinitely many $q\in\mathcal{S}_{P}$ and $F\in\mathrm{Loc}_{q,P}$.
Since $\lim\limits _{q_{F}\rightarrow\infty}H_{F}(k)=0$ for any $k\in\nats$,
and by (\ref{eq:(8.3)}) we are done.
\end{proof}
Corollary \ref{cor:8.22} implies the following: 
\begin{cor}
We have $b_{1}\leq(1-\varepsilon)\mathrm{dim}Y_{\rats}$. 
\end{cor}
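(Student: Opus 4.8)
The plan is to deduce the bound from Corollary~\ref{cor:8.22}, together with the lower estimate $|P_F(k)|>q_F^{-e}$ of Lemma~\ref{lem:Auxiliary lemma}(2), by a limiting argument in the parameter $\varepsilon'$. First I would record that Corollary~\ref{cor:8.22} is available not only for the value $\varepsilon'$ fixed at the start of the implication $(2)\Rightarrow(1)$, but for every $\varepsilon''\in(0,\varepsilon)$: its sole analytic input is the vanishing $\lim_{q_F\to\infty}G_F(k)q_F^{-k(1-\varepsilon'')\mathrm{dim}Y_{\rats}}=0$ for all $k$, which follows from the $\varepsilon$-jet flatness of $\varphi_{\rats}$ via the Lang-Weil bounds for each such $\varepsilon''$, while the data $G_F$, its leading exponent $b_1$, the index set $T=\{i:b_i=b_1\}$, the polynomials $P_F,E_F$, and the constants $e,\delta$ entering the proof are all independent of the choice of $\varepsilon''$. (In fact, after dividing (\ref{eq:(8.3)}) by $q_F^{k(b_1-(1-\varepsilon'')\mathrm{dim}Y_{\rats})}$ the estimate reads $\sum_{i\in T^c}q_F^{2e+k(\delta+b_i-b_1)}<1$, which no longer mentions $\varepsilon''$, so even $k_2$ may be taken uniform; but for what follows it suffices that for each $\varepsilon''$ some $k_2=k_2(\varepsilon'')$ works.)

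So, given $\varepsilon''\in(0,\varepsilon)\cap\rats$, set $c:=b_1-(1-\varepsilon'')\mathrm{dim}Y_{\rats}$ and suppose $c>0$; since $b_1,\varepsilon'',\mathrm{dim}Y_{\rats}$ are fixed rationals, $c$ is a fixed positive rational. By Corollary~\ref{cor:8.22} there is $k_2$ so that for every $k>k_2$ one has $\lim_{q_F\to\infty}P_F(k)q_F^{kc}=0$, and there are infinitely many $q\in\mathcal{S}_P$ admitting $F\in\mathrm{Loc}_{q,P}$ with $P_F(k)\neq0$. Fix $k>k_2$ large enough that $kc>e$, where $e$ is the constant of Lemma~\ref{lem:Auxiliary lemma}(2). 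Along these infinitely many $q$ we then get $|P_F(k)q_F^{kc}|>q_F^{-e}q_F^{kc}=q_F^{\,kc-e}\to\infty$, which contradicts $\lim_{q_F\to\infty}P_F(k)q_F^{kc}=0$. Hence $c\le0$, i.e.\ $b_1\le(1-\varepsilon'')\mathrm{dim}Y_{\rats}$.

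Finally, since $\varepsilon''\in(0,\varepsilon)$ was arbitrary, letting $\varepsilon''\nearrow\varepsilon$ yields $b_1\le(1-\varepsilon)\mathrm{dim}Y_{\rats}$, as claimed. The one place requiring a little care is the first paragraph: one must check that invoking Corollary~\ref{cor:8.22} for a $\varepsilon''$ different from the originally fixed $\varepsilon'$ is legitimate, and that the ``infinitely many $q$'' clause---which ultimately comes from Lemma~\ref{lem:Auxiliary lemma}(1) applied to the $\varepsilon''$-independent polynomials $P_F$---is unaffected. Granting this, the remainder is a short quantitative contradiction built on the lower bound for $|P_F(k)|$.
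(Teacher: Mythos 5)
Your proof is correct and is exactly the (unstated) argument behind the paper's one-line "Corollary \ref{cor:8.22} implies the following." The key observations you make---that $b_1$, $P_F$, the index set $T$, and the constant $e$ of Lemma \ref{lem:Auxiliary lemma}(2) are all $\varepsilon'$-independent so Corollary \ref{cor:8.22} applies for every $\varepsilon''<\varepsilon$, and that combining $\lim_{q_F\to\infty}P_F(k)q_F^{kc}=0$ with $|P_F(k)|>q_F^{-e}$ forces $c\leq 0$---are precisely the intended deduction, finished by letting $\varepsilon''\nearrow\varepsilon$.
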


For any $\varepsilon'<\varepsilon$ we can now write $H_{F}(k)=\sum\limits _{i=1}^{L}c_{i}(F)\cdot k^{a_{i}}q_{F}^{kd_{i}(\varepsilon')}$,
with all $d_{i}(\varepsilon')<0$. Set $d(\varepsilon')=\max\limits _{1\leq i\leq L}\{d_{i}(\varepsilon')\}$,
then by Lemma \ref{lem:Auxiliary lemma} for every $\delta>0$ we
have $k_{0}$ such that for every $k>k_{0}$ 
\[
H_{F}(k)<q_{F}^{(d(\varepsilon')+\delta)k+e}.
\]
In particular, there exists $k_{1}$ such that for $k>k_{1}$ and
every $F\in\mathrm{Loc}_{\gg}$ we have $H_{F}(k)<1$. We are now
done as for each $1\leq k\leq k_{1}$ we can conclude individually
by the Lang-Weil estimates, by taking $F\in\mathrm{Loc}_{\gg}$. 
\end{proof}

\section{\label{sec:Applications-to-probabilistic}Applications to the $p$-adic
probabilistic Waring type problem }

Our goal in this section is to combine our results on word maps (Sections
\ref{sec:Singulrities-of-word}-\ref{sec:flatness-and-(FRS) of word maps}),
with the number-theoretic interpretation of the (FRS) property and
of $\varepsilon$-jet flatness (Section \ref{sec:The-(FRS)-property and counting points}),
to obtain results on random walks induced from word maps. We start
by discussing the more general case of random walks induced from algebraic
morphisms, and then descend to the case of word maps.

\subsection{\label{subsec:Algebraic-random-walks-general case}Algebraic random
walks: general case}

Let $X$ be a finite type $\ints$-scheme, with $X_{\rats}$ smooth
and geometrically irreducible, let $\underline{G}$ be a finite type
affine group scheme, with $\underline{G}_{\rats}$ connected and let
$\varphi:X\rightarrow\underline{G}$ be a dominant morphism.

Let $\mu_{X(\ints_{q}/\frak{m}_{q}^{k})}$ denote the uniform probability
measure on $X(\ints_{q}/\frak{m}_{q}^{k})$. For every prime power
$q$ and $k\in\nats$, the map $\varphi:X(\ints_{q}/\frak{m}_{q}^{k})\rightarrow\underline{G}(\ints_{q}/\frak{m}_{q}^{k})$
gives rise to a probability measure $\tau_{\varphi,q,k}:=\varphi_{*}(\mu_{X(\ints_{q}/\frak{m}_{q}^{k})})$
by considering the pushforward of $\mu_{X(\ints_{q}/\frak{m}_{q}^{k})}$
under $\varphi$. Explicitly, the probability given to $g\in\underline{G}(\ints_{q}/\frak{m}_{q}^{k})$
is
\[
\tau_{\varphi,q,k}(g)=\frac{\left|\varphi^{-1}(g)\right|}{\left|X(\ints_{q}/\frak{m}_{q}^{k})\right|}.
\]
The collection $\{\tau_{\varphi,q,k}\}_{q,k}$ induces a family of
random walks on $\{\underline{G}(\ints_{q}/\frak{m}_{q}^{k})\}_{q,k}$.
Note that $\tau_{\varphi,q,k}^{*2}=\tau_{\varphi^{*2},q,k}$ (see
e.g.~\cite[Remark 1.2]{GH19}), so the $t$-th step of the random
walk induced by $\tau_{\varphi,q,k}$ is given by
\[
\tau_{\varphi,q,k}^{*t}(g)=\frac{\left|(\varphi^{*t})^{-1}(g)\right|}{\left|X(\ints_{q}/\frak{m}_{q}^{k})\right|^{t}}.
\]
Let $\pi_{q,k}$ be the uniform probability on $\underline{G}(\ints_{q}/\frak{m}_{q}^{k})$.
Recall that if $\supp(\tau_{\varphi,q,k})$ is generating and contains
the identity element, then $\tau_{\varphi,q,k}^{*t}$ converges to
$\pi_{q,k}$ as $t\rightarrow\infty$, and the mixing time (see Definition
\ref{def:mixing time}) measures this convergence rate. Also recall
we define $L^{a}$ norms as follows: 
\begin{defn}
\label{def:Lp norms}For a real function $f$ on a finite group $G$,
set 
\[
\left\Vert f\right\Vert _{a}:=(\left|G\right|^{a-1}\sum\limits _{g\in G}\left|f(g)\right|^{a})^{1/a}\text{ ~~and~~ }\left\Vert f\right\Vert _{\infty}=\left|G\right|\cdot\max_{g\in G}\left|f(g)\right|.
\]
\end{defn}

The following is a uniform analogue of mixing time. 
\begin{defn}
\label{def:uniform mixing time- section}Let $\varphi:X\rightarrow\underline{G}$
be a dominant morphism as above. We say $\varphi$ 
\begin{enumerate}
\item is\textit{ almost $\{\mathbb{F}_{q}\}$-unifor}\textsl{m in $L^{a}$}
if there exists a finite set $S$ of primes such that 
\[
\underset{\mathcal{P}_{S}\ni q\rightarrow\infty}{\lim}\left\Vert \tau_{\varphi,q,1}-\pi_{q,1}\right\Vert _{a}=0.
\]
\item has an \textit{almost} $\{\mathbb{F}_{q}\}$\textit{-uniform $L^{a}$-mixing
time }$t_{a,\varphi,\{\mathbb{F}_{q}\}}$ if it is minimal such that
$\varphi^{*t_{a,\varphi,\{\mathbb{F}_{q}\}}}$ is almost $\{\mathbb{F}_{q}\}$-uniform
in $L^{a}$. 
\item is\textit{ almost $\{\ints_{q}\}$-unifor}\textsl{m in $L^{a}$} if
there exists a finite set $S$ of primes such that 
\[
\underset{\mathcal{P}_{S}\ni q\rightarrow\infty}{\lim}\left(\underset{k\in\nats}{\sup}\left\Vert \tau_{\varphi,q,k}-\pi_{q,k}\right\Vert _{a}\right)=0.
\]
\item has an \textit{almost} $\{\ints_{q}\}$\textit{-uniform $L^{a}$-mixing
time }$t_{a,\varphi,\{\ints_{q}\}}$ if it is minimal such that $\varphi^{*t_{a,\varphi,\{\ints_{q}\}}}$
is almost $\{\ints_{q}\}$-uniform in $L^{a}$. 
\end{enumerate}
\end{defn}

\subsubsection{\label{subsec:Algebraic interpretation of mixing times}Algebraic
interpretation of almost uniform mixing time}

We now use results from Section \ref{sec:The-(FRS)-property and counting points}
to give an algebraic interpretation of various properties of the family
of random walks induced by a morphism $\varphi:X\rightarrow\underline{G}$
as above. We first characterize properties related to $L^{\infty}$
bounds and then deal with properties related to $L^{1}$ bounds. 
\begin{thm}[Characterization of $L^{\infty}$-bounds]
\label{thm:dictionary for random walks Linfty}Let $X$ be a finite
type $\ints$-scheme, with $X_{\rats}$ smooth and geometrically irreducible,
let $\underline{G}$ be a finite type affine group scheme, with $\underline{G}_{\rats}$
connected and let $\varphi:X\rightarrow\underline{G}$ be a dominant
morphism. 
\begin{enumerate}
\item $\varphi$ is almost $\{\mathbb{F}_{q}\}_{q}$-uniform in $L^{\infty}$
if and only if $\varphi_{\rats}:X_{\rats}\rightarrow\underline{G}_{\rats}$
is (FGI). 
\item $\varphi$ is almost $\{\ints_{q}\}_{q}$-uniform in $L^{\infty}$
if and only if $\varphi_{\rats}:X_{\rats}\rightarrow\underline{G}_{\rats}$
is (FRS) and (FGI). 
\item $\varphi_{\rats}$ is $\varepsilon$-jet flat if and only if for every
$\varepsilon'<\varepsilon$ there exists a set $S(\varepsilon')$
of primes such that the family $\{\tau_{\varphi,q,k}\}_{q,k}$ satisfies
$\tau_{\varphi,q,k}(g)\leq\left|\underline{G}(\ints_{q}/\frak{m}_{q}^{k})\right|^{-\varepsilon'}$
for every $q\in\mathcal{P}_{S}$ and every $k\in\ints_{>0}$. 
\end{enumerate}
\end{thm}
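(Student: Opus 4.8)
The plan is to deduce Theorem~\ref{thm:dictionary for random walks Linfty} directly from the number-theoretic characterizations of flatness, the (FRS) property, and $\varepsilon$-jet flatness established in Section~\ref{sec:The-(FRS)-property and counting points}, together with the basic identity $\tau_{\varphi,q,k}^{*t}=\tau_{\varphi^{*t},q,k}$ and the definitions of the $L^a$-norms. The key observation linking the two worlds is that, since $\pi_{q,k}$ is the uniform measure on $\underline{G}(\ints_q/\frak{m}_q^k)$,
\[
\left\Vert \tau_{\varphi,q,k}-\pi_{q,k}\right\Vert _{\infty}=\left|\underline{G}(\ints_q/\frak{m}_q^k)\right|\cdot\max_{g}\left|\tau_{\varphi,q,k}(g)-\frac{1}{\left|\underline{G}(\ints_q/\frak{m}_q^k)\right|}\right|=\max_{g}\left|\frac{\left|\varphi^{-1}(g)\right|}{\left|X(\ints_q/\frak{m}_q^k)\right|/\left|\underline{G}(\ints_q/\frak{m}_q^k)\right|}-1\right|,
\]
i.e.\ the $L^{\infty}$-distance to uniform measures exactly the sup-deviation of the normalized fiber counts from $1$. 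With this in hand each item becomes a translation exercise.

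For Item~(1): first reduce to $\underline{G}$ connected semisimple simply connected --- actually to $\underline{G}_{\rats}$ connected, no reduction needed --- and apply Theorem~\ref{thm: flatness and counting points} to the flatness of $\varphi_{\rats}$ together with the Lang--Weil bounds of Theorem~\ref{thm:Lang-Weil}. If $\varphi_{\rats}$ is (FGI), then for $q$ in some $\mathcal{P}_S$ all $\ints_q$-fibers of $\varphi$ are geometrically irreducible of the expected dimension, so $C_{\varphi,q,g}=1$ for all $g\in\underline{G}(\mathbb{F}_q)$, and Theorem~\ref{thm: flatness and counting points}(3) gives $\left\Vert \tau_{\varphi,q,1}-\pi_{q,1}\right\Vert _{\infty}=O(q^{-1/2})\to0$. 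Conversely, if $\varphi$ is almost $\{\mathbb{F}_q\}$-uniform in $L^{\infty}$, then the normalized fiber counts tend to $1$ uniformly; in particular condition (2) of Theorem~\ref{thm: flatness and counting points} holds with $C_{\varphi,q,g}=1$ forced (by the Lang--Weil expansion, the normalized fiber count converges to $C_{\varphi,q,g}+O(q^{-1/2})$, and an integer sequence converging to something near $1$ must eventually equal $1$), so $\varphi_{\rats}$ is flat; and geometric irreducibility of the generic fiber follows because $C_{\varphi,q,g}=1$ for infinitely many $q$ forces, via \cite[Proposition 9.7.8]{Gro66} and Chebotarev, the generic fiber to be geometrically irreducible. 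For Item~(2), the same argument is run with Theorem~\ref{thm: rational singularities and point count} / Remark~\ref{rem:family version for rational singularities} (i.e.\ the results of \cite{AA18,CGH}) replacing the Lang--Weil-over-$\mathbb{F}_q$ input: an (FRS)+(FGI) morphism has each $\ints_q$-fiber a geometrically irreducible local complete intersection with rational singularities, so $\left|h_{X_{g}}(\ints_q/\frak{m}_q^k)-1\right|<Cq^{-1/2}$ uniformly in $g,k$, hence $\sup_k\left\Vert \tau_{\varphi,q,k}-\pi_{q,k}\right\Vert _{\infty}=O(q^{-1/2})\to0$; conversely, almost $\{\ints_q\}$-uniformity in $L^{\infty}$ gives the uniform bound on $h_{X_g}(\ints_q/\frak{m}_q^k)$ which, by Theorem~\ref{thm: rational singularities and point count}, forces each $\ints_q$-fiber to be reduced irreducible with rational singularities; combined with Item~(1) (which follows by restricting to $k=1$) this yields (FRS)+(FGI). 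Item~(3) is a direct restatement: by the identity displayed above, $\tau_{\varphi,q,k}(g)\leq\left|\underline{G}(\ints_q/\frak{m}_q^k)\right|^{-\varepsilon'}$ for all $g,k$ is equivalent (up to absorbing the bounded ratio $\left|X(\ints_q/\frak{m}_q^k)\right|/\left|\underline{G}(\ints_q/\frak{m}_q^k)\right|^{1-\varepsilon'}$ into a modified $\varepsilon'$, using smoothness of $X_{\rats},\underline{G}_{\rats}$ so these point counts are $q^{k\dim}(1+O(q^{-1/2}))$) to $\left|\varphi^{-1}(g)\right|<q^{k(\dim X_{\rats}-\varepsilon'\dim\underline{G}_{\rats})}$ for all $g\in\underline{G}(\ints_q/\frak{m}_q^k)$, which is exactly condition (1) of Theorem~\ref{thm:epsilon jet flat and counting points}, equivalent to $\varepsilon$-jet flatness of $\varphi_{\rats}$.

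The main obstacle is bookkeeping the passage between the two slightly different normalizations --- the fiber count $\left|\varphi^{-1}(g)\right|$ versus the probability $\tau_{\varphi,q,k}(g)=\left|\varphi^{-1}(g)\right|/\left|X(\ints_q/\frak{m}_q^k)\right|$ --- and verifying that for $\underline{G}_{\rats}$ merely connected (not necessarily reduced as a scheme over $\ints$) and for $X_{\rats}$ merely finite type with smooth geometrically irreducible generic fiber, the point counts $\left|X(\ints_q/\frak{m}_q^k)\right|$ and $\left|\underline{G}(\ints_q/\frak{m}_q^k)\right|$ have the clean Hensel-type asymptotics $q^{k\dim}(1+O(q^{-1/2}))$ needed so that multiplying/dividing by them does not damage any of the bounds; this is where I would be careful to invoke smoothness of the generic fibers and choose the finite bad set $S$ large enough that $X_{\mathbb{F}_q}$ and $\underline{G}_{\mathbb{F}_q}$ are smooth of the right dimension for $q\in\mathcal{P}_S$. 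Everything else is a formal combination of Theorems~\ref{thm: flatness and counting points}, \ref{thm: rational singularities and point count} (via \cite{AA18,CGH}) and \ref{thm:epsilon jet flat and counting points} with the elementary norm identity above.
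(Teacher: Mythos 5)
The high-level plan matches the paper: translate the $L^\infty$-statements into the number-theoretic characterizations of Section~\ref{sec:The-(FRS)-property and counting points} using the identity $\left\Vert \tau_{\varphi,q,k}-\pi_{q,k}\right\Vert_\infty = \max_g\left|\,|\varphi^{-1}(g)|\cdot|\underline{G}(\ints_q/\mathfrak m_q^k)|/|X(\ints_q/\mathfrak m_q^k)|-1\right|$ together with Lang--Weil asymptotics for $|X|$ and $|\underline{G}|$. The forward directions of (1)--(3) and all of Item~(3) are fine and essentially the paper's argument.

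The genuine gap is in the converse of Item~(1). You conclude from $C_{\varphi,q,g}=1$ (for large $q$, all $g$) that ``the generic fiber is geometrically irreducible.'' But (FGI) demands that \emph{every} fiber of $\varphi_\rats$ be geometrically irreducible, not merely the generic one, and for a flat family irreducibility of the generic fiber does not propagate to special fibers (e.g.\ $(x,y)\mapsto x^2-y^2$). The paper instead argues by contrapositive: if $\varphi_\rats$ is flat but some $\overline\rats$-fiber is reducible, then by spreading out ([Gro66, Cor.~9.7.9]) there is a prime $p$ and $g\in\underline G(\mathbb{F}_p)$ with $C_{\varphi,p,g}>1$, and this $g$ persists with $C_{\varphi,p^{r'},g}>1$ for all $r'$, so Theorem~\ref{thm: flatness and counting points} forces $\left\Vert\tau_{\varphi,p^{r'},1}-\pi_{p^{r'},1}\right\Vert_\infty\not\to 0$. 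You would need to either adopt this contrapositive argument or show directly that the uniform constraint $C_{\varphi,q,g}=1$ for all $g$ and all large $q$ forces all $\overline\rats$-fibers irreducible — the step you labeled ``via [Gro66, Prop.~9.7.8] and Chebotarev'' needs to be run for an arbitrary closed point of $\underline G_\rats$, not just the generic point. A smaller point in the same paragraph: the Lang--Weil expansion $|\varphi^{-1}(y)|/q^{\dim X-\dim\underline G}=C_{\varphi,q,y}+O(q^{-1/2})$ is only valid when the fiber already has the expected dimension, so extracting flatness from the $L^\infty$ bound requires first ruling out higher-dimensional fibers (this is implicit in invoking Theorem~\ref{thm: flatness and counting points}(2)$\Rightarrow$(4) but should be said).

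For the converse of Item~(2) your route differs from the paper's: you invoke Theorem~\ref{thm: rational singularities and point count} fiber-by-fiber, but that theorem is stated for finite type $\ints$-schemes with the hypothesis running over all primes, whereas the fibers $X_g$ for $g\in\underline G(\ints_q)$ are $\ints_q$-schemes for a single $p$, and the quantity controlled there ($h_{X_g}$) needs uniformity in $g$ to be converted into a statement about $\varphi_\rats$ over $\overline\rats$. The paper avoids this by switching to the analytic criterion (Theorem~\ref{thm:analytic criterion of the (FRS) property} via \cite[Theorem 1.6]{Gla19} and \cite[Lemma 3.15]{GH19}): it bounds the density $\varphi_*(\mu_{X(\ints_q)})/\mu_{\underline G(\ints_q)}$ directly and concludes (FRS) from continuity of that density. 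Your approach might be salvageable, but as written it silently transfers a theorem about $\ints$-schemes to $\ints_q$-fibers and does not close the uniformity issue the paper flags in Remark~\ref{rem:family version for rational singularities}.
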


\begin{proof}
~
\begin{enumerate}
\item Assume that \textit{\emph{$\varphi_{\rats}:X_{\rats}\rightarrow\underline{G}_{\rats}$
is (FGI).}} Theorem \ref{thm: flatness and counting points} implies
that $\varphi$ is almost $\{\mathbb{F}_{q}\}_{q}$-uniform in $L^{\infty}$.
For the other direction, Theorem \ref{thm: flatness and counting points}
implies that $\varphi_{\rats}$ is flat. Assume that $\varphi_{\rats}:X_{\rats}\rightarrow\underline{G}_{\rats}$
is not (FGI). By \cite[Corollary 9.7.9]{Gro66}, we may find a prime
$p$, such that for each $r\in\nats$ and $q=p^{r}$ the map $\varphi_{\mathbb{F}_{q}}:X_{\mathbb{F}_{q}}\rightarrow\underline{G}_{\mathbb{F}_{q}}$
is flat but not (FGI). Take $g\in\underline{G}(\mathbb{F}_{q})$ with
$C_{\varphi,q,g}>1$. Then also $C_{\varphi,q^{r'},g}>1$ for any
$r'\in\nats$. By Theorem \ref{thm: flatness and counting points}
we deduce that $\varphi$ is not almost $\{\mathbb{F}_{q}\}_{q}$-uniform
in $L^{\infty}$. 
\item Assume that $\varphi$ is almost $\{\ints_{q}\}_{q}$-uniform in $L^{\infty}$.
Item (1) implies that $\varphi_{\rats}$ is (FGI). By enlarging $S$
we may assume that $X$ is smooth over $\ints_{q}$ for every $q\in\mathcal{P}_{S}$,
so that $\mu_{X(\ints_{q})}$ and $\mu_{\underline{G}(\ints_{q})}$
exist by Lemma \ref{lem:canonical measure}. Since $\varphi$ is dominant,
$\varphi_{*}(\mu_{X(\ints_{q})})$ is absolutely continuous with respect
to $\mu_{\underline{G}(\ints_{q})}$ (see e.g.~\cite[Corollary 3.6]{AA16}).
Fix a prime $p\notin S$, and let $q=p^{r}\in\mathcal{P}_{S}$. For
every $g\in\underline{G}(\ints_{q})$ set $B(g,k)=r_{k}^{-1}(r_{k}(g))$,
then we have,
\begin{align*}
\tag{\ensuremath{\dagger}}\frac{\mu_{X(\ints_{q})}(\varphi^{-1}(B(g,k)))}{\mu_{\underline{G}(\ints_{q})}(B(g,k))} & =\frac{\left|\varphi^{-1}(r_{k}(g))\right|}{q^{k(\mathrm{dim}X_{\mathbb{Q}}-\mathrm{dim}\underline{G}_{\mathbb{Q}})}}=\tau_{\varphi,q,k}(r_{k}(g))\cdot\frac{\left|X(\ints_{q}/\frak{m}_{q}^{k})\right|}{q^{k(\mathrm{dim}X_{\mathbb{Q}}-\mathrm{dim}\underline{G}_{\mathbb{Q}})}}\\
 & \leq\sup\limits _{k\in\nats}\left\Vert \tau_{\varphi,q,k}\right\Vert _{\infty}\frac{\left|X(\ints_{q}/\frak{m}_{q}^{k})\right|}{q^{k\mathrm{dim}X_{\rats}}}\frac{q^{k\mathrm{dim}\underline{G}_{\mathbb{Q}}}}{\left|\underline{G}(\ints_{q}/\frak{m}_{q}^{k})\right|}.
\end{align*}
Since $\varphi$ is almost $\{\ints_{q}\}_{q}$-uniform in $L^{\infty}$,
by using the the Lang-Weil bounds, and by possibly enlarging $S$,
it follows that $(\dagger)$ is uniformly bounded (in $k$) by a constant
$C>0$. By the analytic criterion for the (FRS) property (Theorem
\ref{thm:analytic criterion of the (FRS) property} and \cite[Theorem 1.6]{Gla19})
combined with \cite[Lemma 3.15]{GH19}, the map $\varphi_{\rats}$
is (FRS). 

The ``if'' part follows from a family version of Theorem \ref{thm:number theoretic criterion for rational singularities},
which is proven in an upcoming work (see Remark \ref{rem:family version for rational singularities}).
For completeness, we prove here the weaker statement that $\varphi^{*2}$
is almost $\{\ints_{q}\}_{q}$-uniform in $L^{\infty}$. Set $\psi:X\rightarrow\underline{G}$
by $\psi(x):=\varphi(x)^{-1}$. Note that if $\varphi_{\rats}:X_{\rats}\rightarrow\underline{G}_{\rats}$
is (FRS) and (FGI) then by Proposition \ref{prop: properties preserved under convolution}
the map $\varphi_{\rats}*\psi_{\rats}:X_{\rats}\times X_{\rats}\rightarrow\underline{G}_{\rats}$
is (FRS) and (FGI) as well. In particular, the fiber $\left(X_{\rats}\times X_{\rats}\right)_{e,\varphi_{\rats}*\psi_{\rats}}$
is a reduced, geometrically irreducible, local complete intersection
scheme with rational singularities. By Theorem \ref{thm:number theoretic criterion for rational singularities}
(or \cite[Theorem 3.0.3]{AA18}) and by the Lang-Weil bounds, we have
$\left|\left|\underline{G}(\ints_{q}/\frak{m}_{q}^{k})\right|\tau_{\varphi,q,k}*\tau_{\psi,q,k}(e)-1\right|<Cq^{-1/2}$
for some $C>0$, so 
\begin{align*}
\left\Vert \tau_{\varphi,q,k}-\pi_{q,k}\right\Vert _{2}^{2} & =\left|\underline{G}(\ints_{q}/\frak{m}_{q}^{k})\right|\sum_{g\in\underline{G}(\ints_{q}/\frak{m}_{q}^{k})}\left(\tau_{\varphi,q,k}(g)-\pi_{q,k}(g)\right)^{2}\\
 & =\left(\left|\underline{G}(\ints_{q}/\frak{m}_{q}^{k})\right|\sum_{g\in\underline{G}(\ints_{q}/\frak{m}_{q}^{k})}\tau_{\varphi,q,k}(g)^{2}\right)-1\\
 & =\left|\underline{G}(\ints_{q}/\frak{m}_{q}^{k})\right|\tau_{\varphi,q,k}*\tau_{\psi,q,k}(e)-1<Cq^{-1/2}.
\end{align*}
By Young's inequality, 
\begin{equation}
\left\Vert \tau_{\varphi,q,k}^{*2}-\pi_{q,k}\right\Vert _{\infty}=\left\Vert (\tau_{\varphi,q,k}-\pi_{q,k})^{*2}\right\Vert _{\infty}\leq\left\Vert \tau_{\varphi,q,k}-\pi_{q,k}\right\Vert _{2}^{2}<Cq^{-1/2}.\label{eq:(9.1)}
\end{equation}

\item Follows from Theorem \ref{thm:epsilon jet flat and counting points}
by using the Lang-Weil bounds. 
\end{enumerate}
\end{proof}
The following theorem is a strengthening of \cite[Proposition 2.1]{LST19}.
\begin{thm}[Characterization of $L^{1}$-uniformity]
\label{thm:L1 bounds }Let $\varphi:X\rightarrow\underline{G}$ be
as in Theorem \ref{thm:dictionary for random walks Linfty}. Then
the following are equivalent: 
\begin{enumerate}
\item $\varphi$ is almost $\{\mathbb{F}_{q}\}_{q}$-uniform in $L^{1}$. 
\item $\varphi$ is almost $\{\ints_{q}\}_{q}$-uniform in $L^{1}$. 
\item $\varphi_{\rats}:X_{\rats}\rightarrow\underline{G}_{\rats}$ has a
geometrically irreducible generic fiber. 
\end{enumerate}
\end{thm}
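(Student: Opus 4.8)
\textbf{Proof proposal for Theorem \ref{thm:L1 bounds }.}

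The plan is to prove the cycle of implications $(3)\Rightarrow(2)\Rightarrow(1)\Rightarrow(3)$, with the genuinely new content living in $(3)\Rightarrow(2)$; the equivalence $(1)\Leftrightarrow(3)$ is essentially \cite[Proposition 2.1]{LST19} (or a mild adaptation thereof via the Lang-Weil bounds), and $(2)\Rightarrow(1)$ is trivial since the $L^1$-distance over $\mathbb{F}_q=\ints_q/\frak{m}_q$ is dominated by the supremum over all $k$. So the real work is showing that if the generic fiber of $\varphi_{\rats}$ is geometrically irreducible, then $\varphi$ is almost $\{\ints_q\}$-uniform in $L^1$. The key idea, generalizing the jet-scheme trick used for word maps in the excerpt, is to pass to jet schemes: by Lemma \ref{lem:irreducible generic fiber in jets}(2), for every $m\in\nats$ the jet morphism $J_m(\varphi)_{\rats}: J_m(X)_{\rats}\to J_m(\underline{G})_{\rats}$ also has a geometrically irreducible generic fiber. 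One then wants to translate this into a uniform-in-$k$ $L^1$ bound on $\tau_{\varphi,q,k}$, using the identification $X(\ints_q/\frak{m}_q^k)$ with the $\mathbb{F}_q$-points of an appropriate jet-type scheme.

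First I would reduce to $\underline{G}$ affine (which it is, by hypothesis) and, enlarging the exceptional set $S$ of primes, arrange that $X$, $\underline{G}$ are smooth over $\ints_q$ and have the expected dimensions modulo $p$ for all $q\in\mathcal{P}_S$. The $L^1$-distance $\|\tau_{\varphi,q,k}-\pi_{q,k}\|_1 = \sum_{g}|\,|\varphi^{-1}(g)|/|X(\ints_q/\frak{m}_q^k)| - 1/|\underline{G}(\ints_q/\frak{m}_q^k)|\,|$ should be rewritten, after clearing denominators and using smoothness of $X$ (so $|X(\ints_q/\frak{m}_q^k)| = q^{k\dim X_{\rats}}(1+O(q^{-1/2}))$ by Hensel plus Lang-Weil), in terms of $\sum_g |\,|\varphi^{-1}(r_k(g))| - q^{k(\dim X_{\rats}-\dim\underline{G}_{\rats})}\,|$ divided by $|X(\ints_q/\frak{m}_q^k)|$. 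Now $|\varphi^{-1}(\bar g)|$ for $\bar g\in\underline{G}(\ints_q/\frak{m}_q^k)$ is, after identifying $\ints_q/\frak{m}_q^k$-points with $\mathbb{F}_q[t]/t^k$-points (possible since $\ints_q$ is unramified, so $\ints_q/\frak{m}_q^k$ and $\mathbb{F}_q[t]/t^k$ have the same point counts for smooth schemes — more precisely one uses the Greenberg functor / the fact that these counts are governed by motivic functions as in Section \ref{subsec:The-Denef-Pas-language}), exactly the cardinality of a fiber of $J_{k-1}(\varphi)$ over $\mathbb{F}_q$. Applying the Lang-Weil bounds to the fibers of $J_{k-1}(\varphi)$ — which, because $J_{k-1}(\varphi)_{\rats}$ has geometrically irreducible generic fiber, are generically geometrically irreducible of the right dimension — one gets that the ``bad'' locus in $J_{k-1}(\underline{G})(\mathbb{F}_q)$ where the fiber count deviates from $q^{k(\dim X_{\rats}-\dim\underline{G}_{\rats})}$ is of strictly smaller dimension, contributing $O(q^{-1/2})$ to the $L^1$-distance \emph{for each fixed $k$}.

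The hard part, exactly as in the proof of Theorem \ref{thm:epsilon jet flat and counting points}, is that the family $\{J_{k-1}(\varphi)\}_{k\in\nats}$ has unbounded complexity, so the Lang-Weil constant a priori blows up with $k$ and one cannot directly take $\sup_k$. This is where motivic integration enters: the function $(q,k)\mapsto \|\tau_{\varphi,q,k}-\pi_{q,k}\|_1$, or rather the relevant density function $g_F(y,k)$ as in Section \ref{subsec:Number-theoretic-characterization of epsilon jet flat}, is a motivic function in the sense of \cite{CGH14,CGH16}, and by Presburger cell decomposition (Theorem \ref{Presburger Cell decomposition}) and rectilinearization (Theorem \ref{thm:Rectilinearization}) it has, on each definable piece, the shape $\sum_i c_i(F) k^{a_i} q_F^{b_i k}$. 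The fixed-$k$ Lang-Weil analysis shows each such piece tends to $0$ as $q_F\to\infty$ for every $k$; by the same extraction-of-leading-term argument used in Corollary \ref{cor:8.22} (isolating the maximal $b_i$ and showing a polynomial in $k$ cannot vanish at infinitely many $k$ for $q_F$ large), one concludes $b_i \le 0$ for all relevant $i$ with $c_i\not\equiv 0$, and in fact $b_i<0$ unless the $k^{a_i}$-factor is controlled, forcing $\sup_k \|\tau_{\varphi,q,k}-\pi_{q,k}\|_1 \to 0$ as $\mathcal{P}_S\ni q\to\infty$. I expect this motivic bookkeeping — making precise that the $L^1$-distance (not just a sup-norm) is a well-behaved motivic quantity and that the uniform-in-$k$ bound follows from the fixed-$k$ ones — to be the main technical obstacle; everything else is an assembly of Lemma \ref{lem:irreducible generic fiber in jets}, the Lang-Weil estimates, and the structure theory already set up in Section \ref{sec:The-(FRS)-property and counting points}. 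Finally, $(1)\Rightarrow(3)$: if $\varphi_{\rats}$ did not have a geometrically irreducible generic fiber, then by \cite[Corollary 9.7.9]{Gro66} there is a prime $p$ such that for all $q=p^r$ the generic fiber of $\varphi_{\mathbb{F}_q}$ breaks into $\ge 2$ geometric components, and a standard computation (as in \cite[Proposition 2.1]{LST19}) shows $\|\tau_{\varphi,q,1}-\pi_{q,1}\|_1$ stays bounded away from $0$ along $q=p^r$, $r\to\infty$, contradicting $L^1$-uniformity.
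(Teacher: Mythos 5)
Your proposal for $(3)\Rightarrow(2)$ takes a genuinely different and substantially heavier route than the paper. You pass to jet schemes via Lemma \ref{lem:irreducible generic fiber in jets}(2), interpret $|\varphi^{-1}(r_k(\cdot))|$ as an $\mathbb{F}_q$-fiber count of $J_{k-1}(\varphi)$ (modulo the equal-/mixed-characteristic transfer that you flag), and then invoke the motivic machinery of Section \ref{subsec:The-Denef-Pas-language} to tame the unbounded complexity of $\{J_{k-1}(\varphi)\}_k$, modeling this on the proof of Theorem \ref{thm:epsilon jet flat and counting points}. You flag the resulting gap honestly, and it is real: the quantity to control is now the \emph{integral} over $y$ of $|g_F(y,k)-1|$, and one would need to know this is still a motivic function of $(F,k)$ (the formalism does not automatically preserve absolute values of signed differences), and the leading-term extraction of Corollary \ref{cor:8.22} was calibrated to give a bound $<1$, not a $\sup_k$ tending to $0$. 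Both points would need additional work before this route closes.

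The paper's proof of $(3)\Rightarrow(2)$ bypasses all of this with a Hensel's-lemma shortcut that your proposal misses. Choose an open $U\subseteq\underline{G}$ over which $\varphi_\rats$ is smooth and (FGI) (it exists by hypothesis $(3)$ and generic smoothness), and set $V_q\subseteq\underline{G}(\ints_q)$ to be the preimage of $U(\mathbb{F}_q)$ under reduction. For $q\in\mathcal{P}_S$ with $S$ large enough and $g\in V_q$, Hensel's lemma gives the \emph{exact} equality
\[
\left|\varphi^{-1}(r_k(g))\right| \;=\; q^{(k-1)(\dim X_\rats - \dim\underline{G}_\rats)}\cdot\left|\varphi^{-1}(r_1(g))\right|,
\]
so over $r_k(V_q)$ the deviation $|\tau_{\varphi,q,k}(g)-\pi_{q,k}(g)|$ is controlled by a single application of Lang--Weil at level $k=1$, uniformly in $k$. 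The complementary contribution from $r_k(V_q^c)$ lies over $Z:=U^c$; since $\varphi^{-1}(Z)_\rats$ has dimension strictly less than $\dim X_\rats$ (because $X_\rats$ is geometrically irreducible and $\varphi$ dominant), both $|\varphi^{-1}(Z(\mathbb{F}_q))|/|X(\mathbb{F}_q)|$ and $|Z(\mathbb{F}_q)|/|\underline{G}(\mathbb{F}_q)|$ are $O(q^{-1})$, and by smoothness of $X$ and $\underline{G}$ over $\ints_q$ these level-$1$ ratios already account for all levels $k$. Summing, $\|\tau_{\varphi,q,k}-\pi_{q,k}\|_1 < C_1 q^{-1/2}+C_2 q^{-1}$ uniformly in $k$. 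In short: smoothness of $\varphi$ over a dense open kills the $k$-dependence outright, so no jets, no Greenberg functor, and no motivic integration are needed for the $L^1$ statement. Your handling of $(2)\Rightarrow(1)$ and $(1)\Rightarrow(3)$ matches the paper's.
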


\begin{proof}
We would like to prove $(1)\Rightarrow(3)\Rightarrow(2)\Rightarrow(1)$.
The implication $(2)\Rightarrow(1)$ is obvious. $(1)\Rightarrow(3)$
is essentially given in \cite{LST19}: if $\varphi$ is almost $\{\mathbb{F}_{q}\}_{q}$-uniform
in $L^{1}$, then for any prime $p\notin S$ we have $\lim\limits _{n\rightarrow\infty}\left\Vert \tau_{\varphi,p^{n}}-\pi_{p^{n}}\right\Vert _{1}=0$.
By \cite[Proposition 2.1]{LST19}, $\varphi_{\mathbb{F}_{p}}$ has
geometrically irreducible generic fiber for each $p\notin S$, which
implies that $\varphi_{\rats}$ has a geometrically irreducible generic
fiber.

It is left to show $(3)\Rightarrow(2)$. Let $U\subseteq\underline{G}$
be an open set such that $\varphi_{\rats}$ is smooth and (FGI) over
$U_{\rats}\subseteq\underline{G}_{\rats}$. Set 
\[
V_{q}:=\{g\in\underline{G}(\ints_{q}):r_{1}(g)\in U(\mathbb{F}_{q})\}.
\]
By a generalization of Hensel's lemma, we may find a finite set $S$
such that for every $q\in\mathcal{P}_{S}$, and every $g\in V_{q}$
we have 
\[
\left|\varphi^{-1}(r_{k}(g))\right|=q^{(k-1)(\mathrm{dim}X_{\rats}-\mathrm{dim}\underline{G}_{\rats})}\cdot\left|\varphi^{-1}(r_{1}(g))\right|.
\]
By the Lang-Weil bounds, we may find $C_{1}>0$ such that for any
$g\in V_{q}$ we have 
\[
\left|\tau_{\varphi,q,k}(r_{k}(g))-\frac{1}{\left|\underline{G}(\ints_{q}/\frak{m}_{q}^{k})\right|}\right|<\frac{C_{1}q^{-1/2}}{\left|\underline{G}(\ints_{q}/\frak{m}_{q}^{k})\right|}.
\]
Write $Z=U^{c}$ and notice that $\mathrm{dim}\varphi^{-1}(Z)_{\rats}<\mathrm{dim}X_{\rats}$
since $X_{\rats}$ is geometrically irreducible. We may therefore
find $C_{2}>0$, such that 
\begin{align*}
\left\Vert \tau_{\varphi,q,k}-\pi_{q,k}\right\Vert _{1} & =\sum_{g\in r_{k}(V_{q})}\left|\tau_{\varphi,q,k}(g)-\pi_{q,k}(g)\right|+\sum_{g\in r_{k}(V_{q}^{c})}\left|\tau_{\varphi,q,k}(g)-\pi_{q,k}(g)\right|\\
 & <C_{1}q^{-1/2}+\sum_{g\in r_{k}(V_{q}^{c})}\left|\tau_{\varphi,q,k}(g)\right|+\sum_{g\in r_{k}(V_{q}^{c})}\left|\pi_{q,k}(g)\right|\\
 & <C_{1}q^{-1/2}+\frac{\left|\varphi^{-1}(Z(\mathbb{F}_{q}))\right|}{\left|X(\mathbb{F}_{q})\right|}+\frac{\left|Z(\mathbb{F}_{q})\right|}{\left|\underline{G}(\mathbb{F}_{q})\right|}<C_{1}q^{-1/2}+C_{2}q^{-1},
\end{align*}
where the last inequality follows by applying the Lang-Weil bounds.
\end{proof}
We would now like to discuss the random walk on $\underline{G}(\hat{\ints})=\underset{\leftarrow}{\mathrm{lim}}\underline{G}(\ints/n\ints)$.
We say that a map $\varphi:X\rightarrow\underline{G}$ is \textsl{almost
$\{\ints/n\ints\}_{n\in\nats}$-uniform in $L^{a}$} if there exists
a set $S$ of primes such that $\left\Vert \tau_{\varphi,n}-\pi_{n}\right\Vert _{a}<\frac{1}{2}$
for every $n\in\nats$ which is a product of primes outside of $S$.
By the Chinese remainder theorem, it is tempting to say that if $\varphi:X\rightarrow\underline{G}$
is almost $\{\ints_{q}\}$-uniform in $L^{\infty}$, then it is also
\textsl{almost $\{\ints/n\ints\}_{n\in\nats}$-uniform in $L^{\infty}$.
}The following example shows this hope is too naive. 
\begin{rem}[Warning! Rational singularities+complete intersection does not imply
good point count over $\ints/n\ints$]
In fact, even smooth schemes might have bad point count over $\ints/n\ints$;
let $E$ be an elliptic curve over $\rats$. The Sato-Tate conjecture
(see \cite{CHT08,Tay08,HSBT10}) says that under some mild assumptions
on $E$, we have $\left|E(\mathbb{F}_{p})\right|=p+1-2\sqrt{p}\cos(\theta_{p})$,
where for any $0\leq\alpha\leq\beta\leq\pi$,
\begin{equation}
\underset{M\rightarrow\infty}{\lim}\left(\frac{\#\{\text{primes }p\leq M:\alpha\leq\theta_{p}\leq\beta\}}{\#\{\text{primes }p\leq M\}}\right)=\frac{2}{\pi}\int_{\alpha}^{\beta}\sin^{2}(\theta)d\theta.\label{eq:Sato-Tate}
\end{equation}
In particular, by (\ref{eq:Sato-Tate}) and the prime number theorem,
there exist $\varepsilon,\delta>0$ such that for arbitrary large
$M\in\nats$, we may find a subset $S$ of primes in the interval
$[\frac{1}{2}M,M]$, such that $\left|S\right|>\varepsilon\left|\frac{M}{\mathrm{log}M}\right|$
and such that $\frac{\left|E(\mathbb{F}_{p})\right|}{p}\geq1+\delta p^{-1/2}$,
for every $p\in S$. Now, taking $n=\prod\limits _{p\in S}p$, by
the Chinese remainder theorem we get the following:
\[
\frac{\left|E(\ints/n\ints)\right|}{n}=\prod_{p\in S}\frac{\left|E(\mathbb{F}_{p})\right|}{p}\geq\prod_{p\in S}\left(1+\delta p^{-1/2}\right)\geq\left(1+\frac{\delta}{\sqrt{M}}\right)^{\varepsilon\left|\frac{M}{\mathrm{log}M}\right|}\overset{M\rightarrow\infty}{\longrightarrow}\infty.
\]
\end{rem}

It turns out that this pathology can be fixed by applying a few more
convolutions. 
\begin{prop}
\label{prop:unifrm in Z/nZ}Let $\varphi:X\rightarrow\underline{G}$
be as in Theorem \ref{thm:dictionary for random walks Linfty}. Assume
that $\varphi_{\rats}$ is (FRS) and (FGI). Then $\varphi^{*3}$ is
almost $\{\ints/n\ints\}_{n\in\nats}$-uniform in $L^{\infty}$. 
\end{prop}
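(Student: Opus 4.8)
The strategy is to decompose the random walk on $\underline{G}(\ints/n\ints)$ into its prime-power components via the Chinese remainder theorem, and to separate the primes into a ``bad'' finite-dimensional contribution that is tamed by the Lang-Weil estimates plus an extra convolution, and a ``generic'' tail where Hensel-type smoothness gives exponentially small error. First I would write $n=\prod_{i=1}^{s}p_{i}^{k_{i}}$ with $p_{i}\notin S$, so that $\tau_{\varphi,n}=\bigotimes_{i}\tau_{\varphi,p_{i},k_{i}}$ and $\pi_{n}=\bigotimes_{i}\pi_{p_{i},k_{i}}$ under the factorization $\underline{G}(\ints/n\ints)\simeq\prod_{i}\underline{G}(\ints_{p_{i}}/\mathfrak{m}_{p_{i}}^{k_{i}})$. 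A standard fact is that for product measures one has $\left\Vert \bigotimes_{i}\mu_{i}-\bigotimes_{i}\pi_{i}\right\Vert_{\infty}+1\leq\prod_{i}(\left\Vert\mu_{i}-\pi_{i}\right\Vert_{\infty}+1)$ (this follows since $\left\Vert f\right\Vert_\infty=|G|\max|f|$ and the $L^\infty$-distance to uniform multiplies in the obvious way when one writes $\mu_i=\pi_i+\varepsilon_i$ and expands). So it suffices to show $\prod_{i}\bigl(\left\Vert\tau_{\varphi^{*3},p_{i},k_{i}}-\pi_{p_{i},k_{i}}\right\Vert_{\infty}+1\bigr)$ stays below, say, $\tfrac{3}{2}$, which amounts to showing $\sum_{i}\log\bigl(1+\left\Vert\tau_{\varphi^{*3},p_{i},k_{i}}-\pi_{p_{i},k_{i}}\right\Vert_{\infty}\bigr)$ is small, hence that $\sum_{i}\left\Vert\tau_{\varphi^{*3},p_{i},k_{i}}-\pi_{p_{i},k_{i}}\right\Vert_{\infty}$ converges (uniformly, after enlarging $S$).

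The key estimate is therefore a summable-over-primes bound of the form $\left\Vert\tau_{\varphi^{*3},p,k}-\pi_{p,k}\right\Vert_{\infty}\leq C p^{-3/2}$ for $p\notin S$ and all $k$. To get this I would proceed as in the proof of Theorem \ref{thm:dictionary for random walks Linfty}(2), Equation (\ref{eq:(9.1)}): since $\varphi_{\rats}$ is (FRS) and (FGI), Proposition \ref{prop: properties preserved under convolution} gives that $\varphi_{\rats}*\psi_{\rats}$ is (FRS) and (FGI), where $\psi(x)=\varphi(x)^{-1}$, and Theorem \ref{thm:number theoretic criterion for rational singularities} (applied to the fiber $(X\times X)_{e,\varphi*\psi}$, a reduced geometrically irreducible local complete intersection with rational singularities) together with the Lang-Weil bounds yields $\bigl\Vert\tau_{\varphi,p,k}-\pi_{p,k}\bigr\Vert_{2}^{2}=\left|\underline{G}(\ints_{p}/\mathfrak{m}_{p}^{k})\right|\tau_{\varphi,p,k}*\tau_{\psi,p,k}(e)-1<Cp^{-1/2}$. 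One convolution plus Young's inequality then gives $\left\Vert\tau_{\varphi,p,k}^{*2}-\pi_{p,k}\right\Vert_{\infty}=\left\Vert(\tau_{\varphi,p,k}-\pi_{p,k})^{*2}\right\Vert_{\infty}\leq\left\Vert\tau_{\varphi,p,k}-\pi_{p,k}\right\Vert_{2}^{2}<Cp^{-1/2}$, and a third convolution improves the exponent: $\left\Vert\tau_{\varphi,p,k}^{*3}-\pi_{p,k}\right\Vert_{\infty}\leq\left\Vert\tau_{\varphi,p,k}^{*2}-\pi_{p,k}\right\Vert_{\infty}\cdot\left\Vert\tau_{\varphi,p,k}-\pi_{p,k}\right\Vert_{\infty}$ — here one uses that convolving by $(\mu-\pi)$ in $L^\infty$ is bounded by the $L^\infty$-norm times the total-variation (i.e.\ $L^1$) size of $\mu-\pi$, and that the $L^1$-distance is also $O(p^{-1/2})$ by Theorem \ref{thm:L1 bounds } (the generic fiber being geometrically irreducible). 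So $\left\Vert\tau_{\varphi^{*3},p,k}-\pi_{p,k}\right\Vert_{\infty}=O(p^{-1})$ uniformly in $k$, which is summable over primes.

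With the summable bound in hand, the conclusion follows: after enlarging $S$ so that the constants are uniform and $\sum_{p\notin S}Cp^{-1}<\log\tfrac{3}{2}$ — wait, this last inequality cannot be arranged by enlarging $S$ alone unless the tail sum is finite, which it is not for exponent $1$. The main obstacle, and the reason the exponent must be pushed below $1$, is precisely this: $\sum_p p^{-1}$ diverges, so an $O(p^{-1})$ bound per prime is not enough; I need $O(p^{-1-\delta})$ for some $\delta>0$, which requires one more convolution (a $*4$ rather than $*3$ bound gives $O(p^{-3/2})$, summable). I would therefore re-examine whether the stated $\varphi^{*3}$ is really sufficient: iterating the estimate, $\left\Vert\tau_{\varphi,p,k}^{*t}-\pi_{p,k}\right\Vert_{\infty}\leq\left\Vert\tau_{\varphi,p,k}-\pi_{p,k}\right\Vert_{2}^{2}\cdot\left(\left\Vert\tau_{\varphi,p,k}-\pi_{p,k}\right\Vert_{1}\right)^{t-2}=O(p^{-1})\cdot O(p^{-(t-2)/2})=O(p^{-t/2})$, so for $t=3$ one gets $O(p^{-3/2})$, which \emph{is} summable. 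Thus the exponent $3$ does suffice provided one is careful to use the $L^1$ (total variation) bound — not the $L^\infty$ bound — for the extra convolution factor, exploiting that the $L^1$-distance to uniform is $O(p^{-1/2})$ via Theorem \ref{thm:L1 bounds }. The hard part of the write-up is getting these mixed-norm convolution inequalities ($\left\Vert f*g\right\Vert_\infty\leq\left\Vert f\right\Vert_\infty\left\Vert g\right\Vert_1$ in the normalized conventions of Definition \ref{def:Lp norms}) and the product-measure $L^\infty$-estimate stated cleanly; once those are in place, enlarging $S$ so that $\prod_{p\notin S}(1+O(p^{-3/2}))<\tfrac{3}{2}$ is immediate, and uniformity in $k$ is automatic since every bound above was uniform in $k$.
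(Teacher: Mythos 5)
Your overall strategy — factor via the Chinese remainder theorem, reduce to a per-prime $L^{\infty}$ bound that is summable over primes, and iterate Young's inequality — is the right one, and your product-measure inequality $\left\Vert \bigotimes_{i}\mu_{i}-\bigotimes_{i}\pi_{i}\right\Vert_{\infty}+1\leq\prod_{i}\bigl(\left\Vert\mu_{i}-\pi_{i}\right\Vert_{\infty}+1\bigr)$ is correct. But your final exponent calculation contains an error that leaves the argument short of $\varphi^{*3}$. Writing $\epsilon:=\tau_{\varphi,p,k}-\pi_{p,k}$, you correctly state earlier that $\left\Vert\epsilon\right\Vert_{2}^{2}<Cp^{-1/2}$ (this is exactly what Theorem \ref{thm:number theoretic criterion for rational singularities} applied to $\varphi*\psi$ gives), yet in your final iteration you substitute $\left\Vert\epsilon\right\Vert_{2}^{2}=O(p^{-1})$ — off by a square root. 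With the correct value and your $L^{1}$ input $\left\Vert\epsilon\right\Vert_{1}=O(p^{-1/2})$ (which lives in the proof of Theorem \ref{thm:L1 bounds }, not its statement, and should be cited explicitly), you get
\[
\left\Vert\epsilon^{*t}\right\Vert_{\infty}\leq\left\Vert\epsilon\right\Vert_{2}^{2}\left\Vert\epsilon\right\Vert_{1}^{t-2}=O\bigl(p^{-(t-1)/2}\bigr),
\]
which at $t=3$ is $O(p^{-1})$, not summable — the very obstacle you flag midway and then mistakenly resolve. Your toolkit in fact reaches only $\varphi^{*4}$, where the exponent $O(p^{-3/2})$ first becomes summable.

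The paper gets to $\varphi^{*3}$ by starting from the stronger pointwise input $\left\Vert\epsilon\right\Vert_{\infty}<Cp^{-1/2}$ — that $\varphi$ \emph{itself}, not $\varphi^{*2}$, is almost $\{\ints_{q}\}$-uniform in $L^{\infty}$ with explicit error $O(p^{-1/2})$ — and then applying the submultiplicativity $\left\Vert\epsilon^{*t}\right\Vert_{\infty}\leq\left\Vert\epsilon\right\Vert_{\infty}^{t}$ to land at $O(p^{-3/2})$ at $t=3$. The paper cites Equation (\ref{eq:(9.1)}) for that starting bound, but note (\ref{eq:(9.1)}) literally concerns $\tau_{\varphi,p,k}^{*2}$; what is actually invoked is the full ``if'' direction of Theorem \ref{thm:dictionary for random walks Linfty}(2), whose family-uniform constant rests on the family version of Theorem \ref{thm:number theoretic criterion for rational singularities} (Remark \ref{rem:family version for rational singularities}, deferred to [CGH]). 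So: with only the self-contained estimates in the paper your route honestly proves the statement for $\varphi^{*4}$; to reach $\varphi^{*3}$ you must take the uniform-in-families bound $\left\Vert\epsilon\right\Vert_{\infty}=O(p^{-1/2})$ as an input, after which the argument is the paper's.
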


\begin{proof}
By (\ref{eq:(9.1)}), there exists $C>0$, such that for every $k\in\nats$
and almost every prime $p$ 
\[
\left\Vert \tau_{\varphi,p,k}-\pi_{p,k}\right\Vert _{\infty}<Cp^{-\frac{1}{2}}.
\]
By standard properties of mixing (see e.g.~\cite[Lemma 4.18]{LeP17}),
we have 
\[
\left\Vert \tau_{\varphi,p,k}^{*t}-\pi_{p,k}\right\Vert _{\infty}<C^{t}p^{-\frac{t}{2}}.
\]
Now let $n,N\in\nats$ be such that $n=\prod\limits _{i=1}^{N}p_{i}^{k_{i}}$
with $p_{i}$ distinct primes, larger than $M$ for some $M\in\nats$
sufficiently large. Then for each $i$ and each $g\in\underline{G}(\ints/p_{i}^{k_{i}}\ints)$,
\[
1-C^{t}p_{i}^{-\frac{t}{2}}<\left|\underline{G}(\ints/p_{i}^{k_{i}}\ints)\right|\tau_{\varphi,p_{i},k_{i}}^{*t}(g)<1+C^{t}p_{i}^{-\frac{t}{2}}.
\]
Therefore for every $g\in\underline{G}(\ints/n\ints)$, every $t\geq3$,
and $M$ sufficiently large we get, 
\[
\left|\underline{G}(\ints/n\ints)\right|\tau_{\varphi,n}^{*t}(g)<\mathrm{exp}\left(\sum_{M<p\in\text{primes}}\mathrm{log}\left(1+C^{t}p^{-\frac{t}{2}}\right)\right)<\mathrm{exp}\left(\sum_{M<p\in\text{primes}}C^{t}p^{-\frac{t}{2}}\right)<\mathrm{1+\frac{1}{2}},
\]
and similarly 
\[
1-\frac{1}{2}<\left|\underline{G}(\ints/n\ints)\right|\tau_{\varphi,n}^{*t}(g),
\]
which is precisely what we want. 
\end{proof}
\begin{rem}
The (FRS) property can also be used to obtain information on algebraic
random walks on real algebraic groups. Indeed, an Archimedean analogue
of \cite[Theorem 3.4]{AA16} was proven in \cite{Rei}: the pushforward
$\varphi_{*}(\mu)$ of any smooth compactly supported measure $\mu$
on $X(\reals)$ under an (FRS) morphism $\varphi:X\rightarrow Y$
between smooth $\reals$-varieties is a measure with continuous density. 
\end{rem}

\subsection{\label{subsec:Algebraic-random-walks word maps}Algebraic random
walks induced from word maps}

Let $\mathcal{G}$ denote the collection of all affine groups schemes
$\underline{G}$, with $\underline{G}_{\rats}$ semisimple and simply
connected. We study uniform properties of the family of random walks
induced by a word $w\in F_{r}$ on $\{\underline{G}(\ints_{q})\}_{q,\underline{G}\in\mathcal{G}}$
by considering the family of random walks induced on the quotients.
Let $\pi_{\underline{G},q,k}$ be the uniform probability measure
on $\underline{G}(\ints_{q}/\frak{m}_{q}^{k})$. The word map $\varphi_{w}$
induces a collection of probability measures $\{\tau_{w,\underline{G},q,k}\}_{\underline{G},q,k}$
on $\{\underline{G}(\ints_{q}/\frak{m}_{q}^{k})\}_{\underline{G},q,k}$
by $\tau_{w,\underline{G},q,k}:=(\varphi_{w})_{*}(\pi_{\underline{G},q,k}^{r})$.
As we saw before, the probability to reach an element $g\in\underline{G}(\ints_{q}/\frak{m}_{q}^{k})$
at the $t$-th step of the random walk is given by
\[
\tau_{w,\underline{G},q,k}^{*t}(g)=\frac{\left|(\varphi_{w}^{*t})^{-1}(g)\right|}{\left|\underline{G}(\ints_{q}/\frak{m}_{q}^{k})\right|^{rt}}.
\]

\begin{defn}
~\label{def: almost uniform mixing time}Let $w\in F_{r}$ be a word. 
\begin{enumerate}
\item $w$ is\textit{ almost $\{\ints_{q}\}_{q}$-uniform in $L^{a}$} if
$\varphi_{w}:\underline{G}^{r}\rightarrow\underline{G}$ is almost
$\{\ints_{q}\}_{q}$-uniform for every $\underline{G}\in\mathcal{G}$. 
\item $w$ has an \textit{almost} \textit{$\{\ints_{q}\}_{q}$-uniform $L^{a}$-mixing
time }$t_{a,w,\{\mathbb{Z}_{q}\}_{q}}$ if it is minimal such that
$w^{*t_{a,w,\{\mathbb{Z}_{q}\}_{q}}}$ is almost-\textit{\emph{$\{\ints_{q}\}_{q}$-uniform
in $L^{a}$.}}
\end{enumerate}
These notions are similarly defined for the collection $\{\mathbb{F}_{q}\}_{q}$. 
\end{defn}

The following corollary is an immediate consequence of Theorems \ref{thm:dictionary for random walks Linfty}
and \ref{thm:L1 bounds }. 
\begin{cor}
\label{cor:dictionary for word maps}Let $w\in F_{r}$. 
\begin{enumerate}
\item $w$ is almost $\{\mathbb{F}_{q}\}$-uniform in $L^{\infty}$ if and
only if $(\varphi_{w})_{\rats}:\underline{G}_{\rats}^{r}\rightarrow\underline{G}_{\rats}$
is (FGI) for every $\underline{G}\in\mathcal{G}$. 
\item $w$ is almost $\{\mathbb{Z}_{q}\}$-uniform in $L^{\infty}$ if and
only if $(\varphi_{w})_{\rats}:\underline{G}_{\rats}^{r}\rightarrow\underline{G}_{\rats}$
is (FRS) and (FGI) for every $\underline{G}\in\mathcal{G}$. 
\item The following are equivalent: 
\begin{enumerate}
\item $w$ is almost $\{\mathbb{F}_{q}\}$-uniform in $L^{1}$. 
\item $w$ is almost $\{\mathbb{Z}_{q}\}$-uniform in $L^{1}$. 
\item $(\varphi_{w})_{\rats}:\underline{G}_{\rats}^{r}\rightarrow\underline{G}_{\rats}$
has a geometrically irreducible generic fiber, for every $\underline{G}\in\mathcal{G}$. 
\end{enumerate}
\end{enumerate}
\end{cor}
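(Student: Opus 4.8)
The plan is to derive Corollary \ref{cor:dictionary for word maps} directly from Theorem \ref{thm:dictionary for random walks Linfty} and Theorem \ref{thm:L1 bounds } by checking that the word map $\varphi_w \colon \underline{G}^r \to \underline{G}$ fits the hypotheses of those theorems for every $\underline{G} \in \mathcal{G}$, and then quantifying over $\underline{G}$. The point is essentially bookkeeping: both general theorems are stated for a fixed dominant morphism $\varphi \colon X \to \underline{G}$ with $X_\rats$ smooth and geometrically irreducible and $\underline{G}_\rats$ connected, while the notion of ``$w$ is almost $\{\mathbb{F}_q\}$-uniform'' (resp. $\{\ints_q\}$-uniform) is by Definition \ref{def: almost uniform mixing time} exactly the assertion that $\varphi_w$ is almost $\{\mathbb{F}_q\}_q$-uniform (resp. $\{\ints_q\}_q$-uniform) for \emph{every} $\underline{G} \in \mathcal{G}$.

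First I would verify the hypotheses. For $\underline{G} \in \mathcal{G}$, the group $\underline{G}_\rats$ is semisimple and simply connected, hence connected; and $\underline{G}^r$ is a smooth geometrically irreducible $\rats$-variety, so $X = \underline{G}^r$ with $X_\rats$ smooth and geometrically irreducible is legitimate. Dominance of $\varphi_w \colon \underline{G}^r \to \underline{G}$ is Borel's theorem \cite{Bor83} (cited in the introduction), valid for any non-trivial $w$; the trivial word is excluded implicitly, or else every fiber is all of $\underline{G}^r$ or empty and the statements degenerate trivially. Also $\underline{G}$ extends to an affine group scheme over $\ints$ (indeed $\underline{G} \in \mathcal{G}$ is already such a scheme), and $\varphi_w$ is defined over $\ints$, so the setup of Section \ref{sec:Applications-to-probabilistic} applies. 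Thus for each individual $\underline{G}$, Theorem \ref{thm:dictionary for random walks Linfty}(1)--(2) and Theorem \ref{thm:L1 bounds } apply verbatim.

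Next, for item (1): $w$ is almost $\{\mathbb{F}_q\}$-uniform in $L^\infty$ iff, by definition, $\varphi_w \colon \underline{G}^r \to \underline{G}$ is almost $\{\mathbb{F}_q\}_q$-uniform in $L^\infty$ for every $\underline{G} \in \mathcal{G}$, which by Theorem \ref{thm:dictionary for random walks Linfty}(1) holds iff $(\varphi_w)_\rats \colon \underline{G}_\rats^r \to \underline{G}_\rats$ is (FGI) for every $\underline{G} \in \mathcal{G}$. Note (FGI) in the sense of Definition \ref{def:normality and (FAI)} is flatness with geometrically irreducible fibers, i.e. exactly ``flat and has geometrically irreducible fibers'' as phrased in Theorem \ref{thm:G- dictionary for Linfty-introduction}. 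Item (2) is identical using Theorem \ref{thm:dictionary for random walks Linfty}(2): almost $\{\ints_q\}$-uniform in $L^\infty$ iff $(\varphi_w)_\rats$ is (FRS) and (FGI) for every $\underline{G} \in \mathcal{G}$. Item (3) follows the same way from Theorem \ref{thm:L1 bounds }: the three conditions there --- almost $\{\mathbb{F}_q\}_q$-uniform in $L^1$, almost $\{\ints_q\}_q$-uniform in $L^1$, geometrically irreducible generic fiber --- are equivalent for a fixed $\underline{G}$, and quantifying over all $\underline{G} \in \mathcal{G}$ preserves the equivalence (each of the three quantified statements is equivalent to each of the others because the equivalence is pointwise in $\underline{G}$). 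This proves (3)(a) $\Leftrightarrow$ (3)(b) $\Leftrightarrow$ (3)(c).

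I do not anticipate a genuine obstacle here; the corollary is a straightforward specialization. The only subtlety worth a sentence is making sure the quantifier ``for every $\underline{G} \in \mathcal{G}$'' interacts correctly: in Definitions \ref{def:uniform mixing tme-introduction}/\ref{def: almost uniform mixing time} the finite exceptional set $S$ of primes is allowed to depend on $\underline{G}$, which matches precisely the $\underline{G}$-by-$\underline{G}$ application of Theorem \ref{thm:dictionary for random walks Linfty} and Theorem \ref{thm:L1 bounds }, so no uniformity across $\mathcal{G}$ is claimed or needed. Hence the proof is simply: apply the two cited theorems to each $\underline{G} \in \mathcal{G}$ and conjoin. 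I would write this as a two- or three-sentence proof citing Theorems \ref{thm:dictionary for random walks Linfty} and \ref{thm:L1 bounds } and Definition \ref{def: almost uniform mixing time}, with the Borel dominance remark as the only added input.
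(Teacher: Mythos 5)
Your proposal is correct and matches the paper's approach: the paper states the corollary as "an immediate consequence of Theorems \ref{thm:dictionary for random walks Linfty} and \ref{thm:L1 bounds }," and your argument is precisely that specialization, applying those two theorems to each $\underline{G}\in\mathcal{G}$ and quantifying. Your hypothesis-checking (smoothness and geometric irreducibility of $\underline{G}^r_{\rats}$, connectedness of $\underline{G}_{\rats}$, dominance via Borel for non-trivial $w$) and your remark about the per-$\underline{G}$ exceptional prime set $S$ are the right bookkeeping points, though the paper itself leaves them implicit.
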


\subsubsection{Uniform properties of random walks induced by word measures}

Combining the discussion on random walks in the last two subsections,
together with the results on word maps obtained in this paper, we
can summarize our applications to $p$-adic probabilistic Waring type
problem.

The following is a direct consequence of Theorem \ref{thm:jet flatness of word maps}
and Item $(3)$ of Theorem \ref{thm:dictionary for random walks Linfty}. 
\begin{cor}
\label{cor:bounds on fibers for SLn}Let $w\in F_{r}$ be a word.
Then for any $n\in\nats$ there exists a finite set $S(n)$ of primes
such that for every $q\in\mathcal{P}_{S(n)}$ and every $g\in\mathrm{SL}_{n}(\ints_{q}/\frak{m}_{q}^{k})$,
\[
\tau_{w,\mathrm{SL}_{n},q,k}(g)\leq\left|\mathrm{SL}_{n}(\ints_{q}/\frak{m}_{q}^{k})\right|^{-\frac{1}{2\cdot10^{17}\ell(w)^{9}}}.
\]
\end{cor}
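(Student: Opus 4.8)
\textbf{Proof plan for Corollary \ref{cor:bounds on fibers for SLn}.}
The plan is to combine the algebro-geometric estimate on the log canonical threshold of the fibers of word maps on $\mathrm{SL}_n$ (Theorem \ref{thm:jet flatness of word maps}) with the number-theoretic translation of the $\varepsilon$-jet flatness property given in Section \ref{sec:The-(FRS)-property and counting points}. Concretely, fix $w \in F_r$ and $n \in \nats$, and consider the word map $\varphi_w \colon \mathrm{SL}_n^r \to \mathrm{SL}_n$, viewed as a morphism of finite type $\ints$-schemes by choosing an integral model (the standard one from the group scheme structure of $\mathrm{SL}_n$ over $\ints$). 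Both $\mathrm{SL}_n^r$ and $\mathrm{SL}_n$ are smooth and geometrically irreducible over $\rats$, and $\varphi_w$ is dominant by Borel's theorem; hence $\varphi_w$ satisfies the hypotheses of Theorem \ref{thm:epsilon jet flat and counting points}. By Theorem \ref{thm:jet flatness of word maps}, $(\varphi_w)_\rats$ is $\varepsilon$-jet flat with $\varepsilon = \frac{1}{10^{17}\ell(w)^9}$.

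The next step is to invoke Item $(3)$ of Theorem \ref{thm:dictionary for random walks Linfty} (equivalently, the implication $(2) \Rightarrow (1)$ of Theorem \ref{thm:epsilon jet flat and counting points}, applied with $X = \mathrm{SL}_n^r$, $Y = \underline{G} = \mathrm{SL}_n$): since $(\varphi_w)_\rats$ is $\varepsilon$-jet flat, for every $\varepsilon' < \varepsilon$ there is a finite set $S = S(n, w, \varepsilon')$ of primes such that for every $q \in \mathcal{P}_S$, every $k \in \nats$, and every $g \in \mathrm{SL}_n(\ints_q/\frak{m}_q^k)$,
\[
\left|\varphi_w^{-1}(g)\right| < q^{k(\dim \mathrm{SL}_n^r \cdot_\rats - \varepsilon' \dim \mathrm{SL}_{n,\rats})} = q^{k\left(r\,\dim \mathrm{SL}_n - \varepsilon'\dim\mathrm{SL}_n\right)}.
\]
Dividing by $\left|\mathrm{SL}_n(\ints_q/\frak{m}_q^k)\right|^r$ and using that, after enlarging $S$ if necessary, $\left|\mathrm{SL}_n(\ints_q/\frak{m}_q^k)\right| = q^{k\dim\mathrm{SL}_n}(1 + O(q^{-1/2}))$ uniformly (this is Hensel's lemma / smoothness of $\mathrm{SL}_n$ over $\ints_q$, together with Lang--Weil), we obtain
\[
\tau_{w,\mathrm{SL}_n,q,k}(g) = \frac{\left|\varphi_w^{-1}(g)\right|}{\left|\mathrm{SL}_n(\ints_q/\frak{m}_q^k)\right|^r} \lesssim q^{-\varepsilon' k \dim\mathrm{SL}_n} = \left|\mathrm{SL}_n(\ints_q/\frak{m}_q^k)\right|^{-\varepsilon'(1+o(1))}.
\]
Choosing $\varepsilon' = \frac{1}{2\cdot 10^{17}\ell(w)^9}$ (which is $< \varepsilon$, with room to absorb the $q^{-1/2}$ error and the $(1+o(1))$ factor for $q$ large), and enlarging $S$ one last time, yields the stated bound $\tau_{w,\mathrm{SL}_n,q,k}(g) \le \left|\mathrm{SL}_n(\ints_q/\frak{m}_q^k)\right|^{-\frac{1}{2\cdot 10^{17}\ell(w)^9}}$ for all $q \in \mathcal{P}_{S(n)}$, all $k$, and all $g$.

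The substantive content is entirely inherited from the two cited theorems; the only genuine obstacle — which has already been handled in Theorem \ref{thm:epsilon jet flat and counting points} via motivic integration — is that the family of jet morphisms $\{J_k(\varphi_w)\}_{k\in\nats}$ has unbounded complexity, so that a naive fiberwise application of Lang--Weil does not give a bound uniform in $k$. At the level of this corollary, the remaining bookkeeping is routine: one passes from the constant $\frac{1}{10^{17}\ell(w)^9}$ to the slightly smaller $\frac{1}{2\cdot 10^{17}\ell(w)^9}$ precisely to convert the strict inequality and the error terms of the Lang--Weil / Hensel estimates into a clean inequality, absorbing the dependence on $n$ into the finite exceptional set $S(n)$. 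I would also remark that the factor $2$ is not optimal and could be replaced by any constant $>1$ at the cost of enlarging $S(n)$ further.
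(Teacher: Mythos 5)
Your proof is correct and follows exactly the paper's intended route: the paper obtains this as a direct consequence of Theorem \ref{thm:jet flatness of word maps} (giving $\varepsilon$-jet flatness with $\varepsilon=\frac{1}{10^{17}\ell(w)^9}$) and Item (3) of Theorem \ref{thm:dictionary for random walks Linfty}, applied with $\varepsilon'=\frac{1}{2\cdot 10^{17}\ell(w)^9}<\varepsilon$. The only difference in your write-up is that you partly re-derive Item (3) by passing through Theorem \ref{thm:epsilon jet flat and counting points} and dividing by $|\mathrm{SL}_n(\ints_q/\frak{m}_q^k)|^r$ by hand; invoking Item (3) directly already delivers the bound $\tau_{w,\mathrm{SL}_n,q,k}(g)\leq|\mathrm{SL}_n(\ints_q/\frak{m}_q^k)|^{-\varepsilon'}$ and spares you that bookkeeping, but the content is the same.
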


Theorem \ref{thm: (FRS) at (e,...,e)} allows us to provide $L^{\infty}$-bounds
for random walks on congruence subgroups. For any $\underline{G}\in\mathcal{G}$
and any prime $p$, write $\underline{G}^{1}(\Zp)$ for the kernel
of the quotient map $\underline{G}(\Zp)\rightarrow\underline{G}(\mathbb{F}_{p})$. 
\begin{cor}
\label{cor:result for G1(ZP)}Let $w\in F_{r}$ be a word. Then there
exists $C>0$, such that for every $\underline{G}\in\mathcal{G}$,
every prime $p>p_{0}(\underline{G})$, and every $t\geq C\ell(w)^{4}$,
the pushforward measure $(\varphi_{w}^{*t})_{*}(\mu_{\underline{G}^{1}(\Zp)}^{rt})$
on $\underline{G}^{1}(\Zp)$ has continuous density with respect to
the normalized Haar measure. 
\end{cor}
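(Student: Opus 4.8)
\textbf{Proof proposal for Corollary \ref{cor:result for G1(ZP)}.}
The plan is to transfer the local-at-$(e,\dots,e)$ information provided by Theorem \ref{thm: (FRS) at (e,\dots,e)} into an analytic statement about the first congruence subgroup, using the analytic criterion for the (FRS) property (Theorem \ref{thm:analytic criterion of the (FRS) property}). First I would fix $\underline{G}\in\mathcal{G}$; since $\underline{G}_{\rats}$ is semisimple and simply connected, the symbol $\widetilde{\varphi}_{w}:\mathfrak{g}^{r}\rightarrow\mathfrak{g}$ of the word map is a homogeneous Lie algebra word map of degree $\le\ell(w)$ by Propositions \ref{prop:symbol of a word map is a Lie algebra word map} and \ref{Word maps is generating and bounded degree}, and it is non-trivial on every simple constituent of $\mathfrak{g}$ once $\mathrm{rk}(\underline{G})$ is large enough (Corollary \ref{cor: commutator relations on simple Lie algebras}); for the finitely many groups of small rank one argues separately via Proposition \ref{prop:(FRS) for generating morphisms} / Corollary \ref{cor:low dimensional Lie algebras}. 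Applying Theorem \ref{thm: (FRS) at (e,\dots,e)} we get a constant $C>0$ (independent of $\underline{G}$, absorbing the $d=\ell(w)$ dependence into $C\ell(w)^{4}$, cf.\ the bounds $Cd^4$ in Theorem \ref{thm: main thm Lie algebra word maps}) such that $\varphi_{w}^{*t}:\underline{G}^{rt}\rightarrow\underline{G}$ is (FRS) at $(e,\dots,e)$ for every $t\ge C\ell(w)^{4}$.

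Next I would pass from "(FRS) at a point" to "(FRS) on an open neighborhood", which is automatic since the (FRS) locus of a morphism between smooth varieties is open (this is built into Definition \ref{def:(FRS)} via Proposition \ref{Prop:properties preserved under deformations} applied with trivial base, or directly from \cite{Elk78}): there is a Zariski-open $U\subseteq\underline{G}^{rt}$ containing $(e,\dots,e)$ on which $\varphi_{w}^{*t}$ is (FRS). Now I would invoke the analytic criterion (Theorem \ref{thm:analytic criterion of the (FRS) property}, in the formulation that (FRS) on $U$ is equivalent to: for every non-Archimedean local field $F\supseteq\rats$ and every Schwartz measure $\mu$ on $U(F)$, the pushforward $(\varphi_{w}^{*t})_{*}(\mu)$ has continuous density). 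For $p$ large enough — larger than some $p_{0}(\underline{G})$ guaranteeing that $\underline{G}$ has good reduction at $p$ and that $(\underline{G}^{1}(\Zp))^{rt}$ lands inside $U(\Qp)$ — the normalized Haar measure $\mu_{\underline{G}^{1}(\Zp)}^{rt}$ is precisely such a Schwartz measure supported in $U(\Qp)$, being locally a Haar measure on the analytic manifold $\underline{G}^{1}(\Zp)^{rt}$ and compactly supported. Hence $(\varphi_{w}^{*t})_{*}(\mu_{\underline{G}^{1}(\Zp)}^{rt})$ has continuous density on $\underline{G}(\Qp)$; since $\varphi_{w}^{*t}$ is a word map it is generating and maps $(e,\dots,e)$ to $e$, and the congruence subgroup $\underline{G}^{1}(\Zp)$ is closed under the word operation, so the pushforward is in fact supported on $\underline{G}^{1}(\Zp)$ and has continuous density there with respect to its normalized Haar measure.

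The step I expect to be the main obstacle is the verification that $(e,\dots,e)$ admits an open neighborhood $U$ that actually contains $(\underline{G}^{1}(\Zp))^{rt}$ for all $p>p_{0}(\underline{G})$, together with the claim that $\mu_{\underline{G}^{1}(\Zp)}^{rt}$ is supported there: one needs a $\ints$-model of $\underline{G}$ and of the complement $\underline{G}^{rt}\setminus U$, and an argument (via reduction mod $p$ and a generalization of Hensel's lemma, as used in Lemma \ref{lem:canonical measure} and in the proof of Theorem \ref{thm:L1 bounds }) showing that for $p$ large the entire first congruence subgroup reduces into the locus where the bad set vanishes — equivalently, that the special fiber of the bad set does not meet the identity section. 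A second, more cosmetic point is that Theorem \ref{thm: (FRS) at (e,\dots,e)} is stated with exponent $d^{6}$ while the target exponent here is $\ell(w)^{4}$; the sharper $d^{4}$ bound of Theorem \ref{thm: main thm Lie algebra word maps}(3) applies because the symbol is a genuine (homogeneous) Lie algebra word map and one reduces the local question at $(e,\dots,e)$ to it via the linearization method (Corollary \ref{cor:(cf.---Linearization)}), so one quotes Theorem \ref{thm: main thm Lie algebra word maps} rather than Theorem \ref{thm: (FRS) at (e,\dots,e)} to obtain the stated $C\ell(w)^{4}$.
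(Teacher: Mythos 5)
Your proposal matches the paper's proof: one applies Theorem \ref{thm: (FRS) at (e,...,e)} to obtain an open $U\subseteq\underline{G}^{rt}$ containing $(e,\dots,e)$ on which $\varphi_w^{*t}$ is (FRS), shows via smoothness of a $\ints$-model of $U$ and Hensel's lemma that $\underline{G}^{1}(\Zp)^{rt}\subseteq U(\Zp)$ for $p>p_0(\underline{G})$, and concludes with the analytic criterion (Theorem \ref{thm:analytic criterion of the (FRS) property}). Your observation that the body statement's exponent $d^6$ in Theorem \ref{thm: (FRS) at (e,...,e)} should be $d^4$ (reducing via Corollary \ref{cor:(cf.---Linearization)} and Proposition \ref{Word maps is generating and bounded degree} to Theorem \ref{thm: main thm Lie algebra word maps}(3)) is correct and explains how the corollary arrives at $C\ell(w)^4$; your digression on handling small-rank groups separately is unnecessary but harmless, since the symbol of a group word map is automatically a non-trivial Lie algebra word map of degree $\le\ell(w)$.
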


\begin{proof}
By Theorem \ref{thm: (FRS) at (e,...,e)}, there exists $U\subseteq\underline{G}^{rt}$
such that $\varphi_{w}^{*t}:U_{\rats}\rightarrow\underline{G}_{\rats}$
is (FRS) for $t=C\ell(w)^{4}$. Note that for $p\gg1$, $U(\mathbb{F}_{p})$
contains $(e,...,e)\in\underline{G}(\mathbb{F}_{p})^{rt}$ and $U$
is smooth over $\Zp$. By a generalization of Hensel's lemma, we have
$U(\Zp)\supseteq\underline{G}^{1}(\Zp)^{rt}$. By the analytic criterion
for the (FRS) property (Theorem \ref{thm:analytic criterion of the (FRS) property}),
$(\varphi_{w}^{*t})_{*}(\mu_{\underline{G}^{1}(\Zp)}^{rt})$ has continuous
density.
\end{proof}
\begin{cor}[{$L^{1}$-mixing time, generalization of \cite[Theorems 1 and 2]{LST19}}]
\label{cor:-L1 mixing time of words}Let $w_{1}\in F_{r_{1}}$ and
$w_{2}\in F_{r_{2}}$ be words. Then $w=w_{1}*w_{2}$ is almost $\{\ints_{q}\}_{q}$-uniform
in $L^{1}$. 
\end{cor}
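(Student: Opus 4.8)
The plan is to deduce Corollary \ref{cor:-L1 mixing time of words} from the already-proved Theorem \ref{thm:L1 bounds } (equivalently Corollary \ref{cor:dictionary for word maps}), which says that a word $w\in F_r$ is almost $\{\ints_q\}_q$-uniform in $L^1$ if and only if $(\varphi_w)_{\rats}:\underline{G}_{\rats}^r\rightarrow\underline{G}_{\rats}$ has a geometrically irreducible generic fiber for every $\underline{G}\in\mathcal{G}$. So the entire task reduces to verifying the geometric input: for $w=w_1*w_2$ and any $\underline{G}\in\mathcal{G}$, the morphism $\varphi_{w}=\varphi_{w_1}*\varphi_{w_2}:\underline{G}^{r_1+r_2}\rightarrow\underline{G}$ has a geometrically irreducible generic fiber.

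First I would invoke Theorem \ref{thm:-convolution of two word maps is generically absolutely irreducible} (the result of \cite[Lemma 2.4]{LST19}), which states precisely that for a simply connected semisimple $K$-algebraic group $\underline{G}$ and words $w_1\in F_{r_1}$, $w_2\in F_{r_2}$, the convolution $\varphi_{w_1}*\varphi_{w_2}$ has a geometrically irreducible generic fiber. Applying this with $K=\rats$ to each $\underline{G}\in\mathcal{G}$ (recall $\mathcal{G}$ consists of affine group schemes $\underline{G}$ with $\underline{G}_{\rats}$ semisimple and simply connected, so $\underline{G}_{\rats}$ is exactly of the type covered by that theorem) gives condition (3) of Theorem \ref{thm:L1 bounds } for the morphism $\varphi_w$. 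Then Theorem \ref{thm:L1 bounds } yields that $\varphi_w:\underline{G}^{r}\rightarrow\underline{G}$ is almost $\{\ints_q\}_q$-uniform in $L^1$ for every $\underline{G}\in\mathcal{G}$, which by Definition \ref{def: almost uniform mixing time}(1) is exactly the statement that $w=w_1*w_2$ is almost $\{\ints_q\}_q$-uniform in $L^1$. I should also record, for completeness, that $\varphi_w$ is dominant (so that the framework of Subsection \ref{subsec:Algebraic-random-walks-general case} applies and the measures $\tau_{w,\underline{G},q,k}$ are defined): this follows from Borel's theorem \cite{Bor83}, or simply because a morphism with a geometrically irreducible generic fiber onto an irreducible target is dominant.

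There is essentially no obstacle here — the proof is a one-line combination of the $L^1$-dictionary (Theorem \ref{thm:L1 bounds }) with the LST irreducibility lemma (Theorem \ref{thm:-convolution of two word maps is generically absolutely irreducible}). The only point requiring a modicum of care is the passage from a single base field to the collection $\{\ints_q\}_q$: one must ensure that the finite set $S$ of excluded primes produced by Theorem \ref{thm:L1 bounds } depends only on the generic fiber being geometrically irreducible, which it does, since that theorem is stated uniformly in $q$ for a fixed $\underline{G}$. The genuinely hard work — establishing that $\varphi_w$ being almost $\{\ints_q\}_q$-uniform in $L^1$ is equivalent to geometric irreducibility of the generic fiber, via the jet-scheme argument of Lemma \ref{lem:irreducible generic fiber in jets} — has already been carried out in the proof of Theorem \ref{thm:L1 bounds }, so nothing further is needed.

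In summary: the statement follows immediately by combining Theorem \ref{thm:-convolution of two word maps is generically absolutely irreducible}, which provides the geometric hypothesis, with the equivalence (1)$\Leftrightarrow$(2)$\Leftrightarrow$(3) of Theorem \ref{thm:L1 bounds } (i.e.\ Corollary \ref{cor:dictionary for word maps}(3)), which converts that hypothesis into almost $\{\ints_q\}_q$-uniformity in $L^1$ of the word $w=w_1*w_2$.
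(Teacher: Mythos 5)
Your proposal is correct and follows exactly the same route as the paper: invoke Theorem \ref{thm:-convolution of two word maps is generically absolutely irreducible} to get geometric irreducibility of the generic fiber of $\varphi_{w_1}*\varphi_{w_2}$ for every $\underline{G}\in\mathcal{G}$, then apply Corollary \ref{cor:dictionary for word maps}(3) (i.e.\ Theorem \ref{thm:L1 bounds }) to conclude. The additional remarks on dominance and uniformity of the excluded prime set are accurate but not needed beyond what the cited results already guarantee.
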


\begin{proof}
By Theorem \ref{thm:-convolution of two word maps is generically absolutely irreducible},
$(\varphi_{w})_{\rats}$ has geometrically irreducible generic fiber
for every \textit{$\underline{G}\in\mathcal{G}$.} By Corollary \ref{cor:dictionary for word maps}
we are done. 
\end{proof}
\begin{cor}[Mixing time for the commutator map]
\label{cor:mixing times for the commutator maps}Let $w_{0}=xyx^{-1}y^{-1}\in F_{r}$
be the commutator word. Then 
\begin{enumerate}
\item $w_{0}$ is almost $\{\ints_{q}\}_{q}$-uniform in $L^{1}$. 
\item $w_{0}^{*4}$ is almost $\{\ints_{q}\}_{q}$-uniform in $L^{\infty}$. 
\end{enumerate}
\end{cor}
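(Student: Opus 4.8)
The plan is to combine the geometric results already established for the commutator word with the dictionary provided by Corollary \ref{cor:dictionary for word maps}. For item (1), recall that by \cite{GS09} the commutator word is known to be almost $\{\mathbb{F}_q\}$-uniform in $L^1$ (this is quoted just before Corollary \ref{cor:uniform mixing time commutator} in the introduction); equivalently, by the implication $(1)\Rightarrow(3)$ in the $L^1$ characterization (Theorem \ref{thm:L1 bounds }, or its word-map form Corollary \ref{cor:dictionary for word maps}(3)), the map $(\varphi_{w_0})_{\rats}:\underline{G}_{\rats}^{r}\rightarrow\underline{G}_{\rats}$ has a geometrically irreducible generic fiber for every $\underline{G}\in\mathcal{G}$. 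Feeding this back through $(3)\Rightarrow(2)$ of the same corollary immediately yields that $w_0$ is almost $\{\ints_q\}_q$-uniform in $L^1$. (Alternatively, one can cite Corollary \ref{cor:-L1 mixing time of words}: since $w_0 = [x,y]$ is itself literally a concatenation of two one-letter-ish pieces — or more honestly, one applies Theorem \ref{thm:-convolution of two word maps is generically absolutely irreducible} directly to see the generic fiber is geometrically irreducible.)

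For item (2), the key input is Theorem \ref{thm F:commutator map is (FRS)} (proved in the excerpt as Theorem \ref{thm:Commutator is (FRS) after 4 convolutions}), which asserts that $\varphi_{w_0}^{*4}:\underline{G}^{8}\rightarrow\underline{G}$ is (FRS) for every semisimple algebraic $K$-group $\underline{G}$; in particular this holds for every $\underline{G}\in\mathcal{G}$ over $\rats$. Moreover $\varphi_{w_0}^{*4} = \varphi_{w_0*w_0}*\varphi_{w_0*w_0}$ is a convolution of two word maps, so by Theorem \ref{thm:-convolution of two word maps is generically absolutely irreducible} it has a geometrically irreducible generic fiber, and being (FRS) it is flat with geometrically irreducible fibers over all of $\underline{G}_{\rats}$ — hence (FGI) in the sense of Definition \ref{def:normality and (FAI)}. (One can also invoke Corollary \ref{cor: word maps are (FAI) }, using that $\varphi_{w_0}^{*2}$ is flat by Theorem \ref{thm F:commutator map is (FRS)}(1), to upgrade geometric irreducibility of the generic fiber to (FGI) for three-fold convolutions, a fortiori for four.) Thus $(\varphi_{w_0}^{*4})_{\rats}$ is both (FRS) and (FGI) for every $\underline{G}\in\mathcal{G}$, and Corollary \ref{cor:dictionary for word maps}(2) then gives precisely that $w_0^{*4}$ is almost $\{\ints_q\}_q$-uniform in $L^{\infty}$.

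The only real subtlety — and the step I would be most careful about — is making sure the (FGI) property (not merely flatness with a geometrically irreducible \emph{generic} fiber) holds over \emph{every} fiber, since Corollary \ref{cor:dictionary for word maps}(2) requires (FRS) \emph{and} (FGI). This is where one genuinely needs both Theorem \ref{thm F:commutator map is (FRS)}(2) (giving flatness plus rational, reduced, \emph{and} — via the proof, which passes through Corollary \ref{cor: singularity properties through dim of jets} — geometrically irreducible fibers) and the irreducible-generic-fiber input; alternatively one leans on Corollary \ref{cor: word maps are (FAI) } which packages exactly this upgrade for a convolution of $\geq 3$ word maps with the first two flat. Everything else is a direct citation, so no lengthy computation is required; the proof is essentially two one-line deductions from the dictionary, and I would present it in that compressed form.
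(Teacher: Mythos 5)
Your proposal matches the paper's proof in both structure and citations: for item (1) you use \cite{GS09} plus the dictionary of Corollary \ref{cor:dictionary for word maps}(3), and for item (2) you combine Theorem \ref{thm:Commutator is (FRS) after 4 convolutions} (the (FRS) input), Theorem \ref{Thm:commutator is flat} together with Corollary \ref{cor: word maps are (FAI) } (the (FGI) input), and then Corollary \ref{cor:dictionary for word maps}(2). This is exactly the argument the paper gives.

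One caution about a passing claim in your main route to (FGI): you write that since $\varphi_{w_0}^{*4}$ is (FRS) and has geometrically irreducible \emph{generic} fiber, ``being (FRS) it is flat with geometrically irreducible fibers over all of $\underline{G}_{\rats}$.'' This implication does not hold in general. Consider the flat morphism $(x,y)\mapsto xy$ on $\mathbb{A}^2\setminus\{(0,0)\}\rightarrow\mathbb{A}^1$: every fiber is smooth (so the morphism is (FRS)), the generic fiber is $\mathbb{G}_m$ (geometrically irreducible), yet the fiber over $0$ is $\mathbb{G}_m\sqcup\mathbb{G}_m$, which is not irreducible. So (FRS) plus geometrically irreducible generic fiber does not force (FGI). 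You correctly flag this subtlety at the end and give the valid remedy — Corollary \ref{cor: word maps are (FAI) } applied with $\varphi_{w_0}^{*2}$ flat (Theorem \ref{Thm:commutator is flat}) — which is precisely the route the paper takes; I would simply drop the ``being (FRS) it is flat with geometrically irreducible fibers'' sentence and present the Corollary \ref{cor: word maps are (FAI) } argument directly.
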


\begin{proof}
By Theorems \ref{thm:Commutator is (FRS) after 4 convolutions}, \ref{Thm:commutator is flat}
and by Corollary \ref{cor: word maps are (FAI) }, the map $\varphi_{w_{0}}^{*4}:\underline{G}^{8}\rightarrow\underline{G}$
is (FRS) and (FGI) for every $\underline{G}\in\mathcal{G}$. By \cite{GS09},
$w_{0}$ is \textit{$\{\mathbb{F}_{q}\}$-uniform in $L^{1}$. }By
Corollary \ref{cor:dictionary for word maps} we are done. 
\end{proof}
\bibliographystyle{alpha}
\bibliography{singularity_properties_of_word_maps_arxiv_v2}

\end{document}